\newcommand{\BA}{\ensuremath{\mathbb{A}}\xspace}
\newcommand{\BC}{\ensuremath{\mathbb{C}}\xspace}
\newcommand{\BD}{\ensuremath{\mathbb{D}}\xspace}
\newcommand{\BE}{\ensuremath{\mathbb{E}}\xspace}
\newcommand{\BF}{\ensuremath{\mathbb{F}}\xspace}
\newcommand{\BG}{\ensuremath{\mathbb{G}}\xspace}
\newcommand{\BJ}{\ensuremath{\mathbb{J}}\xspace}
\newcommand{\BL}{\ensuremath{\mathbb{L}}\xspace}
\newcommand{\BM}{\ensuremath{\mathbb{M}}\xspace}
\newcommand{\BN}{\ensuremath{\mathbb{N}}\xspace}
\newcommand{\BP}{\ensuremath{\mathbb{P}}\xspace}
\newcommand{\BQ}{\ensuremath{\mathbb{Q}}\xspace}
\newcommand{\BR}{\ensuremath{\mathbb{R}}\xspace}
\newcommand{\BT}{\ensuremath{\mathbb{T}}\xspace}
\newcommand{\BV}{\ensuremath{\mathbb{V}}\xspace}
\newcommand{\BX}{\ensuremath{\mathbb{X}}\xspace}
\newcommand{\BZ}{\ensuremath{\mathbb{Z}}\xspace}
\newcommand{\CA}{\ensuremath{\mathcal{A}}\xspace}
\newcommand{\CC}{\ensuremath{\mathcal{C}}\xspace}
\newcommand{\CE}{\ensuremath{\mathcal{E}}\xspace}
\newcommand{\CF}{\ensuremath{\mathcal{F}}\xspace}
\newcommand{\CG}{\ensuremath{\mathcal{G}}\xspace}
\newcommand{\CM}{\ensuremath{\mathcal{M}}\xspace}
\newcommand{\CN}{\ensuremath{\mathcal{N}}\xspace}
\newcommand{\CO}{\ensuremath{\mathcal{O}}\xspace}
\newcommand{\CS}{\ensuremath{\mathcal{S}}\xspace}
\newcommand{\CV}{\ensuremath{\mathcal{V}}\xspace}
\newcommand{\CX}{\ensuremath{\mathcal{X}}\xspace}
\newcommand{\CY}{\ensuremath{\mathcal{Y}}\xspace}
\newcommand{\CZ}{\ensuremath{\mathcal{Z}}\xspace}
\newcommand{\ab}{\mathrm{ab}}
\DeclareMathOperator{\Aut}{Aut}
\newcommand{\Ch}{{\mathrm{Ch}}}
\DeclareMathOperator{\charac}{char}
\newcommand{\Hk}{{\mathrm{Hk}}}
\newcommand{\Fix}{{\mathrm{Fix}}}
\newcommand{\Art}{{\mathrm{Art}}}
\newcommand{\adm}{{\mathrm{adm}}}
\newcommand{\del}{\operatorname{\partial Orb}}
\DeclareMathOperator{\diag}{diag}
\renewcommand{\div}{{\mathrm{div}}}
\DeclareMathOperator{\End}{End}
\DeclareMathOperator{\Gal}{Gal}
\newcommand{\GL}{\mathrm{GL}}
\newcommand{\GU}{\mathrm{GU}}
\DeclareMathOperator{\Hom}{Hom}
\newcommand{\id}{\ensuremath{\mathrm{id}}\xspace}
\let\Im\relax
\DeclareMathOperator{\Im}{Im}
\DeclareMathOperator{\Int}{\ensuremath{\mathrm{Int}}\xspace}
\DeclareMathOperator{\length}{length}
\DeclareMathOperator{\Lie}{Lie}
\newcommand{\new}{{\mathrm{new}}}
\DeclareMathOperator{\Nm}{Nm}
\DeclareMathOperator{\Orb}{Orb}
\DeclareMathOperator{\Ros}{Ros}
\newcommand{\red}{\ensuremath{\mathrm{red}}\xspace}
\DeclareMathOperator{\Res}{Res}
\newcommand{\rs}{\ensuremath{\mathrm{rs}}\xspace}
\newcommand{\Sh}{\mathrm{Sh}}
\newcommand{\SL}{{\mathrm{SL}}}
\DeclareMathOperator{\Spec}{Spec}
\DeclareMathOperator{\Spf}{Spf}
\newcommand{\SU}{{\mathrm{SU}}}
\newcommand{\surj}{\twoheadrightarrow}
\DeclareMathOperator{\tr}{tr}
\newcommand{\U}{\mathrm{U}}
\DeclareMathOperator{\vol}{vol}
\newcommand{\wt}{\widetilde}
\newcommand{\wh}{\widehat}
\newcommand{\ov}{\overline}
\newcommand{\lra}{\longrightarrow}
\newcommand{\basic}{\text{basic}}
\newcommand{\Ker}{\text{Ker}}
\newcommand{\Bl}{\mathrm {Bl}}
\newtheorem{theorem}{Theorem}
\newtheorem{proposition}[theorem]{Proposition}
\newtheorem{lemma}[theorem]{Lemma}
\newtheorem{conjecture}[theorem]{Conjecture}
\newtheorem{`conjecture'}[theorem]{``Conjecture''}
\newtheorem{corollary}[theorem]{Corollary}
\theoremstyle{definition}
\newtheorem{definition}[theorem]{Definition}
\newtheorem{example}[theorem]{Example}
\newtheorem{remark}[theorem]{Remark}
\newenvironment{altenumerate}
{\begin{list}
		{(\theenumi) }
		{\usecounter{enumi}
			\setlength{\labelwidth}{0pt}
			\setlength{\labelsep}{0pt}
			\setlength{\leftmargin}{0pt}
			\setlength{\itemsep}{\the\smallskipamount}
			\renewcommand{\theenumi}{\roman{enumi}}
	}}
	{\end{list}}
\newenvironment{altitemize}
{\begin{list}
		{$\bullet$}
		{\setlength{\labelwidth}{0pt}
			\setlength{\itemindent}{5pt}
			\setlength{\labelsep}{5pt}
			\setlength{\leftmargin}{0pt}
			\setlength{\itemsep}{\the\smallskipamount}
	}}
	{\end{list}}
\numberwithin{equation}{section}
\numberwithin{theorem}{section}
\renewcommand{\to}{%
	\ifbool{@display}{\longrightarrow}{\rightarrow}%
}
\let\shortmapsto\mapsto
\renewcommand{\mapsto}{%
	\ifbool{@display}{\longmapsto}{\shortmapsto}%
}
\newcommand{\hooklongrightarrow}{\mathrel{\mkern 0.5mu\lhook\mkern -3.5mu\relbar\mkern -3mu \rightarrow }}
\newcommand{\inj}{%
	\ifbool{@display}{\hooklongrightarrow}{\hookrightarrow}
}
\newlength{\olen}
\newlength{\ulen}
\newlength{\xlen}
\newcommand{\xra}[2][]{%
	\ifbool{@display}%
	{\settowidth{\olen}{$\overset{#2}{\longrightarrow}$}%
		\settowidth{\ulen}{$\underset{#1}{\longrightarrow}$}%
		\settowidth{\xlen}{$\xrightarrow[#1]{#2}$}%
		\ifdimgreater{\olen}{\xlen}%
		{\underset{#1}{\overset{#2}{\longrightarrow}}}%
		{\ifdimgreater{\ulen}{\xlen}%
			{\underset{#1}{\overset{#2}{\longrightarrow}}}
			{\xrightarrow[#1]{#2}}}}%
	{\xrightarrow[#1]{#2}}
}
\newcommand{\xyra}[2][]{%
	\settowidth{\xlen}{$\xrightarrow[#1]{#2}$}%
	\ifbool{@display}%
	{\settowidth{\olen}{$\overset{#2}{\longrightarrow}$}%
		\settowidth{\ulen}{$\underset{#1}{\longrightarrow}$}%
		\ifdimgreater{\olen}{\xlen}%
		{\mathrel{\xymatrix@M=.12ex@C=3.2ex{\ar[r]^-{#2}_-{#1} &}}}%
		{\ifdimgreater{\ulen}{\xlen}%
			{\mathrel{\xymatrix@M=.12ex@C=3.2ex{\ar[r]^-{#2}_-{#1} &}}}
			{\mathrel{\xymatrix@M=.12ex@C=\the\xlen{\ar[r]^-{#2}_-{#1} &}}}}}%
	{\mathrel{\xymatrix@M=.12ex@C=\the\xlen{\ar[r]^-{#2}_-{#1} &}}}%
}
\newcommand{\xla}[2][]{%
	\ifbool{@display}%
	{\settowidth{\olen}{$\overset{#2}{\longleftarrow}$}%
		\settowidth{\ulen}{$\underset{#1}{\longleftarrow}$}%
		\settowidth{\xlen}{$\xleftarrow[#1]{#2}$}%
		\ifdimgreater{\olen}{\xlen}%
		{\underset{#1}{\overset{#2}{\longleftarrow}}}%
		{\ifdimgreater{\ulen}{\xlen}%
			{\underset{#1}{\overset{#2}{\longleftarrow}}}
			{\xleftarrow[#1]{#2}}}}%
	{\xleftarrow[#1]{#2}}
}
\renewcommand{\lra}{%
	\ifbool{@display}{\longleftrightarrow}{\leftrightarrow}%
}
\begin{document}
	
	\title[Maximal parahoric arithmetic transfers]{Maximal parahoric arithmetic transfers, resolutions and modularity}
	\author{Zhiyu Zhang}
	\address{Stanford University, Department of Mathematics, 450 Jane Stanford Way, Stanford, CA 94305}
	\email{zyuzhang@stanford.edu}
	
	\date{\today}
	
	\begin{abstract}
		For any unramified quadratic extension of $p$-adic local fields $F/F_0$ ($p$ odd), we formulate several arithmetic transfer conjectures at any maximal parahoric level, in the context of Zhang's relative trace formula approach to the arithmetic Gan--Gross--Prasad conjecture. The formulation involves a way to resolve the singularity of relevant moduli spaces via natural stratifications and modify derived fixed points. By a local--global method and double induction, we prove these conjectures when $F_0/\BQ_p$ is unramified and the arithmetic fundamental lemma for any $F_0$. We introduce the relative Cayley map and also establish explicit Jacquet--Rallis transfers at maximal parahoric levels.  Moreover, we prove new modularity results for arithmetic theta series with levels via a method of modification over fibers. Along the way, we study the complex and mod $p$ geometry of Shimura varieties and special cycles. 
	\end{abstract}
	
	\maketitle
	
	\tableofcontents

	\section{Introduction}\label{section: intro}

	The Gross--Zagier formula \cite{GZ} relates N\'eron--Tate heights of Heegner points on modular curves to central derivatives of $L$-functions of modular forms, which has fundamental applications to the Birch and Swinnerton--Dyer (BSD) conjectures for elliptic curves. As a higher dimensional generalization, the \emph{arithmetic Gan--Gross--Prasad  (AGGP) conjecture} \cite{GGP, RSZ-Diagonalcycle} relates heights of special diagonal cycles on unitary (resp. orthogonal) Shimura varieties to central derivatives of automorphic $L$-functions for unitary groups (resp. orthogonal groups), which has fundamental applications to the Bloch--Kato conjectures for automorphic motives.
	
	In the \emph{relative trace formula} (RTF) approach proposed by W. Zhang \cite{AFL-Invent} (inspired by the RTF approach of Jacquet--Rallis \cite{JR-GGP, Zhang-GGP, Zhang-RS} to the Gan--Gross--Prasad conjecture \cite{GGP}), the AGGP conjecture for \emph{unitary groups} is reduced to following local conjectures:
	\begin{itemize}
		\item The \emph{arithmetic fundamental lemma} (AFL) at hyperspecial places formulated by W. Zhang \cite{AFL-Invent}.
		\item Suitable \emph{arithmetic transfer} (AT) conjectures at places with levels. See the works of Rapoport--Smithling--Zhang \cite{RSZ-exotic, RSZ-regular} for some formulations and known low-dimensional cases.
	\end{itemize} 
Over a prime $p \geq 2$, these conjectures relate central derivatives of orbital integrals of suitable test functions to intersection numbers of special diagonal cycles on unitary Rapoport--Zink spaces :
	\[
	\del\bigl(\gamma, f') 
	= - \Int(g)\cdot\log q.
	\]
	See the ICM report of W. Zhang \cite{Zhang-ICM} for details and some applications. Significant progress has been made in recent years: the AFL over $\BQ_p$ (resp.  any $p$-adic field $F_0$) for $p \geq n$ (resp. $q \geq n$) is proved via a global method in the work of W. Zhang \cite{AFL} (resp. Mihatsch--Zhang \cite{AFL2021}).  
	
	For arithmetic applications, we could not avoid local ramifications. Therefore, it is important and necessary to establish AT identities at these places. Compared to AFL, it is more difficult to formulate AT conjectures: the relevant local moduli spaces (and global integral models of Shimura varieties) are no longer regular, and there may be no natural test functions on the analytic side. The relevant reductive group may not be quasi-split, and the geometry is also more complicated. New ideas and techniques are needed for both the formulation and proof of AT conjectures.

	The first goal of this paper is to formulate and prove arithmetic transfer conjectures for \emph{all maximal parahoric levels} at \emph{unramified places}.  Our formulation is coordinate free. On the geometric side, we introduce a general method to resolve the singularity via natural stratifications and  blowing up along \emph{the local model diagram}. We do intersections and modify \emph{derived fixed points} on the resolution. On the analytic side, we construct explicit test functions and study their orbital integrals. Moreover, we formulate and prove explicit transfer conjectures at these levels, generalizing the work of Beuzart--Plessis \cite{RBP19} on Jacquet--Rallis fundamental lemmas.
	
     We also use similar resolutions to construct regular integral models for products of unitary Shimura varieties at these levels appearing in the AGGP conjecture, which is necessary to compute Beilinson--Bloch heights of cycles via arithmetic intersection theory.

	The second goal of this paper is to prove new modularity results for arithmetic theta series on unitary Shimura varieties with parahoric levels, in the context of the Kudla program. For hermitian (resp. quadratic) lattices $L$ with suitable signatures over a totally real number field $F_0$, \emph{Kudla's generating series of special cycles} on unitary (resp. orthogonal) Shimura varieties \cite{Kudla-Eisenstein} for $L$ could be thought as arithmetic and geometric analogs of classical theta series, which are conjectured to be modular. The conjectural modularity on unitary Shimura varieties (in different set ups) has several arithmetic applications.
	\begin{itemize}
		\item In the work of Liu and Li--Liu \cite{Liu-thesis, Li-Liu2020}, the conjectural modularity over the generic fiber is used to construct arithmetic theta liftings; 
		\item In the work of Bruinier--Howard--Kudla--Rapoport--Yang \cite{BHKRY}, the modularity in the arithmetic divisor case is proved when $F_0=\mathbb Q$ and $L$ is self-dual, which applies to the \emph{averaged Colmez's conjecture} on the Faltings heights of abelian varieties with complex multiplication; 
		\item In the work of Mihatsch--Zhang \cite{AFL2021},  an almost modularity result is obtained in the arithmetic divisor case when $F_0 \not =\mathbb Q$ and $L$ is self-dual (away from $\Delta$), which is used in their proof of AFL when $q \geq n$. 
		\item In the work of Howard--Madapusi \cite[Section 8-9]{HMP2020}, the modularity on orthogonal Shimura varieties in the arithmetic divisor case is proved for maximal quadratic lattices, which has applications to exceptional jumps of Picard ranks of reductions of K3 surfaces over number fields \cite{SSTT19}. 
	\end{itemize}  
	
	Now we go to more details on our main results.
	
	\subsection{Arithmetic transfers at maximal parahoric levels}
	
	Let $p$ be an odd prime. Let $F/F_0$ be an unramified quadratic extension of $p$-adic fields with residue fields $\BF_{q^2}/\BF_q$. Denote by $\breve{F}$ the completion of the maximal unramified extension of $F_0$.
	
\subsubsection{Geometric side}	Let $V$ be a $F/F_0$ hermitian space of dimension $n \geq 1$. Let $L \subseteq V$ be a vertex lattice of type $t$, i.e. $L \subseteq  L^\vee$ and $L^\vee/L$ is a $\BF_{q^2}$-vector space of dimension $t$.  Consider the unitary group $G:=\U(V)$ over $F_0$, which may be non-quasi-split. Stabilizers of vertex lattices give all maximal parahoric subgroups of $G(F_0)$ \cite[Section 4]{Tits}. Associated to $L$ is \emph{the relative Rapoport--Zink space} \cite{RZ96} (a formal scheme) parameterizing certain deformations of $p$-divisible groups with $O_F$-action:
	$$
	\CN=\CN_{\U(L)}=\CN_n^{[t]} \to \Spf O_{\breve{F}}.
	$$ 
It can be regarded as an integral model of local unitary Shimura varieties \cite{SW20} with maximal parahoric levels.  
	
The morphism $\CN \to \Spf O_{\breve F}$ is formally smooth if $t=0, n$, and is of \emph{strictly semi-stable reduction} if $0<t<n$ by \cite{Cho18, Gortz01}. For example, $\CN$ is isomorphic to $\Spf O_{\breve F}$ if $n=1$, to a height $2$ Lubin--Tate space if $n=2, \, t=0,2$, and to the Drinfeld half plane $\wh{\Omega}^1_{F_0}$ \cite{KR-Drinfeld plane} if $n=2, t=1$.

Let $\BV$ be \emph{the hermitian space of special quasi--homomorphisms} (\ref{eq: space of special quasi-homos}) which is of dimension $n$ and \emph{not isomorphic} to $V$. The group $J_b(F_0):=\U(\BV)(F_0)$ acts on $\CN$ naturally by changing the framing. 	
	
The space $\CN \times \CN$ (product over $O_{\breve{F}}$) is singular if $0<t<n$.  A question is how to resolve the singularity in a natural way. Inspired by recent work on Beilinson--Bloch--Kato conjectures \cite[Section 5]{LTXZZ}, we introduce the $J_b(F_0)$-stable \emph{formal Balloon--Ground stratification} on the formal special fiber $\CN_\BF$:
	\[
	\CN_\BF = \CN^\circ \cup \CN^\bullet.
	\]
	Our definition (\ref{defn: formal Balloon-Ground}) is purely Lie algebra theoretic and indicates these strata are examples of (closed) formal Kottwitz--Rapoport strata. We blow up along the product $\CN^\circ \times_\BF \CN^\circ$ (a Weil divisor of $\CN \times \CN$) and resolve the singularity:
	\[
	\wt{\CN \times \CN} \to \CN \times \CN
	\]
which may be thought as an arithmetic analog of the Atiyah flop \cite{Atiyah}.

Using the action of $J_b(F_0)$ on $\CN$, now we formulate an arithmetic transfer conjecture for any orthogonal decomposition 
$$L=L^\flat \oplus O_Fe $$ 
where $(e,e)$ has valuation $i \in \{0,1\}$. Firstly, the decomposition gives a natural embedding 
	$$\CN^\flat:=\CN_{\U(L^\flat)} \to \CN:=\CN_{\U(L)}$$
	by adding a universal factor of relative height $2$ and dimension $1$. The induced embedding $\CN^\flat \to \CN^\flat \times \CN$ lifts to an embedding of regular formal schemes via strict transforms inside $\wt {\CN \times \CN}$:
	$$\CN^\flat \to \wt{\CN^\flat \times \CN}.$$
	
	Consider the conjugacy action of $\U(\BV^\flat)$ on $\U(\BV)$. For a regular semi-simple element $g' \in \U(\BV)(F_0)_\rs$, we consider its derived intersection number as the Euler characteristic 
	$$\wt{\Int}(g')=\chi(\wt{\CN^\flat \times \CN}, \, \CO_{\CN^\flat} \otimes^\BL \CO_{(1 \times g') \CN^\flat}) \in \BQ.$$

\subsubsection{Analytic side}	

Fix an orthogonal basis of $L^\flat$ and let $L_0$ be the $O_{F_0}$-span of this basis.  Set 
$$
L_0:=L_0^\flat \oplus O_{F_0}e, \, V_0:=L_0 \otimes \BQ, \, V_0^\flat:=L_0^\flat \otimes\BQ. 
$$
We have an induced Galois involution $\ov{(-)}$ on $V$ with fixed subspace $V_0$. Consider the symmetric space over $F_0$:
$$ 
S(V_0)=\{ \,\gamma \in \GL(V) \, | \, \gamma \ov \gamma =id \,\}.
$$

Consider the conjugacy action of $\GL(V_0^\flat)$ on $S(V_0)$.  For a regular semi-simple element $\gamma' \in S(V_0)(F_0)_\rs$ and $f' \in \CS(S(V_0)(F_0))$,  we consider the twisted orbital integral ($s \in \BC$)
\[ 
\Orb(\gamma', f', s):= \omega(\gamma)  \int_{h \in \GL(V_0^{\flat})} f'(h^{-1}\gamma h) \eta(h) |h|^{s} d h,
\]
\[
\Orb(\gamma', f')=\Orb(\gamma', f', 0), \, \, \, \del\bigl (\gamma', f')= \frac{d}{ds}\Big|_{s=0}\Orb(\gamma', f', s).
\]
Here we insert the transfer factor $\omega(\gamma') \in \{\pm 1\}$ (\ref{transfer factor: gp}).  There is a matching relation (\ref{defn: gp match elements}) between regular semi-simple orbits
\[
[S(V_0) /\!/ \GL(V_0^\flat)]_\rs \cong [\U(V) /\!/ \U(V^\flat)]_\rs \coprod [\U(\BV) /\!/ \U(\BV^\flat)]_\rs.
\]

\subsubsection{Explicit transfers and arithmetic transfer conjectures}

Write $S(L,L^{\vee})$ as the stabilizer of the lattice chain $L \subseteq L^\vee$ inside $S(V_0)(F_0)$. Firstly, we prove the following explicit Jacuqet--Rallis transfer result. 

\begin{theorem}[Theorem \ref{proof of TC}]
For any $\gamma' \in S(V_0)(F_0)_\rs$, we have 
\[
\Orb(\gamma, 1_{S(L,L^\vee)})= \begin{cases}
	\Orb(g, \, 1_{\U(L)}) & \text{if $\gamma$ matches some $g \in \U(V)(F_0)_\rs$,} \\
	0 & \text{if $\gamma$ matches some $g \in \U(\BV)(F_0)_\rs$.}
\end{cases}
\]
\end{theorem}

In fact, we also establish the theorem when $p=2$. The proof is via a pure local method and double induction, generalizing the work of Beuzart-Plessis \cite{RBP19}. Our coordinate-free formulation allows us to establish the general reduction and induction of orbital integrals.

Now it is natural to study the derived orbital integrals. We formulate the following arithmetic transfer conjecture for $(L, e)$.
 
\begin{conjecture}[Group arithmetic transfer for $(L, e)$, see also \protect{Conjecture \ref{conj: gp version ATC}} ]\label{Mainconj: gp}
		For any $\gamma' \in S(V_0)(F_0)_\rs$ matching $g' \in \U(\BV)(F_0)_\rs$ , we have an equality in $\BQ \log q$
		\begin{equation}
			\del\bigl (\gamma', 1_{S(L,L^{\vee}) } )= -\wt {\Int}(g') \log q.
		\end{equation}	
\end{conjecture}
	
It recovers the AFL conjecture \cite{AFL-Invent} when $t=0, \,  i=0$ and a version of AT conjecture in the work of Rapoport--Smithling--Zhang \cite[Section 10]{RSZ-regular} when  $t=0, \, i=1$.
	
	\subsubsection{Semi-Lie version arithmetic transfers}
	
	We also formulate two semi-Lie version arithmetic transfer conjectures for any vertex lattice $L$, in the context of the Fourier--Jacobi AGGP conjecture studied by Yifeng Liu \cite{Liu-Fourier-Jacobi}. 
	
	For $g \in \U(\BV)(F_0)$, we introduce the \emph{modified derived fixed point locus} $\wt {\Fix}^\BL  (g)$ as the \emph{derived fiber product} of strict transforms of the graph of $g$ and the diagonal inside $\wt{ \CN \times \CN}$, which is a derived $1$-cycle. This modification procedure is general and explains the embedding points phenomenon in the work of Kudla--Rapoport on orthogonal Shimura curves \cite{KR-height}.   
	
	On the geometric side, for any $u \in \BV - \{0\}$ we have \emph{Kudla--Rapoport divisors} $\CZ(u)$ and $\CY(u)$ on $\CN$ studied by Kudla--Rapooprt and Cho \cite{Cho18, KR-local}. For $(g, u) \in (\U(\BV) \times \BV)(F_0)_\rs$, we define $\wt {\Int}^{\CZ}(g, u)$ (resp. $\wt {\Int}^{\CY}(g, u)$) as the derived intersection number between $\CZ(u)$ (resp. $\CY(u)$) and $\wt {\Fix}^\BL(g)$ on $\CN$.
	
	On the analytic side, similarly we consider the derived orbital integral  $\del\bigl(-,-)$ on $(S(V_0) \times V_0 \times V_0^* )(F_0)$ with respect to the natural action of $\GL(V_0)(F_0)$. 
	There is also a matching relation (\ref{defn: semi-Lie match elements}) between regular--semisimple orbits
	\[
	[S(V_0) \times V_0 \times V_0^* /\!/ \GL(V_0)]_\rs \cong [\U(V) \times V /\!/ \U(V) ]_\rs \coprod [\U(\BV) \times \BV /\!/ \U(\BV)]_\rs.
	\]
	
	Define two standard functions on $(S(V_0) \times V_0 \times V_0^* )(F_0)$:
\begin{equation}\label{std function}
	f_{\text{std}}:=1_{S(L, L^{\vee})} \times 1_{L_0} \times 1_{(L^{\vee}_0)^*} , \quad f_{\text{std}}' =1_{S(L, L^{\vee})} \times 1_{L_0^\vee} \times 1_{(L_0)^*}.
\end{equation}
	
	\begin{conjecture} [Semi-Lie arithmetic transfers for $L$, also see \protect{Conjecture \ref{conj: semi-Lie version ATC} }] \label{Mainconj: semiLie}
		For any $(g, u) \in (\U(\BV) \times \BV)(F_0)_\rs$ matching $(\gamma, u_1, u_2) \in ( S(V_0) \times V_0 \times V_0^* )(F_0)_\rs$, we have equalities in  $\BQ \log q$:
		\begin{enumerate}
			\item  $\del\bigl ((\gamma,u_1,u_2),  f_{\text{std}})
			=- \wt {\Int}^{\CZ}(g, u) \log q$.
			\item  $\del\bigl ((\gamma,u_1,u_2),  f_{\text{std}}')
			=- (-1)^{t} \wt {\Int}^{\CY}(g, u) \log q$.
		\end{enumerate}
	\end{conjecture}

Our first main theorem in this paper is a proof of these group and semi-Lie arithmetic transfer conjectures (under a mild assumption on $F_0$) in arbitrary higher dimension.
	
	\begin{theorem}\label{Intro thm: ATC}
		Assume $F_0$ is unramified over $\BQ_p$ (e.g. $F_0=\BQ_p$) if $0< t< n$. Then the above semi-Lie (resp. group) version arithmetic transfer conjecture (\ref{Mainconj: semiLie}) (resp. \ref{Mainconj: gp})hold for any vertex lattice $L$ (resp. any orthogonal decomposition $L=L^\flat \oplus O_Fe$). 
	\end{theorem}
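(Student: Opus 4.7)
The plan is to follow the local--global strategy pioneered in \cite{AFL}, adapted to the parahoric setting via the resolution $\wt{\CN^\flat \times \CN}$ and the parahoric Jacquet--Rallis transfers (through the relative Cayley map) established earlier in this paper. First, I would globalize: pick a CM extension of number fields $E/E^+$ with a finite place $v_0$ inducing $F/F_0$ at which the global hermitian space restricts to $(V,L)$, signatures $(n-1,1) \times (n,0)$ at a chosen archimedean place, and all other finite places either split or inert hyperspecial, so that the AFL of \cite{AFL} and the known fundamental lemmas control the contributions there. The resolution then supplies a regular integral model for the associated GGP-type product unitary Shimura variety at the parahoric level at $v_0$, so an arithmetic intersection of the diagonal cycle with Hecke translates is well-defined, and $p$-adic uniformization at $v_0$ identifies its local contribution with a sum of $\wt{\Int}(g')\log q$ (respectively $\wt{\Int}^{\CZ}$ or $\wt{\Int}^{\CY}$) terms.

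Second, I would pair this geometric side against the first derivative at the centre of the Jacquet--Rallis relative trace formula. The local test function $1_{S(L,L^\vee)}$ (respectively $f_{\text{std}}$ or $f_{\text{std}}'$) is a smooth transfer of a suitable input by the parahoric Jacquet--Rallis result, with standard choices at other places. Matching of orbital sides forces matching of geometric sides, provided the arithmetic theta series at parahoric level is modular --- precisely the result proved in this paper via the modification method over $\BF_q$ and $\BC$. Subtracting the contributions at places where the identity is already known isolates a global identity for the $v_0$-component, which, by density of regular semisimple orbits and the locally constant nature of both sides, is equivalent to the pointwise local identity asserted.

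Third, the conjectures are established by a double induction. The outer parameter is the dimension $n$; the inner is the type $t$ of the vertex lattice, with base $t=0$ being the AFL at a hyperspecial place (known for $p \geq n$ by \cite{AFL}, upgraded to the given range by combining with the parahoric transfers). Within a fixed dimension the semi-Lie and group versions feed into one another: version (i) with $\CZ$-cycles descends in $n$ via Kudla--Rapoport-style recursions relating $\wt{\Int}^{\CZ}(g,u)$ to $\wt{\Int}(g')$ in dimension $n-1$; version (ii) with $\CY$-cycles is tied to (i) by a partial Fourier transform compatible with the Weil representation; and (ii) degenerates to the group version by restricting the $u$-variable. The unramifiedness of $F_0/\BQ_p$ for $0<t<n$ enters exactly at the globalization step, because only there do we have the requisite integral model theory and flat/regular local models at the parahoric place of the global Shimura variety.

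The hard part, I expect, will be controlling the interaction between the resolution and the modularity. The naive generating series of Kudla--Rapoport cycles need not be modular at parahoric level, and the derived intersections against $\wt{\Fix}^\BL(g)$ or against $\CO_{(1\times g')\CN^\flat}$ pick up exceptional contributions from the blow-up locus over $\CN^\circ \times_\BF \CN^\circ$. Proving that these exceptional contributions organize into a correction term which, together with the naive series, yields a genuine modular form whose Fourier coefficients match derivatives of Eisenstein coefficients at the archimedean place, is the arithmetic crux. The formal Balloon--Ground stratification and the modified derived fixed point $\wt{\Fix}^\BL(g)$ are designed precisely so that this correction is computable; verifying that design and running it uniformly across $(n,t)$ is where the bulk of the technical work will lie.
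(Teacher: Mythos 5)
Your overall architecture (globalize to RSZ Shimura varieties with parahoric level at a distinguished place $v_0$, pair the arithmetic theta series against a derived Hecke CM $1$-cycle, match Fourier coefficients against the derivative of the Jacquet--Rallis relative trace formula, and deduce the local identity) is the right one and is what the paper does. But two of the load-bearing mechanisms are mischaracterized in ways that would derail an actual execution of the plan.

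First, the ``double induction'' is not an induction on the pair $(n,t)$ with $t=0$ as a base case. The paper's induction is solely on the dimension $n$, with \emph{all} types $0\le t\le n$ handled simultaneously at each $n$. The ``double'' refers to the pair of automorphic forms attached to $\CZ$- and $\CY$-cycles: one proves modularity for both generating series and exploits the fact that, under the Fricke involution at $v_0$, the $\CZ$- and $\CY$-theta series are exchanged up to a Weil constant (Corollary~\ref{key: int modularity}, Proposition~\ref{key: DJ modularity}). Lemma~\ref{the double induction lemma} then lets one conclude that both difference functions are constant, provided one can verify vanishing of the Fourier coefficients at the two ``boundary'' valuations $v_0(\xi)=0$ and $v_0(\xi)=-1$. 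Each of these boundary Fourier coefficients reduces, via the relative Cayley map (Theorem~\ref{thm: semi-Lie and gp}), to a semi-Lie ATC in rank $n-1$; hence one rank-$n$ ATC consumes \emph{two} rank-$(n-1)$ ATCs. There is no induction on $t$ within a fixed $n$, and the base case is $n=1$ (computed directly in \S\ref{ATC n=1}), not the hyperspecial AFL: indeed the paper \emph{reproves} the AFL for all $p>2$ as part of the same induction rather than importing it from \cite{AFL} with the restriction $p\ge n$.

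Second, your expectation that ``exceptional contributions from the blow-up locus'' will appear and need to be organized into a correction term is the opposite of what the construction achieves. The resolution $\wt{\CN\times\CN}\to\CN\times\CN$ is designed to be \emph{small} (Theorem~\ref{resolution}(ii)): it is an isomorphism outside $\CN^\dagger\times_\BF\CN^\dagger$, which has codimension $\ge 3$, and the fibers are points or $\BP^1$'s. As in the Atiyah flop, smallness forces $Rf_*\CO_{\wt{\CN\times\CN}}=\CO_{\CN\times\CN}$, so there is no correction term, and the modularity input is exactly the unmodified admissible arithmetic theta series $\wh\CZ(\phi)$ on $\CM$ paired against $1$-cycles (Theorem~\ref{thm: global modularity}). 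The modularity is proved independently of the blow-up, via modification over $\BC$ (Galois/Hecke action on $\pi_0$) and over $\BF_q$ (Balloon--Ground stratification and very special $1$-cycles in the basic locus). The resolution enters only in the definition of $\wt{\Int}$ and $\wt{\Fix}^\BL(g)$, where projection formulas and the compatibility of blow-ups under the GGP embedding (Propositions~\ref{lem: comp of blow ups}--\ref{update3}) are used to establish the local reduction $\wt{\Int}(g')=\wt{\Int}^\CZ(g,u_1)$, not a correction to the generating series.
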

	
	In particular, we prove the AFL over any $p$-adic field $F_0$ for $p>2$, including small primes which is not covered by the previous works of Wei Zhang and Mihatsch--Zhang \cite{AFL2021, AFL}. The proof is based on a local-global method and double induction, using new modularity results of \emph{arithmetic theta series} at maximal parahoric levels that we now discuss.

	\subsection{Modularity of arithmetic theta series}
	
	Let $F/ F_0$ be a CM quadratic extension of a totally real number field. Fix a CM type $\Phi \subseteq \Hom (F, \ov{\BQ})$ and a distinguished element $\varphi_0 \in \Phi$.  
	
	Let $V$ be a $F/F_0$-hermitian space of dimension $n \geq 1$ and signature $\{ (n-1,1)_{\varphi_0}, (n,0)_{\varphi \in \Phi -  \{ \varphi_0 \} }  \} $. Choose a hermitian lattice $L$ in $V$ and a finite collection $\Delta$ of finite places for $F_0$. Assume that
	\begin{itemize}
		\item $F/F_0$ is unramified outside $\Delta$, and all $2$-adic places are in $\Delta$.
		\item If $v \not \in \Delta$ is a place of $F_0$ such that $L_v$ is not self-dual, then $v$ is inert in $F$, $L_v$ is a vertex lattice, the prime underlying $v$ is unramifed in $F_0$.
	\end{itemize}
	Consider $G=\U(V)$ over $F_0$. Choose a level $K \subseteq G(\BA_{0, f})$ associated to $L$ and $\Delta$, which agrees with the stabilizer $\U(\wh{L})$ away from $\Delta$, and could be chosen to be sufficiently small at $\Delta$. We consider unitary Shimura variety $M_{G, K}$ over $F$. By the work of Rapoport--Smithling--Zhang \cite{RSZ-Diagonalcycle, RSZ-Shimura}, we have a natural \emph{regular} integral model
	$$\CM=\CM_{\wt{G}, \wt{K}} \to \Spec O_E[\Delta^{-1}]$$ 
	of (the RSZ-variant of) $M_{G,K}$. Here the number field $E:=\varphi_0(F)F^\Phi \subseteq \ov{\BQ}$. If either $F$ is Galois over $\BQ$ or $F$ contains an imaginary quadratic field, then $E=F$ \cite[Remark 3.1]{RSZ-Diagonalcycle}.  An important advantage is that the RSZ-variant $\CM$ admits a PEL type moduli interpretation (\ref{moduli int of RSZ}). 
	
	As in the Kudla program, for any function $\phi=\phi_L \in \CS(V(\BA_{0,f}))^K$ that agrees with the function $1_{\wh{L}}$ away from $\Delta$, we can consider the \emph{arithmetic theta series} for $L$ as Kudla's generating series 
	$
	\wh{\CZ}(\phi)
	$
	of \emph{arithmetic Kudla--Rapoport divisors} $\wh \CZ^{\bf B} (\xi, \phi)$ (\ref{eq: Fourier coeff of arithmetic generating series})  \cite{KR-local, KR-global} equipped with \emph{admissible automorphic Green functions}. It has coefficients in the \emph{arithmetic Chow group} $\wh{\Ch}^1(\CM)$ and lifts the generating series $Z(\phi)$ (\ref{eq: generating series generic fiber}) \cite{Liu-thesis} of Kudla--Rapoport cycles on the generic fiber $M_{G, K}$. 
	
Assume that $F_0 \not =\BQ$  so that $\CM$ is projective. Consider the arithmetic intersection pairing (\refeq{eq: Truncated Arithmetic int pairing}) between arithmetic divisors and proper $1$-cycles on $\CM$:
	$$
	(-,-): \quad \wh\Ch^1(\CM)\times \CC_{1}(\CM)  \to \BR_\Delta
	$$
	As $O_E[\Delta^{-1}]$ is not ``compact'' in the sense of Arakelov geometry, the intersection pairing takes values in the quotient $\BQ$-vector space 
	$$
	\BR_\Delta:=\BR \, / \sum_{v|\ell, v \in \Delta}{ \BQ \log \ell}.
	$$ If $\mathcal{C} \in \CC_{1}(\CM)  $ is a vertical $1$-cycle at $v \not\in \Delta$, we may take the value $(-,\CC)$ in $\mathbb R$. Our second main theorem in this paper is the modularity of arithmetic theta series when intersecting with $1$-cycles.
	
	\begin{theorem}[Global modularity, also see \protect{Theorem \ref{thm: global modularity}}] \label{Intro thm: mod}
		Assume that $F_0 \not =\BQ$ and $E=F$. 
		\begin{enumerate}
			\item  Assume that $\Delta$ is large enough (see Theorem \ref{thm: global modularity} for precise conditions). For any proper $1$-cycle $\mathcal{C} \to \CM$ and $\phi=\phi_L$, the generating series $(\wh{\CZ}(\phi), \mathcal{C})$ is modular of weight $n$, up to adding a constant in $\BR$.
			\item For any linear functional $\ell: \wh{\Ch}^{1}(\CM) \to \BR_{\Delta}$ that has degree $0$ over $\mathbb C$ and trivial on irreducible components of special fibers of $\CM$ over $v \in \Delta$, $\ell(\wh{\CZ}(\phi))$ is modular of weight $n$.
		\end{enumerate}	
	\end{theorem}
	
 The modularity in $(1)$ shows that there exists a Hilbert modular form on $\SL_2(F_0)$ whose Fourier expansion agrees with $(\wh{\CZ}(\phi), \mathcal{C})$ under the projection $\mathbb R \to \BR_\Delta$ (up to the constant terms).  We avoid the use of elementary CM cycles in Mihatsch--Zhang \cite{AFL2021} hence give a different proof of the almost modularity result (in which case $\CM$ has good reduction away from $\Delta$) in \emph{loc. cit.}.  The related results on mod $p$ geometry of Shimura varieties may be of independent interest.  We also note the recent result of Qiu \cite{Qiu-mod} prove some general arithmetic modularity results on admissible extensions (rather than moduli extensions here) of arithmetic special divisors on unitary Shimura varieties with no levels at inert places (rather than maximal parahoric levels here) and Drinfeld levels at split paces.
	
	Now we briefly describe the proofs of our main theorems.
	
	\subsection{Modularity by modification and geometry of Shimura varieties}
	
	We introduce a modification method to reduce the conjectured modularity of arithmetic theta series on integral models to the modularity over the generic fiber  (known by the work of Liu \cite{Liu-thesis}) and over the basic locus (proved in \S \ref{section:Lmod}). The method relies on the complex and mod $p$ geometry of Shimura varieties, and may apply to more general levels. 
	
     Our arithmetic theta series $\wh{\CZ}(\phi)$ lies in the admissible arithmetic Chow group $\wh{\Ch}^{1,\adm}(\CM)$ (\ref{section: adm arithmetic int}).  We have a short exact sequence (\ref{factorization of Chow})
	\[
	0 \to \wh{\Ch}^{1}_{\mathrm{Vert}}(\CM) \to \wh{\Ch}^{1,\adm}(\CM) \to \Ch^1(\CM_E) \to 0.
	\]
Here the subgroup $\wh{\Ch}^{1}_{\mathrm{Vert}}(\CM)$ is generated by
	\begin{itemize}
		\item $(0, c_\nu)$, where $c_\nu$ is a locally constant function on $\CM_{E, \nu}(\BC)$ for an embedding $\nu: E \to \BC$;
		\item $(X_s, 0)$, where $X_s$ is an irreducible component of the special fiber $\CM_{k_\nu}$ at a finite place $\nu$ of $E$ (with residue field $k_\nu$) such that $\CM_{O_{E_\nu}}$ is non-smooth over $O_{E_\nu}$. In our cases, when $v \not \in \Delta$ the non--smoothness means that $v$ is inert and $L_v$ is a vertex lattice of type $0<t_v<n$.  
	\end{itemize}  
	
Consider any linear functional $\ell: \wh{\Ch}^{1}(\CM) \to \BC$, e.g. $\ell=(-, \mathcal{C})$ for a proper $1$-cycle $\CC$ on $\CM$. We make three observations of the arithmetic modularity for $\ell$:
\begin{enumerate}
	\item  If $\ell$ is trivial on $\wh{\Ch}^{1}_{\mathrm{Vert}}(\CM)$, then Theorem \ref{Intro thm: mod} holds for $\ell$, i.e. $\ell(\wh{\CZ}(\phi))$ is modular by the known modularity on the generic fiber $\CM_E$. If $\ell=(-, \CC)$, this means $\CC$ has degree $0$ over $\BC$ and the intersection $(X_s, \CC)=0$ for any irreducible component $X_s$ of a non-smooth special fiber of $\CM$.
	\item (Comparison to automorphic generating functions in specific cases) For specific $\ell': \wh{\Ch}^{1}(\CM) \to \BC$, we may prove Theorem \ref{Intro thm: mod} for $\ell'$ directly. 
	
	Over $\BC$, we consider $\ell'=(-, \CC')$ for specific Hecke CM cycle $\CC'=\mathcal{CM}(\alpha_0, \mu_\Delta)$ (\S \ref{section: global KR}) where $\alpha_0$ has maximal order. Using simple cases of AFLs and ATs (proved in \S \ref{section: ATC unram max order}), we show that $\ell'(\wh{\CZ}(\phi))$ corresponds to $\alpha_0$-part of the analytic generating function $\partial \BJ(h, \Phi')$  in \S \ref{section: global analytic and geometric side}, which is known to be modular by Poisson summation formula.  
	
	Over a non-smooth special fiber $\CM_{k_{\nu}}$ (over a place $v \not \in \Delta$ with residue field $\BF_q$), we consider $\ell'=(-, \CC')$ for any $1$-cycle $\CC'_\nu$ in the basic locus of $\CM_{k_{\nu}}$. Assuming the local modularity Conjecture \ref{local mod conj} on relevant Rapoport--Zink spaces, we show that under the basic uniformization (\S \ref{section: basic uni local-global}), $\ell'(\wh{\CZ}(\phi))$ corresponds to a classical theta series on the nearby totally definite hermitian space $V^{(v)}$ of $V$ at $v$, which is known to be modular by Poisson summation formula. We prove the local modularity conjecture for \emph{very special} (\ref{very special}) $1$-cycles (Theorem \ref{thm: partial local modularity}) and use them as specific functionals $\ell'$.
	
	The local modularity Conjecture \ref{local mod conj} is quite interesting as these Deligne--Lusztig varieties appearing in the reduced locus are not of Coxeter type in general \cite[Section 6]{ADLV-Coexter} and mysterious.

	\item  (Modification via complex and mod $p$ geometry) To prove Theorem \ref{Intro thm: mod} in general, under our assumptions we show that for any given $\ell$, we may find specific $\ell'$ such that $\ell-\ell'$ is trivial on $\wh{\Ch}^{1}_{\mathrm{Vert}}(\CM)$. Hence the modularity for $\ell$ is proved by the known modularity of $\ell-\ell'$ and $\ell'$.  
	
	To do the modification i.e. show the existence of such specific $\ell'$ for given $\ell$, we use \emph{global geometry} of Shimura varieties. Over $\BC$, we show the unitary Shimura variety $M_{G,K}$ is connected over $E$ (assuming $E=F$) by studying the Galois and Hecke action on connected components of complex Shimura varieties.
	
	 Over a non-smooth special fiber $\CM_{k_{\nu}}$, we introduce the \emph{Balloon--Ground stratification} (\S \ref{section: global KR strata})
	$$
	\CM_{k_{\nu}}= \CM_{k_{\nu}}^\bullet \cup \CM_{k_{\nu}}^\circ.
	$$
	
		Our construction may be thought as a moduli definition of top dimensional Kottwitz--Rapoport strata, which is not considered before in the unitary case.  Irreducible components of  $\CM_{k_{\nu}}$ are given by irreducible components of $\CM_{k_{\nu}}^\bullet$ and $\CM_{k_{\nu}}^\circ$.  If $t_v=1$, then irreducible components of the Balloon stratum $\CM_{k_{\nu}}^\circ$ are isomorphic to projective spaces $\mathbb P^{n-1}_{k_\nu}$ and contained in the basic locus of $\CM_{k_\nu}$ (\cite[Section 5]{LTXZZ}). And the Ground stratum $\CM_{k_{\nu}}^\bullet$ is irreducible in any given connected component of $\CM_{k_\nu}$, see the Figure below. Therefore, we can choose $\CC'$ to be a finite linear combination of $\BP^1$ in different irreducible components $\BP^{n-1}$ of the Balloon stratum. 

\begin{figure}[h]\label{Figure 1}
	\centering
	
	\tikzset{every picture/.style={line width=0.75pt}}
	
	\begin{tikzpicture}[x=0.75pt,y=0.75pt,yscale=-0.65,xscale=0.65]
		
		\draw    (200,4750) -- (650,4750) ;

		\draw [fill={rgb, 255:red, 244; green, 244; blue, 244 }  ,fill opacity=1 ]   (283,4750) .. controls (277,4700) and (248, 4640) .. (284,4640) ;
		\draw [fill={rgb, 255:red, 244; green, 244; blue, 244 }  ,fill opacity=1 ]   (283,4750) .. controls (293,4700) and (320,4640) .. (284,4640) ;
		
		\draw [fill={rgb, 255:red, 244; green, 244; blue, 244 }  ,fill opacity=1 ]   (383,4750) .. controls (377,4700) and (348, 4640) .. (384,4640) ;
		\draw [fill={rgb, 255:red, 244; green, 244; blue, 244 }  ,fill opacity=1 ]   (383,4750) .. controls (393,4700) and (420,4640) .. (384,4640) ;

		\draw [fill={rgb, 255:red, 244; green, 244; blue, 244 }  ,fill opacity=1 ]   (483,4750) .. controls (477,4700) and (448, 4640) .. (484,4640) ;
		\draw [fill={rgb, 255:red, 244; green, 244; blue, 244 }  ,fill opacity=1 ]   (483,4750) .. controls (493,4700) and (520,4640) .. (484,4640) ;

		\draw [fill={rgb, 255:red, 244; green, 244; blue, 244 }  ,fill opacity=1 ]   (583,4750) .. controls (577,4700) and (548, 4640) .. (584,4640) ;
		\draw [fill={rgb, 255:red, 244; green, 244; blue, 244 }  ,fill opacity=1 ]   (583,4750) .. controls (593,4700) and (620,4640) .. (584,4640) ;
		
	\end{tikzpicture}
	
	\caption{``Balloons'' (with grey colors) on the ``ground'' (the bottom line) for almost self dual $L_v$}
	\label{fig:Balloon}
\end{figure}
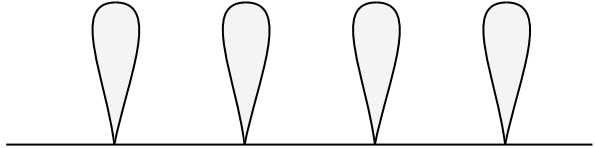

The case $t_v=n-1$ is similar to the case $t_v=1$ by duality. If $1<t_v<n-1$, the level $K_v$ is a non-special parahoric. For dimensional reasons, the Balloon and Ground strata are not completely in the basic locus. Nevertheless, we show $\CM_{k_{\nu}}^\circ$ and $\CM_{k_{\nu}}^\bullet$ are both irreducible in any given connected component of $\CM_{k_\nu}$, which is related to the irreducibility of non-basic Kottwitz--Rapoport strata at parahoric levels (Conjecture \ref{Important conj: irred} (2)) studied in the work of van Hoften \cite{IrredKR}. Then we choose a very special $1$-cycle $\CC'$ lying in the basic locus but not in $\CM_{k_{\nu}}^\bullet \cap \CM_{k_{\nu}}^\circ$ to do the job.

\end{enumerate}

	In fact we obtain a double modularity result (i.e., for both special divisors $\CZ(\phi_L)$ and $\CZ(\phi_L^\vee)$) as ``vector-valued'' modular forms. Their generating series may be related via the action of local Weil representation on hermitian spaces $\{ V_v \}_v$, see $\S$ \ref{section:Gmod}. As in the work on AFL \cite{AFL}, the modularity of arithmetic theta series plays a similar role in the proof of arithmetic transfer identities as the ``perverse continuation principle'' in the geometric approach (see \cite{Ngo-FL}, \cite{Yun-Duke} ) to fundamental lemmas over function fields.
	
	\subsection{Arithmetic transfer identities via modularity and double induction}
	
	Our approach to arithmetic transfers is local-global.
	
	\begin{itemize}
		\item (Local aspects)  
		We introduce the \emph{relative Cayley map} as a more natural reduction tool (\ref{relative Cayley map}) for a decomposition of $F$-vector spaces $V=V^\flat  \oplus Fe$. The unitary (\ref{UnitaryCayley}) and symmetric (\ref{SymCayley}) variants behave well for all primes $p \geq 2$. They give reductions of orbits and (derived) orbital integrals \S \ref{section: red via relative Cayley}. 
		
		We establish the reduction of intersection numbers in \S \ref{section: reduction Int}. A key point is the smallness of the resolution $\wt{\CN \times \CN} \to \CN \times \CN$ similar to the smallness of the Atiyah flop. Given an orthogonal decomposition of vertex lattices $L=L^\flat \oplus O_Fe$, consider \emph{the unitary relative Cayley map} for the decomposition $\BV=\BV^\flat \oplus Fe^{[i]}$
		\[
		\begin{gathered}
			\xymatrix@R=0ex{
				\mathfrak{c}_\U:  \U(\BV) \ar[r]  &  \U(\BV^{\flat}) \times \BV^{\flat}  \\
				g'={\begin{pmatrix}
						t & u \\ w  & d 
				\end{pmatrix}}  \ar@{|->}[r]  &  (g=t +\frac{uw}{1-d}, \, u_1=\frac{u}{1-d}). 
			}
		\end{gathered}
		\] 
		For example when $i=0$, for $g' \in \U(\BV)(F_0)_\rs$ with $1-d \in O_F^\times$ and $u_1 \not =0$ we have \emph{the reduction equality}:
		$$
		\wt {\Int}(g') = \wt {\Int}^{\CZ}(g, u_1).
		$$
		In conclusion,	we establish a reduction from group version AT conjectures (\ref{conj: gp version ATC}) of rank $n$ to semi-Lie version AT conjectures (\ref{conj: semi-Lie version ATC}) of rank $n-1$.  Moreover, we show semi-Lie AT conjectures for $\CZ$-cycles on $\CN_n^{[t]}$ and $\CY$-cycles on $\CN_n^{[n-t]}$ are equivalent using the dual isomorphism $\lambda: \CN_n^{[t]} \to \CN_n^{[n-t]}$ (\ref{dual lambda}).  Hence we may focus on semi-Lie version AT conjectures for $\CZ$-cycles.
		
		\item (Global aspects) We globalize the local data to cycles on suitable global RSZ unitary Shimura varieties as above. We follow the strategy of \cite{AFL}, but the original induction in \cite{AFL} for the proof of the AFL doesn't apply directly at parahoric levels. We introduce a method of double induction at parahoric levels in \S \ref{section: double induction} to prove one AT conjecture of rank $n$ using two AT conjectures of rank $n-1$ (hence the name of ``double induction''). Using the new (double) modularity result above as an input (we may enlarge  $\Delta$ to be sufficiently large), and local reductions and equivalences above as base cases, we finish the proof of AT conjectures. 
		
	\end{itemize}

	We also study the geometry of relevant Rapoport--Zink spaces and reconstruct the Bruhat--Tits stratification (\cite{Cho18, VW11}) on the reduced locus via special cycles. The phenomenon that there are two kinds of Bruhat--Tits strata at our parahoric levels is related to the formal Balloon--Ground stratification, see Remark $\ref{rek: Ballloon-Ground Bruat-Tits}$. We also use Bruhat--Tits strata to study fixed points of $g$ on $\CN$ in simple cases, see Example  \ref{Exa: Fix(g) on Pn}. A new pheonomenon in parahoric levels is that the behavior of $\Fix(g)$ in general depends on the embedding $O_F[g] \to \End(\BV)$ rather than just $O_F[g]$.  
	
	\subsection{Further directions}
	
	Firstly we comment on the assumptions. The assumption that $F_0$ is unramified over $\BQ_p$ is used to show the relevant Rapoport--Zink spaces appear in the basic uniformization of relevant unitary Shimura varieties. We hope to establish a comparison theorem and basic uniformization at general parahoric levels as in \cite{M-Thesis}. Applications include the proof of AT conjectures (including $p=2$) and understanding the cohomology of local Shimura varieties (e.g. the work of Hansen \cite{Hansen}) for any $p$-adic field $F_0$ via globalization.  Moreover, the modification method for arithmetic modularity could be applied to general levels.
	
	Our method may be further developed to formulate and prove arithmetic transfer identities for more general levels (e.g. the Iwahori level) and other set ups (e.g. \cite{HSY20, AFL-linear, RZ-Drinfeld space, AFL-More}). In general, the singularity of suitable integral model is controlled by certain sections of universal vector bundles (or ``Shtukas'' as in \cite{Pappas-Rapoport}), and we may blow up along the local model diagram (using Kottwitz--Rapoport strata) to resolve the singularity. They will be used in the proof of AGGP conjectures in higher dimensions over number fields with ramifications.  
	
	Finally, we give an overview of the paper, which is divided into $4$ parts. We formulate the set up of Jacquet--Rallis transfers in \S \ref{section: local set up transfer}. In \S \ref{section: red via relative Cayley}, we introduce the relative Cayley map and do reductions for orbits and (derived) orbital integrals. In \S \ref{section: TC proof}, we establish the Jacuqet--Rallis transfer for vertex lattices using a double induction and local uncertainty principles. 
	
	In Part $2$, we go into the local geometric story. In \S \ref{section:local RZ}, we introduce relevant moduli spaces and cycles, and use the formal Balloon--Ground stratification to resolve the singularity hence modify derived fixed points . In \S \ref{section: conj ATC}, we formulate semi-Lie and group version arithmetic transfer conjectures, and use relative Cayley map and derived intersection theory to show the reduction of intersection numbers. In \S \ref{section: local BT strata}, we introduce the Bruhat--Tits stratification. In \S \ref{section: local modularity}, we formulate the local modularity conjecture and prove the local modularity for very special $1$-cycles. In \S \ref{section: ATC unram max order}, using moduli interpretations and Bruhat--Tits strata we prove our ATCs in the unramified maximal order case.
	
	In Part $3$ and $4$, we go into the global story. In \S \ref{section: global Shimura var}, we introduce RSZ Shimura varieties for parahoric hermitian lattices and study the singularity using Balloon--Ground stratification and local models. In \S \ref{section: global KR}, we introduce relevant special cycles on Shimura varieties. In \S \ref{section: mod over C and F} and \ref{section:Gmod}, we formulate prove global modularity by modification over $\BC$ and $\BF_q$. In \S \ref{section: global analytic and geometric side}, we introduce the global analytic side and geometric side. In \S \ref{section: the end}, we do a double induction and prove our arithmetic transfer conjectures by globalization.
	
	\subsection*{Acknowledgments} 
	I heartily thank my advisor Wei Zhang for introducing me into this beautiful area and his constant encouragement. I enjoy discussions with him during different stages of this project. I thank P. van Hoften, Y. Liu, A. Mihatsch, L. Xiao and Z. Yun for interesting and helpful discussions. I also thank C. Li and M. Rapoport for helpful comments on a draft.
	\subsection*{Notations and Conventions}

	\begin{itemize}
		\item 
		Unless otherwise stated, any hermitian space $V$ over a quadratic extension $F/F_0$ in this article is assumed to be non-degenerate. We always denote by $(\cdot,\cdot)_V$ the hermitian form on $V$, which is by definition $F$-linear on the first factor and conjugate symmetric. The $F_0$-valued bilinear form $(x, y) \to \tr_{F/F_0} (x, y)_V$ makes $V$ a quadratic space over $F_0$. 	
		\item Let $X$ be an affine variety with an action of a reductive group $G$ over a field $F_0$. Then an element $x \in X(F_0)$ is called regular semi-simple if and only if the stabilizer of $x$ inside $G$ is trivial, and the orbit of $x$ is closed in $X$ under the Zariski topology. If $F_0$ is a $p$-adic field, this is equivalent to that $G(F_0)x$ is closed in $X(F_0)$ under the analytic topology.
	\end{itemize}

	\subsubsection*{Local notations}
	
	\begin{itemize}
		\item
		In local set up, we denote by $F/F_0$ an unramified quadratic extension of $p$-adic fields with residue fields $\BF_{q^2}/\BF_q$, where $p \geq 2$. Denote by $\varpi=\varpi_{F_0}$ a uniformizer of $F_0$, and write the non-trivial Galois involution on $F$ by $x \mapsto \overline{x}$. The standard valuation map is denoted by $v=v_F: F^{\times} \surj \mathbb Z$, where $v_F(\varpi)=1$. Choose a unit $\delta \in O_{F_0}$ such that $O_F=O_{F_0}[\sqrt{\delta}]$ and $\bar {\delta}=-\delta$.
		\item
		We denote by $\eta: F_0^{\times} \surj F_0^{\times}/ NF^{\times} \cong \{\pm 1\} \subseteq \mathbb C^{\times}$  the quadratic character associated to $F/F_0$ by local class field theory. We extend it to $\eta: F^{\times} \rightarrow \mathbb C^{\times}$ by $\eta(x)=(-1)^{v_F(x)}$.
		\item
		For an $O_F$-lattice $\Lambda \subseteq V$ (of full rank), denote by $\Lambda^{\vee}= \{ x \in V | (x,\Lambda)_V  \subseteq  O_F\}$ its dual lattice. A lattice $\Lambda$ is called a \emph{vertex lattice} if $\Lambda \subset \Lambda^{\vee} \subset \varpi^{-1} \Lambda$. The type of a vertex lattice $\Lambda$ is $t(\Lambda) := \dim_{k_F}\Lambda^{\vee} /\Lambda \in [0, \dim_F V]$. 
		A \emph{self-dual} lattice is a vertex lattice of type $0$.  An \emph{almost self-dual} lattice is a vertex lattice of type $1$.  
		\item For an $O_{F_0}$-lattice $\Lambda \subseteq V_0$ (of full rank) in a $F_0$-vector space $V_0$, denote by $\Lambda^*=\{ y \in (V_0)^* | y(\Lambda) \subseteq O_{F_0} \}$ its linear dual lattice in $(V_0)^*$.
		Let $\breve{F}$ be the $p$-adic completion of the maximal unramified extension of $F_0$,  $W=O_{\breve{F}}$ be its ring of integers, and $\sigma \in \Gal(\breve{F}/F_0)$ be the Frobenius element $x \rightarrow x^q$.
		\item We fix a non-trivial unramified additive character $\psi: F_0 \rightarrow \mathbb C$, which induces a character $\psi_F: F \rightarrow \mathbb C$ given by $\psi_F:=\psi \circ \tr_{F/F_0}$.
		\item For a smooth affine algebraic variety $X$ over $F_0$, let $\CS(X(F_0))$ be the space of Schwartz functions on $X(F_0)$. Here all Schwartz functions on totally disconnected topological spaces are $\BQ$-valued locally constant functions. 	
		\item  For a $F/F_0$-hermitian space $V$, denote by $\omega$ the Weil representation of $\SL_2(F_0)$ on $\CS(V(F_0))$, which commutes with the natural right translation action of $\U(V)(F_0)$. 
		\item We use covariant version of the relative Dieudonne theory.
		\item For any $\xi \in \BR$ and $n \in \BZ$, let $W_{\xi}^{(n)} (h)$  be the standard weight $n$ Whittaker function on $h \in \SL_2(\BR)$. More concretely, for $h \in \SL_2(\BR)$ under the Iwasawa decomposition 
		$$h=\begin{pmatrix}
			1 & b \\ 0 & 1
		\end{pmatrix}\begin{pmatrix}
			a^{1/2} & 0 \\ 0 & a^{-1/2}
		\end{pmatrix} \kappa_\theta, \quad a \in \BR_{+}, \quad b \in \BR, $$ 
		and 
		$$
		\kappa_\theta=\begin{pmatrix}
			\cos \theta & \sin \theta \\ -\sin \theta & \cos \theta
		\end{pmatrix} \in \mathrm{SO}_2(\BR),
		$$
		we have 
		\begin{equation}\label{defn: Whittaker function on SL2}
			W_{\xi}^{(n)} (h):=|a|^{n/2} e^{2\pi \xi i (b+a i)} e^{in\theta}.
		\end{equation}
	\end{itemize}

	\subsubsection*{Global notations}

	\begin{itemize}
		\item In global set up, we always denote by $F/F_0$ a CM quadratic extension over a totally real number field. Let $F_{0,+}$ be the subset of totally positive elements in $F_0$.
		\item We denote by $\BA$, $\BA_0$, and $\BA_F$ for the adele rings of $\BQ$, $F_0$, and $F$, respectively. We use the subscript $(-)_f$ to denote the terms of finite adeles. We use the subscript $(-)_p$, $(-)_v$, and $(-)_w$ to denote the local term at a place $p$ of $\BQ$, a place $v$ of $F_0$, and a place $w$ of $F$, respectively.
		\item Let $\Delta$ be a finite collection of places of a number field $F_0$. We use the lower index $(-)_{\Delta}$ to denote the product of $(-)_v$ over all places $v \in \Delta$.
		\item We denote by $\Phi$ for a chosen CM type of $F$. A CM type $\Phi$ is called unramified at $p$, if $\Phi \otimes \BQ_p: F \otimes \BQ_p \to \ov \BQ_p$ is induced from a CM type of the maximal subalgebra $(F \otimes \BQ_p)^{u}$ of $F \otimes \BQ_p$ that is unramified over $\BQ_p$.
		\item For a smooth affine algebraic variety $X$ over $F_0$, let $\CS(X(\BA_0))$ be the space of Schwartz functions on $X(\BA_0)$.
		\item We use $\Hom^{\circ}(-, -)$ to mean $\Hom(-,-) \otimes \BQ$. All $K$-groups and Chow groups have $\BQ$-coefficients. 
		\item We use the standard additive character $\psi_{\BQ}: \BQ \backslash \BA  \rightarrow \mathbb C$, which induces an additive character $\psi_{F_0}: F_0 \backslash \BA_0 \rightarrow \mathbb C$ given by $\psi_{F_0}:=\psi_\BQ \circ \tr_{F_0/\BQ}$.
		\item 	
		Denote by $N^+ \leq \SL_2$ the subgroup of upper triangular unipotents. For any continuous function $f: \SL_2(\BA_0) \to \mathbb C$ that is left $N^+(F_0)$-invariant and $\xi \in F_0$, the $\xi$-th Fourier coefficient of $f$ is the following function on $h \in \SL_2(\BA_0)$:
		\begin{equation}\label{defn: Fourier coeff}
			W_{\xi, f}(h)=\int_{F_0 \backslash \BA_0}  f\left[ \begin{pmatrix} 1&b\\
				&1
			\end{pmatrix} h\right]\psi_{F_0}(-\xi b) db.
		\end{equation}		
		\item For a quadratic space $V$ over $F_0$, and $\xi \in F_0$, we denote by $V_{\xi}$ the $F_0$-subscheme of $V$ defined by $(x,x)_V=\xi$.
		\item For a $F/F_0$-hermitian space $V$, denote by $\omega$ the Weil representation of $\SL_2(\mathbb A_{0,f})$ on $\CS(V(\BA_{0,f}))$, which commutes with the natural right translation action of $\U(V)(\BA_{0,f})$. 
		\item We denote by $\CA_{\rm hol}(\SL_2(\BA_0), K_0, n)$ the space of automorphic forms on $\SL_2(\BA_0)$ (left invariant under $\SL_2(F_0)$) that is of parallel weight $n$, holomorphic and right invariant under a chosen compact open subgroup $K_0 \subseteq \SL_2(\BA_{0, f})$. 
	\end{itemize}

	\part{Jacquet--Rallis transfers}\label{Part 1}

	\section{Explicit transfer conjectures}\label{section: local set up transfer}
	
	Let $p$ be a prime. Let $F/F_0$ be an unramified quadratic extension of $p$-adic local fields, and $\eta$ be the associated quadratic character $\eta(x):=(-1)^{v_F(x)}: F^{\times} \to \{\pm 1\}$. In this section, we recall the set up of Jacquet--Rallis transfers and formulate explicit transfer conjectures for suitable hermitian lattices.
	
	\subsection{Semi-Lie version transfers} 
	Let $L$ be a hermitian lattice in a $n$-dimensional $F/F_0$ hermitian space $V$. We consider the natural right action of the unitary group $\U(V)$ on $\U(V) \times V$
	\[ 
	h.(g,u)=(h^{-1}gh, \, h^{-1}u).
	\]
	For $f \in \mathcal{S}((\U(V) \times V)(F_0))$, define its orbital integral at $(g,u) \in (\U(V) \times V) (F_0)_\rs$ : 
	\[ 
	\Orb((g, u), f) := \int_{h \in \U(V)} f(h.(g,u)) d h.
	\]
	Choose an orthogonal basis $\{e_i\}_{i=1}^n$ of $L$, which exists for $p \geq 2$ by \cite[Section 7]{classicalherm}. Let $\ov{(-)}$ be the induced Galois involution on $V$ with fixed subspace $V_0:=\oplus_{i=1}^n F_0 e_i$. Let $L_0=\oplus_{i=1}^n O_{F_0} e_i$. Consider the symmetric space over $F_0$:
	\begin{equation}
		S(V_0)=\{ \gamma \in \GL(V) | \gamma \ov \gamma =\id \}.
	\end{equation}
	The general linear group $\GL(V_0)$ acts on the space $S(V_0) \times V_0 \times V_0^*$ by
	$$
	h.(\gamma, \, u_1, \, u_2)=(h^{-1}\gamma h, \, h^{-1}u_1, \, u_2h).
	$$
	Define the \emph{transfer factor} on $(S(V_0) \times V_0 \times V_0^*)(F_0)_\rs$ with respect to $L$ by
	\begin{equation}\label{transfer factor: semi-Lie}
		\omega(\gamma, u_1, u_2) :=\eta( \det (\gamma^{i} u_1 )_{i=0}^{n-1}) \in \{\pm 1\}.
	\end{equation} 
	For $f' \in \mathcal{S}((S(V_0) \times V_0 \times V_0^{*})(F_0))$ and $(\gamma,u_1,u_2) \in  (S(V_0) \times V_0 \times V_0^*)(F_0)_\rs$, define twisted orbital integrals:
	\begin{equation}
		\Orb( (\gamma, u_1, u_2), f', s)= \omega(\gamma, u_1, u_2)  \int_{h \in \GL(V_0)} f'(h.(\gamma,u_1,u_2)) \eta(h) |h|^{s} d h, \quad s \in \BC.
	\end{equation}
	\begin{equation}
		\Orb((\gamma, u_1, u_2), f'):= \Orb((\gamma, u_1, u_2), f', 0).
	\end{equation}
We introduce the derived orbital integral:
	\begin{equation}\label{derived orbital int}
		\del\bigl((\gamma, u_1, u_2), f' \bigr):=\frac{d}{ds}\Big|_{s=0}  \Orb((\gamma, u_1, u_2), f', s).
	\end{equation} 
	Here we write $ \eta(h):=\eta(\det h)$ and $|h|^s:=|\det h|^s$ for short.  These integrals converge absolutely for regular semi-simple elements. By \cite{JR-GGP}, an orbit $(g, u) \in (\U(V) \times V )(F_0)$ is regular semi-simple if and only if $\{ g^iu \}_{i=0}^{n-1}$ spans the space $V$. 
	\begin{definition}\label{defn: semi-Lie match elements}
		We say $(g,u) \in (\U(V) \times V )(F_0)_\rs$ and $(\gamma,u_1,u_2) \in  (S(V_0) \times V_0 \times V_0^*)(F_0)_\rs$ match if they are conjugated by $\GL(V)$ in $\End(V) \times V \times V^*$, where we use the embedding $	\U(V) \times V   \to \End(V) \times V \times V^{*}$ given by $(g,u) \to (g, \, u, \, u^*:=(x \mapsto (x,u)_V)).$ 
	\end{definition} 
	The matching relation is equivalent to matching of the following invariants \cite{JR-GGP}:
	\[
	\det(T \id_{V}+g)=\det(T \id_{V}+\gamma) \in F[T], \quad (g^iu,u)=u_2 (\gamma^iu_1), \quad 0 \leq i \leq n-1.
	\]	
	The matching relation gives a natural bijection of orbit spaces of regular semisimple elements:
	\begin{align}
		\xymatrix{
			\bigl[ (\U(V) \times V)(F_0)\bigr]_\rs \coprod \bigl[(\U(\BV) \times \BV)(F_0) \bigr]_\rs  \ar[r]^-\sim& \bigr[(S(V_0) \times V_0 \times V_0^*)(F_0) \bigr]_\rs}
	\end{align}
	where $\BV$ is the nearby hermitian space of $V$ and the action of $\U(\BV)$ on $\U(\BV) \times \BV$ is defined similarly.
	
	Define $L_0^\vee:=L^\vee \cap V_0$. From now on, we normalize the Haar measure on $\U(V)(F_0)$ (resp. $\GL(V_0)(F_0)$) such that the stabilizer $\U(L)$ (resp. the stabilizer $\GL(L_0, L_0^{\vee})$ of $L_0 \subseteq L_0^\vee$) is of volume $1$. 
	
	\begin{definition}[Semi-Lie Transfers]\label{Semi-Lie Tranfers}
We say $f' \in \CS((S(V_0) \times V_0 \times V_0^{*})(F_0))$ and $f \in \CS((\U(V) \times V)(F_0))$ are \emph{transfers} to each other, if for any regular semisimple $(\gamma, u_1, u_2) \in (S(V_0) \times V_0 \times V_0^*)(F_0)_\rs$, 
	$$
	\Orb((\gamma,u_1,u_2),f')= \begin{cases}
		\Orb((g,u), \, f) & \text{if $(\gamma, u_1, u_2)$ matches some $(g, u) \in (\U(V) \times V)(F_0)_\rs$,} \\
		0 & \text{else.}
	\end{cases}
	$$ 
	\end{definition}

	\subsection{Group version transfers}  Consider an orthogonal decomposition
	$L=L^{\flat} \oplus O_Fe$ of hermitian lattices. Choose an orthogonal basis of $L^\flat$ which gives a basis of $L$ by adding $e$. Similarly, we consider the conjugacy action of $\U(V^\flat)$ (resp. $\GL(V_0^{\flat})$) on $\U(V)$ (resp. $S(V_0)$). 
	\begin{definition}\label{defn: gp match elements}
		We say $g \in \U(V)(F_0)_\rs$ and $\gamma \in  S(V_0)(F_0)_\rs$ match if they are conjugated by $\GL(V^\flat)$ in $\End(V)$.
	\end{definition} 
The matching relation is equivalent to matching of following invariants \cite{JR-GGP}:
\[
\det(T \id_{V}+g)=\det(T \id_{V}+\gamma) \in F[T], \quad (g^ie, e)=e^* (\gamma^ie), \quad 0 \leq i \leq n-1.
\]	
	
	Similarly, we formulate the matching relation and orbital integrals as in \cite{JR-GGP}. In particular, for $f' \in \mathcal{S}(S(V_0)(F_0))$ and $\gamma \in S(V_0)(F_0)_\rs$, we define ($s \in \mathbb C$)
	\[ 
	\Orb(\gamma, f', s):= \omega(\gamma)  \int_{h \in \GL(V_0^{\flat})} f'(h^{-1}\gamma h) \eta(h) |h|^{s} d h,
	\]
	where the \emph{transfer factor} on $S(V_0)(F_0)_\rs$ with respect to $L$ is defined by
	\begin{equation}\label{transfer factor: gp}
		\omega(\gamma) :=\eta( \det (\gamma^{i} e )_{i=0}^{n-1}) \in \{\pm 1\}.
	\end{equation}

	We normalize the Haar measure on $\U(V^{\flat})(F_0)$ (resp. $\GL(V^{\flat}_0)(F_0)$) such that the stabilizer $\U(L^{\flat})$ (resp. $\GL(L_0^{\flat},L_0^{\flat, \vee})$) is of volume $1$. 
	
	\begin{definition}[Group Transfers]
	We say $f' \in \CS((S(V_0)(F_0))$ and $f \in \CS(\U(V)(F_0))$ are \emph{transfers} to each other, if for any regular semisimple $\gamma \in S(V_0)(F_0)_\rs$, 
	$$
	\Orb(\gamma, f')= \begin{cases}
		\Orb(g, \, f) & \text{if $\gamma$ matches some $g \in \U(V)(F_0)_\rs$,} \\
		0 & \text{else.}
	\end{cases}
	$$ 
\end{definition}	
	
	\subsection{Conjectures}
	If $L$ is a hermitian lattice in $V$, define the \emph{valuation} of $L$ as 
	\[
	v(L):=\min_{x \in L, y \in L} v((x, y)_V).
	\]
	We say $L$ is \emph{maximal parahoric} if $v(L) + v(L^\vee) \geq -1$.  Recall $L$ is a \emph{vertex lattice} if $L \subseteq L^\vee \subseteq \varpi^{-1} L$, and the type of $L$ is $t(L):=\dim_{\BF_{q^2}} L^\vee/L$.  Using an orthogonal basis for $L$, we see that $L$ is maximal parahoric if and only if some scalar of $L$ or $L^\vee$ is a vertex lattice. 
	
	Define $L_0^{\vee}:=L^\vee \cap V_0$, and the linear dual of $L_0^{\vee}$ as $L_0^{\vee*}:=\{ f \in V^*_0| f(L_0^{\vee}) \subseteq O_{F_0} \}$. Let $S(L_0, L_0^\vee)$ be the stabilizer of $L_0 \subseteq L_0^\vee$ inside $S(V_0)$.  We formulate the following \emph{explicit Jacquet--Rallis transfer conjectures} (TC). 
	
	\begin{conjecture}{\label{conj: TC}}
		
		\begin{enumerate}
			\item  (Semi-Lie transfer for $L$) Let $L$ be a maximal parahoric hermitian lattice, then
			$f_L:=1_{\U(L)} \times 1_{L}$ and $f_{L}':=1_{S(L_0, L^{\vee}_0)} \times 1_{L_0} \times 1_{L_0^{\vee*}} $ 
			are transfers.
			\item  (Group transfer for the pair $(L,e)$)  Let $L=L^{\flat} \oplus O_Fe$ be an orthogonal decomposition of maximal parahoric hermitian lattices. Then $f_{(L,e)}:=1_{\U(L)}$	and  $f_{(L,e)}':=1_{S(L_0,L^{\vee}_0)}$ are transfers.
		\end{enumerate}
	\end{conjecture}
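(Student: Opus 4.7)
The plan is to prove both parts of Conjecture \ref{conj: TC} by a purely local argument combining (i) the relative Cayley map to reduce the group version to the semi-Lie version, (ii) a double induction on $n$ using the equivariance of the transfer relation under the local Weil representation of $\SL_2(F_0)$, and (iii) a local uncertainty principle to pin down any residual ambiguity. This extends the strategy of Beuzart-Plessis' local proof of the Jacquet--Rallis fundamental lemma to parahoric levels.

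First I would establish that (2) at rank $n$ follows from (1) at rank $n-1$. Given $L=L^\flat \oplus O_F e$, the unitary relative Cayley map $\fkc_\U \colon \U(V) \to \U(V^\flat) \times V^\flat$ sending $\begin{pmatrix} t & u \\ w & d \end{pmatrix} \mapsto (t+uw/(1-d),\, u/(1-d))$ on the locus $\{1-d \in O_F^\times\}$, together with its symmetric analog on $S(V_0)$ (cf.\ \S\ref{section: red via relative Cayley}), intertwines matching of regular-semisimple orbits and orbital integrals on the two sides. A direct lattice computation shows that on this Cayley locus $1_{\U(L)}$ pushes forward to a constant multiple of $1_{\U(L^\flat)} \times 1_{L^\flat}$ and $1_{S(L_0,L_0^\vee)}$ pushes forward to a constant multiple of $1_{S(L_0^\flat,L_0^{\flat,\vee})} \times 1_{L_0^\flat} \times 1_{L_0^{\flat,\vee*}}$, with transfer factors transforming compatibly. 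The complementary Cayley locus $1-d \in \varpi O_F$ is handled symmetrically by centering the Cayley map at $-1$. So (2) reduces to (1) at rank $n-1$.

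For (1) I would run an induction on $n$, the base case $n=1$ being an explicit computation on a rank-one torus. For the inductive step, the key input is that the local Weil representation of $\SL_2(F_0)$ on $\CS(V(F_0))$ and on $\CS((V_0 \times V_0^*)(F_0))$ commutes with orbital integration, and that matching of orbital integrals is $\SL_2(F_0)$-equivariant after the transfer factor $\omega$ is absorbed. Both $f_L$ and $f_L'$ are joint eigenvectors for the maximal compact subgroup $K_0 \subset \SL_2(F_0)$ stabilizing the lattice data, and the orbital-integral identity need therefore only be checked on a Zariski-dense $\SL_2(F_0)$-orbit of representatives. Applying a partial Fourier transform in the $V$-direction and invoking the Cayley reduction above twice, once for each of two naturally occurring orthogonal splittings $L = L^\flat_i \oplus O_F e_i$ that together control the full orbital structure, expresses the rank-$n$ identity as a consequence of two instances of (2) at rank $n$, and hence by step one of two instances of (1) at rank $n-1$ --- this is the double induction. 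A local uncertainty principle then rules out residual discrepancies supported on the non-regular boundary.

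The main obstacle is the genuinely parahoric case $0 < t(L) < n$. Two technical points require care. First, the orthogonal splitting $L = L^\flat \oplus O_F e$ need not preserve the vertex type: depending on whether $e$ pairs with a generator of $L^\vee/L$, $L^\flat$ may be of type $t(L)$ or $t(L) \pm 1$, so the induction must stay within the maximal parahoric class (allowing scalars of vertex lattices), and at the boundary of types one must verify that the analytic test function $f_L'$ continues to transform correctly. Second, the transfer factor $\omega$ and the linear dual $L_0^{\vee*}$ transform subtly under the Cayley reductions, and matching them relies on an orthogonal-basis decomposition of $L$, which is guaranteed for $p > 2$ by \cite[Section 7]{classicalherm}. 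For $p = 2$, where such bases need not exist, one must run the entire reduction coordinate-freely; this is precisely what the coordinate-free formulation of Conjecture \ref{conj: TC} is designed to support, and verifying all reductions at that level of generality is the principal technical burden.
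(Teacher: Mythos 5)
Your plan is essentially the paper's plan (\S\ref{section: red via relative Cayley}--\S\ref{section: TC proof}): reduce the group transfer (2) at rank $n$ to the semi-Lie transfer (1) at rank $n-1$ via the relative Cayley map (Theorem~\ref{thm: reduction orb}), then prove (1) by induction on $n$ using the $\SL_2(F_0)$-action on the orbital integral space $\Orb_U + \Orb_S$ together with an uncertainty principle, with the ``double'' in the double induction coming from maximal-length orthogonal splittings of $L$ and of $L^\vee$. All the ingredients are correctly identified.

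However, you have the wiring between the uncertainty principle and the boundary cases backward, and as written the argument does not close. You say the uncertainty principle ``rules out residual discrepancies supported on the non-regular boundary.'' In the actual proof (Propositions~\ref{Uncertain: double supports}, \ref{prop: support}, Lemma~\ref{lem: ind on boundary cases}), the uncertainty principle requires as \emph{input} that $\Phi_L(x) = 0$ for $v(q(x)) \leq v(L)$ and that the Fourier-transformed $\Phi_{L^\vee}(x) = 0$ for $v(q(x)) \leq v(L^\vee)$; since $v(L) + v(L^\vee) \geq -1$ for maximal parahoric $L$, the uncertainty principle then forces $\Phi_L \equiv 0$. The support of the test functions alone only gives these vanishings up to $v(L) - 1$, resp.\ $v(L^\vee) - 1$, which sums to $\geq -3$ --- not enough for the uncertainty principle to fire. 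The boundary cases $v(q(x)) = v(L)$ and $v(q(x)) = v(L^\vee)$ are supplied by the inductive reduction to group transfer for $(L,e)$ with $e$ of maximal length, and only \emph{then} does the uncertainty principle handle the interior. Your description leaves these two boundary strata unproved, which is precisely where the induction is needed. Two smaller points: (i) $f_L$ and $f_L'$ are not eigenvectors for a maximal compact of $\SL_2(F_0)$ when $L$ is a vertex lattice of type $0 < t < n$; the relevant invariances are under a pair of unipotent congruence subgroups whose filtration indices sum to $\leq -1$, which together generate $\SL_2(F_0)$; (ii) your $p=2$ worry is moot --- over an unramified quadratic extension every hermitian $O_F$-lattice admits an orthogonal basis for every $p \geq 2$, including $p=2$ (this is the cited \cite[Section~7]{classicalherm}), so no coordinate-free rerun is required and the paper does prove Conjecture~\ref{conj: TC} for $p=2$.
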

	By definition, the group TC for the pair $(L, e)$ is equivalent to the group TC for the pair $(L^\vee, e/(e,e)_V)$. 		
	
	\begin{remark}\label{rek: transfer factor and basis}
		Any $F$-basis of $V$ gives a trivialization $\wedge^{n} V \cong F$ hence a transfer factor $\omega(\gamma, u_1, u_2) :=\eta( \det (\gamma^{i} u_1 )_{i=0}^{n-1}) \in \{\pm 1\}$. For different $F$-basis of $V$, the associated transfer factors may differ up to a constant sign. In Conjecture \ref{conj: TC}, the transfer factors are defined using a basis of $L$ and depends only on $L$. We have $$\omega_{L^{\vee}}=\gamma_V \omega_L$$ where $\gamma_V=\eta(\det V) \in \{\pm1\}$ is the Weil constant of $V$.
	\end{remark}	
	
	We prove the explicit Jacquet--Rallis transfer conjectures. 
	
 	\begin{theorem}\label{proof of TC}
 	Conjecture \ref{conj: TC} holds for any maximal parahoric lattice $L$ i.e., $v(L)+v(L^\vee) \geq -1$ e.g. any vertex lattice $L$. Hence by Theorem \ref{thm: reduction orb}, the group version conjectures \ref{conj: TC}  also hold for any pair $(L, e)$.
 \end{theorem}
 
 For self-dual $L$, the method is similar to \cite{RBP19} which gives a pure local proof for the Lie algebra version Jacquet--Rallis fundamental lemma (FL). We handle the group version FL for $p=2$ which is not covered by \cite{RBP19}. In general, our strategy is to do reductions and inductions on two boundary cases via double uncertainty principle.

	\section{Reductions via relative Cayley maps}\label{section: red via relative Cayley}
	
	In this section, we introduce the relative Cayley map and do reductions of orbits and orbital integrals from group version to semi-Lie version. We have the reductions for $s$-variable orbital integrals, which is used for the reduction of arithmetic transfer conjectures in \S \ref{section: reduction Int}. 
	
	\subsection{The relative Cayley map}\label{relativeC}
	
	Let $V$ be a $F$-vector space with a decomposition $V=V^{\flat} \oplus Fe$. For any endomorphism $A' \in \End(V)$, we write 
	$$A'=
	\begin{pmatrix}
		a & b \\ c & d 
	\end{pmatrix} \in 	\begin{pmatrix}
		\End(V^\flat) & V^\flat \\ (V^{\flat})^* & F 
	\end{pmatrix} = \End(V).
	$$
	\begin{definition}
		The \emph{relative Cayley map} is the $\GL(V^{\flat})$-equivariant rational map
		\begin{equation}\label{relative Cayley map}
			\begin{gathered}
				\xymatrix@R=0ex{
					\mathfrak{c}: \End(V)  \ar[r]  &  \End(V^{\flat}) \times V^{\flat} \times (V^{\flat})^* \\
					A' \ar@{|->}[r]  &  (A=a+ \frac{bc}{1-d}, \, b_1= \frac{b}{1-d}, \, c_1=\frac{c}{1-d}). 
				}
			\end{gathered}
		\end{equation}
	\end{definition}

	\subsection{Reduction for unitary groups}
	
	Let $V$ be a $F/F_0$-hermitian space with an orthogonal decomposition $V=V^{\flat} \oplus Fe$. For any $g' =
	\begin{pmatrix}
		t & u \\ w & d 
	\end{pmatrix} \in \U(V), $ we have $d=\frac{(g'e,e)}{(e,e)}$ and $
	w(x)=\frac{(g'x,e)}{(e,e)}$ for $x \in V^{\flat}$.
	\begin{proposition}\label{Unitaryproj}
		Assume that $1-d \not =0$, consider the well-defined relative projection 
		$$ g = t +\frac{uw}{1-d}: V^{\flat} \to V^{\flat}.$$
		\begin{enumerate}
			\item We have $g \in \U(V^{\flat})$. If $x \in V^{\flat}$ such that $w(x)=0$, then $g(x)=g'(x)$.
			\item We have $\frac{w}{1-d}(x)=\frac{(gx,\frac{u}{1-d})}{(e,e)}$ for $x \in V^{\flat}$.
		\end{enumerate}
		
		Moreover, these properties uniquely characterize $g$ in terms of $g'$.  Given $g \in \U(V^\flat)$, $u \in V^\flat$ and $d \not =1$, there exists a unique $g' \in \U(V)$ with these invariants.
		
	\end{proposition}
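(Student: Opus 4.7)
The plan is to translate the unitarity of $g'$ into explicit algebraic identities among the blocks $(t,u,w,d)$, and then to perform direct computations using those identities to verify $(1)$, $(2)$, and the inverse construction. The formula $g = t + uw/(1-d)$ is essentially a Schur-complement correction, and once the correct set of identities is in hand, everything reduces to tidy bookkeeping.

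First I would unpack $g'(g')^{*}=\id_V$ by expanding $(g'x, g'y)_V$ for $x,y \in V^\flat \cup \{e\}$, using $g'x = tx + w(x)e$ and $g'e = u + de$. This will yield three fundamental relations: for all $x,y \in V^\flat$,
\begin{align*}
(tx, ty)_{V^\flat} + w(x)\overline{w(y)}(e,e)_V &= (x,y)_{V^\flat}, \\
(tx, u)_{V^\flat} &= -\bar d\, w(x)(e,e)_V, \\
(u,u)_{V^\flat} &= (1-|d|^2)(e,e)_V.
\end{align*}
These three identities are the workhorse of the whole argument.

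Next, to prove $g \in \U(V^\flat)$, I would substitute $g(x) = tx + \frac{w(x)}{1-d}u$ into $(gx, gy)_{V^\flat}$, expand the four cross-terms, and apply the three relations. Collecting the coefficient of $w(x)\overline{w(y)}(e,e)_V$ reduces the problem to the scalar identity
\[
\frac{1-|d|^2}{|1-d|^2} - \frac{d}{1-d} - \frac{\bar d}{1-\bar d} - 1 = 0,
\]
which follows by writing $1-|d|^2 = (1-d) + d(1-\bar d)$ and clearing denominators. I expect this scalar cancellation to be the main (but still mild) technical obstacle. The restriction statement in $(1)$ is then immediate, since $g'(x) = tx + w(x)e = tx = g(x)$ whenever $w(x)=0$. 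For $(2)$, I would compute
\[
(gx, u)_{V^\flat} = (tx,u)_{V^\flat} + \frac{w(x)}{1-d}(u,u)_{V^\flat},
\]
substitute the relations, and simplify to $w(x)(e,e)_V \cdot (1-\bar d)/(1-d)$; dividing by $\overline{1-d}\,(e,e)_V$ then yields identity $(2)$.

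Finally, for the inverse direction I would reverse the procedure: given $(g,u,d)$ with $g \in \U(V^\flat)$ and the necessary compatibility $(u,u)_{V^\flat} = (1-|d|^2)(e,e)_V$ (imposed for $g'$ to exist in $\U(V)$), identity $(2)$ uniquely determines $w$ via $w(x) = (1-d)\,(gx, u/(1-d))_{V^\flat}/(e,e)_V$, after which $t = g - uw/(1-d)$ is forced. Verifying that the resulting matrix lies in $\U(V)$ amounts to re-checking the three relations of the first step, which hold by the compatibility condition together with the unitarity of $g$. The uniqueness of $g$ given $g'$ is already manifest from the defining formula.
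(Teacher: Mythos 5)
Your proof is correct and takes the same basic route as the paper --- a direct verification of unitarity culminating in a scalar cancellation --- but parametrized differently. You first translate $g'\in\U(V)$ into the three block identities on $(t,u,w,d)$ and then expand $(gx,gy)$ term by term; the paper instead observes $gx-g'x=-\lambda(x)(g'e-e)$ with $\lambda(x)=(g'x,e)/(g'e-e,e)$, expands $(gx,gy)-(g'x,g'y)$ using unitarity of $g'$ wholesale, and reduces to $\overline{(g'e-e,e)}+(g'e-e,e)+(g'e-e,g'e-e)=0$. After substituting $(g'e-e,e)=(d-1)(e,e)$ and $(g'e-e,g'e-e)=[(1-|d|^2)+|1-d|^2](e,e)$ this is exactly your scalar identity
\[
\frac{1-|d|^2}{|1-d|^2}-\frac{d}{1-d}-\frac{\bar d}{1-\bar d}-1=0
\]
in disguise, so the two computations are equivalent. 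Your version avoids introducing the auxiliary scalar $\lambda$ and is arguably easier to check line by line; the paper's is more compact because it never writes out the block relations. One genuine catch on your part: the inverse construction requires the compatibility $(u,u)_{V^\flat}=(1-|d|^2)(e,e)_V$ (equivalently, that $g'e=u+de$ has the right norm), which the proposition's final sentence leaves implicit; making it explicit, as you do, is the correct precise statement of the bijection.
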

	
	\begin{proof} 
		We have $d=\frac{(g'e,e)}{(e,e)}$ and $w(x)=\frac{(g'x,e)}{(e,e)}$. So for $x \in V^\flat$ we have $gx-g'x= - \lambda(x)(g'e-e)$
		where $\lambda(x):= \frac{(g'x, e)}{(g'e-e, e)}$. For $x, y \in V^\flat$, using $(g'e,  g'y)=(e,y)=0=(g'x, g'e)=(x,e)$ we find
		\[
		(gx,gy)-(x,y)= (gx,gy) - (g'x, g'y) = \lambda(x)(e, g'y) + \overline{\lambda(y)}(g'x, e) + \lambda(x) \overline{\lambda(y)} (g'e-e,g'e-e)
		\]
		which is $\lambda(x) \overline{\lambda(y)} ( \overline{(g'e-e,e)} + (g'e-e,e) + (g'e-e,g'e-e))=0.$  Hence $g \in \U(V^{\flat})$.
		
		If $w(x)=0$, then by definition $gx=tx=g'x$. For $x  \in V^{\flat}$, we compute $(gx,u)=$
		\[ 
		(g'x - \frac{(g'x,e)}{(g'e-e,e)}(g'e-e), g'e - d e)=(g'x,e) (0 - \frac{1-\overline{d}}{d-1}  - \overline{d} + \overline{d}) = \frac{1-\overline{d}}{1-d}  (g'x, e).
		\]
		As $w(x)=\frac{(g'x,e)}{(e,e)}$, we get the equality $
		\frac{w}{1-d}(x)=\frac{(gx,\frac{u}{1-d})}{(e,e)}.$ Finally, these properties determine $g^{-1}u$ and $g|_{\ker w}$ from $g'$, hence $g$ itself as $w(g^{-1}u)=\frac{(u,e)}{(e,e)}=1-d $ is non-zero. Conversely, we can reverse above computations and reconstruct $g'$.
	\end{proof}

	\begin{definition}
		The \emph{unitary relative Cayley map} is the rational map 
		\begin{equation}\label{UnitaryCayley}
			\begin{gathered}
				\xymatrix@R=0ex{
					\mathfrak{c}_\U:  \U(V) \ar[r]  &  \U(V^{\flat}) \times V^{\flat}  \\
					g' \ar@{|->}[r]  &  (g=t +\frac{uw}{1-d}, \, u_1=\frac{u}{1-d}). 
				}
			\end{gathered}
		\end{equation}
	\end{definition}
	
	The map is equivariant with respect to the natural conjugacy action of $\U(V^{\flat})$ on both sides.  By Proposition \ref{Unitaryproj}, we have
	\begin{corollary}\label{prop: red UnitaryInvariants}
		We have	$\mathfrak{c}(g')=(g, u_1, w_1)$ where $w_1(x)=\frac{(gx,u_1)}{(e,e)}$. The map $\mathfrak{c}_\U$ induces an isomorphism over the locus where $d \not =1$ is fixed. We have equalities of invariants ($i \in \BZ$):
		\[
		(g^iu_1,u_1)=(e,e)w_1(g^{i-1}u_1).
		\]
	\end{corollary}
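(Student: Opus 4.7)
The plan is to derive this corollary essentially as a direct unpacking of Proposition \ref{Unitaryproj}, since the unitary Cayley map $\mathfrak{c}_\U$ is just the restriction to $\U(V)$ of the relative Cayley map $\mathfrak{c}$, shorn of its last coordinate $w_1$. First, I would apply $\mathfrak{c}$ to $g' = \begin{pmatrix} t & u \\ w & d \end{pmatrix}$ to obtain the triple $(g, u_1, w_1) = (t + uw/(1-d),\, u/(1-d),\, w/(1-d))$. The identity $g = t + uw/(1-d)$ matches the relative projection analyzed in Proposition \ref{Unitaryproj}, so part (2) of that proposition immediately reads as $w_1(x) = (gx, u_1)/(e,e)$, which is the first claim.

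Next, for the isomorphism statement, I would observe that $\mathfrak{c}_\U$ records only $(g, u_1)$, and so the natural target for the restriction to a fixed-$d$ locus is $\U(V^\flat) \times V^\flat$. Surjectivity and injectivity on points with $d \neq 1$ are exactly the content of the last sentence of Proposition \ref{Unitaryproj}: given any $(g, u) \in \U(V^\flat) \times V^\flat$ and $d \neq 1$, there is a unique $g' \in \U(V)$ realizing these as $\mathfrak{c}_\U$-invariants. Since both $\mathfrak{c}_\U$ and the inverse assignment are given by rational expressions regular on the open locus $\{d \neq 1\}$, the bijection on points upgrades to an isomorphism of varieties over each fixed-$d$ stratum.

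Finally, the invariant equalities are obtained by substituting $x = g^{i-1}u_1$ into the formula $w_1(x) = (gx, u_1)/(e,e)$ produced above, yielding
\[
w_1(g^{i-1}u_1) \;=\; \frac{(g^{i}u_1, u_1)}{(e,e)} \qquad (i \in \BZ),
\]
which rearranges to the asserted identity. The extension to negative $i$ is legitimate because $g \in \U(V^\flat)$ is invertible, so $g^{i-1}u_1$ makes sense for all integers $i$.

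There is really no hard step here: all nontrivial content (that the projection $g$ is unitary and that $w_1$ is expressible via the hermitian form) is already packaged into Proposition \ref{Unitaryproj} and proved there by a direct computation with the hermitian form. The only minor point worth flagging is the verification that the inverse to $\mathfrak{c}_\U|_{d = d_0}$ is genuinely algebraic, which is transparent from the explicit reconstruction of $g'$ from $(g, u_1, d_0)$ in Proposition \ref{Unitaryproj}.
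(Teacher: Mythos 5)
Your proposal is correct and matches the paper's approach: the paper presents this corollary with the bare phrase ``From Proposition \ref{Unitaryproj}, we have,'' which is exactly the unpacking you carry out. The only step requiring any verification beyond restating Proposition \ref{Unitaryproj} is the invariant identity, and your substitution $x = g^{i-1}u_1$ into $w_1(x) = (gx,u_1)/(e,e)$ is the intended (and correct) derivation.
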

	
	\begin{proposition}\label{regsemisimple:unitary}
		The element $g'$ is regular semisimple if and only if $\mathfrak{c}_\U(g')$ is regular semisimple.
	\end{proposition}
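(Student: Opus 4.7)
My plan is to deduce the equivalence directly from the fact, already recorded in Proposition \ref{Unitaryproj} and Corollary \ref{prop: red UnitaryInvariants}, that $\mathfrak{c}_\U$ is $\U(V^{\flat})$-equivariant and induces an isomorphism once the scalar $d=(g'e,e)/(e,e)$ is fixed. The key observation is that the action of $\U(V^{\flat})$ on $\U(V)$ (by conjugation, with $V^\flat \subseteq V$ and $e$ fixed) preserves $d$, since for $h \in \U(V^\flat)$ extended by the identity on $Fe$ one has $((hg'h^{-1})e,e)=(g'e,e)$. Hence the locus $U_{d_0}:=\{g' \in \U(V) : d(g')=d_0\}$ is a $\U(V^{\flat})$-stable closed subvariety of $\U(V)$ for any $d_0 \in F$, and $\mathfrak{c}_\U$ restricts to a $\U(V^{\flat})$-equivariant isomorphism
\[
U_{d_0} \isoarrow \U(V^{\flat}) \times V^{\flat}
\]
for each fixed $d_0 \neq 1$.

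Given $g' \in \U(V)(F_0)$ with $d(g') \neq 1$, set $d_0=d(g')$ and $(g,u_1)=\mathfrak{c}_\U(g')$. First I would check the stabilizers correspond: an element $h \in \U(V^{\flat})(F_0)$ satisfies $hg'h^{-1}=g'$ iff $hg'h^{-1} \in U_{d_0}$ maps to $(g,u_1)$ under $\mathfrak{c}_\U$, iff $h.(g,u_1)=(hgh^{-1},hu_1)=(g,u_1)$, by the equivariant bijection on $U_{d_0}$. So the $\U(V^\flat)$-stabilizer of $g'$ equals that of $(g,u_1)$; in particular, one is trivial iff the other is.

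Next I would check closedness of orbits. Because $d$ is a $\U(V^\flat)$-invariant regular function on $\U(V)$, the entire $\U(V^{\flat})$-orbit $\U(V^\flat)\cdot g'$ is contained in $U_{d_0}$, which is closed in $\U(V)$. Hence $\U(V^\flat)\cdot g'$ is closed in $\U(V)$ iff it is closed in $U_{d_0}$, and the equivariant isomorphism $U_{d_0} \cong \U(V^{\flat}) \times V^{\flat}$ carries this orbit onto $\U(V^\flat)\cdot (g,u_1)$. Therefore $\U(V^\flat)\cdot g'$ is closed iff $\U(V^\flat)\cdot (g,u_1)$ is closed. Combining with the stabilizer comparison, $g'$ is regular semisimple iff $\mathfrak{c}_\U(g')$ is, which is the claim. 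The only step that requires any care is verifying that conjugation by $h \in \U(V^\flat)$ (viewed in $\U(V)$ via the orthogonal decomposition $V=V^\flat \oplus Fe$) does fix the coordinate $d$, and this follows immediately from $(e,e)_V \neq 0$ and $he=e$.
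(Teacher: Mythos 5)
Your proof is correct, but it takes a genuinely different route from the paper's. The paper argues by reducing regular semisimplicity on both sides to an explicit cyclic-vector criterion — that $\{g'^{i}e\}_{i\ge 0}$ spans $V$ and that $\{g^{i}u_1\}_{i\ge 0}$ spans $V^\flat$ — and then verifies by an elementary triangularity computation (using $g'e=u+de$ and $g'x=tx+w(x)e$, plus induction on $i$) that the two spanning conditions coincide. This relies implicitly on the known Jacquet--Rallis characterization of regular semisimple elements via cyclic vectors. Your argument instead works directly with the definition in the paper's Notations (trivial stabilizer and Zariski-closed orbit) and leverages the $\U(V^\flat)$-equivariant isomorphism $U_{d_0}\isoarrow \U(V^\flat)\times V^\flat$ already established in Proposition~\ref{Unitaryproj} and Corollary~\ref{prop: red UnitaryInvariants}, together with the elementary observation that $d$ is a $\U(V^\flat)$-invariant regular function so the orbit of $g'$ sits inside the closed slice $U_{d_0}$. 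Each approach has its merits: the paper's argument is short and immediately yields the explicit matching of cyclic-vector conditions that is reused later, whereas yours is more conceptual, avoids invoking the cyclic-vector criterion as an external fact, and makes transparent why the equivalence is automatic once the slice isomorphism is in hand. Both are valid; just be aware that when the paper later speaks of regular semisimplicity in terms of cyclic vectors (e.g.\ in the proof of Proposition~\ref{prop: twisting}'s regular-semisimple counterpart for symmetric spaces), the cyclic-vector formulation is the one being used operationally.
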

	\begin{proof}
		This follows from the $\U(V^\flat)$-equivariance of the relative Cayley map $\mathfrak{c}_\U$.

	\end{proof}
	
	Therefore, we have a natural reduction of $\U(V^{\flat})$-orbits. We now do reduction for $\U(V^{\flat})$-orbital integrals. Consider a hermitian lattice $L \subseteq V$ with an orthogonal decomposition $
	L=L^{\flat} \oplus O_Fe_L \subseteq V=V^\flat \oplus Fe$, where $e_L=\lambda_L e$ for some $\lambda_L \in F_0^{\times}$. 
	
	\begin{proposition}\label{Unitaryred}
		Let $g' \in \U(V)(F_0)$ with  $\mathfrak{c}_\U(g')=(g,u_1)$. Assume that  $1-d \in O_F^{\times}$, then for any $h \in \U(V^{\flat})$, $h^{-1}g'h \in \U(L)$ holds if and only if 
		$$
		h^{-1}gh \in \U(L^{\flat}), \quad h^{-1}u_1 \in \lambda_L^{-1} L^{\flat}, \quad h^{-1}u_1 \in (e,e)(\lambda_L^{-1} L^{\flat})^{\vee}.
		$$ 
	\end{proposition}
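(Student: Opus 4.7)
The strategy is to translate the membership $h^{-1}g'h \in \U(L)$ into four block-level integrality conditions on the entries of the matrix $h^{-1}g'h = \begin{pmatrix} h^{-1}th & h^{-1}u \\ wh & d\end{pmatrix}$, and then use the unitary relative Cayley map together with the identity $w_1(x) = (gx,u_1)/(e,e)$ from Corollary \ref{prop: red UnitaryInvariants} to rewrite these in terms of $(g, u_1)$.

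First I would check directly from the decomposition $L = L^\flat \oplus O_F e_L$ with $e_L = \lambda_L e$ that $h^{-1}g'h$ stabilizes $L$ if and only if the following four block conditions hold: (i) $(h^{-1}th)(L^\flat)\subseteq L^\flat$; (ii) $wh(L^\flat)\subseteq \lambda_L O_F$; (iii) $h^{-1}u \in \lambda_L^{-1}L^\flat$; (iv) $d \in O_F$. Since $1-d \in O_F^\times$ by assumption, (iv) is automatic and, via $u_1 = u/(1-d)$, (iii) is equivalent to the second asserted condition $h^{-1}u_1 \in \lambda_L^{-1}L^\flat$. It then remains to equate (i) and (ii) with the conditions $h^{-1}gh \in \U(L^\flat)$ and $h^{-1}u_1 \in (e,e)(\lambda_L^{-1}L^\flat)^\vee$, given the asserted $u_1$-condition.

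The key identities are $g = t + u_1 \otimes w$ as an endomorphism of $V^\flat$, and $(gx, u_1) = (e,e) w_1(x) = \tfrac{(e,e)}{1-d} w(x)$. The first yields
\[
h^{-1}th = h^{-1}gh - h^{-1}u_1 \otimes (wh),
\]
so (i) is equivalent to $h^{-1}gh \in \U(L^\flat)$ once (ii) and the $u_1$-condition are in hand. To translate (ii), one moves $g$ and $h$ across the hermitian pairing using that both lie in $\U(V^\flat)$:
\[
w(hx) = \tfrac{1-d}{(e,e)}(ghx, u_1) = \tfrac{1-d}{(e,e)}\bigl(x, (h^{-1}gh)^{-1}(h^{-1}u_1)\bigr).
\]
Assuming $h^{-1}gh \in \U(L^\flat)$, its inverse preserves $(L^\flat)^\vee$, so the right-hand side lies in $\lambda_L O_F$ for all $x \in L^\flat$ if and only if $h^{-1}u_1 \in (e,e)\lambda_L(L^\flat)^\vee = (e,e)(\lambda_L^{-1}L^\flat)^\vee$, which is the third asserted condition.

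The main obstacle will be organizing the two implications in the correct order so that the argument is not circular. For the $\Rightarrow$ direction I would first extract $h^{-1}gh \in \U(L^\flat)$ from (i) and (ii) via the block identity for $t$, and only then invoke it to convert (ii) into the dual-lattice condition on $h^{-1}u_1$. For the $\Leftarrow$ direction the logic runs in reverse: the two $u_1$-conditions together with $h^{-1}gh \in \U(L^\flat)$ give (ii) via the pairing identity, after which the block identity for $t$ yields (i). Once this dependency is sorted out, each step is a direct manipulation of hermitian pairings under unitary transformations.
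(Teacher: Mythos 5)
Your proof is correct and follows essentially the same argument as the paper: reduce $h^{-1}g'h \in \U(L)$ to block conditions on $h^{-1}g'h L \subseteq L$, swap the $t$-condition for the $g$-condition using $g = t + u_1 \otimes w$ and the other two block conditions, and convert the $w$-condition into the dual-lattice membership of $u_1$ via $(gx,u_1) = (e,e)w_1(x)$. The only organizational difference is that the paper first reduces to $h=\id$ by $\U(V^\flat)$-equivariance of $\fkc_\U$ (and $\U(V^\flat)$-invariance of $d$), whereas you carry $h$ through the pairing manipulations explicitly; also note that in extracting $h^{-1}gh \in \U(L^\flat)$ you need (iii) alongside (i) and (ii), as the $u_1$-condition is used to see that the rank-one term $(h^{-1}u_1)\otimes(wh)$ stabilizes $L^\flat$.
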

	\begin{proof}
		The map $\mathfrak{c}_\U$ is $\U(V^{\flat})$-equivariant and $d$ is $\U(V^{\flat})$-invariant, so we may assume $h=\id$. As $\det(g') \in F^{\Nm_{F/F_0}=1}$ is a unit, the condition $g' \in \U(L)$ is the same as $g' L \subseteq L$.  Under the decomposition, using $1-d \in O_F^{\times}$ we see that $g' L \subseteq L$ holds if and only if
		\[
		t(L^{\flat}) \subseteq L^{\flat}, \quad u_1 \in  \lambda_L^{-1} L^{\flat}, \quad w_1(L^{\flat}) \subseteq \lambda_L O_F. 
		\]
		The last two conditions imply $\frac{uw}{1-d} (L^{\flat}) \subseteq L^{\flat}$. Using this and $g=t+\frac{uw}{1-d} \in \U(V^\flat)$, we may replace the first condition $t(L^{\flat}) \subseteq L^{\flat}$ by $ g(L^{\flat}) = L^{\flat}$.
		By Proposition \ref{Unitaryproj} we have $w_1(x)=\frac{(gx, u_1)}{(e,e)}$, hence $w_1(L^{\flat}) \subseteq \lambda_L O_F$ is equivalent to $(L^{\flat}, u_1) \subseteq \lambda_L (e,e) O_F$ i.e., $u_1 \in (e,e) (\lambda_L^{-1} L^{\flat})^{\vee}$. 
	\end{proof}
	
	\begin{remark}
		We do not assume that $g'$ is regular semi-simple for this proposition. We have similar reduction result for intersection numbers on unitary Rapoport--Zink spaces in \S \ref{section: reduction Int}.
	\end{remark}

	\begin{corollary}\label{RedOrbUUU}
		For $g' \in \U(V)(F_0)_\rs$ with $1-d \in O_F^{\times}$ and any Haar measure on $\U(V^{\flat})(F_0)$, we have
		\[
		\Orb(g', 1_{\U(L)}) = \Orb(\mathfrak{c}_\U(g'), 1_{\U(L^{\flat})} \times 1_{(\lambda_L^{-1}L^{\flat} ) \cap (e,e)(\lambda_L^{-1}L^{\flat} )^{\vee}} ).
		\]
	\end{corollary}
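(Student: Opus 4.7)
The plan is to expand the orbital integral on the left as an integral against the characteristic function $1_{\U(L)}$ and then apply Proposition \ref{Unitaryred} pointwise to the integrand, turning the single condition $h\i g'h \in \U(L)$ into three conditions that split as a product across the factors of $\U(V^\flat) \times V^\flat$.

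First I would observe that the hypothesis $1-d \in O_F^\times$ is preserved throughout the integral: since $d = (g'e,e)_V/(e,e)_V$ depends only on $g'$ and not on a $\U(V^\flat)$-conjugate, the same $d$ appears for every $h\i g'h$ with $h \in \U(V^\flat)(F_0)$. Hence Proposition \ref{Unitaryred} applies uniformly to every $h$ in the domain of integration. By the $\U(V^\flat)$-equivariance of $\mathfrak{c}_\U$ noted in the definition, we have $\mathfrak{c}_\U(h\i g'h) = (h\i g h, h\i u_1)$ where $(g,u_1) = \mathfrak{c}_\U(g')$. Proposition \ref{Unitaryred} then gives the pointwise identity
\[
1_{\U(L)}(h\i g'h) \;=\; 1_{\U(L^\flat)}(h\i g h)\cdot 1_{\lambda_L^{-1}L^\flat}(h\i u_1)\cdot 1_{(e,e)(\lambda_L^{-1}L^\flat)^\vee}(h\i u_1).
\]
The last two factors combine to $1_{(\lambda_L^{-1}L^\flat)\,\cap\,(e,e)(\lambda_L^{-1}L^\flat)^\vee}(h\i u_1)$.

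Second, I would integrate this identity against the chosen Haar measure $dh$ on $\U(V^\flat)(F_0)$. The left side is $\Orb(g',1_{\U(L)})$ by definition, while the right side is precisely the defining integral of the semi-Lie orbital integral of the tensor product test function $1_{\U(L^\flat)} \times 1_{(\lambda_L^{-1}L^\flat)\,\cap\,(e,e)(\lambda_L^{-1}L^\flat)^\vee}$ at the point $(g,u_1) = \mathfrak{c}_\U(g')$, under the diagonal action $h.(g,u_1) = (h\i g h,\, h\i u_1)$. Regular semisimplicity on the right is automatic from Proposition \ref{regsemisimple:unitary}, so the integral converges.

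There is no substantive obstacle here; the work is already contained in Proposition \ref{Unitaryred}, and the corollary amounts to recognizing that a product of indicators under one integral equals the orbital integral of the product test function. The only minor point to flag is that the same Haar measure is used on both sides, which is exactly the hypothesis ``any Haar measure'' in the statement.
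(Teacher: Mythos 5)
Your proof is correct and is exactly the argument the paper has in mind: the corollary is stated without proof precisely because it is the immediate consequence of integrating the pointwise equivalence in Proposition \ref{Unitaryred} over $\U(V^\flat)(F_0)$, using the $\U(V^\flat)$-equivariance of $\mathfrak{c}_\U$ and the invariance of $d$ under conjugation.
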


	\begin{definition}\label{defn: red lat}
		We say the lattice $L=L^{\flat} \oplus O_F e_L$ is suitable for reduction if 
		$$ v_F(e_L,e_L)=\min_{x,y \in L} v_F((x,y)).$$  
		By definition, $L$ (resp. $L^\vee$) is suitable for reduction if and only if  $\lambda_L^{-1} L^{\flat} \subseteq (e,e)(\lambda_L^{-1}L^{\flat} )^{\vee}$ (resp. $(e,e)(\lambda_L^{-1}L^{\flat} )^{\vee} \subseteq \lambda_L^{-1} L^{\flat} $). 
	\end{definition}
	
	\begin{lemma}
		If $L$ is maximal parahoric, then either $L$ or $L^\vee$ is suitable for reduction.
	\end{lemma}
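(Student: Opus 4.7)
The plan is a brief scaling reduction to the vertex lattice case, followed by a short case analysis on the value of $v_F(e_L,e_L)$.

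First, given any orthogonal decomposition $L=L^\flat\oplus O_F e_L$, the induced decomposition of the dual is $L^\vee=(L^\flat)^\vee\oplus O_F e_L^\vee$ with $e_L^\vee=(e_L,e_L)_V^{-1}e_L$, so $v_F(e_L^\vee,e_L^\vee)=-v_F(e_L,e_L)$. Replacing $L$ by $\varpi^k L$ sends the quadruple $(v(L),\,v(L^\vee),\,v_F(e_L,e_L),\,v_F(e_L^\vee,e_L^\vee))$ to $(v(L)+2k,\,v(L^\vee)-2k,\,v_F(e_L,e_L)+2k,\,v_F(e_L^\vee,e_L^\vee)-2k)$, so both the maximal parahoric hypothesis and the reduction conclusion are invariant under such scaling. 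Since $L$ is maximal parahoric, I may therefore assume that $L$ is a vertex lattice, and set $t=t(L)\in[0,n]$.

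Next, from the orthogonality of the decomposition and $L\subseteq L^\vee\subseteq\varpi^{-1}L$, I would verify that $(L^\flat)^\vee=L^\vee\cap V^\flat$, so $L^\flat$ is itself a vertex lattice in $V^\flat$, and that the orthogonal summand $O_F e_L$ satisfies $O_F e_L\subseteq O_F e_L^\vee\subseteq\varpi^{-1}O_F e_L$, forcing $v_F(e_L,e_L)\in\{0,1\}$. A length count in $L^\vee/L$ then gives $t(L^\flat)+v_F(e_L,e_L)=t$. Diagonalizing the hermitian form on an orthogonal basis of $L$ adapted to the decomposition, one reads off $v(L)=1$ exactly when $t=n$ (otherwise $v(L)=0$), and $v(L^\vee)=-1$ exactly when $t>0$ (otherwise $v(L^\vee)=0$).

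Finally, I split into three cases. If $v_F(e_L,e_L)=0$, then $t=t(L^\flat)\leq n-1$, whence $v(L)=0=v_F(e_L,e_L)$ and $L$ reduces. If $v_F(e_L,e_L)=1$ and $t=n$, then $t(L^\flat)=n-1$ forces $v(L^\flat)=1$, hence $v(L)=1=v_F(e_L,e_L)$ and $L$ still reduces. If $v_F(e_L,e_L)=1$ and $t<n$, then $t=t(L^\flat)+1\geq 1$, so $v(L^\vee)=-1=v_F(e_L^\vee,e_L^\vee)$ and $L^\vee$ reduces. These three subcases exhaust all possibilities. I do not expect any real obstacle; the only mildly delicate inputs are the identification $(L^\flat)^\vee=L^\vee\cap V^\flat$ and the constraint $v_F(e_L,e_L)\in\{0,1\}$ in the vertex lattice case, both of which are short direct checks once the orthogonal summand structure of $L^\vee$ is written out.
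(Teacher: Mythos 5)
Your reduction to the vertex lattice case has a gap: integer scaling $L\mapsto \varpi^k L$ is not always enough. For instance, if $L$ has an orthogonal basis $e_1,e_2$ with $v_F(e_1,e_1)=1$ and $v_F(e_2,e_2)=2$, then $v(L)+v(L^\vee)=-1$ so $L$ is maximal parahoric, but the diagonal valuations of $\varpi^k L$ are $\{1+2k,\,2+2k\}$, which is never contained in $\{0,1\}$ for any integer $k$; no scalar of $L$ is a vertex lattice (only a scalar of $L^\vee$ is). To repair this you must also allow the swap $L\leftrightarrow L^\vee$, which is legitimate because the hypothesis $v(L)+v(L^\vee)\geq -1$ is symmetric and the conclusion is a disjunction invariant under the swap, with $L^{\vee\vee}=L$ and $e_{L^{\vee\vee}}=e_L$ under the induced decomposition. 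With that extra operation the reduction goes through and your three-case analysis is then correct.

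Once that is fixed the argument works, but it is considerably longer than the paper's, which does no normalization at all. The paper argues by contradiction in one line: since $e_L\in L$ we always have $v_F(e_L,e_L)\geq v(L)$, and similarly $v_F(e_{L^\vee},e_{L^\vee})\geq v(L^\vee)$; if neither $L$ nor $L^\vee$ satisfies the reduction then both inequalities are strict, so $v_F(e_L,e_L)\geq v(L)+1$ and $-v_F(e_L,e_L)=v_F(e_{L^\vee},e_{L^\vee})\geq v(L^\vee)+1$, and adding gives $0\geq v(L)+v(L^\vee)+2\geq 1$, a contradiction. This isolates the two facts that actually matter — the trivial bound $v_F(e_L,e_L)\geq v(L)$ and the identity $v_F(e_{L^\vee},e_{L^\vee})=-v_F(e_L,e_L)$ — and invokes the maximal parahoric hypothesis exactly once, at the end. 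Conceptually, $v_F(e_L,e_L)$ is an integer forced into the interval $[v(L),\,-v(L^\vee)]$, which by hypothesis has length at most $1$, so it must hit one of the two endpoints; your scaling and case split rediscover this at the cost of a detour through explicit diagonal forms.
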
	
	\begin{proof}
		Otherwise, we have $v_F(e_L, e_L) \geq v(L) + 1$ and $v_F(e_{L^\vee}, e_{L^\vee})=-v_F(e_L, e_L) \geq v(L^\vee)+1$, which sum together to give $0 \geq v(L)+v(L^\vee)  +2 \geq 1$, a contradiction.
	\end{proof}
	
	\begin{corollary}\label{SimpleU}
		If $L$ is suitable for reduction, then for any $g' \in \U(V)(F_0)_\rs$ with $1-d \in O_F^{\times}$ we have 
		\[
		\Orb(g', 1_{\U(L)}) = \Orb(\mathfrak{c}_\U(g'), 1_{\U(\lambda_L^{-1}L^{\flat})} \times 1_{(\lambda_L^{-1}L^{\flat})} ).
		\]
	\end{corollary}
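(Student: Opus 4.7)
The plan is to derive this corollary as a direct specialization of Corollary \ref{RedOrbUUU} using the reduction hypothesis on $L$ to simplify the characteristic function appearing in the $V^\flat$-variable.

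First, I would invoke Corollary \ref{RedOrbUUU}, which already provides the identity
\[
\Orb(g', 1_{\U(L)}) = \Orb\bigl(\mathfrak{c}_U(g'), \, 1_{\U(L^{\flat})} \times 1_{(\lambda_L^{-1}L^{\flat}) \cap (e,e)(\lambda_L^{-1}L^{\flat})^{\vee}}\bigr).
\]
Next I would unpack Definition \ref{defn: red lat}: the assumption that $L$ satisfies the reduction translates, by the very definition, into the lattice inclusion $\lambda_L^{-1} L^{\flat} \subseteq (e,e)\,(\lambda_L^{-1}L^{\flat})^{\vee}$ inside $V^\flat$. Intersecting both sides with $\lambda_L^{-1}L^\flat$ gives the equality
\[
(\lambda_L^{-1}L^{\flat}) \cap (e,e)\,(\lambda_L^{-1}L^{\flat})^{\vee} = \lambda_L^{-1} L^{\flat},
\]
so the second factor of the test function in Corollary \ref{RedOrbUUU} simplifies to $1_{\lambda_L^{-1}L^\flat}$.

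Finally, since $\lambda_L \in F_0^\times$ is a scalar, scaling by $\lambda_L^{-1}$ does not change the stabilizer inside $\U(V^\flat)$, i.e.\ $\U(L^\flat) = \U(\lambda_L^{-1} L^\flat)$ as subgroups. Substituting both simplifications yields exactly the stated formula. There is no real obstacle here; the content of the corollary is the bookkeeping of the reduction hypothesis, and the only point worth double-checking is that the chosen normalization of Haar measure on $\U(V^\flat)(F_0)$ (for which $\U(L^\flat)$ has volume $1$) is consistent with rewriting the stabilizer as $\U(\lambda_L^{-1}L^\flat)$, which is automatic since they coincide as subgroups.
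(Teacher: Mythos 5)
Your proposal is correct and is exactly the intended derivation: specialize Corollary \ref{RedOrbUUU}, use Definition \ref{defn: red lat} to collapse the intersection $(\lambda_L^{-1}L^{\flat}) \cap (e,e)(\lambda_L^{-1}L^{\flat})^{\vee}$ to $\lambda_L^{-1}L^{\flat}$, and observe $\U(L^\flat)=\U(\lambda_L^{-1}L^\flat)$ since $\lambda_L$ is a scalar. The paper states the corollary without proof, evidently treating this bookkeeping as immediate, and your write-up fills it in faithfully.
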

	
	For $\xi \in F^{\Nm_{F/F_0}=1}$, we define the \emph{$\xi$-twisted unitary Cayley map} by 
	\begin{equation}
		\mathfrak{c}_{U, \xi}(g'):=\mathfrak{c}_\U (\xi g').
	\end{equation}
	
	For any element $g' \in \U(V)(F_0)_\rs$, we have $1-d\xi \in O_F^{\times}$ for infinitely many $\xi \in F^{N_{F/F_0}=1}$ because $\# \{ x \in \BF_{q^2}| \Nm_{\BF_{q^2}/\BF_q}(x)=1\}=q+1 > 1$. As $\xi$ is a unit, the elements $g'$ and $\xi g'$ have same orbital integrals for any $f \in \CS(\U(V)(F_0))$. Therefore, the reduction of orbital integrals holds for any regular semi-simple orbit in $\U(V)(F_0)$ under suitable $\xi$-twisted unitary Cayley maps.

	\subsection{Reduction for symmetric spaces}
	
	Consider a decomposition $V=V^{\flat} \oplus Fe$ of $F$-vector spaces. Choose a basis of $V^\flat$ to endow $V^\flat$ (resp. $V$) a $F_0$-rational structure $V_0^\flat$ (resp. $V_0$  by adding $e$), which induces a $F/F_0$ semi-linear involution $\overline{(-)}$ on $V$ with fixed subspace $V_0$. Denote also by $\overline{(-)}$ the induced involutions on $V^*$ and $\End(V)$.  We have the embedding of symmetric spaces:
	$$
	S(V_0^{\flat})= \{ \gamma \in \GL(V^{\flat}) | \gamma \overline{\gamma}=\id \} \hookrightarrow S(V_0)=\{ \gamma' \in \GL(V) | \gamma' \overline{\gamma'}=\id \}.
	$$
	Under the decomposition $V=V^{\flat} \oplus Fe$, we write any $\gamma' \in S(V_0)$ as $\gamma'=
	\begin{pmatrix}
		a & b \\ c & d 
	\end{pmatrix}.$
	\begin{proposition}\label{Symproj}
		Assume that $1-d \not =0$, consider the well-defined relative projection 
		$$\gamma= a +\frac{bc}{1-d} : V^{\flat} \to V^{\flat}. $$
		\begin{enumerate}
			\item We have $\gamma \in S(V^{\flat}_0)$. If $x \in V^{\flat}$ such that $c(x)=0$, then $\gamma(x)=\gamma'(x)$.
			\item We have 		$
			\gamma \overline{\frac{b}{1-d}}=\frac{b}{1-d}, \quad \overline{\frac{c}{1-d}} \gamma =\frac{c}{1-d}.$
		\end{enumerate}
		
		Moreover, these properties uniquely characterize $\gamma$ in terms of $\gamma'$.  Given $\gamma \in S(V_0^\flat), \, b \in V, \, c \in V^*$ and $d \not =1$ satisfying these properties, there exists a unique $\gamma' \in S(V_0)$ with these invariants.	
	\end{proposition}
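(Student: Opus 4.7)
The plan is to mirror the proof of Proposition \ref{Unitaryproj} in the unitary case, replacing the hermitian form relations by the matrix identity $\gamma'\overline{\gamma'}=\id$. First I would block-decompose the identity $\gamma'\overline{\gamma'}=\id$ with respect to $V=V^\flat\oplus Fe$. Writing $\overline{\gamma'}=\begin{pmatrix}\overline{a}&\overline{b}\\ \overline{c}&\overline{d}\end{pmatrix}$ and carrying out the block multiplication, this yields the four equations
\begin{align*}
a\overline{a}+b\,\overline{c} &= \id_{V^\flat}, & a\overline{b}+\overline{d}\,b &= 0,\\
c\,\overline{a}+d\,\overline{c} &= 0, & c\,\overline{b}+d\overline{d} &= 1,
\end{align*}
where $b\overline{c}$ denotes the rank-one endomorphism $x\mapsto b\cdot\overline{c}(x)$ and $c\overline{b}\in F$ denotes $c(\overline{b})$.

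Next I would verify $\gamma\overline{\gamma}=\id$ by direct substitution. Expanding $\gamma\overline{\gamma}=\bigl(a+\frac{bc}{1-d}\bigr)\bigl(\overline{a}+\frac{\overline{b}\,\overline{c}}{1-\overline{d}}\bigr)$ and applying the four block equations to replace $a\overline{a}$, $a\overline{b}$, $c\overline{a}$, $c\overline{b}$, every term becomes a scalar multiple of $b\overline{c}$ plus the constant $\id$; the coefficient of $b\overline{c}$ simplifies to
\[
-1+\frac{-\overline{d}(1-d)-d(1-\overline{d})+(1-d\overline{d})}{(1-d)(1-\overline{d})}=-1+\frac{(1-d)(1-\overline{d})}{(1-d)(1-\overline{d})}=0,
\]
so $\gamma\overline{\gamma}=\id$. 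For the second assertion in (1), if $c(x)=0$ then $\gamma(x)=ax=\gamma'(x)$ since the component of $\gamma'(x)$ along $e$ is $c(x)e=0$.

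For (2), I would compute $\gamma(\overline{b})$ directly: using $a\overline{b}=-\overline{d}b$ and $c\overline{b}=1-d\overline{d}$,
\[
\gamma(\overline{b})=a\overline{b}+\frac{b\,c(\overline{b})}{1-d}=-\overline{d}b+\frac{b(1-d\overline{d})}{1-d}=\frac{b(1-\overline{d})}{1-d},
\]
so $\gamma\bigl(\tfrac{\overline{b}}{1-\overline{d}}\bigr)=\tfrac{b}{1-d}$; the second identity follows by the mirror computation with $c$.

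For the uniqueness and converse, I would argue as in Proposition \ref{Unitaryproj}: the element $c$ is surjective (assuming regular behaviour, otherwise $\gamma'$ degenerates) and, under $d\overline{d}\ne 1$ we have $c(\overline{b})=1-d\overline{d}\ne 0$, so $V^\flat=\ker c\oplus F\overline{b}$. The pair of conditions (i) $\gamma|_{\ker c}=\gamma'|_{\ker c}$ and (ii) $\gamma(\overline{b}/(1-\overline{d}))=b/(1-d)$ then pin down $\gamma$ uniquely. Conversely, given $(\gamma,b,c,d)$ with $\gamma\in S(V_0^\flat)$ and satisfying (2), I would define $a:=\gamma-\frac{bc}{1-d}$ and $\gamma':=\begin{pmatrix}a&b\\ c&d\end{pmatrix}$, and check the four block equations hold by reversing the computation above; the conditions in (2) are precisely what guarantees the two off-diagonal block equations. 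The degenerate case $d\overline{d}=1$ should also be addressed, but since $1-d$ is already assumed invertible, the same formulae go through essentially formally.

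The main technical point is not any single step but the bookkeeping in the verification $\gamma\overline{\gamma}=\id$ and the converse reconstruction: one must keep straight which products mean composition of endomorphisms, which mean pairing of $V^\flat$ with $(V^\flat)^*$, and which are scalars in $F$. Once the block equations are written cleanly, every subsequent identity is a short algebraic manipulation.
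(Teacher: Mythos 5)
Your proof is correct and follows essentially the same route as the paper's: block-decompose $\gamma'\overline{\gamma'}=\id$ into the four relations on $a,b,c,d$, verify $\gamma\overline{\gamma}=\id$ and the two twisted-eigenvector identities by direct substitution, and reconstruct $\gamma'$ from $(\gamma,b,c,d)$ via $a=\gamma-\tfrac{bc}{1-d}$. Your treatment of the characterization/uniqueness is in fact slightly more explicit than the paper's terse "reversing the computations," and you are right to flag that the decomposition $V^\flat=\ker c\oplus F\overline{b}$ requires $c(\overline{b})=1-d\overline{d}\neq 0$, a condition not automatic from $d\neq 1$ (unlike the unitary case, where $w(g^{-1}u)=1-d$ directly); the paper glosses over this degenerate case as well, so this is not a defect of your write-up relative to the source.
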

	
	\begin{proof}
		By definition, if $x \in V^{\flat}$ such that $c(x)=0$, then we have $\gamma(x)=a(x)=\gamma'(x)$. From the equality $ \begin{pmatrix}
			a & b \\ c & d 
		\end{pmatrix}
		\begin{pmatrix}
			\overline{a} & \overline{b} \\ 
			\overline{c} & \overline{d} 
		\end{pmatrix} =\id$, we have
		$$ a \overline{a} = \id -b \overline{c}, \quad a\overline{b} = -\overline{d} b,  \quad c \overline{a}=-d\overline{c}, \quad c\overline{b}=1-d\overline{d}.
		$$
		We find $\gamma \overline{\gamma}= a \overline{a}+ a \frac{\overline{b}\overline{c} }{1-\overline{d}} + \frac{bc}{1-d} \overline{a} +  \frac{bc}{1-d} \frac{\overline{b}\overline{c} }{1-\overline{d}}$ is equal to $\id + b\overline{c} (-1 - \frac{\overline{d}}{1-\overline{d}} -\frac{d}{1-d} + \frac{1-d\overline{d}}{(1-d)(1-\overline{d})})=\id$.
		Therefore we have $\gamma \in S(V_0^\flat)$.  We have
		$$\gamma \overline{b}= a \overline{b}+ c\overline{b} \frac{b}{1-d}= -\overline{d} b + (1- d \overline{d}) \frac{b}{1-d}= \frac{1-\ov d}{1 -d}b, $$ 
		and similarly $\overline{c} {\gamma}= \overline{c} a + \overline{c} \frac{bc}{1-d}= - \overline{d}{c} + (1- d\overline{d}) \frac{c}{1-d}= \frac{1-\ov d}{1 -d}c$. The second proposition follows.  Conversely, the uniqueness and reconstruction of $g'$ follows by reversing the above computations.
	\end{proof}
	
	\begin{definition}
		The \emph{untwisted symmetric relative Cayley map} is the $\GL(V_0)$-equivariant rational map 
		\begin{equation}\label{SymCayley}
			\begin{gathered}
				\xymatrix@R=0ex{
					\mathfrak{c}_S:   S(V_0)   \ar[r]  &  S(V^{\flat}_0) \times V^{\flat} \times (V^{\flat})^* \\
					\gamma' \ar@{|->}[r]  &  (\gamma, \, b_1=\frac{b}{1-d}, \, c_1=\frac{c}{1-d}).
				}
			\end{gathered}
		\end{equation}
	\end{definition}
	Now we modify $\mathfrak{c}_S$ to make the image lying in $S(V^{\flat}_0) \times V^{\flat}_0 \times (V^{\flat}_0)^*$. 
	\begin{definition}
		A \emph{twisting element} for $\gamma \in S(V_0^{\flat})(F_0)$ is any $B_{\gamma} \in \GL(V^{\flat})(F)$ such that
		\[  \gamma=B_{\gamma} \overline{B_{\gamma}}^{-1} =  \overline{B_{\gamma}}^{-1} B_{\gamma}. \]
	\end{definition}
	
	By Hilbert $90$ for $F/F_0$, there is always some $B \in \GL(V^{\flat})$ such that $\gamma=B \overline{B}^{-1}$. Here the condition is slightly stronger. In particular, we have $\gamma B_{\gamma}=B_{\gamma} \gamma$ and $\gamma  \overline{B_{\gamma}}=\overline{B_{\gamma}} \gamma$.

	\begin{example}\label{IntHilbert90Exa}
		Choose any $\xi \in F^{\Nm_{F/F_0}=1}$ such that $\det(1+\xi \gamma) \not =0$. Write $\xi=a / \bar{a}$ for some $a \in F^{\times}$, then $a^{-1}(1+\xi \gamma)$ is a twisting element for $\gamma$.
	\end{example}
	Let $\gamma' \in S(V_0)(F_0)$ with   $1-d \not =0$ and $\mathfrak{c}_S(\gamma')=(\gamma, b_1, c_1)$. Choose a twisting element $B_{\gamma}$ for $\gamma$, set
	\[ 
	b_2:=B_{\gamma}^{-1}b_1, \quad c_2:= c_1 \overline{B_{\gamma}}.
	\]
	
	\begin{proposition}\label{prop: twisting}
		Choose any twisting element $B_{\gamma}$ for $\gamma$ as above.
		\begin{enumerate}
			\item We have $(b_2, c_2) \in V_0^{\flat} \times (V_0^{\flat})^*$, and we have following equalities of invariants ($i \in \mathbb Z$):
			$$
			c_2\gamma^{i+1} b_2 = c_1 \gamma^{i} b_1.
			$$
			\item The $\GL(V_0^{\flat})$-orbit of $(\gamma, b_2,c_2)$ doesn't depends on the choice of $B_\gamma$ and only depends on the $\GL(V_0^{\flat})$-orbit of $\gamma'$.
		\end{enumerate}
		
	\end{proposition}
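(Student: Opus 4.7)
The plan is to exploit the two-sided defining identity $\gamma=B_\gamma\overline{B_\gamma}^{-1}=\overline{B_\gamma}^{-1}B_\gamma$ of a twisting element. Equating the two expressions gives $B_\gamma\overline{B_\gamma}=\overline{B_\gamma}B_\gamma$, and either presentation then shows that both $B_\gamma$ and $\overline{B_\gamma}$ commute with $\gamma$; in particular $\overline{B_\gamma}B_\gamma^{-1}=B_\gamma^{-1}\overline{B_\gamma}=\gamma^{-1}$. This algebraic observation is the heart of everything that follows.

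For part (1), to see that $b_2=B_\gamma^{-1}b_1\in V_0^\flat$, I would combine the identity $\gamma\overline{b_1}=b_1$ from Proposition \ref{Symproj}(2) with the commutation above to rewrite $\overline{b_2}=\overline{B_\gamma}^{-1}\overline{b_1}$ as $B_\gamma^{-1}b_1=b_2$. The analogous computation, using $\overline{c_1}\gamma=c_1$, gives $c_2\in (V_0^\flat)^*$. For the invariant identity I would pass $\overline{B_\gamma}$ and $B_\gamma^{-1}$ through $\gamma^{i+1}$ by commutativity, so that
\[
c_2\gamma^{i+1}b_2=c_1\overline{B_\gamma}B_\gamma^{-1}\gamma^{i+1}b_1=c_1\gamma^{-1}\gamma^{i+1}b_1=c_1\gamma^ib_1.
\]

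For part (2), given a second twisting element $B_\gamma'$, I would set $h:=B_\gamma^{-1}B_\gamma'$. Comparing the two presentations of $\gamma$ for $B_\gamma$ and $B_\gamma'$ forces $h=\overline{h}$, hence $h\in\GL(V_0^\flat)(F_0)$, and $h$ centralizes $\gamma$ because both $B_\gamma$ and $B_\gamma'$ do. A direct substitution then yields $b_2'=h^{-1}b_2$ and $c_2'=c_1\overline{B_\gamma'}=c_2\cdot\overline{B_\gamma^{-1}B_\gamma'}=c_2 h$ (using $\overline{h}=h$), so that $(\gamma,b_2',c_2')=h.(\gamma,b_2,c_2)$. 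To handle dependence on $\gamma'$: if $\gamma'$ is replaced by $h^{-1}\gamma'h$ with $h\in\GL(V_0^\flat)(F_0)$, the block decomposition transforms as $\gamma\mapsto h^{-1}\gamma h$, $b_1\mapsto h^{-1}b_1$, $c_1\mapsto c_1h$, and $h^{-1}B_\gamma h$ is a valid twisting element for $h^{-1}\gamma h$; plugging in shows $(b_2,c_2)\mapsto(h^{-1}b_2,c_2h)$, as required.

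The main hurdle is purely bookkeeping: tracking the commutations between $B_\gamma$, $\overline{B_\gamma}$ and $\gamma$ forced by the two-sided identity. Once these are in place, all remaining verifications reduce to a handful of direct substitutions, and no further input beyond Proposition \ref{Symproj} is needed.
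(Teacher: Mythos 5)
Your proof is correct and follows essentially the same route as the paper: establish that $B_\gamma$, $\overline{B_\gamma}$, and $\gamma$ all pairwise commute (already noted right after the definition of twisting element), use $\gamma\overline{b_1}=b_1$ and $\overline{c_1}\gamma=c_1$ from Proposition~\ref{Symproj}, and then commute factors through powers of $\gamma$ and substitute directly for part~(2). The only difference is expository — you spell out the intermediate commutation identities that the paper treats as immediate.
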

	\begin{proof}
		By Proposition \ref{Symproj}, we have $\gamma \overline{b_1}=b_1$, $\overline{c_1} \gamma =c_1$. From
		$ \gamma=B_{\gamma} \overline{B_{\gamma}}^{-1}$, we see 
		$$\overline{B_{\gamma}^{-1}b_1}=B_{\gamma}^{-1}b_1, \quad \overline{c_1\overline{B_\gamma}}=c_1\overline{B_{\gamma}}$$
		which shows $(b_2, c_2) \in V_0^{\flat} \times (V_0^{\flat})^*$. Since $\gamma$ commutes with $B_{\gamma}$ and $\overline{B_{\gamma}}$, we get 
		$c_2\gamma^{i+1} b_2 = c_1 \gamma^{i+1} \overline{B_{\gamma}} B_{\gamma}^{-1} b_1= c_1 \gamma^{i}b_1.$ 
		
		Finally, if $B_{\gamma},  B_{\gamma}'$ are two twisting elements for $\gamma$, then $h_0:=B_{\gamma}^{-1}B_{\gamma}'$ is in $\GL(V_0^{\flat})(F_0)$ and commutes with $\gamma$. Hence 
		$h_0.(\gamma, b_2, c_2)=(\gamma, b_2', c_2').$ And for any $h \in \GL(V_0^{\flat})(F_0)$, the element $h^{-1} B_{\gamma} h$ is a twisting element for $h^{-1}\gamma h$. Property $(2)$ holds.
	\end{proof}

	\begin{definition}
		The \emph{relative symmetric Cayley map} on $\GL(V^\flat_0)$-orbits is the map  between $\GL(V_0^{\flat})(F_0)$-orbits
		\begin{equation}\label{SymTCayley}
			\begin{gathered}
				\xymatrix@R=0ex{
					\overline{\mathfrak{c}_S'}:   [S(V_0)(F_0)]    \ar[r]  &  [(S(V^{\flat}_0) \times V^{\flat}_0 \times (V^{\flat})^*_0)(F_0) ]  \\
					\gamma'  \ar@{|->}[r]  &  (\gamma, \, b_2:=B_{\gamma}^{-1}b_1, \, c_2:= c_1 \overline{B_{\gamma}}) 
				}
			\end{gathered}
		\end{equation}
		where $B_{\gamma}$ is any twisting element of $\gamma$. This is well-defined by Proposition \ref{prop: twisting}. We also write $\mathfrak{c}_S'(\gamma')=(\gamma, \, b_2, \, c_2)$  with respect to a chosen twisting element $B_{\gamma}$.
	\end{definition}
	
	\begin{proposition}
		The element $\gamma'$ is regular semisimple if and only if $\mathfrak{c}_S'(\gamma')=(\gamma, b_2 , c_2)$ is regular semisimple.
	\end{proposition}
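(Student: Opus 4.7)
The plan is to mirror the unitary proof of Proposition \ref{regsemisimple:unitary}, reducing regular semisimplicity on both sides to spanning conditions involving the block $a$ of $\gamma' = \begin{pmatrix} a & b \\ c & d \end{pmatrix}$. On the ambient side, $\gamma' \in S(V_0)(F_0)$ is regular semisimple for the $\GL(V_0^\flat)$-action iff $\{\gamma'^i e\}_{i=0}^{n-1}$ is an $F$-basis of $V$ (equivalently, via the identity $\gamma'\overline{\gamma'}=\id$, iff the dual $\{e^{*}\gamma'^i\}_{i=0}^{n-1}$ spans $V^*$). On the triple side, $(\gamma, b_2, c_2)$ is regular semisimple iff $\{\gamma^i b_2\}_{i=0}^{n-2}$ spans $V^\flat$ and $\{c_2 \gamma^i\}_{i=0}^{n-2}$ spans $(V^\flat)^*$.

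Expanding $\gamma'^i e = v_i + \lambda_i e$ with $v_i \in V^\flat$ and $\lambda_i \in F$ gives the recursion $v_0 = 0$, $v_1 = b$, $v_{i+1} = a v_i + \lambda_i b$. Induction on $i$ shows $v_i - a^{i-1} b \in F\text{-span}\{b, ab, \ldots, a^{i-2}b\}$, yielding
\[
\{\gamma'^i e\}_{i=0}^{n-1} \text{ spans } V \iff \{a^i b\}_{i=0}^{n-2} \text{ spans } V^\flat,
\]
and a parallel statement on the dual side relating $\{e^{*}\gamma'^i\}$ to $\{c a^i\}$.

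The key intermediate lemma, the exact analogue of the observation used in the unitary proof, asserts that for the rank-one perturbation $\gamma = a + \frac{bc}{1-d}$ on $V^\flat$, the difference $\gamma^i b_1 - a^i b_1$ lies in the $F$-span of $\{a^j b_1 : 0 \leq j < i\}$. This is proved by induction using $\gamma(a^j b_1) = a^{j+1} b_1 + b_1 \cdot c_1(a^j b_1)$: the $b_1$-tail stays inside the span while the leading term propagates. Consequently $\{\gamma^i b_1\}_{i=0}^{n-2}$ and $\{a^i b_1\}_{i=0}^{n-2}$ have the same $F$-span, which coincides with that of $\{a^i b\}_{i=0}^{n-2}$ since $b_1$ is a nonzero scalar multiple of $b$. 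Because $B_\gamma$ is invertible and commutes with $\gamma$ by Proposition \ref{Symproj}, the set $\{\gamma^i b_2\} = B_\gamma^{-1}\{\gamma^i b_1\}$ spans $V^\flat$ iff $\{\gamma^i b_1\}$ does; a symmetric argument using $c_2 = c_1 \overline{B_\gamma}$ handles the $c$-side.

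Chaining these three equivalences yields the desired result. The main obstacle, a modest bookkeeping one, is aligning the $F_0$-rationality enforced by the twisting element $B_\gamma$ with the $F$-linear spanning conditions that characterize regular semisimplicity on the original side; beyond that, the argument is essentially a transcription of the unitary proof.
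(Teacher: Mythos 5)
Your approach mirrors the paper's: reduce regular semisimplicity on each side to spanning conditions, transfer $\{\gamma'^ie\}$-spanning to $\{a^ib\}$-spanning by expanding $\gamma'^ie$, transfer that to $\{\gamma^ib_1\}$-spanning by the rank-one induction lemma, and transfer that to $\{\gamma^ib_2\}$-spanning because $B_\gamma$ is invertible and commutes with $\gamma$. This is exactly the content of the paper's appeal to Proposition \ref{prop: twisting} together with "the same argument" as the unitary Proposition \ref{regsemisimple:unitary}. (Minor typo: in the induction step the coefficient should be $c(a^jb_1)$ or $c_1(a^jb)$, not $c_1(a^jb_1)$, but the upshot is the same scalar.)

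There is, however, a false claim in your first paragraph. The two conditions "$\{\gamma'^ie\}_{i}$ spans $V$" and "$\{e^*\gamma'^i\}_{i}$ spans $V^*$" are \emph{not} equivalent for $\gamma'\in S(V_0)$, and $\gamma'\overline{\gamma'}=\id$ does not bridge them. For $n=2$, take $\gamma' = \begin{pmatrix} 1 & 0 \\ c & -1 \end{pmatrix}$ with $c\in F_0^\times$: one checks $\gamma'\overline{\gamma'}=\id$, yet $\gamma'e=-e$ so $\{\gamma'^ie\}$ fails to span, while $e^*\gamma'=(c,-1)$ so $\{e^*\gamma'^i\}$ spans. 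This $\gamma'$ is not regular semisimple (its $\GL_1$-orbit is not closed), and illustrates that the symmetric side genuinely requires \emph{both} spanning conditions, unlike the unitary side where the hermitian form makes the dual condition automatic. The identity $\gamma'\overline{\gamma'}=\id$ only shows each span is individually bar-stable; it does not relate one to the other. As written, your characterization of regular semisimplicity of $\gamma'$ by a single condition is therefore incomplete, and it is this incomplete characterization that lets you "chain three equivalences" and seemingly finish using only the $b$-side.

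The repair is entirely within your reach: characterize regular semisimplicity of $\gamma'$ by the conjunction of both spanning conditions — exactly as you correctly do for the triple $(\gamma, b_2, c_2)$ — and then run the $b$-chain and the $c$-chain in parallel, both of which you already construct. With that change the argument closes and agrees with the paper's terse proof.
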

	\begin{proof}
		By Proposition \ref{prop: twisting} we have $c_2\gamma^{i+1} b_2 = c_1 \gamma^{i} b_1$. So by the same argument in  Proposition \ref{regsemisimple:unitary} we see the equivalences:  $\mathfrak{c}_S'(\gamma')=(\gamma, b_2 , c_2)$ is regular semisimple $\Leftrightarrow$ $\mathfrak{c}_S(\gamma')=(\gamma, b_1 , c_1)$ is regular semisimple $\Leftrightarrow$ $\gamma'$ is regular semisimple.
	\end{proof}

	An element $\gamma \in S(V_0^{\flat})(F_0)$ is called \emph{integral} if its characteristic polynomial $\text{char}(T \id_{V^{\flat}} -\gamma)$ lies in $O_{F}[T]$.  Recall that $F/F_0$ is unramified and $p \geq 2$. 
	
	\begin{proposition} \label{IntHHH} (Integral Hilbert $90$)
		Let $\gamma \in S(V_0^{\flat})(F_0)$ be an integral element and $R=O_F[\gamma] \subseteq \End(V^\flat)$ be the commutative $O_F$-algebra generated by $\gamma$. Then there exists an element $B_{\gamma} \in (O_F[\gamma] )^{\times}$ such that $\gamma=B_{\gamma}\overline{B_{\gamma}}^{-1}$. 
	\end{proposition}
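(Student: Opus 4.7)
The plan is to combine a product decomposition of $R$ with a Hensel-style lifting argument whose base case is classical Hilbert 90 for finite fields. Since $O_F$ is complete and $R$ is a finite $O_F$-algebra, $R$ is semi-local and decomposes as a finite product $R = \prod_{i \in I} R_i$ of local complete $O_F$-algebras. The involution $\sigma = \ov{(-)}$ permutes the factors and preserves the constraint $\gamma\ov\gamma = 1$. For a two-element $\sigma$-orbit $\{i,j\}$ (exchanged pair), writing $\gamma|_{R_i}=\gamma_i \in R_i^{\times}$ forces $\gamma|_{R_j} = \sigma(\gamma_i^{-1})$; then $B|_{R_i}:=1$, $B|_{R_j}:=\sigma(\gamma_i^{-1})$ gives a unit on this pair with $B/\ov{B} = \gamma$ by direct check. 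Thus we reduce to the case that $R$ itself is local complete with maximal ideal $\fkm$ and $\sigma$-stable.

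For such $R$, the induced involution $\ov\sigma$ on the residue field $k := R/\fkm$ restricts to the nontrivial Frobenius on $\BF_{q^2} \subseteq k$, so $k/k^{\ov\sigma}$ is a nontrivial quadratic Galois extension of finite fields. Classical Hilbert 90 produces $\ov{B} \in k^{\times}$ with $\gamma \equiv \ov{B}/\ov\sigma(\ov{B}) \pmod \fkm$; lift it to $B_0 \in R^{\times}$ and set $\epsilon_0 := B_0/(\gamma\ov{B_0}) - 1 \in \fkm$. The relation $\gamma\ov\gamma = 1$ forces $(1+\epsilon_n)\,\sigma(1+\epsilon_n) = 1$ at every stage, so that $\sigma(\epsilon_n) \equiv -\epsilon_n \pmod {\fkm^{2n}}$. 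I will iteratively refine $B_{n+1} := B_n(1+\eta_n)$ by choosing $\eta_n \in \fkm^n$ with $\eta_n - \sigma(\eta_n) \equiv -\epsilon_n \pmod {\fkm^{n+1}}$; a routine expansion of $B_{n+1}/(\gamma\ov{B_{n+1}})$ then shows the error is improved to $\epsilon_{n+1} \in \fkm^{n+1}$. By $\fkm$-adic completeness the sequence $B_n$ converges to $B \in R^{\times}$ with $B = \gamma\ov{B}$, and assembling across all orbits gives the required $B_\gamma \in R^{\times}$.

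The main obstacle is the solvability of $\eta - \sigma(\eta) \equiv -\epsilon \pmod{\fkm^{n+1}}$ for an approximately anti-invariant $\epsilon$. When $p > 2$ one may simply take $\eta = -\epsilon/2$. In residue characteristic $p = 2$ this division is not available, and here the hypothesis that $F/F_0$ is unramified becomes essential: I will pick a Teichm\"uller lift $\zeta \in O_F^{\times}$ of a generator of $\BF_{q^2}$ over $\BF_q$, so that $\zeta - \sigma(\zeta) \in O_F^{\times}$ (because its reduction $\zeta - \zeta^q$ is a unit in $\BF_{q^2}$), and set
\[
\eta \;:=\; -\,\epsilon \cdot \frac{\zeta}{\zeta - \sigma(\zeta)}.
\]
A direct computation using $\sigma(\epsilon) \equiv -\epsilon \pmod{\fkm^{n+1}}$ yields $\eta - \sigma(\eta) \equiv -\epsilon$ to the same order. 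This unit substitute for $1/2$ is precisely what the unramifiedness hypothesis provides, and is the only place in the argument where characteristic $p$ might otherwise intervene.
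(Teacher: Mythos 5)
Your proof is correct but takes a genuinely different, more hands-on route than the paper's. The paper's argument observes that $R$ is $\sigma$-stable and finite flat over $O_F$, identifies $R$ with the \'etale base change $R^\sigma\otimes_{O_{F_0}}O_F$ of its $\sigma$-fixed $O_{F_0}$-form, and then invokes the black-box integral Hilbert~90 of Lemma~\ref{IntHHH: lemma}, which the paper cites from Kottwitz's work on orbital integrals for $\GL_3$ where it is proved by vanishing of a first Galois cohomology. You instead exploit completeness of $O_F$ to split $R$ into complete local factors, dispose of $\sigma$-exchanged pairs with an explicit one-line formula, and on each $\sigma$-stable local factor run a Hensel lifting whose base case is classical Hilbert~90 for the quadratic extension of finite residue fields $k/k^\sigma$ (nontrivial precisely because $F/F_0$ is unramified). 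This yields an explicit construction of $B_\gamma$ rather than a descent argument, and your Teichm\"uller unit $\zeta/(\zeta-\sigma(\zeta))$ as a replacement for $1/2$ handles $p=2$ uniformly and makes transparent where unramifiedness enters — the paper only records the $\delta$-averaging formula for $p>2$ before deferring the general case to Kottwitz. The only blemish is a minor off-by-one in the $\fkm$-adic bookkeeping: the improvement step uses $\fkm^{2n}\subseteq\fkm^{n+1}$, which requires $n\geq 1$, and $\eta_0\in\fkm^{0}$ does not match the intended convergence; this is repaired by re-indexing so the iteration starts at $\epsilon_1\in\fkm$.
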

	\begin{proof}
		As $\gamma$ is integral and $\det(\gamma) \in F^{Nm_{F/F_0}=1}$ is a unit, we see $R$ is finite flat over $O_F$.  As $\overline{\gamma}=\gamma^{-1}$ lies in $R$ by Cayley--Hamilton theorem, we see $R$ is stable under the involution $\overline{(-)}$ on $\End(V^{\flat})$. Consider the finite flat commutative $O_{F_0}$-subalgebra 
		$
		R_0=\{ x \in R | x=\bar{x} \}
		$ of $R$.
		
		The ring map $R_0 \otimes_{O_{F_0}} O_F \to R$ is an isomorphism: we base change along the \'etale morphism $O_{F_0} \to O_F$ where things are trivial. If $p>2$,  we can see this by observing any $x \in R$ is in the image by the formula $x=\frac{x+\bar{x}}{2} + \frac{x+\bar{x}}{2\delta} \delta$ for chosen $\delta \in O_F^{\times}$ such that $\overline{\delta}=-\delta$. We are done by the commutative algebra lemma \ref{IntHHH: lemma} below. \end{proof}
	
	\begin{lemma}\label{IntHHH: lemma}
		For any finite flat commutative $O_{F_0}$-algebra $A_0$, consider the induced involution $\overline{(-)}$ on $A=A_0 \otimes_{O_{F_0}} O_F$ given by $a \otimes b \to a \otimes \overline{b}$. 	If $x \in A^{\times}$ satisfies $x\overline{x}=1$, then there exists $y \in A^{\times}$ such that $x=y\overline{y}^{-1}$.
	\end{lemma}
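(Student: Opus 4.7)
The plan is to adapt the classical proof of Hilbert 90 to the integral setting by exploiting the explicit $O_{F_0}$-basis of $O_F$. Using the local notation $O_F = O_{F_0}[\sqrt\delta]$ with $\bar\delta = -\delta$ (which forces $p > 2$), I would write $A = A_0 \oplus A_0\sqrt\delta$ with involution acting by $a + b\sqrt\delta \mapsto a - b\sqrt\delta$, and decompose $x = a + b\sqrt\delta$ with $a,b \in A_0$, so that the hypothesis $x\bar x = 1$ becomes the scalar identity $a^2 - b^2\delta = 1$ in $A_0$.

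Next I would introduce two candidate splittings
\[
y_+ := 1 + x, \qquad y_- := (1-x)\sqrt\delta,
\]
modeled on the standard trick ``$y = z + x\bar z$'' that proves Hilbert 90 over fields. A direct calculation using $x\bar x = 1$ verifies $x\bar y_\pm = y_\pm$ in $A$, so either identity gives $x = y_\pm \bar y_\pm^{-1}$ as soon as the corresponding $y_\pm$ is a unit of $A$. Computing norms yields
\[
y_+ \bar y_+ = (1+x)(1+\bar x) = 2(1+a), \qquad y_- \bar y_- = -\delta(1-x)(1-\bar x) = -2\delta(1-a)
\]
in $A_0$; since $2\delta \in O_{F_0}^\times$, whether $y_\pm$ is a unit in $A$ is equivalent to whether $1 \pm a$ is a unit in $A_0$.

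The decisive step, and the main conceptual obstacle, is to show that at least one of $1 + a, 1 - a$ is a unit in $A_0$. I would argue by contradiction: if both lay in the Jacobson radical $\mathrm{Jac}(A_0)$, then so would their sum $2$, contradicting $2 \in O_{F_0}^\times \subseteq A_0^\times$. Hence at least one of $y_\pm$ is a unit of $A$ and serves as the required $y$. The point worth emphasizing is that producing a single universal splitting $y$ fails: instead the pair $(y_+, y_-)$ collectively covers all cases by a pigeonhole argument in $A_0$, which is precisely the integral phenomenon that makes this statement more subtle than its field-level counterpart.
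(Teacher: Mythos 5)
Your explicit construction of $y_\pm$ and the verification $x\bar y_\pm = y_\pm$, $y_+\bar y_+ = 2(1+a)$, $y_-\bar y_- = -2\delta(1-a)$ are all correct, and the reduction of ``$y_\pm$ is a unit of $A$'' to ``$1\pm a$ is a unit of $A_0$'' is sound (since $A$ is integral over $A_0$ and $2,\,\delta\in O_{F_0}^\times$). This is a genuinely different route from the paper, which simply cites Kottwitz's Lemma 8.6 (proved there by showing the relevant first Galois cohomology group vanishes); your version is more elementary and self-contained, which is a real gain.

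However, the pigeonhole step as you wrote it has a gap. You argue that if \emph{neither} $1+a$ nor $1-a$ is a unit of $A_0$, then both lie in $\mathrm{Jac}(A_0)$, whence $2\in\mathrm{Jac}(A_0)$, contradiction. But ``non-unit'' is \emph{not} equivalent to ``lies in the Jacobson radical'' unless $A_0$ is local. Since $A_0$ is a finite algebra over the complete (hence henselian) local ring $O_{F_0}$, it decomposes as a finite product $A_0 \cong \prod_i (A_0)_i$ of local rings, and an element fails to be a unit as soon as it falls into the maximal ideal of \emph{one} factor. For instance, with $A_0 = O_{F_0}\times O_{F_0}$ and $x = a = (1,-1)$ (so $b=0$, $x\bar x=1$), one has $1+a=(2,0)$ and $1-a=(0,2)$, and \emph{neither} $y_+$ nor $y_-$ is a unit of $A$, even though $2$ is certainly a unit. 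The fix is precisely the decomposition you should have invoked: run the argument on each local factor $(A_0)_i$, where ``non-unit'' does mean ``in the maximal ideal'' and your pigeonhole is valid, obtaining a unit $y_i\in\{(y_+)_i,(y_-)_i\}$ with $x_i = y_i\bar y_i^{-1}$; then set $y=(y_i)_i$. In the example above this gives $y=(1,\sqrt\delta)$ up to scalar, which is neither $y_+$ nor $y_-$ globally. So the idea survives, but the choice between $y_+$ and $y_-$ must be made factor by factor, and you need to say so explicitly.
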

	
	\begin{proof}
		This is proved in \cite[Lem. 8.6]{Kottwitz-OrbGL3} and is a variant of Hilbert 90 for vector bundles on the semi-local ring $A_0$.
	\end{proof}
	Now we do reduction for orbital integrals. Consider any lattice $L$ in $V$ with a decomposition  $L=L^{\flat} \oplus O_Fe_L $
	that is stable under $\overline{(-)}$, where $e_L=\lambda_L e$ for some $\lambda_L \in F_0^{\times}$.  Then $L_0:=\{x \in L | \ov{x}=x\}$ is an $O_{F_0}$-lattice in $V_0$ such that $L=L_0 \otimes_{O_{F_0}} O_F$.	Consider the stabilizer for $L$:
	$$
	S(L):=\{\gamma \in S(V_0)(F_0)\,|\, \gamma L=L\}.
	$$
	\begin{proposition} \label{Symred}
		Let $\gamma' \in S(V_0)(F_0)$ with $\mathfrak{c}_S(\gamma')=(\gamma, b_1, c_1)$. Assume that $1-d \in O_F^{\times}$, then
		\begin{enumerate}
			\item  Assume that $\gamma$ is integral. Choose the twisting element $B_{\gamma} \in O_F[\gamma]^{\times}$ as in Proposition \ref{IntHHH}, and set 
			$$
			\mathfrak{c}_S'(\gamma')=(\gamma, b_2=B_{\gamma}^{-1}b_1, c_2=c_1 \overline{B_{\gamma}}).
			$$ 
			For any $h \in \GL(V^{\flat}_0)(F_0)$, we have $h^{-1}\gamma'h \in S(L)$ if and only if
			$$h^{-1}\gamma h \in S(L^{\flat}), \quad h^{-1}b_2 \in \lambda_L^{-1} L^{\flat}_0,  \quad c_2h \in (\lambda_L^{-1} L^{\flat}_0)^{*}.$$
			\item If $\gamma$ is not integral, then there is no $h \in \GL(V^{\flat}_0)(F_0)$ such that 
			$h^{-1}\gamma'h \in S(L)$ or $h^{-1}\gamma h \in S(L^{\flat})$.
		\end{enumerate}
	\end{proposition}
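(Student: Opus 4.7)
The plan is to closely mirror the proof of Proposition \ref{Unitaryred} in the unitary case, with the additional bookkeeping that the symmetric case requires a twisting element to push $b_1,c_1$ into the fixed $F_0$-subspace. By the $\GL(V_0^\flat)$-equivariance of $\mathfrak{c}_S$ and the $\GL(V_0^\flat)$-invariance of $d$, I would reduce to the case $h=\id$. Writing $\gamma'=\begin{pmatrix} a & b \\ c & d\end{pmatrix}$ under the decomposition $V=V^\flat\oplus Fe$ and using $e_L=\lambda_L e$, the condition $\gamma'L\subseteq L$ (which is equivalent to $\gamma' L=L$ since $\overline{L}=L$ and $\overline{\gamma'}=\gamma'^{-1}$) unwinds into the four conditions $a(L^\flat)\subseteq L^\flat$, $b\in\lambda_L^{-1}L^\flat$, $c(L^\flat)\subseteq \lambda_L O_F$, and $d\in O_F$. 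Because $1-d\in O_F^\times$, the last three conditions force $\frac{bc}{1-d}(L^\flat)\subseteq L^\flat$, so one can swap the first condition for $\gamma(L^\flat)\subseteq L^\flat$, i.e.\ $\gamma\in S(L^\flat)$ (again using $\overline{\gamma}=\gamma^{-1}$ to upgrade containment to equality).

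For part (1), assuming $\gamma$ is integral, Proposition \ref{IntHHH} provides the twisting element $B_\gamma\in O_F[\gamma]^\times$. The key observation is that once $\gamma\in S(L^\flat)$, the whole ring $O_F[\gamma]$ preserves $L^\flat$, and since $B_\gamma$ is a unit there, $B_\gamma L^\flat=L^\flat$ and $\overline{B_\gamma}L^\flat=L^\flat$. Hence the conditions $b\in\lambda_L^{-1}L^\flat$ and $c(L^\flat)\subseteq \lambda_L O_F$ are equivalent (using $1-d\in O_F^\times$) to $b_2=B_\gamma^{-1}\frac{b}{1-d}\in\lambda_L^{-1}L^\flat$ and $c_2=\frac{c}{1-d}\overline{B_\gamma}\in \lambda_L(L^\flat)^\ast$. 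Since $b_2\in V_0^\flat$ and $c_2\in (V_0^\flat)^\ast$ by Proposition \ref{prop: twisting}, and since $L^\flat\cap V_0^\flat=L_0^\flat$ (as $L^\flat=L_0^\flat\otimes_{O_{F_0}}O_F$ with $\lambda_L\in F_0^\times$), these are exactly $b_2\in\lambda_L^{-1}L_0^\flat$ and $c_2\in(\lambda_L^{-1}L_0^\flat)^\ast$.

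For part (2), I would argue by contraposition. Suppose some $h\in\GL(V_0^\flat)(F_0)$ satisfies $h^{-1}\gamma h\in S(L^\flat)$; then $h^{-1}\gamma h$ preserves the $O_F$-lattice $L^\flat$, so its characteristic polynomial lies in $O_F[T]$, and by conjugation invariance so does that of $\gamma$, contradicting non-integrality. If instead $h^{-1}\gamma' h\in S(L)$, then the block-decomposition analysis above (applied to $\widetilde{\gamma'}=h^{-1}\gamma' h$, whose relative Cayley image has $V^\flat$-block $h^{-1}\gamma h$) shows that $h^{-1}\gamma h\in S(L^\flat)$, reducing to the previous case.

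I expect the main subtle point to be the verification that $B_\gamma$ (rather than merely $\gamma$) preserves $L^\flat$: this rests crucially on choosing $B_\gamma$ inside $O_F[\gamma]^\times$ via Proposition \ref{IntHHH}, rather than an arbitrary twisting element as in Example \ref{IntHilbert90Exa}. Everything else is routine unpacking of the block decomposition together with the identification $L^\flat\cap V_0^\flat=L_0^\flat$.
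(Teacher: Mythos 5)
Your proof is correct and follows essentially the same route as the paper: unwind the block conditions on $\gamma'L\subseteq L$, use $1-d\in O_F^\times$ to trade $a(L^\flat)\subseteq L^\flat$ for $\gamma(L^\flat)\subseteq L^\flat$, and then exploit $B_\gamma\in O_F[\gamma]^\times$ to see that $B_\gamma$ and $\overline{B_\gamma}$ preserve $L^\flat$ once $\gamma$ does. The only cosmetic difference is that you reduce to $h=\id$ by equivariance (justified since $h^{-1}B_\gamma h$ is a valid twisting element for $h^{-1}\gamma h$), whereas the paper keeps $h$ explicit throughout; both are fine, and you correctly identify the crucial point that $B_\gamma$ must be taken in $O_F[\gamma]^\times$ rather than an arbitrary twisting element.
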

	\begin{proof}
		Because $\det(h^{-1}\gamma'h)$ is a unit, the condition $h^{-1}\gamma' h \in S(L)$ is equivalent to $h^{-1}\gamma' h L \subseteq L $. Under the decomposition $L=L^\flat \oplus O_F e$, we see (using $1-d \in O_F^{\times}$) that $h^{-1}\gamma' h L \subseteq L$ if and only if 
		$$
		h^{-1}ah(L^{\flat}) \subseteq L^{\flat}, \quad h^{-1}b_1 \in \lambda_L^{-1} L^{\flat}, \quad c_1h(L^{\flat}) \subseteq \lambda_L O_F. 
		$$
		The latter two conditions imply $\frac{bc}{1-d}(L^{\flat}) \subseteq L^{\flat}$. As $h^{-1}\gamma h=h^{-1} a h + \frac{bc}{1-d}$, above condition is equivalent to 	
		$$
		h^{-1}\gamma h (L^{\flat}) \subseteq L^{\flat}, \quad h^{-1}b_1 \in \lambda_L^{-1} L^{\flat}, \quad c_1 h (L^{\flat}) \subseteq \lambda_L O_F.
		$$
		 In particular $\gamma$ must be integral, the second part follows.
		
		Assume that $h^{-1}\gamma h (L^{\flat}) \subseteq L^{\flat}$ holds. By Proposition \ref{IntHHH},  $B_{\gamma}$ and $B_{\gamma}^{-1}$ lie in $O_F[\gamma]$ in particular $\det(B_{\gamma}) \in O_F^{\times}$. Hence we have $h^{-1} B_{\gamma} h (L^{\flat})=L^{\flat}$ and $h^{-1} \overline{B_{\gamma}} h (L^{\flat})=L^{\flat}$.  
		
		So the condition $h^{-1}b_1 \in \lambda_L^{-1} L^{\flat}$ (resp. $c_1 h (L^{\flat}) \subseteq \lambda_L O_F$) is equivalent to the condition $h^{-1}B_{\gamma}^{-1}h (h^{-1}b_1)=h^{-1}b_2 \in \lambda_L^{-1} L^{\flat}$ (resp. $c_2h (L^{\flat})=c_1 h ( h^{-1} \overline{B_{\gamma}} h) (L^{\flat})= c_1 h(L^{\flat}) \subseteq \lambda_L O_F$). As $(h^{-1}b_2, \, c_2h) \in V_0^{\flat} \times (V_0^{\flat})^*$, the result follows.
		
	\end{proof}
	
From now on, we choose and fix the twisting element $B_{\gamma} \in O_F[\gamma]^{\times}$ as in Proposition \ref{IntHHH}.
	
	\begin{corollary}\label{RedOrbSSS}
		For $\gamma' \in S(V_0)(F_0)_\rs$ with $1-d \in O_F^{\times}$, we have
		($s \in \BC$)
		\[
		\Orb(\gamma', 1_{S(L)}, s)= \Orb(\mathfrak{c}_S'(\gamma'),   1_{S(L^{\flat})} \times 1_{\lambda_L^{-1}L^{\flat}_0} \times 1_{(\lambda_L^{-1}L^{\flat}_0 )^{*}}, s).
		\]
		In general, for any collection of lattices $\{L_i\}$ of the form $L_i=L^{\flat}_i \oplus O_F\lambda_{L_i} e$, let $S(\{L_i\})$ be the stabilizer of all lattices $L_i$ inside $S(V_0)$. Then we have
		\[
		\Orb(\gamma', 1_{S(\{L_i\})}, s)=\Orb(\mathfrak{c}_S'(\gamma'),   1_{S(\{ L^{\flat} \} )} \times 1_{\cap_i \lambda_{L_i}^{-1}(L^{\flat}_i)_0} \times 1_{ ( \sum_i \lambda_{L_i}^{-1} (L^{\flat}_i)_0 )^{*}}, s).
		\]
	\end{corollary}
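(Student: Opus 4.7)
The plan is to deduce this corollary from Proposition~\ref{Symred} by a direct unfolding of the orbital integral. For the single-lattice case, writing
\[
\Orb(\gamma', 1_{S(L)}, s) = \omega(\gamma') \int_{\GL(V_0^\flat)(F_0)} 1_{S(L)}(h^{-1}\gamma' h) \, \eta(h) |h|^s \, dh,
\]
I would use Proposition~\ref{Symred}(1) to factor
\[
1_{S(L)}(h^{-1}\gamma' h) = 1_{S(L^\flat)}(h^{-1}\gamma h) \cdot 1_{\lambda_L^{-1} L_0^\flat}(h^{-1} b_2) \cdot 1_{(\lambda_L^{-1} L_0^\flat)^*}(c_2 h),
\]
which is precisely $f'(h.(\gamma, b_2, c_2))$ for $f' = 1_{S(L^\flat)} \times 1_{\lambda_L^{-1} L_0^\flat} \times 1_{(\lambda_L^{-1} L_0^\flat)^*}$. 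Both sides use the Haar measure on $\GL(V_0^\flat)(F_0)$ normalized so that $\GL(L_0^\flat, L_0^{\flat,\vee})$ has volume $1$, so the two integrals match up to the ratio of transfer factors $\omega(\gamma')/\omega(\gamma, b_2, c_2)$.

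The only non-formal step is therefore to check $\omega(\gamma') = \omega(\gamma, b_2, c_2)$ under the assumption $1-d \in O_F^\times$. The identities $\gamma' e = (1-d) b_1 + d e$ and $a b_1 = \gamma b_1 - c(b_1) b_1$ (the latter coming from $a = \gamma - bc/(1-d)$ together with $b = (1-d) b_1$) imply, by induction on the power, that the change-of-basis matrix in $V$ from $(e, \gamma' e, \ldots, \gamma'^{n-1} e)$ to $(e, b_1, \gamma b_1, \ldots, \gamma^{n-2} b_1)$ is upper triangular with diagonal entries that are non-negative powers of $1-d$, hence of unit determinant. Passing from $b_1$ to $b_2 = B_\gamma^{-1} b_1$ multiplies the flag determinant by $\det(B_\gamma^{-1}) \in O_F^\times$, using that $B_\gamma \in O_F[\gamma]^\times$ (Proposition~\ref{IntHHH}) and commutes with $\gamma$. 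Triviality of $\eta$ on $O_F^\times$ then gives the transfer-factor equality; when $\gamma$ is not integral, Proposition~\ref{Symred}(2) makes both sides of the identity vanish, so nothing remains to verify.

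The collection case follows by writing $1_{S(\{L_i\})} = \prod_i 1_{S(L_i)}$ (since $S(\{L_i\}) = \bigcap_i S(L_i)$) and applying Proposition~\ref{Symred} termwise. The conjunction of the conditions $h^{-1} b_2 \in \lambda_{L_i}^{-1}(L_i^\flat)_0$ over all $i$ is exactly $h^{-1} b_2 \in \bigcap_i \lambda_{L_i}^{-1}(L_i^\flat)_0$, while the conjunction of $c_2 h \in (\lambda_{L_i}^{-1}(L_i^\flat)_0)^*$ over all $i$ is $(c_2 h)\bigl(\sum_i \lambda_{L_i}^{-1}(L_i^\flat)_0\bigr) \subseteq O_{F_0}$, i.e., $c_2 h \in (\sum_i \lambda_{L_i}^{-1}(L_i^\flat)_0)^*$, by the standard duality between intersections and sums of $O_{F_0}$-lattices. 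The main (mild) obstacle in the whole argument is the transfer-factor verification; everything else is bookkeeping anchored in Proposition~\ref{Symred}.
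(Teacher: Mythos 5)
Your proof is correct and follows essentially the same route as the paper: invoke Proposition~\ref{Symred} to match supports (and dispose of the non-integral case), then reduce everything to checking that the transfer factors agree. The only superficial difference is that the paper chains through $\omega(a,b,c)$ as an intermediate step ($\omega(\gamma,b_2,c_2)=\omega(\gamma,b_1,c_1)=\omega(\gamma,b,c)=\omega(a,b,c)=\omega(\gamma')$, using $\gamma'x=a(x)+c(x)e$ and the span relation between $\gamma^ib$ and $a^ib$), whereas you compare the flag $(e,\gamma'e,\ldots,\gamma'^{n-1}e)$ directly to $(e,b_1,\gamma b_1,\ldots,\gamma^{n-2}b_1)$; this amounts to the same unipotent-plus-units change of basis. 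One small caution: your listed identity $ab_1=\gamma b_1 - c(b_1)b_1$ is only the $j=0$ case of what the induction actually needs, namely $a(\gamma^jb_1)=\gamma^{j+1}b_1-c(\gamma^jb_1)b_1$ for all $j$; this does follow from the same relation $a=\gamma-bc/(1-d)$ together with $b=(1-d)b_1$, but it should be stated at that level of generality for the inductive step to be self-contained. The handling of the collection case via $S(\{L_i\})=\bigcap_iS(L_i)$ and the lattice duality $\bigcap_iA_i^{*}=(\sum_iA_i)^{*}$ is exactly what the paper sketches.
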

	
	\begin{proof}
		
		By Proposition \ref{Symred}, both integrals are zero unless $\gamma$ is integral.  Assume that $\gamma$ is integral and choose $B_{\gamma}$ as in Proposition \ref{Symred}, then two integrals have same supports. So we only need to show the equality of transfer factors $\omega(\gamma, b_2, c_2)=\omega(\gamma').$ From $\gamma^i b_2=B_{\gamma}^{-1} \gamma^i b_1$ and $\det(B_{\gamma}) \in O_F^{\times}$, we have $$\omega(\gamma, b_2, c_2)=\omega(\gamma, b_1, c_1)= \omega(\gamma, b, c).$$  By induction on $i \geq 0$, we see $\gamma^i b - a^{i} (b)$ is in the $F$-span of $\{ b, ab, \dots, a^{i-1}b \}$ so $\omega(\gamma, b, c)=\omega(a,b,c)$. As $\gamma'(x)=a(x)+c(x)e$ $(x \in V^{\flat})$, we have 
		$$ \omega(\gamma')=\det(e \wedge \gamma' e \dots \wedge \gamma'^{n-1} e) = \det(e \wedge b \wedge \gamma b \dots \wedge \gamma^{n-2} b) =\omega(\gamma, b, c).$$
		The result for $L$ follows. The reduction for any collection of lattices follows along the same line as in Proposition $\ref{Symred}$ and above arguments.
	\end{proof}

	For any $\xi \in F^{\Nm_{F/F_0}=1}$, define the $\xi$-twisted symmetric Cayley map by 
	$$\mathfrak{c}_{S, \xi}(\gamma'):=\mathfrak{c}_S (\xi \gamma'), \quad \mathfrak{c}_{S, \xi}'(\gamma'):=\mathfrak{c}_S' (\xi \gamma').$$
	For $\gamma' \in S(V_0)(F_0)_\rs$, we have $1-d\xi \in O_F^{\times}$ for infinitely many $\xi \in F^{\Nm_{F/F_0}=1}$ as $\# \{ x \in k_{F}| \Nm_{k_{F}/k_{F_0}}(x)=1\}=q+1 > 1$. The orbital integral is the same for $\gamma'$ and $\xi \gamma'$. Therefore, the reduction of orbital integrals holds for any regular semi-simple $\gamma' $, using suitable $\xi$-twisted symmetric Cayley maps.

	\subsection{Matching and the equivalence of two transfer conjectures}
	
	Return to our transfer Conjecture \ref{conj: TC}. Choose an orthogonal $F$-basis $\{e_i\}$ of $V^{\flat}$, and extend it to an orthogonal basis of $V$ by adding $e$. 
	
	\begin{proposition}\label{RedOrbits}
		If $g' \in \U(V)(F_0)_\rs$ matches $\gamma' \in S(V_0)(F_0)_\rs$, then $\mathfrak{c}_\U(g')=(g,u_1) \in (\U(V^{\flat}) \times V^{\flat})(F_0)$ matches 
		$$(e,e).\mathfrak{c}_S' (\gamma') :=(\gamma, b_2, (e,e)c_2) \in (S(V^{\flat}_0) \times V^{\flat}_0 \times (V^{\flat}_0)^*)(F_0).$$
	\end{proposition}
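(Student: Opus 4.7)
The plan is to exploit the $\GL(V^{\flat})$-equivariance of the untwisted relative Cayley map $\mathfrak{c}$ from \eqref{relative Cayley map} together with the invariant identities of Corollary~\ref{prop: red UnitaryInvariants} and Proposition~\ref{prop: twisting}. I would proceed in three steps.

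First, I use the hypothesis that $g'$ matches $\gamma'$ in the sense of Definition~\ref{defn: gp match elements}, which produces $h \in \GL(V^{\flat})(\overline{F})$ with $g' = h^{-1} \gamma' h$ in $\End(V)$. Since $h$ fixes $e$ and preserves $V^{\flat}$, under the block decomposition this forces $t = h^{-1} a h$, $u = h^{-1} b$, $w = c h$, and $d = d'$, so that both $\mathfrak{c}(g')$ and $\mathfrak{c}(\gamma')$ are simultaneously defined. By the $\GL(V^{\flat})$-equivariance of $\mathfrak{c}$ one then obtains
\[
g = h^{-1} \gamma h, \qquad u_1 = h^{-1} b_1, \qquad w_1 = c_1 h,
\]
where $\mathfrak{c}_S(\gamma') = (\gamma, b_1, c_1)$.

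Second, I verify the invariants characterizing the semi-Lie matching of Definition~\ref{defn: semi-Lie match elements} between $(g, u_1)$ and $(\gamma, b_2, (e,e) c_2)$: namely, equality of characteristic polynomials and the identities $(g^i u_1, u_1)_{V^{\flat}} = ((e,e) c_2)(\gamma^i b_2)$ for $0 \leq i \leq n-2$. The former is immediate from $g = h^{-1} \gamma h$. For the latter, start from Corollary~\ref{prop: red UnitaryInvariants}, which gives $(g^i u_1, u_1) = (e,e)\, w_1(g^{i-1} u_1)$; the equivariance substitutions above rewrite $w_1(g^{i-1} u_1) = c_1 \gamma^{i-1} b_1$; then Proposition~\ref{prop: twisting}(1), applied with exponent $i-1$, yields $c_1 \gamma^{i-1} b_1 = c_2 \gamma^i b_2$. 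Chaining these three equalities gives $(g^i u_1, u_1) = (e,e)\, c_2 (\gamma^i b_2)$, which together with the equality of characteristic polynomials establishes the desired semi-Lie matching.

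The main obstacle I expect is the bookkeeping around the shift between $c_1 \gamma^{i-1} b_1$ and $c_2 \gamma^i b_2$. Since the twisting element $B_{\gamma} \in F[\gamma]^{\times}$ with $\gamma = B_{\gamma} \overline{B_{\gamma}}^{-1}$ commutes with $\gamma$, the passage from $(b_1, c_1)$ to $(b_2, c_2) = (B_{\gamma}^{-1} b_1,\, c_1 \overline{B_{\gamma}})$ effectively absorbs exactly one power of $\gamma$ into the rationalization over $F_0$. This absorption compensates the exponent shift built into Corollary~\ref{prop: red UnitaryInvariants} (where $(g^i u_1, u_1)$ involves $w_1(g^{i-1} u_1)$, not $w_1(g^i u_1)$), and it pins down $(e,e) \in F_0^{\times}$ as the correct rescaling of $c_2$, compensating the $(e,e)$ denominator hidden in $w_1(x) = (gx, u_1)/(e,e)$ from Proposition~\ref{Unitaryproj}. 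Verifying this alignment cleanly is the only subtle piece; everything else is a direct translation between the two Cayley maps.
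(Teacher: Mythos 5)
Your proposal is correct and mirrors the paper's proof step by step: both pass via the block-form substitutions coming from the matching hypothesis, then chain $(g^i u_1,u_1) = (e,e)\,w_1(g^{i-1}u_1) = (e,e)\,c_1\gamma^{i-1}b_1 = (e,e)\,c_2\gamma^i b_2$ using Proposition~\ref{Unitaryproj} (equivalently Corollary~\ref{prop: red UnitaryInvariants}) and Proposition~\ref{prop: twisting}(1). The only cosmetic differences are that you conjugate by $h$ rather than $h^{-1}$ and write $h\in\GL(V^\flat)(\overline{F})$ rather than $\GL(V^\flat)(F)$, neither of which affects the argument.
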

	\begin{proof}
		We only need to check invariants of $(g,u_1)$ and $(\gamma, b_2, (e,e)c_2)$ match. If $g'$ matches $\gamma'$, then there exist $h \in \GL(V^{\flat})$ such that $h^{-1}g'h=\gamma'$. This gives 
		$$h^{-1}u=b, \quad wh=c, \quad h^{-1}th=a, \quad d_{g'}=d_{\gamma'}=d.$$ As $\gamma=a+ \frac{bc}{1-d}$, $g=t+\frac{uw}{1-d}$, we see $h^{-1} gh =\gamma$. So we have 
		$$\text{char}(T \text{id}_{V^{\flat}}- g)=\text{char}(T \text{id}_{V^{\flat}} -\gamma)$$
		and $c_1 \gamma^i b_1= w_1 g^{i} u_1$ for any $i \in \mathbb Z$. By Proposition \ref{Unitaryproj}, we have $(g^iu_1,u_1)=(e,e) w_1(g^{i-1}u_1)$. By Proposition \ref{prop: twisting}, $c_2\gamma^ib_2= c_1 \gamma^{i-1} b_1$, so $(e,e)c_2\gamma^{i} b_2= (e,e) c_1 \gamma^{i-1} b_1$. 		Therefore we have $(g^iu_1, u_1)= (e,e)c_2\gamma^i b_2$ for any $i \in \mathbb Z$. So $(g, u_1)$ matches $(\gamma, b_2, (e,e)c_2) $.
	\end{proof}
	
	Consider any $\ov{(-)}$-stable hermitian lattice $L$ in $V$ with an orthogonal decomposition $L=L^{\flat} \oplus O_Fe_L$, $e_L=\lambda_L e$ for some $\lambda_L \in F_0^{\times}$. We have $L=L_0 \otimes_{O_{F_0}} O_F$ for the lattice $L_0=\{ x \in L | \ov{x}=x \}$.  Apply Corollary \ref{RedOrbSSS} to the collection of $L$ and $L^{\vee}$, we see
	
	\begin{proposition}
		For $\gamma' \in S(V_0)(F_0)_{rs}$ with $1-d \in O_F^{\times}$, we have
		$$
		\Orb(\gamma', 1_{S(L, L^{\vee})})=
		\Orb( (e,e).\mathfrak{c}_S' (\gamma'), f').
		$$
		where $(e,e).\mathfrak{c}_S' (\gamma') :=(\gamma, b_2, (e,e)c_2)$ and 
		$$
		f'=1_{S(L^{\flat}, (L^{\flat})^{\vee})} \times 1_{\lambda_L^{-1}L^{\flat}_0 \cap (e,e)  (\lambda^{-1}_L L^{\flat}_0)^{\vee} } \times 1_{(e,e)(\lambda_L^{-1}L^{\flat}_0 )^{*} \cap ( (\lambda_L^{-1}L^{\flat}_0)^{\vee} )^* }   .$$
	\end{proposition}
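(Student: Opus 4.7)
The plan is to recognize this statement as essentially the specialization of Corollary \ref{RedOrbSSS} to the two-lattice collection $\{L, L^{\vee}\}$, combined with the $(e,e)$-twist already introduced in Proposition \ref{RedOrbits}. First, I would verify that $L^{\vee}$ admits an orthogonal decomposition of the form required by Corollary \ref{RedOrbSSS}, namely $L^{\vee}=(L^{\flat})^{\vee}\oplus O_F e_{L^{\vee}}$ with $e_{L^{\vee}}=\lambda_{L^{\vee}} e$. A direct computation using the orthogonality of $V^{\flat}\oplus Fe$ gives
\[
\lambda_{L^{\vee}}=\frac{1}{\lambda_L (e,e)},
\]
and since $L$ is $\overline{(-)}$-stable one checks by conjugating the defining pairing condition that $L^{\vee}$ is $\overline{(-)}$-stable as well, so the hypotheses of Corollary \ref{RedOrbSSS} apply with $\{L_i\}=\{L, L^{\vee}\}$. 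Also note $S(L,L^{\vee})=S(\{L,L^{\vee}\})$.

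Next, I would identify the indicator sets produced by Corollary \ref{RedOrbSSS}. The ``$\flat$-part'' identification $((L^{\flat})^{\vee})_0=(L^{\flat}_0)^{\vee}$ follows from $L^{\flat}=L_0^{\flat}\otimes_{O_{F_0}} O_F$ together with $O_F\cap F_0=O_{F_0}$. Then
\[
\bigcap_i \lambda_{L_i}^{-1}(L_i^{\flat})_0=\lambda_L^{-1} L^{\flat}_0\,\cap\,\lambda_L (e,e)(L_0^{\flat})^{\vee}=\lambda_L^{-1} L^{\flat}_0\,\cap\,(e,e)(\lambda_L^{-1} L^{\flat}_0)^{\vee},
\]
which matches the $b_2$-component of $f'$. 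For the $c$-component, writing $M:=\sum_i \lambda_{L_i}^{-1}(L_i^{\flat})_0=\lambda_L^{-1} L^{\flat}_0+(e,e)(\lambda_L^{-1} L^{\flat}_0)^{\vee}$ and using $(A+B)^{*}=A^{*}\cap B^{*}$ together with $(\alpha N)^{*}=\alpha^{-1}N^{*}$ for $\alpha\in F_0^{\times}$ gives
\[
M^{*}=(\lambda_L^{-1} L^{\flat}_0)^{*}\,\cap\,(e,e)^{-1}((\lambda_L^{-1} L^{\flat}_0)^{\vee})^{*}.
\]
Thus $1_{M^{*}}$ supported at $c_2$ is the same as $1_{(e,e)M^{*}}$ supported at $(e,e)c_2$, and
\[
(e,e)M^{*}=(e,e)(\lambda_L^{-1} L^{\flat}_0)^{*}\,\cap\,((\lambda_L^{-1} L^{\flat}_0)^{\vee})^{*},
\]
which is precisely the $(e,e)c_2$-component of $f'$.

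Finally, I would assemble the pieces: Corollary \ref{RedOrbSSS} gives the identity of orbital integrals with the output $\mathfrak{c}_S'(\gamma')=(\gamma,b_2,c_2)$ and the indicator $1_{M^{*}}$ on the third coordinate; rewriting this indicator as $1_{(e,e)M^{*}}$ evaluated at $(e,e)c_2$ is the same substitution that turns $\mathfrak{c}_S'(\gamma')$ into $(e,e).\mathfrak{c}_S'(\gamma')$. Note that this rescaling does not alter the transfer factor $\omega(\gamma,b_2,c_2)$ since $(e,e)\in F_0^{\times}$ factors out of the determinant computation and its effect cancels against the rescaling of $c_2$ in the invariant $c_2\gamma^{i+1}b_2$. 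The only genuine bookkeeping obstacle is tracking the four-fold duality conventions ($O_F$-hermitian versus $O_{F_0}$-linear, and $(-)^{\vee}$ versus $(-)^{*}$) and the scalar $(e,e)$ coherently; everything else is a direct substitution into Corollary \ref{RedOrbSSS}.
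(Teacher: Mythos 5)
Your proof is correct and is essentially the paper's own (one-line) argument: apply Corollary \ref{RedOrbSSS} to the collection $\{L, L^{\vee}\}$ — after noting $\lambda_{L^{\vee}}=1/(\lambda_L(e,e))$ and simplifying the resulting intersection and sum of lattices exactly as you do — and then rescale the third coordinate by $(e,e)\in F_0^{\times}$. The only small imprecision is your justification that the transfer factor is unchanged: $\omega(\gamma,b_2,c_2)=\eta\bigl(\det(\gamma^i b_2)_{i=0}^{n-1}\bigr)$ simply does not depend on the third coordinate $c_2$ at all, so the rescaling $c_2\mapsto(e,e)c_2$ leaves it fixed for that trivial reason rather than by any cancellation against the invariants $c_2\gamma^{i+1}b_2$.
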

	
	\begin{corollary}\label{SimpleS}
		If $L$ is suitable for reduction (Definition \ref{defn: red lat}), then for $\gamma' \in  S(V_0)(F_0)_{rs}$ with $1-d \in O_F^{\times}$ we have
		$$
		\Orb(\gamma', 1_{S(L, L^{\vee})})=
		\Orb( (e,e).\mathfrak{c}_S' (\gamma'), f').
		$$  
		where $
		f':=1_{S(\lambda^{-1}_L L^{\flat}, (\lambda^{-1}_L L^{\flat} )^{\vee}) } \times 1_{\lambda^{-1}_L L^{\flat}_0} \times 1_{((\lambda^{-1}_L L^{\flat})^{\vee}_0)^* } .$
	\end{corollary}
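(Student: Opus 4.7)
The plan is to start from the general reduction formula in the preceding proposition (obtained from Corollary~\ref{RedOrbSSS} applied to the pair $\{L, L^{\vee}\}$) and simplify its two lattice intersections to single lattices under the hypothesis that $L$ satisfies the reduction. Recall that the proposition yields
\[
\Orb(\gamma', 1_{S(L, L^{\vee})}) = \Orb((e,e).\mathfrak{c}_S'(\gamma'), f'),
\]
where $f'$ is the product of $1_{S(L^{\flat},(L^{\flat})^{\vee})}$ with two characteristic functions of lattice intersections. So the task reduces to three elementary identifications: collapse the middle intersection, collapse the dual intersection, and rewrite the stabilizer factor in rescaled form.

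By Definition~\ref{defn: red lat}, the hypothesis reads $\lambda_L^{-1} L^{\flat} \subseteq (e,e)(\lambda_L^{-1}L^{\flat})^{\vee}$. Since both sides are $\ov{(-)}$-stable and $(e,e)\in F_0^{\times}$, intersecting with $V_0^{\flat}$ yields $\lambda_L^{-1}L^{\flat}_0 \subseteq (e,e)(\lambda_L^{-1}L^{\flat})_0^{\vee}$, which immediately identifies the second characteristic function of $f'$ with $1_{\lambda_L^{-1}L^{\flat}_0}$. Next I would transport the same inclusion through linear duality on $V_0^{\flat}$, using that $(e,e)\in F_0^{\times}$ commutes with $(-)^{*}$ via $((e,e)M)^{*} = (e,e)^{-1}M^{*}$ while linear duality reverses inclusions; this gives $((\lambda_L^{-1}L^{\flat})_0^{\vee})^{*} \subseteq (e,e)(\lambda_L^{-1}L^{\flat}_0)^{*}$, so the third factor of $f'$ collapses to $1_{((\lambda_L^{-1}L^{\flat})_0^{\vee})^{*}}$. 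Finally, the first factor $S(L^{\flat}, (L^{\flat})^{\vee})$ equals $S(\lambda_L^{-1}L^{\flat}, (\lambda_L^{-1}L^{\flat})^{\vee})$ because rescaling a lattice by the $F_0^{\times}$-scalar $\lambda_L$ does not change its stabilizer inside $\GL(V^{\flat})$ and preserves the hermitian dual up to an $F_0^{\times}$-scalar.

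The only subtlety worth flagging is the interplay of hermitian versus linear duality under the $F_0$-rational scalings $(e,e)$ and $\lambda_L$: one must be careful about how each scalar moves across $(-)^{\vee}$ and $(-)^{*}$. Because both scalars lie in $F_0^{\times}$, they commute with both dualities up to explicit inversion, and the transfer factor was already matched in Corollary~\ref{RedOrbSSS}, so no additional sign work is needed. This makes the corollary a direct book-keeping consequence of the preceding proposition rather than a substantive new computation, and there is no serious obstacle beyond organizing the three simplifications above.
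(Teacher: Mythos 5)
Your proposal is correct and matches the paper's intent: the corollary is stated as an immediate consequence of the unnumbered proposition preceding it (itself obtained from Corollary~\ref{RedOrbSSS} applied to the collection $\{L, L^{\vee}\}$), and the paper leaves the lattice book-keeping implicit. Your three simplifications — collapsing the middle intersection using $\lambda_L^{-1}L^{\flat}_0 \subseteq (e,e)(\lambda_L^{-1}L^{\flat}_0)^{\vee}$ (which follows from Definition~\ref{defn: red lat} after intersecting with $V_0^{\flat}$), dualizing to collapse the third factor, and rewriting the stabilizer via scalar invariance — are exactly the intended argument.
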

	
	\begin{theorem}\label{thm: reduction orb}
		\begin{enumerate}
			\item 	If $L$ is suitable for reduction,  then the group transfer conjecture \ref{conj: TC} for $L=L^{\flat} \oplus O_F e_L$  is equivalent to the semi-Lie transfer conjecture \ref{conj: TC} for $L_{\new}:=\lambda^{-1}_L L^{\flat}$.
			\item 	If $L^\vee$ is suitable for reduction,  then the group transfer conjecture \ref{conj: TC} for $L=L^{\flat} \oplus O_F e_L$  is equivalent to the semi-Lie transfer conjecture \ref{conj: TC} for $L_{\new}:=\lambda^{-1}_{L^\vee} L^{\vee, \flat}$. 
		\end{enumerate}	
	\end{theorem}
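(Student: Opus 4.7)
The plan is to derive both parts directly from the reduction corollaries (Corollaries \ref{SimpleU} and \ref{SimpleS}) combined with the matching statement (Proposition \ref{RedOrbits}). Assume the hypothesis of (1), that $L$ satisfies the reduction. On the open locus $\{1-d \in O_F^\times\}$, Corollary \ref{SimpleU} yields
\[
\Orb(g', 1_{\U(L)}) \,=\, \Orb(\mathfrak{c}_U(g'), \, 1_{\U(L_{\new})} \times 1_{L_{\new}}),
\]
while Corollary \ref{SimpleS} yields
\[
\Orb(\gamma', 1_{S(L,L^\vee)}) \,=\, \Orb((e,e).\mathfrak{c}_S'(\gamma'), \, 1_{S(L_{\new},L_{\new}^\vee)} \times 1_{L_{\new,0}} \times 1_{(L_{\new,0}^\vee)^{*}}).
\]
The test functions on the right are precisely $f_{L_{\new}}$ and $f'_{L_{\new}}$ from the semi-Lie part of Conjecture \ref{conj: TC}. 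By Proposition \ref{RedOrbits}, matching of $(g',\gamma')$ is equivalent to matching of the reduced pair $(\mathfrak{c}_U(g'), (e,e).\mathfrak{c}_S'(\gamma'))$, and conversely every semi-Lie regular semisimple matching pair is realized from some group pair via any choice of $d \in O_F^\times \setminus \{1\}$ (by the reconstruction clauses in Propositions \ref{Unitaryproj} and \ref{Symproj}). Hence the group transfer identity at $(g',\gamma')$ is literally the semi-Lie transfer identity at the reduced pair, proving the equivalence on this locus.

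To extend to all regular semisimple orbits, I would invoke the $\xi$-twisting already used after Corollaries \ref{RedOrbUUU} and \ref{RedOrbSSS}: for any $\gamma'$ there exists $\xi \in F^{\Nm_{F/F_0}=1}$ with $1-\xi d \in O_F^\times$, since the kernel of $\Nm : \BF_{q^2}^\times \to \BF_q^\times$ has order $q+1 > 1$. Any such $\xi$ lies in $O_F^\times$, so multiplication by $\xi$ preserves $\U(L)$, $S(L,L^\vee)$, and matching of orbits; a short calculation using $\eta(O_F^\times)=1$ shows $\omega(\xi\gamma') = \eta(\xi^{n(n-1)/2}) \omega(\gamma') = \omega(\gamma')$ and the analogue on the unitary side, so orbital integrals are unchanged by twisting. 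The reductions of Section \ref{section: red via relative Cayley} apply verbatim to the nearby hermitian space $\BV$, so the full bijection of orbit spaces is handled uniformly, completing (1).

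For (2), the group TC for $(L,e)$ is equivalent to the group TC for $(L^\vee, e/(e,e)_V)$ by the remark after Conjecture \ref{conj: TC}: the test functions $1_{\U(L)} = 1_{\U(L^\vee)}$ and $1_{S(L_0,L_0^\vee)} = 1_{S(L_0^\vee,L_0)}$ coincide, while the transfer factors for $L$ and $L^\vee$ differ by the constant sign $\gamma_V$ of Remark \ref{rek: transfer factor and basis}, which cancels from the two sides of the conjectured identity. Since $L^\vee$ satisfies the reduction by hypothesis, (1) applied to $L^\vee = L^{\vee,\flat} \oplus O_F e_{L^\vee}$ yields the equivalence with the semi-Lie TC for $\lambda_{L^\vee}^{-1} L^{\vee,\flat} = L_{\new}$. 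The main bookkeeping point, and the place I would check most carefully, is the on-the-nose identification of the reduced test function from Corollary \ref{SimpleS} with $f'_{L_{\new}}$: this amounts to verifying $L_{\new}^\vee \cap V_0^\flat = \lambda_L (L^\flat)^\vee_0$ and the compatibility of linear dualization with restriction to the $F_0$-fixed subspace, which are routine but crucial for matching the two conjectures without a spurious constant.
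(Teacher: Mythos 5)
Your proposal follows the paper's proof essentially verbatim in outline: you use Corollaries \ref{SimpleU} and \ref{SimpleS} together with Proposition \ref{RedOrbits} to pass from the group conjecture to the semi-Lie conjecture, invoke the reconstruction clauses (which you correctly locate in Propositions \ref{Unitaryproj} and \ref{Symproj}) and surjectivity of the relative Cayley maps on orbits for the converse, extend from the locus $\{1-d \in O_F^\times\}$ to all regular semisimple orbits via the $\xi$-twisting, and handle part (2) by duality via the remark after Conjecture \ref{conj: TC}. The paper's proof is terser but has the same structure; you usefully spell out that $\eta(\xi)=1$ for norm-one units so the transfer factor is unchanged under twisting, and you flag the dual-lattice bookkeeping identity $L_{\new}^\vee \cap V_0^\flat = \lambda_L (L^\flat)^\vee_0$, which does hold and makes the test function in Corollary \ref{SimpleS} literally equal to $f'_{L_{\new}}$.
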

	\begin{proof}
		By duality, we only need to show the first part. By Corollary \ref{SimpleS}, Corollary \ref{SimpleU} and Proposition \ref{RedOrbits}, semi-Lie version TC implies group version TC. Moreover, as relative Cayley maps are surjective on orbits (for given $d$) by Proposition \ref{Unitaryred} and \ref{Symproj}, we can reverse the process. 
	\end{proof}

	In particular, if $L$ is maximal parahoric then either $(1)$ or $(2)$ holds. The new lattice $L_\new $ is still maximal parahoric, so we can continue the process once we could relate (boundary) cases of semi-Lie version transfer conjectures to lower rank group version transfer conjectures.

	\section{Transfers for all vertex lattices}\label{section: TC proof}    
	In this section, we give the proof of explicit transfer Conjecture \ref{conj: TC} for any $p$-adic field $F_0$. 	Consider the space of regular semisimple $\GL(V_0)$-orbits 
	\begin{align}
		\xymatrix{
			B=  \bigr[S(V_0) \times V_0 \times V_0^* \bigr]_\rs(F_0)  \ar[r]^-\sim& [\U(V) \times V]_\rs(F_0) \coprod \bigl[\U(\BV) \times \BV\bigr]_\rs(F_0). }
	\end{align}
	where $\BV$ is the nearby hermitian space of $V$.
	
	Endow the $n$-dimensional hermitian space $V$ with the $F_0$-valued quadratic form $q(u):=(u,u)_V$, and the space $V_0 \times V_0^*$ with the $F_0$-valued quadratic form $q'(b,c):=c(b)$. They induce a map on orbit space $B$:
	$$
	q=q': B \to F_0, \quad [(\gamma,b,c)] \mapsto c(b).
	$$	
	
	Fix an unramified quadratic character $\psi$ of $F_0$. We denote by
	\begin{itemize}
		\item $\mathcal{F}_U$ the Fourier transform on $V$ with respect to the additive character $\psi \circ q$,
		\item $\mathcal{F}_S$ the Fourier transform on $V_0 \times V_0^*$ with respect to the additive character $\psi \circ q'$.
	\end{itemize}  
	
	We always use self-dual Haar measure on $V$ and $V_0 \times V_0^{*}$ for the Fourier transforms.  Then for a $\ov{(-)}$-stable hermitian lattice $L \subseteq V$ of type $t$, we have 
	$$\vol(L)=q^{-t/2}_{F}=1 \times q^{-t}=\vol(L_0 \times (L_0^{\vee})^*). $$

	\subsection{Double uncertainty principle on distributions}
	
	We have the Weil representation of $\SL_2(F_0)$ on $\mathcal{S}(S(V_0) \times V_0 \times V_0^{*})$ and $\mathcal{S}(\U(V) \times V)$ respectively, by acting on the linear factors $V_0 \times V_0^*$ and $V$ respectively. Concretely, the action of $\SL_2(F_0)$ on $f \in \mathcal{S}(\U(V) \times V)$ is given by 
	\begin{equation}
		\begin{pmatrix}
			1 & t \\ & 1
		\end{pmatrix}.f(g, u)=\psi(t(u,u)_V)f(g, u),
	\end{equation}
	\begin{equation}
		\begin{pmatrix} & -1 \\ 1 &
		\end{pmatrix}.f(g, u)=\gamma_V \mathcal{F}_Uf(g, u),
	\end{equation}
	where $\gamma_V:=\eta(\det(V)) \in \{\pm1\}$ is the Weil constant of $V$.
	The action of $\SL_2(F_0)$ on $f' \in \mathcal{S}(S(V_0) \times V_0 \times V_0^{*})$ is given by
	\begin{equation}
		\begin{pmatrix}
			1 & t \\ & 1
		\end{pmatrix}.f'(\gamma, b, c)=\psi(t(cb))f'(\gamma, b, c),
	\end{equation}
	\begin{equation}
		\begin{pmatrix} & -1 \\ 1 &
		\end{pmatrix}.f'(\gamma, b, c)=\mathcal{F}_S f'(\gamma, b,c ). 
	\end{equation}
	
For $f \in \mathcal{S}(\U(V) \times V)$, define the function 
\[
\Orb((\gamma, b, c), f)= \begin{cases}
\Orb(g,u, f) 	&   \text{   if $(\gamma, b, c)$ matches $(g,u) \in (\U(V) \times V)_\rs(F_0) $}  \\
	0, \, \, \text{else}. &  
\end{cases}
\]
	By pushforward, we define two \emph{space of orbital integral functions} $\Orb_U, \Orb_S \subseteq C^{\infty}(B)$: 

\[ 
	\Orb_{S}:=\{ \Orb(-, f'): (\gamma, b, c) \mapsto \Orb((\gamma, b, c), f')| f' \in \mathcal{S}(S(V_0) \times V_0 \times V_0^{*}) \},
\]
\[
\Orb_{U}=\{ \Orb(-, f): (\gamma, b, c) \mapsto \Orb((\gamma, b, c), f) | f \in \mathcal{S}(\U(V) \times V) \}.
\]

	By \cite[Theroem A.1]{AFL}, the Weil representations of $\SL_2(F_0)$ on $\mathcal{S}(S(V_0) \times V_0 \times V_0^{*})$ and  $\mathcal{S}(\U(V) \times V)$ descend to $\Orb_U$ and $\Orb_S$, and agree on the intersection $\Orb_U \cap \Orb_S$. In other words, 
	
	\begin{proposition}\label{OrbU+OrbS}
		There is an action of $\SL_2(F_0)$ on the linear subspace 
		$$\Orb_U+\Orb_S \subseteq C^{\infty}(B)$$
		compatible with the Weil representation on $V$ and $V_0 \times V_0^{*}$. And $ f \in \mathcal{S}(\U(V) \times V), f' \in \mathcal{S}(S(V_0) \times V_0 \times V_0^{*})$ are transfers, if and only if $\Orb(-, f)-\Orb(-, f') \in \Orb_U+\Orb_S  $ is zero.
	\end{proposition}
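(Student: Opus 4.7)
The plan is to combine two separate descent results, one on each of $\Orb_U$ and $\Orb_S$, and then glue them via a compatibility on the intersection supplied by \cite[Theorem A.1]{AFL}. I will reduce the action of $\SL_2(F_0)$ to checking its behaviour on the standard generators (upper unipotent matrices, the diagonal torus, and the Weyl element $w=\begin{pmatrix} & -1 \\ 1 & \end{pmatrix}$), since together with the diagonals they generate $\SL_2$ and they suffice to characterise the Weil representation.

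First I would show the Weil action descends to each of $\Orb_U$ and $\Orb_S$ separately. For the upper unipotent, the action on $f\in\CS(\U(V)\times V)$ multiplies by $\psi(t(u,u)_V)$, and on $f'\in\CS(S(V_0)\times V_0\times V_0^*)$ by $\psi(t\,c(b))$. Since $(u,u)_V$ is $\U(V)$-invariant and $c(b)$ is $\GL(V_0)$-invariant, these multiplications descend to multiplication operators on $C^\infty(B)$ via the common quadratic $q$ on $B$. For the Weyl element, the action is (up to the constant $\gamma_V$) the appropriate partial Fourier transform $\CF_U$ on $V$ or $\CF_S$ on $V_0\times V_0^*$. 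The key point is that $\U(V)$ preserves the hermitian (hence quadratic) form defining $\CF_U$, so $\CF_U$ commutes with $\U(V)$-pushforward, yielding
\[
\Orb(-,\CF_U f) \text{ depends only on } \Orb(-,f),
\]
and analogously $\Orb(-,\CF_S f')$ depends only on $\Orb(-,f')$ using $\GL(V_0)$-invariance of $c(b)$. This gives well-defined $\SL_2(F_0)$-actions on $\Orb_U$ and on $\Orb_S$.

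Next I would invoke \cite[Theorem A.1]{AFL} to show the two descended actions agree on the intersection $\Orb_U\cap\Orb_S$: for a function $\phi$ representable as $\Orb(-,f)$ and as $\Orb(-,f')$, the results of applying either the $\U$-side or the $S$-side Weil action coincide as elements of $C^\infty(B)$. Granting this compatibility, the two actions glue to a single $\SL_2(F_0)$-action on $\Orb_U+\Orb_S$: any decomposition ambiguity $\phi=\phi_U+\phi_S=\phi_U'+\phi_S'$ produces an element $\phi_U-\phi_U'=\phi_S'-\phi_S$ of $\Orb_U\cap\Orb_S$, and the compatibility forces the two possible images of $\phi$ under $h\in\SL_2(F_0)$ to agree.

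For the second assertion, by our conventions $\Orb(-,f)$ for $f\in\CS(\U(V)\times V)$ is extended by zero to the orbits on $B$ that match the nearby hermitian space $\BV$. Hence the vanishing $\Orb(-,f)-\Orb(-,f')=0$ in $C^\infty(B)$ unpacks as: for every regular semisimple $(\gamma,b,c)\in B$ matching some $(g,u)\in(\U(V)\times V)(F_0)_\rs$ one has $\Orb((\gamma,b,c),f')=\Orb((g,u),f)$, and for $(\gamma,b,c)$ matching the nearby space one has $\Orb((\gamma,b,c),f')=0$. This is exactly the definition of $f$ and $f'$ being transfers.

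The main obstacle is precisely the compatibility of the two Weil actions on $\Orb_U\cap\Orb_S$; everything else is a formal descent or an unpacking of definitions. This compatibility is the nontrivial content extracted from \cite[Theorem A.1]{AFL}, reflecting the matching between the Fourier transform on the hermitian space $V$ and the Fourier transform on the dual pair $V_0\times V_0^*$ under the orbit correspondence.
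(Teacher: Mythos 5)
Your structural plan---descend the Weil action to $\Orb_U$ and to $\Orb_S$ separately, check the two descended actions agree on the intersection, then glue, and finally unpack the definition of transfer to get the \textquotedblleft iff\textquotedblright\ statement---is the right shape, and your unpacking of the second assertion is fine (the extension by zero is built into the definition of $\Orb_U$, so vanishing of the difference is literally the transfer condition).

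However, there is a genuine gap in the first half: you treat the descent of the Weyl-element action to $\Orb_U$ (resp.\ $\Orb_S$) as a formal consequence of $\U(V)$-equivariance of $\CF_U$, i.e.\ a Fubini interchange giving $\Orb((g,u), \CF_U f) = \int_V \psi_F((v,u)_V)\,\Orb((g,v),f)\,dv$. That interchange is not justified: the absolute integrability needed is $\int_{\U(V)} F(h^{-1}gh)\,dh < \infty$ with $F(g') := \int_V |f(g',v')|\,dv'$, which is a conjugation orbital integral of a compactly supported function on $\U(V)$ alone. Regular semisimplicity of the pair $(g,u)$ in $\U(V)\times V$ does \emph{not} force $g$ itself to be regular semisimple for conjugation on $\U(V)$, so this integral can diverge and the naive Fubini argument breaks down. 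In the paper the entire content of the proposition---both separate descents \emph{and} their agreement on $\Orb_U\cap\Orb_S$---is attributed to \cite[Theorem A.1]{AFL}, and the accompanying remark makes explicit that the proof of that theorem goes through the local trace formula, reduced to local integrability of orbital integral functions and $L^2$-preservation of the Fourier transform on a quadratic space. So the descents are precisely the hard analytic input, not a formal equivariance computation, and you should invoke \cite[Theorem A.1]{AFL} for them as well, not only for the intersection-compatibility step.
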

	
	\begin{remark}
		The proof of \cite[Theroem A.1]{AFL} is via local trace formula, which is eventually reduced to the fact that orbital integral function is locally integrable and Fourier transform on a quadratic space preserves the $L^2$-norm. Therefore, it also holds for $p=2$.
	\end{remark}

Using an orthogonal basis for $L$, we see the valuation
$v(L):=\min_{u \in L} v((u,u)_V)$ is equal to the valuation
$v(L_0, L_0^{\vee}):=\min_{b \in L_0, \, c \in (L_0^{\vee})^* } v(c(b)).$ We introduce the difference function
\begin{equation}\label{difference orbit integral function}
	\Phi_L=\Orb(-, 1_{\U(L)} \times 1_L)- \Orb(-, 1_{S(L_0,L_0^{\vee})} \times 1_{L_0} \times 1_{(L_0^{\vee})^*}) \in C^{\infty}(B).
\end{equation}

By Proposition \ref{OrbU+OrbS}, to prove Theorem \ref{proof of TC} for $L$, we only need to show $\Phi_L=0$. In other words,  both matching part and vanishing part of orbital integrals follow from $\Phi_L=0$.

The following double uncertainty principle says the support of a function and its ``Fourier transform'' can't be both too small, unless the function is zero.
	
	\begin{proposition}\label{Uncertain: double supports}
		Let $a_1, a_2$ be two integers with $a_1+a_2 \geq -1$. Assume that $\Phi \in \Orb_U+\Orb_S$ satisfies $\Phi(x)=0$ for all  $x \in B$ with $v(q(x)) \leq a_1$, and $\begin{pmatrix} & -1 \\ 1 &
		\end{pmatrix}.\Phi(x)=0$ for all  $x \in B$ with $v(q(x)) \leq a_2$.
		Then $\Phi=0$.
	\end{proposition}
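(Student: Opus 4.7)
The plan is to convert the two vanishing hypotheses into invariance properties of $\Phi$ under two opposite unipotent subgroups of $\SL_2(F_0)$, then combine them via an explicit Bruhat identity into an action of $w$ that yields a scaling functional equation rigidifying $\Phi$. Since $\psi$ is unramified and $(n(t).\Phi)(x)=\psi(tq(x))\Phi(x)$, the condition that $\Phi$ vanishes on $\{v(q)\leq a_1\}$ is equivalent to $n(t).\Phi = \Phi$ for every $t$ in the compact open subgroup $T:=\varpi^{-a_1-1}O_{F_0}$. Similarly the vanishing of $w.\Phi$ on $\{v(q)\leq a_2\}$ gives $n(s).(w.\Phi)=w.\Phi$ for $s\in S:=\varpi^{-a_2-1}O_{F_0}$, and conjugating by $w$ (using $w^{-1}n(s)w = n^{-}(-s)$) this rewrites as $n^{-}(s).\Phi = \Phi$ for all $s \in S$.

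Next I would use the Bruhat-type matrix identity
\[
n(-1/s)\,n^{-}(s)\,n(-1/s) \;=\; a(1/s)\,w, \qquad a(\beta):=\mathrm{diag}(\beta,\beta^{-1}),
\]
valid for every $s\in F_0^{\times}$, which is verified by a direct $2\times 2$ computation. The hypothesis $a_1+a_2\geq -1$ guarantees that the ``annulus''
\[
U := \{\, s \in F_0^{\times} : s \in S \ \text{and}\ -1/s \in T\,\} = \{\,s : -a_2-1 \leq v(s) \leq a_1+1\,\}
\]
is a non-empty open subset of $F_0^{\times}$. For $s\in U$ each of the three factors acts trivially on $\Phi$, so $(a(1/s)w).\Phi = \Phi$, i.e.\ $w.\Phi = a(s).\Phi$ for all $s\in U$. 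Under the two Weil representations the action of $a(s)$ descends, after pushforward to the orbit space, to a formula of the shape $(a(s).\Phi)(x) = \omega(s)\,\Phi(s.x)$, where $s.x$ is the natural scaling $[g,u]\mapsto[g,su]$ (matching $[\gamma,b,c]\mapsto[\gamma,sb,sc]$) and $\omega(s)$ is a character times $|s|^n$; compatibility of the two actions on $\Orb_U\cap\Orb_S$ makes this well-defined across the sum. Combined with the previous step this yields the functional equation $(w.\Phi)(x) = \omega(s)\,\Phi(s.x)$ uniformly in $s\in U$.

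Now comes the rigidity step. Since $q(s.x)=s^2 q(x)$, for any $x$ with $q(x)\neq 0$ one can choose $s\in U$ with $v(s)$ sufficiently close to $-a_2-1$ so that $v(q(s.x))=2v(s)+v(q(x))\leq a_1$, which by hypothesis kills $\Phi(s.x)$ and hence $(w.\Phi)(x)$. Bookkeeping shows this works whenever $v(q(x))\leq a_1+2a_2+2$, strictly enlarging the vanishing locus of $w.\Phi$ beyond $\{v(q)\leq a_2\}$ (since $a_1+2a_2+2 \geq a_2+1$ under the hypothesis); the same argument applied after swapping the roles of $\Phi$ and $w.\Phi$ (using $w^2 = \pm I$) enlarges the vanishing locus of $\Phi$ likewise. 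Iterating these two bootstraps finitely many times drives both vanishing thresholds to $+\infty$, showing that $\Phi$ and $w.\Phi$ vanish on the entire non-isotropic locus $\{q\neq 0\}$. Finally, since every element of $\Orb_U+\Orb_S$ is locally constant on the regular semisimple orbit space $B$ and $\{q=0\}$ is the zero locus of a non-constant algebraic function (hence has empty interior in $B$), vanishing on the open dense set $\{q\neq 0\}$ upgrades to $\Phi\equiv 0$. The main obstacle I expect is making the scaling formula in the middle paragraph uniform across the two pieces $\Orb_U$ and $\Orb_S$ — one has to track the Weil characters carefully and invoke Proposition~\ref{OrbU+OrbS} to ensure the descended $a(s)$-action is consistent on their sum — together with the careful bookkeeping in the iteration so that each step genuinely gains valuation under the sharp threshold $a_1+a_2\geq -1$.
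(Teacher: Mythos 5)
Your proof is correct and reaches the same conclusion by a genuinely different finishing argument. Both you and the paper begin identically: the hypothesis $\Phi(x)=0$ for $v(q(x))\leq a_1$ is equivalent to invariance of $\Phi$ under $n(t)$ for $t\in\varpi^{-a_1-1}O_{F_0}$, and likewise the vanishing of $w.\Phi$ gives, after conjugating by $w$, invariance of $\Phi$ under the opposite unipotent $n^{-}(s)$ for $s\in\varpi^{-a_2-1}O_{F_0}$. From there the paper simply invokes the fact that these two unipotent subgroups generate all of $\SL_2(F_0)$ once $(-a_1-1)+(-a_2-1)\leq -1$, so $\Phi$ is $\SL_2(F_0)$-invariant, and the nontriviality of the phase $\psi(t\,q(x))$ then forces $\Phi(x)=0$ wherever $q(x)\neq 0$. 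You instead make the generation explicit: the Bruhat identity $n(-1/s)\,n^{-}(s)\,n(-1/s)=a(1/s)\,w$ turns the two invariances into a functional equation $w.\Phi=a(s).\Phi$ over the annulus $U=\{-a_2-1\leq v(s)\leq a_1+1\}$, and the $a(s)$-action scales the quadratic invariant by $s^{\pm 2}$, which lets you bootstrap the vanishing thresholds iteratively; the hypothesis $a_1+a_2\geq -1$ is exactly what makes each iteration strictly gain (if $a_1+a_2=-2$ the bootstrap stalls, mirroring the failure of generation when $k+k'=0$). Your version is longer but self-contained (no appeal to the generation fact), and you also explicitly handle the locus $q=0$ via local constancy and density of $\{q\neq 0\}$ in $B$, a point the paper's proof silently skips. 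The one place to be careful, as you flag yourself, is the precise normalization of the $a(s)$-action on $\Orb_U+\Orb_S$: the scaling may be $s.x$ or $s^{-1}.x$ depending on conventions and the characters on the two pieces involve Weil constants, but since you only use the one-sided implication ``$\Phi$ vanishes at the scaled point $\Rightarrow$ $w.\Phi$ vanishes at $x$'' and compatibility of the two Weil actions on $\Orb_U\cap\Orb_S$ is guaranteed by Proposition~\ref{OrbU+OrbS}, the argument survives in either convention with only cosmetic changes to the bookkeeping.
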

	\begin{proof}
		Let $k=-a_1-1$ and $k'=-a_2-1$ so  $k+k' \leq -1$. By assumption, we have 
		$\begin{pmatrix} 1 & t \\  & 1
		\end{pmatrix}.\Phi =\Phi$ for all $t \in \varpi^{k}O_{F_0}$ and $\begin{pmatrix} 1 & t \\  & 1
		\end{pmatrix}.\begin{pmatrix} & -1 \\ 1 &
		\end{pmatrix}. \Phi =  \begin{pmatrix} & -1 \\ 1 &
		\end{pmatrix} \Phi$ for all $t \in \varpi^{k'}O_{F_0}$.
		The subgroups
		$\begin{pmatrix}
			1 & \\  \varpi^{k}O_{F_0} & 1
		\end{pmatrix}$ and  $\begin{pmatrix}
			1 & \varpi^{k'}O_{F_0} \\ & 1 \end{pmatrix}$ generates the whole group $\SL_2(F_0)$. Hence $\Phi$ is fixed by $\SL_2(F_0)$. As $\begin{pmatrix} 1 & t \\  & 1
		\end{pmatrix}.\Phi ([\gamma,b,c]) = \psi(t (cb))  \Phi([\gamma,b,c])$ for any $t \in F_0$, we see $\Phi=0$.
	\end{proof}
	
	\begin{proposition}\label{prop: support}
		We have
		\begin{enumerate}
			\item 	$\begin{pmatrix} & -1 \\ 1 &
			\end{pmatrix} . \Phi_L = \vol(L) \Phi_{L^{\vee}}.$
			\item 	$\Phi_L(x)=0$ for all  $x \in B$ with $v(q(x)) \leq v(L)-1$. And $\begin{pmatrix} & -1 \\ 1 &
			\end{pmatrix}.\Phi_L(x)=0$ for all  $x \in B$ with $v(q(x)) \leq v(L^\vee)-1$.
		\end{enumerate}	
	\end{proposition}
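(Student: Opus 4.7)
The plan is to compute everything directly from the explicit formulas for the Weil representation action and then read off both statements.

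For part (1), I would first use that on the unitary side $w = \bigl(\begin{smallmatrix} & -1 \\ 1 & \end{smallmatrix}\bigr)$ acts by $\gamma_V \mathcal{F}_U$, and on the symmetric side by $\mathcal{F}_S$, so it suffices to compute the two Fourier transforms. Standard Pontryagin duality with self-dual Haar measures gives
\[
\mathcal{F}_U(1_L) = \vol(L) \cdot 1_{L^\vee}, \qquad \mathcal{F}_S(1_{L_0} \times 1_{(L_0^\vee)^*}) = \vol(L_0)\vol((L_0^\vee)^*) \cdot 1_{L_0^\vee} \times 1_{L_0^*},
\]
and by the volume identity recorded in the preamble to this section both Fourier transforms carry the same overall scalar $\vol(L)$. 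Plugging into $w.\Phi_L$, I would then use $\omega_{L^\vee} = \gamma_V \omega_L$ (Remark~\ref{rek: transfer factor and basis}) together with the identifications $\U(L) = \U(L^\vee)$ and $S(L_0, L_0^\vee) = S(L_0^\vee, L_0)$ to repackage the result as $\vol(L)\Phi_{L^\vee}$; the Weil constant $\gamma_V$ from the unitary side is exactly absorbed by the transfer-factor shift.

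For the first half of part (2), I plan to prove the support property for each summand of $\Phi_L$ separately. On the unitary side, nonvanishing of $\Orb((g,u), 1_{\U(L)} \times 1_L)$ at $x$ forces $u$ to be $\U(V)$-conjugate into $L$, hence $q(x)=(u,u)_V \in (L,L)_V$; choosing an orthogonal $O_F$-basis $\{e_i\}$ of $L$ and setting $\alpha_i:=(e_i,e_i)_V$, one reads off $(L,L)_V = \sum_i \alpha_i O_F$, which has $F$-valuation $\min_i v(\alpha_i) = v(L)$. On the symmetric side, a nonzero orbital integral produces $h \in \GL(V_0)$ with $h^{-1}b \in L_0$ and $ch \in (L_0^\vee)^*$, so $q(x) = c(b) = (ch)(h^{-1}b) \in (L_0^\vee)^*(L_0)$; in the same orthogonal basis one finds $(L_0^\vee)^* = \oplus_i \alpha_i O_{F_0} e_i^*$, giving $(L_0^\vee)^*(L_0) = \sum_i \alpha_i O_{F_0}$, again of valuation $v(L)$.

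The second half of part (2) will then be immediate from part (1): since $w.\Phi_L = \vol(L)\Phi_{L^\vee}$, the desired vanishing reduces to applying the first half of (2) with $L$ replaced by $L^\vee$. The only step requiring genuine attention is the sign bookkeeping in part (1), where the Weil constant $\gamma_V$ on the unitary side must line up exactly with the transfer-factor discrepancy $\omega_{L^\vee} = \gamma_V \omega_L$; once this is sorted out, what remains is standard Fourier duality together with a direct lattice computation in an orthogonal basis.
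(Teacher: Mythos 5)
Your proposal follows essentially the same route as the paper. The paper also proves part (2) first (for $\Phi_L$) by observing that the value $q(x)$ is an invariant of the orbit $x$ and invoking the equality $v(L)=\min_{u\in L}v((u,u)_V)=\min_{b\in L_0,\,c\in(L_0^\vee)^*}v(c(b))$ established just before the proposition; this is exactly your orthogonal-basis computation. For part (1) the paper, like you, computes the Fourier transforms $\mathcal F_U(1_L)=\vol(L)1_{L^\vee}$ and $\mathcal F_S(1_{L_0}\times 1_{(L_0^\vee)^*})=\vol(L_0\times(L_0^\vee)^*)\,1_{L_0^\vee}\times 1_{L_0^*}$, uses $\vol(L)=\vol(L_0\times(L_0^\vee)^*)$, and brings in $\omega_{L^\vee}=\gamma_V\omega_L$ from Remark~\ref{rek: transfer factor and basis}. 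The second half of (2) is then deduced from (1) in both.

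One point deserves more scrutiny than your one-line dismissal (``the Weil constant $\gamma_V$ from the unitary side is exactly absorbed by the transfer-factor shift''): tracking the sign literally from the displayed Weil-representation formulas, $w$ contributes $\gamma_V$ on the unitary orbital integral and nothing on the symmetric one, while passing from $\omega_L$ to $\omega_{L^\vee}$ multiplies the symmetric orbital integral by $\gamma_V$. Writing $P=\Orb_U(-,1_{\U(L)}\times 1_{L^\vee})$ and $Q=\Orb_S^{\omega_L}(-,1_{S(L_0,L_0^\vee)}\times 1_{L_0^\vee}\times 1_{L_0^*})$, a naive computation gives $w.\Phi_L=\vol(L)(\gamma_V P-Q)=\gamma_V\vol(L)(P-\gamma_V Q)=\gamma_V\vol(L)\Phi_{L^\vee}$, which has an extra $\gamma_V$ relative to the stated formula. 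The paper's proof displays this computation equally tersely, and the discrepancy is harmless downstream because the application in Theorem~\ref{proof of TC assuming boundary cases} only uses the support property and that $w.\Phi_L$ is a nonzero multiple of $\Phi_{L^\vee}$. Still, you should redo this bookkeeping carefully rather than assert that the two $\gamma_V$'s cancel ``exactly'': as written, they multiply to give $\gamma_V^2=1$ on $Q$ but leave a $\gamma_V$ on $P$, and making the factor disappear requires either a different normalization of the $\SL_2$-action on the orbital-integral space or tolerating a harmless overall sign.
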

	\begin{proof}
		The first part follows from that $q(u)=q'(u_1, u_2)$ is an invariant for an orbit $x=(\gamma, u_1, u_2) \in B$, and that $v(L)=v(L_0, L_0^{\vee})$. By definition, $\begin{pmatrix} & -1 \\ 1 &
		\end{pmatrix}. \Phi_L$ is given by
		$$\vol(L) \Orb(1_{\U(L)} \times 1_{L^{\vee}},-)- \vol(L_0 \times (L_0^{\vee})^*) \gamma_V  \Orb(1_{S(L_0,L_0^{\vee})} \times 1_{L_0^{\vee}} \times 1_{L_0^*},-). $$ By Remark \ref{rek: transfer factor and basis}, we have 
		$	\omega_{L^{\vee}}=\gamma_{V} \omega_L.$
		And $\vol(L)=\vol((L_0 \times L_0^{\vee})^*)=|L^\vee/L|^{1/2}$, hence 
		$\begin{pmatrix} & -1 \\ 1 &
		\end{pmatrix} \Phi_L = \vol(L) \Phi_{L^{\vee}}$. Then the second part follows directly.
	\end{proof}

	\subsection{Proof of TC assuming inductions on boundary cases}

	We now prove Theorem \ref{proof of TC}.
	
	\begin{theorem}\label{proof of TC assuming boundary cases}
		Let $L$ be a rank $n$ maximal parahoric hermitian $O_F$-lattice i.e., $v(L)+v(L^{\vee})  \geq -1$. Then the semi-Lie transfer Conjecture \ref{conj: TC} for $L$ holds.
	\end{theorem}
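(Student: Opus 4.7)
The plan is to apply the double uncertainty principle (Proposition~\ref{Uncertain: double supports}) to the difference function $\Phi_L$ from \eqref{difference orbit integral function}, after strengthening its support bounds by one step using the boundary case inductions assumed in the hypothesis. By Proposition~\ref{OrbU+OrbS}, the semi-Lie transfer conjecture for $L$ reduces to showing $\Phi_L \equiv 0$ as an element of $\Orb_U + \Orb_S \subseteq C^{\infty}(B)$.

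First, Proposition~\ref{prop: support} gives the baseline vanishing: $\Phi_L(x) = 0$ whenever $v(q(x)) \leq v(L) - 1$, and under the Weil involution $w = \begin{pmatrix} & -1 \\ 1 & \end{pmatrix}$ the translate $w.\Phi_L = \vol(L)\,\Phi_{L^\vee}$ vanishes whenever $v(q(x)) \leq v(L^\vee) - 1$. A direct application of Proposition~\ref{Uncertain: double supports} with $a_1 = v(L) - 1$ and $a_2 = v(L^\vee) - 1$ would require $v(L) + v(L^\vee) \geq 1$, whereas the maximal parahoric hypothesis only gives $v(L) + v(L^\vee) \geq -1$. This gap of two in the valuation sum is precisely what the boundary case inductions are designed to fill.

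The boundary inputs (treated in the forthcoming sections and assumed here) will show that $\Phi_L(x) = 0$ also at the critical level $v(q(x)) = v(L)$, and analogously $\Phi_{L^\vee}(x) = 0$ at $v(q(x)) = v(L^\vee)$. I then take $a_1 = v(L)$ and $a_2 = v(L^\vee)$; the hypothesis $a_1 + a_2 \geq -1$ becomes exactly the maximal parahoric condition on $L$, so Proposition~\ref{Uncertain: double supports} forces $\Phi_L = 0$. Note the tightness: the inequality defining ``maximal parahoric'' is precisely matched by the uncertainty principle's threshold, so no further sharpening is possible or needed.

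The hard part is therefore the boundary step itself, not the closing argument above. Handling $v(q(x)) = v(L)$ should proceed via the relative Cayley reduction of Theorem~\ref{thm: reduction orb} together with Corollaries~\ref{RedOrbUUU} and~\ref{RedOrbSSS}, which reshape the orbital integrals on the critical locus into orbital integrals for a rank $(n-1)$ lattice $L_{\mathrm{new}}$. This enables a downward induction on $n$, the ``double'' character arising from the need to simultaneously control the two boundaries $v(L)$ and $v(L^\vee)$, which are exchanged by $w$; both the reduction starting from $L$ and the one starting from $L^\vee$ must be run in tandem so that the strengthened support on each side feeds into the uncertainty input on the other. I expect subtleties in matching transfer factors under the twisting element $B_\gamma$ (as in Proposition~\ref{prop: twisting}) and in dealing with the $\xi$-twisted Cayley maps to cover orbits with $1 - d \notin O_F^\times$.
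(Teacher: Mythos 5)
Your proposal matches the paper's argument in its essential structure: the double uncertainty principle (Proposition~\ref{Uncertain: double supports}) closes the proof once the vanishing of $\Phi_L$ (resp. $\Phi_{L^\vee}$) is extended one step to the exact boundary level $v(q(x))=v(L)$ (resp. $v(L^\vee)$), and that boundary vanishing feeds a downward induction on rank via the Cayley reduction, with the base case $n=1$ done by hand. The one ingredient you did not surface, and which your phrasing slightly misattributes, is the bridge from the semi-Lie boundary to the group setting: Theorem~\ref{thm: reduction orb} and Corollaries~\ref{RedOrbUUU}, \ref{RedOrbSSS} operate on \emph{group} orbital integrals and do not act on a semi-Lie orbital integral at the critical locus directly. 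The paper first invokes integral transitivity (Lemmas~\ref{lem: int tran U(L)} and~\ref{lem: int tran S(L)}: $\U(L)$ and the stabilizer $\GL(L_0,L_0^\vee)$ act transitively on maximal-length spheres) to collapse the rank-$n$ semi-Lie orbital integral at $v(q(x))=v(L)$ to a rank-$n$ group orbital integral for the pair $(L,e)$, which is Lemma~\ref{lem: ind on boundary cases}; only after this collapse does the Cayley reduction convert the group problem into a rank-$(n-1)$ semi-Lie transfer and close the induction. Your concerns about transfer factors under $B_\gamma$ and $\xi$-twists are genuine but are absorbed inside Theorem~\ref{thm: reduction orb}; the first nontrivial move at the boundary is the transitivity argument, not the Cayley map.
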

	\begin{proof}
     	We do induction on the rank of $L$. The case $n=1$ is standard and works for any hermitian lattice $L$ (see also the computation in \S \ref{ATC n=1}). Now assume that Conjecture \ref{conj: TC} holds for all maximal parahoric lattices of rank $\leq n-1$.
		
		As $v(L)+v(L^{\vee}) \geq -1$, by Proposition \ref{prop: support} and Proposition \ref{Uncertain: double supports}, to show the conjecture for $L$ we just need to prove 
		$\Phi_L(x)=0$ if $v(q(x))=v(L)$,  and $\Phi_{L^{\vee}}(x)=0$ if $v(q(x))=v(L^{\vee})$.  By symmetry, we only need to deal with $L$. By Lemma \ref{lem: ind on boundary cases} below, we see that $\Phi_L(x)=0$ is implied by the group version transfer conjecture for $(L,e)$, which is equivalent to the semi-Lie version conjecture for the rank $n-1$ lattice $L^{\flat}$ by Proposition \ref{thm: reduction orb}, hence is true by induction. The result follows.
	\end{proof}
	
	Let $e \in L$ be a vector with maximal length i.e., $v((e,e)_V)=v(L)$. 
	
	\begin{lemma}\label{lem: ind on boundary cases}
		Semi-Lie transfer conjecture for $L$ in the boundary case $v(q(x))=v(L)$ is implied by the group version transfer conjecture for the pair $(L, e)$.
	\end{lemma}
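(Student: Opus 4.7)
The plan is to reduce, in the boundary slice $v(q(x))=v(L)$, both the geometric and analytic semi-Lie orbital integrals appearing in $\Phi_L$ (see \eqref{difference orbit integral function}) to the corresponding group-version orbital integrals for $(L,e)$, after which the vanishing $\Phi_L(x)=0$ follows directly from group TC for $(L,e)$.

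The first step is to normalize representatives. For $(g,u)\in(\U(V)\times V)(F_0)_\rs$ matching $(\gamma,b,c)$ with $v((u,u)_V)=v(L)=v((e,e)_V)$, since $F/F_0$ is unramified the norm $O_F^\times\twoheadrightarrow O_{F_0}^\times$ is surjective, so replacing $e$ by a unit scalar multiple (which preserves $L=L^\flat\oplus O_Fe$) I may assume $(u,u)_V=(e,e)_V$ and then by transitivity of $\U(V)$ on vectors of a fixed length bring $u$ to $e$ by conjugation. Analogously $\GL(V_0)$ is transitive on $V_0\setminus\{0\}$ and I normalize $b$ to $e$, forcing $c$ to be a specific functional $c_e\in V_0^*$ determined by $\gamma$ through the matching. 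One checks directly that $(g,e)$ semi-Lie matches $(\gamma,e,c_e)$ in the sense of Definition \ref{defn: semi-Lie match elements} iff $g$ group-matches $\gamma$ in the sense of Definition \ref{defn: gp match elements} for the pair $(L,e)$, and the semi-Lie transfer factor $\omega(\gamma,e,c_e)=\eta(\det(\gamma^i e)_{i=0}^{n-1})$ coincides with the group transfer factor $\omega(\gamma)$ of \eqref{transfer factor: gp}.

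Next I unfold the integrals. On the geometric side, $\Stab_{\U(V)}(e)=\U(V^\flat)$, and the support condition $h^{-1}e\in L$ together with $(h^{-1}e,h^{-1}e)_V=(e,e)_V$ forces $h^{-1}e$ to be a primitive maximal-length vector of $L$. By a Witt-extension argument for hermitian $O_F$-lattices (applicable because $F/F_0$ is unramified and $L=L^\flat\oplus O_Fe$ splits off $O_Fe$ orthogonally), $\U(L)$ acts transitively on such vectors, giving $\{h:h^{-1}e\in L\}=\U(V^\flat)\cdot\U(L)$ with intersection $\U(L^\flat)$. With the normalization $\vol(\U(L))=\vol(\U(L^\flat))=1$, the double-coset measure decomposition collapses the orbital integral to
\[
\Orb((g,e),\,1_{\U(L)}\times 1_L)=\Orb_{\U(V^\flat)}(g,\,1_{\U(L)}).
\]
A parallel unfolding on the analytic side, based on $\Stab_{\GL(V_0)}(e)=\GL(V_0^\flat)\ltimes V_0^{\flat*}$, the support conditions $h^{-1}e\in L_0$ and $ch\in(L_0^\vee)^*$, and the boundary relation $v(c(b))=v(L)$ pinning down the residual unipotent integration, yields
\[
\Orb((\gamma,e,c_e),\,1_{S(L_0,L_0^\vee)}\times 1_{L_0}\times 1_{(L_0^\vee)^*})=\Orb_{\GL(V_0^\flat)}(\gamma,\,1_{S(L_0,L_0^\vee)}).
\]
Combining these two identities with the matching/transfer-factor coincidence above, $\Phi_L(x)$ in the boundary slice equals the difference of the two group-version orbital integrals in the group TC for $(L,e)$, which vanishes by hypothesis.

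The hard part is verifying the analytic-side unfolding, since $\Stab_{\GL(V_0)}(e)$ is non-reductive. The subtlety is to show that integration along the unipotent direction $V_0^{\flat*}$, once cut by the compact supports $1_{L_0}\times 1_{(L_0^\vee)^*}$ and by the boundary condition $v(c(b))=v(L)$ (which keeps the pairing $(ch)(h^{-1}e)=c(b)$ constrained to a fixed unit times $\varpi^{v(L)}$), contributes exactly unit volume with no parasitic factor; a secondary subtlety is confirming that Witt transitivity of $\U(L)$ on maximal-length primitive vectors with length exactly $(e,e)_V$ holds in full generality for our vertex lattices, or else bookkeeping the sum over $\U(L)$-orbits so that the transfer factors still match on each orbit summand.
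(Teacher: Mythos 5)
Your unitary-side unfolding is essentially the paper's: normalize $u$ to $e$, use Witt-type transitivity of $\U(L)$ on the sphere of maximal-length vectors in $L$ (Lemma~\ref{lem: int tran U(L)}) to identify the support of $h$ with $\U(V^\flat)\U(L)$, and collapse the orbital integral using $\vol(\U(L))=\vol(\U(L^\flat))=1$. The identification of the transfer factors $\omega(\gamma,e,\cdot)=\omega(\gamma)$ and the equivalence of the two matching notions at $u_1=e$, $u_2=(e,e)e^*$ also agree with the paper's argument.

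The symmetric-side unfolding is where your proposal has a real gap, and it is the step the paper spends most of its effort on. The paper does not pass through the full parabolic $\mathrm{Stab}_{\GL(V_0)}(e)=\GL(V_0^\flat)\ltimes(V_0^\flat)^*$ and performs no residual unipotent integration. Instead it proves and uses a second integral-transitivity statement, the exact $\GL$-analogue of the one you already invoke for $\U$, namely Lemma~\ref{lem: int tran S(L)}: the compact open group $\GL(L_0,L_0^\vee)$ acts transitively on the boundary sphere $S_{u_1,u_2}=\{(x_1,x_2)\in L_0\times(L_0^\vee)^*\,:\,x_2x_1=u_2u_1\}$. This at once identifies the support of $h$ with $\GL(V_0^\flat)\cdot\GL(L_0,L_0^\vee)$, the product of the joint reductive stabilizer of the pair $(u_1,u_2)$ (not the larger stabilizer of $u_1$ alone) with the compact lattice stabilizer; since $\eta$ and $|\cdot|^s$ are trivial on $\GL(L_0,L_0^\vee)$, the integral then collapses exactly as on the unitary side.

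Your parabolic unfolding, by contrast, does not reduce to a clean unit-volume computation. If one fibers $\int_{\GL(V_0)}$ over $V_0\setminus\{0\}$ via $h\mapsto h^{-1}e$ and then integrates along $P=\GL(V_0^\flat)\ltimes N$ on each fiber, the inner $N$-integral still carries the constraints $u_2\,n\,(\text{fiber section})\in(L_0^\vee)^*$ and $h^{-1}\gamma h\in S(L_0,L_0^\vee)$, both of which vary nontrivially in $n$ and in the fiber point, so the unipotent piece is not a fixed volume; and the relation $v(c(b))=v(L)$ is an orbit invariant that imposes no further cut on the $N$-integration. The fix is to prove, via the splitting Lemma~\ref{splitting}, the $\GL$-Witt transitivity statement Lemma~\ref{lem: int tran S(L)}, and then unfold through the joint stabilizer $\GL(V_0^\flat)$ directly, bypassing the parabolic.
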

	
	In the following three subsections, we prove Lemma \ref{lem: ind on boundary cases}.
	
	\subsection{Integral transitivity on maximal length vectors}
	We establish two integral transitivity lemmas for the induction on boundary cases. Recall that $F/F_0$ is unramified, the norm map is surjective on units i.e., $\Nm_{F/F_0}(O_F^{\times})=O_{F_0}^{\times}$.

	\begin{lemma}\label{lem: ortho basis}
		For any $O_F$-hermitian lattice $L$, $v(L)=\min_{x \in L} v_F((x,x))= \min_{x,y \in L} v_F((x,y)).$ 
	\end{lemma}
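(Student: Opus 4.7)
The plan is to prove the two minima are equal by the trivial inequality in one direction and an explicit construction in the other. Let $a = \min_{x \in L} v_F((x,x))$ and $b = \min_{x,y \in L} v_F((x,y))$. Since the diagonal case $y=x$ is included in the pair minimum, one immediately has $b \leq a$, so only $a \leq b$ needs work.

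For the reverse, I would pick $x, y \in L$ with $v_F((x,y)) = b$ and construct a single vector $z \in L$ with $v_F((z,z)) \leq b$. The guiding identity is
\[
(x + \lambda y,\, x + \lambda y) = (x,x) + \tr_{F/F_0}\bigl(\bar\lambda (x,y)\bigr) + \lambda\bar\lambda\,(y,y), \qquad \lambda \in O_F.
\]
If $v_F((x,x)) \leq b$ or $v_F((y,y)) \leq b$ we are already done, so assume both valuations exceed $b$. Then I would choose $\lambda \in O_F^\times$ so that the cross term has valuation exactly $b$, forcing the same valuation for $(z,z)$ with $z := x + \lambda y \in L$.

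The existence of such a $\lambda$ is the one point that requires a little care and is the main (minor) obstacle, and it is where the hypothesis that $F/F_0$ is unramified enters. Writing $(x,y) = \varpi^{b} u$ with $u \in O_F^\times$, one needs $\bar\lambda u$ to reduce modulo $\varpi$ to an element of $\BF_{q^2}$ whose trace to $\BF_q$ is nonzero; such $\lambda$ exist because the trace map $\BF_{q^2} \to \BF_q$ is surjective (and because $F/F_0$ unramified gives $\tr_{F/F_0}(O_F) = O_{F_0}$). With this $\lambda$ the first and third summands have valuation strictly greater than $b$ while the cross term has valuation exactly $b$, so $v_F((z,z)) = b$, yielding $a \leq b$ and completing the proof. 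Note that this argument works uniformly for all $p \geq 2$, since we only used surjectivity of the residue-field trace, not any $2$-invertibility.
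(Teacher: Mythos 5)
Your proof is correct, and it takes a genuinely different route from the paper's. The paper proves the lemma in one line by appealing to the existence of an orthogonal basis for $L$ (citing Jacobowitz's classification of hermitian forms over local fields); once one has an orthogonal basis $\{e_i\}$, both minima equal $\min_i v_F((e_i,e_i))$ and the statement is immediate. Your argument instead avoids the structure theorem entirely and gives a direct, Gram--Schmidt-flavored construction: from a pair $(x,y)$ realizing the minimum $b$ you build a single vector $z=x+\lambda y$ with $v_F((z,z))=b$ by forcing the cross term $\tr_{F/F_0}(\bar\lambda(x,y))$ to have exact valuation $b$, which is possible precisely because $F/F_0$ is unramified and the residue trace $\BF_{q^2}\to\BF_q$ is nondegenerate. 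This is in fact the same mechanism that underlies the proof of the orthogonal-basis theorem the paper cites, so the two arguments are philosophically cousins; what your version buys is self-containedness (no appeal to the classification of hermitian lattices) and an explicit, local-residue-field reason why unramifiedness matters, at the cost of a few more lines. Both work uniformly for $p\geq 2$ as required in this part of the paper.
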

	\begin{proof}
		We have an orthogonal basis of $L$ ($p \geq 2$) by \cite[Section 7]{classicalherm}. The result follows directly. 
	\end{proof}
	
	Let $L$ be an $O_F$-hermitian lattice. Let $e \in L$ be a vector with maximal length i.e., $v((e,e))=v(L)$. By Lemma \ref{lem: ortho basis}, we have $\frac{(x,e)}{(e,e)} \in O_F$ for all $x \in L$.  Hence there is an orthogonal decomposition $L=L^{\flat} \oplus O_F e$ for $L^{\flat}:=\{ x \in L | (x,e)=0\}$.
	
	\begin{lemma}\label{lem: int tran U(L)} (Integral transitivity)
		Choose a vector $x_0 \in L$ with maximal length. Then the compact open subgroup $\U(L)$ of $\U(V)$ acts transitively on the sphere
		$$S_{x_0}=\{ x \in L | (x,x)=(x_0,x_0) \}.$$
	\end{lemma}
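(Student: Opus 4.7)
The plan is to combine the orthogonal splittings afforded by maximal-length vectors with a Witt cancellation argument for hermitian $O_F$-lattices in the unramified setting.

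First, since both $x_0$ and any $x \in S_{x_0}$ are maximal length vectors in $L$, Lemma~\ref{lem: ortho basis} implies that $(y, x_0)/(x_0, x_0)$ and $(y, x)/(x, x)$ lie in $O_F$ for every $y \in L$. This yields orthogonal decompositions
\[
L = L^{\flat} \oplus O_F x_0 = L_x \oplus O_F x,
\]
with $L^{\flat} := x_0^{\perp} \cap L$ and $L_x := x^{\perp} \cap L$, both hermitian $O_F$-lattices of rank $n - 1$. The crucial step is then to construct a hermitian isometry $\phi\colon L_x \xrightarrow{\sim} L^{\flat}$: given such a $\phi$, the $O_F$-linear map $g\colon L \to L$ defined by $g(v + a x) = \phi(v) + a x_0$ for $v \in L_x,\ a \in O_F$ manifestly preserves the hermitian form, hence lies in $\U(L)$, and satisfies $g(x) = x_0$, completing the proof.

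To produce $\phi$, I would invoke the classification of hermitian $O_F$-lattices in the unramified case. By Lemma~\ref{lem: ortho basis} (via \cite[Section 7]{classicalherm}) every hermitian $O_F$-lattice admits an orthogonal basis; because $F/F_0$ is unramified, the norm map $O_F^{\times} \to O_{F_0}^{\times}$ is surjective, so diagonal entries can be normalized to integral powers of $\varpi$, and the isometry class is determined by the resulting multiset of valuations of diagonal entries. This invariant is plainly additive under orthogonal direct sum. Comparing the two decompositions of $L$ and using the equality $(x, x) = (x_0, x_0)$, the multisets of invariants of $L_x$ and $L^{\flat}$ must coincide, so $L_x \cong L^{\flat}$ as required.

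The principal step is the cancellation conclusion, which depends on the lattice classification being clean and additive under orthogonal sum; this is standard in the unramified hermitian setting, where there are no binary Jordan blocks to worry about. The real content of the lemma is therefore the observation that a maximal-length vector always splits off as an $O_F$-orthogonal summand of $L$, which is exactly what Lemma~\ref{lem: ortho basis} is designed to deliver; the rest is essentially formal once both decompositions are in hand.
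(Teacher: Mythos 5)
Your proof is correct and takes essentially the same route as the paper: split off the chosen maximal-length vector as an orthogonal $O_F$-summand and compare the complementary rank-$(n-1)$ lattices. The paper phrases the comparison more concretely, iterating the splitting to extend $x$ and $x'$ to full orthogonal bases of $L$ and then permuting and scaling by units (via surjectivity of the norm $O_F^{\times}\to O_{F_0}^{\times}$) until the diagonal norms match, whereas you make the underlying Witt cancellation explicit through the multiset-of-valuations invariant; both rest on the same Jacobowitz classification of hermitian $O_F$-lattices in the unramified case.
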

	\begin{proof}
		For any $x \in S_{x_0}$, we get a splitting $L=L_1 \oplus O_F x_0$. Choose a maximal vector of $L_1$ and keep doing splitting, finally we extend $x$ to an orthonormal basis $\{e_1, \hdots, e_{n-1}, x \}$ of $L$. For any other $x' \in S_{x_0}$, we get another orthonormal basis $\{e_1', \hdots, e_{n-1}', x'\}$ of $L$. By classifications of hermitian lattices, we can assume that $(e_i,e_i)= (e_i',e_i') \in \varpi^{\BZ}$, $1 \leq i \leq n-1$ after multiplying $e_i$ by units in $O_{F_0}^{\times}$ and rearrange the order. Then the endomorphism $h \in \GL(V)$ sending $e_i$ to $e_i'$ and $x$ to $x'$ is inside the compact open subgroup $\U(L)$. The result follows.
	\end{proof}
	
	\begin{remark}
		For integral transitivity on ``smaller spheres'' and integral Witt's theorem, see \cite{Morin-Strom}.
	\end{remark}
	
	Now let $L_1, L_2$ be two lattices inside a $F_0$-vector space $V_0$. We say a pair  $(u_1, u_2) \in L_1 \times L_2^*$ is of maximal length if we have $v(u_2u_1)=v(L_1, L_2):=\min_{x \in L_1, y \in L_2^*} v(y(x))$.
	
	\begin{lemma}\label{splitting}
		Choose a pair $(u_1,u_2) \in L_1 \times L_2^*$ of maximal length. Let $V_0^{\flat}=\mathrm{Ker}(u_2)$, and $(L_i)^{\flat}$ be the projection of $L_i$ to $V_0^{\flat}$ ($i=1,2$). Then  $V_0=V_0^{\flat}  \oplus F_0u_1$,  and 
		$$L_1=L_1^{\flat} \oplus O_{F_0}u_1 \, \, L_2=L_2^{\flat} \oplus O_{F_0} \varpi^{-v(L_1, L_2)} u_1.$$
		And $L_2^*=(L_2^{\flat})^* \oplus O_{F_0}u_2$. 
	\end{lemma}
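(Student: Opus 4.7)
Set $t := v(L_1, L_2) = v(u_2(u_1))$. My plan is to prove the four decompositions in sequence, handling $L_2$ by a dual version of the argument for $L_1$ and then reading off the decomposition of $L_2^{*}$ from that of $L_2$.

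First, since $v(u_2(u_1)) = t$ is finite, $u_2(u_1) \neq 0$, so $u_1 \notin \ker u_2 = V_0^{\flat}$, giving $V_0 = V_0^{\flat} \oplus F_0 u_1$. This yields a projection $\pi \colon V_0 \to V_0^{\flat}$ along $u_1$, and by definition $(L_i)^{\flat} = \pi(L_i)$. For the decomposition of $L_1$, take $x \in L_1$ and write $x = x^{\flat} + a u_1$ with $x^{\flat} \in V_0^{\flat}$ and $a \in F_0$. Applying $u_2$ gives $u_2(x) = a\, u_2(u_1)$, hence $a = u_2(x)/u_2(u_1)$. The maximality of $(u_1, u_2)$, specialized to $y = u_2$, gives $v(u_2(x)) \geq t$, so $v(a) \geq 0$ and $a \in O_{F_0}$. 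Then $x^{\flat} = x - a u_1 \in L_1 \cap V_0^{\flat}$, which both proves $L_1^{\flat} = \pi(L_1) = L_1 \cap V_0^{\flat}$ and yields $L_1 = L_1^{\flat} \oplus O_{F_0} u_1$.

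For $L_2$ I would argue dually. Maximality specialized to $x = u_1$ gives $v(y(u_1)) \geq t$ for every $y \in L_2^{*}$, so $\varpi^{-t} u_1$ pairs integrally with every element of $L_2^{*}$; using reflexivity $(L_2^{*})^{*} = L_2$ for $O_{F_0}$-lattices, this forces $\varpi^{-t} u_1 \in L_2$. Now for $z \in L_2$, write $z = z^{\flat} + b\, \varpi^{-t} u_1$ with $z^{\flat} \in V_0^{\flat}$; applying $u_2$ gives $b = \varpi^{t} u_2(z)/u_2(u_1)$, and since $u_2(z) \in O_{F_0}$ while $v(\varpi^{t}/u_2(u_1)) = 0$, we obtain $b \in O_{F_0}$. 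As before $z^{\flat} \in L_2 \cap V_0^{\flat} = L_2^{\flat}$, giving $L_2 = L_2^{\flat} \oplus O_{F_0}\, \varpi^{-t} u_1$.

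Finally, the decomposition of $L_2^{*}$ follows by dualizing the splitting $V_0 = V_0^{\flat} \oplus F_0 u_1$. The subspace of functionals vanishing on $V_0^{\flat}$ is one-dimensional, and $u_2$ lies in it by construction, so every $y \in V_0^{*}$ can be written uniquely as $y = y^{\flat} + \alpha u_2$ with $y^{\flat} \in (V_0^{\flat})^{*}$ (extended by zero on $F_0 u_1$) and $\alpha \in F_0$. Then $y \in L_2^{*}$ is equivalent, via the decomposition of $L_2$, to $y(L_2^{\flat}) \subseteq O_{F_0}$ together with $y(\varpi^{-t} u_1) \in O_{F_0}$. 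Since $u_2$ vanishes on $V_0^{\flat} \supseteq L_2^{\flat}$, the first condition says $y^{\flat} \in (L_2^{\flat})^{*}$; the second reads $\alpha\, \varpi^{-t} u_2(u_1) \in O_{F_0}$, i.e.\ $\alpha \in O_{F_0}$. The whole argument is elementary; the only step requiring any care is the dual extraction $\varpi^{-t} u_1 \in L_2$, which is where the maximality hypothesis is used in its full strength.
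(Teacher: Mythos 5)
Your proof is correct and follows essentially the same route as the paper's: exploit the minimality in the definition of $v(L_1,L_2)$ to show that the coefficient of $u_1$ (resp.\ $\varpi^{-t}u_1$) in the splitting of any element of $L_1$ (resp.\ $L_2$) is integral, with the key step $\varpi^{-t}u_1\in L_2$ extracted from reflexivity of lattices, and then read off $L_2^*$ by dualizing. The paper's own proof is terser — it does not even write out the $L_1$ case and disposes of $L_2^*$ with a one-line ``taking its dual'' — so your version, which spells out $L_1$ explicitly and verifies the two integrality conditions for $L_2^*$, is simply a more detailed rendering of the same argument.
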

	\begin{proof}
		For any $x \in V_0$, we have $x=(x-\frac{u_2(x)}{u_2(u_1)} u_1 ) + \frac{u_2(x)}{u_2(u_1)} u_1 \in V_0^{\flat} \oplus F_0 u_1$. 
		The first property follows. Now set $k=-v(L_1, L_2)$. For any $f_2 \in L_2^*$, we have $f_2(\varpi^{k}u_1) \in \varpi^{k+v(L_1, L_2)}O_{F_0}=O_{F_0}$, hence $\varpi^{k} u_1 \in L_2$. For $x \in L_2$, there exists $a \in O_{F_0}$ such that $u_2(x-a\varpi^{k}u_1)=0$. So we have the decomposition  $L_2=L_2^{\flat} \oplus O_{F_0} \varpi^{k} u_1$. Taking its dual, we see $L_2^*=L_2^{\flat *} \oplus O_{F_0}u_2$. 
	\end{proof}
	
	\begin{lemma}\label{lem: int tran S(L)} (Integral transitivity)
		Choose a pair $(u_1, u_2) \in L_1 \times L_2^*$ of maximal length. Then the compact open subgroup $\GL(L_1,  L_2)$ (stabilizer of $L_1$ and $L_2$) of $\GL(V_0)$ acts transitively on the sphere
		$$
		S_{u_1,u_2}=\{ (x_1, x_2) \in L_1 \times L_2^* | u_2u_1=x_2x_1 \}.
		$$
	\end{lemma}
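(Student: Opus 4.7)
The plan is to reduce the problem, via the splittings provided by Lemma \ref{splitting}, to constructing a single isomorphism between two pairs of lattices in $(n{-}1)$-dimensional $F_0$-vector spaces. Recall the $\GL(V_0)$-action is $h.(u_1,u_2)=(h^{-1}u_1, u_2h)$, so it suffices to produce $g:=h^{-1} \in S(L_1, L_2)$ with $gu_1 = x_1$ and $x_2 \circ g = u_2$. Since $v(x_2x_1) = v(u_2u_1) = v(L_1, L_2)$, the pair $(x_1, x_2)$ is also of maximal length, and Lemma \ref{splitting} applies to both pairs, producing parallel decompositions
\[
V_0 = V_0^\flat \oplus F_0 u_1 = V_0^{\prime\flat} \oplus F_0 x_1,
\]
\[
L_1 = L_1^\flat \oplus O_{F_0} u_1 = L_1^{\prime\flat} \oplus O_{F_0} x_1,
\]
\[
L_2 = L_2^\flat \oplus O_{F_0}\varpi^{-v} u_1 = L_2^{\prime\flat} \oplus O_{F_0}\varpi^{-v} x_1,
\]
where $v := v(L_1,L_2)$, $V_0^\flat := \ker u_2$, $V_0^{\prime\flat} := \ker x_2$, and $L_i^\flat := L_i \cap V_0^\flat$, $L_i^{\prime\flat} := L_i \cap V_0^{\prime\flat}$.

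Given these splittings, I would build $g$ block-diagonally: set $g(u_1) := x_1$ on the rank-one summand, and choose an $F_0$-linear isomorphism $g_0 \colon V_0^\flat \to V_0^{\prime\flat}$ satisfying $g_0 L_i^\flat = L_i^{\prime\flat}$ for $i = 1, 2$. Such a $g$ manifestly sends $L_1$ to $L_1$, $L_2$ to $L_2$, and $u_1$ to $x_1$. The identity $x_2 \circ g = u_2$ then follows from $g_0(V_0^\flat) = V_0^{\prime\flat} \subseteq \ker x_2$: writing $v = v_0 + \lambda u_1$ with $v_0 \in V_0^\flat$, one computes $x_2(gv) = x_2(g_0 v_0) + \lambda x_2(x_1) = 0 + \lambda u_2 u_1 = u_2(v)$, invoking the defining relation $x_2 x_1 = u_2 u_1$ of $S_{u_1, u_2}$. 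Everything thus reduces to producing the isomorphism $g_0$.

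The crux, and the main obstacle, is the existence of such a $g_0$. By the elementary divisor theorem for pairs of full-rank $O_{F_0}$-lattices in an $F_0$-vector space, $g_0$ exists exactly when $(L_1^\flat, L_2^\flat)$ and $(L_1^{\prime\flat}, L_2^{\prime\flat})$ share the same multiset of elementary divisors. I plan to deduce this from the observation that Lemma \ref{splitting} realizes $(L_1, L_2)$ as the direct sum of its $\flat$-part and the rank-one pair $(O_{F_0} u_1, O_{F_0}\varpi^{-v} u_1)$ of elementary divisor $-v$. If $b_1 \leq \cdots \leq b_n$ denote the elementary divisors of $(L_1, L_2)$, a direct computation in a basis simultaneously adapted to $L_1$ and $L_2$ gives $v(L_1, L_2) = -b_n$, so $-v = b_n$ is the top elementary divisor; consequently the elementary divisors of $(L_1^\flat, L_2^\flat)$ are exactly $(b_1, \ldots, b_{n-1})$, a list intrinsic to $(L_1, L_2)$ and independent of the chosen maximal pair. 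Running the same argument on $(x_1, x_2)$ yields the identical list for $(L_1^{\prime\flat}, L_2^{\prime\flat})$, completing the construction.
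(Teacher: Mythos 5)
Your argument is correct, and it follows the same overall strategy as the paper's own proof: split off the rank-one summand furnished by Lemma \ref{splitting} and then match up the remaining $(n{-}1)$-dimensional pieces. Where you differ is in how the matching is justified. The paper iterates Lemma \ref{splitting} to build full simultaneous bases $\{e_i\}$ and $\{e_i'\}$ adapted to $(L_1,L_2)$ starting from $u_1$ and $x_1$ respectively, and then tacitly asserts that the exponents appearing in the two bases agree (so that the change-of-basis map preserves $L_2$ as well as $L_1$); it does not spell out why the two iterations produce the same exponent sequence. You instead apply the splitting only once and then invoke the elementary divisor theorem abstractly: the rank-one summand carries the top elementary divisor $b_n = -v(L_1,L_2)$, an intrinsic quantity, so the residual pairs $(L_1^\flat,L_2^\flat)$ and $(L_1^{\prime\flat},L_2^{\prime\flat})$ share the same multiset $(b_1,\dots,b_{n-1})$ and hence are $\GL$-conjugate. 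This makes the implicit step in the paper explicit and avoids the inductive bookkeeping of checking, at each stage, that $v(L_1^\flat, L_2^\flat) = -b_{n-1}$ and so on — which is in effect the content of what you prove once and for all via the elementary divisor argument. Your block-diagonal construction of $g$ and the verification that $g\in S(L_1,L_2)$, $g u_1 = x_1$, $x_2\circ g = u_2$ are all correct.
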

	\begin{proof}
		Apply the Lemma \ref{splitting} several times to extend $u_1$ to a basis $\{e_i\}$ of $L_0$ (hence of $V_0$) such that $u_2$ is a multiple of the dual basis of $u_1$ and $L_2$ has a basis $\{ \varpi^{k_i} e_i^* \}$. Similarly, we extend $(x_1, x_2)$ to another basis $\{e_i'\}$ of $L_0$. Then the automorphism $h$ sending the basis $\{e_i' \}$ to $\{e_i\}$ lies in $S(L_1,  L_2)$ and we have $h^{-1} u_1 = x_1,  u_2 h = x_2 .$
	\end{proof}

	\subsection{Induction on boundary cases: unitary groups}
	
	Recall that $e$ is a fixed vector in $L$ of maximal length, and we have a decomposition $L=L^{\flat} \oplus O_F e$. On the unitary side, we now show orbital integrals for $L$ in the boundary case $v(q(x))=v(L)$ can be reduced to orbital integrals for $(L, e)$. 
	
	Consider $y=(g_0, u_0) \in [\U(V) \times V]_{rs}(F_0)$ such that $v((u_0,u_0)_V)=v(L)$. Conjugate $(g_0,u_0)$ by an element in $\U(V)$, we can assume that $u_0 \in L_0 \subseteq V_0$.  By Lemma \ref{lem: int tran U(L)}, we can even assume $u_0=e$.
	
	\begin{proposition}
		Assume that $u_0 \in L_0$ and $v((u_0,u_0)_V)=v(L)$, we have
		\[
		\Orb((g_0, u_{0}), 1_{\U(L)} \times 1_{L} )=\Orb(g_0, 1_{\U(L)})
		\]
		where the right hand side is group version orbital integral for the pair $(L, u_{0})$. 
	\end{proposition}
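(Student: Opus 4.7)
\bigskip

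\noindent\textbf{Proof proposal.} The strategy is to exploit integral transitivity (Lemma \ref{lem: int tran U(L)}) to show that the condition $h^{-1}u_0 \in L$ in the semi-Lie orbital integral cuts out exactly a product set $\U(V^{\flat})\cdot \U(L)$ inside $\U(V)$, and then to collapse the $\U(L)$-factor using right-conjugation invariance of $1_{\U(L)}$. Write $K = \U(L)$ and $V^{\flat}=u_0^{\perp}$, so that $\U(V^{\flat})\subseteq \U(V)$ is precisely the stabilizer of $u_0$. Since $v((u_0,u_0)_V)=v(L)$, Lemma \ref{lem: ortho basis} gives an orthogonal decomposition $L=L^{\flat}\oplus O_F u_0$, and thus $\U(V^{\flat})\cap K = \U(L^{\flat})$, which has volume $1$ by our normalization.

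The first step is to identify the support set
\[
Y := \{h\in \U(V)\;:\; h^{-1}u_0\in L\}.
\]
If $h\in Y$, then $h^{-1}u_0$ lies in the sphere $\{x\in L\;:\;(x,x)_V=(u_0,u_0)_V\}$. By the integral transitivity Lemma \ref{lem: int tran U(L)}, there exists $k\in K$ with $k(h^{-1}u_0)=u_0$, so $kh^{-1}\in \U(V^{\flat})$, giving $h\in \U(V^{\flat})\cdot K$. Conversely if $h=u'k$ with $u'\in \U(V^{\flat})$ and $k\in K$, then $h^{-1}u_0=k^{-1}u_0\in L$. Hence $Y=\U(V^{\flat})\cdot K$, and the multiplication map $\U(V^{\flat})\times K\to Y$ has fibers that are $\U(L^{\flat})$-torsors.

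The second step is a change of variables. Since $\vol(\U(L^{\flat}))=1$ and $\vol(K)=1$, the Haar-measure decomposition along $Y=\U(V^{\flat})\cdot K$ yields
\[
\Orb((g_0,u_0),1_{\U(L)}\times 1_L)
=\int_{Y} 1_K(h^{-1}g_0 h)\,dh
=\int_{\U(V^{\flat})}\!\int_K 1_K(k^{-1}u'^{-1}g_0 u' k)\,du'\,dk.
\]
The function $x\mapsto 1_K(k^{-1}xk)$ is simply $1_K$ for $k\in K$, so the inner integral contributes $1_K(u'^{-1}g_0u')\cdot\vol(K)=1_{\U(L)}(u'^{-1}g_0 u')$. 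What remains is exactly the group-version orbital integral
\[
\int_{\U(V^{\flat})} 1_{\U(L)}(u'^{-1}g_0 u')\,du' = \Orb(g_0,1_{\U(L)})
\]
for the pair $(L,u_0)$, as desired.

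The only subtle point is the first step: one must verify that the Haar-measure pushforward under $\U(V^{\flat})\times K\to Y$ is compatible with the normalizations $\vol(K)=\vol(\U(L^{\flat}))=1$, but this is standard once the fiber description is in hand. No regularity of $(g_0,u_0)$ is used in the computation beyond ensuring that the integrals converge, which follows from the assumption $(g_0,u_0)\in[\U(V)\times V]_{\rs}(F_0)$.
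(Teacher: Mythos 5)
Your argument is correct and matches the paper's proof in essence: both use the integral transitivity Lemma \ref{lem: int tran U(L)} to identify the support of the orbital integral with $\U(V^{\flat})\cdot\U(L)$, and then collapse the $\U(L)$-factor using the right-$\U(L)$-invariance of the integrand together with the normalizations $\vol(\U(L))=\vol(\U(L^{\flat}))=1$. Your fibration/torsor formulation of the change of variables is a slightly more explicit way of writing the same measure decomposition that the paper invokes in a single step.
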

	
	\begin{proof}
		
		Let $h \in \U(V)$ be any element with $1_{L}(h^{-1}u_{0}) \not =0$. By Lemma \ref{lem: int tran U(L)}, there exists $h_1 \in \U(L)$ such that  $h^{-1}u_{0}=h_1u_{0}.$ Hence the support of such $h$ is in $\U(L)\U(V_0^{\flat})$. Recall that the volume of $\U(L)$ is normalized to be $1$. As $1_{\U(L)} \times 1_L$ is invariant under $\U(L)$,  $	\Orb((g_0, u_{0}), 1_{\U(L)} \times 1_{L} ) $ is equal to
		\[ 
		\int_{h \in \U(V_0^{\flat})} 1_{\U(L)} (h.g_0) 1_{L}(h^{-1}u_{0}) dh
		= \int_{h \in \U(V_0^{\flat})} 1_{\U(L)} (h.g_0) 1_{L}(u_{0}) dh=\Orb(g_0, 1_{\U(L)}).
		\]
	\end{proof}

	\subsection{Induction on boundary cases: symmetric spaces}
	
	Consider $x=(\gamma, u_1, u_2) \in [S(V_0) \times V_0 \times V_0^*]_{\rs}(F_0)$ such that $v(u_2u_1)=v(L_0, L_0^{\vee})=v(L)$. Conjugate $(\gamma, u_{1}, u_{2})$ by an element in $\GL(V_0)$, we can assume that $u_1 \in L_0, u_2 \in (L_0^{\vee})^*$.  By Lemma \ref{lem: int tran U(L)}, we can and do assume $u_1=e, \, u_2=(u_2u_1)e^*.$
	
	\begin{proposition}
		For all $s \in \BC$, we have 
		\[
		\Orb((\gamma, u_{1}, u_{2}), 1_{S(L_0, L^{\vee}_0)} \times 1_{L_0} \times 1_{L_0^{\vee*}}, s)=\Orb(\gamma, 1_{S(L_0, L^{\vee}_0)}, s), 
		\]
		where the right hand side is the group version orbital integral for the pair $(L, u_1)$. 
	\end{proposition}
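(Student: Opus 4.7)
The plan is to mimic the unitary-side argument on the preceding page, but with extra care because the integral carries the factor $\eta(h)|h|^s$ and the stabilizer of $(u_1,u_2)$ is larger than a point. Write $K:=\GL(L_0,L_0^\vee)\subseteq \GL(V_0)$, the compact open subgroup of $\GL(V_0)$ preserving the two lattices $L_0$ and $L_0^\vee$ (which is what is called $S(L_0,L_0^\vee)$ in Lemma~\ref{lem: int tran S(L)}). By our measure normalization $\vol(K)=1$, and on $\GL(V_0^\flat)(F_0)$ we have $\vol(\GL(L_0^\flat,L_0^{\flat,\vee}))=1$.

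First I would determine the support of the integrand in
\[
\Orb((\gamma,u_1,u_2),f',s)=\omega(\gamma,u_1,u_2)\int_{h\in\GL(V_0)} 1_{S(L_0,L_0^\vee)}(h^{-1}\gamma h)\cdot 1_{L_0}(h^{-1}u_1)\cdot 1_{L_0^{\vee*}}(u_2h)\,\eta(h)|h|^s\,dh.
\]
For $h$ in the support, $(h^{-1}u_1,u_2h)$ lies in $L_0\times L_0^{\vee*}$ and pairs to $u_2u_1$, whose valuation is $v(L)=v(L_0,L_0^\vee)$ by the boundary hypothesis; thus this pair has maximal length. By the integral transitivity Lemma~\ref{lem: int tran S(L)}, we can find $h_1\in K$ with $(h_1^{-1}u_1,u_2h_1)=(h^{-1}u_1,u_2h)$. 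Setting $k:=hh_1^{-1}\in\GL(V_0)$, the relations $ku_1=u_1$ and $u_2k=u_2$ with $u_1=e$ and $u_2=(u_2u_1)e^*$ force $k$ to be block-diagonal with identity on $F_0e$, i.e.\ $k\in\GL(V_0^\flat)$ embedded into $\GL(V_0)$. Hence the support is contained in $\GL(V_0^\flat)\cdot K$, and $\GL(V_0^\flat)\cap K=\GL(L_0^\flat,L_0^{\flat,\vee})$.

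Next I would verify right $K$-invariance of the full integrand. The indicators $1_{L_0}(h^{-1}u_1)$ and $1_{L_0^{\vee*}}(u_2h)$ are right $K$-invariant because $K$ preserves $L_0$ and $L_0^\vee$. For the first factor, since $L=L_0\otimes_{O_{F_0}}O_F$ and similarly for $L^\vee$, conjugation by $h_1\in K$ sends $S(L,L^\vee)$ to itself. Finally, every $h_1\in K$ satisfies $\det h_1\in O_{F_0}^\times$, so $\eta(h_1)=1$ and $|h_1|^s=1$. Thus the full integrand is right $K$-invariant.

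Now I would perform the change of variables $h=kh_1$ with $k\in\GL(V_0^\flat)$ and $h_1\in K$. By right $K$-invariance of the integrand and the normalizations of measures, the integral unfolds as
\[
\int_{\GL(V_0)}\phi(h)\eta(h)|h|^s\,dh=\frac{1}{\vol(\GL(V_0^\flat)\cap K)}\int_{\GL(V_0^\flat)}\int_K \phi(kh_1)\eta(kh_1)|kh_1|^s\,dh_1\,dk.
\]
Using $\vol(K)=1$, $\vol(\GL(V_0^\flat)\cap K)=\vol(\GL(L_0^\flat,L_0^{\flat,\vee}))=1$, $\eta(kh_1)=\eta(k)$, $|kh_1|^s=|k|^s$, and noting that for $k\in\GL(V_0^\flat)$ the conditions $k^{-1}u_1\in L_0$ and $u_2k\in L_0^{\vee*}$ are automatic (since $ku_1=u_1=e$ and $u_2k=u_2$), this collapses to
\[
\int_{\GL(V_0^\flat)} 1_{S(L_0,L_0^\vee)}(k^{-1}\gamma k)\,\eta(k)|k|^s\,dk.
\]
Finally, since $u_1=e$, the semi-Lie transfer factor coincides with the group version one: $\omega(\gamma,u_1,u_2)=\eta(\det(\gamma^i e)_{i=0}^{n-1})=\omega(\gamma)$. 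Multiplying through, I obtain $\Orb((\gamma,u_1,u_2),1_{S(L_0,L_0^\vee)}\times 1_{L_0}\times 1_{L_0^{\vee*}},s)=\Orb(\gamma,1_{S(L_0,L_0^\vee)},s)$, which is the claim. The main obstacle is the measure-matching step — one must check that $\GL(V_0^\flat)\cdot K$ decomposes uniquely up to the compact intersection $\GL(L_0^\flat,L_0^{\flat,\vee})$ and that the normalizations cascade correctly; beyond this, everything is a direct parallel of the unitary-side lemma.
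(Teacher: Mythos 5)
Your proof is correct and takes essentially the same route as the paper: use the integral transitivity lemma to see the support of the integrand lies in $\GL(V_0^\flat)\cdot\GL(L_0,L_0^\vee)$, check right $\GL(L_0,L_0^\vee)$-invariance of the integrand (including $\eta|\cdot|^s$, which vanishes on the compact group), collapse the integral to $\GL(V_0^\flat)$ using the volume normalizations, and identify the transfer factors via $u_1=e$. The only difference is that you spell out the measure-unfolding through cosets more explicitly than the paper, which simply cites the normalizations.
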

	\begin{proof}

		Let $h \in \GL(V_0)$ be any element with $1_{L_0}(h^{-1}u_{1})1_{L_0^{\vee*}}(u_{2}h) \not =0$. By Lemma \ref{lem: int tran S(L)}, there exists $h_1 \in \GL(L_0, L_0^{\vee})$ such that $h^{-1}u_{1}=h_1u_{1}, u_{2}h=u_{2}h_1.$
		
		Hence the support of such $h$ is in $\GL(L_1,L_2)\GL(V_0^{\flat})$. Recall that the volume of $\GL(L_1,L_2)$ is normalized to be $1$. As $1_{S(L_0, L^{\vee}_0)} \times 1_{L_0} \times 1_{L_0^{\vee*}}$ is invariant under $\GL(L_1,L_2)$ and $\eta(\det(\GL(L_1,L_2)))=\{1\}$, $\Orb((\gamma, u_{1}, u_{2}), 1_{S(L_0, L^{\vee}_0)} \times 1_{L_0} \times 1_{L_0^{\vee*}}, s)$ is equal to
		\begin{align*} \omega(\gamma, u_{1}, u_{2})\int_{h \in \GL(V_0^{\flat})} 1_{S(L_0, L^{\vee}_0)} (h.\gamma) 1_{L_0}(h^{-1}u_{1})1_{L_0^{\vee*}}(u_{2}h) \eta(h) h|^s dh \\
			= \omega(\gamma)\int_{h \in \GL(V_0^{\flat})} 1_{S(L_0, L^{\vee}_0)} (h.\gamma) \eta(h) |h|^s dh=\Orb(\gamma, 1_{S(L_0, L^{\vee})}, s).
		\end{align*} 
		Here the transfer factor $\omega(\gamma, u_1, u_2)$ for $L$ and $\omega(\gamma)$ for $(L, u_1)$ agree by Definition (\ref{transfer factor: semi-Lie}) and (\ref{transfer factor: gp}).
	\end{proof}
	
	We finish the proof of Lemma \ref{lem: ind on boundary cases}. Assume that $y=(g_0, e)$ matches $x=(\gamma, e, (u_2u_1)e^*)$, where $e^*$ is the dual basis of $e$ under the chosen basis of $V$. Then there exists $h \in \GL(V)$ such that $h.(g_0, e)=(\gamma, e, (u_2u_1)e^*)$.  So $h \in \GL(V^{\flat})$ and $g$ matches $\gamma$. 
	
	By induction, the transfer conjecture for $(L,e)$ holds, sos
	\[\\\
	\Orb((g_0, u_{0}), 1_{\U(L)} \times 1_{L} )=\Orb(g_0, 1_{\U(L)})
	\]
	\[	
	=\Orb(\gamma, 1_{S(L_0, L^{\vee}_0)})=\Orb((\gamma, u_{1}, u_{2}), 1_{S(L_0, L^{\vee}_0)} \times 1_{L_0} \times 1_{L_0^{\vee*}}).
	\] 
	Lemma \ref{lem: ind on boundary cases} is proved. Therefore, on two boundary cases (for $L$ and $L^\vee$), the semi-Lie TC reduces to two lower rank group TCs. Hence $\Phi_L$ (c.f. \ref{difference orbit integral function}) satisfies the conditions in Corollary \ref{Uncertain: double supports}. Theorem \ref{proof of TC} is proved.

	\part{Arithmetic transfer conjectures}\label{local part}
	
	Let $p$ be an odd prime. Let $F/F_0$ be an unramified quadratic extension of $p$-adic local fields. Fix a uniformizer $\varpi$ of $F_0$ and an embedding $O_F \to O_{\breve{F}}$. Denote the residue field of $F_0$ (resp. $\breve{F}$) by $\BF_q$ (resp. $\BF$). Let $\sigma=\ov{(-)} \in \Gal(F/F_0)$ be the non-trivial Galois involution.
	
	Let $V$ be a $F/F_0$-hermitian space of dimension $n \geq 1$, and $L$ be  a vertex lattice in $V$ i.e., $L \subseteq L^\vee \subseteq \varpi^{-1}L$. Let $0 \leq t \leq n$ be the type of $L$. The space $V$ is split if and only if $t$ is even.
	
	\section{Special cycles and modified derived fixed points} \label{section:local RZ}
	
	In this section, we study the relative Rapoport--Zink space $\CN$ for the lattice $L$ and two kinds of Kudla--Rapoport cycles. We use the dual isomorphism $\lambda: \CN \to \CN^\vee$ as the tool of ``taking dual'' on the geometric side. We introduce the formal Balloon--Ground stratification on the special fiber and use it to resolve the singularity of $\CN \times \CN$, which is an arithmetic analog of the Atiyah flop \cite{Atiyah}.  We define a (derived) modification of the fixed point locus of an automorphism on $\CN$.
	
	For any $\Spf O_{\breve F}$-scheme $S$, \emph{a basic triple $(X, \iota, \lambda)$ of dimension $n$ and type $t$} (and signature $(1, n-1)$) over $S$ is the following data:
	\begin{itemize}
		\item
		$X$ is a strict formal $O_{F_0}$-module over $S$ of relative height $2n$ and dimension $n$. Strictness means the induced action of $O_{F_0}$ on $\Lie X$ is via the structure morphism $O_{F_0} \to \CO_S$. 
		\item $\iota: O_F \to \End(X)$ is an action of $O_F$ on $X$ that extends the action of $O_{F_0}$. We require that the \emph{Kottwitz signature condition} of signature $(1, n-1)$ holds for all $a \in O_{F} $:
		\begin{equation}\label{eq: Kottwitz}
			\charac (\iota(a)\mid \Lie X) = (T-a)(T-\ov a)^{n-1} \in \CO_S[T].
		\end{equation} 
		\item $\lambda: X \to X^\vee$ is a polarization on $X$ as strict $O_{F_0}$-module, which is $O_F/O_{F_0}$ semi-linear in the sense that the Rosati involution $\Ros_\lambda$ induces the non-trivial involution $\ov{(-)} \in \Gal({F}/{F_0})$ on $\iota: O_{F} \to \End(X)$.
		\item We require that the finite flat group scheme $\Ker\, \lambda$ over $S$ lies in $X[\varpi]$ and is of order $q^{2t}$.
	\end{itemize}
	
Note that here we endow the dual formal $O_{F_0}$-module $X^\vee$ with the $O_F$-action $\iota^\vee(a):=\iota(\bar{a})^\vee$ and the dual polarization $\lambda^\vee$ (such that $\lambda^\vee \circ \lambda=[\varpi]$ ).  The dual of a basic triple $(X, \iota, \lambda)$ (of dimension $n$ and type $t$) is the basic triple $(X^\vee, \iota^\vee, \lambda^\vee)$ (of dimension $n$ and type $n-t$).
	
 The Kottwitz signature condition gives a decomposition $\Lie X=(\Lie X)_0 \oplus (\Lie X)_1$, where $O_F$ acts on the rank $1$ piece $(\Lie X)_0$ by structure map, and on the rank $n-1$ piece $(\Lie X)_1$ by $\sigma$-conjugate of the structure map.	
	
	An isomorphism $(X_1, \iota_1, \lambda_1) \overset{\sim}{\longrightarrow} (X_2, \iota_2, \lambda_2)$ between two such triples is an $O_{F}$-linear isomorphism $\varphi\colon X_1 \overset{\sim}{\longrightarrow} X_2$ such that $\varphi^*(\lambda_2)=\lambda_1$.   Up to $O_F$-linear quasi-isogeny compatible with the polarization, there exists a unique such triple $(\BX, \iota_{\BX}, \lambda_{\BX})$ over $\BF$. Fix one choice of $(\BX, \iota_{\BX}, \lambda_{\BX})$ as the {\em framing object}.

	\begin{definition}\label{defn: local RZ space}
		The Rapoport--Zink space for $L$ is the functor 
		$$\CN_{\U(L)}=\CN_n^{[t]} \to \Spf O_{\breve F} $$
		sending $S$ to the set of isomorphism classes of tuples $(X, \iota, \lambda, \rho)$, where
		\begin{itemize}
			\item $(X, \iota, \lambda)$ is a basic  triple of dimension $n$ and type $t$ over $S$.
			\item $\rho: X \times_{S} {\ov S} \to \BX \times_\BF \ov S $ an $O_F$-linear quasi-isogeny of height $0$ over the reduction $\ov S:=S \times_{O_{\breve F}} \mathbb F$ such that $\rho^*(\lambda_{\BX, \ov S}) = \lambda_{\ov S}$.
		\end{itemize}
	\end{definition}
	
	From \cite[Thm. 2.16]{RZ96}, the functor $\CN_{\U(L)}=\CN_n^{[t]}$ is representable by a formal scheme locally formally of finite type over $\Spf O_{\breve F}$ of relative dimension $n-1$. 
	
	Let $\BE$ be a formal $O_{F_0}$-module over $\BF$ of relative height $2$ and dimension $1$. Complete it into a triple $(\BE, \iota_{\BE}, \lambda_{\BE})$ of \emph{signature $(0, 1)$}, dimension $1$ and type $0$ over $\BF$, and still denote it by $\BE$. Changing the $O_F$-action on $\BE$ by Galois involution $\ov{(-)}$, we obtain a framing object $(\BE, \iota_{\BE} \circ \sigma, \lambda_{\BE})$ for $\CN_1^{[0]}$, which we denote by $\ov{\BE}$.  The theory of canonical lifting \cite{Gross-canonical} gives an isomorphism $\CN_1^{[0]} \simeq \Spf O_{\breve F}$, where the universal triple $\ov{\CE}$ over $\CN_1^{[0]} \simeq \Spf O_{\breve F}$ is the canonical lifting of $\ov{\BE}$. Denote by $\CE$ the basic triple over $O_{\breve F}$ obtained from the canonical lifting of $\BE$.
	
	Let \emph{the space of special quasi-homomorphisms} be the $F$-vector space 
	\begin{equation}\label{eq: space of special quasi-homos}
		\BV=\Hom_{O_F}^{\circ}(\BE, \BX)
	\end{equation}
	equipped with the hermitian form ($x, \, y \in \BV$):
	\begin{equation}
		(x,y)_{\BV}:= \lambda_{\BE}^{-1} \circ y^{\vee} \circ \lambda_{\BX} \circ x \in \Hom_{O_F}^{\circ}(\BE, \BE) \cong F.
	\end{equation}
	
	The automorphism group of quasi-isogenies $\Aut^\circ(\BX, \iota_{\BX}, \lambda_{\BX})$ acts on $\BV$ naturally. By Dieudonne theory, we have $\Aut^\circ(\BX, \iota_{\BX}, \lambda_{\BX}) \cong \U(\BV)(F_0)$. The group $\U(\BV)(F_0)$ acts on $\CN_{\U(L)}$ by
	\begin{equation}
		g.(X, \iota, \lambda, \rho)=(X, \iota, \lambda, g \circ \rho).
	\end{equation}	
	
	From now on, we fix $n$ and $t$ and write $\CN=\CN_{\U(L)}=\CN_n^{[t]}$ for simplicity. 
	\begin{remark}\label{descent to O_F: moduli}
		The framing object may be defined over $\BF_{q^2}$, and we obtain a descent of $\CN$ to $\Spf O_{F}$ by the same moduli interpretation. We will still denote it by $\CN \to \Spf O_F$.
	\end{remark}

	\subsection{The dual isomorphism $\lambda_\CN: \CN \to \mathcal{N}^{\vee}$}\label{dual lambda}

	By taking dual, the framing object $(\BX, \iota_{\BX}, \lambda_{\BX})$ gives a framing object $(\BX^{\vee}, \iota_{\BX^{\vee}}, \lambda_{\BX^{\vee}})$ of dimension $n$ and type $n-t$, where $\lambda_{\BX^{\vee}}: \BX^{\vee} \to \BX$ is the dual polarization of $\lambda_{\BX}$ such that $\lambda_{\BX^{\vee}} \circ \lambda_{\BX}=[\varpi] : \BX \to \BX$.  And $\iota_{\BX^\vee}:=\iota_{\BX} \circ \ov {(-)}$.
	
	Let $\BV^{\vee}:=\Hom_{O_F}^{\circ}(\BE, \BX^{\vee})$ be the space of special quasi-homomorphisms for $(\BX^{\vee}, \iota_{\BX^{\vee}}, \lambda_{\BX^{\vee}})$. The polarization $\lambda_{\BX}$ induces a $F$-linear map $\lambda_\BV: \BV \to \BV^{\vee}$ sending $x$ to $\lambda_{\BX} \circ x$.

	\begin{proposition}\label{prop: dual iso herm form comp}
		The map $\lambda_\BV: \BV \to \BV^{\vee}$ is a bijection. For any $x, y \in \BV$, we have
		\begin{equation}
			(\lambda_{\BV} \circ x,  \lambda_{\BV} \circ y)_{\BV^{\vee}}=-\varpi (x,y)_{\BV}.
		\end{equation}
	\end{proposition}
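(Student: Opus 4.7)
The plan is to verify both assertions by unwinding the definitions in the $F$-linear isogeny category of formal $O_F$-modules, using only the defining identity $\lambda_{\BX^{\vee}} \circ \lambda_{\BX} = [\varpi]$ on $\BX$ and the (semi-linear) symmetry of $\lambda_\BX$ as a polarization.

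For bijectivity, the map $\lambda_\BX \colon \BX \to \BX^\vee$ is an isogeny with finite kernel (of order $q^{2t}$), hence invertible in the $O_F$-isogeny category; an explicit quasi-inverse is $\varpi^{-1}\lambda_{\BX^\vee}$. Post-composition with $\lambda_\BX$ therefore yields a bijection $\Hom^\circ_{O_F}(\BE,\BX) \isoarrow \Hom^\circ_{O_F}(\BE,\BX^\vee)$, which is by definition the map $\lambda_\BV$.

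For the form identity, I would just expand using the definition of the hermitian form on $\BV^\vee$ attached to the framing $(\BX^\vee, \iota_{\BX^\vee}, \lambda_{\BX^\vee})$:
\[
(\lambda_\BV \circ x,\, \lambda_\BV \circ y)_{\BV^\vee}
= \lambda_\BE^{-1} \circ (\lambda_\BX \circ y)^\vee \circ \lambda_{\BX^\vee} \circ \lambda_\BX \circ x.
\]
Using the canonical identification $(\BX^\vee)^\vee \cong \BX$, rewrite $(\lambda_\BX \circ y)^\vee = y^\vee \circ \lambda_\BX^\vee$; the symmetry of the polarization $\lambda_\BX$ (together with the sign convention built into the double-dual identification) allows one to replace $\lambda_\BX^\vee$ by $-\lambda_\BX$ in this expression. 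Inserting the defining relation $\lambda_{\BX^\vee}\circ \lambda_\BX = [\varpi]$ then collapses the right-hand side to $-\varpi \cdot \lambda_\BE^{-1}\circ y^\vee \circ \lambda_\BX \circ x = -\varpi (x,y)_\BV$, as desired.

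The one subtle point—and essentially the whole content of the proof—is the sign bookkeeping: tracking through the identification $\BX^{\vee\vee}\cong \BX$ and the Rosati involution (which, being Galois conjugation, is $O_F/O_{F_0}$ semi-linear) is where the factor $-1$, as opposed to $+1$, enters. A clean way to pin this down unambiguously is to pass to the rational relative Dieudonn\'e module $M(\BX)_{\breve F_0}$, where $\lambda_\BX$ becomes a non-degenerate alternating form and each step of the above manipulation is an elementary linear-algebra identity; alternatively, one may read off the sign from the analogous classical computation for $O_F$-linear $p$-divisible groups arising from polarized abelian varieties.
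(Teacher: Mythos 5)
Your proof is correct and is essentially the paper's proof, written out in full: the paper disposes of the proposition with the one-liner ``any polarization is an anti-symmetric isogeny; unraveling the definitions gives the result,'' and your expansion (post-compose with $\lambda_\BX$, dualize, substitute $\lambda_\BX^\vee = -\lambda_\BX$, then apply $\lambda_{\BX^\vee}\circ\lambda_\BX = [\varpi]$) is precisely that unraveling, with the explicit quasi-inverse $\varpi^{-1}\lambda_{\BX^\vee}$ handling bijectivity. Your remark about pinning down the sign on the rational Dieudonné module is a sensible sanity check but is not a different route — it is the same anti-symmetry fact recast at the level where it is most transparent.
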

	\begin{proof}
		Any polarization is an anti-symmetric isogeny. Unraveling the definitions we obtain the comparison between hermitian forms on $\BV$ and $\BV^\vee$.
	\end{proof}
	
	Let $V^{\vee}$ be the hermitian space with same underlying $F$-vector space as $V$, but equipped with the hermitian form $(x, y)_{V^{\vee}}:=-\varpi (x,y)_{V}$. Denote by $\lambda_V: V \cong V^{\vee}$ the identification of underlying vector spaces. Then $\lambda_V (L^{\vee})$ is a vertex lattice in $V^{\vee}$ of type $n-t$.
	
	\emph{The dual Rapoport--Zink space} of $\CN$ is the Rapoport--Zink space $\CN^{\vee}:=\CN_{\U( \lambda_V ( L^{\vee})) }=\CN_n^{[n-t]}$ for the framing object $(\BX^{\vee}, \iota_{\BX^{\vee}}, \lambda_{\BX^{\vee}})$. We have \emph{the dual isomorphism} 
	\[
	\lambda_\CN: \CN \to \CN^{\vee}
	\]
	\[
	(X, \iota_X, \lambda_X, \rho_X) \mapsto (X^{\vee}, \iota_{X^{\vee}}, \lambda_{X^{\vee}}, \rho_{X^{\vee}})
	\] 
	where $\lambda_{X^\vee}$ is the dual polarization of $\lambda_X$, and $\rho_{X^{\vee}}$ is the quasi-isogeny $((\rho_X)^{\vee})^{-1}: X^{\vee} \times_{S} {\ov S} \to \BX^{\vee} \times_\BF \ov S$ . By definition, we have 
	$$
	(\CN^{\vee})^{\vee}=\CN, \quad \lambda_{\CN^{\vee}} \circ \lambda_\CN=id_{\CN},  \quad\lambda_\CN \circ \lambda_{\CN^\vee}=id_{\CN^{\vee}}.
	$$

	\subsection{Kudla--Rapoport cycles}\label{section: local KR}
	
	The dual  $\ov{\BE}^\vee$ of $\ov{\BE}$ gives a framing object for $\CN_1^{[1]}$. For $i \in \{0,1\}$, denote by $\ov{\CE}^{[i]}$ the universal triple over $\CN_1^{[i]} \simeq \Spf O_{\breve F}$, and by $\CE^{[i]}$ the triple over $O_{\breve F}$ obtained from $\ov{\CE}^{[i]}$ by twisting the $O_F$-action using the Galois involution $\sigma$. So $\CE=\CE^{[0]}$.
	
	\begin{definition}
		For any non-zero $u \in \BV$, the Kudla--Rapoport cycle $\CZ(u) \to \CN$ is the closed formal subscheme of $\CN$ sending each $\Spf O_{\breve{F}}$-scheme $S$ to the subset $(X, \iota_X, \lambda_X, \rho_X) \in \CN(S)$ such that
		\[ (\rho_{X})^{-1}  \circ u: \BE \times_{\BF} \ov S \to X \times_{S} \ov S  \]  
		lifts to a homomorphism from $\CE$ to $X$ over $S$.
		
		Similarly, the Kudla--Rapoport cycle $\CY(u) \to \CN$ is the closed formal subscheme of $\CN$ where 
		\[ (\rho_{X^{\vee}})^{-1} \circ \lambda_{\mathbb{X}} \circ u: \BE \times_{\BF} \ov S \to X^{\vee} \times_{S} \ov S  \]  
		lifts to a homomorphism from $\CE$ to $X^{\vee}$ over $S$.
	\end{definition}
	
	\begin{proposition}\cite[Prop. 5.9]{Cho18}\label{Prop: Z and Y are Cartier divisors}
		The functors $\CZ(u)$ and $\CY(u)$ are representable by Cartier divisors of $\CN$.
	\end{proposition}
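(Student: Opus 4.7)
The plan is to follow the standard Kudla--Rapoport strategy, adapted to the parahoric setting as in \cite{Cho18}, by combining representability of liftability loci for quasi-isogenies with a Grothendieck--Messing analysis that pins down the obstruction to lifting a special homomorphism as a single equation.

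First I would establish representability by a closed formal subscheme. For a test object $(X, \iota_X, \lambda_X, \rho_X) \in \CN(S)$, the quasi-homomorphism $\rho_X^{-1} \circ u \colon \BE_{\ov S} \to X_{\ov S}$ is given over the reduction, and the locus where it lifts to an honest homomorphism $\CE \to X$ over $S$ is closed by the Rapoport--Zink rigidity of quasi-isogenies \cite[Prop.~2.9]{RZ96}, combined with the standard fact that being a genuine homomorphism (as opposed to a quasi-homomorphism) is a closed condition once the quasi-isogeny is rigidified.

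To upgrade to a Cartier divisor I would work locally via crystalline deformation theory. Consider a square-zero thickening $T \hookrightarrow S$ of $\Spf O_{\breve F_0}$-schemes and suppose a homomorphism $u_T \colon \CE_T \to X_T$ is given. By Grothendieck--Messing, the obstruction to lifting $u_T$ across $T \hookrightarrow S$ is the failure of the induced map on evaluated Dieudonn\'e crystals to carry the Hodge filtration of $\CE$ into that of $X$. Using the Kottwitz signatures $(0,1)$ of $\CE$ and $(1,n-1)$ of $X$ together with $O_F$-semilinearity, the obstruction naturally lands in an $\CO_T$-module which should be locally free of rank one (the ``wrong weight'' part of $\Lie X / \Fil^1_X$ with respect to the $O_F$-action). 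This identifies the defining ideal of $\CZ(u)$ locally with a principal ideal. Non-zero-divisorship of the generator, and hence Cartier-ness, would follow from flatness of $\CN$ over $\Spf O_{\breve F_0}$ (known for all vertex lattices via the local model results of Pappas--Cho) together with the fact that for $u \neq 0$ the liftability locus is a proper formal subscheme of $\CN$.

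The case of $\CY(u)$ I would handle by duality: transporting along the dual isomorphism $\lambda_\CN \colon \CN \isoarrow \CN^\vee$ of \S\ref{dual lambda} converts the $\CY$-condition (lift of $\lambda_\BX \circ u \colon \BE \to \BX^\vee$) into a $\CZ$-type condition on the dual side, to which the same Grothendieck--Messing analysis applies. The main obstacle I anticipate is the rank-one assertion at points where $\CN$ is not smooth, i.e.\ when $0 < t < n$: there the Hodge filtration does not have constant $O_F$-type, and one must invoke the precise form of the wedge/spin conditions cutting out the local model to verify that the relevant ``wrong weight'' cokernel is genuinely an invertible module rather than a larger coherent sheaf. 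Once this local freeness of rank one is established along the singular strata, principality and Cartier-ness follow uniformly.
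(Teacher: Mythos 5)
The paper offers no proof of this proposition itself---it simply cites \cite[Prop.~5.9]{Cho18}---and your outline does follow the standard Kudla--Rapoport/Cho strategy. One obstacle you anticipate is in fact not there, and two of your steps need tightening. The non-obstacle first: the signature-$(1,n-1)$ Kottwitz condition (\ref{eq: Kottwitz}) is part of the moduli problem defining $\CN$, so $(\Lie X)_{\varphi_0}$ is a line bundle on \emph{all} of $\CN$, independent of the type $t$; the semi-stable singularities for $0<t<n$ come from the interaction of the Hodge filtration with the polarization (the Balloon--Ground dichotomy of \S\ref{Formal Balloon-Ground}), not from any jump in the $O_F$-type of $\Lie X$. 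Since $\CE$ has signature $(0,1)$, the $\varphi_0^c$-constraint in Grothendieck--Messing is vacuous, and the obstruction to lifting $u\colon\CE\to X$ across a square-zero thickening is the composite $\Fil(\CE)_{\varphi_0}\to\BD(X)(S)_{\varphi_0}\to(\Lie X)_{\varphi_0}$, a section of a line bundle on the nose --- no wedge or spin analysis is required.

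The two steps that need repair both concern promoting ``one local equation'' to ``Cartier''. Flatness of $\CN$ over $O_{\breve F_0}$ is \emph{not} enough to make a nonzero local equation a non-zerodivisor, since a flat local ring can have embedded primes; the correct input is that the completed local rings of $\CN$ are regular, hence integral domains, which is exactly what the local-model computation in Proposition~\ref{prop:semi-stable} records ($O_{\breve F_0}[[t_0,\dots,t_{n-1}]]$ or $O_{\breve F_0}[[x_0,y_0,t_1,\dots,t_{n-1}]]/(x_0y_0-\varpi)$). And the claim that for $u\neq 0$ the liftability locus is a proper closed formal subscheme at every point --- equivalently, that the local equation is not identically zero on any branch --- is the residual nontrivial content and does not come for free from $u\neq 0$; this is what Cho actually verifies by an explicit window/display computation. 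With those two points filled in the argument closes, and passing from $\CZ(u)$ to $\CY(u)$ via the dual isomorphism $\lambda_\CN$ of \S\ref{dual lambda}, as you propose, is indeed the cleanest route (compare Proposition~\ref{prop: relation of Z and Y}, which identifies $\lambda_\CN(\CY(u))$ with $\CZ(\lambda_\BV\circ u)$).
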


	\begin{remark}
		Unlike the case $t=0$ \cite[Prop. 3.5]{KR-local}, in general $\CZ(u)$ and $\CY(u)$ may not be flat over $\BZ_p$ nor relative Cartier divisors on $\CN$.
	\end{remark}
	
	\begin{proposition}\label{prop: relation of Z and Y}
		\begin{enumerate}
			\item We have natural inclusions:
			\begin{equation}
				\CZ(u) \hookrightarrow \CY(u) \hookrightarrow \CZ( \varpi u).
			\end{equation}	
			\item Under the dual isomorphism $\lambda_\CN: \CN \to \mathcal{N}^{\vee}$, we have 
			\begin{equation}
				\lambda_\CN (\CY(u)) = \CZ(\lambda_\BV \circ u), \quad \lambda_\CN (\CZ(u)) = \CY(\varpi^{-1} (\lambda_\BV \circ u)).
			\end{equation} 
			\item $\CZ(u)$ is empty unless $v((u,u)_\BV) \geq 0$; $\CY(u)$ is empty unless $v((u,u)_\BV) \geq -1$.
		\end{enumerate}
	\end{proposition}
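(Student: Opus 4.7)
The key relation tying the three parts together is the identity
$(\rho_{X^\vee})^{-1} \circ \lambda_\BX = \lambda_X \circ \rho_X^{-1}$,
which follows from $\rho_{X^\vee} = (\rho_X^\vee)^{-1}$ and the compatibility $\rho_X^\ast \lambda_\BX = \lambda_X$ (after reduction to $\overline S$) built into the definition of $\CN$. I would begin by recording this identity together with its counterpart $\lambda_{X^\vee} \circ \lambda_X = [\varpi]$, since every subsequent manipulation is a short consequence.

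For part (1), to produce $\CZ(u) \hookrightarrow \CY(u)$ I would start with an $S$-point lying in $\CZ(u)$, i.e.\ a genuine lift $h\colon \CE \to X$ of $\rho_X^{-1} \circ u$, and then simply post-compose with $\lambda_X$; by the displayed identity above, $\lambda_X \circ h$ is a lift of $(\rho_{X^\vee})^{-1} \circ \lambda_\BX \circ u$, which is exactly the condition defining $\CY(u)$. For the second inclusion, I would post-compose instead with $\lambda_{X^\vee}$, observing that $\lambda_{X^\vee} \circ \lambda_X = [\varpi]$ turns a lift of $\lambda_X \circ \rho_X^{-1}\circ u$ into a lift of $\rho_X^{-1} \circ (\varpi u)$, proving $\CY(u) \hookrightarrow \CZ(\varpi u)$.

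For part (2), I would unwind the moduli functor on $\CN^\vee$: a point $(X^\vee,\iota_{X^\vee},\lambda_{X^\vee},\rho_{X^\vee})$ lies in $\CZ(v)$ (for $v \in \BV^\vee$) iff $(\rho_{X^\vee})^{-1} \circ v$ lifts to $\CE \to X^\vee$, and in $\CY(v)$ iff $\rho_X^{-1} \circ \lambda_{\BX^\vee} \circ v$ lifts to $\CE \to X$. Taking $v = \lambda_\BV \circ u = \lambda_\BX \circ u$ in the first case reproduces the defining condition of $\CY(u) \subset \CN$ verbatim, using the displayed identity; taking $v = \varpi^{-1}(\lambda_\BV \circ u)$ in the second and using $\lambda_{\BX^\vee} = \varpi \lambda_\BX^{-1}$ recovers the condition of $\CZ(u) \subset \CN$. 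This gives both equalities; I would double-check that both sides are cut out by the \emph{same} deformation condition on the universal object, not merely have the same points.

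For part (3), the argument is integrality of the hermitian form. If $\CZ(u)(S) \neq \emptyset$ with lift $h$, then $\lambda_\CE^{-1} \circ h^\vee \circ \lambda_X \circ h$ lies in $\End_{O_F}(\CE) \cong O_F$ and equals $(u,u)_\BV$ by compatibility of $\rho_X$ with the polarizations, giving $v((u,u)_\BV) \geq 0$. For $\CY(u)$, one applies the same reasoning on $\CN^\vee$ and uses Proposition~\ref{prop: dual iso herm form comp}: a lift in $\CZ(\lambda_\BV \circ u) \subset \CN^\vee$ forces $v((\lambda_\BV \circ u, \lambda_\BV \circ u)_{\BV^\vee}) \geq 0$, i.e.\ $v(-\varpi(u,u)_\BV) \geq 0$, hence $v((u,u)_\BV) \geq -1$. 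The only step requiring care will be keeping track of the twist by $\sigma$ in the definition of $\CE$ versus $\ov{\CE}$ and the direction of the hermitian form pairing, so that the displayed identity and the sign in Proposition~\ref{prop: dual iso herm form comp} are used consistently; this bookkeeping, rather than any serious difficulty, is the main obstacle.
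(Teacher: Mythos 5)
Your proposal is correct and follows essentially the same route as the paper's own proof: part (1) is obtained by post-composing a lift with $\lambda_X$ (respectively $\lambda_{X^\vee}$), using precisely the identity $(\rho_{X^\vee})^{-1}\circ\lambda_\BX=\rho_X^\vee\circ\lambda_\BX=\lambda_X\circ\rho_X^{-1}$ that you record at the outset; part (2) is an unwinding of the moduli definitions against $\lambda_\CN$; and part (3) is the same integrality argument — a genuine lift $h$ forces the specialization of $\lambda_\CE^{-1}\circ h^\vee\circ\lambda_X\circ h$, which equals $(u,u)_\BV$, to lie in $O_F$, with the $\CY$ case then reduced to the $\CZ$ case on $\CN^\vee$ via Proposition~\ref{prop: dual iso herm form comp}. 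The caution you flag about checking scheme-level agreement in part (2) and about the $\sigma$-twist bookkeeping is reasonable but introduces no real difficulty, and the paper handles it identically.
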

	\begin{proof}
		We use the moduli interpretation. If $(X, \iota_X, \lambda_X, \rho_X) \in \CZ(u)(S)$, then $\lambda_X \circ \rho_{X}^{-1} \circ u : \mathbb E \rightarrow X^{\vee} \times_S \ov S$ lifts to a homomorphism from $\CE$ to $X^{\vee}$ over $S$. By compatibility of $\lambda_X$ and $\lambda_{\mathbb X}$ under the quasi-isogeny $\rho_X$, we have $\lambda_X \circ \rho_{X}^{-1} \circ u = \rho_{X}^{\vee} \circ \lambda_{\mathbb{X}} \circ u$ hence $(X, \iota_X, \lambda_X, \rho_X) \in \CY(u)(S)$. Similarly, from $\lambda_X^\vee \circ \lambda_X \circ \rho_{X}^{-1} \circ u=[\iota_X(\varpi)] \circ \rho_{X}^{-1} \circ u$ we see that $\CY(u) \subseteq \CZ(\varpi u) $. The second property holds by the definition of $\CY(u)$ and $\lambda_\CN$. 
		
		For the last property, if $(X, \iota_X, \lambda_X, \rho_X) \in \CZ(u)(S)$, then
		$$
		\lambda_\BE^{-1} \circ (\rho_{X}^{-1}  \circ u)^\vee \circ \lambda_X \circ (\rho_{X}^{-1}  \circ u) \in \Hom_{O_F}(\BE \times_\BF \ov S, \BE \times_\BF \ov S ) \simeq O_{F}
		$$
		which exactly gives $(u,u)_\BV \in O_{F}$ by the compatibility $\lambda_X \circ \rho_{X}^{-1} \circ u = \rho_{X}^{\vee} \circ \lambda_{\mathbb{X}} \circ u$. Using the dual isomorphism $\lambda: \CN \to \CN^\vee$ and Proposition \ref{prop: dual iso herm form comp}, the assertion for $\CY(u)$ follows by duality.
	\end{proof}

	\subsection{Formal Balloon-Ground stratification}\label{Formal Balloon-Ground}

	\begin{proposition}\label{prop:semi-stable}
		The formal scheme $\CN_{\U(L)} \to \Spf O_{\breve{F}_0} $ is formally smooth if $t=0$ or $n$, and is of strictly semi-stable reduction if $0<t<n$.  More precisely, the completed local ring of $\CN$ at any closed point  is isomorphic to 
		either $O_{\breve{F}}[[x_0,y_0,t_1, \hdots,t_{n-1}]]/(x_0y_0-\varpi)$
		or  
		$O_{\breve{F}} [[t_0,t_1,  \hdots, t_{n-1}]]$.
	\end{proposition}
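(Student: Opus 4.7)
The plan is to reduce the problem to an explicit computation on the local model, using the local model diagram for $\CN$ established in this unitary signature $(1,n-1)$ setting by G\"ortz \cite{Gortz01} and Cho \cite{Cho18} (extending Rapoport--Zink \cite{RZ96}). Concretely, one produces a diagram
\[
\CN \xleftarrow{\,\pi\,} \wt{\CN} \xrightarrow{\,q\,} \BM^{\mathrm{loc}}_{\U(L)}
\]
in which $\pi$ is a torsor under a smooth affine $O_{\breve F_0}$-group scheme (coming from trivializations of de Rham realizations of the universal $p$-divisible group) and $q$ is formally smooth. Completed local rings of $\CN$ and of $\BM^{\mathrm{loc}}_{\U(L)}$ at corresponding closed points therefore agree up to power series variables, so it suffices to identify the completed local rings of $\BM^{\mathrm{loc}}_{\U(L)}$.

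Next I would describe the local model explicitly. For the vertex lattice chain $L \subseteq L^\vee \subseteq \varpi^{-1} L$ with $L^\vee/L$ of dimension $t$, $\BM^{\mathrm{loc}}_{\U(L)}$ parameterizes, over an $O_{\breve F_0}$-scheme $S$, pairs of rank-$(n-1)$ locally direct summands $\CF \subseteq L\otimes_{O_{F_0}}\CO_S$ and $\CF^\vee\subseteq L^\vee\otimes_{O_{F_0}}\CO_S$ that are compatible with the inclusion $L \hookrightarrow L^\vee$ and the polarization, satisfying the Kottwitz condition (\ref{eq: Kottwitz}) on the quotients together with the Pappas wedge condition. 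Working in affine charts of the Grassmannian, choose a basis adapted to an orthogonal decomposition of $L$ in which $\varpi$ acts diagonally on $L^\vee/L$; the two conditions translate into explicit matrix equations on the coordinate entries of $\CF,\CF^\vee$.

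Then I would carry out the local computation at a closed point $x$ of $\BM^{\mathrm{loc}}_{\U(L)}$. There are two cases according to whether the line $\CF/\varpi L \subseteq L^\vee/L$ (respectively $\CF^\vee/L$) is contained in the radical of the induced hermitian form on $L^\vee/L$ or not. Away from this ``worst point'' locus the Kottwitz and wedge equations become transverse and the completed local ring is a formal power series ring in $n$ variables over $O_{\breve F_0}$, giving the smooth chart $O_{\breve F_0}[[t_0,\dots,t_{n-1}]]$. At a worst point, after eliminating the smooth directions $t_1,\dots,t_{n-1}$ one reduces to a single binomial equation coming from the interaction of $\CF$ and $\CF^\vee$ through the two intersecting lines, which, after a change of variables, takes the standard nodal form $x_0 y_0 = \varpi$ - yielding the second ring in the statement.

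The main obstacle is to confirm that only these two types of local rings arise and that the worst points exist precisely when $0<t<n$. If $t=0$ (so $L=L^\vee$) or $t=n$ (so $L^\vee=\varpi^{-1}L$), there is only one term in the chain up to scaling; the Kottwitz condition on a single Grassmannian cuts out a smooth closed subscheme and no worst point can arise, giving formal smoothness. When $0<t<n$ one verifies, via the radical of the hermitian form on $L^\vee/L$, that the worst-point stratum is non-empty and that the singularity is genuinely of the stated ordinary double point type; this is where the explicit matrix analysis has to be carried through carefully, and one can match the answer against the Kottwitz--Rapoport stratification to double-check that no further degenerations occur.
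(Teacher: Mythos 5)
Your overall route—produce the local model diagram and then do an explicit computation on the $\GL_n$ local model—is exactly the approach the paper takes; its proof is a citation to Cho's Prop.\ 3.33, which in turn reduces to G\"ortz's $\GL_n$ computation, so you are reconstructing the content of that citation. Two remarks, the second of which is a genuine error you would need to fix before the sketch becomes a proof.

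First, the ``Pappas wedge condition'' belongs to the ramified case. Since $F/F_0$ is unramified, base change to $O_{\breve F_0}$ splits the $O_F$-action on the Dieudonn\'e/de Rham realization into two graded pieces, and the Kottwitz signature $(1,n-1)$ forces all but the $\varphi_0$-piece of the Hodge filtration to be trivial; the unitary local model is then a $\GL_n$ local model for the two-step chain $L\subset L^\vee\subset\varpi^{-1}L$ with minuscule cocharacter $(1,0^{n-1})$ and there is no wedge condition in play. Second and more seriously, your worst-point criterion—testing whether $\CF/\varpi L\subseteq L^\vee/L$ lies in ``the radical of the induced hermitian form on $L^\vee/L$''—cannot work as stated: that form is the \emph{perfect} $\BF_{q^2}$-hermitian form coming from the Weil pairing on $\Ker\lambda$, so its radical is zero and your criterion would declare every closed point smooth, contradicting the statement for $0<t<n$. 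The correct criterion lives on the ambient rank-$n$ space (the $\varphi_0$-graded piece), where the pairing against the $\varphi_0^c$-graded piece takes the form $\sum_{i\le t}\varpi x_iy_i + \sum_{i>t}x_iy_i$; its reduction mod $\varpi$ has a genuine $t$-dimensional radical $\Ker$, and the nodal locus is precisely where the Hodge line lies in $\Ker$ \emph{and} the Hodge hyperplane contains $\Ker$—equivalently $\Lie\lambda=0=\Lie\lambda^\vee$, as in Propositions~\ref{prop: formal balloon-ground are smooth}, \ref{Balloon-Ground: comp with LTXZZ} and the global computation in Theorem~\ref{proof: global semi-stable}. With this correction your plan lands on $O_{\breve F_0}[[x_0,y_0,t_1,\dots,t_{n-1}]]/(x_0y_0-\varpi)$ at the worst points and a power series ring elsewhere, matching the cited result.
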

	\begin{proof}
		This follows from the computation local models as in \cite[Prop 3.33]{Cho18}, which reduces to the $\GL_n$ case in \cite[\S 4.4.5]{Gortz01}.
	\end{proof}

	Let $S$ be any $\BF$-scheme. Choose any point $(X, \iota, \lambda, \rho) \in \CN(S)$. Consider the \emph{covariant} relative Dieudonne crystal $\BD(X)$ of $X$ over $S$, and the cotangent sheaf
	$$ \omega_{X}:=(\Lie X )^{\vee}. $$  We have short exact sequences of locally free $\CO_S$-modules
	\begin{equation}
		0 \to \omega_{X^\vee} \to \BD (X)(S) \to \Lie X \to 0,
	\end{equation}
    \begin{equation}
    	0 \to \omega_{X} \to \BD (X^\vee)(S) \to \Lie X^\vee \to 0.
    \end{equation}
	We have a natural identification $(\BD(X)(S))^\vee \simeq \BD(X^\vee)(S)$ via the perfect pairing 
	\begin{equation}
		(-,-): \BD(X)(S) \times \BD(X^\vee)(S) \to \CO_S.
	\end{equation}
	We obtain an antisymmetric bilinear form 
	$$
	(-,-)_\lambda: \BD(X)(S) \times \BD(X)(S) \to \CO_S, \, \, (x, y)_\lambda := (x, \BD(\lambda)(y)).$$
	The $O_F$-action on $X$ induces a decomposition
	$\BD(X)=\BD(X)_0 \oplus \BD(X)_1$ such that $O_F$ acts on $\BD(X)_0$ (resp. $\BD(X)_1$) by the structure map (resp. the structure map composed with the Galois involution on $O_F/O_{F_0}$). 
	And $\BD(X)_i$ ($i=0,1$) is isotropic under the form $(-,-)_\lambda$. From the Kottwitz signature condition $(1, n-1)$,  $(\omega_{X^\vee})_0$ (resp. $(\omega_{X^\vee})_1$) is locally free of rank $n-1$ (resp. $1$).  The polarizations  induce maps between line bundles
	$$
	\Lie \lambda: (\Lie X)_0 \to (\Lie X^\vee)_0, \quad  \Lie \lambda^\vee: (\Lie X^\vee)_0 \to (\Lie X)_0.
	$$
	We introduce \emph{the formal Balloon--Ground stratification} on the special fiber $\CN_\BF$: 
	\begin{itemize}\label{defn: formal Balloon-Ground}
		\item \emph{The formal Balloon stratum} $\CN^\circ$ is the vanishing locus of universal $\Lie \lambda$.
		\item  \emph{The formal Ground stratum} $\CN^\bullet$ is the vanishing locus of universal  $\Lie \lambda^\vee$.
		\item \emph{The formal Linking stratum} $\CN^\dagger$ is the intersection $\CN^\circ \cap \CN^\bullet$.
	\end{itemize}
	
	As $\Lie \lambda^\vee \circ \Lie \lambda =[\varpi]$, we have the stratification $\CN_\BF=\CN^\circ \cup \CN^\bullet$. 
	
	\begin{proposition}\label{prop: formal balloon-ground are smooth}
		The formal subschemes $\CN^\circ$ and $\CN^\bullet$ are Cartier divisors of $\CN$ and are formally smooth over $\BF$.  The structure map $\CN - \CN^{\dagger} \rightarrow O_{\breve{F}_0}$ is formally smooth. And for any closed point $x$ of $\CN^\dagger$, the completed local ring of $\CN$ at $x$ is isomorphic to $O_{\breve{F}}[[x_0,y_0,t_1, \hdots,t_{n-1}]]/(x_0y_0-\varpi)$, and the local equation of $\CN^\circ$ (resp. $\CN^\bullet$) may be taken to be $x_0=0$ (resp. $y_0=0$) .
	\end{proposition}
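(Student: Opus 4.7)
The plan is to analyze $\CN^\circ$ and $\CN^\bullet$ locally at every closed point of $\CN$, using Proposition~\ref{prop:semi-stable} to split into two cases. The Cartier divisor assertion is immediate: $\CN^\circ$ and $\CN^\bullet$ are the vanishing loci of the sections $\Lie\lambda$ and $\Lie\lambda^\vee$ of line bundles on $\CN$, and these sections are nonzero since $\Lie\lambda$ is an isomorphism on the generic fiber (where $\Ker\lambda$ is \'etale).

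Fix a closed point $x\in\CN(\BF)$ with $R:=\wh\CO_{\CN,x}$. After choosing trivializations of $(\Lie X)_0$ and $(\Lie X^\vee)_1$ over $R$, the maps $\Lie\lambda,\Lie\lambda^\vee$ become elements $f,g\in R$, and the identity $\lambda^\vee\circ\lambda=[\varpi]$ yields $fg=\varpi$. In the smooth case $R=O_{\breve F_0}[[t_0,\dots,t_{n-1}]]$, the element $\varpi$ is prime in $R$, so one of $f,g$ must be a unit; WLOG $f$ is a unit, whence $\CN^\circ=V(f)=\emptyset$ near $x$ while $\CN^\bullet=V(g)=V(\varpi)$ has completed local ring $R/\varpi\cong\BF[[t_0,\dots,t_{n-1}]]$, formally smooth over $\BF$. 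Thus $x\notin\CN^\dagger$ and $R$ is itself formally smooth over $O_{\breve F_0}$.

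In the semi-stable case $R=O_{\breve F_0}[[x_0,y_0,t_1,\dots,t_{n-1}]]/(x_0y_0-\varpi)$, the crucial claim is that both $f,g\in\mathfrak m_R$. Granting this, the relation $fg=\varpi=x_0y_0$ in $\mathfrak m_R^2/\mathfrak m_R^3\cong\Sym^2(\mathfrak m_R/\mathfrak m_R^2)$, together with uniqueness of factorization of the rank-two quadratic form $\overline{x_0}\cdot\overline{y_0}$ over the algebraically closed field $\BF$, forces (after possibly swapping $f$ and $g$) $\overline f=c\,\overline{x_0}$ and $\overline g=c^{-1}\overline{y_0}$ for some $c\in\BF^\times$. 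The substitution $(x_0,y_0,t_1,\dots,t_{n-1})\mapsto(f,g,t_1,\dots,t_{n-1})$ is then an $O_{\breve F_0}$-algebra automorphism of $R$ (it preserves the defining relation via $fg=\varpi$ and is invertible on tangent spaces), so without loss of generality $f=x_0$ and $g=y_0$. Hence $\CN^\circ=V(x_0)$ and $\CN^\bullet=V(y_0)$ locally, with completed local rings $\BF[[y_0,t_1,\dots,t_{n-1}]]$ and $\BF[[x_0,t_1,\dots,t_{n-1}]]$ respectively, both formally smooth over $\BF$, and $x\in\CN^\dagger$. Combining the two cases identifies $\CN^\dagger$ with the non-smooth locus of $\CN$, so $\CN-\CN^\dagger$ is the smooth locus and is formally smooth over $\Spf O_{\breve F_0}$; part (3) then follows directly from Proposition~\ref{prop:semi-stable}.

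The main obstacle is the claim that $f,g\in\mathfrak m_R$ at every semi-stable closed point. I would establish this via the explicit local model for $\CN$ at vertex lattices (see \cite{Gortz01,Cho18}): the local model is a piece of a linked Grassmannian whose singular locus is cut out precisely by the vanishing of the universal analogues of $\Lie\lambda$ and $\Lie\lambda^\vee$, so at a semi-stable closed point both maps must vanish. Equivalently, using the Kottwitz--Rapoport stratum interpretation noted after Definition~\ref{defn: formal Balloon-Ground}, the semi-stable singular point is the deepest stratum on the local model, where both universal Hodge maps vanish.
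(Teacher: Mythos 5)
Your proposal is correct in structure, and the commutative-algebra details — the two-case dichotomy, the factorization $\overline f\cdot\overline g=\overline{x_0}\cdot\overline{y_0}$ in $\Sym^2(\fkm_R/\fkm_R^2)$ using that $R$ is regular, and the resulting automorphism of $\wh{\CO}_{\CN,x}$ aligning $(f,g)$ with $(x_0,y_0)$ — are a nicely spelled-out version of what the paper states tersely as ``after applying an automorphism on $\wh{O}_{\CN,x}$.'' Where you and the paper genuinely diverge is exactly on the claim you flag as the main obstacle: that both sections lie in $\fkm_R$ at every non-smooth closed point, equivalently that $\CN-\CN^\dagger\to\Spf O_{\breve F_0}$ is formally smooth. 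You propose to get this from the singular-locus description of the local model, which is consistent (it is the same local model underlying Proposition~\ref{prop:semi-stable}, and the linked-Grassmannian picture is made explicit in Theorem~\ref{proof: global semi-stable}), but requires a precise transfer from local model to $\CN$ via the local model diagram. The paper instead runs a direct Grothendieck--Messing argument on the moduli problem: at $x\notin\CN^\dagger$, one of $\Lie\lambda,\Lie\lambda^\vee$ is nonzero, hence an isomorphism between the quotient lines $(\Lie X)_0$ and $(\Lie X^\vee)_1$; since those two quotients then determine each other, the lifting problem reduces to deforming a single line, which is unobstructed. This establishes formal smoothness of $\CN-\CN^\dagger$ without re-opening the local model, and then your ``crucial claim'' follows by contraposition from the case dichotomy you already set up. Both routes are valid and ultimately equivalent; the paper's is the more self-contained given Proposition~\ref{prop:semi-stable}, while yours carries a dependency you would still need to make precise.
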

	\begin{proof}
		By Proposition \ref{prop:semi-stable}, $\CN$ is regular and $\varpi$ is locally non-zero. As vanishing locus of sections of line bundles, $\CN^\circ$ and $\CN^\bullet$ are Weil divisors of $\CN$. As $\Lie \lambda^\vee \circ \Lie \lambda =[\varpi]$, the local generators of $\CN^\circ$ and $\CN^\bullet$ are also locally non-zero, hence $\CN^\circ$ and $\CN^\bullet$ are in fact Cartier divisors of $\CN$. 
		
		We show that $\CN^\circ$ and $\CN^\bullet$ are formally smooth over $\BF$. For any closed point $x$, we pass to the completed local ring $\hat{O}_{\CN, x}$ of $\CN$ at $x$. As $\Lie \lambda \circ \Lie \lambda^\vee =[\varpi]$, the local generator $x^\circ$ (resp. $x^\bullet$) of $\CN^\circ$  (resp. $\CN^\bullet$) can be chosen such that $x^\circ x^\bullet=\varpi$ inside $\hat{O}_{\CN, x}$. By the description of completed local rings in Proposition \ref{prop:semi-stable}, there are two cases: firstly, if $\hat{O}_{\CN, x}$ is isomorphic to $O_{\breve{F}} [[t_0,t_1,  \hdots, t_{n-1}]]$ which is a UFD, then the equation $x^\circ x^\bullet=\varpi$ implies that either $x^\circ$ or $x^\bullet$ is a unit and the other generator agrees with $\varpi$ (up to a unit). In particular both of them are formally smooth over $\BF$ and we are done; second, if $\hat{O}_{\CN, x}$ is isomorphic to  $O_{\breve{F}}[[x_0,y_0,t_1, \hdots,t_{n-1}]]/(x_0y_0-\varpi)$, solving the equation $x^\circ x^\bullet=\varpi$ shows that $x^\circ$ (resp. $x^\bullet$) can be arranged as local generators $x_0$ (resp. $y_0$) in \ref{prop:semi-stable} after applying an automorphism on $\wh{O}_{\CN, x}$. We are done as well. 
		
		To see the structure map $\CN - \CN^{\dagger} \rightarrow O_{\breve{F}}$ is formally smooth, we apply Grothendieck--Messing theory. We only need to deform the Hodge filtration. As $(\omega_X)_1=(\Lie X)^\vee_1$ and $(\omega_{X^\vee})_0=(\Lie X^\vee)^\vee$ are orthogonal complements under the perfect pairing $(-,-)$ hence determine each other, we only need to deform the quotient lines $(\Lie X)_0$ and $(\Lie X^\vee)_0$ in the Hodge filtration. If $x \in \CN -\CN^\dagger$, then one of $(\Lie X)_0$ and $(\Lie X^\vee)_0$ determines the other using the non-zero section $\Lie \lambda$ or $\Lie \lambda^\vee$, so we only need to deform one quotient line, which forms a unobstructed deformation problem.
	\end{proof}
	
	\begin{definition}\label{eq: defn of local blow}
		Let $f: \wt{ \CN \times \CN} \to \CN \times \CN$ be the formal blow up of $\CN \times_{O_{\breve{F}}} \CN$ along the Weil divisor $\CN^{\circ} \times_{\BF} \CN^{\circ}$. 
	\end{definition}
	
	\begin{theorem}\label{resolution} 
		If  $0<t<n$, the formal scheme $\wt{ \CN \times \CN}$ is of semi-stable reduction over $\Spf O_{\breve{F}}$ and is in particular regular.  The map $f: \wt{ \CN \times \CN} \rightarrow \CN \times \CN$ is an isomorphism away from $\CN^{\dagger} \times_\BF \CN^{\dagger}$. The geometric fiber of $f$ is either a point or a projective line $\mathbb P^1_\BF$. In particular $f$ is a small resolution.
	\end{theorem}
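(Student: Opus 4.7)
The plan is to work locally on completed local rings of $\CN \times_{O_{\breve{F_0}}} \CN$, distinguishing between the locus off $\CN^\dagger \times_\BF \CN^\dagger$ (where the blow up should be trivial) and the remaining points (where a conifold-type small resolution should appear).

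For (ii), I would first show that the ideal sheaf $\CI$ of $\CN^\circ \times_\BF \CN^\circ$ in $\CN \times \CN$ is locally principal outside $\CN^\dagger \times_\BF \CN^\dagger$. Fix a closed point $(x_1, x_2)$ with $x_1 \notin \CN^\dagger$. If $x_1 \notin \CN^\circ$ then $\CI$ is the unit ideal, so assume $x_1 \in \CN^\circ \ssm \CN^\bullet$. By Proposition \ref{prop: formal balloon-ground are smooth} we may choose local equations $x^\circ, x^\bullet$ with $x^\circ x^\bullet = \varpi$; since $x^\bullet$ is a unit at $x_1$, the local equation $x^\circ$ generates the same ideal as $\varpi$ in $\wh{\CO}_{\CN, x_1}$. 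A case check at $x_2$---either the local equation $x_0'$ of $\CN^\circ$ at $x_2$ is a unit, or $\wh{\CO}_{\CN, x_2}$ is semi-stable with $x_0' y_0' = \varpi$ so $\varpi \in (x_0')$---then shows that $\CI$ is locally principal near $(x_1, x_2)$. The case $x_2 \notin \CN^\dagger$ is symmetric.

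Next I would analyze the blow up at a closed point $(x_1, x_2) \in \CN^\dagger \times_\BF \CN^\dagger$. By Proposition \ref{prop: formal balloon-ground are smooth}, the completed local ring is
\[
R \;=\; O_{\breve{F_0}}[[x_0, y_0, x_0', y_0', \underline{t}, \underline{t}']] \,\big/\, (x_0 y_0 - \varpi,\; x_0' y_0' - \varpi),
\]
with $\CI = (x_0, x_0')$. Modulo the smooth factors $\underline{t}, \underline{t}'$, this is precisely the conifold singularity and $(x_0, x_0')$ is the classical Weil divisor whose blow up yields the small resolution. In the chart $x_0' = u x_0$ of $\Bl_\CI \Spec R$, the identity $x_0 y_0 = x_0' y_0' = x_0 u y_0'$ forces $x_0 (y_0 - u y_0') = 0$, so passing to the strict transform (killing the $x_0$-torsion) sets $y_0 = u y_0'$ and yields the strictly semi-stable ring
\[
O_{\breve{F_0}}[[x_0, y_0', u, \underline{t}, \underline{t}']] \,\big/\, (x_0 \cdot u \cdot y_0' - \varpi).
\]
The chart $x_0 = v x_0'$ is symmetric, and the two glue along $uv = 1$. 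Combined with the first step and Proposition \ref{prop:semi-stable}, this proves (i) when $0 < t < n$.

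Finally, the geometric fibers follow directly: a point away from $\CN^\dagger \times \CN^\dagger$ has fiber a single point by the first step, while reducing the two charts above modulo the maximal ideal at $(x_1, x_2) \in \CN^\dagger \times \CN^\dagger$ produces $\Spec \BF[u]$ and $\Spec \BF[v]$ glued along $uv = 1$ to $\BP^1_\BF$. Since $\CN^\dagger$ has codimension $2$ in $\CN$, the exceptional locus of $f$ has codimension at least $3$ in $\wt{\CN \times \CN}$, so $f$ is small. The main technical point will be the strict-transform calculation: correctly identifying the torsion relation $x_0(y_0 - u y_0') = 0$ and verifying that killing it produces a regular, strictly semi-stable ring; this is the arithmetic analog of the Atiyah flop for the conifold.
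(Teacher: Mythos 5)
Your proof follows the same strategy as the paper: you show the center $\CN^\circ \times_\BF \CN^\circ$ is locally principal away from $\CN^\dagger \times_\BF \CN^\dagger$ (so the blow-up is trivial there) via the relation $x^\circ x^\bullet = \varpi$ from Proposition \ref{prop: formal balloon-ground are smooth}, and then carry out the explicit two-chart conifold computation at a point of $\CN^\dagger \times_\BF \CN^\dagger$, which is exactly the "direct computation" the paper invokes. The only slip is in the final sentence: the exceptional locus has codimension $2$ in the source $\wt{\CN \times \CN}$, not $\geq 3$ (the $\BP^1$-fibers over the $(2n-4)$-dimensional $\CN^\dagger \times_\BF \CN^\dagger$ sweep out a $(2n-3)$-dimensional locus in the $(2n-1)$-dimensional total space); it is its image $\CN^\dagger \times_\BF \CN^\dagger$ in the target $\CN \times \CN$ that has codimension $3$, and combined with the fibers being of dimension $\leq 1$ this is what the smallness criterion actually requires, so the conclusion is unaffected.
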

	\begin{proof}
		By formal smoothness of $\CN - \CN^{\dagger} \rightarrow O_{\breve{F}_0}$, we know that $\CN \times \CN$ is regular away from $\CN \times \CN^\dagger$ or $\CN^\dagger \times \CN$. Hence the center $\CN^{\circ} \times_\BF \CN^{\circ}$ is a Cartier divisor of $\CN \times \CN$ away from $\CN^{\dagger} \times_\BF \CN^{\dagger}$, where blow up changes nothing.
		
		Blow up commutes with flat base change in particular open immersions, so we can pass to completed local rings. By Proposition \ref{prop: formal balloon-ground are smooth}, we can choose local generators of $\CN^\circ$ (resp. $\CN^\bullet$) as  $x^0$ (resp. $y^0$) in the isomorphism
		\[
		\wh{O}_{\CN, x} \simeq	O_{\breve{F}}[[x_0,y_0,t_1, \hdots,t_{n-1}]]/(x_0y_0-\varpi)
		\]
		for any closed point $x \in \CN^\dagger$. By direct computation, the blow up of 
		\[ 
		O_{\breve{F}}[[x_0,y_0,x_1,y_1,t_1, \hdots, t_{n-1},s_{1}, \hdots, s_{n-1}]]/(x_0y_0-\varpi,x_1y_1-\varpi)
		\]
		along $(x_0,x_1)$ is of strictly semi-stable reduction over $O_{\breve{F}}$. As the ideal $(x_0,x_1)$ is generated by two elements, the fiber of $f$ at $x \in \CN^{\dagger} \times_\BF \CN^{\dagger}$ can be embedded into $\mathbb P^1_\BF$.  By local computations, it is isomorphic to $\mathbb P^1_\BF$ . Hence the resolution $f$ has only $\dim \leq 1$ fibers and is an isomorphism outside the $\text{codim} \geq 3$ locus $\CN^{\dagger} \times_\BF \CN^{\dagger}$, hence a small resolution.
	\end{proof}
	
	The following proposition gives another definition of Balloon--Ground stratification in the spirit of \cite[Defn. 5.2.3]{LTXZZ}.
	
	\begin{proposition}\label{Balloon-Ground: comp with LTXZZ}
		The closed formal subscheme	$\CN^\circ \subseteq \CN_\BF$ is defined by the condition
		\[
		(\BD(X)(S)_0,  (\omega_{X^\vee})_1)_\lambda =0 \in \CO_S.
		\]
		Similarly, the space $\CN^\bullet \subseteq \CN_\BF$ is defined by the condition 
		\[
		(\BD(X)(S)_1)^\perp \subseteq (\omega_{X^\vee})_0.
		\] 
		Here $(-)^\perp$ means the (right) orthogonal complement under $(-,-)_\lambda$ inside $\BD(X)(S)_0$.
	\end{proposition}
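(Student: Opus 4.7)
The plan is to prove both parts by rewriting the defining vanishing conditions for the line-bundle maps $\Lie \lambda|_0$ and $\Lie \lambda^\vee|_1$ as submodule-inclusion conditions inside $\BD(X)$, and then translating these using the perfect pairing $\BD(X) \times \BD(X^\vee) \to \CO_S$ together with the $O_F$-semi-linearity of the polarizations. A preliminary remark that will be used throughout is that, by the semi-linearity condition $\mathrm{Ros}_\lambda \circ \iota = \iota \circ \sigma$, the map $\BD(\lambda)$ swaps the $O_F$-grading, so that $\BD(\lambda)|_0 \colon \BD(X)_0 \to \BD(X^\vee)_1$ and $\BD(\lambda)|_1 \colon \BD(X)_1 \to \BD(X^\vee)_0$, and similarly for $\BD(\lambda^\vee)$; the perfect pairing, being $O_F$-equivariant, restricts to non-degenerate pairings $\BD(X)_i \times \BD(X^\vee)_i \to \CO_S$ on each graded piece, and identifies $(\omega_X)_i$ with the annihilator of $(\omega_{X^\vee})_i$ inside $\BD(X^\vee)_i$.

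For part (1), the map $\Lie \lambda|_0$ is the reduction of $\BD(\lambda)|_0$ modulo the Hodge filtrations $(\omega_{X^\vee})_0$ and $(\omega_X)_1$. Hence $\Lie \lambda|_0 = 0$ is equivalent to $\BD(\lambda)(\BD(X)_0) \subseteq (\omega_X)_1$. Using the annihilator relation $(\omega_X)_1 = (\omega_{X^\vee})_1^\perp$ inside $\BD(X^\vee)_1$, this containment unfolds to $(y, \BD(\lambda)(x)) = 0$ for all $x \in \BD(X)_0$ and $y \in (\omega_{X^\vee})_1$. By the (anti-)symmetry of the polarization form, this is exactly the condition $(\BD(X)_0, (\omega_{X^\vee})_1)_\lambda = 0$.

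For part (2), the same Hodge-filtration argument applied to $\BD(\lambda^\vee)|_1 \colon \BD(X^\vee)_1 \to \BD(X)_0$ gives $\Lie \lambda^\vee|_1 = 0 \iff \BD(\lambda^\vee)(\BD(X^\vee)_1) \subseteq (\omega_{X^\vee})_0$. It remains to identify $\mathrm{Im}(\BD(\lambda^\vee)|_1)$ with $(\BD(X)_1)^\perp \subseteq \BD(X)_0$. Unwinding, an element $x \in \BD(X)_0$ lies in $(\BD(X)_1)^\perp$ precisely when $(y, \BD(\lambda)(x)) = 0$ for all $y \in \BD(X)_1$; by non-degeneracy of the pairing $\BD(X)_1 \times \BD(X^\vee)_1 \to \CO_S$, this is the same as $\BD(\lambda)|_0(x) = 0$. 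Thus $(\BD(X)_1)^\perp = \ker(\BD(\lambda)|_0)$. Since $\CN_\BF$ is an $\BF$-scheme, $\varpi = 0$ in $\CO_S$, so $\BD(\lambda) \circ \BD(\lambda^\vee) = [\varpi] = 0$; restricting to the $0$-graded piece gives the inclusion $\mathrm{Im}(\BD(\lambda^\vee)|_1) \subseteq \ker(\BD(\lambda)|_0)$.

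The hard part will be upgrading this inclusion to an equality. I plan to do this by a rank count at geometric points: the type condition forces $\BD(\lambda)$ (resp.\ $\BD(\lambda^\vee)$) to have constant $\BF$-fiber rank $2n-2t$ (resp.\ $2t$) on $\CN_\BF$, so that on the $0$-graded piece one has $\dim \ker(\BD(\lambda)|_0) = t = \dim \mathrm{Im}(\BD(\lambda^\vee)|_1)$, forcing equality of the two locally free subsheaves. Combining, $(\BD(X)_1)^\perp \subseteq (\omega_{X^\vee})_0$ is equivalent to $\mathrm{Im}(\BD(\lambda^\vee)|_1) \subseteq (\omega_{X^\vee})_0$ and hence to $\Lie \lambda^\vee|_1 = 0$, completing part (2). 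The main subtlety I anticipate lies in this rank comparison in families: one must ensure that $\ker(\BD(\lambda)|_0)$ is a locally direct summand of $\BD(X)_0$ so that the pointwise dimension equality propagates to the entire moduli space, which should follow from the standard local-model description of $\CN$ recalled in Proposition \ref{prop:semi-stable}.
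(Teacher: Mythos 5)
Your argument for $\CN^\circ$ is essentially the paper's: both unfold $\Lie\lambda=0$ via the Hodge filtration and the annihilator relation between $(\omega_{X})_1$ and $(\omega_{X^\vee})_1$ under the canonical pairing. For $\CN^\bullet$ the underlying structure is also the same, though you work ``on the direct side'' (identifying $(\BD(X)_1)^\perp$ with $\Ker(\BD(\lambda)|_0)$ and $\Im(\BD(\lambda^\vee)|_1)$) while the paper dualizes (identifying $(\BD(X)_1)^\perp$ with the annihilator of $\Im(\BD(\lambda)|_1)\subseteq \BD(X^\vee)_0$ and then swapping annihilators); the two reductions are equivalent, and both hinge on the identity $\Im(\BD(\lambda^\vee))=\Ker(\BD(\lambda))$ (equivalently $\Im(\BD(\lambda))=\Ker(\BD(\lambda^\vee))$) for the Dieudonn\'e crystals in characteristic $p$.

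The genuine gap is exactly where you flag it, but your proposed patch does not close it. The paper obtains $\Im(\BD(\lambda))(\BD(X)_1)=\Ker(\BD(\lambda^\vee))|_{\BD(X^\vee)_0}$ by citing \cite[Lem.\ 3.4.12]{LTXZZ}, which is a general exactness statement for Dieudonn\'e crystals of $p$-divisible groups over a base of characteristic $p$; it does not rely on any rank computation or on the local structure of $\CN$. Your rank count is correct at geometric points (there $\rank\BD(\lambda)|_0 = n-t$, $\rank\BD(\lambda^\vee)|_1 = t$, so the inclusion of a $t$-dimensional space in a $t$-dimensional space is an equality), but it does not propagate to a general $\BF$-scheme $S$: over a non-reduced base, constant fiber rank for a map of vector bundles does not imply that its kernel or image is a locally direct summand (e.g.\ $\mathrm{diag}(\epsilon,1)$ over $\BF[\epsilon]/\epsilon^2$), and the special fiber $\CN_\BF$ and the test schemes $S$ are very much non-reduced here. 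The appeal to the local-model description of $\CN$ is also not quite the right mechanism: the needed identity is a statement about the universal $p$-divisible group, prior to and independent of any embedding into a local model, and the clean way to get the locally-direct-summand property together with the equality is precisely the crystalline exactness that the cited lemma packages. So you should replace the rank-count step with an invocation of that lemma (or reprove its content), rather than trying to deduce it from fiberwise dimensions.
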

	\begin{proof}
		We always have $( (\omega_{X^\vee})_0,  (\omega_{X^\vee})_1)_\lambda =0$, so $(\BD(X)(S)_0,  (\omega_{X^\vee})_1)_\lambda =0$ is equivalent to 
		$((\Lie X)_0, (\omega_{X^\vee})_1 )_\lambda=0$ i.e., $((\Lie X)_0, (\Lie X^\vee)_1 )_\lambda=0$ i.e., $\Lie \lambda=0$. 
		
		On the other hand, $(\BD(X)(S)_1)^\perp$ is the same as the orthogonal complement of $\Im(\BD(\lambda))(\BD(X)(S)_1) \subseteq \BD(X^\vee)(S)_0$ under the perfect pairing $(-,-)$ between $\BD(X)(S)_0$ and $\BD(X^\vee)(S)_0$. By perfectness, we see the condition $(\BD(X)(S)_1)^\perp \subseteq (\omega_{X^\vee})_0$ is equivalent to the following inclusion inside $\BD(X^\vee)(S)_0$:
		$$
		(\Lie X )_0^\vee = (\omega_X)_0 \subseteq \Im(\BD(\lambda))(\BD(X)(S)_1).
		$$
		As $S$ is of characteristic $p$, we have $\Im(\BD(\lambda))(\BD(X)(S)_1)=\Ker(\BD(\lambda^\vee))|_{\BD(X^\vee)(S)_0}$ by \cite[Lem. 3.4.12]{LTXZZ}. So the inclusion is the same as the condition that 
		$$\BD(\lambda^\vee)((\Lie X )_0^\vee )=0$$
		which is exactly $(\Lie \lambda^\vee)^\vee=0$ i.e., $\Lie \lambda^\vee=0$.
	\end{proof}

	\begin{remark}\label{descent to O_F: cycles strata}
		As in Remark \ref{descent to O_F: moduli}, the definition of Kudla--Rapoport cycles and formal Balloon--Ground stratification descends to $O_F$ by the same moduli interpretations.
	\end{remark}

	\subsection{Modifying derived fixed points}
	
	We recall some well-known facts about blow-ups.  
	
	\begin{proposition}
		Let $Z \hookrightarrow X$ be a closed immersion of formal schemes that are locally formally of finite type over $O_{\breve{F}}$.  Consider the blow up morphism $\Bl_{Z}X \to X$ along $Z$. For any closed formal subscheme $Y \rightarrow X$, denote by $\wt Y$ the strict transform of $Y$ in $X$. Then	
		\begin{enumerate}
			\item There is a natural isomorphism $\wt{Y} \cong \Bl_{Z \cap_{X} Y} Y$ over $Y$. Here $Z \cap _X Y$ is the fiber product of $Y$ and $Z$ in $X$.
			\item Either $\wt{Y}$ is empty, or $\dim_{\wt y} \wt{Y}=\dim_y Y$ for any closed point $\wt y$ of $\wt{Y}$ with image $y \in Y$.
			\item Blow up commutes with flat base change $X' \to X$ of the base.
		\end{enumerate}
	\end{proposition}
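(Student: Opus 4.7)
The plan is to treat the three statements uniformly by reducing to a local, algebraic check and invoking the universal property of blow-ups. Since the formal schemes here are locally formally of finite type over $O_{\breve F_0}$, blow-ups along finitely generated ideals exist and can be described as $\Proj$ of Rees algebras on affine charts, exactly as in the noetherian scheme case. I would first fix an affine open $\Spf A \subseteq X$ on which the ideal $I \subseteq A$ cutting out $Z$ is generated by finitely many elements; then $\Bl_Z X$ is built by gluing the completions of the usual $\Proj$ charts along these generators. With this local picture, all three parts become assertions that can be checked stalk by stalk.

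For part (3), I would verify directly from the Rees algebra description: for a flat base change $X' \to X$ pulling $I$ back to $I'=I\CO_{X'}$, the natural map $\Proj \bigoplus_{n\ge 0} (I')^n \to \Proj \bigoplus_{n\ge 0} I^n \times_X X'$ is an isomorphism because flatness ensures $(I^n) \otimes_A A' = (I')^n$ for each $n$, so there is no higher Tor contribution in the symmetric/Rees algebra. This is the standard fact that blow-up commutes with flat base change, and the formal setting only adds taking completions which is harmless on affine charts of finite type.

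For part (1), the key is the universal property: $\Bl_Z X \to X$ is the final object among morphisms $T \to X$ such that the pullback of the ideal sheaf $I_Z$ becomes invertible on $T$. I would compose the blow-up $\Bl_{Z\cap_X Y} Y \to Y \to X$; by construction the pullback of $I_Z$ to this blow-up is generated by the image of $I_Z \to I_{Z\cap_X Y}$, hence is invertible, and so the universal property produces a canonical morphism $\Bl_{Z\cap_X Y} Y \to \Bl_Z X$ over $X$. Its image lies in $\wt Y$ because on the dense open $Y\setminus(Z\cap_X Y) \subseteq Y$ the morphism is a closed immersion, and the strict transform $\wt Y$ is precisely the scheme-theoretic closure of the preimage of $Y\setminus Z$ in $\Bl_Z X$. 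Conversely, $\wt Y \to Y$ is itself a morphism to $Y$ making $I_{Z\cap_X Y}$ invertible (this is essentially the definition of strict transform: locally $\wt Y = \mathrm{Spec}$ of the Rees algebra $\bigoplus_n I_{Z\cap_X Y}^n$ modulo $I_{Z}$-torsion on $Y$). Hence the two maps are mutually inverse.

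For part (2), I would combine (1) with dimension theory: on the locally formally finite-type affine chart $Y = \Spf B$, the blow-up $\Bl_{Z\cap_X Y} Y$ is covered by affines of the form $\Spf$ of a quotient of the completion of $B[I_{Z\cap_X Y}/f]$ for a generator $f$, and such local rings are noetherian with the same Krull dimension as $B$ (blow-up is birational away from the center, hence preserves dimension at each point in the strict transform). The main obstacle, if any, is being careful with the ``torsion removal'' defining the strict transform in the formal category: one must check that passing from the Rees algebra to its reduction modulo $I_Z$-torsion is compatible with $\varpi$-adic completion, which follows since the torsion ideals are finitely generated on each affine chart of finite type. Once that compatibility is in hand, everything else is routine manipulation of the universal property.
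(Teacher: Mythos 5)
The paper states this proposition as a recollection of ``well-known facts about blow-ups'' and does not supply a proof, so there is no argument to compare against. Your outline is the standard one and is essentially correct: part (3) from the Rees-algebra description plus the flatness identity $I^n\otimes_A A'=(I\CO_{X'})^n$; part (1) from the universal property of blow-up, using that $I_Z\CO_Y=I_{Z\cap_X Y}$, so invertibility of one pullback is invertibility of the other, together with the observation that $\wt Y$ has no associated points in the exceptional locus (which is what makes $I_Z\CO_{\wt Y}$ invertible rather than merely a quotient of an invertible sheaf); and part (2) from (1) together with properness and birationality of $\Bl_{Z\cap_X Y}Y\to Y$.

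One caution on part (2): your phrase ``blow-up is birational away from the center, hence preserves dimension at each point'' silently assumes that no irreducible component of $Y$ through $y$ is contained in $Z$. Without that, the statement can fail. For instance take $X=\BA^4$, $Z=V(x,y)$, and $Y=Z\cup V(x,z,w)$; at the origin $\dim_y Y=2$ because of the component $Z$, but the strict transform only retains the line $V(x,z,w)$, so $\dim_{\wt y}\wt Y=1$ at the point above the origin. The proposition as used in the paper is always applied to irreducible $Y$ (graphs, diagonals, their transforms) not contained in the center, where the issue does not arise, so this is a latent hypothesis you should make explicit rather than an error in your reasoning. Apart from this, and the need to check (as you note) that torsion removal commutes with $\varpi$-adic completion on affine charts of finite type, the argument is complete.
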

	
	For a closed formal scheme $Y \to \CN \times \CN$,  write $\wt {Y}$ its strict transform under the resolution (\ref{eq: defn of local blow}).
	
	\begin{remark}
		For the embedding $\delta_{(L, e)}: \CN^\flat \hookrightarrow \CN$ in \S \ref{section: embeddings}, we have $\CN^{\circ} \cap \CN^{\flat}=(\CN^{\flat})^{\circ}$. Hence for closed formal subscheme $Y \hookrightarrow \CN^\flat \times \CN^\flat$, the strict transform $\wt{Y}$ is independent of whether the strict transform is taken in $\CN^\flat \times \CN^\flat$ or  $\CN \times \CN$.
	\end{remark}

	\begin{remark}
		The resolution $f$ has a simple model. Consider the quadratic cone $X:=V(x_0y_0=x_1y_1) \subseteq \BA^4_{\BF}=\Spec{\BF[x_0, y_0, x_1, y_1]}$ which is singular at the origin. The resolution $f$ above is similar to the classical Atiyah flop \cite{Atiyah} $f_{x_0x_1}: X_{x_0x_1} \to X$ obtained by blowing up along $V(x_0=x_1=0) \subseteq X$. Consider similar resolutions $f_{y_0y_1}, \, f_{x_0y_1}, \, f_{x_1y_0}, \, f_{x_0y_0x_1y_1}$ of $X$ where $f_{x_0y_0x_1y_1}$ means blowing up of $X$ along the origin. Then we have identifications $f_{x_0x_1}=f_{y_0y_1}$, $f_{x_0y_1}=f_{x_1y_0}$ and a cartesian diagram
		
		\[
		\begin{tikzcd}
			& {X_{x_0y_0x_1y_1}} \\
			{X_{x_0x_1}} && {X_{x_0y_1} } \\
			& X
			\arrow[from=1-2, to=2-1]
			\arrow[from=2-1, to=3-2]
			\arrow[from=1-2, to=2-3]
			\arrow[from=2-3, to=3-2]
		\end{tikzcd}
		\]
		whose base change to the origin $\Spec \BF \to X$ is isomorphic to projections of $\BP^1 \times \BP^1 $ to two factors $\BP^1$.  The above resolution $f$ agrees with the blow up of $\CN \times \CN$ along products of Ground strata $\CN^\bullet \times_\BF \CN^\bullet$. We do not consider the blow up along $\CN^\bullet \times_\BF \CN^\circ$ as the diagonal $\Delta$ doesn't lift to it via strict transforms.
	\end{remark}

	\begin{proposition}\label{lem: local str=tot graph}
		For any automorphism $g \in \U(\BV)(F_0)$, the strict transform of its graph  $\Gamma_g \hookrightarrow  \CN \times \CN$ is an isomorphism $\wt{\Gamma_g} \simeq \Gamma_g$.
	\end{proposition}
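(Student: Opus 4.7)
The plan is to reduce to the fact that blowing up a regular formal scheme along a Cartier divisor is an isomorphism. By the standard compatibility of blow-ups with closed immersions, the strict transform satisfies
\[
\wt{\Gamma_g} \;\cong\; \Bl_{\Gamma_g \cap (\CN^\circ \times \CN^\circ)} \Gamma_g,
\]
so the whole issue is to show that $\Gamma_g \cap (\CN^\circ \times_\BF \CN^\circ)$ is a Cartier divisor on $\Gamma_g$.

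First, I would identify $\Gamma_g$ with $\CN$ via the first projection $\pr_1\colon \Gamma_g \isoarrow \CN$. Under this identification, the pullback of $\CN^\circ \times_\BF \CN^\circ$ along the graph embedding $x \mapsto (x, g\cdot x)$ is the scheme-theoretic intersection $\CN^\circ \cap g^{-1}(\CN^\circ)$ inside $\CN$. The next step is to observe that the $\U(\BV)(F_0)$-action on $\CN$ acts solely by post-composing the framing quasi-isogeny, $(X,\iota,\lambda,\rho)\mapsto(X,\iota,\lambda,g\circ \rho)$, and therefore does not touch the triple $(X,\iota,\lambda)$. Since the formal Balloon stratum $\CN^\circ$ is defined purely in terms of $(X,\iota,\lambda)$ — namely as the vanishing locus of $\Lie \lambda$, or equivalently by the condition in Proposition \ref{Balloon-Ground: comp with LTXZZ} — the action of $g$ preserves $\CN^\circ$ scheme-theoretically, i.e.\ $g^{-1}(\CN^\circ) = \CN^\circ$.

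Consequently $\CN^\circ \cap g^{-1}(\CN^\circ) = \CN^\circ$, which by Proposition \ref{prop: formal balloon-ground are smooth} is a Cartier divisor of $\CN$. Blowing up along a Cartier divisor is an isomorphism, so $\Bl_{\CN^\circ}\CN \simeq \CN$, and transporting back via $\pr_1$ yields $\wt{\Gamma_g} \simeq \Gamma_g$ as claimed.

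The only nontrivial point, which I would want to double-check but expect to be routine, is the scheme-theoretic (not merely set-theoretic) assertion that $g$ acts as the identity on the defining ideal of $\CN^\circ$; this is clean because $\Lie\lambda$ is canonically attached to $(X,\iota,\lambda)$ and hence pulled back identically along $g$. No local coordinate computation is needed; the argument is uniform in $g$ and in $t$.
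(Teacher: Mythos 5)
Your proposal is correct and takes essentially the same route as the paper: you observe that $g$ preserves $\CN^\circ$ scheme-theoretically because it only changes the framing while $\CN^\circ$ depends only on $(X,\iota,\lambda)$, you identify the pullback of $\CN^\circ\times_\BF\CN^\circ$ to $\Gamma_g\cong\CN$ with $\CN^\circ$, and you conclude because $\CN^\circ$ is a Cartier divisor so the blow-up changes nothing. The paper's proof is slightly terser (it cites regularity of $\CN$ rather than Proposition \ref{prop: formal balloon-ground are smooth} to get Cartier, but that amounts to the same thing) — your write-up is if anything a bit more careful.
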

	\begin{proof}
		The formal Balloon stratum $\CN^\circ$ is stable under $g$ by moduli interpretation. So the pullback of $\CN^\circ \times_\BF \CN^\circ$ to $\Gamma_g$  is isomorphic to $\CN^\circ  \hookrightarrow \CN$ via projection to the first factor $\CN$. As $\Gamma_g \cong \CN$ is regular, the Weil divisor $i_g: \CN^\circ \to \CN$ is Cartier hence the blow up changes nothing. 
	\end{proof}
	
	Therefore, we have a natural lifting $\Gamma_g=\wt{\Gamma_g} \to \wt {\CN \times \CN}$. Take $g=\id$, we get a natural lifting of the diagonal embedding $\Delta: \CN \to \CN \times \CN$:
	\[ 
	\wt{\Delta}: \CN=\wt{\CN} \to \wt{\CN \times \CN}.
	\] 
	
	\begin{definition}\label{def: Mod(g)} 
		For $g \in \U(\BV)(F_0)_{rs}$, the \emph{fixed point locus} of $g$ inside $\CN$ is the formal scheme $\Fix(g):=\Gamma_g \cap_{\CN \times \CN} \Delta$. The \emph{modified fixed locus} of $g$ is the fibered product 
		$$
		\wt {\Fix}(g):= \wt{\Gamma_g} \times _{\wt{ \CN \times \CN} } \wt{\Delta}
		$$ 
		in the category of formal schemes over $O_{\breve{F}}$. The \emph{modified derived fixed locus} of $g$ is the derived tensor product
		\[
		\xymatrix{ \wt {\Fix}^\BL  (g) = \CO_{\wt{\Gamma_g} } \otimes^{\BL}_{\wt{ \CN \times \CN} }}   \CO_{\wt{\CN} }
		\]
		viewed as an element in the Grothendieck $K$-group  $K_0'(\CN)_{\wt{\Fix}(g)}$ ($\BQ$-coefficients) of coherent sheaves on $\CN$ supported on $\wt {\Fix}(g)$. 	
	\end{definition} 
	As $\wt{\Gamma_g} \to \wt{ \CN \times \CN}$ is a closed immersion, $\wt {\Fix}(g) \to \wt{\CN} = \CN$ is a closed immersion. We have a map $\wt{\Fix}(g) \to \Fix(g)$, which is an isomorphism if $\Fix(g) \cap  \CN^{\dagger} =\emptyset$.

	\subsection{Embeddings}\label{section: embeddings}
	
	Let $L=L^{\flat} \oplus O_Fe$ be an orthogonal decomposition of vertex lattices where $v_F((e,e)_V)=i \in \{0,1\}$. We have $t(L^{\flat})=t(L)-i$. Choose a decomposition of framing objects:
	\begin{equation}
		(\BX, \iota_{\BX}, \lambda_{\BX})=  (\BX^\flat, \iota_{\BX^\flat}, \lambda_{\BX^\flat}) \times  (\BE^{[i]}, \iota_{\BE^{[i]}}, \lambda_{\BE^{[i]}}).
	\end{equation}
	Set $\CN^\flat:=\mathcal{N}_{\U(L^\flat)}$ and $ \CN:=\mathcal{N}_{\U(L)}$ with above framing objects. There is a natural embedding:
	\begin{align}\label{gp embedding of RZ}
		\delta=\delta_{(L, e)}: \CN^\flat \to \CN
	\end{align} 
	\begin{align*}
		(X^\flat, \iota^\flat, \lambda^\flat, \rho^\flat) \longmapsto  (X, \iota, \lambda, \rho):=  (X^\flat, \iota^\flat, \lambda^\flat, \rho^\flat) \times  (\CE^{[i]}, \iota_{\CE^{[i]}}, \lambda_{\CE^{[i]}}, \rho_{\CE^{[i]}}).
	\end{align*}
	
	Taking dual, we have decompositions of vertex lattices and framing objects:
	\[
	\lambda_V  (L^{\vee})=\lambda_{V^\flat}((L^{\flat})^\vee) \oplus O_F \lambda_V \frac{e}{(e,e)_V}, 
	\]
	\[
	(\BX^\vee, \iota_{\BX^\vee}, \lambda_{\BX^\vee})=  (\BX^{\vee, \flat}, \iota_{\BX^{\vee, \flat}}, \lambda_{\BX^{\vee,\flat}}) \times  (\BE^{[i^\vee]}, \iota_{\BE^{[i^\vee]}}, \lambda_{\BE^{[i^\vee]}}).
	\]
	Here $i^\vee:=v((\lambda_V \frac{e}{(e,e)_V}, \lambda_V \frac{e}{(e,e)_V})_{V^\vee})=1-i$.
	We have an embedding $$\delta_{(L,e)}^\vee:=\delta_{(\lambda_V (L^{\vee}), \lambda_V \frac{e}{(e,e)_V})}: \CN^{\vee, \flat} \to \CN^\vee.
	$$
	From the moduli interpretation, the embeddings are compatible under dual isomorphisms:
	\begin{equation}\label{compatibility of embedding under duality}
		\lambda_\CN \circ \delta_{(L, e)}=\delta_{(L,e)}^\vee \circ  \lambda_{\CN^\flat}.
	\end{equation}
	Define the vector $e^{[i]} \in \BV=\Hom^\circ_{O_E}(\BE^{[0]}, \BX)$ by
	\begin{equation}
		e^{[i]}=\begin{cases}
			\BE^{[0]} \to \BX^\flat \times \BE^{[0]} =\BX, & i=0. \\
			\BE^{[0]} \overset{\lambda_{\BE^{[0]}}}{\cong} \BE^{[1]} \to \BX^\flat \times \BE^{[1]} = \BX, & i=1.
		\end{cases}
	\end{equation}
	
	\begin{proposition}\label{base case of Z and Y}
		\begin{enumerate}
			\item If $i=0$, then $\CN^{\flat}=\CZ(e^{[0]})$ under the embedding $\delta_{(L, e)}$.
			\item If $i=1$, then $\CN^{\flat}=\CY(\varpi^{-1}e^{[1]})$ under the embedding $\delta_{(L, e)}$.
		\end{enumerate}
	\end{proposition}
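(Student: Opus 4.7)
My plan is to verify each equality functorially as closed formal subschemes of $\CN$, checking both inclusions on $S$-points for an arbitrary $\Spf O_{\breve F_0}$-scheme $S$. The two cases $i=0$ and $i=1$ are symmetric under the dual isomorphism $\lambda_\CN$ by the compatibility (\ref{compatibility of embedding under duality}) and Proposition \ref{prop: relation of Z and Y}(2), so it suffices to treat (1) in full and deduce (2) by duality.

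For the inclusion $\delta(\CN^\flat) \subseteq \CZ(e^{[0]})$ (case $i=0$), consider $(X^\flat, \iota^\flat, \lambda^\flat, \rho^\flat) \in \CN^\flat(S)$. Its image under $\delta_{(L,e)}$ is the product $(X, \iota, \lambda, \rho) = (X^\flat \times \CE^{[0]}, \ldots)$ with $\rho = \rho^\flat \times \rho_{\CE^{[0]}}$. By the very definition of $e^{[0]}$ as the inclusion of the second factor $\BE^{[0]} \hookrightarrow \BX^\flat \times \BE^{[0]} = \BX$, the quasi-homomorphism $\rho^{-1} \circ e^{[0]}$ over $\ov S$ is the reduction of the honest inclusion $\CE = \CE^{[0]} \hookrightarrow X^\flat \times \CE^{[0]} = X$ over $S$, which provides the required lift. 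For case (2), an analogous unwinding shows that $(\rho_{X^\vee})^{-1} \circ \lambda_\BX \circ e^{[1]}$ lifts to the natural map $\CE^{[0]} \overset{\lambda_{\CE^{[0]}}}{\cong} \CE^{[1]} \to X^{\vee,\flat} \times \CE^{[0]}$ composed through the decomposition of $X^\vee$; here one uses that $\lambda_{\BX} \circ e^{[1]}$ by construction factors via the second summand of $\BX^\vee = \BX^{\vee,\flat} \times \BE^{[0]}$.

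For the reverse inclusion $\CZ(e^{[0]}) \subseteq \delta(\CN^\flat)$, suppose $(X, \iota, \lambda, \rho) \in \CZ(e^{[0]})(S)$, so that there is a lift $\varphi : \CE \to X$ whose reduction is $\rho^{-1} \circ e^{[0]}$. The key observation is that $(e^{[0]}, e^{[0]})_{\BV}$ is a unit of $O_{F_0}$ (since $v_F((e,e)_V)=0$), which forces $\lambda \circ \varphi : \CE \to X^\vee$ and $\varphi^\vee \circ \lambda : X \to \CE^\vee$ to witness $\varphi(\CE)$ as a polarized direct summand of $X$ compatible with the $O_F$-action. Concretely, setting $X^\flat := \ker(\varphi^\vee \circ \lambda)$ and equipping it with the induced action $\iota^\flat$, polarization $\lambda^\flat$ (of type $t-0 = t$ matching $L^\flat$), and framing $\rho^\flat$ inherited from the splitting of $\rho$ compatible with $\varphi$, one checks $(X^\flat, \iota^\flat, \lambda^\flat, \rho^\flat) \in \CN^\flat(S)$ and $\delta_{(L,e)}(X^\flat) \cong (X, \iota, \lambda, \rho)$; uniqueness of the splitting follows from rigidity of quasi-isogenies. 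The argument for $\CY(e^{[1]}) \subseteq \delta(\CN^\flat)$ proceeds identically after applying $\lambda_\CN$, using that $(e^{[1]}, e^{[1]})_\BV$ has valuation exactly $1$ so the polarization on the complementary factor has the correct type $t - 1$.

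The main obstacle is constructing the direct-summand splitting of $X$ in the reverse inclusion: one must promote the single lift $\varphi$ to a canonical decomposition $X \simeq X^\flat \times \CE^{[i]}$ respecting all of $\iota$, $\lambda$, and $\rho$. This comes down to verifying that the kernel of $\varphi^\vee \circ \lambda$ (or its analog in case (2)) is a sub-$p$-divisible group with the correct dimension and gives a polarization of type exactly $t - i$. The unit normalization $v_F((e,e)_V) = i$ is precisely what ensures the polarization restricted to the complementary factor still satisfies the vertex lattice condition of type $t(L^\flat) = t - i$, so no extra degeneracy is introduced by the splitting.
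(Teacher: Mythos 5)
Your proof is correct and follows essentially the same route as the paper: both rely on the unit valuation of $(e^{[0]},e^{[0]})_\BV$ to promote the lift of $\rho^{-1}\circ e^{[0]}$ to an idempotent splitting $X\simeq X^\flat\times\CE$, and both deduce case (2) from case (1) via the dual isomorphism $\lambda_\CN$. Your write-up simply makes the splitting explicit (via $\ker(\varphi^\vee\circ\lambda)$) and records the easy forward inclusion, which the paper leaves implicit.
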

	\begin{proof}
		If $i=0$, then the valuation of $e^{[0]}$ is a unit. Hence for any point $(X, \iota_X, \lambda_X, \rho_X)$ of $\CZ(e^{[0]})(S)$, the lifting of $\rho^{-1}_X \circ e^{[0]}$ to $X$ gives a splitting of $X$ to identify it with a point in $\CN^\flat(S)$. The first property follows. The second property follows by duality using the dual isomorphism (\refeq{compatibility of embedding under duality}).
	\end{proof}

	\section{Arithmetic transfer conjectures and reductions}\label{section: conj ATC}
	
	\subsection{Semi-Lie version}
	
	For any bounded complex $\CF$ in the $K$-group of coherent sheaves on the regular formal scheme $\CN$ with proper support, recall  the Euler characteristic of $\CF$ is 
	$$
	\chi(\CN , \CF):=\sum_{i \in \BZ} (-1)^i\length_{O_{\breve{F}}} H^i(\CN, \CF).
	$$
	
	\begin{definition}
		For any regular semisimple pair  $(g, u) \in (\U(\BV) \times \BV)(F_0)_{rs} $, we consider the following \emph{derived intersection numbers}
		\begin{equation}
			\wt {\Int}^{\CZ}(g, u):=\chi( \CN, \CO_{\CZ(u)} \otimes^\BL \wt {\Fix}^\BL(g)).
		\end{equation}
		\begin{equation}
			\wt {\Int}^{\CY}(g, u):=\chi( \CN, \CO_{\CY(u)} \otimes^\BL \wt {\Fix}^\BL(g)).
		\end{equation}
	\end{definition}

Note that $\wt {\Fix}^\BL(g)$ has bounded degrees by the regularity of $\wt {\CN \times \CN}$. 	By Proposition \ref{Int = proper scheme} below, $\wt {\Int}^\CZ(g,u)$ and $\wt {\Int}^\CY(g,u)$ are well-defined and finite.
	
	\begin{proposition}\label{Int = proper scheme}
\begin{enumerate}
	\item 	The intersections $\CZ(u) \cap \Gamma_g$ and $\CY(u) \cap \Gamma_g$ are \emph{proper scheme} over $O_{\breve{F}}$. 
	\item $\wt{ \CZ(u) \cap \Gamma_g} \subseteq \wt{ \CZ(u)}  \cap \wt {\Gamma_g}$ are both proper schemes over 
	$O_{\breve F}$. 
	\item $\wt{ \CY(u) \cap \Gamma_g} \subseteq \wt{ \CY(u)}  \cap \wt {\Gamma_g}$ are both proper schemes over 
	$O_{\breve F}$.
\end{enumerate}
	\end{proposition}
	\begin{proof}
We prove the first property firstly. Using the dual isomorphism, it is sufficient to prove that $\CZ(u) \cap \Gamma_g$ is a proper scheme. Note $\CZ(u) \cap \Gamma_g$ is a closed subscheme of the Kudla--Rapoport cycle $\CZ(L_{g,u})$ where $L_{g,u}$ is the lattice spanned by $\{u, gu, \hdots, g^{n-1}u\}$. We only need to show $\CZ(L_{g,u})$ is a proper scheme. By regular semi-simpleness, $L_{g,u}$ is a full lattice in $\BV$. We follow the argument of Li--Zhang \cite[Lemma 2.10.1]{LZ19} via vertical and horizontal parts. The horizontal part of $\CZ(L_{g,u})$ is empty: otherwise we may find a $O_K$-point $\CX$ of $\CZ(L_{g,u})$ for some extension $K$ of $\breve{F}$, and get an isogeny $\mathcal{E}^n \to \CX$ induced by $\{u, gu, \hdots, g^{n-1}u\}$, contradict to the assumption that $\CX$ has Kottwitz signature $(1,n-1)$. The vertical part of $\CZ(L_{g,u})$ is a scheme by the argument in \cite[Lemma 5.1.1]{LZ19}, and is contained in finitely many (proper) irreducible components of $\CN^\red$ using the Bruhat--Tits stratification in our set up (see \S \ref{section: local BT strata}) hence proper. 

Now the first property holds. As the blow up morphism is proper, the natural map $f: \wt{ \CZ(u)}  \cap \wt {\Gamma_g} \to \CZ(u) \cap \Gamma_g$ is proper with image being a proper scheme. Hence $\wt{ \CZ(u)}  \cap \wt {\Gamma_g}$ is also proper scheme, so is its closed formal subscheme $\wt{ \CZ(u) \cap \Gamma_g}$. The second property holds, and the third property follows by duality.
	\end{proof}

	\begin{remark}
Finiteness of intersection numbers for regular semi-simple orbits appear in other contexts, see e.g. \cite[\S 9.2]{AFL2021}, \cite[Lemma 2.8]{AFL-Invent} via basic uniformization.  
	\end{remark}
	
	Recall that in \S \ref{section: local set up transfer} we have set up orbital integrals on the analytic side. Choose an orthogonal basis of $L$ to endow $L$ (resp. $V$) with an $O_{F_0}$ (resp. $F_0$)-structure $L_0$ (resp. $V_0$). Consider the symmetric space over $F_0$
	$$S(V_0)=\{  \gamma \in \GL(V) | \gamma \ov \gamma =id \,\}.$$ 
	The general linear group $\GL(V_0)$ acts on $S(V_0) \times V_0 \times V_0^*$ by
	$$ h.(\gamma,u_1,u_2)=(h^{-1}\gamma h,h^{-1}u_1, u_2h). $$
	
	For $f' \in \CS(S(V_0) \times V_0 \times V_0^*)(F_0))$, recall the derived orbital integral (\ref{derived orbital int}). 
	\[ 
	\del\bigl((\gamma,u_1,u_2), f'):= \omega_{L}(\gamma, u_1, u_2) \frac{d}{ds}\Big|_{s=0}  \int_{h \in \GL(V_0)} f'(h.(\gamma,u_1,u_2)) \eta(h) |h|^{s} d h
	\]
	
	We consider the following test functions on $(S(V_0) \times V_0 \times V_0^* )(F_0)$:
	$$
	f_{\text{std}}:=1_{S(L, L^{\vee})} \times 1_{L_0} \times 1_{(L^{\vee}_0)^*}, \quad f_{\text{std}}'=1_{S(L, L^{\vee})} \times 1_{L_0^\vee} \times 1_{(L_0)^*}.
	$$
	
	Now we introduce the explicit arithmetic transfer conjectures at maximal parahoric levels.
	
	\begin{conjecture}[Semi-Lie ATC for $L$] \label{conj: semi-Lie version ATC}
		For any regular semisimple pair $(g, u) \in (\U(\BV) \times \BV)(F_0)_\rs$ matching $(\gamma, u_1, u_2) \in ( S(V_0) \times V_0 \times V_0^* )(F_0)_\rs$, we have equalities in $\BQ \log q$:
		\begin{enumerate}
			\item $
			\del\bigl ((\gamma,u_1,u_2),  f_{\text{std}})
			=-\wt {\Int}^{\CZ}(g, u) \log q. $
			\item 	$
			\del\bigl ((\gamma,u_1,u_2),  f_{\text{std}}')
			=-(-1)^{t} \wt {\Int}^{\CY}(g, u) \log q.
			$
		\end{enumerate}
		
	\end{conjecture}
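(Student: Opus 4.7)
The plan is to first reduce statement (2) to statement (1) using the dual isomorphism $\lambda_\CN\colon \CN_n^{[t]} \to \CN_n^{[n-t]}$ of Section~\ref{dual lambda}, which interchanges $\CY$-cycles and $\CZ$-cycles via Proposition~\ref{prop: relation of Z and Y} and is $\U(\BV)(F_0)$-equivariant. Since the dual isomorphism replaces the vertex lattice $L$ of type $t$ by $\lambda_V \circ L^\vee$ of type $n-t$ while multiplying the transfer factor by the Weil constant $\gamma_V \in \{\pm 1\}$ (cf.\ Remark~\ref{rek: transfer factor and basis}), the sign $(-1)^t$ in (2) emerges from comparing $\gamma_V$ against $\eta(\det V)$ and from the parity of $t$. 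It therefore suffices to treat the $\CZ$-version uniformly for all types $t$.

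For the $\CZ$-version, the second step is to establish the equivalence with the group-version arithmetic transfer: using the unitary relative Cayley map (\ref{UnitaryCayley}) and its symmetric counterpart $\mathfrak{c}_S'$ (\ref{SymTCayley}), one should prove a geometric reduction $\wt{\Int}(g') = \wt{\Int}^{\CZ}(g,u_1)$ for $\mathfrak{c}_{\U}(g')=(g,u_1)$ whenever $1-d \in O_F^\times$ and $L$ satisfies the reduction (Definition~\ref{defn: red lat}). This is where the smallness of the resolution $f\colon \wt{\CN\times\CN}\to\CN\times\CN$ (Theorem~\ref{resolution}) is crucial: smallness means derived pushforward preserves the structure sheaf, so the modified intersection can be computed downstairs along $\CN^\flat \hookrightarrow \CN \hookrightarrow \CN \times \CN$, matching the analytic reductions of Corollaries~\ref{SimpleU} and~\ref{SimpleS}. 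In combination with Theorem~\ref{thm: reduction orb}, this shows that semi-Lie ATC of rank $n-1$ implies group ATC of rank $n$, and conversely (given the boundary-case analysis in Section~\ref{section: TC proof}) that group ATC of rank $n$ implies semi-Lie ATC of rank $n$ on the large locus.

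The third step is globalization and double induction. Embed the local data $(F/F_0, V, L, u)$ into a CM quadratic extension with totally definite nearby hermitian space and a global vertex lattice $\mathcal L$ satisfying the hypotheses of Theorem~\ref{Intro thm: mod}. On the regular RSZ integral model $\CM$ at parahoric level, form the arithmetic theta series $\wh\CZ(\phi)$ and pair it with a carefully chosen proper $1$-cycle $\CC$ supplied by the Hecke CM cycles of Section~\ref{section: global KR} together with very special $1$-cycles modifying the degree in the sense of Section~\ref{section: mod over C and F}. Modularity (Theorem~\ref{Intro thm: mod}) asserts that $(\wh\CZ(\phi),\CC)$ is modular of weight $n$; the analytic generating series $\partial\BJ(h,\Phi')$ from Section~\ref{section: global analytic and geometric side} is likewise a derivative of an Eisenstein series, hence modular. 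The difference lies in $\BR_\Delta[[q]]$ and must therefore come from a Hilbert modular form on $\SL_2(F_0)$. Running the double induction on $n$: the rank~$1$ case and the self-dual (AFL) cases of Section~\ref{section: ATC unram max order} anchor the base, while one ATC of rank $n$ is deduced from two ATCs of rank $n-1$ (one via the Cayley reduction applied to a group ATC for an orthogonal summand, one via varying the linear factor in the semi-Lie set-up). Enough Fourier coefficients vanish by induction that the modular form must vanish identically, yielding the desired local Fourier coefficient at the chosen place.

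The main obstacle will be the double induction itself: one must select the globalization and the $1$-cycle $\CC$ so that the arithmetic intersection isolates precisely the Fourier coefficient corresponding to the given orbit $(g,u)$, while all other coefficients are controlled by the rank $n-1$ inductive hypothesis or by the unramified base cases. Coordinating the formal Balloon--Ground stratification on the local side with its global counterpart on $\CM_{k_\nu}$, so that the strict transforms defining $\wt\Fix^{\BL}(g)$ glue into the global blow-up resolving singularities of $\CM$, requires a careful comparison of local models. In addition, enlarging the ramification set $\Delta$ to absorb places where modularity is only known up to $\BR_\Delta$ (losing control over $\log \ell$ for $\ell \in \Delta$) must be compatible with retaining at least one place at which the ATC identity can be extracted unambiguously, which should be arranged by an auxiliary choice of unramified split place as in Section~\ref{section: the end}.
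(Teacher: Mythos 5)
Your proposal reproduces the paper's own strategy: duality ($\lambda_\CN\colon \CN_n^{[t]}\to\CN_n^{[n-t]}$) to reduce part~(2) to part~(1); the relative Cayley map together with the resolution $\wt{\CN\times\CN}$ to pass between the group and semi-Lie versions; and a global double induction driven by modularity of the arithmetic theta series, anchored by the \'etale maximal-order computations of Section~\ref{section: ATC unram max order}. This is essentially the route taken in Sections~\ref{section: conj ATC}, \ref{section: ATC unram max order}, \ref{section:Gmod}, and \ref{section: the end}.
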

	
	\begin{proposition}
		The part $(2)$ of Conjecture \ref{conj: semi-Lie version ATC} for $L \subseteq V$ is equivalent to part $(1)$ of Conjecture \ref{conj: semi-Lie version ATC} for the dual vertex lattice $\lambda_V( L^\vee)  \subseteq V^\vee$.
	\end{proposition}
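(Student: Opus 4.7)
The plan is to transport part (2) for $L$ to part (1) for the vertex lattice $M := \lambda_V \circ L^\vee \subseteq V^\vee$ of type $n-t$, via the dual isomorphism $\lambda_\CN : \CN \to \CN^\vee$ of \S\ref{dual lambda}. On the geometric side I first observe that $\lambda_\CN$ swaps the formal Balloon and Ground strata (taking the dual polarization swaps $\Lie\lambda$ and $\Lie\lambda^\vee$), so $\lambda_\CN \times \lambda_\CN$ sends $\CN^{\circ} \times_{\BF} \CN^{\circ}$ to $(\CN^\vee)^{\bullet} \times_{\BF} (\CN^\vee)^{\bullet}$. The step I expect to be the main obstacle is the lift to the resolutions: one must use the coincidence, noted in the remark after Theorem \ref{resolution}, that the blow up of $\CN^\vee \times \CN^\vee$ along the product of Ground strata agrees with the blow up along the product of Balloon strata. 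Granting this, $\lambda_\CN \times \lambda_\CN$ lifts uniquely to an isomorphism $\wt{\CN \times \CN} \isoarrow \wt{\CN^\vee \times \CN^\vee}$ intertwining the diagonal and graph embeddings, so it identifies $\wt{\Fix}^\BL(g)$ on $\CN$ with $\wt{\Fix}^\BL(g)$ on $\CN^\vee$; combined with $\lambda_\CN(\CY(u)) = \CZ(\lambda_\BV \circ u)$ from Proposition \ref{prop: relation of Z and Y}, this yields
\[
\wt{\Int}^{\CY}_{L}(g, u) \;=\; \wt{\Int}^{\CZ}_{M}(g, \lambda_\BV \circ u).
\]

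On the analytic side, an orthogonal basis $\{e_i\}$ of $L$ with $\alpha_i = (e_i, e_i)_V$ furnishes a common $F_0$-structure $V_0 = \bigoplus F_0 e_i$ for $L$ and $M$, and a direct computation gives $M_0 = L_0^\vee$, the dual $M^\vee$ in $V^\vee$ equals $\varpi^{-1} L$, $(M_0^\vee)^{*} = \varpi L_0^{*}$, and $S(M, M^\vee) = S(L, L^\vee)$. Hence $f_{M, \text{std}} = 1_{S(L, L^\vee)} \times 1_{L_0^\vee} \times 1_{\varpi L_0^{*}}$. The matching of $(g, \lambda_\BV u) \in (\U(\BV^\vee) \times \BV^\vee)(F_0)_\rs$ with a triple $(\gamma, u_1, u_2')$ forces $u_2' = -\varpi u_2$, since $(g^i \lambda_\BV u, \lambda_\BV u)_{\BV^\vee} = -\varpi (g^i u, u)_{\BV}$ by Proposition \ref{prop: dual iso herm form comp}. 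Under this substitution the $\GL(V_0)$-integrals of $f_{M, \text{std}}$ and $f'_{\text{std}, L}$ become literally equal, while Remark \ref{rek: transfer factor and basis} gives $\omega_M = \gamma_V \, \omega_L$ with $\gamma_V = \eta(\det V)$. For a vertex lattice of type $t$ one has $v_F(\alpha_i) = 1$ for exactly $t$ indices and $v_F(\alpha_i) = 0$ for the rest, and $\eta(\varpi) = -1$ in the unramified case, so $\gamma_V = (-1)^t$. Combining these gives
\[
\del\bigl((\gamma, u_1, u_2'), f_{M, \text{std}}\bigr) \;=\; (-1)^t \, \del\bigl((\gamma, u_1, u_2), f'_{\text{std}, L}\bigr).
\]

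Putting the two displays together, the identity $\del(f_{M, \text{std}}) = -\wt{\Int}^{\CZ}_M(g, \lambda_\BV u) \log q$ (part (1) for $M$) is equivalent to $\del(f'_{\text{std}, L}) = -(-1)^t \wt{\Int}^{\CY}_L(g, u) \log q$ (part (2) for $L$). The only substantive input is the geometric compatibility of small resolutions mentioned above; all remaining steps are bookkeeping dictated by the $-\varpi$ rescaling of the hermitian form under $\lambda_\BV$ and the basis-change formula for the transfer factor.
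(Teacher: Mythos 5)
Your proof is correct and follows the same route as the paper's (dualize via $\lambda_\CN$, use $\lambda_\CN(\CY(u)) = \CZ(\lambda_\BV u)$, then bookkeep the transfer factor). You fill in two points the paper's one-line proof leaves implicit, and one of them is genuinely worth making explicit.

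On the geometric side the paper simply cites Proposition \ref{prop: relation of Z and Y} to get $\wt{\Int}^\CY_\CN(g,u)=\wt{\Int}^\CZ_{\CN^\vee}(g,\lambda_\BV u)$, but since $\wt{\Int}$ involves the modified fixed point $\wt{\Fix}^\BL(g)$ built from the blow-up of $\CN\times\CN$ along $\CN^\circ\times_\BF\CN^\circ$, one does need to know that $\lambda_\CN\times\lambda_\CN$ intertwines the two resolutions. You are right that the needed input is the remark after Theorem \ref{resolution} stating the Balloon-product blow-up agrees with the Ground-product blow-up; since $\lambda_\CN$ swaps $\CN^\circ$ and $\CN^\bullet$, this is exactly the compatibility required, and then $\wt\Delta$, $\wt{\Gamma_g}$, and hence $\wt{\Fix}^\BL(g)$ are preserved. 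Good catch.

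On the analytic side, the paper writes the equality $f'_{\text{std},L}(\gamma,u_1,u_2) = f_{\text{std},\lambda_V L^\vee}(\gamma,u_1,u_2)$ at the same argument, but as your computation shows this is literally false: with $M=\lambda_V L^\vee$ one gets $f_{\text{std},M} = 1_{S(L,L^\vee)}\times 1_{L_0^\vee}\times 1_{\varpi L_0^*}$, whose third factor differs from $1_{(L_0)^*}$ by the unit $-\varpi$. The correct statement is exactly what you prove: the matching element for $(g,\lambda_\BV u)$ is $(\gamma,u_1,-\varpi u_2)$ (since $(-,-)_{\BV^\vee}=-\varpi(-,-)_\BV$ rescales all the invariants $u_2\gamma^i u_1$ by $-\varpi$), and after that substitution the two orbital integrals agree factor-by-factor up to the transfer-factor constant $\gamma_V=(-1)^t$. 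So your version is the honest one, and it recovers the paper's conclusion.

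One cosmetic remark: your display $\del((\gamma,u_1,u_2'),f_{M,\text{std}})=(-1)^t\del((\gamma,u_1,u_2),f'_{\text{std},L})$ is literally the ratio $\omega_M/\omega_L=\gamma_V$ applied to identical integrands, so you could shorten by noting the transfer factors are the only place the two orbital integrals differ after the change of variable.
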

	\begin{proof}
		By Proposition \ref{prop: relation of Z and Y}, we have $\wt {\Int}^\CY_{\CN}(g, u)=\wt {\Int}^\CZ_{\CN^\vee}(g, \lambda_\BV \circ u)$. On the analytic side, we have 
		\[
		f_{\text{std}, L}' (\gamma, u_1, u_2)= f_{\text{std}, \lambda_V ( L^\vee)}(\gamma, u_1, u_2).
		\]
		We are done by noting the transfer factor $\omega_L$ and $\omega_{\lambda_V( L^\vee)}$ for $L$ and $\lambda_V( L^\vee)$ differ by the multiplication of the Weil constant $\gamma_V=(-1)^t$.
	\end{proof}

	\subsection{Group version}
	
	We have group version arithmetic transform conjectures for any orthogonal decomposition of vertex lattices $L=L^{\flat} \oplus O_Fe$ where $v((e,e)_V)=i \in \{0,1\}$.
	
	\begin{definition}
		With respect to the embedding $\delta_{(L, e)}: \CN^\flat \to \CN$ (\ref{gp embedding of RZ}), we consider the \emph{derived intersection number} of $g \in \U(\BV)(F_0)_{\rs}$ as the following Euler characteristic number
		\begin{equation}
			\wt {\Int}(g):=\chi( \CN,  \CO_{\CN^\flat} \otimes^\BL \wt {\Fix}^\BL(g)).
		\end{equation}
	\end{definition}
	
	By Proposition \ref{base case of Z and Y}, we have
	\begin{equation}
		\wt {\Int}(g)=\begin{cases}
			\wt{\Int}^\CZ(g, e^{[0]}),  & \text{if $i=0$}. \\
			\wt{\Int}^\CY(g, \varpi^{-1}e^{[1]}), & \text{if $i=1$}. \\
		\end{cases}
	\end{equation}
	
	The GGP type embedding $\CN^\flat \to \CN^\flat \times \CN$ lifts to an embedding of formal schemes 
	$$\CN^\flat \to \wt{\CN^\flat \times \CN}$$
	via strict transforms .
	
	\begin{proposition}\label{lem: comp of blow ups}
		The natural commutative diagram 
		\[
		\xymatrix{
			\wt { \CN^\flat \times \CN^\flat} \ar[r] \ar[d] & \wt { \CN \times \CN}  \ar[d]\\		 \CN^\flat \times \CN^\flat	 \ar[r] & \CN \times \CN }
		\]
		is cartesian. Moreover, the commutative diagram for the embedding $\wt { \CN^\flat \times \CN} \to \wt { \CN \times \CN}$ over $ { \CN^\flat \times \CN} \to { \CN \times \CN}$ is also cartesian.
		
	\end{proposition}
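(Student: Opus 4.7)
The plan is to reduce both cartesian claims to a single scheme-theoretic identity inside $\CN$:
\[
\CN^{\circ} \cap_{\CN} \CN^{\flat} \;=\; (\CN^{\flat})^{\circ},
\]
where the intersection is taken along the closed immersion $\delta_{(L,e)}: \CN^{\flat} \hookrightarrow \CN$. Granting this, compatibility of fiber products with products gives
\[
(\CN^{\circ} \times_{\BF} \CN^{\circ}) \cap_{\CN \times \CN} (\CN^{\flat} \times \CN^{\flat}) \;=\; (\CN^{\flat})^{\circ} \times_{\BF} (\CN^{\flat})^{\circ},
\]
and similarly $(\CN^{\circ} \times_{\BF} \CN^{\circ}) \cap_{\CN \times \CN} (\CN^{\flat} \times \CN) = (\CN^{\flat})^{\circ} \times_{\BF} \CN^{\circ}$. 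By the strict transform formula $\wt{Y} \cong \Bl_{Z \cap_{X} Y}(Y)$ recalled at the beginning of the subsection, the strict transform $\wt{\CN^{\flat} \times \CN^{\flat}}$ inside $\wt{\CN \times \CN}$ coincides with $\Bl_{(\CN^{\flat})^{\circ} \times_{\BF} (\CN^{\flat})^{\circ}}(\CN^{\flat} \times \CN^{\flat})$, which is precisely the blow-up defining $\wt{\CN^{\flat} \times \CN^{\flat}}$ in the $\CN^{\flat}$-context. On the other hand, the universal property of blow-ups characterizes the fiber product $(\CN^{\flat} \times \CN^{\flat}) \times_{\CN \times \CN} \wt{\CN \times \CN}$ as the universal formal $(\CN^{\flat} \times \CN^{\flat})$-scheme on which the pullback of the center becomes an effective Cartier divisor; it therefore equals the same blow-up. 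Hence the first diagram is cartesian, and the argument for the second is identical after replacing one factor $\CN^{\flat}$ by $\CN$.

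To prove the identity $\CN^{\circ} \cap \CN^{\flat} = (\CN^{\flat})^{\circ}$, I would use the description of $\CN^{\circ}$ as the vanishing locus of the universal map of line bundles
\[
\Lie\,\lambda \colon (\Lie\,X)_{0} \longrightarrow (\Lie\,X^{\vee})_{1}
\]
over $\CN_{\BF}$, as in Proposition~\ref{prop: formal balloon-ground are smooth}. Under $\delta_{(L,e)}$ the universal object splits as $X^{\flat} \times \CE^{[i]}$ with polarization $\lambda^{\flat} \times \lambda_{\CE^{[i]}}$, so the two graded summand line bundles and the map $\Lie\,\lambda$ decompose compatibly. Since $\CE^{[i]}$ has signature $(0,1)$, the summand $(\Lie\,\CE^{[i]})_{0}$ vanishes and the $\CE^{[i]}$-contribution to $\Lie\,\lambda$ is identically zero. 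Thus $\delta_{(L,e)}^{*}(\Lie\,\lambda)$ coincides with $\Lie\,\lambda_{X^{\flat}}$, whose vanishing locus is by definition $(\CN^{\flat})^{\circ}$; this yields the equality as closed formal subschemes, not merely as topological subspaces.

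The main obstacle is verifying this identification ideal-sheaf-theoretically rather than only set-theoretically. Since both sides are effective Cartier divisors in $\CN^{\flat}$ by Proposition~\ref{prop: formal balloon-ground are smooth}, and the moduli splitting exhibits their defining sections as identical up to unit, this is immediate. Everything else is then a formal consequence of the universal property of blow-ups in the category of formal schemes over $O_{\breve{F_0}}$.
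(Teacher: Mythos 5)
The identity $\CN^{\circ} \cap \CN^{\flat} = (\CN^{\flat})^{\circ}$ that you establish from the moduli description is correct (the paper records it as a remark before Proposition~\ref{lem: local str=tot graph}), and the consequence via the strict transform formula $\wt{Y} \cong \Bl_{Z\cap_X Y}Y$ that $\wt{\CN^\flat\times\CN^\flat}$ (inside $\wt{\CN\times\CN}$) equals the intrinsic blow-up of $\CN^\flat\times\CN^\flat$ along $(\CN^\flat)^\circ\times(\CN^\flat)^\circ$ is also fine. The gap is in the final step. You assert that ``the universal property of blow-ups characterizes the fiber product $(\CN^{\flat}\times\CN^{\flat})\times_{\CN\times\CN}\wt{\CN\times\CN}$ as the universal $(\CN^\flat\times\CN^\flat)$-scheme on which the pullback of the center becomes an effective Cartier divisor.'' That is not a true statement: the fiber product carries only the universal property of a fiber product. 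Blow-ups commute with \emph{flat} base change (this is item (3) in the paper's list of standard blow-up facts), and the closed immersion $\CN^\flat\times\CN^\flat\hookrightarrow\CN\times\CN$ is not flat. The content of the proposition is precisely that, for this particular closed immersion, the total transform (fiber product) coincides with the strict transform; asserting this via the universal property is circular. A simple counterexample to the general claim: base-changing $\Bl_0\BA^2\to\BA^2$ along the origin $\{0\}\hookrightarrow\BA^2$ gives the full exceptional $\BP^1$, while the strict transform of $\{0\}$ is empty.

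To make your argument go through you would need to verify that the ideal of the center $\CN^\circ\times_{\BF}\CN^\circ$ pulls back to an \emph{invertible} (not merely locally principal) ideal sheaf on the fiber product, i.e.\ that its local generator is a non-zerodivisor there; once that is known, the universal property of $\Bl_{(\CN^\flat)^\circ\times(\CN^\flat)^\circ}(\CN^\flat\times\CN^\flat)$ does give a map from the fiber product to the blow-up, which, combined with the closed immersion from the strict transform in the other direction, forces the two to agree. But checking that Cartier/non-zerodivisor condition at points of $\CN^\dagger\times_{\BF}\CN^\dagger$ (away from which the blow-up is an isomorphism and there is nothing to prove) amounts to exactly the local computation in the paper's proof: choosing compatible local generators as in Proposition~\ref{prop: formal balloon-ground are smooth} and Theorem~\ref{resolution}, reducing to the standard surjection $O_{\breve{F_0}}[[x_0,y_0,x_1,y_1,t_\bullet,s_\bullet]]/(x_0y_0-\varpi,\,x_1y_1-\varpi)\surj O_{\breve{F_0}}[[x_0,y_0,x_1,y_1,t_\bullet,s_\bullet]]/(x_0y_0-\varpi,\,x_1y_1-\varpi,\,t_{n-1},\,s_{n-1})$, and checking directly that the exceptional equation remains a non-zerodivisor after killing $t_{n-1},s_{n-1}$. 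In short: the reduction to $\CN^\circ\cap\CN^\flat=(\CN^\flat)^\circ$ is a good organizing step, but it does not by itself eliminate the need for the transversality/Tor-independence check, which is where the paper's local-ring calculation comes in.
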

	\begin{proof}
		Denote the fiber product by $X$, then we have a closed immersion $i: \wt { \CN^\flat \times \CN^\flat} \to  X$ over $\CN^\flat \times \CN^\flat$. It is sufficient to show $i$ is an isomorphism when localizing at every closed point $P \in \CN^\flat \times \CN^\flat$. The blow up is an isomorphism outside $\CN^\dagger \times_\BF \CN^\dagger$, we can assume $P \in \CN^\dagger \times_\BF \CN^\dagger$. The formulation of Balloon--Ground strata is compatible with the embedding $\delta_{(L, e)}$.  By Proposition \ref{prop: formal balloon-ground are smooth} and Theorem \ref{resolution}, we can choose local generators such that the embedding
		$
		\CN^\flat \times \CN^\flat	 \rightarrow \CN \times \CN
		$ at $P$ is isomorphic to the standard embedding from
		$$
		O_{\breve{F}}[[x_0,y_0,x_1,y_1, t_1, \hdots, t_{n-1}, s_{1}, \hdots, s_{n-1}]]/(x_0y_0-\varpi,x_1y_1-\varpi)
		$$
		$$ 
		\surj O_{\breve{F}}[[x_0,y_0,x_1,y_1, t_1, \hdots, t_{n-2}, s_{1}, \hdots, s_{n-2}]]/(x_0y_0-\varpi,x_1y_1-\varpi)
		$$
		where $x_0$ (resp. $y_0$) is a local generator for $\CN^\circ$ (resp. $\CN^\bullet$). Then the compatibility follows from direct computations. The claim for the embedding $\wt { \CN^\flat \times \CN} \to \wt { \CN \times \CN}$ follows similarly.
	\end{proof}
	
	The formal scheme $\wt{\CN^\flat \times \CN}$ is regular by compatibility of blow ups in Proposition \ref{lem: comp of blow ups}.
	
	\begin{proposition}\label{prop: gp TC for GGP embedding}
		For $g \in \U(\BV)(F_0)_\rs$, we have 
		\[ 
		\wt{\Int} (g)=\chi( \wt{\CN^\flat \times \CN}, \CO_{\CN^\flat} \otimes^\BL \CO_{(1 \times g) \CN^\flat}).
		\]
	\end{proposition}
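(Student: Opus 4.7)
The plan is to rewrite both sides as Euler characteristics of the same derived fiber product, after passing to the common regular ambient $\wt{\CN \times \CN}$. Starting from the definition $\wt{\Fix}^\BL(g) = \CN \times^\BL_{\wt{\CN \times \CN}} \wt{\Gamma_g}$, associativity of derived fiber products together with pushforward along the closed immersion $\delta_{(L,e)} \colon \CN^\flat \hookrightarrow \CN$ recast the left-hand side as
\[
\wt{\Int}(g) = \chi\bigl(\CN^\flat, \, \CN^\flat \times^\BL_{\wt{\CN \times \CN}} \wt{\Gamma_g}\bigr),
\]
where the map $\CN^\flat \to \wt{\CN \times \CN}$ is $\wt{\Delta} \circ \delta_{(L,e)}$. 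I would then factor this map as $\CN^\flat \to \wt{\CN^\flat \times \CN^\flat} \hookrightarrow \wt{\CN^\flat \times \CN} \hookrightarrow \wt{\CN \times \CN}$; such a factorization exists because the ordinary diagonal of $\CN^\flat$ already lies in $\CN^\flat \times \CN^\flat$, and Proposition \ref{lem: comp of blow ups} promotes this to the blow-ups. Applying associativity once more yields
\[
\CN^\flat \times^\BL_{\wt{\CN \times \CN}} \wt{\Gamma_g} = \CN^\flat \times^\BL_{\wt{\CN^\flat \times \CN}} \bigl(\wt{\CN^\flat \times \CN} \times^\BL_{\wt{\CN \times \CN}} \wt{\Gamma_g}\bigr).
\]

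The core of the argument is to identify the inner factor $\wt{\CN^\flat \times \CN} \times^\BL_{\wt{\CN \times \CN}} \wt{\Gamma_g}$ with the classical closed subscheme $(1 \times g)\CN^\flat$ appearing on the right-hand side. Set-theoretically, using $\wt{\Gamma_g} \cong \Gamma_g$ from Proposition \ref{lem: local str=tot graph} together with the cartesian property of Proposition \ref{lem: comp of blow ups}, this intersection is $\{(x, gx) : x \in \CN^\flat\} \cong \CN^\flat$, realized in $\wt{\CN^\flat \times \CN}$ as the strict transform of the graph of $g \circ \delta_{(L,e)}$. That the strict transform agrees with the total transform follows from moduli: $g$ preserves the Balloon--Ground stratification by Proposition \ref{Balloon-Ground: comp with LTXZZ}, so the Balloon stratum pulls back to a Cartier divisor on $(1 \times g)\CN^\flat$, and the blow-up along this Cartier divisor changes nothing, exactly as in Proposition \ref{lem: local str=tot graph}.

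To upgrade this set-theoretic identification to a derived one, I would invoke Serre's Tor-independence criterion. The ambient $\wt{\CN \times \CN}$ is regular of dimension $2n$ by Theorem \ref{resolution}; an analogous strictly semi-stable local-model argument applied to $\CN^\flat \times \CN$ with center $(\CN^\flat)^\circ \times_\BF \CN^\circ$ shows that $\wt{\CN^\flat \times \CN}$ is regular of dimension $2n-1$; and $\wt{\Gamma_g} \cong \CN$ is regular of dimension $n$. The classical intersection has dimension $\dim \CN^\flat = n-1 = (2n-1) + n - 2n$, so Serre's criterion applies and gives Tor-independence in the regular ambient. Combining this with pushforward along the closed immersion $\CN^\flat \hookrightarrow \wt{\CN^\flat \times \CN}$, which preserves Euler characteristics, yields
\[
\wt{\Int}(g) = \chi\bigl(\wt{\CN^\flat \times \CN}, \, \CO_{\CN^\flat} \otimes^\BL \CO_{(1 \times g)\CN^\flat}\bigr),
\]
which is the stated equality. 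The main technical step is verifying the regularity and expected intersection dimension of $\wt{\CN^\flat \times \CN}$; I expect this to reduce to the same local-model analysis underlying Theorem \ref{resolution}, applied in the mildly simpler setting where one of the factors is $\CN^\flat$ instead of $\CN$.
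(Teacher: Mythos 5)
Your argument is essentially the same as the paper's: both use the projection formula to reduce the computation to $\wt{\CN^\flat\times\CN}$ and then identify the derived pullback of $\wt{\Gamma_g}$ with $(1\times g)\CN^\flat$ via Proposition \ref{lem: comp of blow ups}, with the Tor-vanishing (which the paper leaves implicit in the phrase ``true before taking blow ups'') made explicit via Serre's criterion, exactly in the spirit of \cite[Lem.~B.2]{AFL} as used elsewhere in the paper. Note only that your ambient dimensions are each off by one ($\dim\wt{\CN\times\CN}=2n-1$ and $\dim\wt{\CN^\flat\times\CN}=2n-2$, not $2n$ and $2n-1$), though the two errors cancel and the expected-dimension count $n-1$ is still correct.
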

	\begin{proof}
		By definition, $\wt{\Int}(g)$ is the derived intersection number between $\CN^\flat$ and $(1 \times g) \CN$ inside the resolution $\wt{\CN \times \CN}$. By projection formula, we only need to show the (derived) pullback of $(1 \times g) \CN$ along the embedding $ \wt{\CN^\flat \times \CN } \to \wt{\CN \times \CN}$ agrees with $(1 \times g) \CN^\flat$. This is true before taking blow ups, and we are done by the compatibility of blow ups in Proposition \ref{lem: comp of blow ups}.
	\end{proof}
	
	Consider the conjugacy action of $\GL(V_0^{\flat})$ on $S(V_0)$. For $f' \in \mathcal{S}(S(V_0)(F_0))$ and $\gamma' \in S(V_0)(F_0)_\rs$, recall the derived orbital integral
	\[ 
	\del\bigl (\gamma', f')= \omega_L(\gamma')  \frac{d}{ds}\Big|_{s=0} \int_{h \in \GL(V_0^{\flat})} f'(h^{-1}\gamma' h) \eta(\det(h)) |\det h|^{s} d h
	\]
	where $\omega_L(\gamma') :=\eta( \det (\gamma'^{i} e )_{i=0}^{n-1}) \in \{\pm 1\}$ is the transfer factor for the basis of $L$ (\ref{transfer factor: gp}).
	
	\begin{conjecture} (Group ATC for $(L, e)$) \label{conj: gp version ATC}
		For regular semisimple element $g' \in \U(\BV)(F_0)_{rs}$ matching $\gamma' \in S(V_0)(F_0)_{rs}$, we have an equality in $\BQ \log q$
		\begin{equation}
			\del\bigl (\gamma', 1_{S(L,L^{\vee}) } )= -\wt {\Int}(g') \log q.
		\end{equation}	
	\end{conjecture}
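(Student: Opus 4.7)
The plan is to reduce the group ATC for $(L,e)$ of rank $n$ to the semi-Lie ATC for $L^\flat$ of rank $n-1$ via the relative Cayley maps on both sides, in parallel with the orbital-integral reduction already established in Theorem \ref{thm: reduction orb}. First I would handle the $(n,n)$-entry $d$ of $g'$: by the $\xi$-twisting discussion after Corollary \ref{SimpleU} and its symmetric counterpart, both $\wt{\Int}(g')$ and $\del\bigl(\gamma',\, 1_{S(L,L^\vee)}\bigr)$ are unchanged when $g'$ (respectively $\gamma'$) is multiplied by any $\xi \in F^{\Nm_{F/F_0}=1}$, so I may assume $1-d \in O_F^\times$. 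Then apply the unitary relative Cayley map to obtain $\mathfrak{c}_\U(g') = (g,u_1) \in (\U(\BV^\flat)\times\BV^\flat)(F_0)_\rs$, regular-semisimple by Proposition \ref{regsemisimple:unitary}, and the modified symmetric relative Cayley map to obtain $(e,e).\mathfrak{c}_S'(\gamma') = (\gamma, b_2, (e,e)c_2)$; these match by Proposition \ref{RedOrbits}.

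On the analytic side, Corollary \ref{SimpleS} applied to the chain $\{L, L^\vee\}$, together with the observation that $L$ or $L^\vee$ satisfies the reduction (Definition \ref{defn: red lat}) according as $i=0$ or $i=1$, identifies
\[
\Orb\bigl(\gamma',\, 1_{S(L,L^\vee)},\, s\bigr) = \Orb\bigl((\gamma, b_2, (e,e)c_2),\, f,\, s\bigr)
\]
where $f$ matches the standard semi-Lie test function $f_\std$ (case $i=0$) or $f_\std'$ (case $i=1$) for $L^\flat$. Differentiating at $s=0$ converts this into an equality of derived orbital integrals.

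The crux is the geometric reduction
\[
\wt{\Int}(g') \;=\; \begin{cases} \wt{\Int}^{\CZ}(g, u_1), & i=0,\\ \wt{\Int}^{\CY}(g, u_1), & i=1. \end{cases}
\]
By Proposition \ref{prop: gp TC for GGP embedding} the left-hand side is a derived intersection number on the regular formal scheme $\wt{\CN^\flat \times \CN}$; by Proposition \ref{base case of Z and Y} the embedded copy of $\CN^\flat$ is identified with $\CZ(e^{[0]})$ or $\CY(e^{[1]})$ according to $i$. The compatibility of blow-ups (Proposition \ref{lem: comp of blow ups}), the smallness of $f : \wt{\CN\times\CN} \to \CN\times\CN$ (Theorem \ref{resolution}), and Proposition \ref{lem: local str=tot graph} (which trivializes the strict transform of the graph $\Gamma_{g'}$) allow one to push the intersection forward to $\wt{\CN^\flat} = \CN^\flat$ and rewrite it, via the derived projection formula, as the derived intersection of $\wt{\Fix}^\BL(g)$ with a Kudla--Rapoport cycle whose defining vector is exactly $u_1$; the latter identification is dictated by the defining property of $\mathfrak{c}_\U$, namely $g'e = u_1(1-d) + de$ together with the condition $1-d \in O_F^\times$.

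The main obstacle is this last geometric identification, because in general $\wt{Y_1 \cap Y_2}$ differs from $\wt{Y_1}\cap\wt{Y_2}$ along the exceptional divisor of $f$ over $\CN^\dagger\times_\BF\CN^\dagger$. Overcoming this requires a careful derived-categorical bookkeeping using the projection formula and the explicit local equations $x_0y_0=x_1y_1=\varpi$ of Theorem \ref{resolution} to compute the exceptional contribution; the smallness of $f$ ensures that higher direct images of the structure sheaf vanish, which is what ultimately makes the two derived intersection numbers coincide. Combining the analytic and geometric reductions, the conjecture for $(g',\gamma')$ becomes the semi-Lie ATC (Conjecture \ref{conj: semi-Lie version ATC}) applied to $(g, u_1)$ on $\CN^\flat = \CN_{n-1}^{[t-i]}$, completing the argument.
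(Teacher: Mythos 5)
Your proposal reproduces the content of Theorem~\ref{thm: semi-Lie and gp}(3) and its supporting Lemma~\ref{lem: induction} and Propositions~\ref{update1}--\ref{update3}, which is indeed the paper's route. The twisting to make $1-d$ a unit, the matching of Cayley images on the two sides (Proposition~\ref{RedOrbits}), the analytic reduction via Corollary~\ref{SimpleS}, and the geometric reduction from $\wt{\Int}(g')$ to $\wt{\Int}^{\CZ}(g,u_1)$ or $\wt{\Int}^{\CY}(g,u_1)$ are all the same moves.

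Two caveats. First, this is a reduction, not a proof: what you have shown is that the group ATC for $(L,e)$ of rank $n$ follows from the semi-Lie ATC for $L^\flat$ of rank $n-1$, which is itself a conjecture. In the paper the latter is established by an entirely separate global argument (Sections~\ref{section: global analytic and geometric side}--\ref{section: the end}: globalization, modularity of arithmetic theta series over $\BC$ and $\BF_q$, and the double-induction Lemma~\ref{the double induction lemma}). Writing ``completing the argument'' after the reduction would be misleading in an account of the full proof; it completes only one arrow of the induction.

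Second, the mechanism you offer at the ``crux'' is not quite how the paper closes the gap between $\wt{Y_1\cap Y_2}$ and $\wt{Y_1}\cap\wt{Y_2}$. You appeal to ``smallness implies higher direct images of the structure sheaf vanish,'' but the paper never argues via $R^{>0}f_*\CO = 0$. Instead, Lemma~\ref{lem: induction} first gives a moduli-theoretic equality of the \emph{underived} fibre products $(\CN^\flat\times\CZ(u))\cap_{\CN^\flat\times\CN^\flat}\Gamma_g=(\CN^\flat\times\CN^\flat)\cap_{\CN\times\CN}\Gamma_{g'}$ (using the polarization $\lambda_\CE$ to construct the lift of $w$ from that of $u$ and $g$, with $i=0$ and $1-d\in O_F^\times$ essential), and then Propositions~\ref{update1}--\ref{update3} promote this to an equality of \emph{derived} fibre products by a pure expected-dimension argument: the intersections on $\wt{\CN^\flat\times\CN^\flat}$ and $\wt{\CN\times\CN}$ both have expected dimension $n-2$, the ambient schemes are regular, and the relevant divisors are Cohen--Macaulay, so \cite[Lem.~B.2]{AFL} forces the higher Tor to vanish. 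Smallness of the resolution matters because it keeps the strict transforms at the right dimension and makes $\wt{\Gamma_{g'}}=\Gamma_{g'}$ (Proposition~\ref{lem: local str=tot graph}), but it is the dimension count, not a direct-image computation along $f$, that closes the argument. Without Lemma~\ref{lem: induction}, which is the genuine moduli-theoretic content, the bookkeeping you describe has nothing to compare.
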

	
	The group version arithmetic transfer conjecture includes the case \cite[Section 10]{RSZ-regular} as a special case i.e., $L=L^\flat \oplus O_F e$ where $L^\flat$ is self dual and $(e, e)_V=\varpi$.

	\subsection{Reduction of intersection numbers}\label{section: reduction Int}
	
	Consider any orthogonal decomposition of vertex lattices $L=L^{\flat} \oplus O_Fe$ where $v_F((e,e)_V)=i \in \{0,1\}$. In this subsection, we show the reduction from Conjecture \ref{conj: gp version ATC} for $(L,e)$ to part $(i+1)$ of Conjecture \ref{conj: semi-Lie version ATC} for $L^\flat$. 
	
	Assume that $i=0$ firstly. Consider the map $\ast: \Hom_{O_F}(\BE, \BX^\flat) \to \Hom_{O_F}( \BX^\flat,\BE)$ defined by $f \mapsto \lambda_{\BE}^{-1}\circ f^{\vee} \circ \lambda_{\BX^\flat}$. It induces an isomorphism of $F$-vector spaces
	\begin{equation}
		\ast:  \BV^\flat=\Hom^\circ_{O_F}(\BE, \BX^\flat) \simeq  (\BV^\flat)^*=\Hom^\circ_{O_F}( \BX^\flat,\BE).
	\end{equation}
	
	Write any element $g'\in \U(\BV)(F_0)$ in the matrix form
	\begin{align}\label{diag g'}
		\xymatrix{
			\BX^\flat \times \BE \ar[rr]^-{g'=\left(\begin{matrix} a & u\\
					w & d
				\end{matrix}\right)} &		   &  \	\BX^\flat \times \BE  } 
	\end{align}
	where 
	$
	a \in \End^\circ_{O_F}(\BX^\flat)=\End(\BV^\flat) $,  \, $ u\in \BV^{\flat}$, \, $w \in (\BV^{\flat})^*$  and $d \in \End^\circ_{O_F}(\BE)=F$.
	
	For the decomposition $\BV=\BV^\flat \oplus F e^{[i]}$, consider its unitary relative Cayley map  (\ref{UnitaryCayley}):
	\begin{equation}
		\mathfrak{c}_\U: \U(\BV) \to \U(\BV^{\flat}) \times \BV^{\flat},
	\end{equation}
	\begin{equation}\label{Reduction Int by Cayley}
		g' =
		\begin{pmatrix}
			a & u \\ w & d 
		\end{pmatrix} \mapsto (g= a +\frac{uw}{1-d}, \, u_1=\frac{u}{1-d}). 
	\end{equation}
	
	For $g' \in \U(\BV)(F_0)_{rs}$ with $\mathfrak{c}_\U(g')=(g, u_1)$, consider the graph $\Gamma_{g'}$ (resp. $\Gamma_g$) of $g'$ (resp. $g$) inside $\CN \times \CN$ (resp. $\CN^\flat \times \CN^\flat$). 
	
	\begin{lemma}\label{lem: induction}
		If $i=0$ and $1-d \in O_F^{\times}$, then we have
		$$
		(\CN^\flat \times \CZ(u_1) ) \cap_{\CN^\flat \times \CN^\flat } \Gamma_g= (\CN^\flat \times \CN^\flat) \cap_{\CN \times \CN} \Gamma_{g'}.
		$$
	\end{lemma}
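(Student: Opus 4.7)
I propose to prove the lemma by comparing the two closed formal subschemes of $\CN^\flat\times\CN^\flat$ via their functors of points. For an $\Spf O_{\breve F_0}$-scheme $S$, a pair $(X_1^\flat,X_2^\flat)\in(\CN^\flat\times\CN^\flat)(S)$ lies in the right-hand side precisely when the quasi-isogeny $\rho_2^{-1}\circ g'\circ\rho_1$ between $X_1:=\delta(X_1^\flat)=X_1^\flat\times\CE$ and $X_2:=\delta(X_2^\flat)=X_2^\flat\times\CE$ on $\ov S$ lifts to an isomorphism $\phi:X_1\to X_2$ over $S$. Using the product decomposition and (\ref{diag g'}), I write $\phi$ as a $2\times 2$ block matrix $\bigl(\begin{smallmatrix}\tilde a&\tilde u\\ \tilde w&\tilde d\end{smallmatrix}\bigr)$, whose four entries are morphisms over $S$ lifting $a,u,w,d$ respectively. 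The fourth entry $\tilde d$ lifts automatically: since $\CE$ is the canonical lift of $\ov\BE$ one has $\End_{O_F}(\CE)=O_F$, and $1-d\in O_F^\times$ forces $d\in O_F$, so $d$ itself already defines a morphism $\CE\to\CE$. The lift of $u$ is precisely the moduli-theoretic statement $X_2^\flat\in\CZ(u)(S)$.

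With this description in hand, the inclusion $\supseteq$ is immediate: from the four lifts I define $\psi:=\tilde a+\tilde u\circ\tilde w/(1-d)$, which makes sense as a morphism because $1-d\in O_F^\times$, and which lifts $g=a+uw/(1-d)$; together with $X_2^\flat\in\CZ(u)(S)$ this shows $(X_1^\flat,X_2^\flat)$ is an $S$-point of the left-hand side. For the reverse inclusion I must, given $\psi$ lifting $g$ and $\tilde u$ lifting $u$, reconstruct the two remaining entries. Since $i=0$, the object $\CE$ is self-dual, so $\lambda_\CE$ is an isomorphism and the adjoint
\[
\tilde u^\dagger:=(e,e)^{-1}\,\lambda_\CE^{-1}\circ\tilde u^{\vee}\circ\lambda_{X_2^\flat}\colon X_2^\flat\to\CE
\]
is a bona fide morphism over $S$. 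I then set $\tilde w:=\tilde u^\dagger\circ\psi$ and $\tilde a:=\psi-\tilde u\circ\tilde w/(1-d)$. Using Proposition~\ref{Unitaryproj}(2), which identifies $w$ on $\ov S$ with the composition $u^\dagger\circ g$, one checks that $\tilde w$ and $\tilde a$ lift $w$ and $a$ respectively. Assembling these, the block matrix $\phi$ is a morphism lifting $g'$; it is an isomorphism because the same recipe applied to $(g')^{-1}=\bar g'$ (using that $\psi^{-1}$ lifts $g^{-1}$ and that the polarizations are morphisms) produces its inverse.

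The main obstacle is the construction of $\tilde w$ in the reverse inclusion: the lift of $u$ alone does not immediately yield a lift of the off-diagonal entry $w$. This is exactly where the hypothesis $i=0$ is essential, via the self-duality of $\CE$, which promotes $\tilde u^\dagger$ from a mere quasi-isogeny to a genuine morphism on $S$; and it is where the unitarity relation of Proposition~\ref{Unitaryproj}(2) is essential, as it identifies $w$ with the composition $u^\dagger\circ g$ built from precisely the ingredients at hand. The analogous statement for $i=1$, which is not asserted in this lemma, would instead feature the cycle $\CY(u)$ and would be deduced by applying the dual isomorphism $\lambda_\CN$ together with Proposition~\ref{prop: relation of Z and Y} and (\ref{compatibility of embedding under duality}).
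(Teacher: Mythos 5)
Your overall strategy is the same as the paper's: in one direction the lift of $g$ is assembled from the four entries of the lift of $g'$, and in the other direction the crucial step is to manufacture a lift of the bottom-left entry $w$ from the lifts of $u$ and $g$, using the self-duality of $\CE$ (the hypothesis $i=0$) plus the unitary relation of Proposition~\ref{Unitaryproj}(2). The paper implements this by composing $\psi_u$ with $\varphi_g^{-1}$ and then applying $\lambda_\CE^{-1}\,(-)^\vee\,\lambda_{X_1^\flat}$, whereas you dualize $\tilde u$ first and then compose with $\psi$. These are equivalent bookkeepings, since $\psi$ is compatible with the polarizations.

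However, there is a concrete computational slip in your construction of $\tilde w$. Proposition~\ref{Unitaryproj}(2) states
\[
\frac{w(x)}{1-d}=\frac{\bigl(gx,\tfrac{u}{1-d}\bigr)}{(e,e)},
\]
and since the hermitian form is conjugate-linear in the second variable this gives $w(x)=\dfrac{1-d}{(1-\bar d)(e,e)}\,(gx,u)=\epsilon_d^{-1}\,(u^\dagger\circ g)(x)$ with $\epsilon_d=\tfrac{1-\bar d}{1-d}$ and $u^\dagger=(e,e)^{-1}u^*$. So $w$ is \emph{not} $u^\dagger\circ g$; they differ by the unit $\epsilon_d\in O_F^\times$. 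Consequently your $\tilde w:=\tilde u^\dagger\circ\psi$ lifts $\epsilon_d w$ rather than $w$, and the subsequent $\tilde a:=\psi-\tilde u\circ\tilde w/(1-d)$ then lifts $g-\epsilon_d uw/(1-d)$ rather than $a=g-uw/(1-d)$. The fix is easy — set $\tilde w:=[\epsilon_d^{-1}]\circ\tilde u^\dagger\circ\psi$ using that $\epsilon_d^{-1}\in O_F^\times$ acts on $\CE$ — but as written the identity you invoke from Proposition~\ref{Unitaroyproj}(2) is incorrect, so the verification that $\tilde w$ and $\tilde a$ lift $w$ and $a$ does not go through without this correction. (A smaller issue: $(g')^{-1}=\bar g'$ is not accurate notation for the inverse of a unitary element — you want the adjoint, not the bare conjugate — but the intent, constructing the inverse by the analogous recipe and invoking uniqueness of lifts, is sound.)
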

	\begin{proof}
As $1-d \in O_F^\times$, we have $\CZ(u)=\CZ(u_1)$.	Set $\epsilon_d:=\frac{1-\ov d}{(1-d)(e,e)} \in F^\times$ which is an unit by our assumption. By Proposition \ref{Unitaryproj}, for the element $w^\vee:=g^{-1}\epsilon_d u$ we have
		$$
		(w^\vee)^*=w \in (\BV^\flat)^*.
		$$
		For any test scheme $S$, consider a $S$-point $(X_1, X_2)$ of  $\CN^\flat \times \CN^\flat$.  On the one hand, if $(X_1,X_2)$ is on the graph $\Gamma_{g'}$ via the embedding $\delta: \CN^\flat \to \CN$, then we have a homomorphism  $\varphi':X_1 \times \CE \to X_2 \times \CE$ lifting $g'$. Write $\varphi'$ in the matrix form
		\[
		\xymatrix{
			X_1 \times \CE \ar[rr]^-{\varphi'=\left(\begin{matrix} \varphi & \psi_u \\
					\psi_w & d
				\end{matrix}\right)} &		 & X_2\times \CE} 
		\]
		lifting the diagram \eqref{diag g'} (after applying framings of $X_1$ and $X_2$). Here $\varphi: X_1 \rightarrow X_2$, $\psi_u: \CE \rightarrow X_2$, $\psi_w: X_1 \rightarrow \CE$ are all homomorphisms, in particular $X_2 \in \CZ(u)$. As $1-d \in O_F^{\times}$, the quasi-isogeny $$\wt{\varphi}=\varphi+(1-d)^{-1}\psi_u \circ \psi_w$$ is a homomorphism lifting $g$ (\ref{Reduction Int by Cayley}). Hence $(X_1, X_2)$ is on the intersection $(\CN^\flat \times \CZ(u) ) \cap_{\CN^\flat \times \CN^\flat} \Gamma_{g}$.
		
		On the other hand, we start with homomorphisms $\psi_u: \CE \to X_2$ (resp. $\varphi_g : X_1 \to X_2$) lifting $u$ (resp. $g$). By the formula $w^\vee=g^{-1}\epsilon_d u$, the homomorphism ($\epsilon_d $ is a unit) $$\psi_{w^\vee}:=[\epsilon_d] \circ  \varphi^{-1}_g \circ \psi_u: \CE \to X_2$$  lifting the vector $w^\vee \in \BV^\flat$.  As $i=0$, the polarization $\lambda_\CE: \CE \to \CE^\vee$ is an isomorphism. Then the homomorphism	$ \psi_w: = \lambda_{\CE}^{-1} \circ (\psi_{w^\vee})^{\vee} \circ \lambda_{X} $
		gives the desired lifting of $w$. The homomorphism $\varphi:= \varphi_g - (1-d)^{-1} \psi_u \circ \psi_w$ lifts $a$, hence we get the desired lifting of $g'$. Therefore, $(X_1, X_2)$ is on the intersection $(\CN^\flat \times \CN^\flat) \cap_{\CN \times \CN} \Gamma_{g'}$.
	\end{proof}
	
	\begin{remark}
		This generalizes the reduction lemma \cite[Lem. 4.4]{AFL} to maximal parahoric levels.
	\end{remark}

	\begin{proposition}\label{update1}
		We have an identification of two fiber products as  closed formal subschemes of $ \wt {\CN^\flat \times \CN^\flat} $:
		$$
		(\CN^\flat \times \CN^\flat ) \cap_{\CN \times \CN} \Gamma_{g'} = ( \wt {\CN^\flat \times \CN^\flat} ) \cap_{\wt {\CN \times \CN } } \wt{\Gamma_{g'}} . 
		$$
	\end{proposition}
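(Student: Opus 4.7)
The plan is to deduce the identification by combining two structural results already in hand: the cartesian square
$$
\wt{\CN^\flat \times \CN^\flat} = \wt{\CN \times \CN} \times_{\CN \times \CN} (\CN^\flat \times \CN^\flat)
$$
from Proposition \ref{lem: comp of blow ups}, and the fact that the blow-up morphism restricts to an isomorphism $\wt{\Gamma_{g'}} \to \Gamma_{g'}$ over $\CN \times \CN$, established in Lemma \ref{lem: local str=tot graph} applied to $g' \in \U(\BV)(F_0)$. Since $\wt{\Gamma_{g'}}$ sits as a closed formal subscheme of $\wt{\CN \times \CN}$ and this identification is compatible with the structure maps to $\CN \times \CN$, the proposition should follow by transitivity of fibered products.

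Concretely, I would first substitute the cartesian square into the right-hand side and reorganize:
$$
\wt{\CN^\flat \times \CN^\flat} \times_{\wt{\CN \times \CN}} \wt{\Gamma_{g'}} = \bigl(\wt{\CN \times \CN} \times_{\CN \times \CN} (\CN^\flat \times \CN^\flat)\bigr) \times_{\wt{\CN \times \CN}} \wt{\Gamma_{g'}} = (\CN^\flat \times \CN^\flat) \times_{\CN \times \CN} \wt{\Gamma_{g'}},
$$
where the intermediate factor of $\wt{\CN \times \CN}$ collapses by transitivity. Next, I would insert the isomorphism $\wt{\Gamma_{g'}} \simeq \Gamma_{g'}$ over $\CN \times \CN$ to rewrite the last term as $(\CN^\flat \times \CN^\flat) \times_{\CN \times \CN} \Gamma_{g'}$, which is by definition the left-hand side. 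All these identifications respect the realization of both sides as closed formal subschemes of $\wt{\CN^\flat \times \CN^\flat}$; for the LHS this is exactly what furnishes the embedding into $\wt{\CN^\flat \times \CN^\flat}$ asserted by the proposition.

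The only (very minor) obstacle is to verify that the isomorphism $\wt{\Gamma_{g'}} \simeq \Gamma_{g'}$ from Lemma \ref{lem: local str=tot graph} is genuinely an isomorphism over $\CN \times \CN$, rather than an abstract one, so that it can be inserted into a fiber product over $\CN \times \CN$ without loss. This, however, is built into the argument of that lemma: the strict transform morphism is defined over $\CN \times \CN$, and the point is that the Cartier divisor $\CN^{\circ}$ on $\Gamma_{g'} \simeq \CN$ is already Cartier, so blow-up changes nothing on $\Gamma_{g'}$. Once this compatibility is recorded, the proposition drops out; no delicate calculation is required, the real content having been absorbed in establishing smallness of $\wt{\CN \times \CN} \to \CN \times \CN$ and the base-change formula of Proposition \ref{lem: comp of blow ups}.
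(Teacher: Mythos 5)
Your proof is correct and follows essentially the same route as the paper's: both invoke the cartesian square from Proposition \ref{lem: comp of blow ups} and the isomorphism $\wt{\Gamma_{g'}} \simeq \Gamma_{g'}$ from Lemma \ref{lem: local str=tot graph}, and combine them by transitivity of fiber products. The paper phrases the argument as a two-square pasted diagram in which both squares are cartesian; your algebraic manipulation of fiber products is the same reasoning.
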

	\begin{proof}	
		The strict transform of $\Gamma_{g'}$ equals to itself by Proposition \ref{lem: local str=tot graph}. Then $T= ( \wt {\CN^\flat \times \CN^\flat} ) \cap_{\wt {\CN \times \CN } } \wt{\Gamma_{g'}}$ is the fiber product of the diagram:
		\[
		\xymatrix{
			T \ar[r] \ar[d] \ar@{}[rd]|*{\square}  & \ \wt {\Gamma_{g'}}=\Gamma_{g'} \ar[d]  \\
			\wt { \CN^\flat \times \CN^\flat} \ar[r] \ar[d]  & \wt { \CN \times \CN}  \ar[d]\\		 \CN^\flat \times \CN^\flat	 \ar[r] & \CN \times \CN }
		\]
		The bottom square is cartesian by Proposition \ref{lem: comp of blow ups}. So $T=	(\CN^\flat \times \CN^\flat ) \cap_{\CN \times \CN} \Gamma_{g'} $.
	\end{proof}

	Consider the small resolution $f^\flat: \wt { \CN^\flat \times \CN^\flat} \rightarrow \CN^\flat \times \CN^\flat $ introduced before for $\CN^\flat$.
	
	\begin{proposition}\label{update2}
		We have an identification of two fiber products as closed formal subschemes of  $\wt {\CN^\flat \times \CN^\flat} $:		
		$$
		(\CN^\flat \times \CZ(u_1) ) \cap_{\CN^\flat \times \CN^\flat } \Gamma_g = (f^{\flat})^{-1} (\CN^\flat \times \CZ(u_1) ) \cap_{\wt { \CN^\flat \times \CN^\flat}} \wt {\Gamma_g},
		$$
	\end{proposition}
	\begin{proof}
		By Proposition \ref{lem: local str=tot graph}, the graph $\Gamma_g=\wt {\Gamma_g} \rightarrow\CN^\flat \times \CN^\flat$ factors through $f^\flat$. The equality follows by base change.
	\end{proof}

	\begin{proposition}\label{update3}
		Assume that $u \in \BV^\flat$ is non-zero, then we have equalities in the $K$-group of $\wt {\CN^\flat \times \CN^\flat}$
		\begin{equation}
			( \wt {\CN^\flat \times \CN^\flat} ) \cap_{\wt {\CN \times \CN } } \wt{\Gamma_{g'}} = ( \wt {\CN^\flat \times \CN^\flat} ) \cap^\BL_{\wt {\CN \times \CN } } \wt{\Gamma_{g'}},
		\end{equation}
		\begin{equation}
			(f^{\flat})^{-1} (\CN^\flat \times \CZ(u) ) \cap_{\wt { \CN^\flat \times \CN^\flat}} \wt {\Gamma_g} =(f^{\flat})^{-1} (\CN^\flat \times \CZ(u) ) \cap^\BL_{\wt { \CN^\flat \times \CN^\flat}} \wt {\Gamma_g}.
		\end{equation}
	\end{proposition}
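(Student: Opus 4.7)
The plan is to prove both equalities by realising each derived intersection as the Koszul complex of a regular sequence on the regular ambient formal scheme, and then verifying that after restriction to the relevant graph subscheme the sequence remains Koszul-regular. This reduces each equality to an explicit codimension calculation on $\CN$ (resp.\ $\CN^\flat$) that is controlled by the scheme-theoretic identifications in Propositions \ref{update1}, \ref{update2} and Lemma \ref{lem: induction}, together with the non-vanishing of $u$.

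For the first equality, I would begin by observing that, via the cartesian diagram of Proposition \ref{lem: comp of blow ups}, the embedding $\wt{\CN^\flat \times \CN^\flat} \hookrightarrow \wt{\CN \times \CN}$ is cut out by the pullback of the length-$2$ regular sequence defining the codim-$2$ embedding $\CN^\flat \times \CN^\flat \hookrightarrow \CN \times \CN$; the latter is a complete intersection of two Cartier divisors by Proposition \ref{base case of Z and Y}. Since both ambient formal schemes are regular (Theorem \ref{resolution}), this pullback is itself a length-$2$ Koszul-regular sequence, and the derived tensor $\CO_{\wt{\CN^\flat \times \CN^\flat}} \otimes^{\BL}_{\wt{\CN \times \CN}} \CO_{\wt{\Gamma_{g'}}}$ is computed by the Koszul complex obtained by restricting this sequence to $\wt{\Gamma_{g'}}$. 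Agreement with the classical intersection in $K_0$ is then equivalent to acyclicity of the restricted Koszul complex, i.e.\ to regularity of the restricted sequence on $\wt{\Gamma_{g'}} \simeq \CN$ via Proposition \ref{lem: local str=tot graph}. Under this identification the two cutting sections become $e^{[0]}$ and $(g')^{*}(e^{[0]})$ on $\CN$, whose joint vanishing locus is $\CN^\flat \cap (g')^{-1}\CN^\flat$. Chaining Proposition \ref{update1}, Proposition \ref{update2} and Lemma \ref{lem: induction}, this scheme is identified with $\CZ(g^{-1}u) \hookrightarrow \CN^\flat$, which is a proper Cartier divisor precisely because $u \neq 0$ forces $g^{-1}u \neq 0$. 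Hence the intersection has relative dimension $n-3$ and codimension $2$ in $\CN$, so the restricted sequence is regular and the Koszul complex is acyclic.

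For the second equality the argument runs along the same lines but is shorter: $(f^\flat)^{-1}(\CN^\flat \times \CZ(u))$ is a Cartier divisor in $\wt{\CN^\flat \times \CN^\flat}$, so its associated Koszul complex has length $1$. Restricting to $\wt{\Gamma_g} \simeq \CN^\flat$, via the analogue of Proposition \ref{lem: local str=tot graph} applied to $g \in \U(\BV^\flat)$, produces the Cartier divisor $\CZ(g^{-1}u) \hookrightarrow \CN^\flat$, which is non-trivial since $g^{-1}u \neq 0$. Hence this length-$1$ Koszul complex is acyclic and the derived equals the classical intersection in $K_0(\wt{\CN^\flat \times \CN^\flat})$.

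The principal obstacle will be the codimension computation for the first equality: verifying directly that $\CN^\flat$ and $(g')^{-1}\CN^\flat$ intersect properly inside the semistable $\CN$ would require delicate component-by-component control of two a priori unrelated Cartier divisors. The key observation that unlocks the proof is that the already-established chain of identifications of Propositions \ref{update1}, \ref{update2} and Lemma \ref{lem: induction} routes this intersection through the $\CN^\flat$-side, where it becomes the obviously proper Cartier divisor $\CZ(g^{-1}u)$, making the codimension count immediate from $u \neq 0$.
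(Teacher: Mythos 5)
Your proposal is correct and follows essentially the same route as the paper: it uses the chain of identifications from Lemma \ref{lem: induction}, Proposition \ref{update1}, and Proposition \ref{update2} to recognize both intersections as the Cartier divisor $\CZ(g^{-1}u) \subseteq \CN^\flat$ (non-trivial since $u\neq 0$), hence of expected dimension $n-2$, and then invokes Cohen--Macaulayness/regularity of the (blown-up) ambient spaces to conclude that derived and classical intersections agree. The only cosmetic difference is that you inline the Koszul-complex criterion explicitly, whereas the paper delegates this step to \cite[Lem.~B.2]{AFL}.
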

	\begin{proof}
		By Lemma \ref{Prop: Z and Y are Cartier divisors}, $\CZ(u)$ is a Cartier divisor of $\CN^\flat$. 
		By Lemma \ref{lem: induction}, Proposition \ref{update1} and Proposition \ref{update2}, the intersection $( \wt {\CN^\flat \times \CN^\flat} ) \cap_{\wt {\CN \times \CN } } \wt{\Gamma_{g'}}  = (\CN^\flat \times \CN^\flat ) \cap_{\CN \times \CN} \Gamma_{g'}$  has expected dimension $\dim \CZ(u)=n-2$, and $(f^{\flat})^{-1} (\CN^\flat \times \CZ(u) ) \cap_{\wt { \CN^\flat \times \CN^\flat}} \wt {\Gamma_g}$ has expected dimension $\dim \CZ(u)=n-2$.
		
		As $\wt { \CN^\flat \times \CN^\flat}$ and $\wt { \CN \times \CN}$ are regular, we conclude by applying \cite[Lem. B.2]{AFL}. The assumptions in \emph{loc. cit.} are satisfied: $\CN^\flat \times \CZ(u) $ is a Cartier divisor of $\CN^\flat \times \CN^\flat$, so the pullback $(f^{\flat})^{-1} (\CN^\flat \times \CZ(u))$ is a Cartier divisor in the regular formal scheme $\wt {\CN^\flat \times \CN^\flat}$. Hence $(f^{\flat})^{-1} (\CN^\flat \times \CZ(u))$ is Cohen-Macaulay and of pure dimension $2n-4$. 
	\end{proof}

	Now we finish the reduction of group version ATCs to semi-Lie version ATCs.	
	
	\begin{theorem}\label{thm: semi-Lie and gp}
		Consider any regular semi-simple element $g' \in \U(\BV)(F_0)_{rs}$. Let $\mathfrak{c}_\U(g')=(g, u_1) \in (\U(\BV^\flat) \times \BV^\flat)(F_0)_\rs$. 
		\begin{enumerate}
			\item If $i=0$, then we have 
			\[
			\wt{\Int}(g')=\wt{\Int}^\CZ(g, u_1).
			\]
			\item If $i=1$, then we have
			\[
			\wt{\Int}(g')=\wt{\Int}^\CY(g, u_1).
			\]	
			\item Conjecture \ref{conj: gp version ATC} for $(L,e)$ is implied by part $(i+1)$ of Conjecture \ref{conj: semi-Lie version ATC} for $L^\flat$.
		\end{enumerate}
		
	\end{theorem}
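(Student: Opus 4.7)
\medskip

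\noindent
\textbf{Proof plan.} The theorem has three parts, but they are not of equal difficulty: part (2) will follow from part (1) by duality, and part (3) is the passage between the geometric side and the analytic side, which is largely a bookkeeping exercise once (1) and (2) are known. So the heart of the matter is part (1). The strategy is to exhibit both $\wt{\Int}(g')$ and $\wt{\Int}^\CZ(g, u_1)$ as Euler characteristics of derived tensor products on the common regular formal scheme $\wt{\CN^\flat \times \CN^\flat}$, and then identify those derived tensor products themselves. The propositions \ref{update1}, \ref{update2}, \ref{update3} and Lemma \ref{lem: induction} were in fact designed with exactly this comparison in mind.

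\medskip
\noindent
For part (1), fix $g' \in \U(\BV)(F_0)_{\rs}$ with $\fkc_\U(g') = (g, u_1)$, $u_1 \neq 0$, and $i = 0$. The first step is to reduce to the case $1 - d \in O_F^{\times}$, where $d$ is the scalar entry of $g'$ in the decomposition $\BV = \BV^\flat \oplus F e^{[0]}$. This is harmless: replacing $g'$ by $\xi g'$ for an appropriate $\xi \in F^{\Nm=1}$ (possible since $q+1 \geq 2$) does not disturb the equality we wish to prove, because $\xi$ acts as an automorphism of $(\CN, \delta_{(L,e)})$ and affects both sides by the same constant. The second step, assuming $1 - d \in O_F^{\times}$, is to combine the three propositions \ref{update1}, \ref{update2}, \ref{update3} with Lemma \ref{lem: induction} to obtain the following identity in the $K$-group of $\wt{\CN^\flat \times \CN^\flat}$ with proper support:
\[
(\wt{\CN^\flat \times \CN^\flat}) \cap^{\BL}_{\wt{\CN \times \CN}} \wt{\Gamma_{g'}} \;=\; (f^{\flat})^{-1}(\CN^\flat \times \CZ(u_1)) \cap^{\BL}_{\wt{\CN^\flat \times \CN^\flat}} \wt{\Gamma_g}.
\]
The third step is to take Euler characteristics of both sides. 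For the LHS, the cartesian square in Proposition \ref{lem: comp of blow ups} together with Proposition \ref{prop: gp TC for GGP embedding} identify $\chi$ of the LHS with $\chi(\wt{\CN^\flat \times \CN}, \CN^\flat \otimes^{\BL} (1 \times g')\CN^\flat) = \wt{\Int}(g')$. For the RHS, the projection formula for the proper birational morphism $f^\flat$, together with Proposition \ref{lem: local str=tot graph} (which gives $\wt{\Gamma_g} \cong \Gamma_g$) and the base-case identification $\CN^\flat = \CZ(e^{[0]})$ from Proposition \ref{base case of Z and Y}, rewrite $\chi$ of the RHS as $\chi(\CN^\flat, \CO_{\CZ(u_1)} \otimes^{\BL} \wt{\Fix}^{\BL}(g)) = \wt{\Int}^\CZ(g, u_1)$. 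This completes part (1).

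\medskip
\noindent
For part (2), I would deduce it from part (1) by duality. The dual isomorphism $\lambda_\CN: \CN \isoarrow \CN^\vee$ of Section \ref{dual lambda} exchanges $\CZ$- and $\CY$-cycles via Proposition \ref{prop: relation of Z and Y}, and is compatible with the embeddings $\delta_{(L,e)}$ and $\delta_{(L,e)}^\vee$ via (\ref{compatibility of embedding under duality}). Under this equivalence, the case $i = 1$ on $\CN$ with $\CY$-cycles is converted to the case $i^\vee = 0$ on $\CN^\vee$ with $\CZ$-cycles, which is covered by part (1). For part (3), I combine parts (1) and (2) with Theorem \ref{thm: reduction orb} (reduction of orbital integrals via the relative Cayley map, which reduces the group TC for $(L,e)$ to the semi-Lie TC for $L^\flat$) and the matching of invariants in Proposition \ref{RedOrbits} to transfer the ATC from the semi-Lie formulation to the group formulation; the transfer factor bookkeeping works out because $\omega_L(\gamma, u_1, u_2)$ and $\omega_L(\gamma')$ agree by direct comparison of (\ref{transfer factor: semi-Lie}) and (\ref{transfer factor: gp}).

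\medskip
\noindent
The main obstacle will be the third step of part (1), namely the careful matching of the two Euler characteristics with $\wt{\Int}(g')$ and $\wt{\Int}^\CZ(g, u_1)$. One must navigate three distinct blow-ups $\wt{\CN \times \CN}$, $\wt{\CN^\flat \times \CN}$, $\wt{\CN^\flat \times \CN^\flat}$, track strict transforms of the GGP embeddings and of the graph $\Gamma_{g'}$ through all of them, and invoke the projection formula in a form valid for derived intersections. The fact that all three blow-ups are compatible under flat base change (Proposition \ref{lem: comp of blow ups}) and that smallness of the resolution $f$ (Theorem \ref{resolution}) ensures the derived versus underived intersections agree on the support (Proposition \ref{update3}) are the decisive technical inputs that make the argument go through cleanly.
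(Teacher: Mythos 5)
Your proposal follows the same overall route as the paper, and you correctly identify Propositions \ref{update1}, \ref{update2}, \ref{update3} and Lemma \ref{lem: induction} as the engine of part (1). But there is a gap in your third step that would make the argument as written collapse, and a second, smaller gap in part (3).

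The problematic step is ``take Euler characteristics of both sides'' of the identity
\[
(\wt{\CN^\flat \times \CN^\flat}) \cap^{\BL}_{\wt{\CN \times \CN}} \wt{\Gamma_{g'}} \;=\; (f^{\flat})^{-1}(\CN^\flat \times \CZ(u_1)) \cap^{\BL}_{\wt{\CN^\flat \times \CN^\flat}} \wt{\Gamma_g}.
\]
Both sides live in $K_0'(\wt{\CN^\flat \times \CN^\flat})$ supported on a formal scheme of dimension $n-2$ that is \emph{not} proper over $O_{\breve{F_0}}$: by Lemma \ref{lem: induction}, the underlying support is $g^{-1}\CZ(u_1)$, and $\CZ(u_1)$ has horizontal components. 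So there is no finite Euler characteristic to take at this stage. The paper's actual proof first applies $-\otimes^{\BL}_{\wt{\CN^\flat \times \CN^\flat}} \wt{\CN^\flat}$ (derived restriction to the diagonal) to both sides, which lands the result in $K_0'(\wt{\Fix}(g)\cap\CZ(u_1))$, a finite scheme by regular semisimpleness, and only then takes Euler characteristics. This extra tensor is also exactly what turns $\wt{\Gamma_g}$ into $\wt{\Fix}^{\BL}(g)$ on the RHS (your final expression $\wt{\Int}^\CZ(g,u_1)$ already involves $\wt{\Fix}^{\BL}(g)$, so the step is implicitly needed). Moreover, matching the RHS with $\wt{\Int}^\CZ(g,u_1)$ still requires the identification $\wt{\CN^\flat} \otimes^{\BL}_{\wt{\CN^\flat \times \CN^\flat}} (f^\flat)^{-1}(\CN^\flat \times \CZ(u_1)) = \CZ(u_1)$, which the paper proves via a dimension count and \cite[Lem.\ B.2]{AFL}; this input does not appear in your sketch.

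A second, smaller gap is in part (3). Parts (1) and (2) require $u_1\ne 0$, but Conjecture \ref{conj: gp version ATC} must be verified for \emph{all} regular semisimple $g'$, and twisting by $\xi\in F^{\Nm=1}$ preserves the vanishing of $u_1$ (it just rescales $u$). To handle the case $u_1 = 0$, the paper invokes local constancy of intersection numbers \cite{AFLlocalconstant} to approximate $g'$ by a nearby $g''$ with nonzero first entry; your outline omits this and so only proves part (3) on the open locus $u_1 \ne 0$. Minor further notes: for the analytic reduction in part (3) the precise input is Corollary \ref{RedOrbSSS} (which gives the equality of $s$-variable orbital integrals) rather than Theorem \ref{thm: reduction orb} (which is the $s=0$ equivalence of transfer conjectures), and the equality of transfer factors $\omega(\gamma,b_2,c_2)=\omega(\gamma')$ is not a ``direct comparison'' of (\ref{transfer factor: semi-Lie}) and (\ref{transfer factor: gp}) but requires the chain of identities proved inside Corollary \ref{RedOrbSSS}.
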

	\begin{proof}
		Assume that $i=0$ firstly. Twisting $g'$ by elements in $F^{\Nm_{F/F_0}=1}$, we can assume $1-d \in O_F^\times$. Then $\CZ(u_1)=\CZ(u)$. To simplify the notations, we write the space to mean its coherent structure sheaves in the K-groups. By Proposition \ref{update3},  we have 
		\[
		( \wt {\CN^\flat \times \CN^\flat} ) \otimes^\BL_{\wt {\CN \times \CN } } \wt{\Gamma_{g'}} =(f^{\flat})^{-1} (\CN^\flat \times \CZ(u) ) \otimes^\BL_{\wt { \CN^\flat \times \CN^\flat}} \wt {\Gamma_g}.
		\]
		Apply $-\otimes^{\BL}_{\wt {\CN^\flat \times \CN^\flat} } \wt {\CN^\flat} $ to both sides, as elements in $K_0'(\wt{\Fix}(g) \cap \CZ(u) )$ we obtain
		\begin{align*} 
			(\wt {\Fix}^\BL (g'), \CN^\flat ) _{\CN} &=\wt {\CN^\flat} \otimes^{\mathbb{L}}_{\wt {\CN \times \CN}} \wt {\Gamma_{g'}} \\ 
			&= \wt {\CN^\flat} \otimes^{\BL}_{\wt {\CN^\flat \times \CN^\flat}}  ( \wt {\CN^\flat \times \CN^\flat} \otimes^{\BL}_{\wt {\CN \times \CN}} \wt {\Gamma_{g'}}) \\
			&= \wt {\CN^\flat} \otimes^{\BL}_{\wt {\CN^\flat \times \CN^\flat}} ((f^{\flat})^{-1} (\CN^\flat \times \CZ(u) ) \otimes^\BL_{\wt { \CN^\flat \times \CN^\flat}} \wt {\Gamma_g} ) \\ 
			&=  ( \wt {\CN^\flat} \otimes^{\BL}_{\wt {\CN^\flat \times \CN^\flat}}  (f^{\flat})^{-1}(\CN^\flat \times \CZ(u) ) ) \otimes^{\BL}_{\wt { \CN^\flat }} \wt {\Fix}^\BL (g).
		\end{align*}
		We have 
		$$
		\wt {\CN^\flat} \otimes_{\wt {\CN^\flat \times \CN^\flat}}  (f^{\flat})^{-1} (\CN^\flat \times \CZ(u) )=\CN^\flat \otimes_{\CN^\flat \times \CN^\flat}  (\CN^\flat \times \CZ(u) )=\CZ(u)
		$$ which is of the expected dimension $n-1+2n-4 - (2n-3)=n-2$. By \cite[Lem. B.2]{AFL}, we have
		$$
		\wt {\CN^\flat} \otimes^\BL_{\wt {\CN^\flat \times \CN^\flat}}  (f^{\flat})^{-1} (\CN^\flat \times \CZ(u) ) =\CZ(u).
		$$
		Therefore, $\wt{\Int}(g')=(\wt {\Fix}^\BL (g'), \CN^\flat ) _{\CN}=( \wt {\Fix}^\BL  (g), \CZ(u) )_{\CN^\flat}=\wt{\Int}^\CZ(g, u)$.
		
		If $i=1$, we apply the dual isomorphism $\lambda_\CN: \CN \to \CN^\vee$ to reduce to the case $i=0$ using the compatibility (\ref{compatibility of embedding under duality}).
		
		For the last part, consider matching elements $g' \in \U(\BV)(F_0)_{\rs}$ and $\gamma' \in S(V_0)(F_0)_\rs$. Apply relative Cayley maps to $g'$ and $\gamma'$. By local constancy of intersection numbers \cite{AFLlocalconstant}, we can always assume $u$ is non-zero by approximation. We have the reduction on the analytic side by Corollary \ref{RedOrbSSS}, which concludes the proof.
	\end{proof}
	
	\begin{remark}
		Although \cite{AFLlocalconstant} only lists as examples the unitary Rapoport--Zink spaces in the self-dual case, the argument in \emph{loc. cit.} on local constancy of intersection numbers is quite general and applies to our current situation. 
	\end{remark}

	\section{Bruhat--Tits strata}\label{section: local BT strata}
	
	In this section, we study Bruhat--Tits stratification on the reduced locus $\CN^{\red}$ based on \cite{Cho18, Vollaard05, VW11}, which is useful for explicit computations. We redefine Bruhat--Tits strata via the Kudla--Rapoport cycles, which is used later to prove AT conjectures in the unramified maximal order case and some local modularity results.
	
Our group is not quasi--split in general, and the geometry of the reduced locus $\CN^\red$ is more involved in particular  $\CN^\red$ may be not equidimensional. 	See \cite{ADLV-hyperspecial} for some general results on top dimensional irreducible components and their stabilizers (which are special parahorics) of the reduced locus of Rapoport-Zink spaces  for quasi--split groups. 
	
For a vertex lattice $L \subseteq V$ of rank $n$ and type $t$, consider the associated \emph{Rapoport--Zink space} $\CN=\CN_{\U(L)} \to \Spf O_{\breve{F}}$. Recall we introduce the nearby hermitian space $\BV=\Hom_{O_F}^{\circ}(\BE, \BX)$. 
	
	\begin{definition}
		For a vertex lattices $L' \subseteq \BV$, the closed \emph{Bruhat-Tits stratum} $\mathrm{BT}(L')$ is the closed subscheme of $\CN^\red$ over $\BF$ defined by
		\begin{enumerate}
			\item 
			if $L'=L^{\bullet}$ has type $\geq t+1$, then $\mathrm{BT}(L'):=\CZ(L^{\bullet})^\red$.	
			\item 
			If $L'=L^{\circ}$ has type $\leq t-1$, then $\mathrm{BT}(L'):=\CY((L^{\circ})^{\vee})^\red$.
		\end{enumerate}	
	\end{definition}
	
	\begin{proposition}
		Consider an orthogonal decomposition $L=L^\flat \oplus O_F e$ with $v((e,e)_V)=i \in \{0,1\}$, with induced decomposition $\BV=\BV^\flat \oplus Fe^{[i]}$ and the embedding $\delta_{(L, e)}: \CN^\flat \to \CN$. If $L'$ is a vertex lattice in $\BV$ containing $e^{[i]}$, then
		\begin{equation}
			\mathrm {BT}(L') \cap \CN^\flat= \mathrm {BT}(L'^\flat). 
		\end{equation}	
	\end{proposition}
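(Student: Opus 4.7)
The plan is to decompose $L'$ orthogonally, handle the $\CZ$-case of $\mathrm{BT}$ directly via a moduli-theoretic restriction lemma, and reduce the $\CY$-case to the $\CZ$-case using the dual isomorphism $\lambda_\CN$. Since $e^{[i]}$ is orthogonal to $\BV^\flat$, setting $L'^\flat := L' \cap \BV^\flat$ yields $L' = L'^\flat \oplus O_F e^{[i]}$. A direct computation of $(L')^\vee / L'$ (using $v((e^{[i]}, e^{[i]})_\BV) = i$) shows $t(L'^\flat) = t(L') - i$, and since $t(L^\flat) = t - i$, the condition $t(L') \geq t + 1$ is equivalent to $t(L'^\flat) \geq t(L^\flat) + 1$, and similarly for $\leq t - 1$. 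Consequently, the same cycle type ($\CZ$ or $\CY$) appears on both sides of the claimed equality.

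For the $\CZ$-case ($t(L') \geq t + 1$), the orthogonal decomposition gives $\CZ_\CN(L') = \CZ_\CN(L'^\flat) \cap \CZ_\CN(e^{[i]})$. The key moduli lemma is that for any $u \in \BV^\flat$ one has $\CZ_\CN(u) \cap \CN^\flat = \CZ_{\CN^\flat}(u)$: under $\delta_{(L,e)}$ a point of $\CN^\flat$ becomes $X = X^\flat \times \CE^{[i]}$, and $\rho^{-1} \circ u$ lands in the $X^\flat$-factor, so lifting to $X$ is equivalent to lifting to $X^\flat$. Intersecting over $u \in L'^\flat$ gives $\CZ_\CN(L'^\flat) \cap \CN^\flat = \CZ_{\CN^\flat}(L'^\flat) = \mathrm{BT}(L'^\flat)$, so the claim reduces to showing $\CZ_\CN(e^{[i]})^\red \supseteq \CN^{\flat, \red}$. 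For $i = 0$ this is precisely Proposition \ref{base case of Z and Y}; for $i = 1$, one combines $\CZ_\CN(e^{[1]}) \subseteq \CY_\CN(e^{[1]}) = \CN^\flat$ with the observation that the canonical lift $\lambda_{\CE^{[0]}} : \CE^{[0]} \to \CE^{[1]}$ of $\lambda_{\BE^{[0]}}$ provides the required homomorphism lifting $\rho^{-1} \circ e^{[1]}$ on every point of $\CN^\flat$.

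The $\CY$-case ($t(L') \leq t - 1$) is reduced to the $\CZ$-case via $\lambda_\CN : \CN \isoarrow \CN^\vee$. By Proposition \ref{prop: relation of Z and Y} one has $\mathrm{BT}(L') = \lambda_\CN^{-1}(\CZ_{\CN^\vee}(\lambda_\BV(L')^\vee)^\red)$, and the dual lattice $\lambda_\BV(L')^\vee \subseteq \BV^\vee$ has type $n - t(L') \geq n - t + 1 = t(\CN^\vee) + 1$, so this is a genuine $\CZ$-stratum in $\CN^\vee$. By the compatibility \eqref{compatibility of embedding under duality}, $\lambda_\CN$ identifies $\delta_{(L,e)}$ with $\delta_{(L,e)}^\vee$, and the hypothesis $e^{[i]} \in L'$ translates (up to units) into the containment of the corresponding vector $e'^{[1-i]}$ in $\lambda_\BV(L')^\vee$ for the dual data. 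The claim then follows from the $\CZ$-case applied to $\CN^\vee$.

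The main technical point is the $i = 1$ subcase of the $\CZ$-step: one must carefully unwind the construction of $e^{[1]}$ through $\lambda_{\BE^{[0]}}$ and the product decomposition $\BX = \BX^\flat \times \BE^{[1]}$, then invoke functoriality of canonical liftings of strict $O_{F_0}$-modules to produce a genuine homomorphism (not merely a quasi-homomorphism) lifting $\rho^{-1} \circ e^{[1]}$. All remaining steps are routine manipulations of orthogonal decompositions and products.
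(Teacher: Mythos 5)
The paper's proof is two sentences: dualize via $\lambda_\CN$ to reduce $i=1$ to $i=0$ at the outset, then use the identification $\CN^\flat = \CZ(e^{[0]})$ from Proposition \ref{base case of Z and Y}(1) and read off the claim from the moduli definitions of $\CZ(L')$ and $\CY(L'^\vee)$. Your proposal organizes the cases in the opposite order: you handle both $i=0,1$ directly in the $\CZ$-case and only invoke duality for the $\CY$-case. This is a genuinely different route, and unfortunately the $i=1$ subcase of your $\CZ$-step is where it breaks.

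Two related issues. First, your formula $v((e^{[i]},e^{[i]})_\BV)=i$ is not justified and appears to be wrong. Proposition \ref{base case of Z and Y} identifies $\CN^\flat$ with $\CZ(e^{[0]})$ for $i=0$ and with $\CY(e^{[1]})$ for $i=1$, and $\CN^\flat\cong\CN^{[t-i]}_{n-1}$ is formally smooth of dimension $n-2$ with purely horizontal closed image under $\delta$. Comparing with the Drinfeld-plane computations of \S\ref{section: local modularity} (Theorems \ref{thm: n=2 Z-cycle} and the $\CY$-analogue), the only way $\CY(e^{[1]})$ can be devoid of vertical components is if $v((e^{[1]},e^{[1]})_\BV)=-1$, the minimal value for $\CY$ to be non-empty (Proposition \ref{prop: relation of Z and Y}(3)); similarly $v((e^{[0]},e^{[0]})_\BV)=0$, the $\CZ$-base case. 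With these values your type computation $t(L'^\flat)=t(L')-i$ is off, and more seriously, for $i=1$ a vertex lattice $L'$ cannot contain $e^{[1]}$ (its norm has negative valuation), so the statement is vacuous for $i=1$ and no direct argument is needed — the paper's dualization reduces the only non-vacuous interpretation of the $i=1$ claim to an $i=0$ claim for the dual data.

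Second, even setting aside the valuation issue, your central assertion $\CZ_\CN(e^{[1]})^\red\supseteq\CN^{\flat,\red}$ is strictly stronger than what Proposition \ref{base case of Z and Y}(2) provides, and combined with the inclusion $\CZ(e^{[1]})\hookrightarrow\CY(e^{[1]})=\CN^\flat$ it would force $\CZ(e^{[1]})=\CY(e^{[1]})$, hence by the dual isomorphism $\CZ(e^{\vee,[0]})=\CY(e^{\vee,[0]})$ for a norm-zero vector on $\CN^\vee$. But the explicit vertical multiplicities $m(u,L)$ and $m^\vee(u,L)$ in \S\ref{section: local modularity} differ for such $u$, so this equality cannot hold in general. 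The lifting argument via the canonical lift $\lambda_{\CE^{[0]}}$, as you present it, is exactly the content of the paper's $\CN^\flat=\CY(e^{[1]})$ (the $\CY$-cycle is defined via lifting $\lambda_\BX\circ u$, which after the product decomposition is precisely the polarization $\lambda_{\BE^{[0]}}$); it does not produce the stronger $\CZ$-lifting. The moduli restriction lemma and the $\CY$-to-$\CZ$ dualization in your proposal are both fine, but the $i=1$, $\CZ$-case needs to be handled by dualizing $i$ rather than by the lifting claim.
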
	
	\begin{proof}
		We may assume $i=0$ using the dual isomorphism. Identify $\CN^\flat$ with $\CZ(e^{[0]})$. The result follows from moduli definition of $\CZ(L')$ or $\CY(L'^\vee)$.
	\end{proof}

	\subsection{Relative Dieudonne theory}
	
	Recall $\sigma$ is the $F_0$-linear Frobenius automorphism on $\breve{F}_0$. We have $O_F \otimes_{O_{F_0}} O_{\breve{F}_0} \simeq O_{\breve{F}_0} \times O_{\breve{F}_0}$ where $O_F$ acts on the first (resp. second) factor $O_{\breve{F}_0}$ by the fixed embedding $i_0: O_F \hookrightarrow O_{\breve{F}_0}$ (resp. $\sigma \circ i_0$).
	
	Let $ \BM(\BE)$ (resp. $\BM(\BX)$) be the (covariant) \emph{relative Dieudonne module} of the framing object $\BE$ (resp. $\BX$), which is a free $O_{\breve{F}_0}$-module of rank $2$ (resp. rank $2n$) with the relative Frobenius morphism $\CF_\BE$ (resp. $\CF_\BM$) and the relative Verschiebung $\CV_\BE$ (resp. $\CV_\BM$). 
	
	\begin{remark}
		See \cite[Prop 3.56, Defn. 3.57]{RZ96} for backgrounds on relative Dieudonne theory. Let $f$ be the inertial degree of $F_0$ over $\BQ_p$. The absolute Dieudonne module of $\BX$ is a free $W(\BF)$ module of rank $2n[F_0:\BQ_p]$, which decomposes into direct summands under the action of $O_F \otimes_{\BZ_p} W(\BF) \cong \prod_{i=1}^{f} O_{\breve{F}}$. The $0$-th summand has $O_{\breve{F}}$-rank  $\frac{2n[F_0:\BQ_p]}{f} \times \frac{1}{ \text{rk}_{W(\BF)} O_{\breve{F}}}=2n$ and is exactly the relative Dieudonne module $\BM(\BX)$ of $\BX$.
	\end{remark}
	
	Denote by $\BN(\BX)=\BM(\BX) \otimes \BQ$ the associated isocrystal. The $O_F$-action on $\BX$ induces $\BZ/2$-gradings:
	\begin{equation}
		\BM(\BX)=  \BM(\BX)_0 \oplus \BM(\BX)_1, \quad \BN(\BX)=  \BN(\BX)_0 \oplus \BN(\BX)_1.
	\end{equation}
	
	The polarization $\lambda_\BX$ induces a nondegenerate $\breve{F}$-bilinear alternating form $\left<-,-\right>_\BX$ on $\BN(\BX)$. For $x, y \in \BN(\BX)$ and $a \in O_F$, we have
	\begin{equation}
		\left<\CF_{\BX} x , y \right>_\BX= \sigma(\left<x , \CV_{\BX} y \right>_\BX),  \quad \left< \iota(a) x , y \right>_\BX= \left<x , \iota(\ov a) y \right>_\BX. 
	\end{equation}
	
	\begin{definition}
		Let $\{-,-\}$ be the nondegenerate form on $\BN(\BX)_0$ given by $$\{x,y\} := \left<x,\CF_{\BX} y \right>_\BX$$ which is linear in the first variable and $\sigma$-linear in the second variable. The restriction of the form $\{-,-\}|_{C(\BX)}$ to $C(\BX)$ is skew-hermitian.
	\end{definition}

For any lattice $\Lambda_1 \subseteq \BN(\BX)_1$, write $(\Lambda_1)^\perp \subseteq \BN(\BX)_0$ as its dual inside $\BN(X)_0$ under the pairing $\left<-,-\right>$. We use $(-)^\perp$ to distinguish it with the dual under the natural form $\{-,-\}$ on $\BN(\BX)_0$.

Consider the $\sigma^2$-linear operator $\tau= \CV^{-1}_{\BX}\CF_{\BX}$ on $\BN(\BX)_0$. Let $C(\BX)=\BN(\BX)_0^{\tau=1}$ be the $F$-vector space of invariant elements in $\BN(\BX)_0$. 

By the (relative) supersingular assumption on $\BX$, we have 
\begin{equation}
	\BN(\BX)_0 \simeq C(\BX) \otimes_{F} \breve{F}.
\end{equation}

	We have similar decompositions and forms on the isocrystal $\BN(\BE)=\BM(\BE) \otimes \BQ$. Choose a unit $\delta \in O_{F}^{\times}$ such that $\bar{\delta}=-\delta$. By \cite[Remark. 2.5]{KR-local} (the case of signature $(0,1)$), we can find $\tau$-invariant  generators $1_i$ of $\BM(\BE)_i \, (i=0, 1)$ such that
	\begin{equation}\label{eq: Dieduonne for BE}
		\CF_{\BE}1_1=1_0, \quad \CF_{\BE}1_0 = \varpi 1_1, \quad \left\{ 1_0, 1_0 \right\}_{\BE} = \varpi \delta.
	\end{equation}	
	
	A vector $x \in \BV=\Hom_{O_F}^{\circ}(\BE, \BX)$  induces a $F \otimes_{F_0} \breve{F}$-linear map from $\BN(\BE)$ to $\BN(\BX)$. As $\CF_{\BE}(1_{0})=\CV_{\BE}(1_0)$, we have $x(1_0) \in C(\BX)$. From \cite[Lem. 3.9]{KR-local} we obtain:
	
	\begin{proposition}\label{prop: compare herm}
		The association $x \mapsto x(1_0)$ gives a bijection
		\[
		\BV \cong C(\BX)
		\]
		under which the hermitian form $(-,-)_{\BV}$ on $\BV$ is identified with $\frac{1}{\varpi \delta }\{-,-\}$ on $C(\BX)$.
	\end{proposition}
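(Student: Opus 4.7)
The plan is to prove the bijection and the comparison of forms separately via relative Dieudonn\'e theory. First I would observe that, by the equivalence of categories, an element of $\BV = \Hom^\circ_{O_F}(\BE, \BX)$ is the same data as an $F \otimes_{F_0} \breve{F_0}$-linear map $\BN(\BE) \to \BN(\BX)$ commuting with $\CF$ and $\CV$. Since $\BN(\BE)_0$ is free of rank one over $\breve{F_0}$ with basis $1_0$, and since $x(1_1)$ is uniquely determined by $x(1_0)$ via the Frobenius compatibility $\CF_\BX x(1_1) = x(\CF_\BE 1_1) = x(1_0)$, the map $x \mapsto x(1_0)$ is injective.

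Next I would check that this map lands in $C(\BX) = \BN(\BX)_0^{\tau=1}$ and is surjective onto it. On $\BN(\BE)$, the relations $\CF_\BE 1_0 = \varpi 1_1$, $\CF_\BE 1_1 = 1_0$ together with $\CV_\BE \CF_\BE = \varpi$ give $\tau_\BE(1_0) = \CV_\BE^{-1} \CF_\BE (1_0) = 1_0$, and since $x$ intertwines $\tau_\BE$ with $\tau_\BX$ we conclude $x(1_0) \in C(\BX)$. Conversely, given $v \in C(\BX)$ the recipe $x(1_0) := v$, $x(1_1) := \CF_\BX^{-1}(v)$, extended by $F \otimes_{F_0} \breve{F_0}$-linearity, produces a morphism of isocrystals; the defining condition $\tau_\BX v = v$, i.e.\ $\CF_\BX v = \CV_\BX v$, is precisely what is needed for $x$ to intertwine $\CV$ as well (the compatibility with $\CF$ is automatic).

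For the comparison of forms I would first unravel the definition $(x,y)_\BV = \lambda_\BE^{-1} \circ y^\vee \circ \lambda_\BX \circ x \in \End^\circ(\BE) \cong F$ into the explicit identity
\begin{equation*}
\left<x(e), y(f)\right>_\BX = \left<(x,y)_\BV \cdot e,\, f\right>_\BE \qquad \text{for all } e, f \in \BN(\BE),
\end{equation*}
obtained by chasing the polarization conventions $\left<u,v\right>_? = \lambda_?(v)(u)$. The computation of $\{x(1_0), y(1_0)\}_\BX = \left<x(1_0), \CF_\BX y(1_0)\right>_\BX$ then reduces, via the Frobenius-equivariance $\CF_\BX y(1_0) = y(\CF_\BE 1_0) = \varpi y(1_1)$, to $\varpi \left<x(1_0), y(1_1)\right>_\BX$. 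Applying the identity above with $e=1_0$, $f=1_1$ converts this into $i_0((x,y)_\BV) \cdot \varpi \left<1_0, 1_1\right>_\BE$, and the remaining scalar $\left<1_0, 1_1\right>_\BE$ is pinned down by the normalization $\{1_0, 1_0\}_\BE = \delta$ from (\ref{eq: Dieduonne for BE}). Reassembling yields the desired proportionality between $\{-,-\}_\BX|_{C(\BX)}$ and $(-,-)_\BV$ with the constant $\varpi\delta$, after identifying $F$ with $i_0(F) \subset \breve{F_0}$.

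The hard part will be the last step, where pinning down the proportionality constant requires careful bookkeeping of three conventions running in parallel: the alternating nature of $\left<-,-\right>$, the $O_F$-compatibility $\left<\iota(a)x, y\right> = \left<x, \iota(\bar a) y\right>$ (which forces the form to vanish on each $O_F$-eigenspace separately, so only the off-diagonal coupling $\left<1_0, 1_1\right>_\BE$ contributes), and the two embeddings $i_0, i_1 = \sigma \circ i_0 \colon F \hookrightarrow \breve{F_0}$ governing the $O_F$-action on the two eigenspaces of $\BN(\BE)$. The bijectivity in steps one and two is essentially formal from Dieudonn\'e theory once these conventions are fixed; the arithmetic content is concentrated in the form comparison.
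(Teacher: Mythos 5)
Your outline is essentially the standard Dieudonn\'e-theoretic derivation, and the paper itself gives no proof but simply cites \cite[Lem.~3.9]{KR-local}, so there is nothing in the paper to compare to beyond that reference. The bijection half is correct: the injectivity argument (recover $x(1_1)$ from $x(1_0)$ via $\CF$-equivariance), the observation that $\tau_\BE(1_0)=1_0$, and the reconstruction of $x$ from $v\in C(\BX)$ all go through; your parenthetical aside about which compatibility ($\CF$ vs.\ $\CV$) is "automatic" is stated with the roles reversed relative to the computation one actually does, but both conditions reduce to the same equation $\CF_\BX^2 v = \varpi v$, i.e.\ $\tau v = v$, so no harm is done.

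The form comparison is where I would push back. Your chain of identities, carried through without dropping anything, actually collapses to
\[
\{x(1_0),\,y(1_0)\}_\BX \;=\; \varpi\,\langle x(1_0), y(1_1)\rangle_\BX \;=\; \varpi\,(x,y)_\BV\,\langle 1_0,1_1\rangle_\BE \;=\; (x,y)_\BV\cdot\{1_0,1_0\}_\BE,
\]
so the proportionality constant between $\{-,-\}|_{C(\BX)}$ and $(-,-)_\BV$ is \emph{exactly} $\{1_0,1_0\}_\BE$, no more and no less. You then cite the normalization $\{1_0,1_0\}_\BE=\delta$ from \eqref{eq: Dieduonne for BE} and assert that "reassembling yields the constant $\varpi\delta$." Those two statements are incompatible: with $\{1_0,1_0\}_\BE=\delta$ as written, your own computation gives the constant $\delta$, not $\varpi\delta$. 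The factor $\varpi$ only appears if one instead normalizes $\langle 1_0,1_1\rangle_\BE=\delta$ (equivalently $\{1_0,1_0\}_\BE=\varpi\delta$), and this latter normalization is the correct one: since $\lambda_\BE$ is a polarization of type $0$, $\BD(\lambda_\BE)$ is an isomorphism, so $\langle-,-\rangle_\BE$ is a perfect pairing on the \emph{integral} Dieudonn\'e module $\BM(\BE)$ and $\langle 1_0,1_1\rangle_\BE$ must be a unit, which $\delta/\varpi$ is not. In other words, \eqref{eq: Dieduonne for BE} as displayed misstates the Kudla--Rapoport normalization (it should read $\langle 1_0,1_1\rangle_\BE=\delta$), and your proof should derive and use this rather than assert the final constant. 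As written, the argument contains a genuine bookkeeping gap: the value of $\langle 1_0,1_1\rangle_\BE$ is never actually computed, and if one takes the cited normalization literally the wrong constant comes out.
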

	
	Consider the following non-degenerate $\sigma$-sesquilinear form on $\BN(\BX)_0$ and $C(\BX)$:
	\begin{equation}\label{defn: pairing on BN_0}
		(x,y)_\BN:=\frac{1}{\varpi \delta }\{x, y\}.
	\end{equation}
	For  $x, y \in \BN(\BX)_0$,  we have
	$\sigma( (x,y)_\BN )= (\tau(y), x)_\BN, \quad (\tau(x), \tau(y))_\BN=\sigma^2((x, y)_\BN).$

	\subsection{$\BF$-points of $\CN$}	
	
	For any $O_{\breve{F}}$ lattice $A$ in $\BN(\BX)_0$, denote by $A^\vee$ its dual under $(-,-)_\BN$:
	\begin{equation}
		A^\vee=\{ x \in \BN(\BX)_0 | (x, A)_\BN \subseteq O_{\breve{F}} \}.
	\end{equation}
	And denote by $(-)^\perp$ the dual with respect to $\left<-,-\right>_\BX$. For two $O_{\breve{F}}$ lattices $A_1, A_2$ in $\BN(\BX)_0$ and $m \geq 0$, write $A_1 \overset{m} \subseteq A_2$ if $A_1 \subseteq A_2$ and $A_2/A_1$ is a $\BF$-vector space of dimension $m$. Note we have
	\begin{equation}
		(A^{\vee})^{\vee}=\tau(A).
	\end{equation}
	
	We regard $O_{F}$ lattices in $\BV \simeq C(\BX)$ the same as $\tau$-stable $O_{\breve{F}}$ lattices in $\BN(\BX)_0$. For any $O_{F}$ lattice $L'$ in $\BV \simeq C(\BX)$, we denote by $L'^\vee$ its dual under the form $(-,-)_{\BV} \simeq (-,-)_\BN$. We have  $(L'^{\vee})^{\vee}=L'$.

	Consider a point $(X, \iota, \lambda, \rho) \in \CN(\BF)$. The framing gives an isomorphism $\BN(X) \simeq \BN(\BX)$. Hence the relative Dieudonne module of $(X, \iota, \lambda, \rho)$ gives an $O_{\breve{F}}$-lattice $M=M_0 \oplus M_1$ inside $\BN(\BX)$. Consider two $O_{\breve{F}}$-lattices inside $\BN(\BX)_0$ given by
	\begin{equation}
		A=M_0, \quad B=M_1^\perp.
	\end{equation}
	By definition, we have $\CF_{\BX} M_1= ((M_1)^\perp)^\vee$. A pair of $O_{\breve{F}}$-lattices $(A,B)$ in $\BN(\BX)_0$ is call \emph{special} if we have 
	\[
	B^{\vee} \overset{1}  \subseteq A, \quad A^{\vee} \overset{1}  \subseteq B, \quad A \overset{t}  \subseteq B.
	\]
	\begin{proposition}\label{prop: special pair classifying x in N}
		We have the following description of $\BF$-points of $\CN$:
		\begin{equation}
			\CN(\BF)= \{ \text{special pairs of $O_{\breve{F}}$-lattices \,} (A,B) \subseteq \BN(\BX)_0  \}.
		\end{equation}
	\end{proposition}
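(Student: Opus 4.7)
The plan is to pass through covariant relative Dieudonné theory. Given a point $x = (X,\iota,\lambda,\rho) \in \CN(\BF)$, the quasi-isogeny $\rho$ will identify the isocrystal $\BN(X) \cong \BN(\BX)$, and the Dieudonné module $M = \BM(X)$ will sit as an $O_{\breve{F_0}}$-lattice stable under $\CF_{\BX}$ and $\CV_{\BX}$. The $O_F$-action gives a decomposition $M = M_0 \oplus M_1$, and I would set $A := M_0$ and $B := M_1^\perp$ with $\perp$ taken with respect to $\left<-,-\right>_{\BX}$. Conversely, from a special pair $(A,B)$ I would reconstruct $M_1 := B^\perp \subseteq \BN(\BX)_1$, assemble $M := A \oplus M_1$, and invoke the relative Dieudonné equivalence (applicable since $\BX$ is basic) to recover $(X,\iota,\lambda,\rho)$ up to the canonical framing.

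The substance of the argument will lie in translating each defining axiom of $\CN(\BF)$ into a lattice condition. The identity $\CF_{\BX} M_1 = (M_1^\perp)^\vee$ recorded in the setup, together with the Frobenius stability $\CF_{\BX} M \subseteq M$, will yield the inclusion $B^\vee \subseteq A$; the Kottwitz signature condition (\ref{eq: Kottwitz}) will then constrain the length of the graded piece of $\Lie X$ in degree $0$ and force $B^\vee \subseteq^1 A$. The parallel computation with $\CV_{\BX}$, using the $(n-1)$-dimensional graded piece of $\Lie X$ in degree $1$, will produce $A^\vee \subseteq^1 B$. Finally, the axiom that $\ker\lambda \subseteq X[\varpi]$ has order $q^{2t}$ will translate, via the normalization $(-,-)_{\BN} = \tfrac{1}{\varpi \delta} \{-,-\}$ from Proposition \ref{prop: compare herm}, into a colength-$2t$ relation between $M$ and its polarization dual inside $\BN(\BX)$; reading this off in the graded decomposition will give exactly $A \subseteq^t B$.

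The main obstacle will be the precise numerics. I will need to check that the Kottwitz signature produces \emph{length-one} inclusions rather than weaker ones, and that the interplay between $\perp$ (under $\left<-,-\right>_{\BX}$) and $\vee$ (under $(-,-)_{\BN}$), together with the Frobenius-twist identity $(A^\vee)^\vee = \tau(A)$, remains consistent throughout the bookkeeping of indices. Once these computations are pinned down, functoriality of both constructions will be manifest, and tracing $M \mapsto (A,B) \mapsto M$ (and the reverse composition, using that $B^\perp = M_1$ is recovered from $B = M_1^\perp$ by double-orthogonality of lattices in the perfect pairing $\left<-,-\right>_{\BX}$) will confirm that the two assignments are mutually inverse, yielding the claimed bijection.
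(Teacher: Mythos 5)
Your proposal takes the same route the paper does: pass through covariant relative Dieudonn\'e theory, set $A = M_0$ and $B = M_1^\perp$, then translate the Kottwitz signature condition into the two colength-one inclusions $B^\vee \subseteq A$ and $A^\vee \subseteq B$ and the condition on $\ker\lambda$ into $A \subseteq^{t} B$. The paper's own proof delegates the lattice bookkeeping to \cite{Vollaard05} and \cite{Cho18}, so your sketch is essentially an unpacking of that citation; the one place to double-check is your assignment of which graded piece of $\Lie X$ is responsible for which of the two length-one inclusions, since the identity $B^\vee = \CF_{\BX}M_1$ and the relation $\CF_\BX\CV_\BX = \varpi$ swap the roles of $(\Lie X)_0$ and $(\Lie X)_1$ relative to the na\"ive guess.
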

	\begin{proof}
		This follows from \cite[Lem. 1.5]{Vollaard05} \cite[Prop. 2.4]{Cho18}.  The condition on the polarization is equivalent to $A \overset{t}  \subseteq B$, and the Kottwitz signature condition is equivalent to $B^{\vee} \overset{1}  \subseteq A, \, A^{\vee} \overset{1} \subseteq B$. Hence we conclude by relative Dieudonne theory. 
	\end{proof}

	\begin{remark}
		By Proposition \ref{prop: dual iso herm form comp}, we have $(-,-)_{\BV^\vee}=-\varpi (-,-)_{\BV}$ under the dual isomorphism $\lambda_\BV : \BV \cong \BV^\vee$. On $\BF$-points, the dual isomorphism $\lambda: \CN \to \CN^\vee$ (\ref{dual lambda}) is given by
		\[
		\lambda_\CN: \CN(\BF) \to \CN^\vee(\BF),  \,\,	(A,B) \mapsto (A',B'):=(\lambda_\BV(B), \lambda_\BV(\varpi^{-1} A)).
		\]
	\end{remark}
	
	Given a special pair $(A,B)$, denote by $L_A$ (resp. $L_B$) the smallest $\tau$-stable lattice containing $A$ (resp. $B$). By definition, 
	\begin{equation}\label{eq: generate tau-stable lat}
		A \subseteq L_A, \quad L_A^{\vee} \subseteq A^{\vee},
	\end{equation}
	\begin{equation}
		L_B^{\vee} \subseteq B^{\vee}, \quad B \subseteq L_B.
	\end{equation}
	
	\begin{proposition}\label{special pair: choose 1 in 2}\cite[Lem. 2.7]{Cho18} 
		For a special pair $(A,B)$, at least one of the two following cases holds:
		\begin{itemize}
			\item $\varpi L_A^{\vee}  \subseteq L_A \subseteq L_A^{\vee}$, in particular $A \overset{t-1} \subseteq A^{\vee}$. Hence $L^{\circ}=L_A$ is a vertex lattice of type $ \leq  \dim_\BF (A^{\vee}/A)=t-1$ .
			\item $\varpi L_B  \subseteq  L_B^{\vee} \subseteq L_B$, in particular $B^{\vee} \overset{t+1} \subseteq B$. Hence $L^{\bullet}=L_B^{\vee}$ is a vertex lattice of type $\geq \dim_\BF (B/B^{\vee})=t+1$.
		\end{itemize}
	\end{proposition}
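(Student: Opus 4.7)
The plan is to translate the dichotomy into a statement about $\tau$-stable lattices in $\BN(\BX)_0$, equivalently $O_F$-lattices in $\BV$, and then reduce it to a finite-dimensional semilinear-algebra problem. First, the identity $(\tau x, \tau y)_\BN = \sigma^2((x,y)_\BN)$ yields $\tau(L^\vee) = (\tau L)^\vee$ for any $O_{\breve{F_0}}$-lattice $L \subseteq \BN(\BX)_0$, so $L_A^\vee$ and $L_B^\vee$ are automatically $\tau$-stable, and by Proposition \ref{prop: compare herm} all four lattices correspond to $O_F$-lattices in $\BV$. Applying $\tau$-iterates to $A \overset{t}\subseteq B$ gives $L_A \subseteq L_B$ and dually $L_B^\vee \subseteq L_A^\vee$. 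Extremal characterizations then fall out: $L_A$ is the smallest $\tau$-stable lattice containing $A$, and $L_A^\vee$ is the largest $\tau$-stable lattice contained in $A^\vee$ (and analogously for $B$).

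Next, I would unpack the two cases. The vertex condition $\varpi L_A^\vee \subseteq L_A$ in case 1 reduces to $\varpi A^\vee \subseteq A$, which follows for $t<n$ from the chain $\varpi A^\vee \subseteq \varpi B \subseteq B^\vee \subseteq A$ (using $\varpi B \subseteq B^\vee$, valid for $t \leq n-1$); the boundary case $t=n$ is handled symmetrically via the dual isomorphism $\lambda_\CN$ of Section \ref{dual lambda}. Similarly $L_B^\vee \subseteq L_B$ in case 2 is automatic from $L_B^\vee \subseteq B^\vee \subseteq A \subseteq B \subseteq L_B$. Consequently case 1 becomes the existence of a $\tau$-stable lattice between $A$ and $A^\vee$ (equivalent to $L_A \subseteq L_A^\vee$), and case 2 becomes $\varpi L_B \subseteq B^\vee$ (equivalent to $L_B$ being ``not much bigger than $B$'').

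The crux is showing at least one of these conditions holds. I would argue by contrapositive: suppose case 1 fails, so no $\tau$-stable lattice sits between $A$ and $A^\vee$. Working inside the finite $\BF$-vector space $B/B^\vee$ of dimension $t+1$, on which the $\sigma^2$-semilinear operator induced by $\tau$ interacts with the image subspaces $A/B^\vee$ (dimension $1$) and $A^\vee/B^\vee$ (dimension $t$), the failure of case 1 translates into a constraint on the $\tau$-orbit of these subspaces. A Lang's theorem--type descent then produces a $\tau$-stable subspace of $B/B^\vee$ which lifts to a $\tau$-stable intermediate lattice between $B^\vee$ and $B$, forcing $\varpi L_B \subseteq B^\vee$ by extremality of $L_B$. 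Once the dichotomy is established, the precise type bounds follow from index counting: in case 1, $A \subseteq L_A \subseteq L_A^\vee \subseteq A^\vee$ gives $\dim_\BF L_A^\vee/L_A \leq \dim_\BF A^\vee/A = t-1$, and case 2 is parallel.

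The main obstacle will be the semilinear-algebra step: one must exhibit a $\tau$-stable subspace of $B/B^\vee$ in the right position relative to $A/B^\vee$ and $A^\vee/B^\vee$, rather than merely some invariant subspace. A useful simplification is to induct on $[L_A:A]$, with base case $L_A = A$ (so $A$ is itself $\tau$-stable), where the dichotomy is transparent---case 1 holds for $t \geq 1$ via $L_A = A \subseteq A^\vee = L_A^\vee$, and case 2 holds for $t = 0$ since then $A = B$ is $\tau$-stable, $L_B = B$, and $\varpi L_B = \varpi B \subseteq B^\vee$.
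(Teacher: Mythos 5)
Your first paragraph is sound: the identity $(\tau x,\tau y)_\BN=\sigma^2((x,y)_\BN)$ does give $\tau(L^\vee)=(\tau L)^\vee$, so $L_A^\vee$, $L_B^\vee$ are $\tau$-stable, $L_A\subseteq L_B$, $L_B^\vee\subseteq L_A^\vee$, and $L_A^\vee$ is the largest $\tau$-stable lattice inside $A^\vee$. But the second paragraph asserts without argument that ``$\varpi B\subseteq B^\vee$, valid for $t\leq n-1$,'' and this is not a consequence of the three defining conditions of a special pair as they stand: those conditions only control $\BF$-lengths of quotients, which by itself leaves open the possibility that $B/B^\vee$ has $\varpi^2$-torsion. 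The correct intermediate fact is closer to $\varpi B\subseteq A$ (reflecting $\Ker\lambda\subseteq X[\varpi]$), and it is this that yields $\varpi A^\vee\subseteq\varpi B\subseteq A\subseteq L_A$; your proposed chain $\varpi A^\vee\subseteq\varpi B\subseteq B^\vee\subseteq A$ passes through the unproven link.

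The more serious gap is the third paragraph, which you yourself flag as the obstacle, and the sketch does not survive scrutiny. Since $B$ and $B^\vee$ are not $\tau$-stable in general, $\tau$ does not descend to a $\sigma^2$-semilinear operator on $B/B^\vee$, so there is nothing for a Lang-type descent to act on. What the argument actually needs — and what is absent from your outline — is the slow-growth constraint hidden in the special-pair axioms: from $B^\vee\subseteq A$ (given) and $B^\vee\subseteq(A^\vee)^\vee=\tau A$ (dualize $A^\vee\subseteq B$), one gets $B^\vee\subseteq A\cap\tau A$, hence $[A+\tau A:\tau A]=[A:A\cap\tau A]\leq[A:B^\vee]=1$, and dually for $B$. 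The dichotomy then emerges by tracking where the chain $A\subseteq A+\tau A\subseteq A+\tau A+\tau^2A\subseteq\cdots$, growing by at most one $\BF$-dimension at each step, first escapes $A^\vee$ (respectively stabilizes), and comparing with the analogous chain for $B$. Your proposed induction on $[L_A:A]$ would need exactly this control at the inductive step, so as written the core of the lemma remains open.
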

	
	\begin{remark}
		For the Balloon--Ground stratification,  $\CN^{\red, \circ}(\BF) \subseteq \CN^\red(\BF)$ is the locus $A \subseteq A^\vee$ ($A^\vee \subseteq \varpi^{-1} A$ is automatic). And $\CN^{\red, \bullet}(\BF)$ is the locus $B \subseteq \varpi^{-1}B^\vee$ ($B^{\vee} \subseteq B$ is automatic). Any automorphism $g \in \U(\BV)(F_0)$ preserves $\CN^{\red, \bullet}(\BF)$ and $\CN^{\red, \circ}(\BF)$. By Proposition \ref{special pair: choose 1 in 2}, we have a stratification 
		\[
		 \CN^\red(\BF)=\CN^{\red, \bullet}(\BF) \cup \CN^{\red, \circ}(\BF).
		\]
	\end{remark}

	\subsection{KR cycles and BT strata}
	
	Consider a non-zero vector $u \in \BV$ and a point $(X, \iota, \lambda, \rho) \in \CN(\BF)$. By Grothendieck--Messing theory,  $\rho^{-1} \circ u$ lifts to $\Hom_{O_F}(\BE, X)$ if and only if
	\begin{equation}\label{eq: special cycle Dieudonne}
		u(1_0) \in M_0, \quad u(1_1) \in M_1.
	\end{equation}
	We have $\CF_\BE1_1=1_0, \CF_\BE1_0=\varpi1_1$ by (\ref{eq: Dieduonne for BE}). Hence the condition (\ref{eq: special cycle Dieudonne}) is equivalent to that $u(1_0) \in \CF_\BX M_1= B^{\vee}$, and we have the following description of $\CZ(u)(\BF)$ (\cite[ Prop. 5.5.]{Cho18}):
	\begin{equation}
		\CZ(u)(\BF)= \{  (A,B) \in \CN(\BF)| u \in B^{\vee} \}.
	\end{equation}  
	
	The relative Dieudonne module of $X^\vee$ can be identified with $M^{\perp}=B \oplus A^{\perp}$. We obtain similar description for $\CY(u)$:
	\begin{equation}
		\CY(u)(\BF)= \{  (A,B) \in \CN(\BF)| u \in A^{\vee} \}.
	\end{equation}
	
	The inclusion $
	\CZ(u) \subseteq \CY(u) \subseteq \CZ(\varpi^{-1}u)$ on $\BF$-points corresponds to the inclusion $B^\vee \subseteq A^\vee \subseteq \varpi^{-1}B^\vee $.
	
	For a vertex lattice $L^\bullet \subseteq \BV$ of type $\geq t+1$, consider the $\BF_{q^2}$-vector space 
	$\BV(L^\bullet)_0:=(L^\bullet)^\vee / L^\bullet
	$ of dimension $t(L^\bullet)$. Here we regard $L^\bullet$ as a finite rank $O_F$-lattice. Equip $\BV(L^\bullet)_0$ with the $\mathbb F_{q^2}$-valued perfect $\sigma$-hermitian form $(x,y):= \ov {\varpi(x,y)_\BV}$. From above we have  $\mathrm{BT}(L^\bullet)(\BF)= \CZ(L^\bullet)(\BF)=$
	\[
	\{ (A,B) \in \CN(\BF) | L^\bullet \subseteq B^{\vee} \subseteq A \subseteq B \subseteq (L^\bullet)^\vee,
	L^\bullet \subseteq B^{\vee} \subseteq A^\vee \subseteq B \subseteq (L^\bullet)^\vee \}.
	\]
	Set $U_1$ (resp. $U_2$) as the reduction of $B^{\vee}$ (resp. $A$) inside $\BV(L^\bullet)_0 \otimes \BF$.  Then $\CZ(L^\bullet)(\BF)$ can be identified with 
	\begin{equation} \label{eq: Dieudone defn of BT(L bullet)}
		\{ (U_1, U_2) \subseteq \BV(L^\bullet)_0 \otimes \BF | U_1 \subseteq U_2, U_2 \subseteq U_1^\perp, U_1 \subseteq U_2^\perp, \dim U_1 = \frac{t(L^\bullet)-t-1}{2}, \dim U_2= \dim U_1+1 \}.
	\end{equation}
	Here by definition $U^\perp$ is the subspace $\{ x \in  \BV(L^\bullet)_0 \otimes \BF | (x,y)=0, \, \forall y \in U \}.$ 
	
	For a vertex lattice $L^\circ \subseteq \BV$ of type $\leq t-1$,  consider the $\BF_{q^2}$-vector space 
	$\BV(L^\circ)_0:=L^\circ/\varpi (L^\circ)^\vee$ of dimension $n-t(L^\circ)$. Here we regard $L^\circ$ as a finite rank $O_F$-lattice. Equip $\BV(L^\circ)_0$ with the perfect $\sigma$-hermitian form $(x,y):= \ov {(x,y)_\BV}$. Similarly we have $\mathrm{BT}(L^\circ)=\CY((L^\circ)^\vee)(\BF)= $
	\[
	\{ (A,B) \in \CN(\BF) |  \varpi (L^\circ)^\vee  \subseteq \varpi  A^\vee \subseteq \varpi B \subseteq A \subseteq L^\circ , \varpi (L^\circ)^\vee  \subseteq \varpi  A^\vee \subseteq B^\vee \subseteq A \subseteq L^\circ \}
	\]
	Set $U_3$ (resp. $U_4$) as the reduction of $\varpi A^{\vee}$ (resp. $\varpi B$) inside $\BV(L^\circ)_0 \otimes \BF$. Then $\CY((L^\circ)^\vee)(\BF)$ can be identified with the set 
	\begin{equation}\label{eq: Dieudone defn of BT(L circ)}
		\{ (U_3, U_4) \subseteq  \BV(L^\circ)_0 \otimes \BF | U_3 \subseteq U_4, U_3 \subseteq U_4^\perp, U_4 \subseteq U_3^\perp, \dim U_3= \frac{t-1-t(L^\circ)}{2}, \dim U_4= \dim U_3+1 \}.
	\end{equation}
	
	Here $U^\perp$ is by definition the subspace $\{ x \in  \BV(L^\circ)_0 \otimes \BF | (x,y)=0, \, \forall y \in U \}.$ 
	
	We now work with the descent of $\CN$ to $\Spf O_F$ by Remark \ref{descent to O_F: moduli}, the definition of Bruhat--Tits strata also descends to $\Spf O_F$ by Remark \ref{descent to O_F: cycles strata}.
	For any perfect field $k$ over $\mathbb F_{q^2}$, we have natural bijections of $k$ points of BT strata as above by applying relative Dieudonne theory over $k$.  In conclusion, we have (see also \cite[Defn. 2.9, Prop. 2.18]{Cho18}): 
	
	\begin{proposition}\label{prop: BT int combinatorics}
		\begin{enumerate}
			\item  For two vertex lattices $L^{\circ}_1, L^{\circ}_2$ of type $\leq t-1$, we have 
			$$
			\mathrm{BT}(L_1^\circ) \cap \mathrm{BT}(L_2^\circ) = \begin{cases}
				\mathrm{BT}(L^{\circ}_1 \cap L^{\circ}_2) & \text{if $L^{\circ}_1 \cap L^{\circ}_2$ is a vertex lattice} \\
				\emptyset & \text{else} \\
			\end{cases}
			$$ 
			\item For two vertex lattices $L^{\bullet}_1, L^{\bullet}_2$ of type $\geq t+1$, we have 
			$$
			\mathrm{BT}(L_1^\bullet) \cap \mathrm{BT}({L_2^\bullet} )= \begin{cases}
				\mathrm{BT}(L^{\bullet}_1 +L^{\bullet}_2)& \text{if $L^{\bullet}_1 + L^{\bullet}_2$ is a vertex lattice} \\
				\emptyset & \text{else} \\
			\end{cases}
			$$ 
		\end{enumerate}
	\end{proposition}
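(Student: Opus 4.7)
My plan is to reduce both parts to their set-theoretic versions on $\ov\BF$-points, using the lattice-theoretic descriptions of $\CZ(u)(\BF)$ and $\CY(u)(\BF)$ recorded just before the proposition, combined with the classification of $\CN(\BF)$ by special pairs from Proposition~\ref{prop: special pair classifying x in N}. Since each $\mathrm{BT}(L')$ is by definition a reduced closed subscheme of $\CN^{\red}$, and moreover a (smooth projective) subvariety over $\BF_{q^2}$ by works such as \cite{Vollaard05,VW11,Cho18}, matching $\ov\BF$-points determines the asserted scheme-theoretic identifications after passing to reduced structure.

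For part (2), the formula $\CZ(u)(\BF)=\{(A,B)\mid u\in B^{\vee}\}$ extended to the lattice $L^{\bullet}$ yields
\[
\mathrm{BT}(L^{\bullet})(\BF) = \{(A,B)\in\CN(\BF)\mid L^{\bullet}\subseteq B^{\vee}\},
\]
equivalently $\{(A,B)\mid B\subseteq (L^{\bullet})^{\vee}\}$ by dualizing under $(-,-)_{\BN}$. Intersecting gives $\{(A,B)\mid L_1^{\bullet}+L_2^{\bullet}\subseteq B^{\vee}\}$. If this set is nonempty with witness $(A,B)$, then the special-pair chain $L_i^{\bullet}\subseteq B^{\vee}\subseteq A^{\vee}\subseteq B\subseteq (L_j^{\bullet})^{\vee}$ for all $i,j\in\{1,2\}$ forces $(L_1^{\bullet},L_2^{\bullet})_{\BV}\subseteq O_F$, so $L:=L_1^{\bullet}+L_2^{\bullet}\subseteq L^{\vee}=(L_1^{\bullet})^{\vee}\cap(L_2^{\bullet})^{\vee}$. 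Since each $L_i^{\bullet}$ is itself a vertex lattice,
\[
\varpi L^{\vee}\subseteq \varpi(L_1^{\bullet})^{\vee}\cap\varpi(L_2^{\bullet})^{\vee}\subseteq L_1^{\bullet}\cap L_2^{\bullet}\subseteq L,
\]
so $L$ is a vertex lattice --- the contrapositive of the ``otherwise empty'' clause. Moreover, the sandwich $L\subseteq B^{\vee}\overset{t+1}{\subseteq}B\subseteq L^{\vee}$ yields $t(L)=\dim_{\BF_{q^2}}(L^{\vee}/L)\geq t+1$, so $\mathrm{BT}(L)$ is defined via the $\CZ$ formula and its $\BF$-points coincide with the intersection. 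Conversely, when $L$ is a vertex lattice of type $\geq t+1$, $\mathrm{BT}(L)(\BF)\subseteq \mathrm{BT}(L_i^{\bullet})(\BF)$ since $L_i^{\bullet}\subseteq L$.

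For part (1), apply the dual isomorphism $\lambda_{\CN}\colon\CN\simeq\CN^{\vee}$ of \S\ref{dual lambda}. By Proposition~\ref{prop: relation of Z and Y}(2), $\lambda_{\CN}$ carries $\mathrm{BT}(L^{\circ})=\CY((L^{\circ})^{\vee})^{\red}$ to $\CZ(\lambda_{\BV}\circ(L^{\circ})^{\vee})^{\red}$ inside $\CN^{\vee}$; the transported lattice has type $n-t(L^{\circ})\geq n-(t-1)=t(\CN^{\vee})+1$, placing us in the range covered by part (2) for $\CN^{\vee}$. Part (1) thus follows from part (2), observing that the sum of the transported lattices equals $\lambda_{\BV}\circ\bigl((L_1^{\circ})^{\vee}+(L_2^{\circ})^{\vee}\bigr)=\lambda_{\BV}\circ(L_1^{\circ}\cap L_2^{\circ})^{\vee}$, and that the vertex-lattice property is preserved under $(-)^{\vee}$ and under $\lambda_{\BV}\circ(-)^{\vee}$.

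The main obstacle will be the bookkeeping in the ``$L_1^{\bullet}+L_2^{\bullet}$ not a vertex lattice $\Rightarrow$ empty intersection'' direction: the key insight is that $\varpi(L_1^{\bullet}+L_2^{\bullet})^{\vee}\subseteq L_1^{\bullet}+L_2^{\bullet}$ is automatic once the sum is self-orthogonal, so the only obstruction to being a vertex lattice is the failure of self-orthogonality, which the special-pair chain rules out whenever the intersection is nonempty. A secondary subtlety is upgrading the $\ov\BF$-point equality to a scheme-theoretic one; this is handled uniformly by the known reducedness and smoothness of BT strata as projective subvarieties over $\BF_{q^2}$.
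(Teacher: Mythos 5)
Your argument is correct and follows the route the paper intends: read off the intersection from the Dieudonn\'e--lattice descriptions $\CZ(u)(\BF)=\{(A,B): u\in B^\vee\}$ and $\CY(u)(\BF)=\{(A,B): u\in A^\vee\}$ together with the special-pair constraints, then upgrade to reduced subschemes. The paper itself gives no argument here (it defers to \cite[Prop.~2.18]{Cho18}); you prove part (1) by transporting part (2) across $\lambda_\CN$, which is a clean alternative to simply mirroring the argument with the $\CY$-formula --- both are fine, and the duality route correctly tracks types since $\lambda_\BV\circ(-)^\vee$ sends type $s$ to type $n-s$.

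One thing worth noting, which your proof actually handles correctly but does not flag: the hypothesis in the paper's formula should read ``$L_1^\bullet+L_2^\bullet$ is a vertex lattice \emph{of type $\geq t+1$},'' not merely ``a vertex lattice.'' The sum of two lattices of type $\geq t+1$ can be a vertex lattice of strictly smaller type. For instance, in $\CN_2^{[1]}$ take two distinct type-$2$ (i.e.\ $\varpi$-modular) lattices $L_1^\bullet\neq L_2^\bullet$ both adjacent to a common self-dual lattice $\Lambda$; then $L_1^\bullet+L_2^\bullet=\Lambda$ has type $0$. In that case $\mathrm{BT}(L_1^\bullet)\cap\mathrm{BT}(L_2^\bullet)=\emptyset$ (each forces $B=(L_i^\bullet)^\vee\otimes O_{\breve F_0}$, and the two disagree), whereas $\mathrm{BT}(\Lambda)$, defined by the $\CY$-formula, is a nonempty $\BP^1$. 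Your derivation avoids the pitfall precisely because you extract $t(L)\geq t+1$ from the sandwich $L\subseteq B^\vee\overset{t+1}{\subseteq}B\subseteq L^\vee$ \emph{before} invoking the $\CZ$-description of $\mathrm{BT}(L)$; so the proof is right, but if you are writing this up you should record the extra type constraint explicitly so the statement you prove agrees with what the argument delivers.
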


	\begin{theorem}\label{thm: moduli int of BT strata}
		The collection $\{\mathrm{BT}(L') \}_{L'}$ indexed by vertex lattices $L'$ in $\BV$ gives a natural stratification on $\CN^{\red}$. Each stratum $\mathrm{BT}(L')$ is projective, smooth, and irreducible over $\BF$, and isomorphic to a (closed) Deligne--Lusztig variety for $\U(\BV(L')_0)$ which is not of Coxeter type in general. 
		\begin{enumerate}
			\item We have 
			$$
			\dim \mathrm{BT}(L')= \begin{cases}
				\frac{t-1-t(L')}{2} + n-t& \text{if $t(L') \leq t-1$} \\
				\frac{t(L')-t-1}{2}+t & \text{if $t(L') \geq t+1$} \\
			\end{cases}
			$$ 	
			\item  $\mathrm{BT}(L^\bullet)$ is a union of two classical Deligne--Lusztig varieties, depending on whether $U_2 \subseteq U_2^{\perp}$. And $\mathrm{BT}(L^\circ)$ is a union of two classical Deligne--Lusztig varieties, depending on whether $U_4 \subseteq U_4^{\perp}$.		
			\item $\mathrm BT(L^\circ) \cap \mathrm BT(L^\bullet)$ is non-empty if and only if $L^\bullet \subseteq L^\circ$. In this case, 
			\begin{equation}
				\mathrm BT(L^\circ) \cap \mathrm BT(L^\bullet)(\BF)=\{ (A, B) \in \CN(\BF) | L^\bullet \subseteq B^\vee \subseteq A \subseteq L^\circ \}. 
			\end{equation}
			is equal to $\BF$-points of the flag variety $Fl_{(L^\bullet, L^\circ)}$ parameterizing flags $(\ov{B^\vee}, \ov{A})$ of dimension $(\frac{t(L^\bullet)-t-1}{2}, \frac{t(L^\bullet)-t+1}{2})$ in the $\frac{t(L^\bullet)-t(L^\circ)}{2}$ dimensional $\BF$-vector space $L^\circ/L^\bullet$. 
			\item In particular if $t(L^\bullet)=t+1$ and $t(L^\circ)=t-1$, then $\mathrm BT(L^\circ)(\BF) \cong \BP^{n-t}(\BF)$ and $\mathrm BT(L^\bullet)(\BF) \cong \BP^{t}(\BF)$. And $\mathrm BT(L^\circ) \cap \mathrm BT(L^\bullet)(\BF)$ is exactly one point if $L^\bullet \subseteq L^\circ$, and empty otherwise.
		\end{enumerate}
	\end{theorem}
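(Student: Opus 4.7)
The plan is to work entirely in terms of the Dieudonne module description of $\CN^\red(\BF)$ from Proposition \ref{prop: special pair classifying x in N}, which identifies points with special pairs $(A, B)$ of $O_{\breve{F_0}}$-lattices in $\BN(\BX)_0$, and to use the dichotomy of Proposition \ref{special pair: choose 1 in 2} to obtain the stratification. For a point $(A,B)$, either $L_A$ is a vertex lattice of type $\leq t-1$ (placing the point in $\mathrm{BT}(L_A) \subseteq \CY(L_A^\vee)^\red$) or $L_B^\vee$ is a vertex lattice of type $\geq t+1$ (placing it in $\mathrm{BT}(L_B^\vee) \subseteq \CZ(L_B^\vee)^\red$), yielding the covering $\CN^\red = \bigcup_{L'} \mathrm{BT}(L')$. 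The schematic structure and closedness come from the fact that $\CZ(L^\bullet)^\red$ and $\CY((L^\circ)^\vee)^\red$ are closed subschemes cut out by vanishing of finitely many universal homomorphisms, while Proposition \ref{prop: BT int combinatorics} controls the combinatorics of intersections.

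The identification with a closed Deligne--Lusztig variety runs as follows. For $\mathrm{BT}(L^\bullet)$, reducing modulo $L^\bullet$ sends the chain $L^\bullet \subseteq B^\vee \subseteq A \subseteq B \subseteq (L^\bullet)^\vee$ to a flag $(U_1, U_2)$ of $\BF_{q^2}$-dimensions $(\tfrac{t(L^\bullet)-t-1}{2}, \tfrac{t(L^\bullet)-t+1}{2})$ inside the $\sigma$-hermitian space $\BV(L^\bullet)_0$ of dimension $t(L^\bullet)$, and the orthogonality conditions $U_1 \subseteq U_2^\perp$, $U_2 \subseteq U_1^\perp$ of \eqref{eq: Dieudone defn of BT(L bullet)} after base change to $\BF$ translate precisely to the closed DL condition that a Frobenius-twisted flag lies in prescribed relative position with the original flag, for a specific parabolic and Weyl element $w$ in $G := \U(\BV(L^\bullet)_0)$. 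The standard theory of closed DL varieties then yields smoothness, projectivity, irreducibility, and the dimension formula, while the element $w$ generically involves more than one simple reflection in a non-Coxeter fashion; see \cite[Section 6]{ADLV-Coexter} (and compare \cite{VW11, Cho18}). The case $\mathrm{BT}(L^\circ)$ proceeds identically or, more economically, via the dual isomorphism $\lambda_\CN: \CN \to \CN^\vee$ of \S \ref{dual lambda}, which maps $\mathrm{BT}(L^\circ)$ to the analogous Ground-type stratum on $\CN^\vee$.

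For (2), the locally closed condition $U_2 \subseteq U_2^\perp$ (respectively $U_2 \not\subseteq U_2^\perp$) decomposes $\mathrm{BT}(L^\bullet)$ into two classical Coxeter-type Deligne--Lusztig pieces whose union is the closed DL variety; the argument for $\mathrm{BT}(L^\circ)$ via $U_4 \subseteq U_4^\perp$ is parallel. For (3), a point $(A,B)$ lies in both $\mathrm{BT}(L^\bullet)$ and $\mathrm{BT}(L^\circ)$ iff $L^\bullet \subseteq B^\vee \subseteq A \subseteq L^\circ$; the chain together with the vertex-lattice constraints $\varpi L_B \subseteq L_B^\vee = L^\bullet$ and $L^\circ = L_A \supseteq \varpi L_A^\vee$ forces $L^\bullet \subseteq L^\circ$, after which reducing $(\overline{B^\vee}, \overline{A})$ modulo $L^\bullet$ sweeps out exactly the flags of the stated dimensions in $L^\circ/L^\bullet$, matching $Fl_{(L^\bullet, L^\circ)}$. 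Part (4) is immediate by substitution $t(L^\bullet)-t(L^\circ) = 2$. The main obstacle will be carefully matching the moduli conditions in \eqref{eq: Dieudone defn of BT(L bullet)} and \eqref{eq: Dieudone defn of BT(L circ)} to the correct Weyl element $w$ and verifying irreducibility of the closed DL variety, since in our non-quasi-split setting the stabilizers of top-dimensional components need not be special parahorics and the BT strata are not equidimensional across different $L'$; the non-Coxeter nature of $w$ must be read off from the sesquilinear form involving the $\sigma$-twist and the nonzero separation between $\dim U_1$ and $\dim U_2$.
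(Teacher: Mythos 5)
Your proposal is correct and takes essentially the same route as the paper: both rest on the Dieudonn\'e-module description of $\CN^\red(\BF)$ via special pairs $(A,B)$, the dichotomy of Proposition~\ref{special pair: choose 1 in 2} for the covering, and the identification in \eqref{eq: Dieudone defn of BT(L bullet)}, \eqref{eq: Dieudone defn of BT(L circ)} with (closed) Deligne--Lusztig varieties following \cite[Thm.~1.1]{Cho18}. The paper's own proof is simply terser, delegating the DL identification, smoothness, projectivity, irreducibility, and dimension directly to Cho's theorem after noting the agreement of the special-cycle definition of $\mathrm{BT}(L')$ with that of \emph{loc.\ cit.}, while you spell out the mechanism (reducing modulo $L^\bullet$, the Frobenius-twisted flag position, the dual isomorphism $\lambda_\CN$ for $L^\circ$, the chain argument for part~(3)).
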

	\begin{proof}
		This is \cite[Thm. 1.1]{Cho18} after identifying the strata in \emph{loc. cit.} with our definitions via special cycles. Proposition \ref{special pair: choose 1 in 2} shows the collection $\{\mathrm{BT}(L')\}_{L'}$ covers $\CN^\red$ which forms a stratification. In \emph{loc.cit.}, we identify $\mathrm{BT}(L')$ with (closed) Deligne--Lusztig varieties (\ref{eq: Dieudone defn of BT(L bullet)}, \ref{eq: Dieudone defn of BT(L circ)}). By the identification, we obtain dimensions, projectivity, smoothness and geometrical irreducibility of $\mathrm{BT}(L')$. 
	\end{proof}

	\begin{remark}\label{rek: Ballloon-Ground Bruat-Tits}
		We can describe the induced Balloon-Ground stratification on the reduced locus $\CN^\red$ using Bruhat--Tits strata. For example, if $t=1$, then $(\CN^{\circ})^\red$ is the union of all type $0$ Bruhat--Tits strata $\CY((L^{\circ})^{\vee} )^\red \simeq \BP^{n-1}_{L^{\circ}} $ in $\CN^\red$, and $\CN^{\dagger} \subseteq (\CN)^\red$ is the disjoint union of Fermat hypersurfaces in $\BP^{n-1}_{L^{\circ}}$ of degree $q+1$. 
	\end{remark}

	\section{Local modularity}\label{section: local modularity}

	In this section, we prove some local modularity results, which is used in \S \ref{section:Lmod} to show (double) modularity of arithmetic theta series over the basic locus, see also Proposition \ref{key: int modularity}.
	
	Let $L$ be a vertex lattice of rank $n$ and type $t$. Consider the intersection pairing between divisors $\CZ$ on $\CN=\CN_{\U(L)}$ and $1$-cycles $\CC$ in $\CN^\red$:
	$$
	(\CZ, \CC) := \chi(\CN, \CO_\CZ \otimes^\BL \CO_\CC).
	$$
	Consider the following  functions on  $u \in \BV-0$: 
	$$f_{L}(\CC)(u):=(\CZ(u), \CC), \quad f_{L^\vee}(\CC)(u):=(\CY(u), \CC).$$ 
	
	Consider the Weil representation of $\SL_2(F_0)$ on $\CS(\BV)$ with respect to a fixed unramified non-trivial additive character $\psi: F_0 \to \BC$. Set $\psi_{F}=\psi \circ \tr_{F/F_0}$.  Then for $f \in \CS(\BV)$, we have 
	\begin{equation}
		\begin{pmatrix} & -1 \\ 1 &
		\end{pmatrix}. f (u)= \gamma_{\BV} \mathcal{F}_\BV(f)(u),
	\end{equation}
	where $\gamma_{\BV}=\eta(\det(\BV)) \in \{\pm1\}$ is the Weil constant of $\BV$, and $\mathcal{F}_\BV$ is the Fourier transform on $\BV$ with respect to $\psi \circ q_\BV$:
	\begin{equation}
		\mathcal{F}_\BV(f)(y) := \int_{x \in \BV} f(x) \psi_{F}((x, y)_\BV) dx, \quad y \in \BV.
	\end{equation}
	Here the Haar measure is the self-dual one with respect to $\psi_F$.
	
	A $1$-cycle $\CC$ in $\CN^\red$ is called \emph{very special}, if it is a finite linear combination of $1$-cycles from standard embeddings of the form 
	\[ \delta: \CN_2^{[1]} \to  \CN_n^{[t]}= \CN \]
	\[
	X^\flat \mapsto X^\flat \times (\CE^{[1]})^{t-1} \times \CE^{n-t-1}.
	\] 
	induced by a decomposition of framing objects $\BX=\BX^\flat \times (\BE^{[1]})^{t-1} \times (\BE^{[0]})^{n-t-1}$.
	
	\begin{theorem}\label{thm: partial local modularity}
		Choose any $1$-cycle $\CC \subseteq \CN^{\red}$, and assume it is very special if $0<t<n$. Then $f_L(\CC)$ and $f_{L^\vee}(\CC)$ extend to Schwartz functions on $\BV$.  Moreover, we have the dual relation
		\begin{equation}
			\begin{pmatrix} & -1 \\ 1 &
			\end{pmatrix}. f_L(\CC)= (-q)^{-t} f_{L^\vee}(\CC).
		\end{equation}
	\end{theorem}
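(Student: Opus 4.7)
The strategy is to reduce to an explicit computation on the Drinfeld half plane and then invoke the Weil representation. The cases $t \in \{0,n\}$ where $\CN$ is formally smooth follow by a Serre--tensor argument (with $\CC$ allowed to be arbitrary), so I will focus on the main case $0<t<n$ where the very special hypothesis is used.

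\textbf{Step 1: Reduction to $\CN_2^{[1]}$.} By linearity, I may assume $\CC = \delta_*(\CC_0)$ for a standard embedding $\delta\colon \CN_2^{[1]} \to \CN_n^{[t]}$ coming from a decomposition of framing objects $\BX = \BX^\flat \times (\BE^{[1]})^{t-1} \times (\BE^{[0]})^{n-t-1}$, and a $1$-cycle $\CC_0$ on $\CN_2^{[1]}$. This induces an orthogonal decomposition $\BV = \BV^\flat \oplus \bigoplus_{j=1}^{t-1} F e^{[1]}_j \oplus \bigoplus_{k=1}^{n-t-1} F e^{[0]}_k$, and I write $u = u^\flat + \sum a_j e^{[1]}_j + \sum b_k e^{[0]}_k$. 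Applying Proposition~\ref{base case of Z and Y} iteratively and the projection formula along $\delta$, the intersection of $\CZ(u)$ (resp.\ $\CY(u)$) with $\delta_*\CC_0$ reduces to $(\CZ(u^\flat), \CC_0)_{\CN_2^{[1]}}$ (resp.\ $(\CY(u^\flat), \CC_0)_{\CN_2^{[1]}}$) multiplied by characteristic functions of the standard lattices $O_F$ in each coordinate $a_j, b_k$ (the lifting conditions on the $\BE^{[i]}$-factors). These characteristic functions are already Schwartz, so it suffices to prove the theorem on $\CN_2^{[1]}$.

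\textbf{Step 2: Computation on the Drinfeld half plane.} By \cite{KR-Drinfeld plane}, $\CN_2^{[1]} \simeq \wh{\Omega}^1_{F_0}$, whose reduced locus is a tree of $\BP^1$'s indexed by self-dual vertex lattices, with intersection combinatorics given by Proposition~\ref{prop: BT int combinatorics}. A compact $1$-cycle $\CC_0$ is supported on finitely many components $\mathrm{BT}(L^\circ_\alpha) \cong \BP^1$. For each component, the restriction of $\CZ(u)$ is the pullback of a line bundle whose degree is a piecewise linear function of $v((u,u)_\BV)$ vanishing for $v((u,u)_\BV) < 0$, while contributions away from $\CC_0$'s support are zero since $\CZ(u)$ is a Cartier divisor of proper intersection with the $\BP^1$. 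Local constancy in $u$ follows from the explicit formulas on each $\BP^1$, compact support follows from properness of $\CC_0$ together with the Bruhat--Tits bound, and extension across $u=0$ is automatic because the function is bounded and locally constant away from $0$, with $\CZ(0)$ set to vanish. The same analysis applies to $\CY(u)$, using the inclusion $\CZ(u) \subseteq \CY(u) \subseteq \CZ(\varpi^{-1}u)$.

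\textbf{Step 3: Dual relation.} Under $\lambda_\CN\colon \CN \isoarrow \CN^\vee$ we have $\lambda_\CN(\CY(u)) = \CZ(\lambda_\BV u)$, so
\[
f_{L^\vee}(\CC)(u) = (\CY(u), \CC)_\CN = (\CZ(\lambda_\BV u), \lambda_\CN\CC)_{\CN^\vee},
\]
identifying $f_{L^\vee}(\CC)$ with $f_{L^\vee}(\lambda_\CN\CC)$ on $\BV^\vee$ via $\lambda_\BV$. Since $(-,-)_{\BV^\vee} = -\varpi(-,-)_\BV$, the Fourier transforms differ by the standard factor coming from rescaling the quadratic form on a $p$-adic vector space, and the ratio of self-dual volumes for $L \subseteq L^\vee$ with $L^\vee/L$ of dimension $t$ contributes $q^{-t}$, while $\gamma_\BV = (-1)^t$ supplies the sign. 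Combining these, and checking the constant on $\CN_2^{[1]}$ by the explicit $\BP^1$-computation from Step 2 (where $t=1$ and the ratio is $-q^{-1}$, propagating to $(-q)^{-t}$ via Step 1), yields the stated identity $\left(\begin{smallmatrix} & -1 \\ 1 & \end{smallmatrix}\right).f_L(\CC) = (-q)^{-t} f_{L^\vee}(\CC)$.

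The main obstacle will be Step 2: pinning down the exact degree formula for the restriction of $\CZ(u)$ and $\CY(u)$ to a component $\BP^1$ of the Drinfeld half plane uniformly in $u$, and keeping track of the contributions of neighbouring components through the singular points in $\CN^\dagger$, so that both the Schwartz property at $u=0$ and the constant $(-q)^{-t}$ in Step 3 come out cleanly.
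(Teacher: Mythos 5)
Your overall strategy—restrict to a very special $1$-cycle, use the embedding $\delta\colon\CN_2^{[1]}\to\CN_n^{[t]}$ to factor the lattice conditions out, and then compute explicitly on the Drinfeld half plane—is exactly what the paper does. But there is a genuine error in Step 1 that propagates into Step 3 and would corrupt the constant if one tracked it carefully.

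In Step 1 you claim that restricting $\CZ(u)$ \emph{and} $\CY(u)$ to $\delta(\CN_2^{[1]})$ produces $\CZ(u^\flat)$ resp.\ $\CY(u^\flat)$ times ``characteristic functions of the standard lattices $O_F$ in each coordinate $a_j, b_k$.'' For $\CZ$ this is correct, but for $\CY$ it is not: on each $Fe^{[1]}_j$-factor the lifting condition for $\lambda_\BX\circ u$ produces $1_{\varpi^{-1}O_F}$, not $1_{O_F}$, because $\lambda_{\BE^{[1]}}$ has kernel $\BE^{[1]}[\varpi]$, so the $\CY$-condition is off by one power of $\varpi$. This discrepancy is exactly what makes the dual relation work: on $(Fe^{[1]},\varpi a\bar b)$ one has $\bigl(\begin{smallmatrix}&-1\\1&\end{smallmatrix}\bigr).1_{O_F}=-q^{-1}1_{\varpi^{-1}O_F}$, contributing $(-q^{-1})$ per $e^{[1]}$-factor; combined with the $(-q^{-1})$ from the explicit $\CN_2^{[1]}$ computation one gets $(-q^{-1})^{(t-1)+1}=(-q)^{-t}$. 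If the lattice conditions were identical for $\CZ$ and $\CY$, the Fourier transform could not possibly carry $f_L(\CC)$ to a multiple of $f_{L^\vee}(\CC)$, since $\CF(1_{O_F})$ is not proportional to $1_{O_F}$ on the $e^{[1]}$-factors.

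Two further points. First, your Step 3 invokes the $\lambda_\CN$-duality together with a rescaling of the quadratic form and a volume count; this relates $\CY$-cycles on $\CN$ to $\CZ$-cycles on $\CN^\vee$, which in the paper is used only to pass between the types $t$ and $n-t$, and it does not by itself yield the $\CZ/\CY$ duality on $\CN$ that the theorem asserts. Also note $\gamma_\BV=(-1)^{t+1}$ (it is $\gamma_V$, the Weil constant of the \emph{other} hermitian space, that equals $(-1)^t$); the sign $(-1)^t$ in the theorem is the product of $\gamma_\BV$ with the extra $-1$ coming out of the $\CN_2^{[1]}$ intersection numbers, not $\gamma_\BV$ alone. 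Second, your claim that the $t\in\{0,n\}$ cases ``follow by a Serre--tensor argument'' for arbitrary $\CC$ is not substantiated; for $t=0$ this is the nontrivial local modularity result of Li--Zhang (\cite[Lem.~6.2.1, Thm.~6.4.9]{LZ19}), and $t=n$ is deduced from it by duality. Formal smoothness of $\CN$ alone does not give the Schwartz property or the dual relation.
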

	
	\begin{conjecture}[ local modularity conjecture ] \label{local mod conj}
	Theorem	\ref{thm: partial local modularity} holds for any $1$-cycle $\CC \subseteq \CN^\red$.
\end{conjecture}

	In the rest of this section, we prove the theorem. The case $t=0$ follows from  \cite[Lem. 6.2.1, Thm. 6.4.9]{LZ19}. Consider the dual isomorphism $\lambda_\CN: \CN \to \CN^\vee$. By Proposition \ref{prop: relation of Z and Y}, we have
	\[
	\lambda_\CN (\CY(u)) = \CZ(\lambda_\BV \circ u), \quad \lambda_\CN (\CZ(u)) = \CY(\varpi^{-1} (\lambda_\BV \circ u)).
	\]
	We have $(\lambda_{\BV} -, \lambda_{\BV} -)_{\BV^\vee}=-\varpi (-,-)_{\BV}$ by Proposition \ref{prop: dual iso herm form comp}. Hence the self-dual Haar measure on $\BV^\vee$ is $|\varpi|^{n/2}_{F}=q^{-n}$ times the Haar measure on $\BV$. And for $f \in \CS(\BV)$, we have
	\begin{equation}
		\CF_{\BV^\vee}(f \circ \lambda^{-1}_\BV) = q^{-n} \CF_{\BV} (f) \circ (-\varpi \lambda^{-1}_\BV) \in \CS(\BV^\vee).
	\end{equation}
	From this we may compute $\CF_{\BV^\vee}(\wt{f}_{\wt{L}^\vee}(\lambda_\CN(\CC) )) (x)=q^{n} \CF_{\BV} (f_L(\CC)) (\lambda_{\BV}^{-1} \circ x)$ for $x \in \BV^\vee -0$, where $\wt{f}_{\wt{L}^\vee}$ means $f_{L^\vee}$ on the dual Rapoport--Zink space $\CN^\vee$.
	Hence the case $t=t_0$ follows from the case $t=n-t_0$.

	So we may assume that $0<t<n$ from now on.  We could assume the very special $1$-cycle $\CC$ lives on a chosen embedding $\delta: \CN_2^{[1]} \to  \CN_n^{[t]}= \CN$. Write  $u \in \BV$ as
	\begin{equation}
		u=u_1+ u_2 + u_3 \in \BV=\BV^\flat \oplus  (\oplus_{i=1}^{t-1} Fe^{[1]}_i ) \oplus (\oplus_{j=1}^{n-t-1} Fe^{[0]}_j ). 
	\end{equation}
	\begin{proposition}
		After restriction to the embedding $\delta$, for any $u \in \BV$ we have 
		\begin{enumerate}
			\item $
			\CZ(u)|_{\delta}= \CZ(u_1) 1_{O^{t-1}_{F_0}}(u_2) 1_{O^{n-t-1}_{F_0}} (u_3).
			$
			\item $
			\CY(u)|_{\delta}= \CY(u_1) 1_{\varpi^{-1}O^{t-1}_{F_0}}(u_2) 1_{O^{n-t-1}_{F_0}} (u_3).
			$
		\end{enumerate}
	\end{proposition}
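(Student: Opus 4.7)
The approach is to read off both sides using the moduli definitions of $\CZ(u)$ and $\CY(u)$ from Section~\ref{section: local KR}, together with the explicit product structure of the universal object along the image of $\delta$. For a test scheme $S$ and a point $X^\flat \in \CN_2^{[1]}(S)$, the image $\delta(X^\flat) = X^\flat \times (\CE^{[1]})^{t-1}_S \times (\CE^{[0]})^{n-t-1}_S$ comes with the product $O_F$-action, the product polarization $\lambda = \lambda_{X^\flat} \times \lambda_{\CE^{[1]}}^{\times (t-1)} \times \lambda_{\CE^{[0]}}^{\times (n-t-1)}$, and the product framing to $\BX$. Under the orthogonal decomposition $\BV = \BV^\flat \oplus (\oplus_i F e_i^{[1]}) \oplus (\oplus_j F e_j^{[0]})$, a vector $u$ splits as $u_1 + u_2 + u_3$, and the quasi-homomorphism $\rho^{-1} \circ u: \BE \to \delta(X^\flat)_{\ov S}$ decomposes accordingly into a triple of quasi-homomorphisms with targets $X^\flat$, each $\CE^{[1]}_i$, and each $\CE^{[0]}_j$ respectively.

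The key observation is that a direct sum of $O_F$-linear quasi-homomorphisms lifts to an honest homomorphism of $p$-divisible groups over $S$ if and only if each summand does; this is immediate from the rigidity of quasi-isogenies. Applying this to the $\CZ$-condition that $\rho^{-1}\circ u$ lift to $\Hom_{O_F}(\CE, \delta(X^\flat))$ (and, for $\CY$, to the analogous condition for $\rho_{X^\vee}^{-1} \circ \lambda_\BX \circ u$) reduces the restriction to $\delta$ to the conjunction of per-factor lifting conditions. The condition on the $X^\flat$-factor is by definition $X^\flat \in \CZ(u_1)$, respectively $X^\flat \in \CY(u_1)$; the conditions on the remaining $\CE^{[\ast]}$-factors reduce to conditions purely on the scalar coefficients of $u_2$ and $u_3$, because each of these factors is already the canonical lift of its framing object.

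The scalar conditions are then explicit. For the $\CE^{[0]}_j$-factor we have $\End_{O_F}(\CE^{[0]}) = O_F$, and $\lambda_{\CE^{[0]}}$ is an isomorphism, so for both $\CZ$ and $\CY$ the $j$-th coefficient of $u_3$ must be integral. For the $\CE^{[1]}_i$-factor, the quasi-homomorphism underlying $e^{[1]}_i$ is $\lambda_{\BE^{[0]}}: \BE^{[0]} \to \BE^{[1]}$, which already lifts to the canonical homomorphism $\lambda_{\CE^{[0]}}: \CE^{[0]} \to \CE^{[1]}$, so the $\CZ$-condition on the $i$-th coefficient of $u_2$ is again integrality. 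For the $\CY$-condition one post-composes with $\lambda_{\CE^{[1]}}$, and the identity $\lambda_{\CE^{[1]}} \circ \lambda_{\CE^{[0]}} = [\varpi]$ under the identification $\CE^{[1],\vee} \cong \CE^{[0]}$ shifts the integrality condition to $\varpi^{-1}$ times the integral lattice, yielding the $1_{\varpi^{-1}O_{F_0}^{t-1}}$-factor.

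The main technical point will be the polarization bookkeeping in the $\CY$-case: one has to carefully track the Rosati involution and the canonical identification $\CE^{[1], \vee} \cong \CE^{[0]}$ to ensure that the $\varpi$-shift appears exactly for the $e^{[1]}_i$-coefficients and not for the $e^{[0]}_j$-coefficients. Once this is spelled out, the remaining steps are the standard moduli-theoretic factorization of the lifting condition along the product decomposition of $\delta(X^\flat)$, and combining the three sets of per-factor conditions gives the two stated equalities of closed formal subschemes of $\CN_2^{[1]}$.
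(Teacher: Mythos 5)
Your proof is correct and takes essentially the same route as the paper's: both are moduli-theoretic computations exploiting the product structure of the universal object over the image of $\delta$. The paper organizes the bookkeeping by factoring $\delta$ through $\CN_2^{[1]} \to \CN_{t+1}^{[t]} \to \CN_n^{[t]}$, but the substance — the per-factor lifting conditions on the canonical lifts $\CE^{[1]}$ and $\CE^{[0]}$, with $\lambda_{\CE^{[1]}} \circ \lambda_{\CE^{[0]}} = [\varpi]$ producing the $\varpi^{-1}$-shift in the $\CY$-case — is identical. (Minor quibble: the ``iff each summand lifts'' step follows directly from the universal property of products; ``rigidity of quasi-isogenies'' is only needed for uniqueness of the lift, not for the factorization.)
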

	\begin{proof}
		We can decompose the embedding into $\delta: \CN_2^{[1]} \overset{\delta_2}\to \CN_{t+1}^{[t]} \overset{\delta_1} \to  \CN_n^{[t]}= \CN$, then the result follows from moduli definitions of Kudla--Rapoport cycles.
	\end{proof}
	
	So by the projection formula for $\delta$, Theorem \ref{thm: partial local modularity} for the very special $1$-cycle $\CC$ is reduced to the base case $n=2$ and $t=1$.

	\subsection{The base case $n=2,\, t=1$.}

	We now show Theorem \ref{thm: partial local modularity} is true for  $n=2$ and $t=1$. As the starting vertex lattice in $V$ is fixed, we use the notation $L$ instead of $L'$ for a vertex lattice in $\BV$, and write $\BV=\BV_2$. By the main theorem in \cite{KR-Drinfeld plane}, The space $\CN_2^{[1]}$ is isomorphic to the Drinfeld half plane $\wh{\Omega}^1_{F_0} \to \Spf O_{\breve F}$.  Any Bruhat--Tits stratum $\mathrm{BT}(L)$ in $\CN_2^{[1]}$ is a projective line  (see also \ref{thm: moduli int of BT strata}):
	$$
	\mathbb{P}_{L}=\mathbb{P}(L^\vee/ \varpi L^\vee).
	$$
	
	We work with non-isotropic vector $u \in \BV$ i.e., $(u, u) _\BV \not =0$. So $m=v((u,u)_\BV)$ is finite, and we set $u_1=\varpi^{-\lfloor \frac{m+1}{2} \rfloor} u $.  Note 
	\begin{equation}
		v((u_1,u_1)_\BV) = \begin{cases}
			0 & \text{$m$ is even},  \\
			-1 & \text{$m$ is odd}. \\ 
		\end{cases}
	\end{equation}
	
	By \cite[Lem. 3.8]{San13} (the notion of vertex lattices in \emph{loc.cit.} is dual to us) which works for any $F_0$, there exists a unique vertex lattice $L_u$ such that $u_1 \in  L^{\vee}_u - \varpi L^{\vee}_u$. Moreover, $L_u$ is of type $0$ (resp. type $2$) if $m$ is even (resp. odd). We call $L^{\vee}_u$ the \emph{central lattice} of $u$.

	Let $B$ denote the Bruhat--Tits tree for $\SU(\BV_2)(F_0)$, which is a graph with the following description \cite[Defn. 3.3]{San13}. The vertices are vertex lattices in $\BV_2$, and edges can only occur between vertex lattices of differing type. Two vertex lattices $L_1, L_2$ are joined by an edge if and only if $L_1 \subseteq L_2 \subseteq \varpi^{-1}L_1$ or $L_2 \subseteq L_1 \subseteq \varpi^{-1}L_2$ .This graph is a $q+1$-regular tree.
	
	Let $d(L_1, L_2)$ be the distance of two vertex lattices $L_1, L_2 \subseteq \BV_2$. For a vector $u \in \BV_2$ and a vertex lattice $L \subseteq \BV_2$, define the multiplicity 
$$
m(u, L)= \max( \max\{ r \in \mathbb Z | \varpi^{-r} u \in L\}, 0).
$$
	
	\begin{theorem}\label{thm: n=2 Z-cycle}
		Let $u \in \BV_2$ with $m=v((u,u)_{\BV_2}) \geq 0$, then we have a decomposition
		\begin{equation}
			\CZ(u)=\CZ(u)^{h}+\sum_{u \in L} m(u, L) \mathbb{P}_{L}
		\end{equation}
		in the K-group of $\CN_2^{[1]}$ supported on $\CZ(u)$ where 
		\begin{enumerate}
			\item $\CZ(u)^{h} \cong \Spf O_{\breve{F}}$ is the horizontal part of $\CZ(u)$. It meets $\CN^\red$ at an ordinary point $x_u$ on $\BP_{L_u}$, corresponding to the non-isotropic line given by $u_1$ in $L_u^\vee/ \varpi L_u^\vee$. 
			\item For a vertex lattice $L$, $u \in L$ if and only if $d(L, L_u) \leq m$, in which case we have
			\begin{equation}
				m(u,L)= \begin{cases}
					\frac{1}{2}(m-d(L, L_u)), & m=d(L, L_u) \mod 2, \\
					\frac{1}{2}(m+1-d(L, L_u)), & m+1=d(L, L_u) \mod 2.\\ 
				\end{cases} 
			\end{equation}
		\end{enumerate}
		
	\end{theorem}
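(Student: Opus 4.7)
The plan is to decompose $\CZ(u)$ on the Drinfeld half plane $\CN_2^{[1]} \simeq \wh{\Omega}^1_{F_0}$ as a sum of its horizontal part (flat over $\Spf O_{\breve F_0}$) and vertical components supported on the projective lines $\BP_L$ of the Bruhat--Tits stratification, and to identify each piece explicitly. I will work entirely in the relative Dieudonn\'e-theoretic framework of Section \ref{section: local BT strata}, using the special-pair description $\CN_2^{[1]}(\BF) = \{(A,B)\}$ with $\CZ(u)(\BF) = \{u \in B^\vee\}$ from Proposition \ref{prop: special pair classifying x in N}, combined with Theorem \ref{thm: moduli int of BT strata} identifying $\BP_L$ with the appropriate Deligne--Lusztig curve for rank $2$ type $1$.

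For the horizontal part, I will invoke Gross's theory of (quasi-)canonical liftings: a point of $\CZ(u)^h$ is a $p$-divisible group with strict $O_F$-action that acquires an extra action of the order $O_F[u] \subseteq F[u]$, a quadratic \'etale $F$-algebra since $(u,u)_\BV \neq 0$. Such a CM lifting exists uniquely over $\Spf O_{\breve F_0}$, giving $\CZ(u)^h \simeq \Spf O_{\breve F_0}$. Its reduction is a single $\BF$-point $x_u$ which, by the Dieudonn\'e condition $u \in B^\vee \cap A^\vee$ (so $u$ lies in the common dual), is forced to sit in $\BP_{L_u}$ where $L_u^\vee$ is the central lattice defined as in \cite[Lem.~3.8]{San13}. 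The reduction $\overline{u_1} \in L_u^\vee/\varpi L_u^\vee$ is non-isotropic by definition of $L_u$, so $x_u$ corresponds to an ordinary point of the Fermat curve $\mathrm{BT}(L_u) \cap \CN^\dagger$, i.e.\ $x_u \notin \CN^\dagger$ and lies on the unique line $\BP_{L_u}$.

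For the support, translating $u \in L$ via the description of $\BP_L(\BF)$ and walking through the Bruhat--Tits tree shows the required equivalence $u \in L \Leftrightarrow d(L, L_u) \leq m$: each step from $L_u$ in the tree changes the valuation of $u$ inside the dual lattice by one, and $u$ first exits after $m$ or $m+1$ steps depending on whether the endpoint vertex has the same type parity as $L_u$. For the vertical multiplicities, I will localize at a generic point of $\BP_L$ disjoint from $\BP_{L'}$ for $L' \neq L$ and from $\CN^\dagger$, where the completed local ring of $\CN_2^{[1]}$ is regular of dimension two with a transverse parameter $s$ to $\BP_L$. Via Grothendieck--Messing, the local equation of $\CZ(u)$ is the coefficient of $u(1_0)$ along the Hodge filtration direction, and expanding this in powers of $s$ using the filtration condition for special pairs specializing $L$ yields the order of vanishing
\[
m(u,L) = \max\{r \geq 0 \,:\, \varpi^{-r}u \in L\}.
\]
A direct computation with orthogonal bases of $L_u^\vee$ and $L$ transforms this into the parity-dependent closed formula of the theorem.

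The main obstacle will be the vertical-multiplicity calculation: one must choose local parameters at a generic point of each $\BP_L$ in a way that distinguishes type $0$ and type $2$ vertices, and track how the Dieudonn\'e inclusion $u \in B^\vee$ reads off against the transverse parameter $s$. The parity split $m \equiv d(L,L_u) \pmod 2$ versus $m+1 \equiv d(L,L_u) \pmod 2$ reflects precisely whether $L_u$ and $L$ lie on the same side of the bipartite Bruhat--Tits tree, which changes the local model of $\BP_L$ inside $\wh{\Omega}^1_{F_0}$ by a dual shift; keeping the $\varpi$-valuations straight through this shift is the delicate point and is what produces the two separate formulas for $m(u,L)$.
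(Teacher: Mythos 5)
Your proposal takes essentially the same route as the paper, which simply cites Sankaran's explicit local analysis on the Drinfeld half plane (\cite[Thm.~3.14, Lem.~3.12]{San13}); you are fleshing out what that reference does rather than replacing it. The three-step structure (identify the horizontal part via a lifting argument, identify the support via the Bruhat--Tits tree, compute vertical multiplicities by an order-of-vanishing calculation at a generic point of $\BP_L$) is exactly the content of Sankaran's proof, and your localization at a generic point of $\BP_L$ with a transverse parameter $s$ is the right setting for \cite[Lem.~3.12]{San13}.

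Two cautions on the details. First, your claim that the horizontal part is a \emph{single} copy of $\Spf O_{\breve F_0}$ needs more justification than an appeal to ``Gross's theory of (quasi-)canonical liftings'': in the $\GL_2$/Shimura-curve picture the horizontal part of a special divisor of valuation $m$ is a \emph{union} of quasi-canonical divisors $\CW_s$ over ramified extensions $W_s$ of $O_{\breve F_0}$ for $0 \le s \le \lfloor m/2\rfloor$, and it is a specific feature of the unitary Drinfeld-plane setting that one sees only the canonical lift. Since $u$ is a quasi-homomorphism $\BE \to \BX$ rather than an endomorphism, the statement ``$X$ acquires CM by $O_F[u]$'' is not literally correct; you would need to spell out exactly which extra structure $\CZ(u)$ imposes and why only the $s=0$ lift survives here. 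Second, in the vertical multiplicity step you state the lattice-theoretic formula $m(u,L)=\max\{r \geq 0 : \varpi^{-r}u\in L\}$ in the same form as the theorem's display, but be aware that Sankaran's convention for vertex lattices is dual to the one used in this paper (as the paper itself remarks); a careful translation is needed to keep the $L$ versus $L^\vee$ bookkeeping consistent with the closed formula, especially at $L=L_u$ when $m$ is odd. Since the multiplicity computation is what you yourself flag as the delicate point, that is where you should spend your effort and keep the dual convention straight.
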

	\begin{proof}
		The decomposition of $\CZ(u)$ is \cite[Thm. 3.14]{San13} by computing explicit equations of $\CZ(u)$ on each Bruhat--Tits stratum. The multiplicity formula is \cite[Lem. 3.12]{San13}. The proof works over any $F_0$.
	\end{proof}
	
	\begin{remark}\label{rek: base shape Z Y n=1 t=2}
		The case $m=0$ and $m=1$ form the base shapes of $\CZ$-cycles. Note $m(u, L)=0$ if $u \in L- \varpi L$. If $v((u,u)_\BV) \geq 0$, we have $\CZ(\varpi u) - \CZ(u) = \sum_{u \in L } \BP_L $.
	\end{remark}
	
	\begin{theorem}\label{thm: n=2 Y-cycle}
		Let $u \in \BV_2$ with $v((u,u)_{\BV_2})=m \geq -1$. Then we have a decomposition in the K-group of $\CN_2^{[1]}$ with supported on $\CY(u)$:
		\begin{equation}
			\CY(u)=\CY(u)^{h}+\sum_{u \in L} m^{\vee}(u, L) \BP_{L}
		\end{equation}
		\begin{enumerate}
			\item Here $\CY(u)^{h} \cong \Spf O_{\breve{F}}$ is the horizontal part of $\CY(u)$. It meets $\CN^\red$ at an ordinary point $x_u$ on $\BP_{L_u}$, corresponding to the non-isotropic line given by $u_1$ in $L_u^\vee/ \varpi L_u^\vee$. 
			\item If $u \in L$, the multiplicity $m^\vee(u, L)$ is equal to
			\begin{equation}
				m^{\vee}(u,L)=\begin{cases}
					\frac{1}{2}(m+2-d(L, L_u)), & m=d(L, L_u) \mod 2, \\
					\frac{1}{2}(m+1-d(L, L_u)), & m+1=d(L, L_u) \mod 2.\\ 
				\end{cases} 
			\end{equation}
		\end{enumerate}
	\end{theorem}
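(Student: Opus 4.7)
The plan is to deduce the decomposition for $\CY(u)$ from the already-established decomposition for $\CZ$-cycles (Theorem~\ref{thm: n=2 Z-cycle}) by transport along the dual isomorphism $\lambda_\CN\colon \CN_2^{[1]} \isoarrow (\CN_2^{[1]})^\vee$, which, since $n-t=t=1$, is again a Drinfeld half plane.

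First, I would set $v' := \lambda_\BV \circ u \in \BV^\vee$. By Proposition~\ref{prop: dual iso herm form comp}, $(v',v')_{\BV^\vee} = -\varpi(u,u)_\BV$, so $v((v',v')_{\BV^\vee}) = m+1 \geq 0$, and the hypothesis of Theorem~\ref{thm: n=2 Z-cycle} is met on the dual Rapoport--Zink space. By Proposition~\ref{prop: relation of Z and Y}, $\CY(u) = \lambda_\CN^{-1}(\CZ(v'))$, so applying Theorem~\ref{thm: n=2 Z-cycle} to $\CZ(v')$ on $(\CN_2^{[1]})^\vee$ yields
\[
\CY(u) \;=\; \lambda_\CN^{-1}(\CZ(v')^h) \;+\; \sum_{v' \in L'} m(v', L')\, \lambda_\CN^{-1}(\BP_{L'})
\]
in the relevant K-group.

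Second, I would identify the horizontal piece: $\lambda_\CN^{-1}(\CZ(v')^h)$ is a copy of $\Spf O_{\breve F_0}$, and using the explicit $\BF$-point description $(A,B) \mapsto (B, \varpi^{-1}A)$ of $\lambda_\CN$ from the remark after Proposition~\ref{prop: special pair classifying x in N}, one can pinpoint its intersection with $\CN^\red$ at the ordinary point $x_u$ on $\BP_{L_u}$ associated to the non-isotropic line spanned by $u_1$. This requires tracking how the central lattice $L_{v'} \subseteq \BV^\vee$ (of type $0$ or $2$ depending on the parity of $m+1$) corresponds to $L_u \subseteq \BV$ (of type $0$ or $2$ depending on the parity of $m$).

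Third, the vertical pieces $\lambda_\CN^{-1}(\BP_{L'})$ are again projective lines $\BP_L$ attached to vertex lattices in $\BV$. Using the Dieudonn\'e-module descriptions from Section~\ref{section: local BT strata}, I would set up a bijection $L \leftrightarrow L'$ between vertex lattices in $\BV$ and in $\BV^\vee$ (which swaps types $0$ and $2$), show that the containment $v' \in L'$ translates to the appropriate containment condition appearing in the statement (up to reconciling ``$u \in L$'' with ``$u \in L^{\vee_\BV}$'' in the type-$2$ case), and that the Bruhat--Tits distance is preserved under the bijection. Substituting $m' = m+1$ into the two-branch formula of Theorem~\ref{thm: n=2 Z-cycle} then yields the claimed formula for $m^\vee(u,L)$, with the parities appropriately flipped.

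The main obstacle is the combinatorial bookkeeping in the third step: carefully matching vertex lattices, central lattices, and the summation index ``$u \in L$'' under duality, since types $0$ and $2$ are swapped and the containment condition for type-$2$ lattices differs by the involution $L \leftrightarrow L^{\vee_\BV}$. As a cross-check, I would verify the base shapes $m = -1$ and $m = 0$ directly, using the inclusions $\CZ(u) \subseteq \CY(u) \subseteq \CZ(\varpi u)$ from Proposition~\ref{prop: relation of Z and Y} together with Remark~\ref{rek: base shape Z Y n=1 t=2}, which express $\CY(u) - \CZ(u)$ and $\CZ(\varpi u) - \CY(u)$ as explicit combinations of $\BP_L$'s.
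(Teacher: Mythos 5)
Your proposal is correct and follows essentially the same route as the paper: the published proof is a terse one-liner applying the dual isomorphism $\lambda_\CN$ (via Proposition~\ref{prop: relation of Z and Y}) to transport Theorem~\ref{thm: n=2 Z-cycle}, and your plan fills in the same steps in more detail. The one point the paper makes explicit that you bury under your bookkeeping caveats is the index-set shift: duality a priori gives a sum over vertex lattices with $d(L,L_u)\leq m+1$, but when $d(L,L_u)=m+1$ the parity forces the second branch of the formula, giving $m^\vee(u,L)=\tfrac{1}{2}(m+1-d(L,L_u))=0$, so the sum restricts to $d(L,L_u)\leq m$, i.e.\ $u\in L$, as stated.
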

	\begin{proof}
		We are done by applying the dual isomorphism $\lambda: \CN \to \CN^\vee$ (\ref{dual lambda}) to Theorem \ref{thm: n=2 Z-cycle}. Note if $d(L, L_u) = m+1$, then $m^{\vee}(u, L)=0$. Hence we can write the summation $\sum m^{\vee}(u, L) \BP_L $ over vertex lattices $L$ such that $d(L, L_u) \leq m$ i.e., $u \in L$ instead of $d(L, L_u) \leq m+1$. 
	\end{proof}

	\begin{theorem}\label{thm: n=2 local modularity}
		Consider a vertex lattice $L \subseteq \BV_2$.
		\begin{enumerate}
			\item If $L$ has type $0$, then for any $u \not =0$ we have 
			\begin{equation}
				(\CZ(u), \mathbb P^1_{L} )=1_L(u),  \, \, (\CY (u), \mathbb P^1_{L} )=  -q1_{L}(u) .
			\end{equation} 
			\item  If $L$ has type $2$, then for any $u \not =0$ we have 
			\begin{equation}
				(\CZ(u), \mathbb P^1_{L} )=-q 1_L(u), \, \, (\CY (u), \mathbb P^1_{L} )= 1_{L^\vee}(u) .
			\end{equation}		
		\end{enumerate}
		
	\end{theorem}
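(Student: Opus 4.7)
The plan is to compute $(\CZ(u), \BP^1_L)$ and $(\CY(u), \BP^1_L)$ directly by substituting the explicit cycle decompositions from Theorem~\ref{thm: n=2 Z-cycle} (and its $\CY$-analog) into the intersection pairing on the Drinfeld half plane $\CN_2^{[1]} \cong \wh\Omega^1_{F_0}$. First I would observe that the dual isomorphism $\lambda_\CN \colon \CN_2^{[1]} \isoarrow (\CN_2^{[1]})^\vee$ of \S\ref{dual lambda} interchanges $\CZ$- and $\CY$-cycles by Proposition~\ref{prop: relation of Z and Y} and swaps vertex lattices of type $0$ with those of type $2$ (via $L \mapsto \lambda_\BV \circ L^\vee$), so it suffices to prove the two formulas in part (1); the type-$2$ statements follow by duality combined with the comparison of hermitian forms $(-,-)_{\BV^\vee} = -\varpi(-,-)_\BV$ and the resulting factor of $q^{\pm 1}$.

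The next step is to assemble the intersection matrix of components. By \cite{KR-Drinfeld plane}, $\CN_2^{[1]} \cong \wh\Omega^1_{F_0}$, and Deligne's classical computation on the Drinfeld half plane gives $(\BP_L, \BP_L) = -(q+1)$ for every component, and $(\BP_{L_1}, \BP_{L_2}) = 1$ when $L_1 \neq L_2$ are adjacent in the Bruhat--Tits tree of $\U(\BV_2)(F_0)$ (which by Theorem~\ref{thm: moduli int of BT strata}(4) means one is of type $0$, the other of type $2$, with index-$1$ inclusion), and zero otherwise. Combined with the fact that the horizontal component $\CZ(u)^h$ meets the special fiber only at the ordinary point $x_u$ on the single stratum $\BP_{L_u}$, so $(\CZ(u)^h, \BP_L) = \delta_{L, L_u}$, the decomposition of Theorem~\ref{thm: n=2 Z-cycle} gives
\[
(\CZ(u), \BP_L) = \delta_{L, L_u} - (q+1)\, m(u, L) + \sum_{\substack{L'\ \mathrm{adjacent}\ \mathrm{to}\ L \\ u \in L'}} m(u, L'),
\]
and an analogous formula for $(\CY(u), \BP_L)$ with $m$ replaced by $m^\vee$.

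The main obstacle lies in the combinatorial identity in the Bruhat--Tits tree that collapses the right-hand side into $1_L(u)$ (resp.\ $-q\cdot 1_L(u)$). Fixing $L$ of type $0$, $u \in \BV \setminus \{0\}$ with $m = v((u,u)_\BV) < \infty$, and $d = d(L, L_u)$, the tree geometry shows that exactly one neighbor of $L$ lies on the geodesic from $L$ toward $L_u$ (at distance $d-1$), while the remaining $q$ neighbors lie at distance $d+1$. Plugging the piecewise-linear multiplicity formula from Theorem~\ref{thm: n=2 Z-cycle} into the sum and splitting by the parity of $d$ relative to $m$ produces a clean telescoping; the condition $u \in L$ is equivalent to $d \leq m$ with the matching parity gating the switch between the values $1$ and $0$. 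The $\CY$-computation proceeds identically with the shifted multiplicity, which exceeds $m$ by $\tfrac{1}{2}$ exactly when $d(L',L_u) \equiv m \pmod 2$; the constant $-q$ emerges because the ``extra'' contribution at the boundary distance $d = m+1$ survives, weighted by $q$ neighbors off the geodesic minus $1$ on it.

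Finally, to cover the isotropic case $v((u,u)_\BV) = \infty$, I would invoke local constancy: both $(\CZ(u), \BP_L)$ and $1_L(u)$ are locally constant in $u \in \BV \setminus \{0\}$ (the former because $\BP_L$ is proper) and agree on the dense open subset of non-isotropic vectors, so they agree everywhere.
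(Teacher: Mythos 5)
Your proposal is correct and takes the route the paper only mentions in passing (``We could also (re)prove the theorem using Remark~\ref{rek: base shape Z Y n=1 t=2} and the case $m=0$ and $m=1$''); the paper's primary proof simply cites the explicit intersection number from \cite[Lem.\ 2.10]{San17} and then observes that the parity of $d(L,L_u) - v((u,u)_\BV)$ there is determined by the type of $L$. Your plan---assembling the intersection matrix $(\BP_L,\BP_L)=-(q+1)$, $(\BP_L,\BP_{L'})=1$ for adjacent $L,L'$, $(\CZ(u)^h,\BP_L)=\delta_{L,L_u}$, and then running the telescoping sum over the $q+1$ neighbors of $L$---does close, and I verified the cases. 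One cleaner way to organize the $\CY$ part: for $L$ of type $0$ one has $d(L,L_u)\equiv m\pmod 2$, so $m^\vee(u,L)=m(u,L)+1$, while every neighbor $L'$ of $L$ (type $2$) has $d(L',L_u)\equiv m+1\pmod 2$, so $m^\vee(u,L')=m(u,L')$. Hence $(\CY(u),\BP_L)-(\CZ(u),\BP_L)=-(q+1)\cdot 1_L(u)$ directly from the self-intersection term, giving $-q\cdot 1_L(u)$ with no boundary-distance analysis needed; your phrase about ``the boundary distance $d=m+1$'' cannot occur when $L$ has type $0$, so that sentence misdescribes what actually produces the $-q$.

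Two small descriptive corrections to the duality step. First, there is no ``factor of $q^{\pm 1}$'' to track: the pairing is preserved under the isomorphism $\lambda_\CN$, so $(\CZ(u),\BP_L)_\CN = (\CY(\varpi^{-1}\lambda_\BV u),\BP_{L'})_{\CN^\vee}$ and $(\CY(u),\BP_L)_\CN=(\CZ(\lambda_\BV u),\BP_{L'})_{\CN^\vee}$ hold on the nose, where for $L$ of type $2$ the lattice $L'=\varpi^{-1}\lambda_\BV(L)$ is self-dual in $\BV^\vee$. Second, it is worth writing out the membership translation, since it is where the statement $1_{L^\vee}(u)$ for $\CY$ at type $2$ actually comes from: $\lambda_\BV u\in L'$ iff $u\in\varpi^{-1}L=L^\vee$, and $\varpi^{-1}\lambda_\BV u\in L'$ iff $u\in L$. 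With these corrections, your proof is sound and supplies the details the paper delegates to the reference.
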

	\begin{proof}
		The number $(\mathcal{Z}(u), \mathbb P_{L} )$ for non-isotropic vector $u$ is computed in \cite[Lem 2.10]{San17} explicitly. The result depends on the parity of $d(L, L_u)-v((u,u)_\BV)$. But it is equivalent to our formulation, as if $L$ is of type $0$ (resp. $2$), then $d(L, L_u)-v((u,u)_\BV)$ is always even (resp. odd). We could also (re)prove the theorem using the Remark \ref{rek: base shape Z Y n=1 t=2} and the case $m=0$ and $m=1$. For isotropic vectors $u$, we reduce to non-isotropic case by the local constancy of intersection numbers \cite{AFLlocalconstant}.
		
		Applying the dual isomorphism $\CN \to \CN^\vee$, we have $(\CY(u), \mathbb P^1_{L})=(\mathcal Z(\lambda_\BV \circ u), \mathbb P^1_{\lambda (L)})$. The result for $\CY$-cycles follows. 
	\end{proof}
	
	The following corollary finishes the proof of Theorem \ref{thm: partial local modularity}.
	
	\begin{corollary}
		Assume that $n=2, \, t=1$, in particular $\BV=\BV_2$ is split and $\gamma_{\BV}=1$. Then for any $1$-cycle $C \subseteq \CN^{\red}$, we have
		\begin{equation}
			\begin{pmatrix} & -1 \\ 1 &
			\end{pmatrix}. (\CZ(u), \CC)= - q^{-1} (\CY(u), \CC).
		\end{equation}
	\end{corollary}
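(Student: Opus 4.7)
The plan is to reduce the identity to the case where $\CC$ is a single Bruhat--Tits stratum and then verify both sides as explicit Schwartz functions on $\BV_2$. By Theorem \ref{thm: moduli int of BT strata} (in the case $n=2, t=1$), the reduced locus decomposes as $\CN^{\red} = \bigcup_L \BP_L$ with irreducible components indexed by vertex lattices $L \subseteq \BV_2$, so every $1$-cycle $\CC \subseteq \CN^{\red}$ is a finite $\BQ$-linear combination $\sum_L a_L \BP_L$. Since both sides of the identity are linear in $\CC$, it suffices to treat $\CC = \BP_L$ with $L$ of type $0$ or $2$.

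For such $\CC$, Theorem \ref{thm: n=2 local modularity} determines the functions $f_L(\BP_L)(u) = (\CZ(u), \BP_L)$ and $f_{L^\vee}(\BP_L)(u) = (\CY(u), \BP_L)$ on $u \neq 0$ as explicit rational scalar multiples of indicator functions of vertex lattices: namely $f_L(\BP_L) = 1_L$, $f_{L^\vee}(\BP_L) = -q\, 1_L$ when $t(L) = 0$, and $f_L(\BP_L) = -q\, 1_L$, $f_{L^\vee}(\BP_L) = 1_{L^\vee}$ when $t(L) = 2$. These manifestly extend to Schwartz functions on $\BV_2$, which is consistent with Theorem \ref{thm: partial local modularity}.

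I then compute the Fourier transform of $1_\Lambda$ for a vertex lattice $\Lambda \subseteq \BV_2$ with respect to the self-dual Haar measure: a direct evaluation of $\int_\Lambda \psi_F((x,y)_{\BV_2})\,dx$ gives $\CF_{\BV_2}(1_\Lambda) = \vol(\Lambda)\, 1_{\Lambda^\vee}$, and Fourier inversion pins down $\vol(\Lambda) = q^{-t(\Lambda)}$. Thus $\CF_{\BV_2}(1_L) = 1_L$ when $t(L) = 0$, and $\CF_{\BV_2}(1_L) = q^{-2}\, 1_{L^\vee}$ when $t(L) = 2$. Because $\BV_2$ is split, $\gamma_{\BV_2} = \eta(\det \BV_2) = 1$, so the action of $\begin{pmatrix} & -1 \\ 1 & \end{pmatrix}$ on $\CS(\BV_2)$ is exactly $\CF_{\BV_2}$.

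Combining these inputs case by case gives the identity: for $t(L) = 0$, both sides equal $1_L$; for $t(L) = 2$, the left side is $-q \cdot q^{-2}\, 1_{L^\vee} = -q^{-1} 1_{L^\vee}$ and the right side is $-q^{-1} \cdot 1_{L^\vee}$. The only real subtlety is bookkeeping for the self-dual normalization of the Haar measure and the behavior of $\vee$-duality on vertex lattices of type $0$ and $2$; once $\vol(\Lambda) = q^{-t(\Lambda)}$ is confirmed via Fourier inversion on $\Lambda = L$ and $\Lambda = L^\vee$, the remaining verification is mechanical. No separate argument at $u = 0$ is needed: the Schwartz extensions supplied by Theorem \ref{thm: partial local modularity} agree on all of $\BV_2$, so equality on $\BV_2 \setminus \{0\}$ forces equality at the origin (which amounts to the volume identity $\vol(L) \cdot \vol(L^\vee) = 1$ in disguise).
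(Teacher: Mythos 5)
Your argument is correct and follows essentially the same route as the paper: reduce by linearity of both sides to $\CC = \BP_L$ for a vertex lattice $L \subseteq \BV_2$ (since $\CN^{\red}$ is covered by the $\BP_L$), invoke Theorem \ref{thm: n=2 local modularity} for the explicit indicator-function formulas, note $\gamma_{\BV_2} = 1$, and compute the self-dual Fourier transform $\CF_{\BV_2}(1_L) = \vol(L)\,1_{L^\vee}$ with $\vol(L) = (\#(L^\vee/L))^{-1/2} = q^{-t(L)}$. The paper states the volume identity directly rather than deriving it from Fourier inversion as you do, and omits the remark about $u=0$, but these are cosmetic differences; the two case checks for $t(L)=0$ and $t(L)=2$ are exactly as in the paper's proof.
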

	\begin{proof}
		Any vertical $1$-cycle in $(\CN^1_2)_{red}$ is a finite linear combination of  $\mathrm {BT}(L)$ for vertex lattices $L \subseteq \BV_2$. By self-duality of the Haar measure on $\BV$, we have $\mathcal{F}_\BV 1_L = \vol(L) 1_{L^\vee}$ where $\vol(L)= (\# (L^\vee /L) )^{-1/2}=q^{-t(L)}$.  So the result follows from Theorem \ref{thm: n=2 local modularity}. 
	\end{proof}

	\section{ATCs for unramified maximal orders}\label{section: ATC unram max order}
	
	In this section, we show Conjecture $\ref{conj: semi-Lie version ATC}$ for $(g, u) \in (\U(\BV) \times \BV)(F_0)_\rs$ holds in the case $O_F[g]$ is a maximal order that is \'etale over $O_F$. By duality, we only need to work with Part $(1)$ i.e., for $\CZ$-cycles. Then we generalize the thesis of Mihatsch \cite[Section 9]{M-Thesis} on AFL in the maximal order case to our set up assuming $F_0/\BQ_p$ is unramified.

	\subsection{Semi-Lie ATC for $n=1$}\label{ATC n=1}
	
	Assume that $n=1$, and the type of $L \subseteq V$  is $t_0 \in \{0,1\}$. Recall the standard function (\ref{std function})
	$$
	f_{\mathrm{std}}:=1_{S(L, L^{\vee})} \times 1_{L_0} \times 1_{(L^{\vee}_0)^*}.
	$$
	 and a regular semisimple pair $(\gamma, b,c) \in (S(V_0) \times V_0 \times (V_0)^*)(F_0)_\rs$.
	
	\begin{proposition}
		
		\begin{enumerate}	
			\item If $v(cb)=t_0 \mod 2$, then 
			\begin{equation}
				\Orb((\gamma, b, c), f_{\mathrm{std}})=\begin{cases}
					1 & v(cb) \geq t_0, \\
					0 & \text{else.} \\
				\end{cases}
			\end{equation}
			
			\item If $v(cb)=t_0+1 \mod 2$, then $\Orb((\gamma, b, c), f_{\mathrm{std}})=0$ and
			\begin{equation}
				\del\bigl((\gamma, b, c), f_{\mathrm{std}})= 
				\begin{cases}
					- (\log q) \frac{v(cb)-t_0+1}{2}  & v(cb) \geq t_0+1, \\
					0 & \text{else.} \\
				\end{cases}
			\end{equation}
		\end{enumerate}	
	\end{proposition}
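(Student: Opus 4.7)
The plan is to unwind the definitions for $n=1$ and evaluate an elementary alternating sum.

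Pick a generator $e \in L$ with $v((e,e)_V) = t_0$, so that $L = O_F e$, $L^\vee = \varpi^{-t_0} O_F e$, $L_0 = O_{F_0} e$, and $(L_0^\vee)^* = \varpi^{t_0} O_{F_0} e^*$. Since $F/F_0$ is unramified, the torus $F^{\Nm_{F/F_0}=1} = S(V_0)(F_0)$ is contained in $O_F^\times$ and hence automatically stabilizes both $L$ and $L^\vee$, so $1_{S(L,L^\vee)}(\gamma) = 1$; moreover the conjugation action of $\GL(V_0)(F_0) = F_0^\times$ on $\gamma$ is trivial. Writing $b = b_0 e$ and $c = c_0 e^*$, the remaining indicator functions reduce to the condition $t_0 - v(c_0) \leq v(h) \leq v(b_0)$. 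With $\vol(O_{F_0}^\times) = 1$ and transfer factor $\omega(\gamma, b, c) = \eta(b_0) = (-1)^{v(b_0)}$, this gives
\[
\Orb((\gamma, b, c), f_{\text{std}}, s) = (-1)^{v(b_0)} \sum_{j = t_0 - v(c_0)}^{v(b_0)} (-1)^j q^{-js}.
\]

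Set $m := v(cb) = v(b_0) + v(c_0)$ and let $N := m - t_0 + 1$ denote the length of the range. If $m < t_0$ the sum is empty and both $\Orb$ and $\del$ vanish. If $m \equiv t_0 \pmod 2$ then $N$ is odd, the alternating sum at $s = 0$ evaluates to $(-1)^{t_0 - v(c_0)}$, and multiplication by $(-1)^{v(b_0)}$ gives $(-1)^{m + t_0} = 1$, establishing part~(1).

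If instead $m \equiv t_0 + 1 \pmod 2$ then $N$ is even; the alternating sum vanishes at $s = 0$, giving $\Orb = 0$. Pairing consecutive terms in $\sum_j (-1)^j j$ yields $-(-1)^{t_0 - v(c_0)} \cdot N/2$, so
\[
\del((\gamma, b, c), f_{\text{std}}) = -\eta(b_0) (\log q) \sum_{j} (-1)^j j = (-1)^{v(b_0) + t_0 - v(c_0)} \cdot \frac{N}{2} \cdot \log q.
\]
Since $m \equiv t_0 + 1 \pmod 2$, the exponent $v(b_0) + t_0 - v(c_0) \equiv m + t_0 \equiv 1 \pmod 2$, so the sign is $-1$, yielding $\del = -(\log q)\,\tfrac{v(cb) - t_0 + 1}{2}$ when $v(cb) \geq t_0 + 1$ and $0$ otherwise. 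The computation is routine; the only mildly delicate point is matching the sign from the transfer factor $\eta(b_0)$ with the sign produced by the alternating sum, which is forced by the parity constraint relating $v(b_0)$, $v(c_0)$, and $t_0$.
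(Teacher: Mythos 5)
Your proposal is correct and takes essentially the same approach as the paper: choose a generator of $L$, observe the indicator functions collapse to a range constraint $t_0 - v(c) \le v(h) \le v(b)$, reduce the orbital integral to the alternating sum $\eta(b)\sum_k (-1)^k q^{-ks}$, then evaluate at $s=0$ and differentiate, tracking signs via the parity constraint. The paper's proof is nearly verbatim this computation, with you being slightly more explicit about why the $S(L,L^\vee)$-indicator and conjugation action are trivial.
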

	
	\begin{proof}
		Choose a generator of $L$, then we are computing orbital integrals for $f_{\mathrm{std}}=1 \times 1_{O_{F_0}} \times 1_{\varpi^{t_0}O_{F_0}^*}$. And the transfer factor is $\omega_{L}(\gamma, b, c)=(-1)^{v(b)}$. So $\Orb((\gamma, b, c), f_{\mathrm{std}}, s)$ is
		\[
		(-1)^{v(b)} \int_{h \in \GL_{1}(F_0)} 1_{O_{F_0}}(h^{-1}b) 1_{\varpi^{t_0}O_{F_0}^*}(ch)  (-1)^{v_F(h)} |h|^sdh
		= (-1)^{v(b)}\sum^{v(b)}_{k=-v(c) + t_0 } (-1)^k q^{-ks}.
		\] 
		Take $s=0$, then the first part follows. Now assume that $v(cb)=t_0+1 \mod 2$.  Then we obtain
		\[ \del\bigl(\gamma, b, c, 0)= (-1)^{v(b)} \sum_{k=-v(c)+t_0}^{v(b)} (-1)^k (\log q) (-k) = - \log q \frac{v(cb)-t_0+1}{2}.\]

	\end{proof}
	
	\begin{proposition}\label{prop: Fix and Z in the case n=1}
		For any regular semisimple pair $(g, u) \in (\U(\BV) \times \BV)(F_0)_\rs$,  we have identifications
		$
		\Fix(g)=\CN_1^{[t_0]}$ and $
		\CZ(u) \simeq \Spf O_{\breve{F}} / \varpi^{k}$
		where $k:=\max\{0, \frac{v((u,u))-t_0+1}{2}\}$.
	\end{proposition}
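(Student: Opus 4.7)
First I will show $\Fix(g) = \CN_1^{[t_0]}$. Since $n=1$, the group $\U(\BV)(F_0)$ identifies with $\{x \in F^\times : x\ov{x} = 1\}$, and any such $g$ defines an $O_F$-linear polarization-preserving automorphism $\iota_X(g)$ of every quadruple $(X, \iota, \lambda, \rho) \in \CN(S)$ (polarization is preserved since $g\ov{g} = 1$). The automorphism $\iota_X(g)$ provides an isomorphism of quadruples $(X,\iota,\lambda,\rho) \isoarrow (X,\iota,\lambda, g \circ \rho)$, functorial in $S$. Hence $g$ acts as the identity on $\CN$, so $\Gamma_g = \Delta$ and $\Fix(g) = \Delta \cap_{\CN \times \CN} \Gamma_g = \CN = \CN_1^{[t_0]}$.

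Next I compute $\CZ(u)$ in the self-dual case $t_0 = 0$. Here $\CN_1^{[0]} \cong \Spf O_{\breve F_0}$ via canonical lifting, with universal deformation $\ov{\CE}$ of $\ov{\BE}$, and $\CZ(u) \hookrightarrow \CN$ is the locus where $u \in \BV = \Hom_{O_F}^\circ(\BE,\ov{\BE})$ lifts to a homomorphism $\CE \to \ov{\CE}$. As $\CN$ is one-dimensional regular with a unique closed point, $\CZ(u)$ is empty, all of $\CN$, or of the form $\Spf O_{\breve F_0}/\varpi^k$ for some $k \geq 1$. The classical Kudla--Rapoport computation \cite{KR-local}, via relative Dieudonn\'e theory and the explicit Hodge filtration of the canonical lift, identifies the obstruction to lifting $u$ modulo $\varpi^k$ and yields $k = \tfrac{v((u,u)_\BV)+1}{2}$; this is an integer since $\BV$ is the nearby (non-split) $1$-dimensional hermitian space and $v((u,u)_\BV)$ is always odd in this case. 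The $\max$ with $0$ handles the emptiness when $v((u,u)_\BV) = -1$, i.e.\ when $u$ is not integral.

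For the remaining case $t_0 = 1$, I deduce the result from the case $t_0 = 0$ via the dual isomorphism $\lambda_\CN : \CN_1^{[1]} \isoarrow \CN_1^{[0]}$ of Section \ref{dual lambda}. By Proposition \ref{prop: relation of Z and Y}(2), $\lambda_\CN(\CZ(u)) = \CY(\varpi^{-1} \lambda_\BV \circ u)$; on the self-dual space $\CN_1^{[0]}$ the inclusions $\CZ \subseteq \CY \subseteq \CZ(\varpi\,\cdot)$ collapse to equalities (since the universal polarization is an isomorphism), so $\CY(\varpi^{-1} \lambda_\BV u) = \CZ(\varpi^{-1} \lambda_\BV u)$. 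By Proposition \ref{prop: dual iso herm form comp}, $v((\varpi^{-1}\lambda_\BV u, \varpi^{-1}\lambda_\BV u)_{\BV^\vee}) = v((u,u)_\BV) - 1$, which is odd. Applying the formula from the previous paragraph to this vector on $\CN_1^{[0]}$ yields length $\tfrac{(v((u,u)_\BV)-1)+1}{2} = \tfrac{v((u,u)_\BV)}{2}$, matching the stated formula, with the $\max$ again accounting for emptiness when $v((u,u)_\BV) = 0$.

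The principal obstacle is the Dieudonn\'e-theoretic computation in the base case $t_0 = 0$: one must describe the universal deformation of $\ov{\BE}$, compute the order of obstruction to lifting a quasi-isogeny modulo successive powers of $\varpi$, and translate this into the hermitian invariant $v((u,u)_\BV)$. This is a standard calculation available in \cite{KR-local}; once it is in hand, the reduction of the $t_0 = 1$ case to the $t_0 = 0$ case via the dual isomorphism is purely formal.
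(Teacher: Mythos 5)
Your proof is correct; both parts arrive at the right conclusion, and the second part takes a genuinely different route from the paper's.

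For $\Fix(g)$, you prove the stronger statement that $g$ acts trivially on $\CN_1^{[t_0]}$ (via the isomorphism $\iota_X(a^{-1})$ where $g = \iota_\BX(a)$), whereas the paper simply observes that $\CN_1^{[t_0]} \cong \Spf O_{\breve{F_0}}$ forces $\CN_1^{[t_0]} \times_{O_{\breve{F_0}}} \CN_1^{[t_0]} = \CN_1^{[t_0]}$, so $\Gamma_g$ and $\Delta$ are each the whole space no matter what $g$ does. Your version costs a little extra work but exhibits the underlying mechanism. For $\CZ(u)$ with $t_0=1$, you route through the dual isomorphism $\lambda_\CN : \CN_1^{[1]} \to \CN_1^{[0]}$ combined with $\CZ = \CY$ on the self-dual side; the paper instead notes that the lifting condition defining $\CZ(u)$ never sees the polarization, so the subscheme $\CZ(u) \subseteq \Spf O_{\breve{F_0}}$ is literally the same for $t_0=0$ and $t_0=1$ and only $v((u,u)_\BV)$ shifts by $1$. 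Both arguments are ultimately the same observation that the hermitian form gets rescaled by $\varpi$; the paper's phrasing is a step shorter. One small imprecision in your write-up: on $\CN_1^{[0]}$ only the first inclusion in $\CZ(u) \subseteq \CY(u) \subseteq \CZ(\varpi u)$ becomes an equality (the second remains strict because $\lambda_X^\vee = \iota_X(\varpi) \circ \lambda_X^{-1}$ is not an isomorphism), but since what you actually use is $\CZ = \CY$, this does not affect the argument.
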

	\begin{proof}
		By the theory of canonical lifting \cite{Gross-canonical}, we have $\CN_1^{[t_0]} \cong \Spf O_{\breve{F}}$ (by duality we may assume $t_0=0$). As $\CN_1^{[t_0]} \times_{ O_{\breve{F}} } \CN_1^{[t_0]} =\CN_1^{[t_0]}$,  $\Fix(g)$ is the whole space $\CN_1^{[t_0]}$.  The lifting condition for $\CZ(u)$ doesn't depend on the polarization. Only the hermitian form on $\BV$ changes when we change $t_0=0$ to $t_0=1$ by scaling. So we may assume $t_0=0$. Again by the theory of canonical lifting, we have $
		\CZ(u) \simeq \Spf O_{\breve{F}} / \varpi^{k}$.
	\end{proof}

	\subsection{Reduction in the maximal order case}\label{MaxorderATC}
	
	Return to the semi-Lie Conjecture \ref{conj: semi-Lie version ATC} for a vertex lattice $L \subseteq V$ of type $t$ and rank $n$. Recall the hermitian space $\BV=\Hom_{O_F}^{\circ}(\BE, \BX)$. 
	
	Consider a maximal order $O_F[g]=\prod_{i=1}^{m} O_{F_i} \hookrightarrow \End (\BV)$ which is \'etale over $O_F$ i.e., all $F_i$ are \emph{unramified} field extensions of $F$. Then each $F_i$ is an unramified quadratic extension of its fixed subfield $F_{i,0}$. Let $f_i$ be the residual degree of $F_{i,0}$ over $F_0$. As $F_i=F_{i,0} \otimes_{F_0} F$ is a field, the degree $f_i$ is odd for any $i$. We have a decomposition of $F/F_0$-hermitian spaces
	\begin{equation}
		\BV=\prod_{i=1}^{m} \BV_i, \quad \BV_i \cong F_i.
	\end{equation}
	Lift the $F/F_0$-hermitian form on $\BV_i$ to a $F_i/F_{i,0}$-hermitian form on $\BV_i$ along the trace morphism $\tr: F_i \to F$. As $F_i$ is unramified over $Fs$, the hermitian dual of any $O_{F_i}/O_{F_{i,0}}$ hermitian lattice agrees with the hermitian dual of its underlying $O_F/O_{F_0}$ hermitian lattice.

	\subsection{Fixed points via Bruhat--Tits strata}
	
	We can compute $\BF$-points of $\Fix(g)$ for $g \in \U(\BV)(F_0)_{\rs}$ using the Bruhat--Tits strata in \S \ref{section: local BT strata}.
	
	\begin{proposition}\label{prop: O_F[g] stable lat}
		Assume that $O_F[g]$ is an \'etale maximal order. Then there exists a unique vertex lattice $L \subseteq \BV$ that is $O_F[g]$-stable. Moreover, its type $t(L)=\sum_{i} f_i$ where the index runs over $i$ such that $\BV_i$ is non-split as a $F/F_{0}$-hermitian space. 
	\end{proposition}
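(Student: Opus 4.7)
The plan is to reduce the statement to a one-dimensional computation on each factor $\BV_i$, exploiting the idempotent decomposition coming from $O_F[g] = \prod_i O_{F_i}$. First I would check that the Galois involution $\sigma$ preserves each factor $O_{F_i}$: since $g\ov g = 1$ the subring $O_F[g] \subseteq \End(\BV)$ is $\sigma$-stable, and the statement that each $F_i$ is quadratic over its fixed subfield $F_{i0}$ is exactly the condition that $\sigma$ preserves each factor (rather than swapping two). Consequently $\ov{e_i}$ is an idempotent of the local ring $O_{F_i}$ and must equal $e_i$. This symmetry forces orthogonality of the decomposition $\BV = \oplus_i \BV_i$ with respect to $(-,-)_\BV$: for $x \in \BV_i$ and $y \in \BV_j$ with $i \neq j$ one has $(x,y) = (e_i x, y) = (x, \ov{e_i}y) = (x, e_i e_j y) = 0$. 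Therefore any $O_F[g]$-stable lattice decomposes as $L = \oplus_i L_i$ with $L_i = e_i L$ an $O_{F_i}$-stable lattice in $\BV_i$, and the duality respects this decomposition: $L^\vee = \oplus_i L_i^\vee$. In particular $L$ is a vertex lattice iff each $L_i$ is, and $t(L) = \sum_i t(L_i)$.

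Next I would handle each factor separately. Since $\BV_i \cong F_i$ is one-dimensional over $F_i$ and $F_i/F$ is unramified (so $\varpi$ is a uniformizer of $F_i$), every $O_{F_i}$-stable lattice is of the form $\varpi^k O_{F_i}$ for some $k \in \BZ$. Writing the underlying $F_i/F_{i0}$-hermitian form on $\BV_i$ as $(x,y)_i = \alpha_i x \ov y$ for some $\alpha_i \in F_{i0}^\times$, one may rescale by a norm from $F_i^\times$ to arrange $v_{F_i}(\alpha_i) \in \{0,1\}$; the parity of $v_{F_i}(\alpha_i)$ is the isomorphism invariant distinguishing the two classes of one-dimensional $F_i/F_{i0}$-hermitian spaces. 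The key identification is that the $F/F_0$-hermitian dual of an $O_F$-lattice $M \subseteq F_i$ agrees with its $F_i/F_{i0}$-hermitian dual. This follows from the fact that $F_i/F$ is unramified, so the different ideal $\CD_{F_i/F}$ is trivial and the trace pairing identifies $O_{F_i}$ with its own $O_F$-dual under $\tr_{F_i/F}$. Consequently $(\varpi^k O_{F_i})^\vee = \varpi^{-k}\alpha_i^{-1}O_{F_i}$.

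With this in hand, the vertex lattice condition $\varpi^k O_{F_i} \subseteq \varpi^{-k}\alpha_i^{-1}O_{F_i} \subseteq \varpi^{k-1}O_{F_i}$ becomes $v_{F_i}(\alpha_i) + 2k \in \{0,1\}$, which uniquely determines $k$ from the parity of $v_{F_i}(\alpha_i)$, giving uniqueness of $L_i$. The quotient $L_i^\vee / L_i$ is either trivial (when $v_{F_i}(\alpha_i)$ is even) or $\varpi^{-1}O_{F_i}/O_{F_i}$, which is $[F_i:F] = f_i$-dimensional over $\BF_{q^2}$ (when $v_{F_i}(\alpha_i)$ is odd). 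The parity of $v_{F_i}(\alpha_i)$ is exactly the obstruction to $\BV_i$ admitting a self-dual $O_F$-lattice as an $F/F_0$-hermitian space, which is the splitting condition in the paper's sense (its discriminant in $F_0^\times/\Nm(F^\times) \cong \{\pm 1\}$ is trivial iff $v_{F_i}(\alpha_i)$ is even, using that $f_i$ is odd). Summing then yields the formula $t(L) = \sum_{\BV_i\text{ non-split}} f_i$.

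The main technical point will be verifying that the two notions of hermitian duality coincide for lattices of the form $\varpi^k O_{F_i}$, which in turn rests on the triviality of the different $\CD_{F_i/F}$ for unramified $F_i/F$; and matching the parity of $v_{F_i}(\alpha_i)$ with the notion of ``non-split'' used in the paper's conventions for $\BV_i$ as an $F/F_0$-hermitian space. The remaining steps are routine bookkeeping given the clean orthogonal decomposition from the first paragraph.
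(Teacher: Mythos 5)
Your proposal is correct and follows essentially the same path as the paper's proof: reduce to the idempotent factors $\BV_i$, use that $\varpi$ remains a uniformizer of $F_i$ so the $O_{F_i}$-stable lattices form a single chain $\{\varpi^k O_{F_i}\}$, and note that the vertex condition pins down $k$ uniquely with $t(L_i)\in\{0, f_i\}$ governed by whether $\BV_i$ is split. You make explicit several steps the paper leaves implicit — the orthogonality of $\BV = \oplus_i \BV_i$ coming from $\overline{e_i}=e_i$, the agreement of the two hermitian duals via the trivial different of $F_i/F$, and the discriminant-parity computation using that $f_i$ is odd — but the overall strategy is the same.
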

	\begin{proof}
		Consider any $O_F[g]$-stable vertex lattice $L \subseteq \BV$. Then $L=\prod_i L_i$ as $O_F/O_{F_0}$-hermitian lattices. As $L \subseteq L^{\vee} \subseteq \varpi^{-1}L$, we have inclusions of $O_{F_i}$-stable lattices
		$
		L_i \subseteq L^{\vee}_i \subseteq \varpi^{-1}L_i.
		$
		
		We know that there is a unique $O_{F_i}$-stable lattices in $\BV_i$ up to $O_{F_i}$-scaling.  As $F_i/F_0$ is unramified, $\varpi$ is a uniformizer for $F_i$.  We have two cases: either $L_i^{\vee}=L_i$ thus $t(L_i)=0$,  which happens if and only if $\BV_i$ is a split $F/F_{0}$-hermitian space; or $L_i^{\vee}=\varpi^{-1}L_i$ thus $t(L_i)=f_i$, which happens if and only if $\BV_i$ is a non-split $F/F_{i,0}$-hermitian space. 
		
		Hence $L_i$ are all uniquely determined. Conversely, we can construct an $O_F[g]$-stable lattice by reversing the above process.
	\end{proof}
	
	If $x \in \CN(\BF)$ is a fixed point of $g$, then the associated special pair $(A, B)$ by Proposition \ref{prop: special pair classifying x in N} is also fixed by $g$. Recall $L_A$ (resp. $L_B$) the smallest $\tau$-stable lattice containing $A$ (resp. $B$) (\ref{eq: generate tau-stable lat}) is generated by $\tau^i(A)$ (resp. $\tau^i(B)$) ($i=0, 1,2,\dots$). So $L_A, L_B$ are $O_F[g]$-stable. Therefore, from Proposition \ref{special pair: choose 1 in 2} and Proposition \ref{prop: O_F[g] stable lat} we see that  
	
	\begin{corollary}
		Assume that $O_F[g]$ is an \'etale maximal order. Let $L_g$ be the unique $O_F[g]$-stable vertex lattice in $\BV$. Then we have an inclusion
		$$
		\Fix(g)(\BF) \subseteq \mathrm{BT}(L_g).
		$$
		Hence $\Fix(g)(\BF)$ agrees with fixed points of $\ov g \in \U(\BV(L_g)_0)(\BF_q)$ on the smooth projective variety $\mathrm{BT}(L_g)$. 
	\end{corollary}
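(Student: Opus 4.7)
The plan is to turn the paragraph immediately preceding the corollary into a short argument, using the unique $O_F[g]$-stable vertex lattice $L_g$ to pin down which Bruhat--Tits stratum can contain a fixed point. The first main step is: given $x \in \Fix(g)(\BF)$ with associated special pair $(A_x, B_x) \subseteq \BN(\BX)_0$, observe that since $\tau$ commutes with the action of $g$ (as $g$ is defined over $F_0$), the $\tau$-stable closures $L_{A_x}$ and $L_{B_x}$ are $O_F[g]$-stable $O_F$-lattices in $\BV \cong C(\BX)$.

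The second step is to invoke Proposition~\ref{special pair: choose 1 in 2}: at least one of $L_{A_x}$ (in the first case, a vertex lattice of type $\leq t-1$) or $L_{B_x}^\vee$ (in the second case, a vertex lattice of type $\geq t+1$) is a vertex lattice in $\BV$. Whichever case occurs, the resulting vertex lattice is $O_F[g]$-stable, so by the uniqueness statement in Proposition~\ref{prop: O_F[g] stable lat} it must coincide with $L_g$. Thus either $L_{A_x} = L_g$ (forcing $t(L_g) \leq t-1$) or $L_{B_x}^\vee = L_g$ (forcing $t(L_g) \geq t+1$); note that if neither type condition holds then $\Fix(g)(\BF) = \emptyset$ and the inclusion is vacuous. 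Unwinding the $\BF$-point description of Bruhat--Tits strata from \S\ref{section: local BT strata} (namely $\mathrm{BT}(L^\bullet)(\BF) = \CZ(L^\bullet)(\BF)$ and $\mathrm{BT}(L^\circ)(\BF) = \CY((L^\circ)^\vee)(\BF)$), both possibilities give $x \in \mathrm{BT}(L_g)(\BF)$, which establishes the desired inclusion.

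For the second assertion, it suffices to identify the $g$-action on $\mathrm{BT}(L_g)$ with the induced $\ov g$-action on the hermitian $\BF_{q^2}$-space $\BV(L_g)_0$. The $g$-equivariance of the isomorphisms (\ref{eq: Dieudone defn of BT(L bullet)}) and (\ref{eq: Dieudone defn of BT(L circ)}) between $\mathrm{BT}(L_g)$ and the corresponding Deligne--Lusztig variety in $\U(\BV(L_g)_0)$ shows that $g$ acts on $\mathrm{BT}(L_g)$ through its reduction $\ov g \in \U(L_g^\vee/L_g)(\BF_q)$. Combined with the inclusion just proved, we get $\Fix(g)(\BF) = \mathrm{BT}(L_g)^{\ov g}$.

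There is no real obstacle here; the only point requiring care is ensuring that the two cases of Proposition~\ref{special pair: choose 1 in 2} cannot simultaneously produce \emph{distinct} vertex lattices, which would contradict uniqueness of $L_g$---but any two such lattices are $O_F[g]$-stable, so they must coincide, and their types would then satisfy both $\leq t-1$ and $\geq t+1$, which is impossible. Hence exactly one of the two scenarios applies to each fixed point, consistent with $L_g$ having type either $< t$ or $> t$.
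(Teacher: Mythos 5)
Your proposal is correct and follows essentially the same route as the paper: extract the special pair $(A_x, B_x)$ from a fixed point, observe that the $\tau$-stable closures $L_A$, $L_B$ are $O_F[g]$-stable (using that $g$ commutes with $\tau$ since $g$ is defined over $F_0$), apply Proposition~\ref{special pair: choose 1 in 2} to produce an $O_F[g]$-stable vertex lattice, and identify it with $L_g$ by the uniqueness in Proposition~\ref{prop: O_F[g] stable lat}. Your extra remarks—that the two cases cannot simultaneously yield distinct vertex lattices, and that the $g$-action on $\mathrm{BT}(L_g)$ descends to the reduction $\ov g$ via the $g$-equivariance of the Deligne--Lusztig descriptions—are exactly the points the paper leaves implicit, so they are welcome but not a departure in method.
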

	
	\begin{example}\label{Exa: Fix(g) on Pn}
		Consider the case $\sum_{\text{$\BV_i$ is not quasi-split}} f_i = t-1$. Then $\Fix(g)(\BF)$ is equal to the fixed points of $g \in \U(L_g/ \varpi L_g^\vee)$ on $\CY(L^{\vee}_g) = \BP(L_g/ \varpi (L^{\vee}_g))$. They corresponds to $O_F[g] = \prod_{i=1}^{m} O_{F_i}$-stable lines in $L_g/ \varpi L^{\vee}_g \otimes_{\BF_{q^2}} \BF \cong \prod_{\text{$\BV_i$ is quasi-split}} k_{F_i} \otimes_{\BF_{q^2}} \BF$. Hence the number of fixed points of $g$ is the number of index $i$ such that $f_i=1$ and $\BV_i$ is split. 
		
		If $n=2$ and $t=1$, then $F[g]=F \times  F$ and $\BV=\BV_1 \times \BV_2$. We see any unramified maximal order $g \in \U(\BV_2)(F_0)$ has exactly two fixed points on $(\CN^{[1]}_2)^\red$, which lies in a type $0$ (resp. $2$) Bruhat--Tits stratum $\BP^1_L$ if $\BV_1$ and $\BV_2$ are both split (resp. non-split).
	\end{example}

	\subsection{Fixed points via moduli interpretation}
	
	By definition, the subscheme $\Fix(g) \to  \CN$ parameterizes tuples $(X, \iota, \lambda, \rho) \in \CN$ with an $O_F$-linear isomorphism $ \varphi: X \to X $
	such that $\varphi^* \lambda=\lambda$ and that we have a commutative diagram
	\[
	\xymatrix{ X \times_S \ov S \ar[d]^{\rho} \ar[r]^{\varphi}  & X \times_S \ov S 
		\ar[d]^{\rho} 
		\\  \BX \times_\BF \ov S  \ar[r]^{g} & \BX \times_\BF \ov S
	}
	\]
	
	For any point $(X, \iota, \lambda, \rho) \in \Fix(g)(S)$, using the idempotents in $O_F[g]=\prod_{i=1}^m O_{F_i}$ we have a decomposition
	\begin{equation}
		(X, \iota, \lambda, \rho)=\prod_{i=1}^m (X_i, \iota_i, \lambda_i, \rho_i)
	\end{equation}
	with induced decomposition  $\ker \lambda = \prod_{i} \ker \lambda_i $ Here $X_i$ is a supersingular hermitian $O_{F_i}$-$O_{F_0}$  module of dimension $n_i=\dim_{F} \BV_i$ and $O_{F_0}$-type $t_i$ for some integer $t_i$. See \cite[Defn. 3.2]{M-Thesis}, \cite[Defn. 3.21]{Cho18}) for the notion. In particular, the $O_{F_i}$-$O_{F_0}$ rank of $X_i$ is $1$ and $\ker \lambda_i \subseteq X_i[\varpi]$ has order $q^{2t_i}$. By our assumption, $n_i=\dim_{F} \BV_i= f_i$. 
	
	As $X$ satisfies Kottwitz condition of signature $(1, n-1)$, there is a unique index $i_0 $ such that $X_{i_0}$ has signature $(1, n_{i_0}-1)$ and other $X_i$ ($i \not=i_0$) has signature $(0, n_i)$. Fix $i_0$ and $t_i$ $(i=1, \hdots, m)$. The $O_F[g]$ action on $\BX$ induces a decomposition (depending on $i_0$ and $t_i$)
	$ \BX=\prod_{i} \BX_i$
	where $\BX_i$ is a supersingular hermitian $O_{F_i}-O_{F_0}$  module of $O_{F_i}/O_{F_0}$ rank $1$ and $O_{F_0}$-type $t_i$.

	Consider the universal object $X$ on $\Fix(g)$. As $F_i/F_0$ is unramified, we have the comparison theorem between hermitian $O_{F_i}$-$O_{F_0}$ modules and hermitian $O_{F_i}$ modules, see \cite[Prop. 3.29]{Cho18}. We obtain the following generalization of \cite[Prop. 4.14]{M-Thesis}.

	\begin{theorem}
		There is a natural identification of formal schemes over $O_{\breve{F}}$:
		\begin{equation}\label{what is Fix(g)}
			\Fix(g)=  \coprod_{(i_0, \{t_i\}_{i=1}^{m})} ( \mathcal{N}_{F_{i_0}, (1, 0)}^{[t_{i_0}/f_{i_0}]} \times \prod_{i \not =i_0} \mathcal{N}^{[t_i/f_i]}_{F_i, (0, 1)} ) 
		\end{equation}	
		where $(i_0, \{t_i\}_{i=1}^{m})$ runs over integers $1 \leq i_0 \leq m, \, 0 \leq t_i \leq m$ such that
		\begin{enumerate}
			\item $f_i | t_i$ and $\sum_{i=1}^{m} t_i =t$.
			\item $t_{i_0}$ is odd if $\BV_{i_0}$ is split, and is even if $\BV_{i_0}$ is non split.
			\item For $i \not=i_0$, $t_i$ is even if $\BV_i$ is split, and is odd if $\BV_i$ is non split.
		\end{enumerate}
	\end{theorem}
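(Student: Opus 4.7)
The plan is to use the moduli description of $\Fix(g)$ together with the idempotent decomposition $O_F[g]=\prod_{i=1}^m O_{F_i}$, and then invoke the comparison theorem between hermitian $O_{F_i}$-$O_{F_0}$ modules and hermitian $O_{F_i}$ modules. Concretely, for any $S$-point $(X,\iota,\lambda,\rho,\varphi)\in\Fix(g)(S)$, the orthogonal idempotents of $O_F[g]$ act on $X$ and give a canonical decomposition $(X,\iota,\lambda)=\prod_{i=1}^m (X_i,\iota_i,\lambda_i)$ compatible with the quasi-isogeny $\rho$ to the analogous decomposition $\BX=\prod_i\BX_i$. Each $X_i$ is then a supersingular hermitian $O_{F_i}$-$O_{F_0}$ module of $O_{F_i}$-$O_{F_0}$ rank one, with a polarization $\lambda_i$ whose kernel sits inside $X_i[\varpi]$ and has some order $q^{2t_i}$; since $\ker\lambda=\prod_i\ker\lambda_i$ has order $q^{2t}$, we automatically have $\sum_i t_i=t$.

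Next I would extract the signatures. The Kottwitz condition \eqref{eq: Kottwitz} of signature $(1,n-1)$ on $\Lie X$, combined with the $O_F[g]$-decomposition, forces that there is a unique index $i_0$ with $\Lie X_{i_0}$ contributing the $(1,n_{i_0}-1)$-piece, and all other $\Lie X_i$ give signature $(0,n_i)$; this is where the choice of $i_0$ enters. The resulting product of moduli is naturally $\prod_i \mathcal{N}^{\mathrm{rel}}_{F_i/F_0,(\cdot,\cdot)}^{[t_i]}$, where the superscript $[t_i]$ refers to the $O_{F_0}$-type. Because $F_i/F$ is unramified (so that $F_i/F_{i0}$ is an unramified quadratic extension of $p$-adic fields and $\varpi$ remains a uniformizer of $F_{i0}$), the comparison theorem for hermitian modules under unramified extensions, cf.\ \cite[Prop.~3.29]{Cho18} (and as used in \cite[Prop.~4.14]{M-AFL}), identifies this space with $\mathcal{N}_{F_i,(\cdot,\cdot)}^{[t_i/f_i]}$ viewed as a Rapoport--Zink space for $O_{F_i}$ of relative dimension $1$, provided that $f_i\mid t_i$. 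This divisibility requirement is precisely the integrality constraint that falls out of the comparison isomorphism, and it is forced by the $O_{F_i}$-linearity of $\lambda_i$.

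The remaining content is the parity constraints, and here I expect the main subtlety. For each $i$, the framing object $\BX_i$ is a hermitian $O_{F_i}$-module of a prescribed isomorphism class, and its associated hermitian space of special quasi-homomorphisms is (canonically) $\BV_i$. For a rank-one hermitian $O_{F_i}$-module with polarization of type $t_i/f_i$, the induced $F_i/F_{i0}$-hermitian form on $\BV_i\cong F_i$ has a discriminant whose class in $F_{i0}^\times/\Nm F_i^\times$ is determined by the parity of $t_i/f_i$; equivalently, since $f_i$ is odd, by the parity of $t_i$. The relation between the $F_i/F_{i0}$-hermitian structure and the original $F/F_0$-hermitian structure on $\BV_i$ (via $\tr_{F_{i0}/F_0}$ and the fact that $F_i/F$ is unramified, so hermitian duals agree) then translates this parity into the condition that $t_i$ is odd iff $\BV_i$ is split (for $i=i_0$, where the signature is $(1,n_{i_0}-1)$) or that $t_i$ is even iff $\BV_i$ is split (for $i\neq i_0$, where the signature is $(0,n_i)$). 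The sign flip between the $i_0$ and the non-$i_0$ case comes from the signature contribution to the discriminant, analogously to \cite[Prop.~4.14]{M-AFL}.

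The main obstacle will be to bookkeep these invariants carefully and check that the only non-empty pieces are exactly those $(i_0,\{t_i\})$ satisfying the three enumerated conditions; this amounts to a hermitian-lattice computation over unramified quadratic extensions, reducing to Proposition~\ref{prop: O_F[g] stable lat}. Once that is done, the stratification \eqref{what is Fix(g)} assembles from the piecewise identifications, and the disjointness across $(i_0,\{t_i\})$ is clear because these invariants are locally constant on $\Fix(g)$.
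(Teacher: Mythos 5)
Your proposal is correct and follows essentially the same route as the paper: idempotent decomposition via $O_F[g]=\prod O_{F_i}$, the Kottwitz condition forcing a unique $i_0$, the comparison of hermitian $O_{F_i}$-$O_{F_0}$ modules with hermitian $O_{F_i}$ modules from \cite[Prop.~3.29]{Cho18} to produce the product of relative Rapoport--Zink spaces, and the parity constraints tracing back to the discriminant of the nearby hermitian space $\BV_i$ as a function of $t_i$ and the signature. The paper treats this as a direct generalization of \cite[Prop.~4.14]{M-AFL} and does not spell out the parity bookkeeping, so your more explicit identification of where the sign flip between $i_0$ and $i\neq i_0$ comes from is a welcome elaboration rather than a deviation.
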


	Note $\sum_{i=1}^{m} f_i = n \geq t=\sum_{i=1} t_i =t$, so condition $(1)$ implies $t_i=0$ or $f_i$. From this, we see 
	$$
	t_i=\begin{cases}
		f_i & \text{if $i= i_0$, $\BV_{i_0}$ is split,}   \\
		0 & \text{if $i= i_0$, $\BV_{i_0}$ is not split,} \\
		0 & \text{if $i\not = i_0$, $\BV_{i}$ is split,} \\
		f_i & \text{if $i\not = i_0$, $\BV_{i}$ is not split.} \\
	\end{cases}
	$$
	
	\begin{corollary}\label{prop: max order Int is not derived}
		Assume that $O_F[g]$ is an \'etale maximal order. Then
		\begin{enumerate}
			\item $\Fix(g)$ is formally smooth over $\Spf O_{\breve{F}}$.
			\item We have $
			\Fix(g)= {}^\BL \wt{\Fix}(g)$.
		\end{enumerate} 
	\end{corollary}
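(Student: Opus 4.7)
My plan is to extract both statements directly from the explicit decomposition \eqref{what is Fix(g)} established in the theorem just above.

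For (1), I observe that each factor appearing in \eqref{what is Fix(g)} is a \emph{rank-one} Rapoport--Zink space: since $\dim_F \BV_i = n_i = f_i$ and $X_i$ has $O_{F_i}$-$O_{F_0}$ rank $1$, each $\mathcal{N}_{F_i,\,(\bullet,\bullet)}^{[t_i/f_i]}$ has unitary rank $1$ over $F_{i0}$. By the theory of canonical liftings (cf.\ Proposition~\ref{prop: Fix and Z in the case n=1}) every such rank-one space is isomorphic to $\Spf O_{\breve{F_{i0}}}$. Because $F_i/F$ and $F/F_0$ are both unramified, the fixed subfield $F_{i0}/F_0$ is also unramified, so $\breve{F_{i0}} = \breve{F_0}$ inside a fixed algebraic closure. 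Consequently every component of $\Fix(g)$ is isomorphic to $\Spf O_{\breve{F_0}}$, and $\Fix(g)$ is formally smooth over $\Spf O_{\breve{F_0}}$ of relative dimension $0$.

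For (2), I first establish $\Fix(g) = \wt{\Fix}(g)$ as closed formal subschemes of $\CN$. By Proposition~\ref{lem: local str=tot graph}, $\wt{\Gamma_g}$ (resp.\ $\wt{\Delta}$) identifies with $\Gamma_g$ (resp.\ $\CN$) inside $\wt{\CN \times \CN}$, so the natural map $\wt{\Fix}(g) \to \Fix(g)$ is proper and is an isomorphism away from $\CN^\dagger \times_\BF \CN^\dagger$ where the blow-up is trivial. At a closed point $p \in \Fix(g) \cap \CN^\dagger$, I pass to completed local rings as in Proposition~\ref{prop: formal balloon-ground are smooth} and use that the étale $O_F[g]$-action makes the graph equations transverse to the blow-up center, so the strict transforms of $\Gamma_g$ and $\Delta$ coincide on the exceptional $\BP^1$ and contribute exactly one lift of $p$ to $\wt{\Fix}(g)$. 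Given this identification and the expected-dimension count from (1) --- $\Fix(g)$ has total dimension $1$, matching the expected intersection dimension $n + n - (2n-1)$ of $\wt{\Gamma_g} \cap \wt{\Delta}$ in the regular ambient $\wt{\CN \times \CN}$ (Theorem~\ref{resolution}) --- the standard Tor-vanishing criterion \cite[Lem.~B.2]{AFL} applied to the regular closed subschemes $\wt{\Gamma_g}, \wt{\Delta} \cong \CN$ yields ${}^\BL \wt{\Fix}(g) = \wt{\Fix}(g)$. The equality with ${}^\BL \Fix(g)$ then follows by pushing forward along the proper blow-down $f$ using the projection formula together with $Rf_* \CO_{\wt{\Delta}} = \CO_\Delta$.

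The main obstacle I anticipate is the transversality claim in the first step of (2), since a priori the blow-up could separate the strict transforms of $\Gamma_g$ and $\Delta$ over points where they previously coincided. My fallback is the moduli-theoretic characterisation of the formal Balloon--Ground strata (Proposition~\ref{Balloon-Ground: comp with LTXZZ}): the automorphism $g$ preserves both $\CN^\circ$ and $\CN^\bullet$ individually on the special fiber, so its differential at $p$ preserves each of the two formal tangent directions distinguished by the local generators $x_0$ (Balloon) and $x_1$ (Ground) in Proposition~\ref{prop: formal balloon-ground are smooth}. Under the étaleness of $O_F[g]$ these directions are rigid, forcing agreement of the lifts of $\Gamma_g$ and $\Delta$ on every exceptional $\BP^1$-fiber, as needed.
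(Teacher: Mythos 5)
Your treatment of part (1) is correct and essentially matches the paper: each factor of the decomposition is a rank-one Rapoport--Zink space, identified via canonical lifting with $\Spf O_{\breve{F_0}}$.

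For part (2), however, your argument has a genuine gap, and you miss the key structural observation that the paper uses. The crucial fact is that $\Fix(g)$ \emph{does not meet the linking stratum} $\CN^\dagger$ when $O_F[g]$ is an \'etale maximal order, so there are no points $p \in \Fix(g)\cap\CN^\dagger$ to analyze at all, and $\Fix(g)=\wt{\Fix}(g)$ follows immediately from the fact that $f$ is an isomorphism away from $\CN^\dagger\times_\BF\CN^\dagger$. To see the emptiness: take a fixed $\BF$-point of $g$ with associated special pair $(A,B)$. Then $A$ and $B$ are $g$-stable, hence so are the $\tau$-saturations $L_A$ and $L_B$, and also $L_B^\vee$ since $g$ is unitary. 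The point lies in $\CN^\dagger$ precisely when both $L_A$ (a vertex lattice of type $\le t-1$) and $L_B^\vee$ (of type $\ge t+1$) are vertex lattices (Proposition~\ref{special pair: choose 1 in 2}); by the uniqueness of an $O_F[g]$-stable vertex lattice in $\BV$ (Proposition~\ref{prop: O_F[g] stable lat}), two such of incompatible types cannot coexist, a contradiction.

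Your proposed ``transversality'' argument at hypothetical points $p \in \Fix(g)\cap\CN^\dagger$, and the fallback invoking that $g$ preserves $\CN^\circ$ and $\CN^\bullet$, would not close the gap even if such points existed. That $g$ preserves the two strata constrains $dg_p$ on the two normal directions, but it does not by itself determine where the strict transform of $\Gamma_g$ meets the exceptional $\BP^1$ relative to the strict transform of $\Delta$; the claim that they ``coincide on the exceptional $\BP^1$'' is asserted, not proved, and is exactly what could fail. Once $\Fix(g)=\wt{\Fix}(g)$ is established, your remaining steps (the dimension count and Tor-vanishing via \cite[Lem.~B.2]{AFL}) agree with the paper and are fine.
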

	\begin{proof}
		As $\CN_1^{[1]}$ and $\CN_1^{[1]}$ are isomorphic to $\Spf O_{\breve{F_i}}=\Spf O_{\breve{F}}$, $\Fix(g)$ is formally smooth over $\Spf O_{\breve{F}}$.
		
		For any fixed point $(A,B) \in \CN(\BF)$ of $g$, both $L_A$ and $L_B$ in Proposition \ref{special pair: choose 1 in 2} are $O_F[g]$-stable, so can't both be vertex lattices by the uniqueness in Proposition \ref{prop: O_F[g] stable lat}. So $\Fix(g)$ doesn't intersect with the link stratum $\CN^{\dagger}$, hence $\Fix(g)=\wt{\Fix}(g)$.	As $\wt{\Fix}(g)$ is purely $1$-dimensional of the expect dimension, we apply  \cite[Lem. B.2.]{AFL} (note that $\CN$ and $\wt{\CN \times \CN}$ are regular) to see that
		${}^\BL \wt{\Fix}(g) = \wt{\Fix}(g)$.
	\end{proof}
	
	\begin{example}
		Assume that $t_0=1$, $f_i=1$ and $\BV_i$ is split for all $i$, then $\Fix(g)= \coprod_{i=1}^{n} \Spf O_{\breve{F}}$ which matches Example \ref{Exa: Fix(g) on Pn}.
	\end{example}
	
	\subsection{Geometric side}
	
	Consider a non-zero vector $u=(u_i) \in \BV=\prod_{i=1}^m \BV_i$. 
	
	\begin{proposition}\label{prop: Z(u) in pieces of max order Fix(g) }
		On the $(i_0, \{t_i\}_{i=1}^{m})$-th copy of $\Spf O_{\breve{F}}$ of $\Fix(g)$,	we have 
		$$
		\CZ(u)|_{\Spf O_{\breve{F}}}  \cong \CZ(u_{i_0}) \hookrightarrow \mathcal{N}_{F_{i_0}, (1, 0)}^{[t_{i_0}/f_{i_0}]} .
		$$
		The $O_{\breve{F}}$-length of $\CZ(u)|_{\Spf O_{\breve{F}}}$ is
		$ \begin{cases}
			\max\{0, \frac{v_F((u_i,u_i)_\BV)}{2}\} & \text{if $\BV_{i_0}$ is split,}   \\
			\max\{0, \frac{v_F((u_i,u_i)_\BV+1}{2}\}  & \text{if $\BV_{i_0}$ is not split.} \\
		\end{cases}
		$.
	\end{proposition}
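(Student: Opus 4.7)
The plan is to exploit the product decomposition of the universal object on each component of $\Fix(g)$, which already underlies the identification (\ref{what is Fix(g)}). Over the $(i_0,\{t_i\})$-th copy of $\Spf O_{\breve{F_0}}$, the idempotents $e_i \in O_F[g] \subseteq \End(X)$ lift uniquely from $\End(\BX)$ to the universal deformation (by rigidity of idempotents under infinitesimal extension), yielding a splitting $X=\prod_i X_i$ compatible with $\iota$, $\lambda$ and $\rho$. Here $X_{i_0}$ is the universal object of $\mathcal{N}_{F_{i_0},(1,0)}^{[t_{i_0}/f_{i_0}]}$, while for $i\neq i_0$ the component $X_i$ is the unique canonical lift of $\BX_i$ over $\Spf O_{\breve{F_0}}$; here I use that $F_i/F_0$ being unramified gives $O_{\breve{F_i}}=O_{\breve{F_0}}$, and that the signature $(0,1)$ rank-$1$ moduli problem is rigid, i.e.\ $\mathcal{N}^{[t_i/f_i]}_{F_i,(0,1)}\simeq\Spf O_{\breve{F_0}}$.

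Writing $u=(u_i)\in\BV=\prod_i\BV_i$ accordingly, the condition that $\rho_X^{-1}\circ u$ lifts to a homomorphism $\CE\to X$ is, by $O_F[g]$-linearity and the splitting above, equivalent to the conjunction of the lifting conditions for each component $\rho_{X_i}^{-1}\circ u_i\colon\CE\to X_i$. On the $i_0$-factor this is by definition the closed condition cutting out $\CZ(u_{i_0})\hookrightarrow\mathcal{N}_{F_{i_0},(1,0)}^{[t_{i_0}/f_{i_0}]}$. On the factors with $i\neq i_0$, since $X_i$ is rigid the condition reads $u_i\in\Hom_{O_{F_i}}(\CE,X_i)\subseteq\BV_i$, a pure integrality condition depending on $t_i/f_i$; when it fails the scheme-theoretic restriction is empty, otherwise it is all of $\Spf O_{\breve{F_0}}$. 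This gives the asserted isomorphism $\CZ(u)|_{\Spf O_{\breve{F_0}}}\cong\CZ(u_{i_0})$.

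For the length, I reduce to the rank-$1$ case via Proposition \ref{prop: Fix and Z in the case n=1} applied to the $F_{i_0}/F_{i_0,0}$-hermitian space $\BV_{i_0}\cong F_{i_0}$ with type $t_0=t_{i_0}/f_{i_0}$. Since $F_{i_0}/F_0$ is unramified, $\varpi$ remains a uniformizer of $F_{i_0}$ and $O_{\breve{F_{i_0}}}=O_{\breve{F_0}}$, so lengths over the two rings coincide; moreover the $F_{i_0}/F_{i_0,0}$- and $F/F_0$-hermitian forms on $\BV_{i_0}$ differ only by the unramified trace $F_{i_0,0}\to F_0$ on the target, hence share the same $v_F$-valuation. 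Recalling from the analysis after (\ref{what is Fix(g)}) that $t_{i_0}/f_{i_0}=1$ when $\BV_{i_0}$ is split and $0$ when it is non-split, substitution into $k=\max\{0,(v_F((u_{i_0},u_{i_0})_\BV)-t_0+1)/2\}$ produces exactly the two formulas stated.

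The main step to justify carefully is the decomposition of the lifting condition along the product. At the level of Grothendieck--Messing deformation theory this amounts to checking that the Hodge filtration of $X$ is the product of those of the $X_i$, which follows from the rigidity-of-idempotents argument already used to split $X$. The remaining ingredients---the rank-$1$ length formula, the canonical-lift description of the rigid factors, and the compatibility of $F_{i_0}/F_{i_0,0}$ and $F/F_0$ hermitian structures---are then routine.
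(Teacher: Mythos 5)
Your proof follows essentially the same route as the paper's: split the universal object along the $O_F[g]$-idempotents, decompose the lifting condition factor by factor, invoke rigidity of the signature-$(0,1)$ moduli problems to handle the $i\neq i_0$ components, and reduce the length computation to Proposition~\ref{prop: Fix and Z in the case n=1} with $t_0=t_{i_0}/f_{i_0}$. You are a shade more explicit than the paper on two subsidiary points (the all-or-nothing integrality condition for $u_i$ with $i\neq i_0$, and the compatibility between the $F_{i_0}/F_{i_0,0}$- and $F/F_0$-hermitian valuations), both of which the paper absorbs silently into the phrase ``by Lubin--Tate theory''; neither affects the substance of the argument.
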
	
	\begin{proof}
		We observe that
		\[ 
		\Hom_{O_F}(\CE, X)= \prod_{i} \Hom_{O_{F_i}}(\CE \otimes_{O_F} O_{F_i}, X_i).
		\]
		Therefore, the lifting of $u$ corresponds to liftings of each $u_i$ on the right hand side of (\ref{what is Fix(g)}).
		
		If $i \not = i_0$ then $X_i$ has signature $(0, 1)$, hence $u_i$ lifts to the whole space $\mathcal{N}^{[t_i/f_i]}_{F_i, (0, 1)}$ by Lubin--Tate theory. So we have $\CZ(u)|_{\Spf O_{\breve{F}}}  \cong \CZ(u_{i_0})$. The computation of $\length(\CZ(u)|_{\Spf O_{\breve{F}}})$ follows from the case $n=1, t \in \{0,1\}$, see Proposition \ref{prop: Fix and Z in the case n=1}.
	\end{proof}
	
	\begin{theorem}\label{thm: max order Int}
		For any regular semisimple pair $(g,u) \in (\U(\BV) \times \BV)(F_0)_\rs$ such that $O_F[g]$ is an \'etale maximal order, we have 
		\[
		\wt {\Int}^{\CZ}(g, u)= \sum_{i=1}^m \max\{0, \frac{v_F((u_i,u_i)_\BV)+a_i}{2}\}   .
		\]
		where $a_i=0$ if $\BV_i$ is split and $a_i=1$ if $\BV_i$ is not split.
	\end{theorem}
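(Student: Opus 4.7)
The plan is to reduce the computation to ordinary intersection theory on a smooth fixed locus, which is completely understood by the moduli-theoretic decomposition (\ref{what is Fix(g)}).

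First, Corollary \ref{prop: max order Int is not derived} identifies the modified derived fixed locus with its underived counterpart: $\wt{\Fix}^{\BL}(g)=\wt{\Fix}(g)=\Fix(g)$, and $\Fix(g)$ is formally smooth of relative dimension $0$ over $\Spf O_{\breve{F_0}}$. Therefore
\[ \wt{\Int}^{\CZ}(g, u) = \chi\bigl(\CN,\; \CO_{\CZ(u)} \otimes^{\BL}_{\CO_{\CN}} \CO_{\Fix(g)} \bigr). \]
Since $\CZ(u)$ is an effective Cartier divisor on the regular formal scheme $\CN$ and $\Fix(g)$ is Cohen--Macaulay of relative dimension $0$ over $\Spf O_{\breve{F_0}}$, wherever the scheme-theoretic intersection is non-empty it is automatically of the expected dimension $0$. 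Consequently \cite[Lem. B.2]{AFL} collapses the derived tensor product to the underived one, and
\[ \wt{\Int}^{\CZ}(g, u) \;=\; \length_{O_{\breve{F_0}}}\bigl(\CZ(u)\cap_{\CN}\Fix(g)\bigr). \]

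Second, invoke the explicit decomposition (\ref{what is Fix(g)}), which realizes $\Fix(g)$ as a disjoint union of formal schemes indexed by tuples $(i_0, \{t_i\})$ satisfying conditions (1)--(3) there. As explained immediately after (\ref{what is Fix(g)}), each $t_i$ must lie in $\{0, f_i\}$, and the parity conditions (2)--(3) then determine $\{t_i\}$ uniquely from the choice of $i_0 \in \{1, \ldots, m\}$, subject to the consistency requirement $\sum_i t_i = t$. All the resulting pieces are isomorphic to $\Spf O_{\breve{F_0}}$, so by additivity of lengths
\[ \wt{\Int}^{\CZ}(g, u) \;=\; \sum_{(i_0,\{t_i\})} \length_{O_{\breve{F_0}}} \bigl(\CZ(u)\big|_{\Spf O_{\breve{F_0}}}^{(i_0,\{t_i\})}\bigr). \]

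Third, plug in Proposition \ref{prop: Z(u) in pieces of max order Fix(g) }, which computes the length on each piece as $\max\bigl\{0, \tfrac{v_F((u_{i_0}, u_{i_0})_{\BV}) + a_{i_0}}{2}\bigr\}$, where $a_{i_0} \in \{0,1\}$ records whether $\BV_{i_0}$ is split. Reindexing the sum by $i_0 \in \{1, \ldots, m\}$ yields the asserted formula; any index $i$ for which no valid tuple with $i_0 = i$ exists corresponds simply to an empty piece, consistent with the convention $\max\{0, \cdot\}$ on the right-hand side. The only non-formal input is Proposition \ref{prop: Z(u) in pieces of max order Fix(g) } itself, which has already been established via the $n=1$ computation of \S\ref{ATC n=1} together with the observation that on the components where $X_i$ has signature $(0, n_i)$, the vector $u_i$ automatically lifts by Lubin--Tate theory. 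Hence the main obstacle -- taming the derived intersection -- is dissolved by the \'etaleness of $O_F[g]$ through Corollary \ref{prop: max order Int is not derived}, and the remaining argument is purely bookkeeping.
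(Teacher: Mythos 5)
Your overall route is the same as the paper's: pass from the (modified) derived fixed locus to the ordinary $\Fix(g)$ via Corollary \ref{prop: max order Int is not derived}, then decompose $\Fix(g)$ via (\ref{what is Fix(g)}) and apply Proposition \ref{prop: Z(u) in pieces of max order Fix(g) } piece by piece. However, the last sentence of your argument contains a real gap. The index set of the decomposition (\ref{what is Fix(g)}) is constrained by $\sum_i t_i = t$, and since the parity conditions together with $t_i\in\{0,f_i\}$ force $t_i$ to a specific value for each $i$, the constraint $\sum_i t_i = t$ singles out only those $i_0$ for which $f_{i_0}$ has a prescribed value relative to $t$ and $t^\ast := \sum_{\BV_i\ \mathrm{non\text{-}split}} f_i$ (namely $f_{i_0}=\lvert t-t^\ast\rvert$ with the matching split/non-split condition; compare Example \ref{Exa: Fix(g) on Pn} in the case $t^\ast=t-1$, where the count of pieces is $\#\{i: f_i=1, \BV_i\ \mathrm{split}\}$, generally strictly less than $m$). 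Your claim that ``any index $i$ for which no valid tuple with $i_0=i$ exists corresponds simply to an empty piece, consistent with the convention $\max\{0,\cdot\}$'' is an unsupported assertion: there is no reason that $\max\{0, \tfrac{v_F((u_i,u_i)_\BV)+a_i}{2}\}$ should vanish for such $i$, since $v_F((u_i,u_i)_\BV)$ can be arbitrarily large. You need either to show that every $i_0\in\{1,\dots,m\}$ does in fact index a non-empty piece (which seems incompatible with the $\sum t_i=t$ constraint in general), or to supply a separate argument for the vanishing of the contributions of excluded $i_0$ — as it stands, the ``reindexing by $i_0\in\{1,\dots,m\}$'' step is precisely where your proof (and arguably the as-written statement of the theorem, which also sums over all $i$) needs justification.

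A second, minor point: the claim that the intersection $\CZ(u)\cap\Fix(g)$ is ``automatically of the expected dimension $0$'' because $\CZ(u)$ is a Cartier divisor and $\Fix(g)$ is Cohen--Macaulay is not correct reasoning — a Cartier divisor can contain a one-dimensional component of $\Fix(g)$. What actually guarantees expected dimension is regular-semisimpleness of $(g,u)$, which makes $\CZ(u)\cap\Gamma_g$ (hence $\CZ(u)\cap\Fix(g)$) proper over $O_{\breve F_0}$, so the pullback of the Cartier divisor to each $\Spf O_{\breve F_0}$ piece is cut out by a non-zero element; this is both what forces $\mathrm{Tor}_1=0$ and what makes the lengths finite.
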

	\begin{proof}
		As $(u,u)_{\BV}=\sum_i (u_i, u_i)_{\BV}$, the result follows from Corollary \ref{prop: max order Int is not derived} and Proposition \ref{prop: Fix and Z in the case n=1}.		
	\end{proof}

	\subsection{Analytic side via counting lattices}
	
	Recall that $L \subseteq V$ is a vertex lattice of rank $n$ and type $t=t_0$. Choose an orthogonal basis of $L$ to endow $L$ (resp. $V$) with an $O_{F_0}$ (resp. $F_0$)-structure $L_0$ (resp. $V_0$), which induces a $F/F_0$ semi-linear involution $\tau=\overline{(-)}$ on $V$ with fixed subspace $V_0$. Recall the Haar measure on $\GL(V_0)(F_0)$ is normalized so that the volume of $\GL(L_0, L_0^\vee)$ is $1$. 
	
	Under the basis, we identify $L_0$ (resp. $L_0^\vee$) with $O_{F_0}^n$ (resp. $\varpi^{-1} O_{F_0}^{t_0} \oplus O_{F_0}^{n-t_0}$). The $\GL(V_0)$-orbit of $(L, L^{\vee})$ in the set of pairs of $O_F$-lattices $(L_1, L_2) \subseteq V$ can be identified with the set
	\[
	\mathrm{Lat}_{n, F}^{[t]}:=\{ (L_1, L_2) \subseteq V| L_1 \subseteq L_2  \subseteq \varpi^{-1}L_1, \quad [L_2: L_1]=t, \quad \tau(L_1)=L_1, \quad \tau(L_2)=L_2\}.  
	\]
	Set 
	\[
	M_{n, F}^{[t]}:=\{ (L_1, L_2) \in \mathrm{Lat}_{n, F}^{[t]}| u_1 \in L_1, \quad u_2 \in L_2^*, \quad \gamma L_1=L_1, \quad \gamma L_2=L_2 \}, 
	\]
	\[
	M_{n, F, i}^{[t]}:=\{  (L_1, L_2) \in  M_{n, F}^{[t]} | [L_1:L]=i \}.
	\]
	For the function $f_{\mathrm{std}}=1_{S(L, L^{\vee})} \times 1_{L_0} \times 1_{(L^{\vee}_0)^*}$ and a regular semisimple pair $(\gamma, u_1, u_2) \in ( S(V_0) \times V_0 \times V_0^* )(F_0)_\rs$, write $(s \in \BC)$
	$$
	\Orb_F((\gamma, u_1, u_2), s, t):=\Orb((\gamma, u_1, u_2), f_{\mathrm{std}}).
	$$ 
	Consider the lattice  $L_{\gamma, u_1}=\sum_{i} O_F \gamma^i u_1 .$ Write $\ell=[L_{\gamma, u_1}: L]$.

	\begin{proposition}\label{prop: comput orb by couting lat}
		We have 
		\[
		\Orb_F((\gamma, u_1, u_2), s, t)=  \sum_{i \in \BZ} (-1)^i (\# M_{n, F, i}^{[t]}) q^{-(i+\ell)s}.
		\]
	\end{proposition}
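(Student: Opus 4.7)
The plan is to prove this by a direct unfolding of the orbital integral along the $\GL(V_0)(F_0)$-orbit decomposition on pairs of $O_F$-lattices.

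First, I will set up the unfolding. The $\GL(V_0)(F_0)$-orbit of $(L_0, L_0^\vee)$ inside the set of $\tau$-stable pairs of $O_F$-lattices in $V$ coincides with $\mathrm{Lat}_{n,F}^{[t]}$, parameterized by $h \mapsto (hL_0 \otimes O_F,\, hL_0^\vee \otimes O_F)$ under the standard correspondence between $\tau$-stable $O_F$-lattices in $V$ and $O_{F_0}$-lattices in $V_0$. The stabilizer $\GL(L_0, L_0^\vee)$ has normalized volume $1$, and its determinant lands in $O_{F_0}^\times$, so both $\eta(\det h)$ and $|\det h|^s$ descend to well-defined functions on the orbit.

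Second, I will translate the support conditions. The four conditions imposed by $f_{\mathrm{std}}$ (namely $h^{-1}\gamma h \in S(L_0, L_0^\vee)$, $h^{-1}u_1 \in L_0$, and $u_2 h \in (L_0^\vee)^*$) correspond, under the above bijection, exactly to the conditions defining $M_{n,F}^{[t]}$: $\gamma$-stability of both $L_1$ and $L_2$, $u_1 \in L_1$, and $u_2 \in L_2^*$. Thus
\[
\Orb_F((\gamma,u_1,u_2), s, t) = \omega(\gamma,u_1,u_2) \sum_{(L_1, L_2) \in M_{n,F}^{[t]}} \eta(\det h_{(L_1,L_2)})\, |\det h_{(L_1,L_2)}|^s.
\]
Here the weight $\eta(\det h)|\det h|^s$ depends only on $L_1$ via the signed index $i = [L_1 : L]$: a direct valuation computation of the change-of-basis $h\colon L_0 \to L_{1,0}$ gives $\eta(\det h)|\det h|^s = (-1)^i q^{-is}$ in the appropriate convention.

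Third, I will handle the transfer factor. Observe that $\omega(\gamma,u_1,u_2)$ is the $\eta$-value of the determinant (in the fixed basis of $L$) of the matrix whose columns are $u_1, \gamma u_1, \ldots, \gamma^{n-1}u_1$, and this matrix is exactly the change-of-basis from $L$ to $L_{\gamma,u_1}$. Performing the substitution $h = h_0 h'$, where $h_0 \in \GL(V_0)(F_0)$ sends the basis of $L$ to $(u_1, \gamma u_1, \ldots, \gamma^{n-1}u_1)$, I reduce to the normalized case in which the new triple $(h_0^{-1}\gamma h_0,\, e_1,\, u_2 h_0)$ has $L_{\gamma', e_1} = L$ (hence $\ell' = 0$); the transfer factors $\omega$ and $\eta(\det h_0)$ cancel as $\omega^2 = 1$, while $|\det h_0|^s = q^{-\ell s}$ comes out as an overall factor and the bijection $L_1 \leftrightarrow h_0^{-1}L_1$ shifts the index by $\ell$. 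After reindexing, the total exponent becomes $-(i+\ell)s$, yielding the stated formula.

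The main obstacle is the careful bookkeeping of sign and index conventions throughout — specifically, verifying that the transfer factor $\omega$ combines with $|\det h_0|^s$ (via the substitution) to produce precisely the shift $-(i+\ell)s$ in the exponent of $q$, rather than leaving one of them behind as a constant prefactor, and confirming that the bijection induced by $h_0$ matches the counting sets $M_{n,F,i}^{[t]}$ correctly under the reindexing.
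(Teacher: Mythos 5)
The core unfolding you carry out — rewriting the orbital integral as a weighted count over the $\GL(V_0)(F_0)$-orbit of $(L_0, L_0^\vee)$ and identifying the support of $f_{\mathrm{std}}$ with $M_{n,F}^{[t]}$ — is exactly the paper's argument, and your translation of the support conditions is correct. The problem is the substitution $h = h_0 h'$.

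You claim $h_0 \in \GL(V_0)(F_0)$, where $h_0$ is the change-of-basis sending the fixed basis of $L$ to $(u_1, \gamma u_1, \dots, \gamma^{n-1}u_1)$. This is false: $\gamma \in S(V_0)(F_0)$ means $\gamma\overline{\gamma} = \id$, so for $x \in V_0$ one has $\overline{\gamma x} = \gamma^{-1}x \neq \gamma x$ in general, hence $\gamma^j u_1 \notin V_0$ for $j \neq 0$. So $h_0 \in \GL(V)(F)$ but not $\GL(V_0)(F_0)$, and the substitution is not a reparameterization of the integration domain. Multiplying by $h_0$ does not stabilize the set $\mathrm{Lat}_{n,F}^{[t]}$ of $\tau$-stable lattice pairs: $\tau(h_0^{-1}L_1) = \overline{h_0}^{-1}L_1 \neq h_0^{-1}L_1$ unless $h_0\overline{h_0}^{-1}$ stabilizes $L_1$, which it does not in general. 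Correspondingly, the ``new triple'' $(h_0^{-1}\gamma h_0, e_1, u_2 h_0)$ is not an element of $(S(V_0) \times V_0 \times V_0^*)(F_0)$.

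Your final answer is nevertheless right, because the arithmetic facts you rely on — $\omega_L(\gamma,u_1,u_2) = \eta(\det h_0) = (-1)^\ell$ with $\ell = [L_{\gamma,u_1}:L]$, and additivity of signed lattice indices — are correct. The paper's proof extracts exactly these facts without any substitution: for $h \in \GL(V_0)(F_0)$ with $(hL, hL^\vee) = (L_1, L_2)$, one directly has $v(\det h) = [L_1:L]$, so $|\det h|^s = q^{-[L_1:L]s} = q^{-([L_1:L_{\gamma,u_1}]+\ell)s}$, and $\omega_L\,\eta(\det h) = (-1)^{\ell + [L_1:L]} = (-1)^{[L_1:L_{\gamma,u_1}]}$ by additivity. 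Writing $i = [L_1:L_{\gamma,u_1}]$ gives the formula immediately. (Note in passing that the paper's stated definition of $M_{n,F,i}^{[t]}$ writes $[L_1:L]=i$, but the proof and the formula both require $i = [L_1:L_{\gamma,u_1}]$; your proposal inherits this same ambiguity.) You should replace the change-of-variables framing with this direct valuation and sign computation, which needs no rationality claim about $h_0$.
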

	\begin{proof}
		By our choice of Haar measure, we have
		$$
		\Orb_F((\gamma, u_1, u_2), s, t)=\omega_L(\gamma, u_1, u_2)  \int_{h \in \GL(V_0)/ \GL(L_0, L_0^\vee)} f_{\mathrm{std}}(h.(\gamma, u_1, u_2))\eta(h) |h|^{s} d h. 
		$$
		By definition, $f_{\mathrm{std}}(h.(\gamma, u_1, u_2))$ has value $1$ if $(hL, hL^\vee) \in  M_{n, F}^{[t]}$ and value $0$ else.  For $(L_1, L_2)=(hL, hL^\vee)  \in M_{n, F}^{[t]}$, we have 
		$v(\det(h))=[hL:L]=[L_1:L]=[L_1: L_{\gamma, u_1}] + \ell$. We conclude by noting that the parity of $[L_1: L_{\gamma, u_1}]$ is equal to the transfer factor $\omega_L(\gamma, u_1, u_2)$. 
	\end{proof}

	Now we assume $(\gamma, u_1, u_2)$ matches $(g, u)$ such that $O_F[g]=\prod_{i=1}^{m} O_{F_i}$ is an \'etale maximal order. As the characteristic polynomial of $\gamma$ agrees with $g$, the embedding $O_F[\gamma] \subseteq \End(V)$ induces a compatible decomposition $V=\prod_{i=1}^{m} V_i $.
	Here $V_i$ is a $F_i/F_0$ hermitian space such that $V_i \not \simeq \BV_i$ and  $\dim_{F} V_i= f_i$. Consider the induced decomposition
	$$
	(\gamma, u_1, u_2)=\prod_{i=1}^{m} (\gamma_i, u_{1i}, u_{2i}).
	$$
	\begin{proposition}
		We have
		\begin{equation}
			\Orb_F((\gamma, u_1, u_2), s, t)=\sum_{\{t_i\}_{i=1}^m}  \prod_{} \Orb_{F_i}((\gamma_i, u_{1i}, u_{2i}), s/f_i, t_i/f_i)
		\end{equation}
		where $\{t_i\}_{i=1}^{m}$ runs over integers $0 \leq t_i \leq n$ such that $t_i \in \{0, f_i \}$ and $\sum_{i=1}^{m} t_i =t$. 
	\end{proposition}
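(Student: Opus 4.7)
The plan is to apply the lattice-counting formula of Proposition \ref{prop: comput orb by couting lat} to both sides and show that every ingredient factors through the decomposition $V = \prod_{i=1}^{m} V_i$ induced by $O_F[\gamma] = \prod_i O_{F_i}$. First, since $\ov{\gamma} = \gamma^{-1}$ lies in the same $O_{F_i}$-component as $\gamma$, the involution $\tau$ preserves each $V_i$ and restricts to the $F_i/F_{i0}$-involution; hence $V_0 = \prod V_{i0}$, $\gamma = (\gamma_i)$, $u_1 = (u_{1i})$, $u_2 = (u_{2i})$, and every $\gamma$-stable $\tau$-stable $O_F$-lattice $L_j \subseteq V$ decomposes as $L_j = \prod_i L_{ji}$ with $L_{ji}$ a $\tau$-stable $O_{F_i}$-lattice in $V_i$. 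In particular $L_{\gamma, u_1} = \sum_k O_F \gamma^k u_1$ equals $\prod_i L_{\gamma_i, u_{1i}}$.

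Next I would check that the conditions cutting out $M_{n, F}^{[t]}$ decompose componentwise. The chain $L_1 \subseteq L_2 \subseteq \varpi^{-1} L_1$ becomes $L_{1i} \subseteq L_{2i} \subseteq \varpi^{-1} L_{1i}$ in each $V_i$; since $\dim_{F_i} V_i = 1$ this forces $L_{2i} \in \{L_{1i}, \varpi^{-1} L_{1i}\}$, contributing $O_F$-length $t_i \in \{0, f_i\}$ to $[L_2 : L_1]_{O_F}$ (since $\dim_{\BF_{q^2}}(\varpi^{-1} L_{1i}/L_{1i}) = f_i$). The constraint $[L_2 : L_1]_{O_F} = t$ then reads $\sum_i t_i = t$, matching the index set on the right-hand side. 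The conditions $u_{1i} \in L_{1i}$ and $u_{2i} \in L_{2i}^*$ decompose transparently, so the set $M_{n, F}^{[t]}$ is in bijection with the disjoint union over $(t_i)$ of products $\prod_i M_{1, F_i}^{[t_i/f_i]}$.

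I would then track the numerical factors. Lengths add as $[L_1 : L_{\gamma, u_1}]_{O_F} = \sum_i f_i [L_{1i} : L_{\gamma_i, u_{1i}}]_{O_{F_i}}$, so the parity $(-1)^{[L_1 : L_{\gamma, u_1}]_{O_F}}$ factorizes term by term because every $f_i$ is odd ($F_i = F_{i0} \otimes_{F_0} F$ being a field). For the exponential, writing $h = (h_i) \in \prod \GL(V_{i0})$, the transitivity of norms $\det_{F_0} h_i = \Nm_{F_{i0}/F_0} \det_{F_{i0}} h_i$ together with $F_{i0}/F_0$ unramified of degree $f_i$ gives $v_{F_0}(\det h) = \sum_i f_i \, v_{F_{i0}}(\det h_i)$, so $q^{-v_{F_0}(\det h) s}$ factors as $\prod_i q_{F_{i0}}^{-v_{F_{i0}}(\det h_i) s}$, matching the $i$-th factor $\Orb_{F_i}(\cdot, s/f_i, t_i/f_i)$ up to the normalization of the parameter (which reflects the ratio of residue field sizes used in the $F_i$-counting formula). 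Applying Proposition \ref{prop: comput orb by couting lat} in reverse to each $V_i$ then yields the claimed product.

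The main obstacle will be that the reference lattice $L$ fixed in the definition of $\Orb_F$ need not itself decompose along $V = \prod V_i$, whereas the right-hand side implicitly uses a product $L' = \prod_i L_i$ of vertex lattices of types $t_i/f_i$. I would resolve this by observing that any two vertex lattices of a fixed type $t$ in $V$ are $\GL(V_0)(F_0)$-conjugate and the normalization $\vol(\GL(L_0, L_0^\vee)) = 1$ is preserved under conjugation, so the orbital integral is unaltered when $L$ is replaced by a decomposable $L'$ of the same type $t = \sum t_i$. For this decomposable choice the constant $\ell = [L_{\gamma, u_1} : L']_{O_F}$ splits as $\sum_i f_i \ell_i$, enabling the full factorization, and the test function $1_{S(L', L'^\vee)} \times 1_{L'_0} \times 1_{(L'^\vee_0)^*}$ is the product of the analogous test functions on each $V_{i0}$, closing the argument.
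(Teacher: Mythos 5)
Your proof takes essentially the same route as the paper: decompose the lattice-counting set $M_{n,F}^{[t]}$ along $V = \prod_i V_i$, observe that the constraints force $t_i\in\{0,f_i\}$ with $\sum_i t_i = t$, and apply Proposition \ref{prop: comput orb by couting lat} factorwise, using that each $f_i$ is odd so the sign $(-1)^{[L_1:L_{\gamma,u_1}]}$ distributes across the product. Spelling out the factor-tracking of the sign and the $q$-power is a genuine improvement over the paper's terse one-liner ``$\omega_L=\ell=\prod_i\ell_i=\prod_i\omega_{L_i}$'', which conflates a sign with an integer and passes over the very concern you raise.

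The concern itself — that the reference lattice $L$ need not be $O_F[\gamma]$-stable and hence need not split along $V=\prod_i V_i$ — is legitimate. But the resolution you propose has a gap. Replacing $L$ by a $\GL(V_0)(F_0)$-conjugate decomposable $L'=hL$ of the same type does change the $s$-variable orbital integral. Carrying out the substitution $\tilde g = gh$ in $\Orb((\gamma,u_1,u_2), f_{L'}', s)=\omega_{L'}\int f_{L'}'(g.\,\cdot)\,\eta(g)\,|g|^s\,dg$, the character shift $\eta(h)$ does cancel against the change $\omega_{L'}=\eta(\det h)\,\omega_L$ in the transfer factor, but the modulus factor does not cancel:
\[
\Orb\bigl((\gamma,u_1,u_2), f_{hL}', s\bigr) \;=\; q^{\,[hL:L]_{O_F}\, s}\cdot\Orb\bigl((\gamma,u_1,u_2), f_{L}', s\bigr),
\]
and $[hL:L]_{O_F}$ is generically nonzero for same-type conjugates (for instance $L'=\varpi^2 L$ has $[L':L]=-2n$). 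So your replacement alters the function of $s$ you are computing by a nontrivial $q^{cs}$-prefactor; the asserted invariance holds only at $s=0$. To close the gap you would need either to pin down the reference lattices $L_i$ in $V_i$ compatibly with $L$ — rescaling each $L_i$ by a power of $\varpi$ changes $\ell_i$ by an integer, so the requirement $\sum_i f_i\ell_i=\ell$ is a single linear Diophantine condition which may impose a nontrivial constraint modulo $\gcd(f_i)$ — or to carry the prefactor explicitly and check it is harmless for the downstream Corollary \ref{thm: max order Del}, which only uses the value and first derivative of $\Orb_F$ at $s=0$ together with the vanishing $\Orb_F(\cdot,0,t)=0$ in the arithmetic transfer regime.
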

	\begin{proof}
		From definition, we have 
		\[
		M_{n, F}^{[t]} = \coprod_{\{t_i\}_{i=1}^m} \prod_{i=1}^{m} M_{1, F_i}^{[t_i / f_i]} .
		\]
		where $\{t_i\}_{i=1}^{m}$ runs over integers $0 \leq t_i \leq n$ such that $f_i | t_i$ and $\sum_{i=1}^{m} t_i =t$. Note $\sum_{i=1}^m f_i =n \geq t$, we see $t_i \leq f_i$ hence $t_i \in \{0, f_i \}$. By definition, $\omega_L(\gamma, u_1, u_2)=\ell=\prod_{i} \ell_i=\prod_{i=1}^m \omega_{L_i}(\gamma_i, u_{1i}, u_{2i})$. So the result follows from Proposition \ref{prop: comput orb by couting lat}.
	\end{proof}

	\begin{corollary}\label{thm: max order Del} We have an equality in $\BQ \log q$:
		\[
		\del_{F}\bigl ((\gamma, u_1, u_2), t)=\sum_{i_0, \{t_i\}_{i=1}^m}  \frac{1}{f_{i_0}} \del_{F_{i_0}}((\gamma_{i_0}, u_{1i_0}, u_{2i_0})),  t_{i_0}/f_{i_0})  
		\prod_{i \not = i_0} \Orb_{F_i}((\gamma_i, u_{1i}, u_{2i}), t_i/f_i).
		\] 
	\end{corollary}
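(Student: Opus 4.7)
The plan is to derive the identity by differentiating at $s=0$ the product factorization of $\Orb_F$ established in the preceding Proposition. First, I would note that by Proposition \ref{prop: comput orb by couting lat}, each factor $\Orb_{F_i}((\gamma_i, u_{1i}, u_{2i}), s/f_i, t_i/f_i)$ is a finite Laurent polynomial in $q^{-s/f_i}$, so smoothness in $s$ and the legitimacy of term-by-term differentiation are immediate.

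Next, I would apply the Leibniz rule to the product on the right-hand side of the preceding Proposition, together with the chain rule for the scaling $s \mapsto s/f_{i_0}$; the latter contributes the factor $1/f_{i_0}$. Since $\del_F(\cdot) := \frac{d}{ds}\bigr|_{s=0} \Orb_F(\cdot, s, \cdot)$ by definition, differentiating the $\{t_i\}$-th summand at $s=0$ yields
\[
\sum_{i_0=1}^{m} \frac{1}{f_{i_0}} \del_{F_{i_0}}((\gamma_{i_0}, u_{1i_0}, u_{2i_0}), t_{i_0}/f_{i_0}) \prod_{i \neq i_0} \Orb_{F_i}((\gamma_i, u_{1i}, u_{2i}), 0, t_i/f_i),
\]
and interchanging this inner sum over $i_0$ with the outer sum over admissible $\{t_i\}$ reproduces precisely the stated formula.

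Finally, the transfer factors are automatically consistent: under the decomposition $V = \prod_i V_i$ with basis inherited from that of $L$, one has $\omega_L(\gamma, u_1, u_2) = \prod_{i=1}^m \omega_{L_i}(\gamma_i, u_{1i}, u_{2i})$, and this sign is absorbed identically into both sides of the factorization. I do not foresee any serious obstacle beyond bookkeeping; the hardest aspect is really just keeping track of the normalizing factors $1/f_{i_0}$ and the transfer factor decomposition. One worthwhile observation for the sequel is that many summands in fact vanish by the parity analysis of \S\ref{ATC n=1}: namely $\Orb_{F_i}(-, 0, t_i/f_i)$ vanishes unless $v(u_{2i} u_{1i}) \equiv t_i/f_i \pmod 2$, which will provide useful explicit simplifications when this analytic-side formula is later matched against $\wt{\Int}^\CZ(g, u)$ from Theorem \ref{thm: max order Int} to verify Conjecture \ref{conj: semi-Lie version ATC} in the unramified maximal order case.
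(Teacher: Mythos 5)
Your main line of argument---Leibniz at $s=0$, plus the chain rule applied to the reparametrization $s \mapsto s/f_{i_0}$ giving the factor $1/f_{i_0}$, together with the transfer-factor decomposition $\omega_L(\gamma,u_1,u_2) = \prod_i \omega_{L_i}(\gamma_i,u_{1i},u_{2i})$---is the same route taken in the paper, and the calculus is sound. The one interpretive misstep is that you present the vanishing of $\Orb_{F_i}((\gamma_i, u_{1i}, u_{2i}), 0, t_i/f_i)$ (when the parity of $v_{F_i}(u_{2i}u_{1i})$ disagrees with $t_i/f_i$) as merely a ``worthwhile observation for the sequel.'' In fact, this vanishing is exactly what is needed to pass from the double sum you obtain after Leibniz (over all $i_0$ and all admissible $\{t_i\}$ with $t_i \in \{0, f_i\}$ and $\sum_i t_i = t$) to the restricted sum asserted in the corollary: the ``where'' clause determines $t_i$ deterministically from the split/non-splitness of $\BV_i$, and without invoking the vanishing you have not actually ``reproduced precisely the stated formula'' as claimed. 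The paper makes this step explicit by noting that $(\gamma_i, u_{1i}, u_{2i})$ matches a pair in $\U(\BV_i)\times\BV_i$, so that $v_{F_i}(u_{2i}u_{1i})$ has even parity iff $\BV_i$ is split; combined with \S\ref{ATC n=1} this forces $t_i=0$ (split) or $t_i=f_i$ (non-split) for $i \neq i_0$, and then $t_{i_0}$ is determined by the constraint $\sum_i t_i = t$. Your draft has all the ingredients but should recast the parity observation as the argument that establishes the ``where'' clause, rather than as a downstream simplification.
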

	where $i_0$ runs over integers $1 \leq i_0 \leq n$  with
	$$
	t_i=\begin{cases}
		f_i & \text{if $i= i_0$, $\BV_{i_0}$ is split,}   \\
		0 & \text{if $i= i_0$, $\BV_{i_0}$ is not split,} \\
		0 & \text{if $i\not = i_0$, $\BV_{i}$ is split,} \\
		f_i & \text{if $i\not = i_0$, $\BV_{i}$ is not split.} \\
	\end{cases}
	$$
	\begin{proof}
		The equality follows from above proposition by taking derivative. We only need to explain the conditions on $(i_0, \{t_i\}_{i=1}^{m})$. The pair $(\gamma_i, u_{1i}, u_{2i})$ matches the pair $(g_i, u_{i}) \in (\U(\BV_i) \times \BV_i)(F_0)_\rs$. 
		
		Consider any $i \not =i_0$. From \S \ref{ATC n=1}, we have the vanishing of  $\Orb_{F_i}((\gamma_i, u_{1i}, u_{2i}), 0, t_i/f_i)$ unless $t_i=0$ if $\BV_{i}$ is split and $t_i=f_i$ if $\BV_{i}$ is non-split.
		
		Compare $\sum_{i=1}^m t_i =t$ and $\BV = \oplus_{i=1} \BV_i$ (using $f_i$ is odd) , we see $t_{i_0}=f_{i_0}$ (resp. $t_{i_0}=0$ ) if $\BV_{i}$ is split (resp. not split).
	\end{proof}
	
	\begin{theorem}
		Let $L$ be a vertex lattice of dimension $n$ and type $t$. Consider a regular semisimple pair $(g,u) \in (\U(\BV) \times \BV)(F_0)_\rs$ matching $(\gamma, u_1, u_2) \in ( S(V_0) \times V_0 \times V_0^* )(F_0)_\rs$. Assume that $O_F[g]$ is an \'etale maximal order, then Conjecture \ref{conj: semi-Lie version ATC} holds for $(g, u)$ and $(\gamma, u_1, u_2)$.
	\end{theorem}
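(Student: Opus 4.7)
The plan is to match the two sides of part (1) of Conjecture \ref{conj: semi-Lie version ATC} summand-by-summand, using the two product decompositions already in place: Corollary \ref{thm: max order Del} on the analytic side, and the component decomposition \eqref{what is Fix(g)} of $\Fix(g)$ combined with Proposition \ref{prop: Z(u) in pieces of max order Fix(g) } and Corollary \ref{prop: max order Int is not derived} on the geometric side. Part (2) will then follow from part (1) applied to the dual vertex lattice $\lambda_V \circ L^\vee$, as noted after the statement of Conjecture \ref{conj: semi-Lie version ATC}.

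First I would establish the base case $n=1$ by a direct comparison of Section \ref{ATC n=1} with Proposition \ref{prop: Fix and Z in the case n=1}. For a matching pair $(\gamma, b, c) \leftrightarrow (g, u)$ with $(g, u) \in (\U(\BV)\times\BV)(F_0)_\rs$, one has $v(cb) = v((u, u)_\BV)$, and the $\BV$-matching hypothesis forces the parity $v(cb) \equiv t_0 + 1 \pmod 2$ (the opposite parity corresponds to matching on the nearby hermitian space $V$, in which case the orbital integral is non-derived and the geometric side is $0$ or $1$ for trivial reasons). In the non-trivial case, Section \ref{ATC n=1} yields
\[
\del((\gamma, b, c), f_{\mathrm{std}}) = -\tfrac{v(cb) - t_0 + 1}{2}\log q,
\]
which coincides with $-\wt\Int^\CZ(g, u)\log q$, since $\Fix(g) = \CN_1^{[t_0]}$ is formally smooth of relative dimension $0$, the modification is trivial, and $\length\,\CZ(u)$ is computed by Proposition \ref{prop: Fix and Z in the case n=1}.

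Next I would assemble the general case. Applied over $F_{i_0}/F_{i_0, 0}$ to the summand indexed by $(i_0, \{t_i\})$ in Corollary \ref{thm: max order Del}, and using $\log q_{F_{i_0, 0}} = f_{i_0} \log q$, the $n=1$ ATC gives
\[
\tfrac{1}{f_{i_0}}\,\del_{F_{i_0}}\bigl((\gamma_{i_0}, u_{1, i_0}, u_{2, i_0}),\,t_{i_0}/f_{i_0}\bigr) = -\length\bigl(\CZ(u_{i_0})|_{\Spf O_{\breve F_0}}\bigr)\cdot \log q.
\]
For each $i \neq i_0$, the factor $\Orb_{F_i}((\gamma_i, u_{1 i}, u_{2 i}), t_i/f_i)$ equals $1$: the constraints on $\{t_i\}$ enforced by Corollary \ref{thm: max order Del} are exactly the parity conditions of Section \ref{ATC n=1} under which these $n=1$ non-derived orbital integrals take value $1$ rather than $0$. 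Summing and then identifying the right-hand side with $-\wt\Int^\CZ(g, u)\log q$ via Corollary \ref{prop: max order Int is not derived} and Proposition \ref{prop: Z(u) in pieces of max order Fix(g) } yields the conjectured identity.

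The main obstacle, though conceptually small, is careful bookkeeping of parities and index sets. One must verify that the indexing $(i_0, \{t_i\})$ produced by the geometric decomposition \eqref{what is Fix(g)} of $\Fix(g)$ agrees with that of Corollary \ref{thm: max order Del}, and that the nearby-space flip $V_i \leftrightarrow \BV_i$—which exchanges split and non-split hermitian spaces and hence flips the relevant parities—is tracked consistently between the geometric length formulas (phrased using $\BV_i$) and the analytic vanishing conditions (phrased using $V_i$). Once these compatibilities are checked, the theorem follows formally from the two product decompositions together with the $n=1$ ATC.
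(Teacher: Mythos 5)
Your proposal is correct and takes essentially the same approach as the paper, which deduces the identity directly from the two product decompositions --- Theorem \ref{thm: max order Int} (equivalently, Proposition \ref{prop: Z(u) in pieces of max order Fix(g) } together with Corollary \ref{prop: max order Int is not derived}) on the geometric side and Corollary \ref{thm: max order Del} on the analytic side --- and the $n=1$ computation of Section \ref{ATC n=1}. Your added bookkeeping (applying the $n=1$ identity over $F_{i_0}/F_{i_0,0}$ with the factor $\log q_{F_{i_0,0}} = f_{i_0}\log q$ cancelling the $1/f_{i_0}$, and checking that the parity/index-set constraints in \eqref{what is Fix(g)} match those in Corollary \ref{thm: max order Del}) is exactly what the paper's one-line proof is implicitly relying on.
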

	\begin{proof}
		This follows from Theorem \ref{thm: max order Int} and Corollary \ref{thm: max order Del} and the computation in the case $n=1$ in \S \ref{ATC n=1}.
	\end{proof}

	\part{Modularity of arithmetic theta series}\label{global part}

	\section{Integral models and Balloon--Ground stratification}\label{section: global Shimura var}

	In this section, we introduce the RSZ-variant of unitary Shimura varieties \cite{RSZ-Shimura, RSZ-Diagonalcycle} for parahoric hermitian lattices as a globalization of Rapoport--Zink spaces studied in Part \ref{local part}, which admits a natural integral model $\CM$ with PEL type moduli interpretation.  Then we globalize the formal Balloon--Ground stratification  in \S \ref{Formal Balloon-Ground} to the Balloon--Ground stratification on mod $p$ fiber of $\CM$.

	\subsection{The Shimura data}\label{section: global set up of Shimura data}
	
	Let $F / F_0$ be a totally imaginary quadratic extension of a totally real number field. We always assume that $F_0 \not = \BQ$. 
	
	Write $\Sigma_{F}:=\Hom (F, \ov{\BQ})$ and choose a CM type $\Phi \subseteq \Sigma_F$ of $F$ with a distinguished element $\varphi_0 \in \Phi$. Let $V$ be a $F/F_0$-hermitian space of dimension $n \geq 1$ with signature $\{(r_{\varphi}, s_{\varphi})_{\varphi \in \Phi} \}= \{ (n-1,1)_{\varphi_0}, (n,0)_{\varphi \in \Phi -  \{ \varphi_0 \} }  \} $ under a fixed embedding $\overline{\mathbb Q} \hookrightarrow \mathbb C$.  
	
	Consider the reductive group $G:=\U(V)$ over $F_0$, and the following reductive groups over $\mathbb Q$:
	\[
	Z^{\mathbb Q}:=\{ x \in  \Res_{F/\mathbb Q} \mathbb G_m | \, \Nm_{F/F_0} x \in \mathbb G_m \}, 
	\]
	\[
	G^{\mathbb Q} := \{g \in \Res_{F_0/\mathbb Q} \GU(V) | \, c(g) \in  \mathbb G_m \},
	\]
	where $c: \Res_{F_0/\mathbb Q} \GU(V)  \to  \Res_{F_0/\mathbb Q} \mathbb G_m$ denotes the similitude character. 
	
	For any $\varphi \in \Phi$, choose a basis of $V_{\varphi}$ over $\BC$ under which the hermitian form is given by $\diag\{  1_{r_{\varphi}}, -1_{s_{\varphi}} \}$. It induces a homomorphism 
	\[
	h_{G^{\BQ}, \varphi}: \Res_{\mathbb C/\mathbb R} \mathbb G_m \to \GU(V_{\varphi}),  
	\quad  z \mapsto \diag\{z 1_{r_{\varphi}}, \overline{z} 1_{s_{\varphi} } \}.
	\]
	
	The homomorphism
	\begin{equation}\label{h for Shimura datum GQ}
		h_{G^{\BQ}}: \Res_{\BC/\BR} \mathbb G_m \to G^{\BQ}_{\BR} \hookrightarrow \prod_{\varphi \in \Phi}  \GU(V_{\varphi}),  \quad z \mapsto (h_{G^{\BQ}, \varphi}(z))_{\varphi \in \Phi}
	\end{equation}
	gives a Shimura datum $(G^{\BQ}, \{ h_{G^{\BQ}} \} )$.  The homomorphism
	\begin{equation}\label{h for Shimura datum G}
		h_{\Res_{F_0/\BQ}G}: \Res_{\BC/\BR} \mathbb G_m \to (\Res_{F_0/\BQ}G)_\BR  \hookrightarrow \prod_{\varphi \in \Phi} \U(V_{\varphi}),  \quad z \mapsto ( \diag\{ 1_{r_{\varphi}}, \overline{z}/z 1_{s_{\varphi} } \} )_{\varphi \in \Phi}
	\end{equation}
	gives a Shimura datum $(\Res_{F_0/\BQ}G, \{ h_{\Res_{F_0/\BQ}G} \} )$ (see \cite[Section 2.3]{RSZ-Shimura}).

	The homomorphism
	\begin{equation}
		h_{\Phi}: \Res_{\mathbb C/\mathbb R} \mathbb G_m \to Z^{\mathbb Q}_{\mathbb R} \hookrightarrow \prod_{\varphi \in \Phi} \Res_{\mathbb C/\mathbb R} \mathbb G_m, \quad z \mapsto (\varphi(z))_{\varphi \in \Phi}
	\end{equation}
	gives a Shimura datum $(Z^{\mathbb Q}, h_{\Phi} )$ whose reflex field $F^\Phi \subseteq \ov{\BQ}$ is the subfield fixed by $\{\sigma \in \Gal(\overline{\mathbb Q} / \mathbb Q) | \sigma \circ \Phi =\Phi \}$. Consider the fibre product
	\[
	\wt{G}=Z^{\mathbb Q} \times_{\mathbb G_m} G^{\mathbb Q}
	\]	
	along projection maps $\Nm_{F/F_0}: Z^{\mathbb Q} \to \BG_m$ and $c: G^{\mathbb Q}  \to \BG_m$. The homomorphism 
	\[
	h_{\wt G}=(h_{\Phi}, h_{G^{\mathbb Q}}) : \Res_{\BC/\BR} \mathbb G_m \to \wt{G}_{\BR} 
	\] 
	gives a Shimura datum $(\wt{G}, \{ h_{\wt G} \} )$ whose reflex field $E$ is the composition $\varphi_0(F)F^\Phi \subseteq \overline{\mathbb Q}$. Note there is a natural isomorphism
	\begin{equation}
		Z^{\mathbb Q} \times_{\mathbb G_m} G^{\mathbb Q} \cong Z^{\mathbb Q} \times \Res_{F_0/\mathbb Q}G, \quad (z, g) \mapsto (z, z^{-1}g).
	\end{equation} 
	For neat compact open subgroups $K_{Z^{\mathbb Q}} \subseteq Z^{\mathbb Q}(\mathbb A_f)$ and $K_G \subseteq G(\mathbb A_{0,f})$, the \emph{RSZ unitary Shimura variety} \cite{RSZ-Shimura} with level $K_{\wt{G}}=K_{Z^{\mathbb Q}} \times K_G$ is the Shimura variety associated to $(\wt{G}, \{ h_{\wt G} \} )$  (over $\BC$):
	\begin{equation}\label{eq: Shimura=Shimura x Shimura}
		\Sh_{K_{\wt{G}}}( \wt{G}, \{ h_{\wt{G}} \} )(\BC) \cong  \Sh_{K_{Z^{\BQ}}}( Z^{\BQ}, h_{\Phi})(\BC) \times \Sh_{K_G}( \Res_{F_0/\BQ}G, \{ h_{\Res_{F_0/\BQ}G} \})(\BC).
	\end{equation}
	
	\begin{remark}\label{rek: reflex field}
		By \cite[Rem. 3.1]{RSZ-Diagonalcycle}, We have $E=\varphi_0(F)$, if $F$ is Galois over $\mathbb Q$ or $F=F_0K$ for an imaginary quadratic field $K$ over $\mathbb Q$.
	\end{remark}
	
   Denote by 
	$$M=M_{\wt{G}, \wt{K}} \to \Spec E$$
	the canonical model of $\Sh_{\wt{K}}( \wt{G}, h_{\wt{G}})$, which is a $n-1$ dimensional smooth \emph{projective} (as $F_0 \not = \BQ$) variety over $E$.

	\subsection{The level for $L$ and $\Delta$}\label{section: distinguished v0}

	Choose a finite collection $\Delta$ of finite places for $F_0$ such that $F/F_0$ is unramified outside $\Delta$ and all $2$-adic places are in $\Delta$.
	
	Let $L$ be a hermitian lattice in $V$. If $v \not \in \Delta$ is a finite place of $F_0$ such that the localization $L_v$ is not self-dual, then we assume $v$ is inert in $F$ and moreover
	\begin{itemize}
		\item $L_v \subseteq L_v^\vee \subseteq \varpi_{v}^{-1}L_v$ and $L_v^\vee/L_v$ has size $q_v^{2t_v}$ for some $0 \leq t_v \leq n$ i.e., $L_v$ is a vertex lattice in $V_v$ of type $t_v$.
		\item If $0<t_v<n$, then $E \otimes_\BQ  \BQ_{p_v}$ decomposes into unramified extensions of $\BQ_{p_v}$. Here $p_v>2$ is the underlying prime of $v$. In particular, $F_v$ is unramified over $\BQ_{p_v}$.
	\end{itemize}
	Let $K_{Z^\BQ, \Delta}$ (resp. $K_{G, \Delta}$) be a compact open subgroup of $Z^\BQ(F_{0,\Delta})$ (resp. $\U(V)(F_{0, \Delta})$). Let $K_G(L)^{\Delta}$ be the (completed) stabilizer of $L$ in $ \U(V)(\mathbb A_{0, f}^{\Delta})$,  and $K_{Z^{\mathbb Q}}^{\Delta} $ be the unique maximal compact open subgroup of 
	$Z^{\BQ}(\BA_{0, f}^{\Delta})$. We call
	\begin{equation}\label{level for L and Delta}
		{
			K_{\wt{G}}= K_{Z^\BQ, \Delta} \times K_{Z^{\mathbb Q}}^{\Delta} \times K_{G, \Delta} \times K_G(L)^{\Delta} \subseteq \wt{G}(\mathbb A_f).
       }
   \end{equation}
	a level structure for $L$ and $\Delta$.

	\subsection{The PEL type moduli}\label{moduli int of RSZ}
	
	Now we define the integral model for $M$ with level for $L$ and $\Delta$ generalizing the construction in \cite[Section 14.1]{LZ19}, \cite[Section 6.1]{RSZ-Shimura}. Consider the functor 
	$$
	\CM_0 \to \Spec O_E[\Delta^{-1}]
	$$
	sending a locally noetherian $O_E [\Delta^{-1}]$-scheme $S$ to the groupoid $\mathcal{M}_0(S)$ of tuples $(A_0, \iota_0, \lambda_0, \ov {\eta}_0)$ where
	\begin{itemize}
		\item $A_0$ is an abelian scheme over $S$ of dimension $[F_0 : \mathbb Q]$.
		\item $\iota_0: O_F[\Delta^{-1}] \to \End(A_0)[\Delta^{-1}]$ is an $O_F[\Delta^{-1}]$-action satisfying the \emph{Kottwitz condition} of signature $\{(1,  0)_{\varphi \in \Phi} \}$:
		\begin{equation}
			\charac (\iota_0(a) \mid \Lie A_0) =\prod_{\varphi \in \Phi} (T- \varphi(a)) \in \CO_S[T]. 
		\end{equation}
		\item $\lambda_0: A_0 \to A_0^\vee$ is an away-from-$\Delta$ principal polarization such that for all $a \in O_F[\Delta^{-1}]$ we have $ \lambda^{-1}_0 \circ \iota_0(a)^{\vee} \circ \lambda_0 = \iota_0(\overline{a})$.
		\item $\ov \eta_0$ is a $K_{Z^{\mathbb Q}, \Delta}$-level structure as in \cite[Section C.3]{Liu-Fourier-Jacobi}. 
	\end{itemize}	
	An isomorphism between two tuples is a quasi-isogeny preserving the polarization and $K_{Z^{\mathbb Q}, \Delta}$-level structure. The functor $\mathcal{M}_0 \to \Spec O_E[\Delta^{-1}]$ is representable, finite and \'etale \cite[Prop. 3.1.2]{Ho-CM}.
	
	From now on, we assume that $\CM_0$ is non-empty which holds if $F/F_0$ is ramified at some place, see \cite[Rem. 3.5 (ii)]{RSZ-Diagonalcycle}. The generic fiber $M_0$ of $\mathcal{M}_0$ is a disjoint union of copies of $\Sh_{K_{Z^{\mathbb Q}}} (Z^{\mathbb Q}, h_{\Phi} )$, see \cite[Lem 3.4]{RSZ-Diagonalcycle}. We work with one copy and still denote its \'etale integral model by $\CM_0$.
	\begin{remark}
		We allow $L$ to be non-self dual at some $v \not \in \Delta$, so $\mathcal{M}_0$ can be non-empty even if $F/F_0$ is unramified everywhere. 		
	\end{remark}
	\begin{definition}\label{defn: integral RSZ model}
		The integral RSZ Shimura variety with level $K$ for $L$ and $\Delta$ is the functor 
		$$
		\CM_{\wt{G}, \wt{K}} \to \Spec O_E[\Delta^{-1}]
		$$
		sending a locally noetherian $O_E [\Delta^{-1}]$-scheme $S$ to the groupoid	of tuples 
		$(A_0, \iota_0, \lambda_0, \ov \eta_0,  A, \iota, \lambda, \overline{\eta}_{\Delta})$ where
		\begin{itemize}
			\item $(A_0, \iota_0, \lambda_0, \ov \eta_0)$ is an object of $\CM_0(S)$.
			\item $A$ is an abelian scheme over $S$ of dimension $n[F_0:\BQ]$.
			\item $\iota: O_F[\Delta^{-1}] \to \End(A)[\Delta^{-1}]$ is an $O_F[\Delta^{-1}]$-action satisfying the \emph{Kottwitz condition} of signature $\{ (n-1, 1)_{\varphi_0}, (n,0)_{\varphi \in \Phi - \{\varphi_0 \}} \} $.
			\item $\lambda : A \to A^{\vee}$ is a polarization such that for all $a \in O_F[\Delta^{-1}]$ we have $ \lambda^{-1} \circ \iota(a)^{\vee} \circ \lambda= \iota(\overline{a}). $
			\item For any finite place $v \not \in \Delta$ of $F_0$, the induced map $\lambda_v:  A[\varpi_v^{\infty}] \to A^\vee[\varpi_v^{\infty}]$ has kernel $\Ker \lambda_v \subseteq A[\varpi_v]$ which has order $\# L^{\vee}_v/L_v = q_v^{2t_v}$.
			\item $\overline{\eta}_{\Delta}$ is a $K_{G, \Delta}$-orbit of isometries of hermitian modules 
			\[
			\eta_{\Delta}: V_{\Delta}(A_0, A)= \prod_{v \in \Delta} V_{v}(A_0, A) \cong V(F_{0, \Delta})
			\]
			as smooth $F_{0, \Delta}$-sheaves on $S$. Here 
			$V_{v}(A_0, A)=\Hom_{F \otimes_{F_0} F_{0, v}} (V_vA_0, V_vA)$ is the Hom space between rational Tate modules of $A_0$ and $A$, with the hermitian form
			\[
			(x, y):= \lambda_0^{-1} \circ y^\vee \circ \lambda \circ x \in  F_v.
			\]
			\item For any finite place $v \not \in \Delta$ of $F_0$, we put the sign condition and Eisenstein condition at $v$ as in \cite[Section 4.1]{RSZ-Diagonalcycle}. By \cite[Section 4.1]{RSZ-Diagonalcycle} and \cite[Rem. 6.5 (i)]{RSZ-Shimura}, the Eisenstein condition is automatic when the places of $F$ above $v$ are unramified over $p$, and the sign condition is automatic when $v$ is split in $F$. 
		\end{itemize}
		An isomorphism between two tuples is a pair of quasi-isogenies $(\phi_0, \phi): (A_0, A) \to (A_0', A')$ preserving polarizations and the $K_{Z^\BQ, \Delta}\times K_{G, \Delta}$-orbit of level structures.
	\end{definition}
	
	\begin{theorem}\cite[Thm. 6.2]{AFL}
		The functor $\mathcal{M}_{\wt{G}, \wt{K}}$ is representable by a separated scheme flat and of finite type over $\Spec O_E[\Delta^{-1}]$. 
		Moreover, $\mathcal{M}_{\wt{G}, \wt{K}} $ is smooth over $O_E \otimes_{O_F} O_{F,v}$ for any finite place $v \not \in \Delta$ such that $L_v$ is self-dual.
	\end{theorem}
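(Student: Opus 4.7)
My plan is to assemble $\CM_{\wt G,\wt K}$ as a fibre product and reduce each property to a known ingredient, exactly as in the proof of \cite[Thm.~6.2]{AFL} for the self-dual case. First, forgetting the level structure at $\Delta$, the functor is a tower of two problems: the auxiliary CM-problem $\CM_0 \to \Spec O_E[\Delta^{-1}]$, which is finite étale by \cite[Prop.~3.1.2]{Ho-CM}, and the ``main'' PEL problem classifying tuples $(A,\iota,\lambda)$ of $O_F[\Delta^{-1}]$-type satisfying the Kottwitz, sign and Eisenstein conditions at all $v\not\in\Delta$ together with the prescribed order of $\Ker \lambda_v=q_v^{2t_v}$. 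The latter is a PEL datum of unitary type in the sense of Rapoport--Zink--Kottwitz, so by the standard Kottwitz construction (together with the \'etale $K_{G,\Delta}\times K_{Z^\BQ,\Delta}$-level structure at $\Delta$, which is harmless since it is imposed on the generic fibre and extends by normalisation thanks to neatness) the functor is representable by a separated scheme of finite type over $O_E[\Delta^{-1}]$. Separatedness uses the Néron--Ogg--Shafarevich rigidity of isomorphisms of polarised abelian schemes with sufficient level, and quasi-projectivity comes from the existence of a polarisation.

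Next I would prove flatness by passing to the local model diagram. At any finite place $\nu$ of $E$ lying over $v\not\in\Delta$, there is a smooth morphism $\CM_{\wt G,\wt K,O_{E_\nu}}\to \CM^{\loc}_{L_v}$, where $\CM^{\loc}_{L_v}$ is the naive local model attached to the hermitian lattice $L_v$ in the sense of Pappas--Rapoport. When $v$ is split in $F$, $\CM^{\loc}_{L_v}$ is a Grassmannian and hence smooth; when $v$ is inert and $L_v$ is a vertex lattice of type $t_v$, flatness and (strict semi-stability for $0<t_v<n$, smoothness for $t_v\in\{0,n\}$) of $\CM^{\loc}_{L_v}$ are known results of G\"ortz and Cho \cite{Gortz01,Cho18}, precisely as reviewed in Proposition~\ref{prop:semi-stable}. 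Thus $\CM_{\wt G,\wt K}$ is flat over $O_E[\Delta^{-1}]$, and in particular at a place $\nu$ over $v\not\in\Delta$ with $L_v$ self-dual (so $t_v=0$) the local model is smooth, hence $\CM_{\wt G,\wt K}$ is smooth over $O_E\otimes_{O_F}O_{F,v}$ as claimed.

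Finally, the Eisenstein and sign conditions at $v\not\in \Delta$ (automatic whenever the places of $F$ above $v$ are unramified over $p_v$, as assumed in \S\ref{section: distinguished v0} for the delicate case $0<t_v<n$, and automatic at split $v$) ensure that the chosen moduli functor cuts out exactly the flat closure of its generic fibre, which coincides with the canonical model of $\Sh_{\wt K}(\wt G,\{h_{\wt G}\})$ under the identification \eqref{eq: Shimura=Shimura x Shimura}. The existence of $O_{F_0}$ in the field $F_0$ with $F_0\neq\BQ$ guarantees the projectivity of the generic fibre, while at finite level properness on the integral model is not used here.

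The genuinely delicate point is the flatness at inert $v\not\in\Delta$ with $0<t_v<n$; the naive local model is in general not flat, and one must either verify that the Eisenstein/spin/wedge conditions built into Definition~\ref{defn: integral RSZ model} cut out the flat local model, or quote the relevant local model theorem directly. This is exactly where the unramifiedness hypothesis on $F_v/\BQ_{p_v}$ at such $v$ (imposed in \S\ref{section: distinguished v0}) is used, matching the set-up of \cite{Cho18} and of \cite[\S 6]{RSZ-Shimura}; beyond this input the remaining verifications are formal.
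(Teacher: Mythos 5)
The paper itself gives no proof of this theorem; it cites \cite[Thm.~6.2]{AFL} and implicitly its extension to vertex lattices of general type $t_v$ via the local-model results of G\"ortz and Cho. Your sketch correctly reconstructs the expected argument: relative representability over the finite \'etale $\CM_0$ via a Kottwitz-style PEL functor, separatedness from rigidity, and flatness/smoothness via the local model diagram. Two details deserve correction, though neither affects the structure of the argument.

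First, your claim that the level structure at $\Delta$ ``is imposed on the generic fibre and extends by normalisation'' is not how the functor is set up. Since $\Delta$ is inverted in $O_E[\Delta^{-1}]$, the rational Tate module $V_\Delta(A_0,A)$ is a lisse $F_{0,\Delta}$-sheaf over \emph{any} base scheme $S$, and the $K_{G,\Delta}$-orbit of isometries is part of the moduli datum integrally, not a generic-fibre condition to be spread out by normalisation. Representability is then relative representability over $\CM_0$, as you correctly set up in the first sentence.

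Second, your last paragraph is overcautious in a way that misplaces the delicacy. At an inert $v\not\in\Delta$ where $L_v$ is a vertex lattice, the place is unramified, so after base change to $O_{\breve F_{0,v}}$ the local model becomes a $\GL_n$ local model of parahoric type, and G\"ortz's theorem \cite{Gortz01} already gives flatness (and reducedness of the special fibre) of the \emph{naive} local model; this is exactly the computation Cho \cite[Prop.~3.33]{Cho18} reduces to. No wedge/spin/Eisenstein refinement of the local model is required at these places --- the Eisenstein and sign conditions in Definition~\ref{defn: integral RSZ model} exist to single out the correct hermitian space and handle ramified behaviour elsewhere, and are automatic under the standing unramifiedness hypotheses of \S\ref{section: distinguished v0}. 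The local model diagram then gives flatness everywhere outside $\Delta$ and smoothness exactly where $t_v\in\{0,n\}$, which includes the self-dual case $t_v=0$. Your appeal to ``spin/wedge conditions cutting out the flat closure'' is the kind of argument one needs at ramified primes, which are here absorbed into $\Delta$ and hence excluded from consideration.

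With these two clarifications your sketch is a faithful reconstruction of the intended proof.
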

	
	From now on, for simplicity we write $\CM=\CM_{\wt{G}, \wt{K}}$ and write $(A_0, A, \ov\eta)$ for a $S$-point of $\CM$. By the assumption $F_0 \not =\BQ$, $\CM$ is in fact projective over $O_E[\Delta^{-1}]$.

	\subsection{The singularity and Balloon--Ground stratification}\label{section: global KR strata}
	
	Let $v \not \in \Delta$ be a finite place of $F_0$ such that $L_v$ is not self-dual. Choose a place $\nu | v$ of the reflex field $E$.  Denote by $k_{v}=\BF_{q_v}$ (resp. $k_{\nu}$) the residue field of $F_{0,v}$ (resp. $E_{\nu}$). Denote by $\CM_{k_{\nu}}$ the special fiber of $\CM$ over $k_{\nu}$.  
	
	\begin{theorem}\cite[Prop.  4.2.]{Cho18}
		If $t_v=n$, then $\CM \to \Spec O_{E_\nu}$ is smooth. If $0<t_v<n$, then $\CM \to \Spec O_{E_\nu}$ is of strictly semi-stable reduction.
	\end{theorem}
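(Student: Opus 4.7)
The plan is to reduce the global claim to an explicit local computation via the local model diagram. Concretely, I would construct the standard PEL local model diagram
\[
\CM \xleftarrow{\pi} \wt{\CM} \xrightarrow{\phi} M^{\mathrm{loc}}_\nu
\]
at $\nu$, where $\pi$ is a torsor under a smooth group scheme (encoding trivializations of the de Rham realization of the universal $A$), $\phi$ is smooth, and $M^{\mathrm{loc}}_\nu \to \Spec O_{E,\nu}$ is the naive local model attached to the signature $\{(n-1,1)_{\varphi_0}, (n,0)_{\varphi \neq \varphi_0}\}$ and the parahoric lattice chain $L_v \subseteq L_v^\vee \subseteq \varpi_v^{-1} L_v$. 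Since both smoothness and strictly semi-stable reduction are smooth-local properties, it is enough to verify them for $M^{\mathrm{loc}}_\nu$.

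Next I would decompose $M^{\mathrm{loc}}_\nu$ along the embeddings $\Sigma_F \hookrightarrow \ov\BQ_{p_v}$. The hypothesis that $E \otimes_{\BQ} \BQ_{p_v}$ is a product of unramified extensions of $\BQ_{p_v}$ (imposed when $0<t_v<n$, automatic when $t_v=n$) guarantees that $O_{E,\nu}$ contains the images of every embedding, so the Hodge filtration splits accordingly. Nontrivial signature occurs only at $\varphi_0$; the other factors give Grassmannians $\Gr(0,n)$ or $\Gr(n,n)$, which are trivial. At $\varphi_0$, since $v$ is inert and $F_v/F_{0,v}$ is unramified, the usual unitary-to-linear reduction identifies the relevant factor with the naive $\GL_n$ parahoric local model for the chain $L_v \subseteq L_v^\vee$ and the minuscule cocharacter $\mu = (1,0,\dots,0)$. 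The sign and Eisenstein conditions of \cite[Section 4.1]{RSZ-Diagonalcycle} are automatic in this unramified setting, so the naive local model is already correct.

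The geometry of this $\GL_n$ local model is computed in \cite{Gortz01}: when the chain effectively reduces to one lattice (the case $t_v=n$, where $L_v^\vee = \varpi_v^{-1} L_v$ is just a scalar multiple of $L_v$) the local model is the Grassmannian $\Gr(1,n)$, smooth of relative dimension $n-1$; when $0<t_v<n$ the two-lattice chain yields strict semi-stable reduction with local equations of the form $O_{E,\nu}[[x_0,y_0,t_1,\dots,t_{n-1}]]/(x_0 y_0 - \varpi_v)$, mirroring Proposition \ref{prop:semi-stable} on the Rapoport--Zink side. The main obstacle is not the $\GL_n$ computation itself, which is classical, but rather the careful verification via Grothendieck--Messing theory that the naive local model truly models the \'etale-local structure of $\CM$ at $\nu$; once the Kottwitz, sign, and Eisenstein conditions are shown to be automatic under our unramifiedness assumption, the identification is immediate and the theorem follows.
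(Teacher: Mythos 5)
Your plan is correct, and it is essentially the route taken in the cited reference \cite[Prop.~4.2]{Cho18}: construct the PEL local model diagram, peel off the trivial Grassmannian factors at $\varphi \neq \varphi_0, \varphi_0^c$, use the unitary-to-linear (Morita) reduction at the inert place $v$ to land on the $\GL_n$ parahoric local model for the two-term chain $L_v \subseteq L_v^\vee$ with cocharacter $(1,0^{n-1})$, and then quote \cite{Gortz01} for its explicit geometry. The paper, however, deliberately takes a \emph{different} route (and says so explicitly): instead of citing the local model diagram and the G\"ortz computation, it computes the deformation functor directly via Grothendieck--Messing theory, exhibiting the local model as the moduli problem of pairs $(\ell, H)$ of a line and a hyperplane in $V \otimes R$ with $(\ell, H) = 0$, where $(\cdot,\cdot)$ is a degenerate form with radical $\Ker$ of dimension $t_v$. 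The trade-off is this: your approach is shorter and more modular (it reuses classical results as black boxes), whereas the paper's direct computation yields the Balloon--Ground strata \emph{for free} as the three loci $\ell \subseteq \Ker$, $\Ker \subseteq H$, and both, which is exactly the finer information needed in the rest of Section~\ref{section: global KR strata}. Starting from the G\"ortz presentation you would have to do additional work to recognize which Weil divisors of the special fiber correspond to the Kottwitz--Rapoport strata in moduli-theoretic terms.

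One small imprecision: you assert the hypothesis that $E \otimes_{\BQ} \BQ_{p_v}$ decomposes into unramified extensions is ``automatic when $t_v = n$.'' It is not automatic; the paper simply does not impose it in that case. Your argument still goes through for $t_v = n$ because the $\GL_n$ local model for the one-term chain is the Grassmannian $\Gr(1,n)$, which is smooth regardless of ramification, and the sign/Eisenstein conditions are genuinely automatic when $v$ is inert and unramified in $F$; but the phrasing as written would mislead a reader into thinking unramifiedness of $F_{0,v}/\BQ_p$ is a consequence of $t_v = n$.
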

	The proof is based on computations of local models. For our need, we give a different proof using Grothendieck--Messing theory, which contains finer information and relates the singularity of $\CM \to  \Spec O_{E, \nu}$ with the Balloon-Ground stratification on $\CM$. 
	
	Let $S$ be a locally noetherian scheme over $k_{\nu}$. For any point $(A_0, A, \ov \eta) \in \CM(S)$, consider the bundle $\omega_{A^\vee}:= (\Lie A^{\vee} )^{\vee}$. The Hodge filtration induces a short exact sequence of locally free $\CO_S$-modules:
	\begin{equation}
		0 \to \omega_{A^\vee}  \to H^{\mathrm{dR}}_1 (A) \to \Lie A \to 0.
	\end{equation}
	which decomposes under the $O_F[\Delta^{-1}]$-action into $\varphi$-parts for $\varphi \in \Phi \cup \Phi^c = \Hom(F, \ov{\BQ})$:
	\[
	0 \to (\omega_{A^\vee})_\varphi  \to H^{\mathrm{dR}}_{1} (A)_\varphi \to (\Lie A)_\varphi \to 0.
	\]
	By the Kottwitz signature condition, $(\omega_{A^\vee})_{\varphi_0}$ (resp. $(\omega_{A^\vee})_{\varphi_0^c}$) is a vector bundle on $S$ of rank $1$ (resp. $n-1$). 
	
	\begin{definition}
		\begin{enumerate}
			\item 
			The \emph{Balloon stratum} $\CM_{k_{\nu}}^\circ$ is the vanishing locus of the universal section $\Lie \lambda: (\Lie A)_{\varphi_0^c} \to (\Lie A^\vee)_{\varphi_0}$ inside $\CM_{O_{E_\nu}}$. 
			\item 
			The \emph{Ground stratum} $\CM_{k_{\nu}}^\bullet$ is the vanishing locus of the universal section $\Lie \lambda^\vee: (\Lie A^\vee)_{\varphi_0} \to (\Lie A)_{\varphi_0^c} $  inside $\CM_{O_{E_\nu}}$. Here $\lambda^\vee$ is the dual polarization of $\lambda$ at $v$ such that $\lambda^\vee \circ \lambda= [\varpi_v]$.
			\item The \emph{Linking stratum} $\CM_{k_{\nu}}^\dagger:= \CM_{k_{\nu}}^\circ \cap \CM_{k_{\nu_0}}^\bullet \hookrightarrow \CM_{k_{\nu_0}}$.
		\end{enumerate}
	\end{definition}
	
	As $\Lie \lambda^\vee \circ \Lie \lambda =[\varpi_v]$ , we have the \emph{Balloon--Ground stratification} $\CM_{k_{\nu}}=\CM_{k_{\nu}}^\circ \cup \CM_{k_{\nu}}^\bullet$, which globalizes the
	\emph{formal Balloon--Ground stratification} in \S \ref{Formal Balloon-Ground}. Consider the bilinear form 
	$$
	(-,-)_{\varphi}: H^{\mathrm{dR}}_1 (A)_{\varphi}  \times H^{\mathrm{dR}}_1 (A)_{\varphi^c} \to O_S
	$$
	induced by the polarization $\lambda: A \to A^{\vee}$.  As the kernel $\Ker \, \lambda[\varpi_{v}]$ has order $q^{2t_v}$, the orthogonal complements $(H^{\mathrm{dR}}_1 (A)_{\varphi_0^c} )^{\perp} \subseteq H^{\mathrm{dR}}_1 (A)_{\varphi_0}$ and $(H^{\mathrm{dR}}_1 (A)_{\varphi_0})^{\perp} \subseteq H^{\mathrm{dR}}_1 (A)_{\varphi_0^c}$ are vector bundles of rank $t_v$ over $S$.
	
	\begin{proposition}\label{Balloon-Ground: global comp with LTXZZ}
		The Balloon stratum $\CM_{k_{\nu}}^\circ$ (resp. Ground stratum $\CM_{k_{\nu}}^\bullet$) is the closed locus of $\CM_{k_{\nu}}$ where $(\omega_{A^\vee})_{\varphi_0} \subseteq (H^{\mathrm{dR}}_1 (A)_{\varphi_0^c} )^{\perp} $ (resp. $(H^{\mathrm{dR}}_1 (A)_{\varphi_0})^{\perp} \subseteq  (\omega_{A^\vee})_{\varphi_0^c} $).
	\end{proposition}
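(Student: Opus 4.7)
I would globalize the relative Dieudonn\'e-theoretic argument of Proposition \ref{Balloon-Ground: comp with LTXZZ}, replacing the relative Dieudonn\'e module of $X$ by $H^{\mathrm{dR}}_1(A)$ equipped with its Hodge filtration. Three standard inputs are needed: (i) functoriality of the Hodge filtration under the polarization, so that $\lambda_*(\omega_{A^\vee}) \subseteq \omega_A$ and this induces $\Lie\lambda \colon \Lie A \to \Lie A^\vee$; (ii) the Poincar\'e duality $H^{\mathrm{dR}}_1(A) \times H^{\mathrm{dR}}_1(A^\vee) \to \CO_S$ identifies $\omega_A \subseteq H^{\mathrm{dR}}_1(A^\vee)$ as the annihilator of $\omega_{A^\vee} \subseteq H^{\mathrm{dR}}_1(A)$; and (iii) the semilinearity relation $\iota(a)^\vee\,\lambda = \lambda\,\iota(\bar a)$ forces $\lambda_*$ to send the $\varphi$-isotypic component to the $\varphi^c$-isotypic component.

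For the Balloon equivalence, I would first unwind the vanishing locus defining $\CM_{k_\nu}^\circ$ as that of the nontrivial rank-one component $\Lie\lambda \colon (\Lie A)_{\varphi_0^c} \to (\Lie A^\vee)_{\varphi_0}$ (matching the local convention in which $(\Lie X)_0$ has rank one). By (i) this component vanishes if and only if $\lambda_*(H^{\mathrm{dR}}_1(A)_{\varphi_0^c}) \subseteq (\omega_A)_{\varphi_0}$. By (ii) the right-hand side is the annihilator of $(\omega_{A^\vee})_{\varphi_0}$ under Poincar\'e duality, so substituting the definition $(-,-)_\lambda = \langle -, \lambda_* - \rangle$ rewrites the condition as $((\omega_{A^\vee})_{\varphi_0}, H^{\mathrm{dR}}_1(A)_{\varphi_0^c})_\lambda = 0$, which is exactly $(\omega_{A^\vee})_{\varphi_0} \subseteq (H^{\mathrm{dR}}_1(A)_{\varphi_0^c})^\perp$.

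For the Ground equivalence the argument is symmetric. One can either repeat the reasoning with the dual polarization $\lambda^\vee \colon A^\vee \to A$ (defined integrally at $v \notin \Delta$ by $\lambda^\vee \circ \lambda = [\varpi_v]$) and the Hodge filtration on $H^{\mathrm{dR}}_1(A^\vee)$, or equivalently use that $\Lie\lambda \circ \Lie\lambda^\vee = 0$ in characteristic $v$ (since $[\varpi_v]$ kills $\Lie A$) together with the observation that $(H^{\mathrm{dR}}_1(A)_{\varphi_0})^\perp \cap H^{\mathrm{dR}}_1(A)_{\varphi_0^c}$ is precisely the $t_v$-rank kernel of $\lambda_*|_{H^{\mathrm{dR}}_1(A)_{\varphi_0^c}}$. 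Its containment in the rank-$(n-1)$ Hodge piece $(\omega_{A^\vee})_{\varphi_0^c}$ then translates, again via (ii)--(iii), into the vanishing of the rank-one component of $\Lie\lambda^\vee$ at $\varphi_0$, which is the defining condition for $\CM_{k_\nu}^\bullet$.

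The only real obstacle is the bookkeeping: tracking $\varphi_0$ versus $\varphi_0^c$ consistently under the semilinearity of $\lambda$, and translating between the ``contained in Hodge filtration'' formulation and the ``orthogonal under $(-,-)_\lambda$'' formulation. Once the conventions of the preceding subsection are fixed, each equivalence is a formal consequence of (i)--(iii) and requires no new geometric input beyond what is already present in the local Proposition \ref{Balloon-Ground: comp with LTXZZ}.
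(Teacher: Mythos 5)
Your plan is sound and is essentially the route the paper takes: globalize the relative Dieudonn\'e-theoretic argument verbatim, substituting $H^{\mathrm{dR}}_1(A)$ for the crystal and the $\varphi_0/\varphi_0^c$-decomposition for the $\BZ/2$-grading (the rank-one pieces match up as local degree $0$ for $\Lie$ corresponds to $\varphi_0^c$, local degree $1$ for $\omega$ corresponds to $\varphi_0$). Your Balloon argument is correct and mirrors the local one: the isotropy of the Hodge filtration is what makes the pairing $(-,-)_\lambda$ against $(\omega_{A^\vee})_{\varphi_0}$ descend to the $\Lie$-quotient, and that is exactly the step the local proof records as $((\omega_{X^\vee})_0, (\omega_{X^\vee})_1)_\lambda = 0$.

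Where I would push back is on your closing claim that each equivalence is ``a formal consequence of (i)--(iii).'' For the Ground stratum this undersells the real input. The identification $(H^{\mathrm{dR}}_1(A)_{\varphi_0})^\perp = \Ker(\lambda_*|_{\varphi_0^c})$ is indeed formal from perfectness of Poincar\'e duality. But concluding that its containment in $(\omega_{A^\vee})_{\varphi_0^c}$ is \emph{equivalent} to the vanishing of $\Lie\lambda^\vee|_{\varphi_0}$ --- equivalently, to $\Im(\lambda^\vee_*|_{\varphi_0}) \subseteq (\omega_{A^\vee})_{\varphi_0^c}$ --- requires the scheme-theoretic equality $\Ker(\lambda_*|_{\varphi_0^c}) = \Im(\lambda^\vee_*|_{\varphi_0})$, not merely the inclusion $\Im \subseteq \Ker$ that $\lambda_*\lambda^\vee_* = [\varpi_v]_* = 0$ gives in characteristic $p$. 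The paper handles this at the local level by citing \cite[Lem.\ 3.4.12]{LTXZZ}. Your rank-counting route can be made to work: the kernel and image are each locally free direct summands of constant rank $t_v$ because $\ker\lambda[\varpi_v]$ and $\ker\lambda^\vee[\varpi_v]$ are finite flat of constant order, so an inclusion of subbundles of equal rank is an equality. But that is precisely the content of the LTXZZ lemma, and it deserves to be named as the key non-formal input rather than folded into ``bookkeeping.'' Apart from that point of emphasis, your argument coincides with the paper's, which is a one-line reduction to Proposition \ref{Balloon-Ground: comp with LTXZZ}.
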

	\begin{proof}
		This is proved in the same way as Proposition \ref{Balloon-Ground: comp with LTXZZ}.
	\end{proof}
	
	\begin{theorem}\label{proof: global semi-stable}
		If $0<t_v<n$, then $\CM \to \Spec O_{E_\nu}$ is of strictly semi-stable reduction. Moreover, 
		\begin{enumerate}
			\item For a geometric point $x$ of $\CM_{k_{\nu}}^\dagger$, the completed local ring of $\CM$ at $x$ admits a smooth map to $W(k_\nu)[[x_1, y_1]]/(x_1y_1 - p)$ such that $\CM_{k_{\nu}}^\circ$ (resp.  $\CM_{k_{\nu}}^\bullet$) is the pullback of the Cartier divisor $x_1=0$ (resp. $y_1=0$). 
			\item The Balloon stratum $\CM_{k_{\nu}}^\circ$ and the Ground stratum $\CM_{k_{\nu}}^\bullet$ are smooth projective varieties of dimension $n-1$ over $k_\nu$. 
		\end{enumerate}
		
	\end{theorem}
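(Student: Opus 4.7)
The plan is to mirror the local analysis of Proposition \ref{prop: formal balloon-ground are smooth} and Proposition \ref{prop:semi-stable}, transferred to the global integral model via Grothendieck--Messing deformation theory. Fix a closed point $x = (A_0, A, \ov\eta) \in \CM(\ov{k}_\nu)$. By Serre--Tate, deformations of $x$ over $\wh{\CO}_{\CM,x}$ are governed by deformations of the $p_v$-divisible group $A_0[p_v^\infty] \times A[p_v^\infty]$ with its extra structure. The factor $A_0[p_v^\infty]$ is CM of height $[F_0{:}\BQ]$ and, by the assumption that $E \otimes_\BQ \BQ_{p_v}$ splits into unramified extensions of $\BQ_{p_v}$, deforms uniquely. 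Decomposing $A[p_v^\infty] = \prod_{w \mid p_v} A[w^\infty]$ under the $O_F$-action, every factor with $w \ne v$ is either $p$-divisible \'etale or of height $n[F_w{:}\BQ_{p_v}]$ with signature $(0,n)$-type Kottwitz condition, hence deforms uniquely. Therefore all non-smoothness comes from the factor at $v$, which together with its $O_{F_v}$-action and polarization is a basic triple of type $t_v$ in the sense of Section \ref{section:local RZ}.

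First I would show that the deformation functor of this remaining factor is pro-represented by the completed local ring of the Rapoport--Zink space $\CN_n^{[t_v]}$ at the corresponding point; this is standard since the Eisenstein condition is automatic in the unramified case and the sign condition is local and already built in. By Grothendieck--Messing, deforming the Hodge filtration $\omega_{A^\vee} \subseteq H^{\mathrm{dR}}_1(A)$ reduces under the $O_F$-decomposition and the polarization to deforming the rank-$1$ quotient lines $(\Lie A)_{\varphi_0}$ and $(\Lie A^\vee)_{\varphi_0^c}$; these are linked by the section $\Lie \lambda$ with $\Lie \lambda^\vee \circ \Lie \lambda = [\varpi_v]$. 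Away from $\CM_{k_\nu}^\dagger$, one of $\Lie \lambda$, $\Lie \lambda^\vee$ is a unit section, so the two lines determine each other, giving an unobstructed deformation problem of relative dimension $n-1$. Hence $\CM \to \Spec O_{E_\nu}$ is smooth on $\CM_{O_{E_\nu}} \setminus \CM_{k_\nu}^\dagger$, and both $\CM_{k_\nu}^\circ \setminus \CM_{k_\nu}^\dagger$ and $\CM_{k_\nu}^\bullet \setminus \CM_{k_\nu}^\dagger$ are smooth of dimension $n-1$ over $k_\nu$.

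At $x \in \CM_{k_\nu}^\dagger$, both quotient lines need to be deformed independently. Choosing local trivializations, let $x_1$ and $y_1 \in \wh{\CO}_{\CM,x}$ be the sections realizing $\Lie \lambda$ and $\Lie \lambda^\vee$; the compatibility $\Lie\lambda^\vee \circ \Lie\lambda = [\varpi_v]$ together with $F_v/\BQ_{p_v}$ being unramified yields $x_1 y_1 = p \cdot (\text{unit})$, while the remaining deformation parameters for $(\omega_{A^\vee})_{\varphi_0^c}$ and $(\omega_{A^\vee})_{\varphi_0}^\perp$ are free, producing a formally smooth map $W(k_\nu)[[x_1,y_1]]/(x_1y_1 - p) \to \wh{\CO}_{\CM,x}$; this gives (1) and that $\CM \to \Spec O_{E_\nu}$ is strictly semistable. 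Since $\CM_{k_\nu}^\circ$ and $\CM_{k_\nu}^\bullet$ pull back to $(x_1=0)$ and $(y_1=0)$ respectively by definition, they are smooth Cartier divisors of $\CM_{k_\nu}$ globally, of dimension $n-1$ over $k_\nu$, and projective since $\CM$ is; this gives (2). The main obstacle is cleanly identifying the deformation problem at $v$ with the local Rapoport--Zink moduli problem, i.e.\ checking that the PEL data in Definition \ref{defn: integral RSZ model} match the framing setup of Definition \ref{defn: local RZ space} after Dieudonn\'e-theoretic decomposition over all $w \mid p_v$; once this identification is made, (1) and (2) are immediate consequences of Proposition \ref{prop: formal balloon-ground are smooth}.
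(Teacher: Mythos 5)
Your proposal has a genuine gap. The identification on which you lean at the end---``the deformation functor of this remaining factor is pro-represented by the completed local ring of the Rapoport--Zink space $\CN_n^{[t_v]}$ at the corresponding point,'' so that ``(1) and (2) are immediate consequences of Proposition \ref{prop: formal balloon-ground are smooth}''---only makes sense when $x$ lies in the \emph{basic} locus of $\CM_{k_\nu}$. The framing object $\BX$ of $\CN_n^{[t_v]}$ is (relatively) supersingular, so the quasi-isogeny $\rho$ required to produce the ``corresponding point'' exists only when the $v$-component $A[v^\infty]$ of $A$ is isogenous to $\BX$. The statement you are proving is about \emph{all} geometric points of $\CM^\dagger_{k_\nu}$, basic or not. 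At a non-basic point there is no point of $\CN_n^{[t_v]}$ whose deformation ring governs the situation, and Proposition \ref{prop: formal balloon-ground are smooth} does not apply. (Note also that Proposition \ref{prop: formal balloon-ground are smooth} is itself derived from Proposition \ref{prop:semi-stable}, i.e.\ from G\"ortz--Cho's local model computation; it does not give you semi-stability for free.)

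Your direct Grothendieck--Messing steps (the middle of the proposal) are the right idea and in fact mirror what the paper actually does, but as written they stop short. Having elements $x_1,y_1\in \wh{\CO}_{\CM,x}$ with $x_1y_1 = p\cdot(\mathrm{unit})$ together with ``the remaining deformation parameters are free'' does not by itself produce a formally smooth map $W(k_\nu)[[x_1,y_1]]/(x_1y_1-p)\to\wh{\CO}_{\CM,x}$; you must rule out additional relations, i.e.\ control the dimension. The paper closes this by explicitly identifying the deformation functor at $v$, after Grothendieck--Messing and the signature condition, with the local model of pairs $(\ell,H)$ (a line $\ell$ and a hyperplane $H$ in a fixed rank-$n$ quadratic space $V$ over $W(k_\nu)$ with form $\sum_{i\le t_v}\varpi_v x_iy_i + \sum_{i>t_v} x_iy_i$ and radical $\Ker$ of rank $t_v$) subject to $(\ell,H)=0$, and then stratifying by whether $\ell\subseteq\Ker$ and $\Ker\subseteq H$: the three cases correspond precisely to $\CM^\dagger$, $\CM^\bullet\setminus\CM^\dagger$, $\CM^\circ\setminus\CM^\dagger$, and a short computation in that model gives the $x_1y_1=p$ relation together with the freeness of the other $n-2$ coordinates. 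If you replace your appeal to the Rapoport--Zink space by this explicit local-model computation---which the paper carries out precisely because it works uniformly at all points, not just the basic ones---your argument goes through.
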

	\begin{proof}
		The proof is similar to Proposition \ref{prop: formal balloon-ground are smooth} and \cite[Thm. 5.2.5]{LTXZZ}. We compute the completed local rings via Grothendieck--Messing theory. By our assumption on $L$, the local ring $O_{E_{\nu}} \simeq W(k_\nu)$ is the ring of Witt vectors for $k_\nu$. Let $k$ be any perfect field over $k_{\nu}$ and $A$ be a $k$-point of $\CM$. 
		
		Denote by $\BD(A)$ the covariant Dieudonne crystal of $A$. Consider the Dieudonne module $M=\BD(A)(W(k))$ which is finite free over $W(k)$ of rank $2n[F_0: \BQ]$. By the signature condition, we only need to consider deformations $\varphi_0$ and $\varphi_{0}^c$ part of the Hodge Filtration of $A$. Now  $M_{\varphi_0}$ and $M_{\varphi^c_0}$ are finite free over $W(k)$ of rank $n$.  The short exact sequence   $ 0 \rightarrow \Lie(A^{\vee})^{\vee} \rightarrow H_1^{\mathrm{dR}}(A) \rightarrow \Lie(A) \rightarrow 0 $
		can be identified by Dieudonne theory with  ($V_M$ is the Verschiebung operator on $M$)
		\[ 0 \rightarrow V_M M/ \varpi_v M \rightarrow M/\varpi_v M \rightarrow M/V_MM \rightarrow 0. 
		\]
		Let $V=K^n$ be a $n$-dimensional quadratic space over $K:=W(k)[1/p]$ with the bilinear form
		\[ (x, y)= \sum_{i=1}^{t_v} \varpi_{v} x_iy_i + \sum_{i=t_v+1}^{n} x_iy_i . 
		\]	
		Let $\Ker$ be the radical of this bilinear form. We can identify $M_{\varphi_0} \otimes K$ and $M_{\varphi^c_0} \otimes K$ with $V$ such that the pairing between them induced by $\lambda$ can be identified with the quadratic form above. By Grothendieck--Messing theory we see that the local model for $\CM \to \Spec O_{E_\nu}$ is the following moduli problem $(\ell, H)$ for $W(k_\nu)$-algebras $R$ where $p \in R$ is locally nilpotent:
		\[	
		\text{ $\ell$ is a line in $V \otimes R$ and $H$ is a hyperplane in $V \otimes R$ such that $(\ell, H)=0$.}
		\]
		The Hodge filtration of $A$ defines such a pair $(\ell_0, H_0)$ for $R=k$. And we can compute the completed local ring of $\CM$ at $x$ by deforming $(\ell_0, H_0)$ in the local model. If $\Ker \not \subseteq H$, then $\Ker + H= V \otimes R$, hence $(\ell, V \otimes R))=0$ i.e., $\ell \subseteq \Ker$. So we have the following three cases:
		\begin{itemize}
			\item $\ell \subseteq \Ker$, and $\Ker \subseteq H$, which corresponds to $x \in \CM^\dagger_{k_{\nu_0}}$.
			\item $\ell \not \subseteq \Ker$, and $\Ker \subseteq H$,  which corresponds to $x \in \CM^\bullet_{k_{\nu_0}} -\CM^\dagger_{k_{\nu_0}}$.
			\item $\ell \subseteq \Ker$, and $\Ker \not \subseteq H$, which corresponds to $x \in \CM^\circ_{k_{\nu_0}} -  \CM^\dagger_{k_{\nu_0}}$.
		\end{itemize}
		Standard computations show the semi-stable reduction results $(1)$ and $(2)$.
	\end{proof}

	\begin{remark}
		If $t_v=1$ i.e, $L_v$ is almost-self dual, the Balloon--Ground stratification is introduced in \cite[Section 5]{LTXZZ} under the assumption that $p$ is a special inert prime in particular $F_{0,v_0}=\BQ_p$. In this case, the Balloon stratum $\CM_{k_\nu}^\circ$ lies in the basic locus of $\CM_{k_{\nu}}$. Hence the basic locus of $\CM_{k_\nu}$ contains some irreducible components of $\CM_{k_\nu}$. Such phenomenon also appears in Stamm's example \cite{Stamm} for Hilbert modular surfaces at Iwahori levels.
	\end{remark}

	\section{Kudla--Rapoport cycles and modified Hecke CM cycles}\label{section: global KR}
	
	In this section, we globalize the construction in \S \ref{section:local RZ}. Fix a finite place $v_0 \not \in \Delta$ of $F_0$ such that $L_{v_0}$ is not self-dual. We introduce two kinds of Kudla--Rapoport divisors on the integral model with testing functions $1_{L_{v_0}}$ and $1_{L_{v_0}^{\vee}}$ at $v_0$ respectively. Then we desingularize the self-product $\CM \times \CM$ and define (derived) modified Hecke CM cycles on $\CM$. Finally, we prove basic uniformization results to connect with \S \ref{section:local RZ}.

	\subsection{The Kudla--Rapoport divisors over the generic fiber}
	
	Consider any neat compact open subgroup $K_{\wt G}=K_{Z^\BQ} \times K_G$ of $\wt G(\BA_f)$ (not necessarily from a lattice $L$). For any $(A_0, A, \ov\eta) \in M_{\wt G, \wt K}(S)$, endow the $F$-vector space $\Hom^\circ_F(A_0,A)$ with the hermitian form
	\begin{equation}
		(u_1,u_2)
		:= \lambda_0^{-1}\circ u_2^\vee\circ \lambda \circ u_1 \in \End ^\circ_F(A_0)\simeq F.
	\end{equation}
	
	\begin{definition}\label{def:KR generic fiber}
		Let $\xi\in F_{0}$ and $\mu\in  V(\BA_{0,f})  /K_{G}$.	The \emph{Kudla--Rapoport cycle} $Z(\xi, \mu)$ is the functor sending any locally noetherian $E$-scheme $S$ to the groupoid of tuples
		$(A_0, A, \ov\eta, u)$ where
		\begin{altitemize}
			\item  $(A_0, A, \ov \eta) \in M_{\wt G, \wt K}(S)$.
			\item  $u\in \Hom^\circ_F(A_0,A)$ such that $(u,u)=\xi$.
			\item $\ov\eta (u)$ is in the $K_{G}$-orbit $\mu$. 
		\end{altitemize}
		An isomorphism between two tuples $(A_0, A, \ov\eta, u)$ and $(A_0', A', \ov\eta', u')$ is an isomorphism of tuples $(\phi_0, \phi): (A_0, A, \ov\eta) \simeq (A_0', A', \ov\eta') $ such that $\phi \circ u= u' \circ \phi_0$.
	\end{definition}
	
	By positivity of the Rosati involution, $Z(\xi, \mu) $ is empty unless $\xi \in F_{0,+}$ is totally positive or $\xi=0$. The natural forgetful morphism $i: Z(\xi, \mu) \to M$ is finite and unramified \cite[Prop. 4.22]{Liu-Fourier-Jacobi}. It is \'etale locally a Cartier divisor if $\xi \not = 0$. 
	
	By taking the image of $Z(\xi, \mu)$ (with multiplicities) inside $M_{\wt G, \wt K}$, we view $Z(\xi, \mu)$ as elements in the Chow group (of $\BQ$-coefficients) $\Ch^1(M_{\wt G, \wt K})$. For a Schwartz function $\phi \in \CS(V(\BA_{0,f}))^{K_{G}}$ and $\xi \in F_{0,+}$,  the $\phi$-averaged Kudla--Rapoport divisor is the finite summation 
	\begin{equation}\label{KR generic average}
		Z(\xi, \phi)\colon=\sum_{\mu\in V_\xi(\BA_{0,f})/K_{G}} \phi(\mu)\,Z(\xi, \mu) \in \Ch^1(M_{\wt G, \wt K}).
	\end{equation}Here $V_{\xi}=\{x \in V| (x,x)= \xi \}$ is the hyperboloid in $V$ of length $\xi$.  We put 
	\[ 
	Z(0, \phi) := -\phi(0) c_1(\omega)  \in \Ch^1(M_{\wt G, \wt K})
	\]
	where $\omega$ is the automorphic line bundle (Hodge bundle) as in \cite{Kudla97Duke}, which is a descent of the tautological line bundle on the hermitian symmetric domain $X$ of $M_{\wt G, \wt K}$ to $E$.

	\subsection{Generating series}
	
	For $\xi \in F_0$, consider the weight $n$ Whittaker function on $\SL_2(F_{0, \infty})$:
	$$W^{(n)}_\xi (h_{\infty})=\prod_{v | \infty}  W^{(n)}_\xi (h_{v}).
	$$ Consider the Weil representation $\omega=\omega_V$ of $\SL_2(\mathbb A_{0,f})$ on $\CS(V(\BA_{0,f}))$ which commutes with the natural action of $\U(V)(\BA_{0,f})$.  For a Schwartz function $\phi \in \CS(V(\BA_{0,f}))^{K_{G}}$, we form \emph{the generating series of Kudla--Rapoport divisors} on $M_{\wt G, \wt K}$ by
	\begin{equation}\label{eq: generating series generic fiber}
		Z(h, \phi) :=W^{(n)}_0(h_{\infty}) Z(0, \omega(h_f) \phi ) + \sum_{\xi \in F_{0, +}} W^{(n)}_\xi (h_{\infty})  Z(\xi, \omega(h_f) \phi )\in \Ch^1(M_{\wt G, \wt K}),
	\end{equation}
	for any $h=(h_\infty, h_f) \in \SL_2(\BA_0)=\SL_2(F_{0, \infty}) \times \SL_2(\BA_{0, f})$.

	For any $h_f \in \SL_2(\mathbb A_{0,f})$, by definition we have 
	\begin{equation}\label{eq: dual relation on generic fiber}
		Z(h, \omega(h_f)\phi)=Z(hh_f, \phi). 
	\end{equation}	
	
	The known modularity over $E$ \cite{Liu-thesis, YZZ} is one main input of our modularity result.
	
	\begin{proposition}\cite[Thm. 8.1]{AFL}\label{modularity over generic fiber} 
		The function $Z(h, \phi)$ is a weight $n$ holomorphic automorphic form valued in $\Ch^1(M_{\wt G, \wt K})$ i.e., we have 
		$$Z(h, \phi) \in \CA_{\rm hol}(\SL_2(\BA_0), K_0, n)_{\overline{\BQ}} \otimes_{\overline{\BQ}} \Ch^1(M_{\wt G, \wt K})_{\overline{\BQ}}$$
		where $K_0 \subseteq \SL_2(\BA_{0,f})$ is any compact open subgroup that fixes $\phi$ under the Weil representation.
	\end{proposition}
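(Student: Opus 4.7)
The plan is to reduce the claim to the known modularity of Kudla--Rapoport generating divisors on the standard unitary Shimura variety $\Sh_{K_G}(\Res_{F_0/\BQ} G, \{h_G\})$, which is due to Liu \cite{Liu-thesis} (and alternatively to Yuan--Zhang--Zhang in the orthogonal case, transferred via theta lifts). The argument should proceed in three steps.

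First, I would exploit the product decomposition (\ref{eq: Shimura=Shimura x Shimura}): over $E$, $M_{\wt G,\wt K}$ is a disjoint union of copies of $\Sh_{K_G}(\Res_{F_0/\BQ}G, \{h_G\})$, each copy indexed by a component of the finite CM Shimura variety $\Sh_{K_{Z^\BQ}}(Z^\BQ, h_\Phi)$. The PEL-type moduli description (\S\ref{moduli int of RSZ}) shows that on each component, the Kudla--Rapoport cycle $Z(\xi,\mu)$ pulls back to the classical Kudla--Rapoport divisor on $\Sh_{K_G}$ defined by the vector $u$ of length $\xi$, now viewed as an element of $\Hom^\circ_F(A_0, A)$ with the auxiliary CM abelian variety $A_0$ fixed. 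Thus the generating series $Z(h,\phi)$ decomposes as a sum over components, each summand of which is pulled back from the corresponding generating series on $\Sh_{K_G}$.

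Second, on each component I would appeal directly to Liu's theorem \cite[Thm.\ 3.5]{Liu-thesis}, which asserts that for the hermitian Shimura variety of signature $\{(n-1,1)_{\varphi_0}, (n,0)_{\varphi\neq\varphi_0}\}$ attached to $V$, the Chow-group-valued generating series of Kudla--Rapoport divisors (completed at $\xi=0$ by a suitable scalar multiple of the Hodge class $c_1(\omega)$) is a holomorphic Hilbert modular form of parallel weight $n$ on $\SL_2(\BA_0)$, right-invariant under a sufficiently small compact open subgroup. The normalization $Z(0,\phi)=-\phi(0)c_1(\omega)$ is precisely the one forced by the Siegel--Weil formula, ensuring compatibility of the constant term. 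Since $F_0\neq \BQ$, the Shimura variety is projective and no regularization issues arise at cusps. Liu's proof uses Howe duality together with restriction to an embedded $\U(1,1)$-Shimura curve to reduce to the classical modularity of theta series attached to hermitian lattices of rank $n-1$ plus a $\GL_2$-calculation; one could alternatively invoke Borcherds' regularized theta lifting (in Bruinier's formulation) to obtain the same result.

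Third, to promote the modularity from each copy to the full series on $M_{\wt G,\wt K}$, I would note that the Hecke action of $K_{Z^\BQ}$ permutes the components of $M_0$ compatibly with the Weil representation action of $\omega(h_f)$ for $h_f\in\SL_2(\BA_{0,f})$. Concretely, the dual-relation identity (\ref{eq: dual relation on generic fiber}) $Z(h,\omega(h_f)\phi)=Z(hh_f,\phi)$ is built into the construction of $Z(h,\phi)$, and combined with the left $\SL_2(F_0)$-invariance coming from Liu's theorem on each component, it yields automorphy for the whole series. The weight-$n$ holomorphy in the archimedean variable is immediate from the explicit formula $W^{(n)}_\xi(h_\infty)$, which is the standard holomorphic weight-$n$ Whittaker function and transforms correctly under $\SO(2)^{\Sigma_\infty}\subseteq \SL_2(F_{0,\infty})$. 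The main obstacle is really Step~2, i.e.\ the input from \cite{Liu-thesis}: the reduction to the scalar-valued case via pairing with linear functionals on $\Ch^1(M_{\wt G,\wt K})_{\ov\BQ}$ is routine, but establishing scalar modularity from scratch would require a substantial Howe-duality or Borcherds-lift argument which I would simply cite.
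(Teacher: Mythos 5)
The paper proves this statement purely by citation to \cite[Thm.~8.1]{AFL}, and your reconstruction correctly tracks the argument there: Zhang's proof indeed reduces the modularity for the RSZ-variant Shimura variety to Liu's theorem for the classical unitary Shimura variety, using the compatibility of the cycle-theoretic constructions with the finite CM factor $M_0$. One minor imprecision worth flagging: the product decomposition (\ref{eq: Shimura=Shimura x Shimura}) (and Proposition \ref{RSZ complex uniformization}) hold only after base change to $\BC$, not over $E$ as your Step~1 asserts; passing from modularity in $\Ch^1(M_{\wt G, \wt K})_{\ov\BQ} \otimes \BC$ back to modularity over $E$ requires the standard observation that $\Ch^1(M)_\BQ \hookrightarrow \Ch^1(M_\BC)_\BQ$ is injective and that the space $\CA_{\rm hol}(\SL_2(\BA_0), K_0, n)$ of holomorphic forms of fixed weight and level is finite-dimensional, so membership in $\CA_{\rm hol}\otimes W$ with coefficients in a subspace $V\subseteq W$ descends to $\CA_{\rm hol}\otimes V$. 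Your Step~3 is also slightly misphrased --- identity (\ref{eq: dual relation on generic fiber}) is an intertwining of the $\SL_2(\BA_{0,f})$-action with the shift of test function and is part of the definition, while the left $\SL_2(F_0)$-invariance is the nontrivial content supplied by Liu --- but the overall structure of the argument is the one actually used in the cited source.
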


	\subsection{The Kudla--Rapoport divisors on the integral model $\CM$}
	
	Consider the level structure for $L$ and $\Delta$ in \S \ref{section: distinguished v0}:
	\[ K_{\wt{G}}= K_{Z^{\BQ}}^{\Delta} \times K_{Z^{\BQ}, \Delta}  \times K_{G, \Delta} \times K_G(L)^{\Delta} \leq  \wt{G}(\mathbb A_f) \]
	So $K_{G,v_0}=\U(L_{v_0})$ and $L_{v_0}$ is a vertex lattice of type $0 \leq t_0 \leq n$.
	
	For any point $(A_0, A, \ov\eta) \in \CM_{\wt G, \wt K}(S)$, consider the hermitian form $(\cdot,\cdot)$ on $\Hom_{O_F}(A_0,A)[1/\Delta]$ given by
	\[	
	(u_1, u_2)= \lambda_0^{-1}\circ u_2^\vee\circ \lambda \circ u_1 \in \End_{O_F}(A_0)[1/\Delta] \simeq O_F[1/\Delta].
	\] 
	
	The polarization $\lambda: A \to A^{\vee}$ gives an injection 
	$$\lambda \circ-: \Hom_{O_F}(A_0,A)[1/\Delta]  \to \Hom_{O_F}(A_0,A^\vee)[1/\Delta]
	$$
	
	\begin{definition}\label{defn: global int KR}
		Let $\xi\in F_{0,+}$ and $\mu_\Delta \in V(F_{0,\Delta})/K_{G,\Delta}$.	\emph{The Kudla--Rapoport cycle}  $\CZ(\xi, \mu_{\Delta})$ (resp. $\CY(\xi, \mu_{\Delta})$) is the functor sending any locally noetherian scheme $S$ over $O_E[1/\Delta]$ to the groupoid of tuples $(A_0, A, \ov\eta, u)$ where
		\begin{altitemize}
			\item  $(A_0, A, \ov\eta) \in\CM_{\wt G, \wt K}(S)$;
			\item $u \in \Hom^\circ_F(A_0, A)$ with $(u,u)=\xi$.  And $u \in \Hom_{O_F}(A_0,A)[1/\Delta]$ (resp.  $\lambda \circ u \in \Hom_{O_F}(A_0,A^\vee)[1/\Delta]$). 
			\item $\ov\eta (u)$ is in the $K_{G,\Delta}$-orbit $\mu_{\Delta}$. 
		\end{altitemize}
	\end{definition} 
	
	\begin{proposition}
		The morphisms $\CZ(\xi, \mu_\Delta) \rightarrow \CM_{\wt G, \wt K}$ and $\CY(\xi, \mu_\Delta)\rightarrow \CM_{\wt G, \wt K}$ are representable, finite and unramified, and \'etale locally Cartier divisors on $\CM_{\wt G, \wt K}$.
	\end{proposition}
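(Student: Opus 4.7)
The plan is to reduce each property to a local calculation via the moduli interpretation in Definition \ref{defn: global int KR}, and then to invoke the local geometry developed in Section \ref{section:local RZ}. First, I would establish representability: the forgetful morphism from the groupoid of tuples $(A_0, A, \ov\eta, u)$ to $\CM_{\wt G, \wt K}$ factors as a closed immersion into the relative Hom-scheme $\uHom_{O_F}(\CA_0, \CA)$ (resp.\ the preimage of $\uHom_{O_F}(\CA_0, \CA^\vee)$ via $\lambda$) over $\CM$, where $(\CA_0, \CA)$ is the universal pair. The conditions $\pair{u,u}=\xi$ and $\ov\eta(u) \in \mu_\Delta$ cut out a closed subfunctor, which is representable by standard results on Hom-schemes of abelian schemes.

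Next I would prove finiteness and unramifiedness. Properness is immediate since $\CM$ is projective and $\CZ(\xi,\mu_\Delta)\to\CM$ is a closed immersion from a separated scheme of finite type. For quasi-finiteness one uses that for $\xi\neq 0$ totally positive, at any geometric point $s$ of $\CM$, the set of $u\in\Hom_{O_F}(A_{0,s},A_s)$ with $\pair{u,u}=\xi$ is finite (the hermitian form is positive definite on the Betti/crystalline realizations, so such $u$ lie in a compact ball inside a discrete lattice), and the level condition cuts out a finite subset. Unramifiedness follows from rigidity of homomorphisms of abelian varieties: a deformation of $(A_0,A,\ov\eta)$ admits at most one lift of $u$, so the relative cotangent sheaf of $\CZ(\xi,\mu_\Delta)\to\CM$ vanishes. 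The same argument works for $\CY(\xi,\mu_\Delta)$, using $\lambda\circ u$ in place of $u$.

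The key remaining step is the \'etale-local Cartier divisor property. I would reduce to a computation at a closed point $x$ of $\CZ(\xi,\mu_\Delta)$ (resp.\ $\CY(\xi,\mu_\Delta)$) lying over a closed point of $\CM$ with residue characteristic $p$. Over the complement of the residue characteristics in $\Delta$ this is classical (the map $p$ is invertible, so Grothendieck--Messing applies on nilpotent thickenings). For a place $v\notin\Delta$, the deformation theory of the datum $u$ relative to the deformation theory of $(A_0,A)$ is governed by a single lifting obstruction: it is the condition that the morphism $u$ (or $\lambda\circ u$) in the crystalline/de Rham realization carries the Hodge filtration of $A_0$ into that of $A$ (resp.\ $A^\vee$), and by the signature condition this obstruction takes values in a line bundle. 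Hence the defining ideal of the cycle in the completed local ring $\widehat{\CO}_{\CM,x}$ is principal, generated by a single non-zero-divisor once we know the cycle is a proper closed subscheme of the irreducible (by regularity of $\CM$) ambient scheme.

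The main obstacle will be the parahoric case $0<t_{v_0}<n$: here $\CM$ itself is only regular with strictly semi-stable reduction (Theorem \ref{proof: global semi-stable}), not smooth, so one cannot appeal directly to a smooth local model to read off the defining equation. I would handle this by matching the local picture with the formal neighborhoods in $\CN_n^{[t]}$ via the basic uniformization that will be established later in the section, together with the fact, proved in Section \ref{section: local KR} (see in particular \cite[Prop.~5.9]{Cho18}), that $\CZ(u)$ and $\CY(u)$ are Cartier divisors of $\CN_n^{[t]}$ in the local setting. Combined with the Grothendieck--Messing computation at places where $L_v$ is self-dual, this yields the Cartier property \'etale locally everywhere and concludes the proof.
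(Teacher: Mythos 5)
Your overall strategy---representability via a closed immersion into a Hom-scheme, quasi-finiteness from positivity of the hermitian form on realizations, unramifiedness from rigidity of homomorphisms, and the Cartier property from a rank-one Grothendieck--Messing obstruction---is exactly what the reference cited in the paper's one-line proof (\cite[Prop.~4.22]{Liu-Fourier-Jacobi}) carries out, so on the whole you reconstruct the intended argument.

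However, your final paragraph is a mis-step, and you should delete it. The Grothendieck--Messing computation in the sentence preceding it already disposes of the parahoric case: given any lift of the point of $\CM$ to a square-zero thickening, the obstruction to lifting $u$ lies in $\Hom\big((\omega_{\CA_0^\vee})_{\varphi_0},\, (\Lie \CA)_{\varphi_0}\big)$, which is a line bundle on $\CM$ by the signature conditions $(1,0)$ and $(n-1,1)$ at $\varphi_0$ alone; nothing in this requires smoothness of $\CM$ over the base, only that you compute the obstruction relative to a lift of the ambient point. By contrast, the appeal to basic uniformization cannot close the argument even in principle: $\Theta$ is defined only on the formal completion of $\CM_{O_{\breve E_\nu}}$ along the \emph{basic} locus, and a general point of $\CZ(\xi,\mu_\Delta)\otimes k_\nu$ is not basic when $0<t_v<n$ (Theorem~\ref{int support}(iii) only localizes the intersection with a $1$-cycle, not all of $\CZ(\xi,\mu_\Delta)$). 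So the detour is both unnecessary and would leave the non-basic locus untreated, besides introducing an awkward forward reference to Section~\ref{section: basic uni local-global}.

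A smaller gap: the map $\CZ(\xi,\mu_\Delta)\to\CM$ is \emph{not} a closed immersion (several special homomorphisms $u$ can sit over the same point of $\CM$, which is precisely why the statement asserts only finite and unramified), so you cannot deduce properness from ``projective $\CM$ plus closed immersion.'' The correct route is that $\CZ(\xi,\mu_\Delta)$ closed-immerses into a bounded-degree piece of $\uHom_{O_F}(\CA_0,\CA)$ (the bound coming from $\xi$), and each such piece is projective over $\CM$; properness then combines with the quasi-finiteness you correctly derive from positivity.
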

	\begin{proof}
		This follows from the proof of \cite[Prop. 4.22]{Liu-Fourier-Jacobi}.
	\end{proof}
	
	\begin{remark}
		We have a natural inclusion $\CZ(\xi, \mu_\Delta) \to \CY(\xi, \mu_\Delta)$. In general, the Cartier divisors $\CZ(\xi, \mu_\Delta)$  and $\CY(\xi, \mu_\Delta)$ may not be flat over $O_E$, see the Shimura curve example \cite{KR-height}.
	\end{remark}

	Choose $\phi^{\Delta, v_0} \in \CS(V(\BA^{\Delta, v_0}_f))$ as the indicator function of the completion of $L$ inside $V(\BA^{\Delta, v_0}_f)$.  For $i \in  \{1, 2\}$, consider the Schwartz function $\phi_i=\phi_{\Delta} \otimes \phi_{i, v_0} \otimes \phi^{\Delta, v_0} \in \CS(V(\BA_{0,f}))^{K_G}$ where $\phi_{\Delta} \in \CS(V(F_{\Delta}))^{K_{G, \Delta}}$ and
	\begin{equation}
		\phi_{1, v_0}=1_{L_{v_0}}, \quad \phi_{2, v_0}=1_{L_{v_0}^{\vee}}.
	\end{equation} 
	For $\xi \in F_{0,+}$,  the $\phi_i$-averaged Kudla--Rapoport divisor is the finite summation
	\begin{align}\label{def:KR int}
		\CZ(\xi, \phi_1)\colon=\sum_{\mu_{\Delta} \in V_\xi(F_{0,\Delta})/K_{G,\Delta}} \phi_\Delta(\mu_{\Delta})\,\CZ(\xi, \mu_{\Delta}),
	\end{align}
	\begin{align}
		\CZ(\xi, \phi_2)\colon=\sum_{\mu_{\Delta} \in V_\xi(F_{0,\Delta})/K_{G,\Delta}} \phi_\Delta(\mu_{\Delta})\,\CY(\xi, \mu_{\Delta})
	\end{align}
	viewed as elements in the Chow group $\Ch^1(\CM_{\wt G, \wt K})$.  
	
	\begin{proposition}
		For $\phi_i \in \CS(V(\BA_{0,f}))^{K_G}$ above ($i=1,2$),  the generic fiber of $\CZ(\xi, \phi_i)$ over $E$ agrees with $Z(\xi, \phi_i)$ defined by \eqref{KR generic average}.
	\end{proposition}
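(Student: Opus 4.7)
The strategy is a direct comparison of the moduli descriptions on the generic fiber. On an $E$-scheme $S$, a point of $\CM_{\wt G, \wt K}$ automatically carries, besides the tracked $K_{G,\Delta}$-level structure $\ov\eta_\Delta$, a prime-to-$\Delta$ level structure $\ov\eta^\Delta$ arising from the rational Tate module: the hermitian $O_F\otimes\wh\BZ^\Delta$-module $T^\Delta(A_0,A)$ is, up to a $K_G(L)^\Delta=\U(\wh L^\Delta)$-orbit, isometric to $\wh L^\Delta$. This is exactly what identifies the generic fiber of $\CM_{\wt G, \wt K}$ with $M_{\wt G, \wt K}$ under the level $K_{\wt G}$ fixed in \S\ref{section: distinguished v0}. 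Under this identification, integrality of a quasi-homomorphism at a place $v\not\in\Delta$ translates into a lattice membership condition under $\ov\eta_v$.

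First, over $E$, I would verify place by place the equivalence
\[
u\in\Hom_{O_F}(A_0,A)\otimes_{O_F}O_{F,v}\ \Longleftrightarrow\ \ov\eta_v(u)\in L_v
\]
for each $v\not\in\Delta$; and analogously, using the dual polarization $\lambda^\vee$,
\[
\lambda\circ u\in\Hom_{O_F}(A_0,A^\vee)\otimes_{O_F}O_{F,v}\ \Longleftrightarrow\ \ov\eta_v(u)\in L_v^\vee.
\]
Both are standard Tate-module/polarization translations. At places $v\ne v_0$ outside $\Delta$ the lattice $L_v$ is self-dual, so the two conditions coincide; at $v=v_0$ they differ by $L_{v_0}$ versus $L_{v_0}^\vee$, which is the whole distinction between $\CZ$ and $\CY$.

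Combining these local equivalences, the generic fiber of $\CZ(\xi,\mu_\Delta)$ parametrizes $(A_0,A,\ov\eta,u)$ with $\pair{u,u}=\xi$, $\ov\eta_\Delta(u)\in\mu_\Delta$ and $\ov\eta^\Delta(u)\in\wh L^\Delta$; breaking the constraint $\ov\eta^\Delta(u)\in\wh L^\Delta$ into $K_G(L)^\Delta$-orbits $\mu^\Delta$, one obtains
\[
\CZ(\xi,\mu_\Delta)_E=\sum_{\mu^\Delta\in V_\xi(\BA_{0,f}^\Delta)/K_G(L)^\Delta}1_{\wh L^\Delta}(\mu^\Delta)\,Z(\xi,\mu_\Delta\otimes\mu^\Delta).
\]
The analogous formula holds for $\CY(\xi,\mu_\Delta)_E$, with $1_{\wh L^\Delta}$ replaced by $1_{\wh L^{\Delta,v_0}}\otimes 1_{L_{v_0}^\vee}$. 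Summing over $\mu_\Delta$ weighted by $\phi_\Delta$, recognizing $\phi_1=\phi_\Delta\otimes 1_{L_{v_0}}\otimes\phi^{\Delta,v_0}$ and $\phi_2=\phi_\Delta\otimes 1_{L_{v_0}^\vee}\otimes\phi^{\Delta,v_0}$, and using \eqref{KR generic average}, I obtain $\CZ(\xi,\phi_1)_E=Z(\xi,\phi_1)$ and $\CZ(\xi,\phi_2)_E=Z(\xi,\phi_2)$.

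The main (and really only) subtle point is the bookkeeping at $v=v_0$: one must check that the moduli condition $\lambda\circ u\in\Hom_{O_F}(A_0,A^\vee)$ at $v_0$ corresponds exactly to $\ov\eta_{v_0}(u)\in L_{v_0}^\vee$, given that $L_{v_0}$ is only a vertex lattice (not self-dual) and that $\Ker\lambda[\varpi_{v_0}]$ has order $q_{v_0}^{2t_{v_0}}$. This is where the precise normalization of $\ov\eta_{v_0}$ relative to $\lambda$ enters, but it is forced by the definition of $\CM$: the kernel condition on $\lambda$ is designed precisely so that $\ov\eta_{v_0}$ pairs $L_{v_0}$ with itself under the induced hermitian form, so that the $\lambda$-dual integrality matches $L_{v_0}^\vee$. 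Once this compatibility is recorded, the comparison is formal.
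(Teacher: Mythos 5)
Your proof is correct and follows essentially the same route as the paper, which simply states that the proposition ``follows from PEL type moduli interpretation of the canonical model $M_{\wt G, \wt K}$.'' You have spelled out the details the paper elides: the implicit prime-to-$\Delta$ level structure on the generic fiber, the place-by-place translation of $v$-integrality of $u$ (resp.\ of $\lambda\circ u$) into $\ov\eta_v(u)\in L_v$ (resp.\ $\ov\eta_v(u)\in L_v^\vee$), and the resulting summation identity. One minor point worth making explicit if you write this up: the equivalence $\lambda\circ u \text{ integral at } v \Leftrightarrow \ov\eta_v(u)\in L_v^\vee$ uses that $\lambda_0$ is $v$-principal, so that $u'\mapsto\lambda_{0,v}^{-1}u'^\vee$ identifies $\Hom_{O_{F_v}}(T_vA_0,T_vA)$ with $\Hom_{O_{F_v}}(T_vA^\vee,T_vA_0)$, whence the hermitian dual $\Lambda^\vee$ coincides with $\{u:\lambda_v u\text{ integral}\}$; you gestured at this (``forced by the definition of $\CM$'') but it is where the actual argument lives.
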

	\begin{proof}
		This follows from PEL type moduli interpretations of the canonical model $M_{\wt G, \wt K}$ similar to Definition \ref{defn: integral RSZ model}.
	\end{proof}

	\subsection{The Hecke CM cycles on the integral model $\CM$}
	
	Fix a degree $n$ monic polynomial $\alpha \in F [t]$ that is conjugate self-reciprocal i.e., 
\begin{equation}\label{conjuga self-reciprocal}
t^n \, \alpha(t^{-1})=\alpha(0) \ov{\alpha}(t).
\end{equation}

	\begin{definition}
		For any $\mu_{G, \Delta} \in K_{G, \Delta}\backslash G(F_{0, \Delta})/K_{G, \Delta}$, the \emph{naive Hecke CM cycle}
		$$
		\mathcal{CM}(\alpha, \mu_{G, \Delta}) \to \CM
		$$
		is the functor sending any locally noetherian scheme $S$ over $O_E[1/\Delta]$ to the groupoid of tuples $(A_0, A, \ov \eta, \varphi)$ where
		\begin{enumerate}
			\item $(A_0, A, \ov \eta) \in \CM(S)$.
			\item $\varphi \in \End_{O_F}(A)[1/\Delta]$ is a prime-to-$\Delta$ endomorphism such that $\text {char}(\varphi)=\alpha \in F[t]$.
			\item $\varphi^*\lambda=\lambda$, and $\eta_1 \varphi \eta_2^{-1} \in \mu_{G, \Delta}$ for some $\eta_1, \eta_2 \in \ov \eta$. 
		\end{enumerate}
	\end{definition}
	
	\begin{definition}\label{CM cycle: generic fiber}
		For such $\alpha \in F[t]$, let the Hecke CM cycle $CM(\alpha, \mu_{G, \Delta})$ over $E$ be the generic fiber of $\mathcal{CM}(\alpha, \mu_{G, \Delta})$.
	\end{definition}
	
	\begin{definition}
		The \emph{Hecke correspondence} $$
		\Hk_{\mu_{G, \Delta}} \to \CM \times_{O_E[1/\Delta]} \CM
		$$ is the functor sending any locally noetherian scheme $S$ over $O_E[1/\Delta]$
		to the groupoid of tuples $(A_0, A_1, \ov \eta_{A_1}, A_2, \ov \eta_{A_2}, \varphi)$ where
		
		\begin{enumerate}
			\item $(A_0, A_1, \ov \eta_{A_1}), \, (A_0, A_2, \ov \eta_{A_2}) \in \CM(S)$.
			\item $\varphi \in \Hom_{O_F}(A_1, A_2)[1/\Delta]$ is a prime-to-$\Delta$ homomorphism such that $\varphi^*\lambda_2=\lambda_1$.
			\item  We have $\eta_{A_2} \varphi \eta_{A_1}^{-1} \in \mu_{G, \Delta}$ for some $\eta_{A_1} \in \ov \eta_{A_1}$ and $\eta_{A_2} \in \ov \eta_{A_2}$. 
		\end{enumerate}
	\end{definition}

	\begin{proposition}
		The pullback of $\Hk_{\mu_{G, \Delta}}$ along the diagonal morphism $\Delta: \CM \to \CM \times_{O_E[1/\Delta]} \CM$ has the following decomposition as schemes over $O_E[\Delta^{-1}]$:
		\begin{equation}\label{eq: Hk deomp}
			\Hk_{\mu_{G, \Delta}}|_{\mathcal{M}} = \coprod_{\alpha } \mathcal{CM}(\alpha, \mu_{G, \Delta}).
		\end{equation}
		where the index $\alpha$ runs over all degree $n$ conjugate self-reciprocal monic polynomials $\alpha \in O_F[\Delta^{-1}][t]$ (c.f. \ref{conjuga self-reciprocal}).
	\end{proposition}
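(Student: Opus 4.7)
The plan is to define the decomposition map by sending a point of $\Hk_{\mu_{G,\Delta}}|_{\CM}$ to the characteristic polynomial of its endomorphism, and then verify that this polynomial is degree $n$, monic, conjugate self-reciprocal, and lies in $O_F[\Delta^{-1}][t]$. First, I would unravel the pullback along the diagonal: an $S$-point of $\Hk_{\mu_{G,\Delta}}|_{\CM}$ consists of a tuple $(A_0,A,\ov\eta,\varphi)$ where $(A_0,A,\ov\eta)\in\CM(S)$ and $\varphi\in\End_{O_F}(A)[1/\Delta]$ satisfies $\varphi^*\lambda=\lambda$ and $\eta\varphi\eta^{-1}\in\mu_{G,\Delta}$. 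So the decomposition on points matches the definition of $\mathcal{CM}(\alpha,\mu_{G,\Delta})$ once the invariant $\alpha=\charac(\varphi)$ is controlled.

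Next I would verify the three conditions on $\alpha$. For the \emph{degree} and \emph{monic}: working étale-locally, the $\varpi$-adic Tate module of $A$ at any finite place $v\not\in\Delta$ of $F_0$ split in $F$ is free of rank $n$ over $O_{F_w}$ (for $w|v$ in $F$), so $\charac(\varphi|T_wA)$ is a degree $n$ monic polynomial in $O_{F_w}[t]$; these characteristic polynomials glue via the PEL datum to a single polynomial in $O_F[\Delta^{-1}][t]$, confirming degree and integrality. For \emph{conjugate self-reciprocal}: the identity $\varphi^*\lambda=\lambda$ reads $\varphi^\vee\circ\lambda\circ\varphi=\lambda$, equivalently $\Ros_\lambda(\varphi)=\varphi^{-1}$; combined with $\Ros_\lambda\circ\iota(a)=\iota(\bar a)$, this gives $\bar\alpha(t)=\charac(\varphi^{-1})=\alpha(0)^{-1}t^n\alpha(t^{-1})$ by the standard formula for the characteristic polynomial of an inverse, yielding exactly the relation $t^n\alpha(t^{-1})=\alpha(0)\,\bar\alpha(t)$.

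Finally, to get a scheme-theoretic decomposition rather than just a decomposition on points, I would argue that $\alpha$ is locally constant on $\Hk_{\mu_{G,\Delta}}|_{\CM}$. Each coefficient of $\charac(\varphi)$ is a global section of $O_F\otimes_{\BZ}\CO_{\Hk_{\mu_{G,\Delta}}|_{\CM}}$ that is integral over $O_F[\Delta^{-1}]$ (since $\varphi$ lives in an $O_F[\Delta^{-1}]$-order inside $\End^\circ_F(A)$), and the base $\CM$ is reduced and of finite type over $O_E[\Delta^{-1}]$, so any such section is locally constant. This partitions $\Hk_{\mu_{G,\Delta}}|_{\CM}$ into an open-and-closed union of subfunctors, the $\alpha$-part of which is canonically identified with $\mathcal{CM}(\alpha,\mu_{G,\Delta})$.

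The only genuinely delicate point I anticipate is the integrality/degree statement at places where the PEL data is less transparent (in particular when $0<t_v<n$ at $v\not\in\Delta$): there one has to combine the $O_F[\Delta^{-1}]$-action on the $\varpi_v$-divisible group with the type condition on $\ker\lambda_v$ to check that $\charac(\varphi)$ still has degree $n$ and coefficients in $O_{F_w}$, rather than only in $F_w$. This should follow from the local model and the fact that $\varphi$ preserves the polarization, but it is the technical core; the remaining steps are formal consequences of the Rosati involution and the continuity of characteristic polynomials.
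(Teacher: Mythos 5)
Your argument is correct and matches the paper's intent: the paper's own proof is a one-line citation to the moduli interpretation (following \cite[Section 7.5]{AFL}), and your proposal unwinds exactly that. The Rosati-involution derivation of the conjugate self-reciprocal relation, the Tate-module argument for degree $n$ and $O_F[\Delta^{-1}]$-integrality, and the identification of the diagonal pullback are all the right ingredients.

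Two small refinements. First, the local-constancy step is not airtight as stated: integrality over $O_F[\Delta^{-1}]$ on a reduced connected finite-type scheme does not by itself force a coefficient to lie in $O_F[\Delta^{-1}]$ (e.g.\ $T=\Spec O_F[\Delta^{-1}][\sqrt{p}]$ is reduced, connected, finite type, and $\sqrt{p}$ is integral but not in $O_F[\Delta^{-1}]$). The clean reason is that for each $\ell\not\in\Delta$, $\varphi$ acts on the lisse $O_F\otimes\BZ_\ell$-sheaf $T_\ell(A)$ over $(\Hk_{\mu_{G,\Delta}}|_\CM)[1/\ell]$, so $\charac(\varphi)$ is locally constant on that open; these opens cover, and together with $\Hk_{\mu_{G,\Delta}}|_\CM \to \CM$ being finite and unramified this yields the open-closed decomposition. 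Second, the ``delicate point'' you flag at $0 < t_v < n$ is not actually delicate: $\varphi$ is prime-to-$\Delta$ integral so it preserves $T_v(A)$, which is free of rank $n$ over $O_{F_v}$ at any inert $v\not\in\Delta$ regardless of the type of $\lambda_v$; the type condition constrains $\Lie A$ and $\Ker\,\lambda_v$, not the Tate module, so degree $n$ and $O_{F_v}$-integrality are automatic at those places.
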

	\begin{proof}
		This follows from the moduli interpretation directly similar to \cite[Section 7.5]{AFL}.
	\end{proof}
	
	Assume that $\alpha \in F[t]$ is irreducible. Then the map $\mathcal{CM}(\alpha, \mu_{G, \Delta}) \to \CM$ is finite and unramified as in \cite[Section 7.5]{AFL}. Hence $\mathcal{CM}(\alpha, \mu_{G, \Delta})$ is proper over $O_E[1/\Delta]$ by the assumption $F_0 \not =\BQ$.

	\subsection{Derived Hecke CM cycles and blow up}\label{section: derived Hk}
	
	Assume that $\alpha \in O_F[\Delta^{-1}][t]$ is irreducible. Then $F_\alpha:=F[t] / \alpha(t)$ is a CM number field. The involution on $F$ extends to $F_\alpha$ by sending $t$ to $t^{-1}$. The subalgebra $F_{\alpha,0}$ fixed by the involution on $F_\alpha$ is a totally real field over $F_0$. We have an isomorphism $F_\alpha \cong F_{\alpha,0} \otimes_{F_0} F $. 
	
	\begin{definition}
		\emph{The modified self product of $\CM$} is the blow up of $\CM \times_{O_E[1/\Delta]} \CM$ along union of Weil divisors 
		$\CM^{\circ}_{k_{\nu}} \times_{k_{\nu}} \CM^{\circ}_{k_{\nu}}$ for all finite places $\nu$ of $E$ over $v$ such that $L_v$ is not self-dual:
		$$
		\wt{\CM \times_{O_E[1/\Delta]} \CM} \to \CM \times_{O_E[1/\Delta]} \CM. 
		$$
	\end{definition}
	
	\begin{proposition}
		The scheme $\wt{\CM \times_{O_E[1/\Delta]} \CM} \to \CM \times_{O_E[1/\Delta]} \CM$ is regular and of strictly semi-stable reduction over $O_E[1/\Delta]$ .
	\end{proposition}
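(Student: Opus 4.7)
The plan is to reduce the global statement to the local blow-up computation already carried out in Theorem \ref{resolution}, by passing to completed local rings at geometric points. Since blow-up commutes with flat base change, regularity and strict semi-stability of $\wt{\CM \times_{O_E[1/\Delta]} \CM}$ can be checked one completed local ring at a time. Fix a geometric point $x = (x_1, x_2)$ lying over a geometric point $s$ of $\Spec O_E[1/\Delta]$. If $s$ does not lie over a finite place $\nu$ of $E$ above some $v \not\in \Delta$ with $0 < t_v < n$, then by Theorem \ref{proof: global semi-stable} $\CM$ is smooth over $O_E[1/\Delta]$ near $x_1$ and $x_2$, the blow-up center is empty or Cartier near $x$, and the blow-up is an isomorphism. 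Likewise, if $s$ lies over a bad place $\nu$ but at least one of $x_1, x_2$ lies outside the Linking stratum $\CM^\dagger_{k_\nu}$, then Theorem \ref{proof: global semi-stable} shows that $\CM$ is smooth at that factor and that the Balloon divisor $\CM^\circ_{k_\nu}$ is Cartier there; hence $\CM^\circ_{k_\nu} \times_{k_\nu} \CM^\circ_{k_\nu}$ is Cartier at $x$ and the blow-up is again trivial, with strict semi-stability inherited from $\CM$ itself.

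The substantive case is $x_1, x_2 \in \CM^\dagger_{k_\nu}$ for a bad place $\nu \mid v$. By Theorem \ref{proof: global semi-stable}(1), $\wh{\CO}_{\CM, x_i}$ admits a smooth map from $W(k_\nu)[[x_i, y_i]]/(x_iy_i - p)$, and after matching local generators in the spirit of Proposition \ref{Balloon-Ground: global comp with LTXZZ} and the argument of Proposition \ref{prop: formal balloon-ground are smooth}, the Balloon divisor $\CM^\circ_{k_\nu}$ is cut out locally by $x_i$. Consequently, $\wh{\CO}_{\CM \times_{O_E[1/\Delta]} \CM,\, x}$ admits a smooth map from
\[
R \;:=\; W(k_\nu)[[x_1, y_1, x_2, y_2]]\big/\bigl(x_1 y_1 - p,\; x_2 y_2 - p\bigr),
\]
with blow-up center given by the ideal $(x_1, x_2) \subset R$. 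This is exactly the situation computed in the proof of Theorem \ref{resolution}, which shows that the blow-up of $\Spec R$ along $(x_1, x_2)$ is strictly semi-stable over $W(k_\nu)$, with exceptional fiber over the isolated singular point isomorphic to $\BP^1_{k_\nu}$. Note also that the centers at distinct bad places $\nu \ne \nu'$ live in disjoint special fibers of $\CM \times_{O_E[1/\Delta]} \CM$, so their blow-ups do not interact and the analysis proceeds place-by-place.

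The main obstacle is the compatibility of local generators: one must verify that the local equation cutting out $\CM^\circ_{k_\nu}$ from the vanishing of $\Lie \lambda$ can indeed be chosen to be one of the two semi-stable coordinates in $x_iy_i = p$. This is essentially the content of Proposition \ref{prop: formal balloon-ground are smooth} in the formal setting, and transfers directly to the algebraic setting via Grothendieck--Messing theory on completed local rings, as already done in the proof of Theorem \ref{proof: global semi-stable}. Once this compatibility is in place, what remains is a purely algebraic blow-up calculation in $R$ identical to the local Rapoport--Zink case, and the desired strictly semi-stable structure over $O_E[1/\Delta]$ follows immediately.
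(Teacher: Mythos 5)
Your argument is correct and matches the paper's approach: the paper's own proof is a single sentence citing Theorem \ref{proof: global semi-stable} plus ``computations on completed local rings'', and your proposal spells out exactly that computation by reducing, via the local model $W(k_\nu)[[x_1,y_1]]/(x_1y_1-p)$, to the blow-up calculation already carried out in Theorem \ref{resolution}. The only point worth flagging explicitly, as the paper's proof opens by doing, is that the identification $O_{E_\nu}=W(k_\nu)$ (so that $p$ is a uniformizer and the local model has the stated form) rests on the standing hypothesis from Section \ref{section: distinguished v0} that $E_\nu$ is unramified over $\BQ_p$ whenever $0<t_v<n$.
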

	\begin{proof}
		We have assumed that $E_\nu$ is unramified over $\BQ_p$. This follows from Theorem \ref{proof: global semi-stable} and computations on completed local rings.
	\end{proof}

	\begin{proposition}\label{lem: global str=tot graph}
		The Hecke correspondence $\Hk_{\mu_{G, \Delta}}$ lifts naturally along $\wt{ \CM \times_{O_E[1/\Delta]} \CM} \to \CM \times_{O_E[1/\Delta]} \CM$ via strict transforms.
	\end{proposition}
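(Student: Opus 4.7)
The plan is to globalize Proposition \ref{lem: local str=tot graph} by reducing the lifting assertion to a compatibility between the Hecke correspondence and the Balloon stratification. By the universal property of the blow up, $\Hk_{\mu_{G,\Delta}}$ lifts to $\wt{\CM \times_{O_E[1/\Delta]} \CM}$ via its strict transform, and this strict transform coincides with $\Hk_{\mu_{G,\Delta}}$ itself provided that, for each finite place $\nu$ of $E$ above some $v\not\in\Delta$ with $L_v$ not self-dual, the pullback ideal of the center $\CM^{\circ}_{k_{\nu}} \times_{k_{\nu}} \CM^{\circ}_{k_{\nu}}$ to $\Hk_{\mu_{G,\Delta}}$ is already invertible.

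First, I would reformulate this invertibility as the equality of effective Cartier divisors
\[
p_1^{*}(\CM^{\circ}_{k_{\nu}}) \;=\; p_2^{*}(\CM^{\circ}_{k_{\nu}})
\]
on $\Hk_{\mu_{G,\Delta}}$, where $p_1, p_2 \colon \Hk_{\mu_{G,\Delta}} \to \CM$ are the two projections. Granting this, the two generators of the pullback ideal of $\CM^{\circ}_{k_{\nu}} \times_{k_{\nu}} \CM^{\circ}_{k_{\nu}}$ differ by a unit, so that ideal is principal. Each $p_i^{*}(\CM^{\circ}_{k_{\nu}})$ is Cartier on $\Hk_{\mu_{G,\Delta}}$ because $\CM^{\circ}_{k_{\nu}} \hookrightarrow \CM$ is Cartier by Theorem \ref{proof: global semi-stable}.

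Next, I would use the moduli-theoretic description of the Balloon stratum from Proposition \ref{Balloon-Ground: global comp with LTXZZ}, namely the vanishing locus of the universal section $\Lie\lambda \colon (\Lie A)_{\varphi_0} \to (\Lie A^\vee)_{\varphi_0^c}$. Pulling back to the universal Hecke datum $(A_0, A_1, A_2, \varphi, \ov\eta_{A_1}, \ov\eta_{A_2})$, the divisor $p_i^{*}(\CM^{\circ}_{k_{\nu}})$ is cut out by $(\Lie\lambda_{A_i})_{\varphi_0}$. The polarization identity $\varphi^{*}\lambda_{A_2} = \lambda_{A_1}$ yields
\[
(\Lie\lambda_{A_1})_{\varphi_0} \;=\; (\Lie\varphi^{\vee})_{\varphi_0} \circ (\Lie\lambda_{A_2})_{\varphi_0} \circ (\Lie\varphi)_{\varphi_0},
\]
so the desired equality of Cartier divisors reduces to showing that $(\Lie\varphi)_{\varphi_0}$ and $(\Lie\varphi^{\vee})_{\varphi_0}$ are isomorphisms of line bundles on $\Hk_{\mu_{G,\Delta}}$.

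The hard part will be verifying that $\varphi$ induces an isomorphism on $p_v$-divisible groups, from which the invertibility of $\Lie\varphi$ at $v$ follows by Grothendieck--Messing theory. Since $v \not\in \Delta$, the Hecke parameter $\mu_{G,\Delta}$ imposes no constraint at $v$; nevertheless, Definition \ref{defn: integral RSZ model} forces $\ker(\lambda_{A_i})[\varpi_v]$ to have order $q_v^{2t_v}$ for both $i=1,2$. Combining this with $\varphi^{*}\lambda_{A_2} = \lambda_{A_1}$ and the multiplicativity of degrees $\deg(\varphi^{*}\lambda_{A_2}) = \deg(\varphi)^2\,\deg(\lambda_{A_2})$ forces the $p_v$-part of $\deg\varphi$ to equal $1$. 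Hence $\varphi$ restricts to an $O_F \otimes \BZ_{p_v}$-linear polarization-preserving isomorphism $A_1[p_v^{\infty}] \isoarrow A_2[p_v^{\infty}]$, giving the required invertibility and completing the strategy.
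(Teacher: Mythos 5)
Your proposal takes essentially the same route as the paper, though it fills in the step the paper compresses into a single sentence. The paper observes that $pr_1\colon \Hk_{\mu_{G,\Delta}}\to\CM$ is smooth (hence $\Hk_{\mu_{G,\Delta}}$ is regular), asserts ``from moduli interpretation'' that the pullback of $\CM^{\circ}_{k_\nu}\times_{k_\nu}\CM^{\circ}_{k_\nu}$ to $\Hk_{\mu_{G,\Delta}}$ equals $pr_1^{-1}(\CM^{\circ}_{k_\nu})$, and concludes that this Weil divisor on the regular scheme $\Hk_{\mu_{G,\Delta}}$ is Cartier, so the strict transform is an isomorphism. You unpack the middle assertion into the equality of scheme-theoretic preimages $p_1^{-1}(\CM^{\circ}_{k_\nu})=p_2^{-1}(\CM^{\circ}_{k_\nu})$, which you deduce from the identity $\Lie\lambda_{A_1}=\Lie\varphi^\vee\circ\Lie\lambda_{A_2}\circ\Lie\varphi$ together with the degree computation $\deg(\varphi_v)^2\deg(\lambda_{A_2,v})=\deg(\lambda_{A_1,v})$ forcing $\deg\varphi_v=1$. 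That is precisely the content behind the paper's appeal to the moduli interpretation, and the extra transparency is a gain; both arguments then agree that the pullback ideal of the blow-up center is invertible, so the strict transform of $\Hk_{\mu_{G,\Delta}}$ is an isomorphism and provides the lift.

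One small logical slip: you assert that each $p_i^*(\CM^{\circ}_{k_\nu})$ is Cartier ``because $\CM^{\circ}_{k_\nu}\hookrightarrow\CM$ is Cartier'', but the pullback of a Cartier divisor along an arbitrary morphism need not remain Cartier (the local equation could pull back to a zerodivisor). You should either invoke flatness of the projections ($p_1$ is smooth as the paper records, and $p_2$ by symmetry after transposing the correspondence), or take the paper's shortcut: once regularity of $\Hk_{\mu_{G,\Delta}}$ is known, the Weil divisor $p_1^{-1}(\CM^{\circ}_{k_\nu})$ is automatically Cartier. With that noted, the argument is complete.
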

	\begin{proof}
		This is a global analog of Proposition \ref{lem: local str=tot graph}. We only put the level structure $\mu_{G, \Delta}$ on $\Delta$, so the first projection $pr_1: \Hk_{\mu_{G, \Delta}} \to \CM$ is smooth and $\Hk_{\mu_{G, \Delta}}$ is regular. 
		
		From moduli interpretations, the pullback of $\CM^{\circ}_{k_{\nu}} \times_{k_{\nu}} \CM^{\circ}_{k_{\nu}}$ to $\Hk_{\mu_{G, \Delta}}$ is the preimage of $\CM^{\circ}_{k_\nu}$ under $pr_1$, hence a divisor in $\Hk_{\mu_{G, \Delta}}$. Due to regularity of $\Hk_{\mu_{G, \Delta}}$, $pr_1^{-1} (\CM^{\circ}_{k_\nu})$ is a Cartier divisor hence the strict transform of $\Hk_{\mu_{G, \Delta}}$ is isomorphic to $\Hk_{\mu_{G, \Delta}}$.
	\end{proof}
	
	In particular, we get a natural lifting of the diagonal morphism
	\begin{equation}
		\wt \Delta: \CM \to \wt{\CM \times_{O_E[1/\Delta]} \CM}.
	\end{equation} 

We now generalize the construction of derived CM cycles \cite[\S 5.1]{AFL2021} to our parahoric levels.
	
	\begin{definition}
		The \emph{derived fixed point locus of the Hecke correspondence} $\Hk_{\mu_{G, \Delta}}$ is the derived tensor product  
		\[ {}^{\BL} \Fix(\mu_{G, \Delta}):= \CO_{\Hk_{\mu_{G, \Delta}} } \otimes^{\BL}_{\wt{\CM \times_{O_E[1/\Delta]} \CM}} \CO_\CM,   \]
		viewed as an element in the $K$-group of coherent sheaves on $\CM$ with support on the image of $\Hk_{\mu_{G, \Delta}}$.
		 The \emph{modified derived Hecke CM cycle} $${}^{\BL}\mathcal{CM}(\alpha, \mu_{G, \Delta}) \in K_0'(\mathcal{CM}(\alpha, \mu_{G, \Delta}))
		$$ is the restriction of ${}^{\BL} \Fix(\mu_{G, \Delta})$ to  $K_0'(\mathcal{CM}(\alpha, \mu_{ \Delta}))$ under the decomposition (\ref{eq: Hk deomp}).
	\end{definition}
	
	By \cite[Apend. B.3]{AFL} and regularity of the modified self product of $\CM$, ${}^{\BL}\mathcal{CM}(\alpha, \mu_{G, \Delta})$ is a virtual $1$-cycle in the sense that it lies in the subgroup $F_1K_0'(\mathcal{CM}(\alpha, \mu_{G, \Delta}))$ generated by coherent sheaves supported on $1$-dimensional subschemes of $\mathcal{CM}(\alpha, \mu_{G, \Delta})$.
	
	\begin{remark}
		By the theory of complex multiplication, the generic fiber $CM(\alpha, \mu_{G, \Delta})$ of $\mathcal{CM}(\alpha, \mu_{G, \Delta})$ is $0$-dimensional over $E$. Moreover, $\mathcal{CM}(\alpha, \mu_{G, \Delta})$ itself is $1$-dimensional away from finitely many primes. Hence ${}^{\BL}\mathcal{CM}(\alpha, \mu_{G, \Delta})_E=CM(\alpha, \mu_{G, \Delta})$ by \cite[Apend. B.2]{AFL}. 
	\end{remark}

	 Let $\mathcal C_1(\CM)$ be the group of $1$-cycles on $\CM$ (with $\BQ$-coefficient), and $\mathcal C_1(\CM)_{\sim}$ be the quotient of $\mathcal C_1(\CM)$ by the subgroup generated by $1$-cycles on $\CM$ that are contained in a closed fiber and rationally trivial within that fiber. For a coherent sheaf $\CF$ on $\mathcal{CM}(\alpha, \mu_{G, \Delta})$ with at most $1$-dimensional support $C$. Let $\eta_1, \hdots, \eta_k$ be the generic points of the $1$-dimensional irreducible components of $C$. Let $C_i$ denote the closure of $\eta_i$ with reduced scheme structure. Then  we have a natural map 
\begin{equation}\label{from K to 1-cycle}
	 F_1K_0'(\mathcal{CM}(\alpha, \mu_{G, \Delta})) \to \CC_1(\CM)_{\sim} :  \, \, \CF \to \sum_{i=1}^k \length_{\CO_{C, \eta_i}} \CF_{\eta_i}  [C_i].
\end{equation}
	 We still denote by ${}^{\BL}\mathcal{CM}(\alpha, \mu_{G, \Delta}) \in \mathcal C_1(\CM)_{\sim}$ the image of ${}^{\BL}\mathcal{CM}(\alpha, \mu_{G, \Delta})$ under the above natural map (\ref{from K to 1-cycle}). 
	
		Consider the space $\CS(K_{G,\Delta} \backslash G(F_{0,\Delta})/ K_{G,\Delta})$  of bi-$K_{G,\Delta}$-invariant Schwartz functions on $G(F_{0,\Delta})$.     	
	
	\begin{definition}\label{defn: Derived CM}
		For $\phi_{CM, \Delta} \in \CS(K_{G,\Delta} \backslash G(F_{0,\Delta})/ K_{G,\Delta})$, the $\phi_{CM, \Delta}$-averaged version of modified derived Hecke CM cycle is the finite summation
		\begin{equation}
			{}^{\BL}\mathcal{CM}(\alpha, \phi_{CM})= \sum_{\mu_{G, \Delta} \in K_{G,\Delta} \backslash G(F_{0,\Delta}) / K_{G,\Delta} } \phi_{CM,\Delta}(\mu_{G, \Delta})
			{}^{\BL}\mathcal{CM}(\alpha, \mu_{G, \Delta}) \in \CC_1(\CM)_{\sim}.
		\end{equation}
		
		Here $\phi_{CM}:=\phi^{\Delta}_{CM} \otimes \phi_{CM, \Delta}$ with $\phi^{\Delta}_{CM}:=1_{K_G^\Delta}$.
	\end{definition}

	\subsection{Basic uniformization and local-global compatibility}\label{section: basic uni local-global} 
	
	In the rest of this section, we establish basic uniformizations of special cycles at inert places of $F_0$, see also \cite[Thm. 8.15]{RSZ-Diagonalcycle}. We show local-global compatibility under blow ups and relate (modified) local and global intersection numbers. 
	
	Let $\nu$ be a finite place of $E$ with $\nu|_{F}=w_0$, $\nu|_{F_0}=v_0$. Assume that $v_0 \not \in \Delta$ is inert in $F$. Recall that if $0<t_{v_0}<n$  we assume that $v_0$ is unramified over $p$. 
	
	 Let $V^{(v_0)}$ be the nearby $F/F_0$-hermitian space of $V$ at $v_0$, which is positively definite at all $v| \infty$, and locally isomorphic to $V$ at all finite places $v \not = v_0$ of $F_0$. 
	
	Consider reductive groups $G^{(v_0)}:=\U(V^{(v_0)})$ over $F_0$ and $\wt{G}^{(v_0)}:=Z^{\BQ} \times \Res_{F_0/\BQ} \U(V^{(v_0)})$ over $\BQ$.  Consider the RSZ Shimura variety 
	$$
	\CM \to \Spec O_E[1/\Delta]
	$$ 
	associated to $L$ in \S \ref{section: global Shimura var}. Denote by $\CM^{\wedge,  \basic}_{O_{\breve{E}_{\nu}}}$ the formal completion of $\CM_{O_{\breve{E}_{\nu}}}$ along the basic locus of the geometric fiber $\CM \otimes \BF_{\nu}$.  The formal scheme $\CM^{\wedge,  \basic}_{O_{\breve{E}_{\nu}}}$ is regular and \emph{formally finite type} over $O_{\breve{E}_{\nu}}$.
	
	Consider the (relative) unitary Rapoport--Zink space 
	$$\CN \to \Spf O_{\breve{F}}$$ associated to $L_{v_0}$ in \S \ref{section:local RZ}. Note we use Kottwitz signature condition $(1, n-1)$ in the Part $2$ and $(n-1,1)$ in Part $3$. In this section we identify our Rapoport--Zink spaces of signature $(1, n-1)$ to $(n-1, 1)$ by chang the $O_{F_{v_0}}$ action with the Galois conjugation in $\Gal(F_{v_0}/F_{0v_0})$. We have the following basic uniformization theorem.

	\begin{proposition}\label{prop: basic uniformization RSZ}
		There is a natural isomorphism of formal schemes over $O_{\breve{E_\nu}}$:
		\begin{equation}\label{eq: basic basic unif}
			\Theta: \, \, \CM^{\wedge,  \basic}_{O_{\breve{E}_{\nu}}} \stackrel{\sim}{\rightarrow}  \wt{G}^{(v_0)}(\mathbb{Q}) \backslash ( \mathcal{N}^{\prime} \times \wt{G} (\mathbb{A}_{f}^{p}) / K_{\wt{G}}^{p} ), 
		\end{equation}  
		where 
		\[ \mathcal{N}^{\prime} \simeq (Z^{\BQ} \left(\mathbb{Q}_{p}\right) / K_{Z^{\BQ}, p} ) \times \mathcal{N}_{O_{\breve{E}_{\nu}}} \times \prod_{v|p, v \not = v_0} \U(V)(F_{0, v}) / K_{G, v}. \]
		And there is a natural projection map
		\begin{equation}\label{projection map from RSZ to unitary}
			\mathcal{M}^{\wedge,  \basic}_{O_{\breve{E}_{\nu}}}  
			\to Z^{\BQ}(\BQ)  \backslash (Z^{\BQ}(\BA_{0, f}) / K_{Z^\BQ} ) 
		\end{equation}  
		with fibers isomorphic to
		\[ \Theta_0: \mathcal{M}^{\wedge,  \basic}_{O_{\breve{E}_{\nu}}, 0} \stackrel{\sim}{\rightarrow}  G^{(v_0)}(F_0) \backslash [ \mathcal{N}_{O_{\breve{E}_{\nu}}}  \times G (\mathbb{A}_{0, f}^{v_0}) / K_{G}^{v_0} ]. \] 
	\end{proposition}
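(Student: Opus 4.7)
The plan is to apply the Rapoport--Zink basic uniformization machinery of \cite{RZ96} to the PEL type moduli problem defining $\CM$, and then factor out the $Z^{\BQ}$-component of the RSZ data. First I would pick a basic geometric point $\ov x = (A_0, A, \ov\eta) \in \CM^{\basic}(\ov{\BF}_\nu)$; its existence follows from non-emptiness of $\CM_0$ (assumed in $\S\ref{moduli int of RSZ}$) combined with the fact that for any $(A_0, \iota_0, \lambda_0, \ov\eta_0)$ over $\ov{\BF}_\nu$ there is some supersingular $A$ of the required signature and polarization type admitting an $O_F[1/\Delta]$-action, because the global hermitian space over $F$ attached to the isogeny class is precisely the nearby space $V^{(v_0)}$ (positive definite at $\infty$ and isomorphic to $V$ away from $v_0$). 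Once a basepoint is fixed, the group $I:=\Aut^\circ(A_0, A)$ of self-quasi-isogenies compatible with polarizations becomes (by the comparison of rational Dieudonn\'e data with the hermitian space $V^{(v_0)}$) an inner form of $\wt G$, and one checks that $I \simeq \wt{G}^{(v_0)}$ as $\BQ$-groups.

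Next, I would construct the map $\Theta$ in two steps: locally at $v_0$ and away from $v_0$. At $v_0$ the basepoint $\ov x$ gives a framing object for $\CN=\CN_n^{[t_{v_0}]}$ by passing to the $\varpi_{v_0}$-divisible group of $A$ with its induced $O_{F_{w_0}}$-action and polarization; the $p$-divisible group at places $v | p$, $v\neq v_0$, gives either an \'etale-local factor (if split) or the product of local Shimura varieties at hyperspecial parahorics (when $L_v$ is self-dual), whose Rapoport--Zink spaces are discrete of the form $\U(V)(F_{0,v})/K_{G,v}$. Away from $p$ the prime-to-$p$ Tate module, together with the level structure $\ov\eta^p$, produces a $K_{\wt G}^p$-orbit $g \in \wt G(\BA_f^p)/K_{\wt G}^p$. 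Sending a deformation of $\ov x$ to the resulting quadruple defines a morphism from the right-hand side to $\CM^{\wedge,\basic}_{O_{\breve E_\nu}}$; quotienting by $\wt G^{(v_0)}(\BQ)=I(\BQ)$ accounts for the choice of framing. The inverse direction uses rigidity of quasi-isogenies: any basic deformation admits a unique quasi-isogeny to $\ov x$, producing the RZ datum and the adelic component.

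To descend to the $G^{(v_0)}$-uniformization $\Theta_0$ of the fibers, I would isolate the $Z^{\BQ}$-factor: $A_0$ is \'etale over the base and its moduli $\CM_0$ is a finite disjoint union of copies of $\Sh_{K_{Z^\BQ}}(Z^\BQ, h_\Phi)$; thus the natural forgetful morphism $\CM \to \CM_0$ realizes the projection (\ref{projection map from RSZ to unitary}), and its fibers are uniformized only by the $G^{(v_0)}$-factor, giving the formula for $\Theta_0$ by splitting off $Z^\BQ(\BQ)\backslash Z^\BQ(\BA_{0,f})/K_{Z^\BQ}$ from the product in $\mathcal{N}'$. Compatibility with the isomorphism $\wt G \cong Z^\BQ\times_{\BG_m} G^\BQ \cong Z^\BQ \times \Res_{F_0/\BQ} G$ recalled in (\ref{eq: Shimura=Shimura x Shimura}) is then formal.

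The main technical point is to verify that the RZ uniformization of \cite{RZ96} applies at the parahoric level $K_{G,v_0}=\U(L_{v_0})$ where $L_{v_0}$ is a vertex lattice of type $t_{v_0}\in\{1,\dots,n-1\}$, so that the local PEL datum is not hyperspecial and $\CN$ has semi-stable reduction (Proposition \ref{prop:semi-stable}). This is where our assumption that $E_\nu / \BQ_{p_{v_0}}$ is unramified is crucial: it ensures the Eisenstein and sign conditions in Definition \ref{defn: integral RSZ model} are automatic (\cite[Rem. 6.5]{RSZ-Shimura}), so that the deformation theory of the $v_0$-component via Grothendieck--Messing matches the moduli problem defining $\CN$ exactly, and the morphism $\Theta$ is a formal isomorphism on local rings rather than just on geometric points. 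The remaining verifications---preservation of polarizations at all places and the $K_{\wt G}^p$-orbit structure of the level---are routine and parallel to \cite[Thm. 8.15]{RSZ-Diagonalcycle}.
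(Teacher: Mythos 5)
Your outline tracks the Rapoport--Zink strategy correctly at a high level, but there is a genuine gap: you never address the comparison between \emph{absolute} and \emph{relative} Rapoport--Zink spaces, and this is exactly the crux of the matter. The space $\CN=\CN_n^{[t_{v_0}]}$ appearing in the statement is the relative RZ space of Definition~\ref{defn: local RZ space}, parametrizing strict formal $O_{F_0}$-modules with $O_F$-action (relative Dieudonn\'e theory over $O_{\breve F_0}$). What the machinery of \cite{RZ96} applied to the $p$-divisible groups of the abelian schemes in the moduli problem of Definition~\ref{defn: integral RSZ model} produces is an \emph{absolute} RZ space (over $O_{\breve E_\nu}$, via absolute Dieudonn\'e theory of $p$-divisible groups with $\BZ_p$-, not $O_{F_0}$-, structure). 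Identifying these two is a nontrivial theorem, and it is precisely this comparison that requires $E_\nu$ (hence $F_{0,v_0}$) to be unramified over $\BQ_p$; the paper invokes \cite[Prop.~3.30]{Cho18} for it and then quotes \cite[Thm.~4.3]{Cho18}, which has already packaged the uniformization in the unitary, parahoric, relative setting. Your proposal instead attributes the unramifiedness hypothesis to the automaticity of the Eisenstein and sign conditions; while those conditions do simplify when the places above $v_0$ are unramified, that is not the decisive point here (the paper itself flags the absolute/relative comparison as the role of the assumption in its ``Further directions''). Without that comparison your morphism $\Theta$ lands in the wrong target, so the proof as written does not close. The rest of your sketch---choice of basic point, $I\simeq \wt G^{(v_0)}$ via the nearby hermitian space $V^{(v_0)}$, splitting off the $Z^{\BQ}$-factor via $\CM\to\CM_0$---is sound and parallel to \cite[Thm.~8.15]{RSZ-Diagonalcycle}, but you should either cite the relative/absolute comparison explicitly or reproduce its proof.
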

	Moreover, the isomorphism descends to $O_{E_\nu}$ when we take the descent of $\CN$ to $\Spf O_{F_{v_0}}$ as in Remark \ref{descent to O_F: moduli}.
	\begin{proof}
		As $E_{\nu}$ is unramified over $\BQ_p$, we have the comparison between absolute and relative Rapoport--Zink spaces \cite[Prop. 3.30]{Cho18}. The result follows from \cite[Thm. 4.3]{Cho18}.
	\end{proof}

	Hence we denote by $\mathcal{M}^{\wedge,  \basic}_{O_{\breve{E}_{\nu}}, 0}$ any fiber of $\mathcal{M}^{\wedge,  \basic}_{O_{\breve{E}_{\nu}}} $ under the projection map (\ref{projection map from RSZ to unitary}).
	
	\subsection{Basic uniformization for Kudla--Rapoport cycles}
	
	In Definition \ref{defn: global int KR}, we defined Kudla--Rapoport cycles on $\CM$ for suitable $\phi_i \in \CS(V(\BA_{0,f}))^{K_G}$ ($i=1, 2$). Denote by $\mathcal{Z}(\xi, \phi_i)_0^\wedge$ any fiber of the formal completion $\mathcal{Z}(\xi, \phi_i)^\wedge$ under the projection map (\ref{projection map from RSZ to unitary}). 
	
	\begin{proposition}\label{prop:basic unif KR}
		For any $\xi \in F_{0, +}$, we have
		\[ 
		\mathcal{Z}(\xi, \phi_1)_0^\wedge= \sum_{(u, g) \in G^{(v_0)}(F_0) \backslash  V^{(v_0)}_{\xi}(F_0) \times G (\mathbb{A}_{0, f}^{v_0}) / K_{G}^{v_0}  }  \phi_1^{v_0}(g^{-1}u) [\CZ(u, g)]_{K_G^{v_0}} , 
		\]
		\[ 
		\mathcal{Z}(\xi, \phi_2)_0^\wedge= \sum_{(u, g) \in G^{(v_0)}(F_0) \backslash  V^{(v_0)}_{\xi}(F_0) \times G (\mathbb{A}_{0, f}^{v_0}) / K_{G}^{v_0}  }  \phi_2^{v_0}(g^{-1}u) [\CY(u, g)]_{K_G^{v_0}} .
		\]
		Here $[\CZ(u, g)]_{K_G^{v_0}}$ is the descent of 
		$$ \sum_{(u',g') \in G^{(v_0)}(F_0).(u, g) \subseteq V^{(v_0)}(F_0) \times G (\mathbb{A}_{0, f}^{v_0}) / K_{G}^{v_0}} \CZ(u') \times 1_{g'K^{v_0}_G}$$ 
		to $\mathcal{M}^{\wedge,  \basic}_{O_{\breve{E}_{\nu}}, 0}$. And $[\CY(u, g)]_{K_G^{v_0}}$ is the descent of $\sum  \CY(u') \times 1_{g'K^{v_0}_G}$ over the same index set.
		
	\end{proposition}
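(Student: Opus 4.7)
The plan is to unpack the moduli definition of $\CZ(\xi, \mu_\Delta)$ and $\CY(\xi, \mu_\Delta)$ through the basic uniformization $\Theta_0$ of Proposition \ref{prop: basic uniformization RSZ}, showing that the integrality condition on the special quasi-homomorphism $u$ factorizes into a product of local integrality conditions at the various finite places away from $v_0$, and a formal-scheme-level condition at $v_0$ that cuts out precisely $\CZ(u)$ or $\CY(u)$ on $\CN$.

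First I would fix a fiber of the projection (\ref{projection map from RSZ to unitary}) and work with $\CM^{\wedge, \basic}_{O_{\breve{E}_\nu}, 0}$. For a test $S$-point $(A_0, A, \bar\eta, u)$ of $\CZ(\xi, \mu_\Delta)^\wedge_0$, the basic uniformization presents the underlying $(A_0, A, \bar\eta)$ as the datum of a point $x \in \CN(S)$ and an element $g \in G(\BA_{0,f}^{v_0})/K_G^{v_0}$, taken modulo the diagonal action of $G^{(v_0)}(F_0)$. Under this presentation, the rational hermitian space $\Hom_F^\circ(A_0, A)$ is canonically identified with the nearby hermitian space $V^{(v_0)}$ over $F_0$, because this identification holds rationally on isocrystals and $A_0$, $A$ share the same framing at $v_0$ by construction of $\CN$. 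So giving $u$ is the same as giving an element of $V^{(v_0)}(F_0)$ with $(u,u)_{V^{(v_0)}} = \xi$, i.e.\ an element of $V^{(v_0)}_\xi(F_0)$.

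Next I would analyze the integrality condition place by place. Away from $\Delta \cup \{v_0\}$ the condition $u \in \Hom_{O_F}(A_0, A)[1/\Delta]$ is equivalent (via the chosen level structure $\bar\eta^{v_0, \Delta}$ determined by $g$) to the statement that $g^{-1}u$ lies in the completed lattice $\wh{L}^{v_0, \Delta}$, which is exactly the value of $\phi_1^{\Delta, v_0}$ (resp.\ $\phi_2^{\Delta, v_0}$, since outside $\Delta \cup \{v_0\}$ we have $L_w = L_w^\vee$). At the places in $\Delta$, the condition $\bar\eta(u) \in \mu_\Delta$ is unchanged by passage to the basic locus and is absorbed into the factor $\phi_\Delta(\mu_\Delta)$ appearing in (\ref{def:KR int}); after summing over $\mu_\Delta \in V_\xi(F_{0,\Delta})/K_{G,\Delta}$ this becomes the value $\phi_\Delta(g^{-1}u)$, hidden inside $\phi_i^{v_0} = \phi_\Delta \otimes \phi^{\Delta, v_0}$. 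Finally at $v_0$ itself, the condition $u \in \Hom_{O_F}(A_0, A)[1/\Delta]_{v_0}$ (resp.\ $\lambda \circ u \in \Hom_{O_F}(A_0, A^\vee)[1/\Delta]_{v_0}$) is by Grothendieck--Messing theory exactly the condition that the special quasi-homomorphism $u \in \BV = \Hom_{O_F}^\circ(\BE, \BX)$ attached to $u$ lifts to a genuine homomorphism $\CE \to X$ over $S$ (resp.\ $\CE \to X^\vee$), which is the moduli definition of $\CZ(u) \hookrightarrow \CN$ (resp.\ $\CY(u) \hookrightarrow \CN$).

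Combining these, the formal completion $\CZ(\xi, \mu_\Delta)^\wedge_0$ is obtained by summing over all pairs $(u, g)$ (modulo the stabilizer action of $G^{(v_0)}(F_0)$) contributing a factor $\phi_1^{v_0}(g^{-1}u) \cdot [\CZ(u, g)]_{K_G^{v_0}}$, and after averaging over $\mu_\Delta$ weighted by $\phi_\Delta$ one arrives at the stated expression for $\CZ(\xi, \phi_1)^\wedge_0$; the same argument with $\CY$ in place of $\CZ$ and $\phi_2$ in place of $\phi_1$ yields the second identity. The main obstacle is the careful bookkeeping in the dictionary between the global integrality condition on $u \in \Hom_{O_F}(A_0, A)[1/\Delta]$ at $v_0$ and the local condition defining $\CZ(u)$ or $\CY(u)$ on $\CN$; this is handled by combining the covariant relative Dieudonn\'e description of Proposition \ref{prop: special pair classifying x in N} with the identification of $\lambda$ with the framing polarization, so that the dual polarization $\lambda^\vee$ supplies the $\CY$-variant from the $\CZ$-variant exactly as in Proposition \ref{prop: relation of Z and Y}.
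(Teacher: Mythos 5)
Your proof takes essentially the same route as the paper, whose own proof is a single line ("check moduli definitions on both sides using the uniformization map"); the place-by-place unpacking you give — rational identification $\Hom^\circ_F(A_0,A)\cong V^{(v_0)}$, absorption of the conditions away from $v_0$ into $\phi_i^{v_0}(g^{-1}u)$, and the lifting condition on $p$-divisible groups at $v_0$ cutting out $\CZ(u)$ resp.\ $\CY(u)$ on $\CN$ — is the correct bookkeeping. Two small imprecisions worth fixing: the $v_0$-step is a consequence of the Serre--Tate equivalence (passage from the abelian scheme to its $p$-divisible group) together with the moduli definition of $\CZ(u)$, $\CY(u)$ as lifting loci on $\CN$, not of Grothendieck--Messing theory per se; and your justification that $\phi_1^{\Delta,v_0}=\phi_2^{\Delta,v_0}$ matches both the $\CZ$- and $\CY$-integrality conditions "since $L_w=L_w^\vee$ outside $\Delta\cup\{v_0\}$" is not among the general hypotheses of Section~\ref{section: distinguished v0} (which allows further inert vertex-lattice places outside $\Delta$), although it is arranged to hold in the application (Section~\ref{section: the end}) where the paper enlarges $\Delta$ so that $L$ is self-dual at all finite $v\notin\Delta$, $v\neq v_0$.
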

	\begin{proof}
		This follows from checking moduli definitions on both sides using the uniformization map $\Theta$ constructed in \cite[Thm. 4.3]{Cho18}.
	\end{proof}

	\subsection{Comparison of Balloon--Ground stratification}
	
	Return to the local set up in \S \ref{Formal Balloon-Ground} and consider
	$$  
	\wt{\CN \times_{O_{\breve{F}_{v_0}}} \CN} \to \CN \times_{O_{\breve{F}_{v_0}}} \CN
	$$ the formal blow up morphism along the Weil divisor  
	$\CN^{\circ} \times_{k_{v_0}} \CN^{\circ}$.

	Let $	(\wt{\CM_{O_{\breve{E}_{\nu}}} \times \CM_{ O_{\breve{E}_{\nu}}} })^{\wedge}$ be the completion of $\wt{\CM_{O_{\breve{E}_{\nu}}} \times \CM_{ O_{\breve{E}_{\nu}}} }$ along the strict transform of the basic locus in the special fiber of $\CM_{O_{\breve{E}_{\nu}}} \times \CM_{ O_{\breve{E}_{\nu}}}$. Let $	(\wt{\CM_{O_{\breve{E}_{\nu}},0} \times \CM_{ O_{\breve{E}_{\nu}},0} })^{\wedge}$ be any fiber of $(\wt{\CM_{O_{\breve{E}_{\nu}}} \times \CM_{ O_{\breve{E}_{\nu}}} })^{\wedge}$ under the $2$-fold product of the projection map (\ref{projection map from RSZ to unitary}). 
		
	\begin{proposition}\label{prop: comp of balloon-ground}
		The uniformization map 
		$$ \Theta_0: \CM^{\wedge,  \basic}_{O_{\breve{E}_{\nu}}, 0} \stackrel{\sim}{\rightarrow}  G^{(v_0)}(F_0) \backslash [ \mathcal{N}_{O_{\breve{E}_{\nu}}}  \times G (\mathbb{A}_{0, f}^{v_0}) / K_{G}^{v_0} ] 
		$$
		induces an isomorphism
		\[
		(\wt{\CM_{O_{\breve{E}_{\nu}},0} \times \CM_{ O_{\breve{E}_{\nu}},0} })^{\wedge}
		\stackrel{\sim}{\rightarrow}  (G^{(v_0)}(F_0) \times G^{(v_0)}(F_0) )\backslash [ \wt{ \CN_{O_{\breve{E}_{\nu}}} \times \CN_{O_{\breve{E}_{\nu}}}}  \times (G (\mathbb{A}_{0, f}^{v_0}) / K_{G}^{v_0} \times G (\mathbb{A}_{0, f}^{v_0}) / K_{G}^{v_0} )  ].
		\]
	\end{proposition}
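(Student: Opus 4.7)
The plan is to deduce this from the basic uniformization isomorphism $\Theta_0$ of Proposition \ref{prop: basic uniformization RSZ} together with the general fact that formal blow-up commutes with flat base change and with quotients by discrete group actions acting freely enough on the completion. The key conceptual point is that the blow-up centers on both sides are matched under $\Theta_0$.

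First I would verify that $\Theta_0$ identifies the global Balloon (resp.\ Ground) stratum $\CM^{\circ}_{k_\nu}$ (resp.\ $\CM^{\bullet}_{k_\nu}$) with the uniformized version of the formal Balloon (resp.\ Ground) stratum $\CN^{\circ}$ (resp.\ $\CN^{\bullet}$) in $\CN_{O_{\breve E_\nu}}$. This is essentially tautological from the two definitions: globally the stratum is cut out by the vanishing of $\Lie\lambda$ (resp.\ $\Lie\lambda^{\vee}$) on the $\varphi_0$-part of the Lie algebra (Proposition \ref{Balloon-Ground: global comp with LTXZZ}), while locally it is cut out by the vanishing of the same universal section on $\CN$ (Section \ref{Formal Balloon-Ground}), and $\Theta_0$ is constructed precisely by transporting the $p$-divisible group structure at the distinguished place $v_0$. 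So on formal completions along the basic locus, the Weil divisor $\CM^{\circ}_{k_\nu}\times_{k_\nu}\CM^{\circ}_{k_\nu}$ pulls back to the $G^{(v_0)}(F_0)\times G^{(v_0)}(F_0)$-quotient of $\CN^{\circ}\times_{\BF}\CN^{\circ}$ times the prime-to-$v_0$ factors.

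Next I would use that formal blow-up commutes with flat base change (in particular with formal completion along a closed subscheme), and that it commutes with taking quotients by a group action that acts discretely on the ambient formal scheme and stabilizes the center. The uniformization $\Theta_0$ in Proposition \ref{prop: basic uniformization RSZ} exhibits $\CM^{\wedge,\basic}_{O_{\breve E_\nu},0}\times \CM^{\wedge,\basic}_{O_{\breve E_\nu},0}$ as a quotient of $\CN_{O_{\breve E_\nu}}\times \CN_{O_{\breve E_\nu}}\times (G(\BA^{v_0}_{0,f})/K^{v_0}_G)^2$ by the diagonal action of $G^{(v_0)}(F_0)\times G^{(v_0)}(F_0)$, and this action clearly preserves the center $\CN^{\circ}\times_{\BF}\CN^{\circ}$ by moduli. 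So applying the blow-up construction of Definition~\ref{eq: defn of local blow} commutes with the uniformization.

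The main (mild) obstacle will be to spell out the compatibility of formal blow-up with the quotient presentation arising from the uniformization, which is not literally a flat morphism of formal schemes but a presentation as a disjoint union of formal quotients by discrete groups acting properly. One way to handle this is to work étale-locally on the target: near any geometric point of $\CM^{\wedge,\basic}_{O_{\breve E_\nu},0}$ the uniformization gives an isomorphism of completed local rings with those of $\CN_{O_{\breve E_\nu}}$, so blowing up along the corresponding Cartier (resp.\ Weil) divisor on either side produces isomorphic completed local rings on the blow-up. Patching these local isomorphisms, which is legitimate because the blow-up is functorial with respect to open immersions and the stabilizers in $G^{(v_0)}(F_0)\times G^{(v_0)}(F_0)$ act on the completed local rings compatibly, yields the claimed global isomorphism of formal schemes.
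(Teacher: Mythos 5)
Your proposal matches the paper's proof in all essentials: both identify the Balloon/Ground strata under $\Theta_0$ via the Lie-algebra description (Proposition \ref{Balloon-Ground: global comp with LTXZZ} globally, Proposition \ref{Balloon-Gro\-und: comp with LTXZZ} locally), and then invoke compatibility of blow-up with flat base change along the completion map to deduce the claim. The paper's published proof is in fact terser than yours, simply asserting that the pullback of $\CM^\circ$ along the faithfully flat composite $G^{(v_0)}(F_0) \backslash [\CN_{O_{\breve{E}_\nu}} \times G(\BA_{0,f}^{v_0})/K_{\wt G}^{v_0}] \cong \CM^{\wedge,\basic}_{O_{\breve{E}_\nu},0} \to \CM_{O_{E_\nu},0}$ is $G^{(v_0)}(F_0) \backslash [\CN^\circ \times G(\BA_{0,f}^{v_0})/K_{\wt G}^{v_0}]$ and then concluding — so your extra paragraph spelling out the \'etale-local patching and the commutation of blow-up with the discrete quotient is a legitimate and welcome fleshing-out of a step the paper leaves implicit.
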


	\begin{proof}
		Blow up commutes with flat base change $O_{\breve{E}_\nu} \to O_{E_\nu}$ so we can work over $O_{\breve{E}_\nu}$. From Proposition \ref{Balloon-Ground: comp with LTXZZ} and the definition of $\Theta_0$, 
		the base change of the Balloon stratum $\CM_{k_{E_\nu}, 0}^\circ$ along the flat morphism
		\begin{equation}
			G^{(v_0)}(F_0) \backslash \mathcal{N}_{O_{\breve{E}_{\nu}}}  \times G (\mathbb{A}_{0, f}^{v_0}) / K_{G}^{v_0} \overset{\Theta^{-1}}{\cong} \CM^{\wedge,  \basic}_{O_{\breve{E}_{\nu}}, 0} \to \CM_{O_{\breve{E}_{\nu}}, 0} \to \CM_{O_{E_\nu}, 0}
		\end{equation} 
		is equal to the formal subscheme $G^{(v_0)}(F_0)\backslash \CN_{O_{\breve{E}_{\nu}}}^\circ \times G (\mathbb{A}_{0, f}^{v_0}) / K_{G}^{v_0}$. The result follows. 
	\end{proof}

	\subsection{Basic uniformization for Hecke CM cycles}
	
	Let $\Hk_{\mu_{G, \Delta}}^{\wedge}$ be the locally noetherian formal scheme over $O_{\breve{E}_{\nu}}$ obtained as pullback of $\Hk_{\mu_{G, \Delta}}$ along the map 
	\[ 
	\CM^{\wedge}_{O_{\breve{E}_{\nu}}} \times_{O_E[\Delta^{-1}]} \CM^{\wedge}_{O_{\breve{E}_{\nu}}} \to \CM \times_{O_E[\Delta^{-1}]} \CM .
	\]
	
	There is a natural projection 
	\begin{equation}
		\Hk_{\mu_{G, \Delta}} \to Z^{\BQ}(\BQ)  \backslash Z^{\BQ}(\BA_{0, f}) / K_{Z^\BQ} .
	\end{equation}
	Denote by $\Hk_{\mu_{G, \Delta},0}^{\wedge}$ any fiber of $\Hk_{\mu_{G, \Delta}}^{\wedge}$ of the projection. Consider the discrete set 
	\[
	\Hk_{\mu_{G, \Delta},0}^{(v_0)}= \{ (g_1,g_2) \in G(\BA_f^{v_0})/K_G^{v_0} \times G(\BA_f^{v_0})/K_G^{v_0} | g_1^{-1}g_2 \in K_G\mu_{G, \Delta}K_G \}. 
	\]
	
	\begin{proposition}
		We have an isomorphism:
		\[ 
		\Hk_{\mu_{G, \Delta},0}^{\wedge} \cong G^{(v_0)}(F_0) \backslash [ \CN_{O_{\breve{E}_{\nu}}} \times \Hk_{\mu_{G, \Delta},0}^{(v_0)}]
		\]
		compatible with the projection under the isomorphism $\Theta$ (\ref{eq: basic basic unif}). 
		
	\end{proposition}
	\begin{proof}
		This follows from the proof of \cite[Prop. 7.16]{AFL}.
	\end{proof}

	For $(\delta, h) \in G^{(v_0)}(F_0) \times G(\BA_{0,f}^{v_0}) / K_G^{v_0} $, consider the Hecke CM cycle
	\begin{equation}
		\mathcal{CM} (\delta, h)_{K_G^{v_0}} := [\CN^{\delta=id}_{O_{\breve{E}_{\nu}}} \times 1_{hK_G^{v_0}}] \rightarrow [ \mathcal{N}_{O_{\breve{E}_{\nu}}}  \times G (\mathbb{A}_{0, f}^{v_0}) / K_{G}^{v_0}].
	\end{equation}
	
	The summation 
	$$\sum_{(\delta', h')} \mathcal{CM} (\delta, h)_{K_G^{v_0}} $$ over the $G^{(v_0)}(F_0)$-orbit of $(\delta, h)$ in $G^{(v_0)}(F_0) \times G(\BA_{0,f}^{v_0}) / K_G^{v_0} $ descends to a cycle $[\mathcal{CM} (\delta, h) ]_{K_G^{v_0}}$ on the quotient $\mathcal{M}^{\wedge,  \basic}_{O_{\breve{E}_{\nu}}, 0}$. Consider $\varphi_{CM}=\varphi_{CM}^{\Delta} \otimes \phi_{CM, \Delta} \in \CS(\U(V)(\BA_{0, f}), K)$ where $\varphi_{CM}^{\Delta}=1_{K_G^{\Delta}}$ (including $v_0$).  As in the proof of \cite[Prop. 7.17]{AFL}, we have
	
	\begin{corollary}
		The restriction of the formal completion $\mathcal{CM}(\alpha, \phi_{CM})^{\wedge}$ to any fiber of the projection $\Theta$ (\ref{eq: basic basic unif}) is the sum
		\begin{equation}\label{basic naive Hk}
			\sum_{(\delta, h) \in G^{(v_0)}(F_0) \backslash G^{(v_0)}(\alpha)(F_0)  \times G(\BA_{0,f}^{v_0})/K_G^{v_0} } \phi_{CM}^{v_0}(h^{-1}\delta h) [\mathcal{CM} (\delta, h) ]_{K_G^{v_0}}.
		\end{equation}
		Here $G^{(v_0)}(\alpha)(F_0) \subseteq G^{(v_0)}(F_0)$ consist of elements with characteristic polynomial $\alpha$.
	\end{corollary}
	
	Note $\CN^{\delta=id}=\Fix(\delta)$ is introduced in Definition \ref{def: Mod(g)}. Replace $\Fix(\delta)$ by $\wt {\Fix}^\BL  (\delta)$ in the definition, we get a virtual $1$-cycle $[{}^\BL\mathcal{CM} (\delta, h) ]_{K_G^{v_0}}$. By Proposition \ref{prop: comp of balloon-ground} we get 
	
	\begin{corollary}
		The restriction of the basic uniformization ${}^\BL \mathcal{CM}(\alpha, \phi_{CM})^{\wedge}$ to any fiber of the projection $\Theta$ (\ref{eq: basic basic unif}) is the sum
		\begin{equation}\label{basic derived Hk}
			\sum_{(\delta, h) \in  G^{(v_0)}(F_0) \backslash  G^{(v_0)}(\alpha)(F_0)  \times G(\BA_{0,f}^{v_0})/K_G^{v_0} } \phi_{CM}^{v_0}(h^{-1}\delta h) [{}^\BL\mathcal{CM} (\delta, h) ]_{K_G^{v_0}}.
		\end{equation}
	\end{corollary}

	\section{Modification over $\BC$ and $\BF_q$}\label{section: mod over C and F}	
	
	In this section, we use basic geometric results on the generic fiber and special fibers of our Shimura varieties to understand $1$-cycles on our integral model by modification. Firstly, we show that the degree function of any Hecke CM cycle $CM(\alpha, \mu) \to E$ is equidistributed on geometric connected components of $M=M_{\wt{G}, \wt{K}}$ under mild assumptions. This shows we can use any $1$-cycle on $M$ with non-empty complex points to modify general $1$-cycle to a cycle of degree zero over $\BC$.	 Secondly, we use Balloon--Ground stratification and basic uniformization to approximate $1$-cycles on non-smooth special fibers by very special $1$-cycles in the basic locus.
	
	\subsection{Modification over $\BC$}\label{section: equidistribution} 
	
	We firstly recall the Hecke and Galois actions on connected compoenents of complex Shimura varieties. Let $(G,X)$ be a Shimura datum,  $G^{\ab}=G/G^{\text{der}}$ be the maximal abelian quotient  of $G$ which is a torus over $\BQ$, and $Z$ be the center of $G$. 
	
	For any neat compact open subgroup $K \leq G(\mathbb A_f)$, consider the complex uniformization: 
	$$ 
	\Sh_K(G,X)(\mathbb C)=G(\BQ) \backslash X \times G(\mathbb A_f) / K.
	$$	
	
	Assume that $G^{\text{der}}$ is simply connected. 	
	
	\begin{proposition}\label{prop: geom conn comp of Sh}
		The natural projection $\det_G: G \to G^{\ab}$ induces an isomorphism of finite sets
		\begin{equation}
			\pi_0(\Sh_K(G,X)(\BC)) \cong  G^{\ab}(\mathbb Q)^{\dagger} \backslash G^{\ab}(\BA_f) / \mathrm{det}_G (K), 
		\end{equation}
		where $G^{\ab}(\mathbb Q)^{\dagger}=G^{\ab}(\BQ) \cap \Im(\det_G |_{Z}: Z(\BR) \to G^{\ab}(\BR))$.
	\end{proposition}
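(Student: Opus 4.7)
The plan is to follow the classical Deligne computation of connected components of Shimura varieties, in three steps. Simple-connectedness of $G^{\text{der}}$ is the crucial input throughout. First, since $X$ is a $G(\BR)$-conjugacy class of Hodge cocharacters, it is a homogeneous $G(\BR)$-space with $\pi_0(X) \cong G(\BR)/G(\BR)_+$, where $G(\BR)_+$ denotes the preimage of the identity component of $G^{\ad}(\BR)$. Real approximation gives $G(\BQ) \cdot G(\BR)_+ = G(\BR)$, so $G(\BQ)$ acts transitively on $\pi_0(X)$ with stabilizer $G(\BQ)_+ := G(\BQ) \cap G(\BR)_+$. Combined with the complex uniformization, this yields
$$\pi_0(\Sh_K(G,X)(\BC)) \cong G(\BQ)_+ \backslash G(\BA_f)/K.$$

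Second, the short exact sequence $1 \to G^{\text{der}} \to G \to G^{\ab} \to 1$ gives a surjection $G(\BA_f) \twoheadrightarrow G^{\ab}(\BA_f)$, by Kneser's vanishing $H^1(\BQ_p, G^{\text{der}}) = 0$ for all $p$ (using simple-connectedness). The induced map
$$G(\BQ)_+ \backslash G(\BA_f)/K \to \mathrm{det}_G(G(\BQ)_+) \backslash G^{\ab}(\BA_f)/\mathrm{det}_G(K)$$
is therefore surjective. For injectivity, after translating it suffices to show that for any $g \in G(\BA_f)$ and any $h \in G^{\text{der}}(\BA_f)$, there exists $\gamma \in G^{\text{der}}(\BQ)$ with $g^{-1}\gamma g \cdot h \in K$; equivalently, $\gamma$ must lie in the non-empty open subset $(gKg^{-1}) \cdot (gh^{-1}g^{-1})$ of $G^{\text{der}}(\BA_f)$ (using normality of $G^{\text{der}}$ so that $gh^{-1}g^{-1} \in G^{\text{der}}(\BA_f)$). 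Such $\gamma$ exists by strong approximation for the simply connected $\BQ$-group $G^{\text{der}}$ (i.e.\ density of $G^{\text{der}}(\BQ)$ in $G^{\text{der}}(\BA_f)$). Since $G^{\text{der}}(\BR)$ is connected, one has $G^{\text{der}}(\BQ) \subseteq G(\BQ)_+$, completing the injectivity.

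Third, I will identify $\mathrm{det}_G(G(\BQ)_+) = G^{\ab}(\BQ)^\dagger$. Surjectivity of $G^{\text{der}}(\BR) \twoheadrightarrow G^{\ad}(\BR)^+$ (again by simple-connectedness) gives the decomposition $G(\BR)_+ = Z(\BR) \cdot G^{\text{der}}(\BR)$, and combined with $\mathrm{det}_G|_{G^{\text{der}}} = 1$ yields $\mathrm{det}_G(G(\BR)_+) = \mathrm{det}_G(Z(\BR))$, so $\mathrm{det}_G(G(\BQ)_+) \subseteq G^{\ab}(\BQ)^\dagger$. For the reverse inclusion, given $a \in G^{\ab}(\BQ)^\dagger$, write $a = \mathrm{det}_G(z)$ with $z \in Z(\BR)$. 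The Hasse principle for simply connected $G^{\text{der}}$ (injectivity of $H^1(\BQ, G^{\text{der}}) \hookrightarrow \prod_v H^1(\BQ_v, G^{\text{der}})$, with all non-archimedean local terms vanishing) shows that $a$ admits a global lift $g \in G(\BQ)$; since $gz^{-1} \in G^{\text{der}}(\BR) \subseteq G(\BR)_+$ and $z \in G(\BR)_+$, one has $g \in G(\BQ)_+$. The main obstacle worth emphasizing is verifying strong approximation for $G^{\text{der}}$, which requires every $\BQ$-simple factor to have a non-compact archimedean factor; this is built into the Shimura datum axioms, and in the unitary setting of this paper is guaranteed by the non-compact factor $\SU(n-1,1)$ of $\Res_{F_0/\BQ}\SU(V)$ at $\varphi_0$.
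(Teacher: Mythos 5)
Your proof is correct and follows precisely the classical Deligne argument that the paper invokes via its citation to Milne's Theorem 5.17: establish $\pi_0 \cong G(\BQ)_+ \backslash G(\BA_f)/K$ via real approximation, pass to $G^{\ab}$ using strong approximation for the simply connected $G^{\text{der}}$ and vanishing of $H^1$ at finite places, and identify $\det_G(G(\BQ)_+) = G^{\ab}(\BQ)^\dagger$ via the Hasse principle for $G^{\text{der}}$. This is the same approach the paper intends; you have merely unpacked the citation.
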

	
	\begin{proof}
		This follows from the strong approximation theorem for non-compact type, simply connected semisimple algebraic groups over number fields, see \cite[Thm. 5.17]{Milne}. 
	\end{proof}
	
	\begin{remark}
		The Hecke action of $G(\BA_f)$ on the tower $\{\Sh_K(G,X)\}_K$ is compatible with the projection $\det_G: G(\BA) \to G^{\ab}(\BA)$, hence is transitive on $\lim_K \pi_0(\Sh_K(G,X)(\BC))$.
	\end{remark}	
	
	Let $E$ be the reflex field of $(G, X)$. The conjugacy class of the Hodge cocharacter $\mu_h: \BG_{m,\BC} \to G_{\BC}$ is defined on $E$. The composition 
	$
	\det_G \circ \mu_h : \BG_{m, \BC} \to G^\ab_{\BC}
	$
	 is independent of the choice of $\mu_h$ in the conjugacy class $\{\mu_h\}$ and is defined over $E$. We denote by 
\begin{equation} \label{reflex hom over E}
	r_{G, E} : (\mathbb G_m)_E \to G^{\ab}_E
\end{equation}
the descent of $\det_G \circ \mu_h$ to $E$. 

We have the norm map $\Nm_{E/\BQ}: \Res_{E/\BQ} (G^\ab_E) \to G^\ab$ (from $E$ to $\BQ$).  \textit{The reflex norm map} is defined as the composite map
\begin{equation}\label{reflex norm map}
	r_G=\Nm_{E/\BQ} \circ \Res_{E / \BQ}r_{G, E}: \Res_{E /\mathbb Q} \mathbb G_m \to \Res_{E/\BQ}G^\ab \to G^{\ab}.
\end{equation}

	We describe the Galois action on $\pi_0(\Sh_K(G,X)(\mathbb C))$ via the theory of Shimura reciprocity law as in \cite[Section 12-13]{Milne}. By global class field theory, there is a continuous surjective homomorphism $\Art_E: \Res_{E/\BQ} \BG_m (\BA) \to \Gal_E$. By \cite[p. 119]{Milne} and Proposition \ref{prop: geom conn comp of Sh}, we have 
	
	\begin{proposition}\label{Gal on pi}	
		For any Galois automorphism $\sigma \in \Gal_E$, choose some $s=(s_\infty, s_f) \in \mathbb A_E^{\times} = \Res_{E/ \mathbb Q} \mathbb G_m (\mathbb A)$ such that $\Art_E(s)=\sigma|_{E^{ab}}$ such that $s_\infty$  lies in the identity connected component of $(E \otimes_\BQ \BR)^\times$. Then $\sigma$ acts on the finite abelian group $\pi_0(\Sh_K(G,X)(\BC)) \cong  G^{\ab}(\mathbb Q)^{\dagger} \backslash G^{\ab}(\BA_f) / \mathrm{det}_G(K)$
		via multiplication with $r_G(s_f)$:
		\begin{equation}
			\sigma[t]= [r_G(s_{f}).t] .
		\end{equation}
	\end{proposition}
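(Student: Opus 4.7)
The plan is to reduce the assertion to the Shimura reciprocity law at a special point in each connected component, and then transport the statement along $\det_G: G \to G^{\ab}$.

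First, I would recall that since $G^{\mathrm{der}}$ is simply connected, Proposition \ref{prop: geom conn comp of Sh} identifies $\pi_0(\Sh_K(G,X)(\mathbb{C}))$ with the finite abelian group $G^{\ab}(\mathbb{Q})^\dagger \backslash G^{\ab}(\mathbb{A}_f)/\det_G(K)$, on which $G(\mathbb{A}_f)$ acts transitively through $\det_G$. Because the complex-analytic construction of $\Sh_K(G,X)$ is functorial in the Shimura datum, and because any Shimura morphism $(T,\{h_x\}) \to (G,X)$ induced by a special point $x \in X$ with associated Mumford--Tate torus $T \subseteq G$ fits into a commutative diagram with $\det_G: G \to G^{\ab}$ and the natural map $T \to G^{\ab}$, it will be enough to prove the formula at (the images of) special points.

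Next, I would pick a complete set of representatives of $\pi_0$ by CM points: for each class $[t] \in G^{\ab}(\mathbb{Q})^\dagger \backslash G^{\ab}(\mathbb{A}_f)/\det_G(K)$, choose a special point $x_t \in X$ and $g_t \in G(\mathbb{A}_f)$ with $\det_G(g_t) = t$, and let $T_t \subseteq G$ be the associated torus. The zero-dimensional Shimura variety $\Sh_{K_{T_t}}(T_t,\{h_{x_t}\})$, where $K_{T_t} = T_t(\mathbb{A}_f)\cap g_t K g_t^{-1}$, admits a canonical model over its reflex field $E_t \supseteq E$, and the main theorem of complex multiplication (Shimura--Taniyama, as formulated in Milne's account) identifies the Galois action of $\sigma \in \mathrm{Gal}(E_t^{\ab}/E_t)$ on its $\mathbb{C}$-points $T_t(\mathbb{Q}) \backslash T_t(\mathbb{A}_f)/K_{T_t}$ with multiplication by $r_{T_t}(s_f)$, where $\Art_{E_t}(s) = \sigma|_{E_t^{\ab}}$.

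I would then transport this statement along the morphism of Shimura data $(T_t, \{h_{x_t}\}) \to (G,X)$. The key compatibility is that the reflex norm is functorial in morphisms of Shimura data: composing $r_{T_t}: \Res_{E_t/\mathbb{Q}} \mathbb{G}_m \to T_t^{\ab} = T_t$ with $T_t \to G^{\ab}$ and then restricting via the natural inclusion $\Res_{E/\mathbb{Q}} \mathbb{G}_m \hookrightarrow \Res_{E_t/\mathbb{Q}} \mathbb{G}_m$ (norm from $E_t$ to $E$) yields $r_G: \Res_{E/\mathbb{Q}} \mathbb{G}_m \to G^{\ab}$. Together with the identification of $\pi_0$ above, this immediately gives the desired formula $\sigma[t] = [r_G(s_f)\cdot t]$ for any $\sigma \in \mathrm{Gal}_E$ and $s \in \mathbb{A}_E^\times$ with $\Art_E(s) = \sigma|_{E^{\ab}}$.

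The main obstacle is ensuring the correct normalization of all the reciprocity maps and reflex norms, and checking functoriality of the reflex norm under morphisms of Shimura data with different reflex fields; this is routine but must be done carefully (and the conventions here match those of Milne's notes, which we cite as the source of the reciprocity formula at CM points). Independence of the choice of special point within a given class is automatic since any two choices differ by an element of $g \in G^{\ab}(\mathbb{Q})^\dagger$, which acts trivially on the $\pi_0$-quotient.
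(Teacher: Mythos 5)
The paper offers no proof of this proposition at all: it is stated with a bare citation to Milne's Shimura varieties notes (around p.~119), so any argument you give is a re-derivation of the cited result. Your plan does follow the standard route (reduce to CM points via a special point in each connected component, apply the Shimura--Taniyama reciprocity law for the zero-dimensional Shimura variety of the Mumford--Tate torus, then push forward along $\det_G$ and use functoriality of reflex norms), and this is indeed how the statement is proved in Milne's account.

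However, your key compatibility for the reflex norm is stated with the wrong map. You write that composing $\det_G \circ r_{T_t}$ with the \emph{inclusion} $\Res_{E/\BQ}\BG_m \hookrightarrow \Res_{E_t/\BQ}\BG_m$ yields $r_G$, and parenthetically describe this inclusion as ``norm from $E_t$ to $E$'' --- but these are two distinct maps in opposite directions. The correct functoriality is
\[
\det_G \circ \, r_{T_t} \;=\; r_G \circ \Nm_{E_t/E},
\]
where $\Nm_{E_t/E}\colon \Res_{E_t/\BQ}\BG_m \to \Res_{E/\BQ}\BG_m$ is the norm. Precomposing the left side with the inclusion you wrote down produces $r_G(\,\cdot\,)^{[E_t:E]}$, not $r_G$, since $\Nm_{E_t/E}$ restricted to the subtorus $\Res_{E/\BQ}\BG_m$ is the $[E_t:E]$-th power map. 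To get the stated formula you must instead take $s\in\BA_{E_t}^\times$ with $\Art_{E_t}(s)=\sigma|_{E_t^{\ab}}$, use the Shimura--Taniyama law to get action by $r_{T_t}(s_f)$ on the CM point, push down by $\det_G$, apply the identity above, and then observe that $\Art_E(\Nm_{E_t/E}(s))=\sigma|_{E^{\ab}}$ by the compatibility of the Artin maps with norms; any other $s'\in\BA_E^\times$ with $\Art_E(s')=\sigma|_{E^{\ab}}$ differs from $\Nm_{E_t/E}(s)$ at finite places by an element of $E^\times$, and $r_G(E^\times)\subseteq G^{\ab}(\BQ)$ acts trivially on the double coset (after the closure argument Milne carries out). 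With that correction the plan is sound.
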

	
	\begin{remark}
		We can compute connected components of PEL type Shimura varieties using moduli interpretation. For the Siegel modular variety $\CA_{g,N}$ with level structure $\Gamma(N)$, we have $\pi_0(\CA_{g, N}(\BC)) \cong (\BZ/N\BZ)^\times$: given $(A, \lambda) \in \CA_{g, N}(\BC)$, its image in $(\BZ/N\BZ)^\times$ is the ratio of a fixed perfect pairing on $(\BZ/N\BZ)^{2g}$ and the Weil pairing on $A[N]$ under $\eta: A[N] \cong (\BZ/N\BZ)^{2g}$. 
	\end{remark}
	
	Return to our set up. We have $\wt{G}=Z^{\BQ} \times \Res_{F_0/\BQ}G$ where $G=\U(V)$ is a reductive group over $F_0$. Denote by $M_{\wt{G}, \wt{K}}$ (resp.  $M_{G,K}$) the canonical model of $\Sh_{\wt{K}}( \wt{G}, \{ h_{\wt{G}} \})$ (resp. $\Sh_{K}( \Res_{F_0/\BQ}G, \{ h_{\Res_{F_0/\BQ}G} \})$) over the reflex field $E$ (resp. $\varphi_0(F) \subseteq \ov \BQ$). We consider the abelianization 
	\[
	T^1:=(\Res_{F_0/ \BQ}G)^\ab=\Res_{F_0/ \BQ} \mathbb G_{m}^{\Nm_{F/F_0}=1}.
	\]
	As $T^1(\BR)$ is connected, we have $T^1(\BQ)^{\dagger}=T^1(\BQ)$.	Consider the reflex norm map (\ref{reflex norm map}) for the Shimura datum $(\Res_{F_0/\BQ}G, \{ h_{\Res_{F_0/\BQ}G} \})$:
	\[
	r_{\Res_{F_0/\BQ}G}: \Res_{F/\BQ} \BG_m \to T^1.
	\]
	
	\begin{lemma}
The reflex norm map for the Shimura datum $(\Res_{F_0/\BQ}G, \{ h_{\Res_{F_0/\BQ}G} \})$ could be identified with the map
		\begin{equation}\label{eq: reflex norm}
			r_{\Res_{F_0/\BQ}G}: \Res_{ F/\BQ} \BG_m \to \Res_{F_0/ \mathbb Q} \mathbb G_{m}^{\Nm_{F/F_0}=1}, \quad z \mapsto \bar{z}/z.
		\end{equation}
	\end{lemma}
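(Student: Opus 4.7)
The plan is to unwind the definition of the reflex norm for the datum $(\Res_{F_0/\BQ} G, h_G)$ directly and match the output to $z \mapsto \bar z/z$ by tracing cocharacters through the $\Phi$-parametrization of $T^1$. I proceed in three steps: identifying the abelianization, extracting the relevant Hodge cocharacter, and computing its norm.

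First, because the derived group $\SU(V)$ is simply connected and semisimple, the determinant $\det: \U(V) \twoheadrightarrow \U(1)$ realizes the abelianization, so $(\Res_{F_0/\BQ}G)^{\ab} = T^1$ and $\det_G$ is the quotient map. Second, I recover $h_G$ from $h_{\wt G}$ via the isomorphism $\wt G \cong Z^{\BQ} \times \Res_{F_0/\BQ} G$, $(z, g) \mapsto (z, z^{-1} g)$: under this identification, $h_G = h_\Phi^{-1} \cdot h_{G^\BQ}$. At each $\varphi \in \Phi$, using $h_{\Phi, \varphi}(w) = w$ and $h_{G^\BQ, \varphi}(w) = \diag(w \cdot 1_{r_\varphi}, \bar w \cdot 1_{s_\varphi})$, one gets $h_{G, \varphi}(w) = \diag(1_{r_\varphi}, (\bar w/w) \cdot 1_{s_\varphi})$, hence $\det \circ \mu_{h_G, \varphi}(w) = w^{-s_\varphi}$. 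Given the signature data $s_{\varphi_0} = 1$ and $s_\varphi = 0$ for $\varphi \in \Phi \setminus \{\varphi_0\}$, the cocharacter $r_{G, E} := \det \circ \mu_{h_G}$ has weight $-1$ at $\varphi_0$ and weight $0$ elsewhere in the $\Phi$-parametrization $T^1_{\bar\BQ} \cong \prod_{\varphi \in \Phi} \BG_m$. Its $\Gal(\bar\BQ/\BQ)$-stabilizer is $\Gal(\bar\BQ/\varphi_0(F))$, consistent with the reflex field $E = \varphi_0(F)$.

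Third, I compute $r_G = \Nm_{E/\BQ}(r_{G, E})$ on $\bar\BQ$-points. Embeddings $\iota: E = \varphi_0(F) \hookrightarrow \bar\BQ$ are in natural bijection with $\Sigma_F$, and the Galois-translated cocharacter indexed by $\varphi \in \Sigma_F$ has weight $-1$ at $\varphi$ and weight $+1$ at $\bar\varphi$ (using $x_\varphi x_{\bar\varphi} = 1$ in $T^1$). Summing over $\varphi \in \Sigma_F$, the $\psi$-coordinate of $r_G((x_\varphi))$ equals $x_{\bar\psi}/x_\psi$ for each $\psi \in \Sigma_F$. Evaluating on the image of $z \in F^\times = E^\times$, where $x_\varphi = \varphi(z)$, yields $\psi(\bar z)/\psi(z) = \psi(\bar z/z)$ in every slot; hence $r_G(z) = \bar z/z \in T^1(\BQ)$, as claimed.

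The argument is essentially a bookkeeping exercise, and the main obstacle lies precisely in that bookkeeping: one must carefully track the bijection between embeddings $E \hookrightarrow \bar\BQ$ and $\Sigma_F$, the Galois action on $\Phi$ and on its complement $\bar\Phi$, and the constraint $x_\varphi x_{\bar\varphi} = 1$, in order to match the weight pattern of the summed cocharacter with the map $z \mapsto \bar z/z$ without sign errors or confusion between $\varphi_0$ and $\bar\varphi_0$.
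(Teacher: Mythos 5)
Your computation is correct and follows the same route as the paper's argument: recover $h_G$ from $h_{\wt G}$ via the splitting $\wt G \cong Z^\BQ \times \Res_{F_0/\BQ}G$, take the composed cocharacter $\det_G \circ \mu_{h_G}$ (which is $z \mapsto z^{-1}$ in the $\varphi_0$-slot and trivial elsewhere in $\Phi$), and then form the reflex norm. The paper's proof is terser --- it records the Hodge cocharacter as $z \mapsto \diag\{1_{n-1}, \bar z/z\}$ and composes with $\det_G$ without spelling out the $\Nm_{E/\BQ}$ step --- so your explicit bookkeeping of the norm over $\Sigma_F$, using the constraint $x_\varphi x_{\bar\varphi}=1$ in $T^1$ to show the weights collapse to $z \mapsto \bar z / z$, is a faithful and welcome elaboration of the same argument.
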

	\begin{proof}
This is computed in \cite[Section 2.4-2.5]{Boumasmoud} via the equivalence between tori over a number field and $\BZ$-lattices with actions of the Galois groups of the number field, given by sending a torus $T$ to the cocharacter lattice $X_*(T_{\BC} ) =\Hom( {\BG_m}_{\BC}, T_{\BC}) $. 
\end{proof}
	
	\begin{corollary}\label{Gal_F stable on M_{G,K}}
		The Galois action of $\Gal_F$ on $\pi_0(M_{G,K}(\BC)) $ is transitive. In other words, $M_{G,K}$ is connected over $F$.
	\end{corollary}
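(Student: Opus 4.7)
My plan is to combine the description of $\pi_0$ in Proposition \ref{prop: geom conn comp of Sh} with the Galois action formula in Proposition \ref{Gal on pi}, reduce transitivity to the surjectivity of a ``ratio-to-conjugate'' map on finite adeles, and then verify that surjectivity place by place via local Hilbert~90.

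Concretely, since $F\otimes_{\BQ}\BR \cong \prod_{\varphi\in\Phi}\BC$ (as $F$ is totally imaginary CM), we have $T^1(\BR)\cong\prod_{\varphi\in\Phi}\U(1)$, which is connected; therefore $T^1(\BQ)^\dagger=T^1(\BQ)$, and Proposition \ref{prop: geom conn comp of Sh} applied to $(\Res_{F_0/\BQ}G,\{h_G\})$ gives
\[
\pi_0(M_{G,K}(\BC)) \cong T^1(\BQ)\backslash T^1(\BA_f)/\det_G(K).
\]
By Proposition \ref{Gal on pi} together with the reflex norm computation \eqref{eq: reflex norm}, a Galois element $\sigma\in\Gal_F$ with $\Art_F(s)=\sigma|_{F^{ab}}$ acts by left multiplication by $r(s_f)=\ov{s_f}/s_f\in T^1(\BA_f)$. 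Since the archimedean component $s_\infty$ does not enter the formula, the set of elements of $T^1(\BA_f)$ realising the $\Gal_F$-action is precisely the image of the map
\[
\Phi: \BA_{F,f}^\times \longrightarrow T^1(\BA_f), \qquad s_f \longmapsto \ov{s_f}/s_f.
\]
Hence transitivity of $\Gal_F$ on $\pi_0(M_{G,K}(\BC))$ is implied by (and essentially equivalent to) the surjectivity of $\Phi$.

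I would then establish surjectivity of $\Phi$ place by place. At each finite place $v$ of $F_0$, the local factor $F_v^\times\to T^1(F_{0,v})$, $z\mapsto \ov{z}/z$, is surjective: for inert or ramified $v$ this is local Hilbert~90 applied to the cyclic extension $F_v/F_{0,v}$, and for split $v$ (where $F_v\cong F_{0,v}\times F_{0,v}$ with the involution swapping the factors) it follows explicitly from $(a,b)\mapsto(b/a,a/b)$. To assemble these into a surjection on the restricted product, I must further check integral surjectivity $O_{F_v}^\times\twoheadrightarrow T^1(O_{F_{0,v}})$ at almost all finite $v$, where $F/F_0$ is unramified: in the split case this is immediate, and in the inert case it reduces via Hensel's lemma (applied to the smooth $O_{F_{0,v}}$-torus $\Res_{O_{F_v}/O_{F_{0,v}}}\BG_m \to T^1_{O_{F_{0,v}}}$) to the residual surjection $\BF_{q_v^2}^\times\twoheadrightarrow\BF_{q_v^2}^{\Nm=1}$, whose target has the correct order $q_v+1=(q_v^2-1)/(q_v-1)$. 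This yields the required surjectivity of $\Phi$, hence the corollary.

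The argument is essentially standard class-field-theoretic bookkeeping, and I do not anticipate any serious obstacle. The only point requiring some care is the restricted-product compatibility at almost all finite places—ensuring that $\Phi$ surjects not merely on the full direct product but on the restricted product $T^1(\BA_f)$—which is routine.
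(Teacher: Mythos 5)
Your argument is correct and is essentially the same as the paper's: both reduce to the surjectivity of the reflex norm map $z\mapsto \bar z/z$ from $\BA_{F,f}^\times$ onto $T^1(\BA_f)$ via Hilbert~90, then conclude from Propositions~\ref{prop: geom conn comp of Sh} and~\ref{Gal on pi}. The paper states this surjectivity in one line; you have simply spelled out the place-by-place and restricted-product details that the phrase ``by Hilbert 90'' is implicitly invoking.
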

	\begin{proof}
		By above lemma and Hilbert 90, the reflex norm map $r_{\Res_{F_0/\BQ}G}$ induces a surjection $\Res_{F/\BQ} \BG_m(\BA_f) \surj T^1(\BA_f)$. We conclude by Proposition \ref{Gal on pi}.
	\end{proof}

	\subsection{Complex uniformization of Hecke CM cycles}
	Denote by $X$ the Grassmannian of negative definite $\mathbb C$-lines in $V \otimes_{F,\varphi_0} \BC$. Consider any neat compact open subgroup $\wt{K}=K_{Z^{\BQ}} \times K$.  
	We have the following complex uniformization \cite[Prop. 3.7]{RSZ-Diagonalcycle}  of $M_{\wt{G}, \wt {K}}$ along  the complex  embedding $E \to \BC$ induced by $\varphi_0$.
	
	\begin{proposition}\label{RSZ complex uniformization} 
		There is an isomorphism between  complex algebraic varieties over $\BC$:
		\begin{equation}
			M_{\wt{G}, \wt {K}} \otimes_{E, \varphi_0} \mathbb C \cong \Sh_{K_{Z^{\mathbb Q}}}(Z^{\mathbb Q}, \{h_{ Z^{\mathbb Q}}\} )_{\BC} \times M_{G,K} \otimes_{F, \varphi_0}{\BC}. 
		\end{equation}
		Each fiber of the projection $ M_{\wt G, \wt K}(\BC) \to Z^{\BQ}(\BQ)  \backslash  Z^{\BQ}(\BA_{f}) / K_{Z^\BQ}$ is isomorphic to 
		$$M_{G, K}(\BC)= \U(V)(F_0) \backslash X \times \U(V)(\BA_{0, f}) / K_G.$$
	\end{proposition}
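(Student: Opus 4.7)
The plan is to deduce this from the standard complex uniformization of Shimura varieties, using the product decomposition of the Shimura datum already noted in \eqref{eq: Shimura=Shimura x Shimura}. First I would unwind the identification $\wt{G} \cong Z^{\mathbb{Q}} \times \Res_{F_0/\mathbb{Q}} G$ given by $(z, g) \mapsto (z, z^{-1}g)$. Under this isomorphism, the cocharacter $h_{\wt{G}} = (h_\Phi, h_{G^{\mathbb{Q}}})$ is sent to $(h_\Phi, h_G)$, where $h_G$ is the unitary Shimura datum on $\Res_{F_0/\mathbb{Q}} G$ that is trivial at $\varphi \in \Phi-\{\varphi_0\}$ and equal to $z \mapsto \diag\{1_{n-1}, \bar{z}/z\}$ at $\varphi_0$. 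This is a direct computation from $h_{G^{\mathbb{Q}}, \varphi_0}(z) = \diag\{z \cdot 1_{n-1}, \bar z\} = \varphi_0(z) \cdot \diag\{1_{n-1}, \bar z/z\}$ (and the analogous factorization at other $\varphi \in \Phi$). The level structure $\wt{K}$ also decomposes as the product $K_{Z^{\mathbb{Q}}} \times K_G$ under the same identification.

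Second, I would invoke the standard product principle for canonical models: the canonical model of a product Shimura datum $(G_1 \times G_2, \{h_1 \times h_2\})$ with product level $K_1 \times K_2$ is the product of the individual canonical models, base-changed to the compositum of the two reflex fields. Since the reflex field of $(Z^{\mathbb{Q}}, h_\Phi)$ is $F^\Phi$ and the reflex field of $(\Res_{F_0/\mathbb{Q}} G, \{h_G\})$ is $\varphi_0(F)$, their compositum is exactly $E=\varphi_0(F) F^\Phi$, matching the reflex field of $\wt{G}$. Base-changing along $\varphi_0: E \to \mathbb{C}$ yields the first displayed isomorphism.

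Third, for the fiber description I would apply the classical complex uniformization $\Sh_K(G, \{h\})(\mathbb{C}) = G(\mathbb{Q}) \backslash X_h \times G(\mathbb{A}_f) / K$ to $(\Res_{F_0/\mathbb{Q}} G, \{h_G\})$. The conjugacy class $X_h$ under $\Res_{F_0/\mathbb{Q}} G(\mathbb{R}) = \prod_{\varphi \in \Phi} \U(V_\varphi)(\mathbb{R})$ factors as a product over archimedean places; at $\varphi \neq \varphi_0$ the signature is $(n,0)$ so $\U(V_\varphi)(\mathbb{R})$ is compact and the $\varphi$-factor of $X_h$ is a single point, while the $\varphi_0$-factor is the Grassmannian $X$ of negative definite $\mathbb{C}$-lines in $V \otimes_{F, \varphi_0} \mathbb{C}$. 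Combined with $\Res_{F_0/\mathbb{Q}} G(\mathbb{A}_f) = \U(V)(\mathbb{A}_{0, f})$ and the fact that the Shimura variety for $(Z^{\mathbb{Q}}, h_\Phi)$ is zero-dimensional with complex points $Z^{\mathbb{Q}}(\mathbb{Q}) \backslash Z^{\mathbb{Q}}(\mathbb{A}_f) / K_{Z^{\mathbb{Q}}}$, this yields the desired fiber description.

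The main obstacle is the formulation of the product formula for canonical models when the two factors have different reflex fields. However, since we work over $\mathbb{C}$, descent concerns are avoided and the statement reduces to a formal manipulation of double cosets together with algebraicity provided by the Baily--Borel construction.
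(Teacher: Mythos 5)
The paper does not actually give a proof of this proposition; it simply cites \cite[Prop.~3.7]{RSZ-Diagonalcycle}, and the complex-points version of the first display is already recorded earlier as \eqref{eq: Shimura=Shimura x Shimura}. Your argument is correct and is the standard one that underlies the cited result: transport the datum $h_{\wt G}=(h_\Phi,h_{G^\BQ})$ through the isomorphism $(z,g)\mapsto (z,z^{-1}g)$ to obtain $(h_\Phi,h_G)$ with $h_G$ trivial away from $\varphi_0$ and $z\mapsto\diag\{1_{n-1},\bar z/z\}$ at $\varphi_0$, invoke the product principle for canonical models (the product canonical model lives over the compositum $\varphi_0(F)F^\Phi=E$, so base change to $\BC$ gives the displayed factorization as algebraic varieties), and read off the fiber description from the classical complex uniformization with $X_{h_G}\cong X\times\{\text{pt}\}^{\Phi\setminus\{\varphi_0\}}$ and $\Res_{F_0/\BQ}G(\BA_f)=\U(V)(\BA_{0,f})$. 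So your proposal takes essentially the same approach as the paper's source, and no further comparison is needed.
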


	Let $\mu_{G, \Delta} \in K_{G, \Delta}\backslash G(F_{0, \Delta})/K_{G, \Delta}$ and $\alpha \in F [t]$ be a degree $n$ conjugate self-reciprocal monic polynomial (c.f. \ref{conjuga self-reciprocal}). Assume $\alpha$ is irreducible.    For any $(\delta, h) \in \U(V)(\alpha)(F_0) \times \U(V)(\mathbb A_{0,f})/K_G$, the cycle $[CM(\alpha, h)]_{K_G} \to M_{G, K}(\BC)$ is the descent of the summation
	\[
	\sum_{(\delta', h')} X^{\delta'=id} \times 1_{g'K_G}
	\]
	where $(\delta', h')$ runs over the $G(F_0)$-orbit of $(\delta, h)$. Similar to \cite[Prop. 7.17]{AFL}, the complex points of Hecke CM cycles have the following description.
	
	\begin{proposition}\label{prop: complex unif for CM cycles}
		The restriction of $CM(\alpha, \mu_{G, \Delta})(\BC)$ on each fiber of the projection $M_{\wt G, \wt K}(\BC) \to Z^{\BQ}(\BQ)  \backslash Z^{\BQ}(\BA_{f}) / K_{Z^\BQ} $ is the disjoint union
		\[ \coprod [CM(\alpha, h)]_{K_G} \rightarrow M_{G, K}(\BC) \]
		where the index runs over the set 
		\[
		A_{\mu_{G,\Delta}}:=\{(\delta, h) \in \U(V)(F_0) \backslash ( \U(V)(\alpha)(F_0) \times \U(V)(\mathbb A_{0,f})/K_G | h^{-1}\delta h \in K_G \mu_{G, \Delta} K_G \}.
		\]
	\end{proposition}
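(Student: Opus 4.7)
The plan is to apply the complex uniformization of $M_{\wt G, \wt K}$ from Proposition \ref{RSZ complex uniformization} and translate the moduli description of $CM(\alpha, \mu_{G, \Delta})(\BC)$ through this uniformization, following the pattern of \cite[Prop. 7.17]{AFL} but adapted to the parahoric level setting. Fix a fiber of the projection to $Z^{\BQ}(\BQ) \backslash Z^{\BQ}(\BA_f) / K_{Z^\BQ}$ and identify it with $\U(V)(F_0) \backslash X \times \U(V)(\BA_{0,f})/K_G$. A complex point $(A_0, A, \ov\eta)$ in this fiber is represented by a pair $(x, g) \in X \times \U(V)(\BA_{0,f})/K_G$, where $x$ encodes the Hodge filtration on $V \otimes_{F, \varphi_0} \BC$ and $g$ encodes the $K_G$-orbit of level structures away from the archimedean place. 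Crucially, the level structure $\ov\eta$ provides an identification $\Hom^\circ_F(A_0, A) \simeq V$ of hermitian $F$-vector spaces.

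Next I would identify an endomorphism $\varphi \in \End_{O_F}(A)[1/\Delta]$ with $\charac(\varphi) = \alpha$ and $\varphi^*\lambda = \lambda$ with an element $\delta \in \U(V)(\alpha)(F_0)$. The Betti realization of $\varphi$ acts on $H_1(A, \BQ)$, commutes with the $F$-action and preserves the polarization form; under the uniformization this yields a polarization-preserving, $F$-linear self-map of $V$ with characteristic polynomial $\alpha$, i.e., an element of $\U(V)(\alpha)(F_0)$. Since $\varphi$ preserves the Hodge structure on $A$, the element $\delta$ must fix the line $x \in X$, giving $x \in X^{\delta = \id}$. The Hecke condition $\eta_1 \varphi \eta_2^{-1} \in \mu_{G, \Delta}$ at places in $\Delta$, combined with the fact that outside $\Delta$ the endomorphism $\varphi$ (via $\ov \eta$) lies in the completed stabilizer $K_G^\Delta$, translates into the double coset condition $g^{-1} \delta g \in K_G \mu_{G, \Delta} K_G$.

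Taking the quotient by the $\U(V)(F_0)$-action on tuples $(x, g, \delta)$ (which acts diagonally by conjugation on $\delta$ and by the natural action on $(x, g)$) identifies the locus of points coming from $CM(\alpha, \mu_{G, \Delta})$ with a disjoint union over the orbit set $A_{\mu_{G, \Delta}}$. For each orbit representative $(\delta, g)$, the contribution is exactly the descent of $\sum_{(\delta', g')} X^{\delta'=\id} \times 1_{g' K_G}$ over the $\U(V)(F_0)$-orbit of $(\delta, g)$, which is precisely $[CM(\alpha, g)]_{K_G}$ by definition.

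The main subtlety lies in verifying the bijection between $(A_0, A, \ov\eta, \varphi)$ and $(x, g, \delta)$ up to the respective equivalences: one must check that the $\U(V)(F_0)$-action on the uniformization side matches the moduli equivalence (isomorphisms of the tuple $(A_0, A, \ov\eta, \varphi)$), and that the Hecke condition in the moduli problem (requiring some representatives $\eta_1, \eta_2 \in \ov\eta$) translates faithfully into the double coset condition $g^{-1}\delta g \in K_G \mu_{G, \Delta} K_G$ after quotienting by $K_G$ on both sides. This is routine but needs careful bookkeeping; it is essentially the same argument as in \cite[Prop. 7.17]{AFL}, except that we allow $\mu_{G, \Delta}$ to be a nontrivial double coset at the parahoric places rather than requiring $\varphi \in \End_{O_F}(A)$ everywhere integrally.
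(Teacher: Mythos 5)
Your proposal is correct and follows essentially the same approach as the paper, which defers the argument to \cite[Prop.\ 7.17]{AFL} with the observation that only the level structure at the places in $\Delta$ changes. The translation of the moduli tuple $(A_0,A,\ov\eta,\varphi)$ into $(x,g,\delta)$ via the complex uniformization and the reinterpretation of the Hecke condition as a double-coset condition is exactly the content of that cited argument; your fleshing-out of the bijection and the $\U(V)(F_0)$-equivariance matches the intended proof.
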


	\subsection{Hecke CM cycles are Galois stable}
	
	Denote by $\wt{\deg}_{C'}$ the degree function of $C'=CM(\alpha, \mu_{G, \Delta})$ on 
	$$ \pi_0(M_{\wt G, \wt K}(\BC)) \cong  Z^\BQ(\BQ) \backslash Z^\BQ(\BA_{f}) / K_{Z^\BQ} \times \pi_0(M_{G, K}(\BC)).$$
	
	The Hecke action of $Z^{\BQ}(\mathbb A_f)$ on $M_{\wt G, \wt K}(\BC)$  preserves $CM(\alpha, \mu)$ by moduli interpretation, and induces a transitive action on $Z^\BQ(\BQ) \backslash Z^\BQ(\BA_{f}) / K_{Z^\BQ}$. Hence
	$$ 
	\wt{\deg}_{C'}=  c \times \deg_{C'}
	$$
	where $c$ is a constant and $\deg$ is a function on $\pi_0(M_{G, K}(\BC))$.

	Note the degree function 
	$\wt{\deg}_{C'}$ is $\Gal_E$-invariant because $CM(\alpha, \mu)$ is defined over $\Spec E$.

	\begin{corollary}\label{prop: equistribution when E=F}
		Assume that $E=F$. Then $\wt{\deg}_{C'}$ is a constant function on $\pi_0(M_{\wt G, \wt K}(\BC))$.
	\end{corollary}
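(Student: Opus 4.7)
The plan is to combine the $Z^{\BQ}(\BA_f)$-Hecke transitivity already used to produce the factorization $\wt{\deg}_{C'} = c \cdot \deg_{C'}$ with the $\Gal_F$-transitivity from Corollary~\ref{Gal_F stable on M_{G,K}} on the second factor. The bridge is the morphism of Shimura varieties coming from the projection of groups $\wt G = Z^{\BQ} \times \Res_{F_0/\BQ}G \to \Res_{F_0/\BQ} G$, $(z,g) \mapsto z^{-1}g$.

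First, I would note that this projection together with the compatibility of Hodge cocharacters defines a morphism of Shimura data $(\wt G, h_{\wt G}) \to (\Res_{F_0/\BQ}G, h_G)$, whose target has reflex field $\varphi_0(F) \subseteq E = \varphi_0(F) \cdot F^{\Phi}$. Hence the induced morphism of canonical models $M_{\wt G, \wt K} \to M_{G, K_G}$ is defined over $E$, and under the hypothesis $E = F$ it is defined over $F$. In particular the induced map on connected components
\[
p \colon \pi_0(M_{\wt G, \wt K}(\BC)) \to \pi_0(M_{G, K_G}(\BC))
\]
is $\Gal_F$-equivariant; via Proposition~\ref{RSZ complex uniformization} this $p$ is precisely the projection to the second factor in the product decomposition of $\pi_0(M_{\wt G, \wt K}(\BC))$, and $\wt{\deg}_{C'}$ factors through $p$ as $c \cdot \deg_{C'}$.

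Second, since $C'$ is defined over $E = F$ its degree function $\wt{\deg}_{C'}$ is $\Gal_F$-invariant, and by the previous paragraph the induced function $\deg_{C'}$ on $\pi_0(M_{G, K_G}(\BC))$ is also $\Gal_F$-invariant. Corollary~\ref{Gal_F stable on M_{G,K}} asserts that $\Gal_F$ acts transitively on $\pi_0(M_{G, K_G}(\BC))$, so any $\Gal_F$-invariant function there is constant. Multiplying by the constant $c$ from the Hecke reduction gives that $\wt{\deg}_{C'}$ is constant on all of $\pi_0(M_{\wt G, \wt K}(\BC))$.

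The main (really only) point to verify is the descent of the projection $M_{\wt G, \wt K} \to M_{G, K_G}$ to $F$ under the hypothesis $E = F$; this is where the hypothesis enters essentially, since without $E = F$ the induced $\Gal_F$-action on $\pi_0(M_{\wt G, \wt K}(\BC))$ need not respect the product decomposition, and one cannot transfer the transitivity on the second factor to the whole $\pi_0$.
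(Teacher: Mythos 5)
Your proof is correct and takes the same route as the paper's: it combines the Hecke factorization $\wt{\deg}_{C'} = c\cdot\deg_{C'}$, the $\Gal_F$-invariance of $\wt{\deg}_{C'}$ (from $E=F$), and the $\Gal_F$-transitivity on $\pi_0(M_{G,K}(\BC))$ of Corollary~\ref{Gal_F stable on M_{G,K}}. The extra step you supply --- that the projection $\pi_0(M_{\wt G, \wt K}(\BC))\to\pi_0(M_{G,K}(\BC))$ is $\Gal_F$-equivariant because the morphism of Shimura data $\wt G\to\Res_{F_0/\BQ}G$ is defined over $E=F$ --- is exactly the detail the paper's one-line proof leaves implicit.
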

	\begin{proof}
		As $E=F$ the degree function $\wt{\deg}_{C'}= c \times \deg_{C'}$ is $\Gal_F$-invariant. By Corollary \ref{Gal_F stable on M_{G,K}}, $\Gal_F$ acts transitively on $\pi_0(M_{G,K})$, this shows $\wt{\deg}_{C'}$ is a constant function.
	\end{proof}
	
	\begin{conjecture}\label{Important conj: eq over C}
		The degree $\deg_{C'}$ is constant on $\pi_0(M_{G, K}(\BC))$. 
	\end{conjecture}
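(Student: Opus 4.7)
The plan is to strengthen the $\Gal_E$-invariance argument used in Corollary \ref{prop: equistribution when E=F} by combining it with the Hecke invariance coming from the centralizer of the CM endomorphism $\alpha$. First, I would compute the $\Gal_E$-action on $\pi_0(M_{G,K}(\BC))$ (viewed as the second factor in $\pi_0(M_{\wt G,\wt K}(\BC))$ via Proposition \ref{RSZ complex uniformization}) precisely via Proposition \ref{Gal on pi}. Because the Hodge cocharacter of the $\Res_{F_0/\BQ}G$-factor of $\wt G$ is already defined over $\varphi_0(F)\simeq F\subseteq E$, the reflex norm restricted to the $T^1$-factor of $\wt G^{\ab}$ factors as $r_G\circ N_{E/F}$, where $N_{E/F}:\Res_{E/\BQ}\BG_m\to\Res_{F/\BQ}\BG_m$ is the norm and $r_G(z)=\bar z/z$ is the reflex norm over $F$ from \eqref{eq: reflex norm}. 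Combined with the $Z^{\BQ}$-Hecke invariance of $C'$ (which gives $\wt{\deg}_{C'}=c\cdot\deg_{C'}$), the $\Gal_E$-invariance of $\wt{\deg}_{C'}$ upgrades to invariance of $\deg_{C'}$ under the subgroup $H_1\subseteq T^1(\BA_f)/T^1(\BQ)\det_G(K_G)$ equal to the image of $\Gal_E^{\ab}$, viewed inside $\Gal_F^{\ab}$ via $\Art_F^{-1}\circ N_{E/F}\circ\Art_E$, under $r_G$.

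Second, I would exploit the moduli-theoretic fact that $\mathcal{CM}(\alpha,\mu_{G,\Delta})$ is preserved, as a cycle, by the Hecke action of the centralizer torus $T_\alpha\simeq F_\alpha^{\Nm_{F_\alpha/F_{\alpha,0}}=1}\hookrightarrow \U(V)$ of any CM endomorphism $\delta$ with $\mathrm{char}(\delta)=\alpha$: translation by $t\in T_\alpha(\BA_{0,f})$ on the level structure leaves the endomorphism $\delta$ of the universal $A$ unchanged. This yields invariance of $\deg_{C'}$ under the further subgroup
\[
H_2:=\Nm_{F_\alpha/F}\bigl(T_\alpha(\BA_{0,f})\bigr) \subseteq T^1(\BA_f)/T^1(\BQ)\det_G(K_G).
\]
The conjecture then amounts to the assertion that $H_1\cdot H_2$ fills up $\pi_0(M_{G,K}(\BC))$.

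The main obstacle is precisely this last surjectivity. By class field theory it translates into the question of whether the joint image of $\Nm_{E/F}(\BA_{E,f}^\times)$ and $\Nm_{F_\alpha/F}(\BA_{F_\alpha,f}^\times)^{\Nm=1}$ covers the finite adelic quotient defining $\pi_0(M_{G,K}(\BC))$. For generic $\alpha$ (those with $F_\alpha/F$ linearly disjoint from the maximal abelian subextension of $F$ inside $E$), this follows from the Hasse norm theorem together with Chebotarev density; the subtle point is what happens for arbitrary $\alpha$ and at ramified places in $\Delta$. My plan is to reduce the arbitrary case to the generic one by deforming $\alpha$ within a Hecke orbit that preserves $\deg_{C'}$, using that the schemes $\mathcal{CM}(\alpha,\mu_{G,\Delta})$ and $\mathcal{CM}(\alpha',\mu_{G,\Delta})$ lie in compatible Hecke-translates of each other; controlling this deformation, especially the norm-defect at places of $\Delta$, is where the argument is most delicate.
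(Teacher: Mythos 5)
This statement is labeled \texttt{Conjecture} in the paper, and the paper does not prove it: the authors establish it only in the special case $E=F$ (Corollary~\ref{prop: equistribution when E=F}), and elsewhere (Proposition~\ref{prop: mod over BC by max order Hecke CM}, Theorem~\ref{thm: global modularity}, Theorem~\ref{Intro thm: mod}) they work under the standing hypothesis ``$E=F$ or Conjecture~\ref{Important conj: eq over C}.'' So there is no proof of yours to compare against a proof of theirs; you are attempting an open problem. Your strategy is broadly the same one the authors hint at in the Remark following the conjecture --- exploit the fact that the reflex field of the $\Res_{F_0/\BQ}G$-factor is $\varphi_0(F)\simeq F$, not $E$, so the reciprocity map should give more than bare $\Gal_E$-invariance. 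The first paragraph of your proposal is essentially a correct computation of why the naive $\Gal_E$-argument falls short of $\Gal_F$-invariance when $E\neq F$: $r_G$ restricted to $\Gal_E$ factors through $N_{E/F}$, and $N_{E/F}(\BA_{E,f}^\times)$ is a proper subgroup of $\BA_{F,f}^\times$.

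Your second ingredient ($H_2$, Hecke invariance under the centralizer torus $T_\alpha$) is the genuinely new idea, but it has a real gap before the admitted one about surjectivity of $H_1\cdot H_2$. Translating the level structure $\ov\eta\mapsto t\ov\eta$ by $t\in T_\alpha(\BA_{0,f})$ fixes the CM endomorphism $\delta$ of the universal abelian scheme, but it replaces the condition $\eta_1\varphi\eta_2^{-1}\in\mu_{G,\Delta}$ by $t_\Delta\,\eta_1\varphi\eta_2^{-1}\,t_\Delta^{-1}\in\mu_{G,\Delta}$, which is not automatic unless $t_\Delta$ normalizes the double coset $\mu_{G,\Delta}$; and outside $\Delta$ the translate of $\ov\eta$ need not remain $K_G^\Delta$-equivalent to $\ov\eta$. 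So $T_\alpha(\BA_{0,f})$ does not literally preserve $\mathcal{CM}(\alpha,\mu_{G,\Delta})$ at fixed level --- it moves the cycle within a Hecke orbit, and you would need a separate argument (or an averaging over the Hecke orbit) to conclude that the degree function $\deg_{C'}$ on $\pi_0$ is unchanged. Only after that is patched do you reach the surjectivity question for $H_1\cdot H_2$, and the Hasse-norm/Chebotarev step you sketch is plausible but, as you say, delicate at places in $\Delta$ and for non-generic $\alpha$; the paper's Remark instead proposes descending the cycle to the canonical model $M_{G,K}$ over $F$ and applying $\Gal_F$ directly, which would short-circuit the need for $H_2$ entirely if it works. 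In short: your strategy is sensible and in the spirit of what the authors suggest, but it is not a proof, and the paper itself does not contain one either.
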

	
	\begin{remark}
		By Proposition \ref{prop: complex unif for CM cycles}, the conjecture is equivalent to the image of the natural projection
		\[ \coprod_{(\delta, h)} X^{\delta=id} \times 1_{gK_G} \rightarrow \U(V)(F_0) \backslash X \times \U(V)(\BA_{0, f}) / K_G \]
		has constant degrees, where $(\delta, h)$ runs over $ \U(V)(\alpha)(F_0) \times \U(V)(\mathbb A_{0,f})/K_G$ with $g^{-1}\delta g \in K_G \mu_{G, \Delta} K_G $.
		Using the explicit description of Galois actions on special points, it might be possible to prove the conjecture for $E \not = F$ by firstly showing the image of the projection could be descent into a cycle over the canonical model $M_{G,K}$ over the reflex field $F$ and then apply the Galois action.
	\end{remark}

	\begin{proposition}\label{prop: mod over BC by max order Hecke CM}
		Assume that $E=F$. For any $0$-cycle $C \to M_{\wt G, \wt K}$ over $E$ stable under the Hecke action of $Z^{\BQ}(\mathbb A_f)$, there exists some $C'=\mathcal{CM}(\alpha, \mu_{G, \Delta})$ such that
		\[
		\wt{\deg}_C=a (\wt{\deg}_{C'})
		\]
		for some $a \in \BQ$. Moreover, we can assume $\alpha$ is maximal order at any finite place $v$ of $F_0$ outside $\Delta$, and is unramified at $v_0$.
	\end{proposition}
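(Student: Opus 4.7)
The plan is to establish that both $\wt{\deg}_C$ and $\wt{\deg}_{C'}$ are constant functions on $\pi_0(M_{\wt G, \wt K}(\BC))$, and then to construct a specific Hecke CM cycle $C'$ with non-empty complex points satisfying the prescribed local properties of $\alpha$. Once constancy is in hand and $\wt{\deg}_{C'} > 0$, the scalar $a := \wt{\deg}_C / \wt{\deg}_{C'}$ does the job. First I would use the decomposition $\pi_0(M_{\wt G, \wt K}(\BC)) \cong Z^{\BQ}(\BQ)\backslash Z^{\BQ}(\BA_f)/K_{Z^{\BQ}} \times \pi_0(M_{G, K}(\BC))$ from \eqref{eq: Shimura=Shimura x Shimura}, together with the transitivity of the $Z^{\BQ}(\BA_f)$-Hecke action on the first factor. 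Since $C$ is Hecke-stable under $Z^{\BQ}(\BA_f)$, the function $\wt{\deg}_C$ descends to a function on $\pi_0(M_{G, K}(\BC))$; the same applies to $\wt{\deg}_{C'}$. When $E = F$, the cycle $C$ is $\Gal_F$-invariant, and by Corollary \ref{Gal_F stable on M_{G,K}} the $\Gal_F$-action on $\pi_0(M_{G, K}(\BC))$ is transitive, so $\wt{\deg}_C$ is constant; constancy of $\wt{\deg}_{C'}$ in this case is Corollary \ref{prop: equistribution when E=F}. Under the alternative hypothesis, Conjecture \ref{Important conj: eq over C} furnishes the constancy of $\wt{\deg}_{C'}$ directly, and the same Hecke-plus-Galois argument (now applied to $C$, which is still Hecke-stable and defined over $E$) handles $\wt{\deg}_C$.

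Next, I would construct $\alpha$ with the required local behavior and produce a $\BC$-point on the corresponding Hecke CM cycle. Using Hilbert irreducibility combined with weak approximation at the finite set $\Delta \cup \{v_0\}$, I would select an irreducible monic conjugate self-reciprocal polynomial $\alpha \in O_F[\Delta^{-1}][t]$ of degree $n$ such that the field $F_\alpha = F[t]/(\alpha)$ is an unramified field extension of $F_v$ at each place $v \notin \Delta$ (in particular at $v_0$), that the involution $t \mapsto t^{-1}$ has fixed subfield $F_{\alpha,0}$ totally real, and that $F_\alpha/F_{\alpha,0}$ admits a CM type $\wt{\Phi}$ extending $\Phi$ in a way compatible with the signature $(n-1,1)_{\varphi_0}, (n,0)_{\varphi \in \Phi \setminus \{\varphi_0\}}$ of $V$. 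These conditions force $O_F(\alpha)$ to be a maximal order at every $v \notin \Delta$ and unramified at $v_0$. The CM abelian variety $A_\alpha$ with CM by $O_{F_\alpha}$ and CM type $\wt{\Phi}$, paired with a suitable $(A_0, \iota_0, \lambda_0, \ov{\eta}_0) \in \CM_0(\BC)$ and an appropriate $K_{G,\Delta}$-orbit $\mu_{G, \Delta}$ of level structures at $\Delta$, yields a $\BC$-point of $\mathcal{CM}(\alpha, \mu_{G, \Delta})$ (using Proposition \ref{prop: complex unif for CM cycles}). Hence $\wt{\deg}_{C'}$ is a positive constant, completing the proof.

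The main obstacle will be producing $\alpha$ with the prescribed local behavior and simultaneously ensuring the Hecke CM cycle is non-empty at the unitary Shimura variety $M_{G, K}$, which is globally non-quasi-split in our setting (vertex lattices of positive type at inert places $v \notin \Delta$). The construction of $\alpha$ itself is a standard approximation argument, but matching $F_\alpha$ to the global hermitian form $V$ so that $F_\alpha \hookrightarrow \End^\circ_F(A)$ occurs for some $A$ on $M_{G,K}$ requires comparing local invariants of the CM-type datum $(F_\alpha, \wt{\Phi})$ with those of $V$ at each place; this is a standard but delicate exercise in CM theory and Shimura reciprocity for unitary groups, made manageable by the flexibility we retain at the places in $\Delta$.
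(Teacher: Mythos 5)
Your proposal follows the same approach as the paper's own (very terse) proof: establish constancy of $\wt{\deg}_C$ via the Galois-plus-Hecke argument and of $\wt{\deg}_{C'}$ via Corollary \ref{prop: equistribution when E=F} (or Conjecture \ref{Important conj: eq over C}), then produce a non-empty $\mathcal{CM}(\alpha, \mu_{G,\Delta})$ with $\alpha$ having the prescribed local behavior. The paper simply cites the complex uniformization of Proposition \ref{prop: complex unif for CM cycles} for the non-emptiness step, whereas you spell out the underlying CM-theoretic construction (choice of $\alpha$ by approximation, CM type $\wt{\Phi}$, and the embedding $F_\alpha \hookrightarrow \End_F(V)$ respecting the hermitian form); the two are equivalent.
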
     
	\begin{proof}
		
		The degree function $\wt{\deg}_C$ is $\Gal_F$-invariant and stable under Hekce action of $Z^{\BQ}(\mathbb A_f)$, hence also a constant. It suffices to show $C'=\mathcal{CM}(\alpha, \mu_{G, \Delta})$ can be chosen to be non-empty. This follows from the complex uniformization (\ref{prop: complex unif for CM cycles}) above.
		
	\end{proof}
	
	\begin{remark}
		As Kudla--Rapoport divisors are invariant under Hecke action of $Z^{\BQ}(\mathbb A_f)$, the assumption that $C$ is invariant under $Z^{\BQ}(\mathbb A_f)$ is harmless for the application to our modularity results.
	\end{remark}

	\subsection{Modification over $\BF_q$}\label{section: mod over BF}

	Let $v \not \in \Delta$ be a finite place of $F_0$ such that $L_v$ is not self-dual. So $L_{v}$ is a vertex lattice of type $0 < t_v \leq n$. The case $t_v=n$ is same to the case $t_v=0$ by duality, so assume $0<t_v<n$.  By assumption, $v$ is inert in $F$ and $E_\nu$ is unramified over $\BQ_p$. Hence $\CM \to \Spec O_{E_\nu}$ is of strictly semi-stable reduction by Theorem \ref{proof: global semi-stable}. 
	
	For a $1$-cycle $\CC$ on the special fiber $\CM_{k_{\nu}}$, consider its degree function on irreducible components $\CX_i$ of $\CM_{k_{\nu}}$ 
	which is obtained via the intersection pairing  $(-,-)_\CM$ between $\CX_i$ and $\CC$ inside the regular scheme $\CM$. Recall the Balloon--Ground stratification in \S \ref{section: global KR strata}:
	\[
	\CM_{k_\nu}=\CM_{k_\nu}^\circ \cup \CM_{k_\nu}^\bullet.
	\]	
	Note $\CM_{k_\nu}^\circ$ and $\CM_{k_\nu}^\bullet$ are closed smooth subvarieties of $\CM_{k_\nu}$ over $k_\nu$ of dimension $n-1$.  So irreducible components of $\CM_{k_\nu}$ are exactly connected components of the Balloon and Ground stratum:
	\[
	\CM_{k_\nu}= \coprod_{i=1}^{a} \CX_i^{\circ} \cup \coprod_{j=1}^{b} \CX_j^{\bullet}.
	\]
	Consider the basic uniformization in Proposition \ref{prop: basic uniformization RSZ}:
	\[
	\Theta_0: \mathcal{M}^{\wedge,  \basic}_{O_{\breve{E}_{\nu}}, 0} \stackrel{\sim}{\rightarrow}  G^{(v_0)}(F_0) \backslash [ \mathcal{N}_{O_{\breve{E}_{\nu}}}  \times G (\mathbb{A}_{0, f}^{v_0}) / K_{G}^{v_0} ]. 
	\]
	Here $\CM_{O_{\breve{E}_{\nu},0}}^{\wedge,\basic}$ is any fiber of the projection $\mathcal{M}^{\wedge,  \basic}_{O_{\breve{E}_{\nu}}}  
	\to Z^{\BQ}(\BQ)  \backslash (Z^{\BQ}(\BA_{0, f}) / K_{Z^\BQ} ) $.
	
	\begin{definition}\label{very special}
		Call a $1$-cycle $C'$ in $\CM_{k_{\nu}}^{\text{basic}}$ \emph{very special}, if via basic uniformization it can be generated from the projection of a cycle in an embedding $\CN_{L^\flat_v} \to \CN_{L_v}$ for a decomposition of vertex lattice $L_v=L^\flat_v \oplus M_v$ where $L^\flat_v$ has rank $2$ and type $1$.
	\end{definition}

	\subsection{The case $t_v=1$ or $n-1$}	
	
	\begin{proposition}
		If $t_v=1$, then $\CM_{k_\nu}^\circ$ is contained in the basic locus of $\CM_{k_{\nu}}$.
	\end{proposition}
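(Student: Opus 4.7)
The plan is to prove the pointwise statement: every geometric point $x$ of $\CM_{k_\nu}^{\circ}$ has basic $p$-divisible group $A_x[\varpi_{v_0}^\infty]$ (equipped with its $O_{F_{v_0}}$-action). We work with the covariant relative Dieudonn\'e module $M$ of $A_x[\varpi_{v_0}^\infty]$, which is free of rank $2n$ over $W:=W(\ov{k_\nu})$, and decomposes as $M=M_0\oplus M_1$ under the $O_{F_{v_0}}\otimes_{\BZ_p}W$-action, with Frobenius $F\colon M_i\to M_{1-i}$, Verschiebung $V\colon M_{1-i}\to M_i$, and $FV=VF=[\varpi_{v_0}]$. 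The polarization $\lambda$ induces a perfect $W$-bilinear alternating form $\langle-,-\rangle_\lambda$ pairing $M_0$ with $M_1$.

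First I would translate the Balloon condition at $x$ into Dieudonn\'e-theoretic language. The Hodge filtration $\omega_{A^\vee}=VM/pM\subseteq M/pM$ splits under the $O_F$-decomposition, and Proposition~\ref{Balloon-Ground: global comp with LTXZZ} says that the Balloon condition is the containment $(\omega_{A^\vee})_{\varphi_0}\subseteq (H^{\mathrm{dR}}_1(A_x)_{\varphi_0^c})^\perp$ inside $H^{\mathrm{dR}}_1(A_x)_{\varphi_0}$. When $t_v=1$ both sides are $\ov{k_\nu}$-lines, so the containment is an equality; this is the exact same rigidification as the rank-one local case. Together with the fact that $\Ker\lambda[\varpi_{v_0}]$ has order $q_{v_0}^{2}$ (and is contained in $A_x[\varpi_{v_0}]$), it pins down the Hodge filtration in terms of $\lambda$ at the $v_0$-component, and forces $V$ to preserve a lattice chain $M_0\supseteq V M_1\supseteq \varpi_{v_0}M_0$ whose graded pieces all have length one; the $\sigma^2$-linear operator $\tau:=V^{-1}F$ on $M_0\otimes W[1/p]$ then stabilises this chain.

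Next I would show that these constraints force the isocrystal $(N,F)$ to be isoclinic of slope $1/2$. This is the exact algebraic shape of the basic framing object $\BX$ of $\CN=\CN_n^{[1]}$ used in Part~II; equivalently, the local model computation in the proof of Theorem~\ref{proof: global semi-stable} shows that on the Balloon stratum $\ell\subseteq\mathrm{Ker}$, so the Hodge line lies in the radical of the pairing restricted to $V$, which under Grothendieck--Messing is precisely the supersingular (basic) locus condition. Hence every geometric point of $\CM_{k_\nu}^{\circ}$ is basic.

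The main obstacle is to verify that the Balloon condition alone, which is vanishing of a single section of a line bundle, is strong enough to cut out a basic locus rather than merely a stratum inside it. The cleanest way to finish is to invoke the local picture. By Remark~\ref{rek: Ballloon-Ground Bruat-Tits} with $t=1$, the reduced scheme $(\CN^{\circ})^{\red}$ is the union of the type-$0$ Bruhat--Tits strata $\BT(L^{\circ})\simeq\BP^{n-1}_{\BF}$, and these are all contained in $\CN^{\red}$, which is a priori in the basic locus. Via the basic uniformization of Proposition~\ref{prop: basic uniformization RSZ} and the compatibility of Balloon--Ground strata under uniformization established in Proposition~\ref{prop: comp of balloon-ground}, the formal completion of $\CM_{O_{\breve E_\nu}}$ at any geometric point of $\CM_{k_\nu}^{\circ}$ is isomorphic to a quotient of the formal completion of $\CN$ at a point of $\CN^{\circ}$; since the latter lies in $\CN$ which parameterizes basic $p$-divisible groups, the former point is basic in $\CM$. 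This yields the desired inclusion $\CM_{k_\nu}^{\circ}\subseteq\CM_{k_\nu}^{\mathrm{basic}}$.
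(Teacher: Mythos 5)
Your overall strategy, namely translating the Balloon condition into relative Dieudonn\'e theory and deducing basicity via the slope of the isocrystal, is the right one and matches the route the paper indicates (the paper simply cites \cite[Thm.~5.3.4]{LTXZZ} for a slope computation or a construction of basic isogenies). However, there are two genuine gaps in the proposal, and the second one is circular.

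First, the sentence ``the Hodge line lies in the radical of the pairing\ldots which under Grothendieck--Messing is precisely the supersingular (basic) locus condition'' conflates deformation theory with Newton-stratum theory. Grothendieck--Messing controls the formal neighbourhood of a given point by lifting the Hodge filtration; it says nothing directly about the slopes of Frobenius. The local-model condition $\ell\subseteq\Ker$ is the defining equation of the Balloon stratum (a Kottwitz--Rapoport type condition), not of basicity: indeed the paper itself remarks that for $1<t_v<n-1$ the Balloon stratum is \emph{not} contained in the basic locus. So the fact that $\ell\subseteq\Ker$ forces basicity must genuinely use $t_v=1$, and establishing it requires an actual Newton-polygon argument linking $F,V$, the polarization, and the single-line radical; you assert this step but do not carry it out. (Incidentally, the intermediate assertion that the chain $M_0\supseteq VM_1\supseteq\varpi_{v_0}M_0$ has all graded pieces of length one is also wrong, as the Kottwitz condition makes one quotient of rank $n-1$ and the other of rank $1$; and $\tau=V^{-1}F$ need not stabilize the lattice $M_0$ at all, its failure to do so being exactly what produces the Bruhat--Tits stratification.)

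Second, the ``cleanest way to finish'' via Proposition~\ref{prop: basic uniformization RSZ} and Proposition~\ref{prop: comp of balloon-ground} is circular. Basic uniformization identifies the formal completion of $\CM_{O_{\breve E_\nu}}$ \emph{along the basic locus} with a quotient of $\CN\times(\text{discrete set})$; to use it to conclude that a geometric point of $\CM^\circ_{k_\nu}$ corresponds to a point of $\CN^\circ$ you must already know that the point lies in the basic locus, which is exactly what is to be proved. The correct way to close the argument is to actually compute the Newton polygon (or build an isogeny to a fixed basic object), which is the content of \cite[Thm.~5.3.4]{LTXZZ} that the paper's one-line proof invokes.
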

	\begin{proof}
		This is \cite[Thm. 5.3.4]{LTXZZ}, whose proof works over any prime that is unramfied in $F$. It is done by the computation of slopes of associated Dieudonne modules and by the construction of basic isogenies. Alternatively, as the formal Balloon strata
		already has top dimension $n-1$ if $t_v=1$ via our Bruhat--Tits stratification (Theorem \ref{thm: moduli int of BT strata}), using the basic uniformization in (Proposition \ref{prop: basic uniformization RSZ}) and that any connected component of $\CM_{k_\nu}$ has a basic point (Proposition \ref{connect comp has basic point}), we see that $\CM_{k_\nu}^\circ$ must lie in the basic locus.
	\end{proof}
	
	\begin{proposition}
		Any irreducible component $\CX_i^{\circ}$ in $\CM_{k_\nu}^\circ$ is isomorphic to the $n-1$ dimensional projective space $\BP^{n-1}$. Moreover, $\CX_i^{\circ} \cap \CM^\bullet_{k_\nu}$ is isomorphic to the degree $q+1$ Fermat hypersurface in $\BP^{n-1}$.
	\end{proposition}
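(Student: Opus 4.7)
The plan is to pull the geometry back to the local Rapoport--Zink side via basic uniformization. Since $t_v=1$, the preceding proposition places $\CM_{k_\nu}^\circ$ inside the basic locus $\CM_{k_\nu}^{\basic}$. Applying Proposition~\ref{prop: basic uniformization RSZ} together with the compatibility of Balloon--Ground stratifications (Proposition~\ref{prop: comp of balloon-ground}), I would identify (the formal completion along) $\CM_{k_\nu}^\circ$ with the double quotient
\[
G^{(v_0)}(F_0)\backslash \bigl((\CN^\circ)^{\red}\times G(\BA_{0,f}^{v_0})/K_G^{v_0}\bigr)
\]
up to the harmless $Z^\BQ$-factor; here $\CN=\CN_{L_v}$ is the Rapoport--Zink space for the vertex lattice $L_v$ of type $t_v=1$. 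The problem thereby reduces to analyzing $(\CN^\circ)^{\red}$ together with the residual action of the global level.

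For the local picture I would invoke Remark~\ref{rek: Ballloon-Ground Bruat-Tits} and Theorem~\ref{thm: moduli int of BT strata}(4) with $t=1$ and $t(L^\circ)=0$. This gives
\[
(\CN^\circ)^{\red}=\bigcup_{L^\circ\subseteq \BV \text{ self-dual}} \mathrm{BT}(L^\circ),\qquad \mathrm{BT}(L^\circ)\cong \BP(L^\circ/\varpi L^\circ)\cong \BP^{n-1}_{k_\nu},
\]
and inside each such $\BP^{n-1}_{L^\circ}$ the linking locus $\CN^\dagger\cap \mathrm{BT}(L^\circ)$ is, by the Dieudonné description of Proposition~\ref{special pair: choose 1 in 2} together with the parametrization \eqref{eq: Dieudone defn of BT(L circ)}, the vanishing locus of the induced nondegenerate $\BF_{q^2}$-hermitian form on $L^\circ/\varpi L^\circ$. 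In standard coordinates this reads $\sum_i x_i^{q+1}=0$, the degree $q+1$ Fermat hypersurface in $\BP^{n-1}$, as wanted.

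Globally, the irreducible components $\CX_i^\circ$ of $\CM_{k_\nu}^\circ$ correspond bijectively to $G^{(v_0)}(F_0)$-orbits of pairs $(L^\circ, gK_G^{v_0})$; each contributes a quotient of $\mathrm{BT}(L^\circ)\cong \BP^{n-1}$ by the stabilizer $\Gamma := \mathrm{Stab}_{G^{(v_0)}(F_0)}(L^\circ)\cap gK_G^{v_0}g^{-1}$, which acts through the natural map from the parahoric stabilizer of $L^\circ$ at $v_0$ to the finite group $\U(L^\circ/\varpi L^\circ)(\BF_{q^2})$. Since $K_G$ is neat, any torsion element of $\Gamma$ is trivial, so $\Gamma$ acts trivially on $\BP^{n-1}_{L^\circ}$ and $\CX_i^\circ\cong \BP^{n-1}_{k_\nu}$. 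The intersection $\CX_i^\circ\cap \CM_{k_\nu}^\bullet = \CX_i^\circ\cap \CM_{k_\nu}^\dagger$ then inherits the local description above, producing the degree $q+1$ Fermat hypersurface. The main technical obstacle is precisely this triviality argument: one must combine neatness of the global level (to rule out nontrivial torsion in $\Gamma$) with the finiteness of $\U(L^\circ/\varpi L^\circ)(\BF_{q^2})$ (so that ruling out torsion suffices), and ensure that the entire picture descends from $\BF$ to $k_\nu$, which is provided by Remark~\ref{descent to O_F: moduli} and Remark~\ref{descent to O_F: cycles strata} together with the descent of the basic uniformization to $O_{E_\nu}$ in Proposition~\ref{prop: basic uniformization RSZ}.
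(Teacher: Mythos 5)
Your overall route is the paper's: the paper's proof is essentially a two-sentence citation of basic uniformization (Proposition~\ref{prop: basic uniformization RSZ}, Proposition~\ref{prop: comp of balloon-ground}) together with the explicit description of the Bruhat--Tits strata in Theorem~\ref{thm: moduli int of BT strata} and Remark~\ref{rek: Ballloon-Ground Bruat-Tits}, and you use exactly those ingredients. The local picture ($\mathrm{BT}(L^\circ)\cong \BP^{n-1}$ and the linking locus as the degree-$(q+1)$ Fermat hypersurface) is read off correctly.

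But the justification you give for the triviality of the stabilizer $\Gamma$ has a genuine logical gap. You argue that $\Gamma$ acts on $\BP^{n-1}_{L^\circ}$ through the finite group $\U(L^\circ/\varpi L^\circ)(\BF_{q^2})$ and that, since neatness makes $\Gamma$ torsion-free, ``ruling out torsion suffices.'' This inference is invalid: a torsion-free group can have nontrivial finite quotients (already $\BZ\twoheadrightarrow\BZ/n\BZ$), so finiteness of the target does not reduce the problem to ruling out torsion. What you actually need is that $\Gamma$ \emph{itself} is finite; then torsion-free plus finite gives trivial. And $\Gamma$ is indeed finite, but for a different reason than the one you cite: under the diagonal embedding $G^{(v_0)}(F_0)\hookrightarrow G^{(v_0)}(\BA_0)$ the group $\Gamma$ lands inside the compact subgroup $\mathrm{Stab}(L^\circ)_{v_0}\times gK_G^{v_0}g^{-1}\times G^{(v_0)}(F_{0,\infty})$ --- the archimedean factor being compact because $V^{(v_0)}$ is totally positive definite --- and a discrete subgroup of a compact group is finite. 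Replace the appeal to the finiteness of $\U(L^\circ/\varpi L^\circ)(\BF_{q^2})$ with this compactness argument, and your proof goes through.
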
	
	\begin{proof}
		As $\CM_{k_\nu}^\circ$ is in the basic locus, we can use basic uniformization (Proposition \ref{prop: basic uniformization RSZ}, which descends to $k_\nu$) to prove similar statements in the basic locus. Then we use explicit descriptions of Bruhat--Tits strata in Theorem \ref{thm: moduli int of BT strata} and Remark \ref{rek: Ballloon-Ground Bruat-Tits}. The irreducible components of $(\CN^\red)^\circ$ are exactly $\BT(L^\circ) \cong \BP^{n-1}$ where  $L^\circ$ is a vertex lattice of type $t_v-1=0$. The intersection of $\BT(L^\circ)$ with $(\CN^\red)^\bullet$ is given by the condition $B^\vee \subseteq \varpi^{-1} B$ which is the equation of Fermat hypersurface in $\BP^{n-1}$.
	\end{proof}			
	
	\begin{proposition}\label{Ground is irr t=1}
	Assume $n \geq 3$. In any given connected component of $\CM_{k_\nu}$, the Ground stratum is geometrically irreducible.
	\end{proposition}		
	\begin{proof}
		Otherwise, we can write $\CM^\bullet$ in this component as a disjoint union of two non-empty smooth varieties $X^\bullet$ and $Y^\bullet$. For any $\CX_i^{\circ}$ in this component, we know $\CX_i^{\circ} \cap \CM^\bullet_{k_\nu}$ is a Fermat hypersurface and in particular connected for $n-1 \geq 2$. This shows either $\CX_i^{\circ} \cap X^\bullet=\emptyset$ or $\CX_i^{\circ} \cap Y^\bullet=\emptyset$. This divides this connected component of $\CM_{k_\nu}$ into two non-empty disjoint unions, a contradiction! 
	\end{proof}
	
	Let $\Ch^1_{\nu}(\CM)$ be the subgroup of the Chow group $\Ch^1(\CM)$ generated by irreducible components $\CX_i$ of  $\CM_{k_{\nu}}$.
	
	\begin{proposition}\label{prop: mod over F for t=1}
		If $t_v=1$ or $n-1$, then for any linear functional $\ell: \Ch^1_{\nu}(\CM) \to \BR_\Delta$, there exists a very special $1$-cycle $C'$ in $\CM_{k_{\nu}}^{\text{basic}}$ such that 
		$$
		(\CX_i, C')_\CM=\ell(\CX_i)
		$$
		for any irreducible component $\CX_i$ of  $\CM_{k_{\nu}}$. In particular, the statement applies to $\ell=(-,\CC)_\CM$ for any $1$-cycle $\CC$ in $\CM$.
	\end{proposition}
	\begin{proof}
		We can work in any connected component of $\CM_{k_\nu}$, hence in some fiber $\CM_{O_{\breve{E}_{\nu}, 0}}$. We only need to deal with the case $t_v=1$, and the case $t_v=n-1$ follows by applying the dual isomorphism. 
		
		If $n=2$, then the Shimura curve $\CM_{k_\nu}$ has Drinfeld uniformization the union of two families of projective lines and any $1$-cycle is in $\CM_{k_{\nu}}^{\text{basic}}$ very special. Any linear functional $\ell$ can be regarded as a function on irreducible components of $\CM_{k_\nu}$. We are done by non-degeneracy of the intersection pairing (intersection matrix) between irreducible components of the curve $\CM_{k_\nu}$ (inside the semi-stable proper scheme $\CM_{O_{E_\nu}}$).
		
	 Now assume that $n \geq 3$. By Proposition \ref{Ground is irr t=1},  there is only one irreducible component $\CX^\bullet_{j_0}$ in $\CM_{k_\nu, 0}^\bullet$, and possibly many irreducible components $\CX^\circ_{i}$ ($i=1, \hdots, r$) in $\CM_{k_\nu, 0}^\circ$. Then $\ell$ is given by a collection of numbers $\{\ell_{j_0}=\ell(\CX^\bullet_{j_0}), \ell_1=\ell(\CX^\circ_{1}), \hdots, \ell_r= \ell(\CX^\circ_{r}) \}$ such that $\ell_{j_0}=-\sum_{i=1}^r \ell_i.$ It suffices to find for each $1 \leq i \leq r$ a very special cycle $C_i'$ such that 
		$$
		C_i' \subseteq \CX^\circ_{i}, \quad (\CX^\bullet_{j_0}, C_i')_\CM \in \BQ^\times.
		$$
		Then $C'=\sum_{i=1}^r \frac{\ell_i}{(\CX_i^\circ, C_i')} C'_i$ is what we want.  To find such $C_i'$, by applying basic uniformization we only need to find special $1$-cycle $C_i'$ in a given Bruhat-Tits stratum $\mathrm{BT}(L^\circ) \cong \BP^{n-1}$. Here $L^\circ$ is a vertex lattice in $V^{(v_0)}$ of type $t_v-1=0$. Choose an orthogonal decomposition of vertex lattices
		\[
		L^\circ_v=(L^\circ_v)^\flat \oplus M_v
		\]
		corresponding to an orthogonal decomposition $L_v=L_v^\flat \oplus M_v$ such that $(L^\circ_v)^\flat$ is a self-dual lattice of rank $2$.  Then we can choose
		$$
		C'_i= \CZ(M_v) = \mathrm{BT}(L^\circ)^\flat \cong \BP^1 \subseteq \CN_{L^\flat_v}.
		$$
	\end{proof}

	\subsection{The case $1<t_v<n-1$}	
	
	If $1<t_v<n-1$, the irreducible components $(\CN^\circ)^\red$ and $(\CN^\bullet)^\red$ are of dimension less than $n-1$ by Theorem \ref{thm: moduli int of BT strata}. So the Balloon stratum $\CM_{k_\nu}^\circ$  and Ground stratum $\CM_{k_\nu}^\bullet$ don't lie in the basic locus of $\CM_{k_{\nu}}$.
	
	\begin{proposition}\label{connect comp has basic point}
 For $0 \leq t_v \leq n$, any geometric connected component of $\CM_{k_{\nu}}$ contains a basic point $x \in \CM_{k_\nu}$. 
	\end{proposition}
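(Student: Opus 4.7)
The plan is to deduce this from the basic uniformization of Proposition \ref{prop: basic uniformization RSZ} together with a comparison of abelianizations. Concretely, I will show that the natural inclusion $\CM_{k_\nu}^{\basic} \hookrightarrow \CM_{k_\nu}$ already induces a surjection on geometric connected components, so that every component of $\CM_{\bar k_\nu}$ meets the basic locus.

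First I will compute $\pi_0(\CM_{\bar E_\nu})$ using the complex uniformization (Proposition \ref{RSZ complex uniformization}) combined with Proposition \ref{prop: geom conn comp of Sh}: since $\wt G^{\mathrm{der}}$ is simply connected (it is a product of special unitary groups over $F_0$), $\pi_0$ is described by a double coset space for the abelianization $\wt G^{\mathrm{ab}} \simeq Z^{\BQ} \times T^1$ where $T^1 = \Res_{F/\BQ}\BG_m^{\Nm=1}$. Because $\CM$ is regular of strictly semi-stable reduction over $O_{E_\nu}$, its Stein factorization $\CM \to S \to \Spec O_{E_\nu}$ has $S$ finite étale, so the specialization map identifies $\pi_0(\CM_{\bar E_\nu})$ with $\pi_0(\CM_{\bar k_\nu})$.

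Next I will compute $\pi_0(\CM_{\bar k_\nu}^{\basic})$ via the basic uniformization isomorphism
\[
\CM^{\wedge, \basic}_{O_{\breve{E}_\nu}} \cong \wt G^{(v_0)}(\BQ) \backslash \bigl(\CN' \times \wt G(\BA_f^p)/K_{\wt G}^p\bigr).
\]
The factor $\CN_{O_{\breve{E}_\nu}}$ in $\CN'$ contributes connected components indexed by the height $\BZ$ of the quasi-isogeny $\rho$ (by the usual description of $\pi_0$ of unitary Rapoport--Zink spaces), and the height is matched by the discrete factor $Z^{\BQ}(\BQ_p)/K_{Z^\BQ, p} \times \prod_{v|p, v\ne v_0} \U(V)(F_{0,v})/K_{G,v}$ via $\wt G^{(v_0)}(\BQ)$-action. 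Since $\CN_{\bar k_\nu}$ is non-empty, after taking the quotient one obtains that $\pi_0$ of the basic locus is a double coset space for $\wt G^{(v_0), \mathrm{ab}}$.

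The crucial comparison is that $\wt G^{(v_0)}$ is an inner twist of $\wt G$ (they differ only at $v_0$ and at archimedean places, where $V$ and $V^{(v_0)}$ are $F/F_0$-hermitian spaces of the same dimension), and inner forms have canonically isomorphic abelianizations. Moreover, at $v_0$ itself the abelianization of $\U(V)$ (namely the norm-one torus $\BG_m^{\Nm_{F_{v_0}/F_{0,v_0}}=1}$) is the same whether we use $V_{v_0}$ or $V^{(v_0)}_{v_0}$. Chasing through the double coset descriptions, the natural map from $\pi_0(\CM_{\bar k_\nu}^{\basic})$ to $\pi_0(\CM_{\bar k_\nu})$ thus factors as a map of double coset spaces for the same abelianization, hence is surjective (the issue of matching up $\pi_0(\CN_{\bar k_\nu}) \cong \BZ$ with the level at $v_0$ via the Kottwitz map is handled by the explicit form of $\CN'$ in the statement of Proposition \ref{prop: basic uniformization RSZ}).

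The main technical obstacle will be rigorously matching the contribution of the local factor $\CN$ at $v_0$ to $\pi_0$ of the basic locus with the corresponding $v_0$-factor appearing in $\pi_0(\CM_{\bar k_\nu})$; this requires either citing Kottwitz's computation of connected components of affine Deligne--Lusztig varieties at basic $b$, or giving a self-contained check using the explicit $\wt G^{(v_0)}(\BQ)$-action on $\CN$ (which shifts the height). Once this is in place, surjectivity onto $\pi_0(\CM_{\bar k_\nu})$ is automatic and the proposition follows.
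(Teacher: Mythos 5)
Your approach is essentially the same as the paper's: both compute $\pi_0$ of the generic fiber via Proposition \ref{prop: geom conn comp of Sh} and complex uniformization, transfer to the special fiber using flatness and properness, describe $\pi_0$ of the basic locus via the uniformization of Proposition \ref{prop: basic uniformization RSZ}, and conclude via the equality of abelianizations $[G^{(v_0)}]^{\ab}=G^{\ab}=T^1$ (the inner-twist observation). One small imprecision: the paper's $\CN$ is defined with a height-$0$ quasi-isogeny (Definition \ref{defn: local RZ space}), so $\pi_0(\CN)$ is \emph{not} indexed by $\BZ$; the "height variation'' is instead absorbed by the discrete factor $Z^{\BQ}(\BQ_p)/K_{Z^{\BQ},p}$ already present in $\CN'$, which is why the surjectivity onto $G^{\ab}(F_0)\backslash G^{\ab}(\BA_f)/\det(K_G)$ goes through as you intend without any appeal to Kottwitz's description of $\pi_0$ of affine Deligne--Lusztig varieties.
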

	\begin{proof}
		This follows from \cite[Prop. 3.5.4]{IrredKR}, using the compatibility of the basic uniformization (\ref{prop: basic uniformization RSZ}) and the complex uniformization of geometric connected components (\ref{prop: geom conn comp of Sh}).  Firstly, we can work on any fiber $\CM_{O_{\breve{E_\nu}}, 0}$ of the projection $\CM_{O_{\breve{E_v}}} \to Z^{\BQ}(\BQ)  \backslash (Z^{\BQ}(\BA_{0, f}) / K_{Z^\BQ} )$. 
		
		As the scheme $\CM_{O_{\breve{E_v}},0}$ is proper and flat over $O_{E_\nu}$ with reduced special fiber, by Lemma \cite[Lemma 0E0N]{StacksProject} we have an identification of geometric connected components $\pi_0(\CM_{\ov{k_\nu},0}) \cong \pi_0(\CM_{\ov{E_\nu},0})$ by specialization.
		Recall by Proposition \ref{prop: geom conn comp of Sh} we have
		$$
		\pi_0(\CM_{\ov{E_{\nu}},0})= G^{\ab}(F_0) \backslash G^{ab}(\BA_{0,f}) / \det(K_G).
		$$	 
		The basic uniformization
		\[
		\Theta_0: \mathcal{M}^{\wedge,  \basic}_{O_{\breve{E}_{\nu}}, 0} \stackrel{\sim}{\rightarrow}  G^{(v_0)}(F_0) \backslash [ \mathcal{N}_{O_{\breve{E}_{\nu}}}  \times G (\mathbb{A}_{0, f}^{v_0}) / K_{G}^{v_0} ]
		\]
		is compatible with the projection
		\[
		\pi_0(\CM^{\basic}_{\ov{k_\nu},0}) \to \pi_0(\CM_{\ov{k_\nu},0}) \cong  \pi_0(\CM_{\ov{E_{\nu}},0})= G^{\ab}(F_0) \backslash G^{ab}(\BA_f) / \det(K_G).
		\]
		We are reduced to show the projection
		\[
		G^{(v_0)}(F_0) \backslash [ \mathcal{N}_{O_{\breve{E}_{\nu}}}  \times G (\mathbb{A}_{0, f}^{v_0}) / K_{G}^{v_0}] \to G^{\ab}(F_0) \backslash G^{ab}(\BA_{0,f}) / \det(K_G)
		\]
		is surjective, which follows from $[G^{(v_0)}]^\ab= G^\ab=T^1$ in our cases.
	\end{proof}
	
\begin{conjecture} \label{irreducible comp contains basic point}
 For $1 < t_v < n-1$, any geometric connected component of the Balloon (resp. Ground) stratum contains a basic point.	
\end{conjecture}

We make the following topological definitions.

\begin{definition}[Good stratification]\label{defn:Good strata}
Let $X$ be a proper scheme over a field $k$. A stratification 
$X= \coprod_{i \in I} X_i= \bigcup_{i \in I} \overline{X_i}$
 indexed by a finite partial order set $I$ is good, if
 \begin{enumerate}
 	\item  The closure relation $\overline{X_i}= \bigcup_{j \leq i} X_j$ holds.
 	\item  The stratum $X_i$ is locally closed and equi-dimensional.
 	\item  If $i <j$, the dimension of $X_i$ is strictly less than $X_j$.
 	\item $X_i$ is quasi-affine.
 \end{enumerate}
\end{definition}

 For a good stratification on $X$, any irreducible component of $X$ is an irreducible component of some $\overline{X_i}$ ($i$ maximal). By dimensional assumptions on $X_i$, this gives a bijection between the set of irreducible components of $X$ and disjoint union of the set of irreducible components of $\ov{X_i} $ for all maximal $i$. We have following topological lemmas.

\begin{lemma}\label{lemma: closed EKOR int with basic locus}
Let $\{X_i\}_{i \in I}$ be a good stratification of $X$. If $Z$ is a (non-empty)
connected component of $\ov{X_i}$ for some $i$, then there exists a minimal element $j$ in $I$ such that $Z \cap X_j \not = \emptyset $ and $\dim X_j=0$. 
\end{lemma}
\begin{proof}
There is a minimal $i' \leq i$ such that $Z \cap X_{i'} \not = \emptyset$. The minimality implies $Z \cap X_{i'}= Z \cap \ov{X_{i'}}$, which is proper and quasi-affine hence finite. As $Z$ is a connected component of $\ov{X_i}$, $Z \cap \ov{X_{i'}}$ is a
union of connected components of $X_{i'}$. As $X_{i'}$ is equi-dimensional, we see that $\dim X_{i'}=0$ hence $i'$ is minimal in $I$. The result follows for $j=i'$. 
\end{proof}

\begin{lemma}\label{lemma: good stratification and irreduciblity by Y}
	Let $\{X_i \}_{i \in I}$ be a good stratification of $X$. Let $Y$ be a closed subset of
	$X$. Fix $i \in I$. Set $\ov{Y_i} := \ov{X_i} \cap Y$. Assume that for any connected component $Z$ of $\ov{X_i}$, $Z \cap \ov{Y_i} \not = \emptyset$. If $\ov{Y_i}$
	is connected, then $\ov{X_i}$ is connected. In particular, if $\ov{X_i}$ is normal then
	$\ov{X_i}$ is irreducible.
\end{lemma}	
\begin{proof}
Firstly, if $\ov{X_i}$  is not connected hence a disjoint union of opens, then $\ov{Y_i}$ is  disjoint union of opens. The non-emptyness of theses opens comes from the assumption $Z \cap \ov{Y_i} \not = \emptyset$. A contradiction to that $\ov{Y_i}$ is connected. Hence $\ov{X_i}$ is connected. 
\end{proof}

By  \cite[Thm. C]{EKOR}, the EKOR stratification on special fiber of abelian type Shimura varieties at parahoric level is good, if the level is Iwahori at $p$ or the axiom $(4c)$ of Rapoport--He \cite{HR17-Axiom} holds. The following proposition is an adaption of \cite[Thm. 3.7.1]{IrredKR} to our cases.
	
	\begin{proposition}  \label{Important conj: irred}
	In any geometric connected component of $\CM_{k_\nu}$, the Balloon and Ground stratum are geometrically irreducible.
	\end{proposition}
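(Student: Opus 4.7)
The plan is to induct on the type $t_v$ via a change-of-parahoric-level moduli construction on the Balloon stratum, the Ground stratum being handled in parallel by globalizing the dual isomorphism of Section~\ref{dual lambda}. On $\CM_{k_\nu}^\circ$, the defining condition $(\omega_{A^\vee})_{\varphi_0}\subseteq(H^{\mathrm{dR}}_1(A)_{\varphi_0^c})^\perp$ of Proposition~\ref{Balloon-Ground: global comp with LTXZZ} is the vanishing of a section of a line bundle and cuts out a canonical $O_F$-stable finite flat subgroup scheme $H\subseteq\Ker\lambda[\varpi_v]$ of order $q_v^2$; quotienting produces $(A',\lambda'):=(A/H,\lambda_{\mathrm{ind}})$ whose polarization kernel at $v$ has order $q_v^{2(t_v-1)}$ while the data at all other places are unchanged. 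This defines a morphism
$$
f^\circ:\CM_{k_\nu}^\circ\longrightarrow \CM'_{k_\nu}
$$
where $\CM'$ is the analogous RSZ integral model attached to the vertex lattice $L'\subseteq V$ obtained from $L$ by replacing $L_v$ with an adjacent vertex lattice of type $t_v-1$.

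The base case $t_v\in\{1,n-1\}$ is handled in the preceding proposition, where the Balloon (resp.\ Ground) stratum lies in the basic locus and is dealt with via basic uniformization and the explicit Bruhat--Tits description of Theorem~\ref{thm: moduli int of BT strata}. For the inductive step, assume the irreducibility statement at type $t_v-1$, so that $\CM'_{k_\nu}$ is geometrically irreducible in each connected component of $\CM'_{k_\nu}$. It then suffices to show that $f^\circ$ has geometrically connected fibres and induces a bijection on connected components. Using Grothendieck--Messing theory together with the local model description from Theorem~\ref{proof: global semi-stable}, a fibre of $f^\circ$ over a geometric point identifies with a closed subvariety of an orthogonal Grassmannian over $k_v$ cut out by the Kottwitz and sign conditions, i.e.\ with a (closed) Deligne--Lusztig-type variety inside a unitary reductive group over $k_v$. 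The comparison between Balloon--Ground strata and Bruhat--Tits strata (Proposition~\ref{prop: comp of balloon-ground}) then reduces the connectedness of the fibre to a fact about such Deligne--Lusztig varieties. Matching $\pi_0$ between source and target uses that $f^\circ$ is compatible with the projection to $Z^\BQ(\BQ)\backslash Z^\BQ(\BA_f)/K_{Z^\BQ}$ of Proposition~\ref{RSZ complex uniformization}, so the Hecke action of $Z^\BQ(\BA_f)$ permutes connected components compatibly on both sides.

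The principal obstacle is the fibre connectedness in the non-Coxeter range $1<t_v<n-1$: the Deligne--Lusztig-type varieties that appear are \emph{not} of Coxeter type in general \cite[Section~6]{ADLV-Coexter}, and their geometric connectedness is not automatic. One expects to deduce this from \cite{IrredKR}, but the results there are stated under quasi-split hypotheses that need not hold in our (non-quasi-split) unitary setting, so these hypotheses must be verified case by case or circumvented. A natural fallback, in the spirit of the $t_v=1$ argument in Proposition~\ref{prop: mod over F for t=1}, is to exhibit explicit chains of isogenies -- parametrised by the basic uniformization dictionary and the action of $\U(\BV)(F_0)$ via adjacent vertex lattices in the Bruhat--Tits building of $G^{(v_0)}(F_{0,v_0})$ -- that globally connect any two potential components of a fibre, thereby forcing fibrewise connectedness directly from the structure of the local model. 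The dualisation via $\lambda_\CN$ globalised to a morphism $\CM(L)\to\CM(L^\vee)$, which by construction swaps Balloon and Ground strata, then yields the corresponding statement for $\CM_{k_\nu}^\bullet$ at no extra cost.
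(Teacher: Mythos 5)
Your proposed proof is \emph{not} the one the paper gives, and as written it has a genuine gap that you yourself flag: the geometric connectedness of the fibres of the change-of-level morphism $f^\circ$ in the non-Coxeter range $1<t_v<n-1$. You offer only a sketch of a ``natural fallback'' via chains of isogenies through the Bruhat--Tits building; that step is not carried out, and it is precisely the heart of the matter, so the argument is incomplete.

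There is a second, more structural problem: the morphism $f^\circ$ is not clearly well defined. You assert that on the Balloon stratum the condition $(\omega_{A^\vee})_{\varphi_0}\subseteq(H^{\mathrm{dR}}_1(A)_{\varphi_0^c})^\perp$ cuts out a canonical $O_F$-stable finite flat subgroup scheme $H\subseteq\Ker\lambda[\varpi_v]$ of order $q_v^2$ whose quotient lowers the type by exactly one. But the vanishing of $\Lie\lambda$ gives a condition on the Hodge filtration of the Dieudonn\'e crystal, not obviously a distinguished finite flat subgroup of $\Ker\lambda$; and even granting such an $H$, quotienting by an isotropic subgroup of order $q_v^2$ makes $\Ker\lambda'$ have order $q_v^{2t_v-4}$, i.e.\ drops the type by \emph{two}, not by one, while a non-isotropic $H$ would not let $\lambda$ descend at all. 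Change of parahoric level on the special fibre is in general implemented by a \emph{correspondence}, not a morphism, and your construction needs either a corrected target (type $t_v-2$) or a genuinely different mechanism. Until $f^\circ$ is pinned down, the induction has no base to run on.

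The paper's proof avoids these issues entirely. It observes that $\CM^\circ_{k_\nu}$ and $\CM^\bullet_{k_\nu}$ are vanishing loci of $\Lie\lambda$ and $\Lie\lambda^\vee$ and, by the local model computation, are therefore (closures of) top-dimensional Kottwitz--Rapoport strata, hence closures of non-basic EKOR strata. It then invokes the He--Rapoport axiom $(4c)$ (verified in this setting via basic uniformization, citing the appendices of \cite{IrredKR} and \cite{EKOR}) to pass from the given parahoric level to the Iwahori level, where irreducibility of non-basic KR strata is established following \cite[Sections 3.5--3.7]{IrredKR}: quasi-affineness of KR strata plus properness of the Shimura variety forces every KR stratum closure to meet a basic stratum, and connectedness of the basic locus is read off from affine Deligne--Lusztig varieties via basic uniformization. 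Your concern that the hypotheses of \cite{IrredKR} are quasi-split is a legitimate point to scrutinise against the paper, but it does not repair the gaps in your own construction; if you want to pursue your route, you must both fix the definition of $f^\circ$ and actually prove the fibre connectedness, rather than defer it.
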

	\begin{proof}
		Using the definition of $\CM^\circ$ (resp. $\CM^\bullet$) as vanishing locus of $\Lie \lambda$ (resp. $\Lie \lambda^\vee$) and the computation of local models, they correspond to different orbits of the parahoric group scheme on the local model. Therefore, our Balloon (resp. Ground) stratum is the closure of a maximal dimensional Kottwitz--Rapoport stratum hence the closure of a non-basic EKOR stratum \cite{EKOR}. We are reduced to prove the claim that closure of non-basic EKOR strata in $\CM_{k_\nu}$ are irreducible in each geometric connected component.
		
		We consider the canonical integral model of unitary Shimura varieties $\CM^{Iw}_{O_{E_\nu}} \to \CM_{O_{E_\nu}}$ by replacing the maximal parahoric level with the Iwahori level at $v$ with PEL type modulis. Note we have the axiom $(4c)$ of Rapoport--He \cite{HR17-Axiom} for our Shimura variety $\CM^{Iw}_{O_{E_\nu}} \to \CM_{O_{E_\nu}}$ by basic uniformization (Proposition \ref{prop: basic uniformization RSZ}) and \cite[Proposition A.4.5]{EKOR} for PEL type Shimura varieties. The axiom 4 (c) implies that for every EKOR stratum at our parahoric level, there is
		a EKOR stratum (i.e. Kottwitz--Rapoport stratum) at Iwahori level surjecting onto it. We are reduced to prove the claim that closure of each EKOR stratum is irreducible on any connected component of $\CM_{k_\nu}^{Iw}$.

		From now on, we consider the EKOR stratification on a given connected component of the proper scheme $\CM_{k_\nu}^{Iw}$. We firstly recall that this is a good stratification in the sense of Definition \ref{defn:Good strata}. Its strata are quasi-affine, equi-dimensional and have the closure relation in Definition \ref{defn:Good strata} by  \cite[Thm. C]{EKOR}. Now choose a  non--basic EKOR strata $X_i$. We need to show that $\ov{X_i}$ is irreducible.  We apply Lemma \ref{lemma: good stratification and irreduciblity by Y} with $Y$ being the basic locus of (the given connected component of) $\CM_{k_\nu}^{Iw}$, where the normality of $\ov{X_i}$ is known by  \cite[Thm. C]{EKOR}. We need to check the non-empty condition in Lemma \ref{lemma: good stratification and irreduciblity by Y}:  let $Z$ be any connected component of $\ov{X_i}$. By Lemma \ref{lemma: closed EKOR int with basic locus}, $Z$ must have  non-empty intersection with a dimension $\leq 1$  Kottwitz--Rapoport stratum $W$. By \cite[Prop 5.6]{HodgeNewtonBasic}, we see that $W$ is in the basic locus hence $Z \cap \ov{Y_i} \not= \emptyset$. Finally, we need to check that $\ov{Y_i}$ is connected. Via basic uniformization, we have connectedness of the basic locus and $\ov{Y_i}$ by \cite[Thm. 3.7.1, Lem. 3.7.4]{IrredKR} (where our notation $\ov{Y_i}$ agrees with $Y_w$ in \emph{loc. cit.} ). By  Lemma \ref{lemma: good stratification and irreduciblity by Y} ($Y$=the basic locus), we obtain the claim that $\ov{X_i}$ is irreducible.  
	\end{proof}

	\begin{proposition}\label{prop: mod over F for 1<t<n-1}
		If $1<t_v<n-1$, then for any linear functional $\ell: \Ch^1_{\nu}(\CM) \to \mathbb R_\Delta$, there exists a very special $1$-cycle $C'$ in $\CM_{k_{\nu}}^{\text{basic}}$ such that 
		$$
		(\CX_i, C')_\CM=\ell(\CX_i)
		$$
		for any irreducible component $\CX_i$ of  $\CM_{k_{\nu}}$. In particular, the statement applies to $\ell=(-, \CC)_\CM$ for any $1$-cycle $\CC$ in $\CM$.
	\end{proposition}
	\begin{proof}
		We may work in a given connected component $X$ of $\CM_{k_{\nu}}$. By Proposition \ref{Important conj: irred}, there are only $2$ irreducible components in this component i.e., the Balloon stratum $X^\bullet$ and the Ground stratum $X^\circ$. So $X^\bullet + X^\circ$ is principal. We only need to show there exists a very special $1$-cycle $C'$ such that 
		$(X^\bullet, C') = - (X^\circ, C') \not =0$. Then $\frac{\ell(X^\circ)}{(X^\circ, C')}C'$ is the $1$-cycle we want. By basic uniformization, any minimal dimensional Bruhat--Tits strata $\BP^{n-t_v}$ and $\BP^{t_v}$ with non-empty intersection will intersect at a point. Choose any $\BP^1 \subseteq \BP^{n-t_v}$ passing exactly  one such point will do the job.
	\end{proof}

In fact, Conjecture \ref{irreducible comp contains basic point} and Proposition \ref{Important conj: irred} make sense for mod $p$ fibers of general Shimura varieties (with suitable integral models). We make the following conjectures. 

\begin{conjecture}
	\begin{enumerate}
		\item Any (geometric) irreducible component of the mod $p$ fiber contains a basic point. 
		\item Closure of any (geometric) non-basic Kottwitz--Rapoport strata of the mod $p$ fiber is irreducible in given connected component of the mod $p$ fiber.
	\end{enumerate}
\end{conjecture}

The first part $(1)$ is closely related to the Hecke orbit conjecture for PEL type Shimura varieties \cite{Basic-point}. Conditional on \cite[Prop. 4.2 (2)]{Basic-point} which is to be filled, a prime-to-$p$ Hecke orbit is finite if and only if it contains basic points \cite[Prop. 4.8-4.9]{Basic-point}).  The second part $(2)$ is proved \cite[Thm. 3.7.1, Prop. 3.7.3]{IrredKR} for Hodge type Shimura varieties under certain quasi-split assumptions.

	\section{Global modularity via modification}\label{section:Gmod}
	
	In this section, we introduce the modification method towards arithmetic modularity. We establish the modularity of arithmetic theta series at maximal parahoric levels when intersecting with (derived) $1$-cycles on $\CM$, in particular with modified derived Hecke CM cycles.
	
	Let $\CX \to \Spec O_E[\Delta^{-1}]$ be a pure dimensional, regular, flat and proper scheme with smooth generic fiber $X$. Set the quotient $\BQ$-vector space 
	$$\BR_{\Delta}=   \BR / \text{span}_{\BQ} \{ \log \ell | \exists \nu  \in \Delta, \, \nu  |\ell \}. $$
	
	\subsection{Truncated arithmetic intersection pairing}\label{section: adm arithmetic int}
	We recall basics of the Gillet--Soul\'e	arithmetic intersection theory following \cite[\S 4.1]{AFL2021} and \cite{GS90}. 
	
	We consider arithmetic divisors to relate local and global intersection numbers. An arithmetic divisor on $\CX$ is a finite $\BQ$-linear combinations of tuples $(Z,g_Z)$, where $Z$ is a divisor on $\CX$, and $g_Z = (g_{Z,\nu})_\nu$ ($\nu$ runs over all infinite places of $E$) is a tuple of Green functions on $X_\nu (\mathbb C) \backslash  Z_\nu (\BC)$  with
	\[
	dd^c g_{Z, \nu} + \delta_{Z_\nu (\BC)}=[\omega_\nu],  \quad \text{$\omega_\nu$ a smooth $(1,1)$-form on $Z_\nu(\BC)$. }
	\]
	Fix a K\"{a}hler metric on $X_\nu (\BC)$.	The Green function $g_{Z,\nu}$ is called \emph{admissible} if $\omega_v$ is harmonic with respect to the metric. Then the arithmetic Chow group $\wh{\Ch}^1(\CX)$ (with $\BQ$-coefficients)
	is generated by arithmetic divisors, modulo the relation given by the $\BQ$-span of principal arithmetic divisors  i.e., tuples $\wh{\div}(f):=(\div(f), (-\log|f|_\nu^2)_{\nu| \infty})$ associated to rational functions $f \in E(\CX)^{\times}$:
	\[
	\widehat{\Ch}^1(\mathcal{X}):= \{ \text{Arithmetic divisors} \} / \mathrm{Span}_{\mathbb Q}\{ (\text{div}(f), (- \text{log} |f|^2_\nu)_{\nu|\infty}) \}_{f \in E(\CX)^\times}.
	\]
	An admissible Green function for a given divisor $Z$ exists, and is unique up to adding locally constant functions on $X_\nu(\BC)$. Consider the admissible Chow group $\wh{\Ch}^{1,\adm}(\CX)$ as the subgroup of the arithmetic Chow group generated by tuples $(Z, (g_{Z,\nu})_{\nu})$ with admissible Green functions $g_{Z, \nu}$.    There is a natural map
	\begin{equation}
		\wh{\Ch}^{1,\adm}(\CX) \to \Ch^1(X), \quad
		(Z, g_Z) \mapsto Z_E.
	\end{equation}      
	
	For any place $\nu$ of $E$, define $\wh{\Ch}^{1}_\nu(\CX) \subseteq \wh{\Ch}^{1,\adm}(\CX)$ as the subgroup generated by
	\begin{itemize}
		\item $(0, c_\nu)$ where $c_\nu$ is a locally constant function on $X_\nu(\BC)$, if $\nu|\infty$.
		\item $(\CX_{k_\nu, i}, 0)$ where and $\CX_{k_\nu, i}$ is an irreducible component of the special fiber $\CX_{k_\nu}$ at $\nu$, if $\nu$ is a finite place with residue field $k_\nu$ .
	\end{itemize}
	
	Let $\wh{\Ch}^{1}_{\mathrm{Vert}}(\CX) \subseteq \wh{\Ch}^{1,\adm}(\CX)$ be the subgroup generated by $\wh{\Ch}^{1}_\nu(\CX)$ for all places $\nu$ of $E$, and $\wh{\Ch}^{1}_{\mathrm{\infty}}(\CX) :=\sum_{\nu_\infty | \infty} \wh{\Ch}^{1}_{\nu_\infty}(\CX)$.
	
	\begin{lemma}\label{factorization of Chow}
		\begin{enumerate}
			\item The projection $\wh{\Ch}^{1,\adm}(\CX) \to \Ch^1(X)$, $(Z, g_Z) \mapsto Z_E$ induces an isomorphism $\wh{\Ch}^{1,\adm}(\CX)/\wh{\Ch}^{1}_{\mathrm{Vert}}(\CX) \cong \Ch^1(X)$. Hence we have a short exact sequence:
			$$
			0 \to \wh{\Ch}^{1}_{\mathrm{Vert}}(\CX) \to \wh{\Ch}^{1,\adm}(\CX) \to \Ch^1(X) \to 0.
			$$
			\item For any finite place $\nu$ such that $\CX_{\nu}:=\CX \otimes O_{E, \nu}$ is smooth over $O_{E, \nu}$, we have $\wh{\Ch}^{1}_\nu(\CX) \subseteq \wh{\Ch}^{1}_{\mathrm{\infty}}(\CX)$.
		\end{enumerate}
	\end{lemma}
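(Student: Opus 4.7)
The argument proceeds in two stages: first establishing the short exact sequence, then deducing the smooth-fiber inclusion.  For surjectivity of $\wh{\Ch}^{1,\adm}(\CX)\to\Ch^1(X)$, given a divisor $D$ on $X$ I take its Zariski closure $\ov{D}\hookrightarrow\CX$ in the regular scheme $\CX$ and attach an admissible Green function at each archimedean place $\nu$.  Their existence is standard Hodge theory on the compact Kähler manifold $X_\nu(\BC)$: choose the harmonic representative $\omega_\nu$ of the cohomology class $[\delta_{\ov{D}_\nu}]$ and solve the Green equation $dd^c g_\nu + \delta_{\ov{D}_\nu} = [\omega_\nu]$.  For the kernel, suppose $(Z, g_Z)\in\wh{\Ch}^{1,\adm}(\CX)$ maps to zero in $\Ch^1(X)$; then $Z_E=\div(f)$ for some $f\in E(\CX)^\times$, so after subtracting the principal arithmetic divisor $\wh{\div}(f)$ I may assume $Z$ is purely vertical, supported on finite-place special fibers.

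It then remains to show that the admissible Green function of such a vertical arithmetic divisor is locally constant.  Since $Z_\nu(\BC)=\emptyset$, $g_Z$ is a smooth function on the compact Kähler manifold $X_\nu(\BC)$ with $dd^c g_Z = \omega_\nu$ harmonic by admissibility.  But $\omega_\nu$ is $dd^c$-exact, hence cohomologically trivial, hence zero by the uniqueness of harmonic representatives in each de Rham class.  Therefore $g_Z$ is pluriharmonic, thus locally constant on each connected component of $X_\nu(\BC)$, placing $(Z, g_Z)\in\wh{\Ch}^1_{\mathrm{Vert}}(\CX)$ and completing the exact sequence.

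For the smooth-fiber inclusion, the key input is the identity $\CX_{k_\nu} = \div(\varpi_\nu)$ as a Cartier divisor on $\CX_\nu$, where $\varpi_\nu\in O_{E,\nu}$ is any uniformizer: smoothness of $\CX_\nu$ over the DVR forces the special fiber to be reduced, so each irreducible component $\CX_{k_\nu,i}$ appears with multiplicity one.  The principal arithmetic divisor $\wh{\div}(\varpi_\nu) = \bigl(\sum_i \CX_{k_\nu,i},\,(-\log|\varpi_\nu|^2_\sigma)_{\sigma|\infty}\bigr)$ vanishes in $\wh{\Ch}^1(\CX)$, immediately giving $\sum_i (\CX_{k_\nu,i},0) \in \wh{\Ch}^1_\infty(\CX)$.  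To isolate individual components, I would pass to the connected components of $\CX_\nu$: properness and smoothness together with Zariski's connectedness theorem ensure that the irreducible components of $\CX_{k_\nu}$ correspond bijectively to connected components of $\CX_\nu$ — each smooth proper over a local extension $O'$ of $O_{E,\nu}$ — and the principal-divisor argument can be run on each piece separately using the uniformizer of $O'$, its archimedean contribution again lying in $\wh{\Ch}^1_\infty(\CX)$.  The main subtlety I expect is precisely this component-wise matching, particularly when the generic fiber is not geometrically connected, where one must track the decomposition carefully so that the uniformizer argument localizes to each irreducible component of $\CX_{k_\nu}$ and absorbs all residual vertical contributions into the archimedean part.
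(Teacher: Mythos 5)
Your treatment of the first part is correct and fills in what the paper dispatches as ``follows from the definition and dimensional reasons.'' Surjectivity via Zariski closure plus choice of admissible Green function, and the kernel computation --- subtract $\wh{\div}(f)$ to make $Z$ purely vertical, then observe that an admissible Green function of a vertical divisor is a globally smooth function with $dd^c g = \omega_\nu$, so $\omega_\nu$ is simultaneously harmonic and exact on the compact K\"ahler manifold, hence zero, hence $g$ is pluriharmonic and locally constant --- is exactly the intended argument.

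For the second part the skeleton is right (smoothness forces the special fiber to be reduced so the principal-divisor trick applies with all multiplicities equal to $1$), and you are right to flag the component-wise matching as the crux. But the resolution you propose --- ``run the principal-divisor argument on each piece using the uniformizer of $O'$, its archimedean contribution again lying in $\wh{\Ch}^1_\infty(\CX)$'' --- does not close the gap. A uniformizer of $O'$ is an element of a complete local ring, not of $E(\CX)^\times$; $\wh{\div}(\varpi_{O'})$ is not a principal arithmetic divisor on $\CX$, and a purely local element has no archimedean contribution to absorb anything into. The correct mechanism, which the paper also uses (tersely, via the notation $a|_{\CX_{\nu,i}}$), is to globalize via Stein factorization over $O_E[1/\Delta]$: since $\CX$ is regular and proper, $\Gamma(\CX,\CO_\CX) = \prod_j O_{E_j}[1/\Delta_j]$ for number fields $E_j/E$, so $\CX = \coprod_j \CX_j$ with $\CX_j \to \Spec O_{E_j}[1/\Delta_j]$ having geometrically connected fibers. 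Each connected component of $\CX_\nu$ is then indexed by a pair $(j,\mu)$ with $\mu$ a place of $E_j$ above $\nu$. Working with $\BQ$-coefficients, choose $a \in E_j^\times$ whose divisor on $O_{E_j}[1/\Delta_j]$ is a positive multiple of $\mu$ alone, and extend $a$ by $1$ to a rational function on all of $\CX$. Then $\div_\CX(a)$ is a positive multiple of the single irreducible component $\CX_{k_\nu,i}$ (smoothness gives multiplicity one, geometric connectedness of $\CX_j$ over $O_{E_j}$ isolates the component), and the archimedean part of the genuinely global $\wh{\div}(a)$ is what lands in $\wh{\Ch}^1_\infty(\CX)$. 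Replacing your local uniformizer with such a global $a \in E_j^\times$ repairs the argument.
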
      
	\begin{proof}
		The first part: by taking Zariski closure and using the existence of admissible Green function, we see the natural map $ \wh{\Ch}^{1,\adm}(\CX) \to \Ch^1(X)$ is surjective. Then we show the natural map has kernel $\wh{\Ch}^{1}_{\mathrm{Vert}}(\CX)$. If $[Z]_E=\emptyset$, then $Z$ is a finite linear combination of  arithmetic divisors supported on special fibers of $\CX$. By adding elements in $\wh{\Ch}^{1}_\nu(\CX)$ ($\mu$ finite place of $E$), we may assume $Z=\emptyset$, then by admissibility  of $g$, we see $g$ must be a constant. This finishes the proof.
		
		The second part: assume that $\CX_{\nu}$ is smooth over $O_{E, \nu}$. Then any $\CX_{k_\nu, i}$ is smooth and is the reduction of a connect component $\CX_{\nu, i}$ of $\CX_\nu$. The argument of \cite[Lemma 4.1]{AFL2021} applies. We use the Stein factorization $\CX \to \Spec R \to \Spec O_E[\Delta^{-1}]$ where $R=H^0(\CX, O_{\CX})$. The connected components of $\CX_{\nu}$ are in bijection with $|\Spec R/ \frak{p}_\nu|$. By finiteness of class number of $R$, we may find an element $a$ in $R$ whose divisor is supported on the prime ideal of $R$ corresponding to $\CX_{k_\nu, i}$. As the arithmetic divisor $\wh{\div}(a|_{\CX_{\nu, i}})$ is principal, we see that $(\CX_{k_\nu, i}, 0) \in \sum_{\nu_\infty | \infty} \wh{\Ch}^{1}_{\nu_\infty}(\CX)$.
	\end{proof}

	Let $\CC_{1}(\CX)$ be the group of $1$-cycles on $\CX$ (with $\BQ$-coefficient). Consider the $\Delta$-truncated version standard arithmetic intersection pairing \cite{GS90} \cite[Section 4.2]{AFL2021} between $\BQ$-vector spaces:
	\begin{equation}\label{eq: Truncated Arithmetic int pairing}
		(\cdot,\cdot): \wh\Ch^1(\CX)\times \CC_{1}(\CX)  \to \BR_\Delta.
	\end{equation}

	Consider the subgroup $\CC_{1}(\CX)_{\deg=0}$ consisting of $1$-cycles with zero degree on each connected component of $X$ over $E$, which is the orthogonal complement of  $\wh{\Ch}^{1}_{\infty}(\CX)$.  Let $\CC_{1}(\CX)^{\perp} \subseteq \CC_{1}(\CX)$ be the orthogonal complement of $\wh{\Ch}^{1}_{\text{Vert}}(\CX)$. 
	
	From Lemma \ref{factorization of Chow}, the $\Delta$-truncated arithmetic intersection pairing (\ref{eq: Truncated Arithmetic int pairing}) induces a pairing
	\begin{equation}
		(\cdot,\cdot)^\text{adm}: \, \Ch^1(X)\times \CC_{1}(\CX)^{\perp}  \to \BR_\Delta.
	\end{equation}

	\subsection{Green functions and arithmetic special divisors}\label{section:arch}
	
	Let $K$ be a neat compact open subgroup of $G(\BA_{0,f})$,  and fix an embedding $\nu: E \hookrightarrow \BC$.  
	
	Choose a Schwartz function  $\phi \in \CS(V(\BA_{0,f}))^K$. Choose any compact open subgroup $K_0 \subseteq \SL_2(\BA_{0,f})$ such that $\phi $ is invariant under $K_0$ by the Weil representation.
	
	For $\xi \in F_{0}$, we have two kinds of Green functions \cite[Section 3]{AFL2021} for the Kudla--Rapoport divisor $Z(\xi,\phi)_\BC:=Z(\xi,\phi) \otimes_{E, \nu} \BC$ on $M_{G,K} \otimes_{E, \nu}\BC$:
	\begin{itemize}
		\item The \emph{Kudla Green function} \cite{Kudla97Ann} $\CG^{\bf K}(\xi,h_{\infty},\phi)$ with a variable $h_\infty \in \SL_2(F_{0, \infty})$, which occurs in the analytic side naturally via computations of archimedean orbital integrals. 
		\item The \emph{automorphic Green function} \cite{JBGreen} $\CG^{\bf B}(\xi, h_\infty, \phi)=\CG^{\bf B}(\xi,\phi)$ for $\xi \in F_{0, +}$ which is admissible by the work of Bruinier \cite[Cor. 5.16]{JBGreen}. 
	\end{itemize} 
	
	Consider the difference  $\mathcal{G}^{\bf K-\bf B}(\xi, h_{\infty}, \phi)= \begin{cases}
		\CG^{\bf K}(\xi,h_{\infty},\phi)- \CG^{\bf B}(\xi,\phi), & \text{for } \xi \in F_{0,+}, \\
		\CG^{\bf K}(\xi,h_{\infty},\phi), & \text{else.}
	\end{cases}$ 
	
	We form the generating series of Green functions as the archimedean part of the generating series of arithmetic special divisors ($h \in \SL_2(\BA_0)$):
	\begin{equation}\label{eq: generating series Green}
		\mathcal{G}^{?}(h, \phi):= \sum_{\xi \in F_0} \mathcal{G}^{?}(\xi, h_{\infty}, \omega(h_f)\phi)W^{(n)}_\xi (h_{\infty}), \quad ?=\bf K, \bf B, \bf K- \bf B. 
	\end{equation}
	
	\begin{theorem}\label{thm: Green}
		The generating function $\mathcal{G}^{\bf K-\bf B}(h, \phi)$ when evaluating at a degree zero cycle on $\CM_{\wt G, \wt K}(\BC)$,  is a smooth function on $\SL_2(\BA_0)$, invariant under left $\SL_2(F_0)$-action and right $K_0$-action, and of parallel weight $n$.
	\end{theorem}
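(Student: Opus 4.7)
The plan is to verify the four stated properties of $\CG^{\bf K-\bf B}(h,\phi)$ separately, with left $\SL_2(F_0)$-invariance being the main content. Smoothness on $\SL_2(\BA_0)$ is the direct dividend of subtracting $\CG^{\bf B}$ from $\CG^{\bf K}$: for each $\xi\in F_0$, both $\CG^{\bf K}(\xi,h_\infty,\phi)$ and $\CG^{\bf B}(\xi,\phi)$ are Green functions for the same Cartier divisor $Z(\xi,\phi)_\BC$, so their $-\log|s|^2$ singularities cancel along $Z(\xi,\phi)_\BC$ and the difference extends smoothly across the divisor. Pairing with a proper cycle therefore yields a smooth function of $h$; uniform convergence in $h$ on compact subsets, controlled by rapid decay of $W^{(n)}_\xi(h_\infty)$ in $|\xi|_\infty$ together with polynomial bounds on the two Green functions, permits term-by-term differentiation. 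Right $K_0$-invariance is immediate from the $K_0$-invariance of $\phi$ via the Weil representation, and parallel weight $n$ comes from the standard transformation law of $W^{(n)}_\xi$ under the product of standard rotation groups at the archimedean places.

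For the modularity itself, I would analyze the Kudla and Bruinier generating series separately. Kudla's construction expresses $\CG^{\bf K}(h,\phi)$ as (the restriction of) the Kudla--Millson theta integral, which is a non-holomorphic parallel weight $n$ modular form on $\SL_2(\BA_0)$ valued in smooth currents on $M_\BC$; its non-holomorphic defect, carried by incomplete-gamma factors in $\CG^{\bf K}(\xi,h_\infty,\phi)$, is cohomologous as a $(1,1)$-current to a scalar multiple of the Hodge class $c_1(\omega)$. The series $\CG^{\bf B}(h,\phi)$, by contrast, is a generating series of admissible automorphic Green functions; the modularity of the generic-fibre divisors $Z(h,\phi)$ established in Proposition \ref{modularity over generic fiber}, together with the characterization of admissible Green functions as unique up to locally constant functions on connected components of $M_\BC$, upgrades to modularity of $\CG^{\bf B}(h,\phi)$ as a harmonic weight $n$ form in $h$.

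Taking the difference and evaluating at a degree-zero cycle $C$ annihilates both residual obstructions at once: the $c_1(\omega)$-correction in the Kudla part, being proportional to the degree of $C$ on each connected component of $M_\BC$, and the locally constant ambiguity in the Bruinier part. What survives is a smooth function on $\SL_2(\BA_0)$ that is left $\SL_2(F_0)$-invariant, right $K_0$-invariant, and of parallel weight $n$, as required.

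The hardest point will be the constant Fourier coefficient $\xi=0$: the Kudla contribution $\CG^{\bf K}(0,h_\infty,\phi)$ is packaged through the incomplete-gamma factor of an Eisenstein coefficient, while $\CG^{\bf B}(0,\phi)$ enters via the Hodge bundle through the convention $Z(0,\phi)=-\phi(0)c_1(\omega)$. Matching the two sides rigorously requires an archimedean Siegel--Weil identity at $\xi=0$ together with a specific choice of Hermitian metric on $\omega$; checking compatibility of these choices with the Green-function definitions is where most of the bookkeeping resides. A subsidiary but routine obstacle is the interchange of $\sum_\xi$ with derivatives in $h$ needed for smoothness, which is standard given the rapid decay of $W^{(n)}_\xi$ in $|\xi|_\infty$ uniformly for $h$ in compacta.
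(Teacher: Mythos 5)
The paper's own proof of this statement is simply a citation to \cite[Thm.\ 3.13]{AFL2021}, so there is no detailed argument in the paper to compare against line by line. Your sketch of the preliminaries is sound: smoothness of $\CG^{\bf K-\bf B}$ comes from cancellation of the $-\log$-singularities since $\CG^{\bf K}(\xi,\cdot)$ and $\CG^{\bf B}(\xi,\cdot)$ are Green functions for the same divisor $Z(\xi,\phi)_\BC$; right $K_0$-invariance is inherited from $\phi$; parallel weight $n$ follows from the archimedean Whittaker functions.

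The genuine gap is in your treatment of $\CG^{\bf B}(h,\phi)$. You claim that modularity of $Z(h,\phi)$ in $\Ch^1(M_E)$ (Proposition \ref{modularity over generic fiber}), combined with uniqueness of admissible Green functions up to locally constant functions, ``upgrades'' to modularity of $\CG^{\bf B}(h,\phi)$ as a current-valued form. This does not follow. Modularity in $\Ch^1(M)$ is a statement about rational equivalence classes: if $Z(\xi,\phi)-\sum_j c_{j,\xi}Z_j=\div(f_\xi)$, then admissibility only forces $\CG^{\bf B}(\xi,\phi)=\sum_j c_{j,\xi}g_j - \log|f_\xi|^2 + (\text{loc.\ const.})$, and the term $\int_C(-\log|f_\xi|^2)$ against a degree-zero $0$-cycle $C$ on $\CM(\BC)$ does not vanish (for $C=P-Q$ it is $\log|f_\xi(Q)/f_\xi(P)|^2$, which is generically nonzero and not obviously modular in $\xi$). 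In the extreme case $\Ch^1(M)_\BQ=0$, Proposition \ref{modularity over generic fiber} is vacuous, yet the content of Theorem \ref{thm: Green} is not. So the Chow-group modularity is neither necessary nor sufficient for the archimedean statement; using it here is essentially circular with respect to the paper's own architecture, where the archimedean modularity (Theorem \ref{thm: Green}) and the generic-fiber modularity (Proposition \ref{modularity over generic fiber}) are meant to be \emph{independent} inputs combined in Theorem \ref{thm: global modularity}.

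The argument actually carried out in \cite[Thm.\ 3.13]{AFL2021} (following Bruinier--Yang and Ehlen--Sankaran) never passes through the Chow group. It computes the difference $\CG^{\bf K}(\xi,h_\infty,\phi)-\CG^{\bf B}(\xi,\phi)$ explicitly as a smooth function, Fourier coefficient by Fourier coefficient, using that $\CG^{\bf B}$ is a Borcherds--Bruinier regularized theta lift and $\CG^{\bf K}$ a Kudla--Millson theta integral, and identifies the resulting generating series with an explicit modular object on the analytic side (a theta/Eisenstein integral); the degree-zero condition on the cycle then only serves to kill constant terms proportional to $c_1(\omega)$ and to locally constant ambiguities. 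To repair your argument you would need to replace the ``upgrade from Chow modularity'' step by this direct archimedean Fourier-coefficient computation; your remarks about the $\xi=0$ term, incomplete-gamma factors, and the archimedean Siegel--Weil identity are pointing in the right direction, but they need to be the backbone of the argument rather than an afterthought about bookkeeping.
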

	\begin{proof}
		See \cite[Thm. 3.13]{AFL2021}.
	\end{proof}
	
	\begin{definition}\label{Hodge bundle}[Hodge bundle]
		We take $\omega_\CM=(\Lie A_0) \otimes (\Lie A)_1^\vee$ as in \cite[Section 7.2]{BHKRY} as an extension of the automorphic line bundle $\omega$ to $\CM_{\wt{G}, \wt{K}}$. Define the arithmetic line bundle $\wh \omega=(\omega_\CM, \lVert  -  \rVert_{\mathrm{Pet}})$ endowed with the natural Petersson metric. 
	\end{definition} 
	Define constant terms
	\begin{equation}
		\wh \CZ^{B} (0, \phi):=-\phi(0) c_1(\wh \omega) \in \wh{\Ch}^1(\CM_{\wt G, \wt K}), 
	\end{equation}
	\begin{equation}
		\wh \CZ^{K} (0, h_\infty, \phi)=-\phi(0) c_1(\wh \omega) + (0, (\CG^{\bf K}_{\nu}(\xi,h_{\nu},\phi))_{\nu} ) \in \wh{\Ch}^{1}(\CM_{\wt G, \wt K}).
	\end{equation}

	Set $\CG^{\bf K}(\xi,\phi):=\CG^{\bf K}(\xi,1, \phi)$.  For $\xi \in F_{0}$, define \emph{arithmetic Kudla--Rapoport divisors} ($?=\bf K, \bf B, \bf K- \bf B$)
	\begin{equation}\label{eq: Fourier coeff of arithmetic generating series}
		\wh \CZ^{?} (\xi, h_\infty, \phi)= (\CZ (\xi, \phi), ( \mathcal{G}^{?}_{\nu}(\xi, h_\nu, \phi))_{\nu} ) \in \wh{\Ch}^1(\CM_{\wt G, \wt K}),  
	\end{equation}
	\begin{equation}
		\wh \CZ^{?} (\xi, \phi)= (\CZ (\xi, \phi), ( \mathcal{G}^{?}_{\nu}(\xi, \phi))_{\nu} ) \in \wh{\Ch}^1(\CM_{\wt G, \wt K}).
	\end{equation}
	We have that $\wh \CZ^{\bf B} (0, \phi), \wh \CZ^{\bf B} (\xi, \phi) \in \wh{\Ch}^{1, \text{adm}}(\CM_{\wt G, \wt K})$.

	\subsection{Modularity of arithmetic theta series at maximal parahoric levels}

	Return to the set up in \S \ref{section: global Shimura var}, we have the integral model (recall $F_0 \not =\mathbb Q$)
	$$
	\CM=\CM_{\wt G, \wt K} \to \Spec O_E[\Delta^{-1}].
	$$ The level $\wt{K}$ is related to a chosen lattice $L$ such that $L_{v_0}$ is a vertex lattice of type $0 \leq t_0 \leq n$.

	\begin{theorem}\label{thm: almost global modularity}
		Let $\phi \in \CS(V(\BA_{0,f}))^K$ be a Schwartz function  such that $\phi$ is invariant under a compact open subgroup $K_0 \subseteq \SL_2(\BA_{0,f})$  by the Weil representation. Then for any $C^\perp \in \CC_{1}(\CM)^{\perp}$, the generating function $h \in \SL_2(\BA_0) \mapsto (Z(h, \phi), C^\perp)^{\text{adm}}$ lies in $\CA_{\rm hol}(\SL_2(\BA_0), K_0, n)_{\ov \BQ} \otimes_{\ov \BQ} \BR_{\Delta, \ov \BQ}$. Moreover, the same is true if we replace $(-,C^\perp)$ by any linear functional $\ell: \Ch^1(\CM) \to \mathbb R_\Delta$ that is trivial on $\wh{\Ch}^{1}_{\text{Vert}}(\CM)$.
	\end{theorem}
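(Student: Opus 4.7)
The proof plan is short: the admissible pairing with $C^\perp$ descends to a $\overline{\BQ}$-linear functional on the Chow group of the generic fiber, and then generic-fiber modularity (Proposition \ref{modularity over generic fiber}) delivers the result. The substantive work is already done by Lemma \ref{factorization of Chow} and by Liu's modularity theorem.

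First I would unpack the admissible pairing $(-,C^\perp)^{\text{adm}}\colon \Ch^1(X)\to\BR_\Delta$. Given a class $Z\in\Ch^1(X)=\Ch^1(M_{\wt G,\wt K})$, choose any lift $\wh Z\in\wh{\Ch}^{1,\adm}(\CM)$ (a Zariski closure equipped with an admissible Green function; existence and uniqueness up to locally constant Green data is standard), and set $\ell_{C^\perp}(Z):=(\wh Z,C^\perp)\in\BR_\Delta$ using the $\Delta$-truncated arithmetic intersection pairing \eqref{eq: Truncated Arithmetic int pairing}. Two lifts differ by an element of $\wh{\Ch}^1_{\mathrm{Vert}}(\CM)$, the kernel of $\wh{\Ch}^{1,\adm}(\CM)\twoheadrightarrow \Ch^1(M)$ furnished by Lemma \ref{factorization of Chow}. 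Since by hypothesis $C^\perp\in\CC_1(\CM)^\perp$ is orthogonal to $\wh{\Ch}^1_{\mathrm{Vert}}(\CM)$, the value is independent of the lift, so $\ell_{C^\perp}$ is a well-defined $\BQ$-linear functional which I extend $\overline{\BQ}$-linearly.

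Next I invoke Proposition \ref{modularity over generic fiber}, which gives
\[
Z(h,\phi)\in\CA_{\rm hol}(\SL_2(\BA_0),K_0,n)_{\overline{\BQ}}\otimes_{\overline{\BQ}}\Ch^1(M_{\wt G,\wt K})_{\overline{\BQ}}.
\]
Applying $\id\otimes\ell_{C^\perp}$ on the second tensor factor preserves modularity and yields
\[
(Z(h,\phi),C^\perp)^{\text{adm}}=(\id\otimes\ell_{C^\perp})(Z(h,\phi))\in\CA_{\rm hol}(\SL_2(\BA_0),K_0,n)_{\overline{\BQ}}\otimes_{\overline{\BQ}}\BR_{\Delta,\overline{\BQ}},
\]
which is the first claim. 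For the second assertion, any linear functional $\ell$ on (the admissible) arithmetic Chow group of $\CM$ which vanishes on $\wh{\Ch}^1_{\mathrm{Vert}}(\CM)$ factors through the quotient $\wh{\Ch}^{1,\adm}(\CM)/\wh{\Ch}^1_{\mathrm{Vert}}(\CM)\cong\Ch^1(M)$ again by Lemma \ref{factorization of Chow}, and the same composition argument applies verbatim.

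There is no real obstacle specific to Theorem \ref{thm: almost global modularity}: everything has been arranged so that this ``almost'' modularity follows formally from Liu's generic-fiber modularity together with the exact sequence of admissible Chow groups. The genuine difficulty, which motivates the modification machinery of this paper, is to remove the hypothesis $C^\perp\in\CC_1(\CM)^\perp$ and establish modularity of $(\wh \CZ^{\bf B}(\phi),\CC)$ for \emph{arbitrary} $\CC\in\CC_1(\CM)$; that will require the degree modification over $\BC$ by Hecke CM cycles (Proposition \ref{prop: mod over BC by max order Hecke CM}) combined with modification by very special $1$-cycles in the basic locus at each bad place (Propositions \ref{prop: mod over F for t=1} and \ref{prop: mod over F for 1<t<n-1}) and the local modularity input of Theorem \ref{thm: partial local modularity}, addressed in the subsequent sections.
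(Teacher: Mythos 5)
Your argument is correct and is exactly the one the paper intends: the paper's proof consists of the single sentence citing Lemma \ref{factorization of Chow} and Proposition \ref{modularity over generic fiber}, and your proposal simply unpacks that citation into the induced functional $\ell_{C^\perp}$ on $\Ch^1(M_{\wt G,\wt K})$ and the tensor-functoriality of modularity. Nothing is missing.
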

	\begin{proof} 
		This follows from by applying Lemma \ref{factorization of Chow} to the regular scheme $\CM$ and Proposition \ref{modularity over generic fiber}.
	\end{proof}

	Let $$
	\phi=\phi_i \in \CS(V(\BA_{0,f}))^{K}$$ be of the form $\phi_i$ $(i=1,2)$ introduced in \S \ref{def:KR int}. Then $\phi$ is invariant under a compact open $K_0 \subseteq \SL_2(\BA_{0, f})$ such that for $v \not \in \Delta$ we have $K_{0, v}=\SL_2(O_v)$ if $L_v$ is self dual; $K_{0, v}=\Gamma_0(v)$ is a upper congruence subgroup if $v \not =v_0$ and $L_v$ is not self dual; $K_{0, v_0}=\Gamma_0(v_0)$ if $i=1$ and $K_{0, v_0}=\Gamma_0^-(v_0)$ is a lower triangular congruence subgroup if $i=2$. Consider the generating series 
	$$
	\wh \CZ^{\bf B}(h_\infty, \phi)= \sum_{\xi \in F_0} \wh \CZ^{\bf B} (\xi, \phi) W_\xi^{(n)}(h_\infty).
	$$

	\begin{theorem}\label{thm: global modularity}
		Assume $E=F$ and $F_0 \not= \mathbb Q$.  
		\begin{enumerate}
			\item Assume that $\Delta$ is large enough so that there exists a non-empty Hecke CM cycle $C=CM(\alpha_0, \mu_0)$ (c.f. (\ref{CM cycle: generic fiber}) ), where $\alpha_0 \in F[t]$ is maximal order at any finite place $v$ outside $\Delta$ and is unramified at $v_0$.  Then for any $1$-cycle $\CC \to \CM$ over $O_E[\Delta^{-1}]$, the function 
			$$(\wh \CZ^{\bf B}(h_\infty, \phi), \CC) $$ is a holomorphic modular form on $\SL_2(F_{0,\infty})$ of parallel weight $n$ of level $K_0$, up to adding a constant in $\BR$.
			\item let $\ell: \wh{\Ch}^{1}(\CM) \to \mathbb R_\Delta$ be a linear functional that has degree $0$ over $\mathbb C$, i.e. $\ell|_{\wh{\Ch}^{1}_{\mathrm{\infty}}(\CM)}=0$. Assume $\ell$ is also trivial on irreducible components of special fibers of $\CM$ over $v  \not \in \Delta$, i.e. $\ell|_{\wh{\Ch}^{1}_{v}(\CM)}=0$. Then $\ell(\wh \CZ^{\bf B}(h_\infty, \phi))$ is a holomorphic modular form on $\SL_2(F_{0,\infty})$ of parallel weight $n$ of level $K_0$.
		\end{enumerate}
	\end{theorem}
	
	\begin{proof}
	Firstly, we know the arithmetic modularity for specific $1$-cycles $\CC$ of the following two forms:
		\begin{enumerate}
			\item $\CC$ is a very special $1$-cycle in the basic locus of $\CM_{k_{\nu}}$ where $\nu$ is a place of $E$ over an inert place $v_0$ of $F_0$. This is done by Theorem \ref{thm: int in basic locus = theta series}.
			
			\item (Only needed for part $(1)$) $\CC$ is the normal integral model of a Hecke CM cycle $C=CM(\alpha, \mu_{G, \Delta})$. We use the set up in \S \ref{section: global analytic and geometric side}. We choose a standard partial transfer $\Phi'$ (Definition \ref{standard partial transfers}) in $\CS((S(V_0) \times V')(\BA_0))$ of the standard function 
			$$
			\Phi=\Phi_\Delta \otimes \prod_{v \not \in \Delta}(1_{U(L_v)} \otimes 1_{L_v}), \, \, \Phi_\Delta:=\phi_\Delta \otimes 1_{\mu_0}.
			$$
			For $\xi \in F_0^\times$, by Proposition \ref{prop: global-semiglobal-local} and the proof in Theorem \ref{prop: local ATC is from global id of Fourier coeff derived at v0} we have the matching
			\[
			2 \partial_{\alpha}(\xi, \Phi') + \Int^{\bf K- \bf B}(\xi, \Phi) = - (\wh \CZ^{\bf B}(\xi, \phi), \CC) .
			\]
			Note by known ATCs for unramified maximal orders in \S \ref{section: ATC unram max order}, the AFL for maximal orders in \cite[Cor. 9.1]{M-Thesis} the assumption in Proposition \ref{prop: global-semiglobal-local} holds.
			Then the modularity follows from the modularity of the automorphic generating function $\partial \BJ_\alpha(h_\infty,\Phi')$ known by the Poisson summation formula and modularity of $\Int^{\bf K- \bf B}(h_\infty, \Phi_f)$ known by \cite{JBGreen}.			
		\end{enumerate}
	Now we consider a general $1$-cycle $\CC$. From the modifications over $\BC$ (Proposition \ref{prop: mod over BC by max order Hecke CM} which uses $E=F$) and $\BF_q$ (Proposition \ref{prop: mod over F for t=1} and Proposition \ref{prop: mod over F for 1<t<n-1}), we know there exists a $1$-cycle $\CC'$ that is a finite linear combination of specific $1$-cycle above such that the functional $(-, \CC-\CC')$ factors through $\Ch^1(\CM_E)$ via the surjection (\ref{factorization of Chow}). Hence the modularity of $(-, \CC-\CC')$ holds by the modularity over the generic fiber (Theorem \ref{modularity over generic fiber}). And the modularity of $\CC$ follows from $\CC'$ and $\CC-\CC'$. The second part is proved similarly.
	\end{proof}

	\begin{remark}
	We may use complex uniformization (\ref{prop: complex unif for CM cycles}) to pin down the condition that a Hecke CM cycle $CM(\alpha, \mu_{G, \Delta})$ is non-empty. Starting with an unramified maximal order $\alpha_{v_0} \in F_{v_0}[t]$,  We may enlarge $\Delta$ to make sure the condition holds.
	\end{remark}

	\subsection{Modularity for very special $1$-cycles in basic locus}\label{section:Lmod}
	
	Assume $\CC$ is a very special $1$-cycle in the basic locus of $\CM_{k_{\nu}}$ where $\nu$ is a place of $E$ over an inert place $v_0$ of $F_0$.
	
	To show $(\wh \CZ^{\bf B}(h_\infty, \phi), \CC) $ is modular, we use basic uniformization and local modularity results (Section \ref{section: local modularity}). In the end, we will find $(\wh \CZ^{\bf B}(h_\infty, \phi), \CC) $ can be adelized into a theta series for suitable Schwartz function $\phi' \in \CS(V^{(v_0)}(\BA_{0,f}))$ in the nearby hermitian space of $V$ at $v_0$, hence is modular.  Without loss of generality, we can assume $\CC$ is of the form
	\[
	\CC= \CC_0 \times 1_{g_0K^{v_0}} \to \CM_{k_{\nu},0}
	\]
	for some $1$-cycle $\CC_0 \subseteq \CN^{\red}$ and $g_0 \in G(\BA_{0, f}^{(v_0)})/ K^{v_0}$. Note the functions $(\CZ(u), \CC_0)$ and$(\CZ(u), \CC_0)$ ($u \in V^{(v_0)}(F_{0,v_0})-0$) extend to smooth functions on $V^{(v_0)}(F_{0, v_0})$ by Theorem \ref{thm: partial local modularity} under the assumption $\CC$ is a very special $1$-cycle.  So we have a Schwartz function $\phi'_\CC \in \CS(V^{(v_0)}(\BA_{0,f}))$ defined by
	\[
	\phi'_{\CC, v}(u):= \begin{cases}
		\phi_v(g_0^{-1}u) & \text{if $v \not= v_0$}, \\
		(\CZ(u), \CC_0)& \text{if $v = v_0, \, i=1$}, \\
		(\CY(u), \CC_0)& \text{if $v = v_0, \, i=2$}. \\
	\end{cases}
	\]

\subsection{Constant terms}
Recall that $\wh \CZ^{B} (0, \phi)=-\phi(0) c_1(\wh \omega)$. By moduli interpretation,  the pullback of the Hodge bundle $\wh \omega$ (\ref{Hodge bundle}) to the basic locus is the Hodge bundle $\omega=(\Lie X)_0^\vee$. We need to show
\[
-\phi(0) \deg(\omega|_\CC)=\phi'_\CC(0).
\]
Identifying the prime-to-$v_0$ part, we reduce this to the following proposition.
\begin{proposition}\label{constant terms}
	For a very special $1$-cycle $\CC \subseteq \CN_{L_{v_0}}$, we have 
	\[
	- \deg \omega |_\CC = \lim_{u \to 0} (\CZ(u), \CC_0), \quad q \deg \omega |_\CC  =  \lim_{u \to 0} (\CY(u), \CC_0).
	\] 
\end{proposition}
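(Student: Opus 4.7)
The approach is to reduce, via the projection formula along the embedding $\delta:\CN_2^{[1]}\hookrightarrow\CN$ underlying the very special structure of $\CC_0$, to a direct computation on the Drinfeld half plane $\wh\Omega^1_{F_0}\cong\CN_2^{[1]}$. By linearity we may assume $\CC_0=\BP^1_L$ is a single irreducible component of a Bruhat--Tits stratum in the embedded $\CN_2^{[1]}$. The restriction formulas
\[
\delta^*\CZ(u)=\CZ(u_1)\cdot 1_{O_{F_0}^{t-1}}(u_2)\cdot 1_{O_{F_0}^{n-t-1}}(u_3), \qquad \delta^*\CY(u)=\CY(u_1)\cdot 1_{\varpi^{-1}O_{F_0}^{t-1}}(u_2)\cdot 1_{O_{F_0}^{n-t-1}}(u_3)
\]
from Section~\ref{section: local modularity} both have indicator factors evaluating to $1$ at $u=0$, so the projection formula gives $\lim_{u\to 0}(\CZ(u),\BP^1_L)_\CN=\lim_{u_1\to 0}(\CZ(u_1),\BP^1_L)_{\CN_2^{[1]}}$ and similarly for $\CY$. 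On the Hodge-bundle side, the product decomposition $X=X^\flat\times(\CE^{[1]})^{t-1}\times(\CE^{[0]})^{n-t-1}$ splits $(\Lie X)_0$; since each $\CE^{[i_j]}$ is pulled back from $\Spf O_{\breve F_0}$, these factors contribute trivial line bundles upon restriction to the proper curve $\BP^1_L\subseteq\CN_2^{[1]}$. Hence $\deg\omega_\CN|_{\BP^1_L}=\deg\omega_{\CN_2^{[1]}}|_{\BP^1_L}$, and both claimed identities reduce to the base case $n=2$, $t=1$.

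On $\CN_2^{[1]}\cong\wh\Omega^1_{F_0}$, the right-hand sides of both identities are supplied by Theorem~\ref{thm: n=2 local modularity}. For the left-hand side, the degree $\deg\omega|_{\BP^1_L}$ is computed directly from the explicit model of the universal $p$-divisible group on the Drinfeld half plane from \cite{KR-Drinfeld plane}: on a type-$0$ component one obtains $\omega|_{\BP^1_L}\cong\CO(-1)$, so $\deg\omega|_{\BP^1_L}=-1$, which matches $-\deg\omega|_{\BP^1_L}=1=\lim(\CZ(u),\BP^1_L)$ and $q\deg\omega|_{\BP^1_L}=-q=\lim(\CY(u),\BP^1_L)$. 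The remaining type-$2$ components are then handled by applying the dual isomorphism $\lambda_\CN$ of Section~\ref{dual lambda}: it interchanges the two types of components, and the Hodge bundle transforms under $\lambda_\CN$ via $\omega_{X^\vee}=(\Lie X)_1^\vee$ together with $\lambda^\vee\circ\lambda=[\varpi]$, producing precisely the factor of $q$ that matches the swap of coefficients $1\leftrightarrow-q$ in Theorem~\ref{thm: n=2 local modularity}.

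The main obstacle is bookkeeping the precise transformation of $\omega$ under the dual isomorphism $\lambda_\CN$, which is what produces the asymmetric factor $q$ between the $\CZ$-identity and the $\CY$-identity. A convenient consistency check is the Fourier duality of Theorem~\ref{thm: partial local modularity}: evaluating both sides at $u=0$ forces a fixed ratio between $\lim(\CY(u),\CC_0)$ and $\lim(\CZ(u),\CC_0)$, so that once either identity is verified for all very special $\CC_0$ (which can be done entirely within the well-understood type-$0$ stratum), the other is automatic with the correct $q$-factor.
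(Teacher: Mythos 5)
Your overall approach matches the paper's: reduce to the base case $n=2$, $t=1$ via the projection formula along the embedding $\delta$, obtain the right-hand sides from Theorem~\ref{thm: n=2 local modularity}, compute $\deg\omega|_{\BP^1_{L^\circ}}=-1$ for a type-$0$ component, and dispatch the type-$2$ component by duality (which the paper dismisses as ``similar''). You add useful explicit detail in the reduction step --- checking that both indicator factors in $\delta^*\CZ(u)$ and $\delta^*\CY(u)$ equal $1$ near $u=0$, and that the $\CE^{[i]}$ factors contribute trivially to $(\Lie X)_0$ upon restriction --- and you replace the paper's relative Dieudonn\'e computation with the explicit model of \cite{KR-Drinfeld plane}; both are valid routes to $\omega|_{\BP^1_{L^\circ}}\cong\CO(-1)$.

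One step is incorrect, though. Your final ``consistency check'' asserts that evaluating the Fourier duality of Theorem~\ref{thm: partial local modularity} at $u=0$ forces a fixed ratio between $\lim(\CZ(u),\CC_0)$ and $\lim(\CY(u),\CC_0)$. It does not: the Weil representation element $\begin{pmatrix}&-1\\1&\end{pmatrix}$ acts through the Fourier transform $\CF_\BV$, and $\CF_\BV f(0)=\int_\BV f$, not $f(0)$. Evaluating $\gamma_\BV\CF_\BV f_L(\CC)=(-q)^{-t}f_{L^\vee}(\CC)$ at $u=0$ therefore relates the \emph{integral} of $(\CZ(u),\CC_0)$ over $\BV$ to the \emph{value} of $(\CY(0),\CC_0)$, which is not the claimed constraint. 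So the two identities in the proposition are genuinely independent statements; you cannot deduce one from the other plus Fourier duality, and both must be verified directly on the Drinfeld half plane (as you do elsewhere in the proposal). Since the check is labeled as optional and your main argument does carry out both computations, this does not invalidate the proof --- but the reasoning given for the check should be dropped.
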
 
\begin{proof}
	As $\CC$ is very special, via pullback we only need to do the case $n=2$ and $t_v=1$ i.e., the unitary Shimura curve case with Drinfeld uniformization at $v_0$. We use computations in Theorem \ref{thm: n=2 Z-cycle}. Consider the case $\CC=\BP^1_{L^\circ}=\BP(L^\circ / \varpi L^\circ)$ for a type $0$ lattice $L^\circ$. Then by Theorem \ref{thm: n=2 local modularity}, the right hand side is $1$ for $\CZ$-cycles and $-q$ for $\CY$-cycles. So we are reduced to show
	\[
	\deg \omega |_{\BP^1_{L^\circ}} = -1. 
	\]
	This is true by relative Dieudonne theory:  by equation \ref{eq: Dieudone defn of BT(L circ)}, a special pair $(A, B)$ (recall $A=M_0, B=M_1^\perp, B^\vee=\CF_\BX M_1$) determines a point $x=\varpi B / \varpi A^\vee$ in $\BP(L^\circ / \varpi (L^\circ)^\vee )$ if $A=L^\circ$. The line bundle $(\Lie X)_0$ at $x$ gives the line $M_0/\CV_{\BX}M_1 = A/ \sigma^{-1}(B) \overset{\sigma} \cong A^\vee /B $ in $\BP(L^\circ / \varpi (L^\circ)^\vee )$. So $\omega$ is the tautological line bundle $O(-1)$ on $\BP^1_{L^\circ}$, hence has degree $-1$. The case $\CC=\BP^1_{L^\bullet}$ for a type $2$ lattice $L^\bullet$ is similar. 
\end{proof}

\begin{remark}
	See \cite[Section 11.1]{KR-2004} for similar computations on orthogonal Shimura curves.
\end{remark}

	\begin{theorem}\label{thm: int in basic locus = theta series}
	Let $\CC$ be a very special $1$-cycle in the basic locus of $\CM_{k_{\nu}}$ where $\nu$ is a place of $E$ over an inert place $v_0$ of $F_0$. Then we have an equality
		\[
		(\wh \CZ^{\bf B}(h_\infty, \phi), \CC) =  \phi_{\CC}'(0) + \sum_{\xi \in F_{0, +}} \sum_{u \in V^{(v_0)}_\xi(F_0)} \phi_{\CC}'(u) W_\xi^{(n)}(h_\infty).
		\]	
		Hence  $(\wh \CZ^{\bf B} (\xi, \phi), \CC)$  is the $\xi$-th Fourier coefficient of the theta series $\Theta(h, \phi_{\CC}' )$  
		for $\phi_{\CC}'$ on $V^{(v_0)}$. In particular, $(\wh \CZ^{\bf B}(h_\infty, \phi), \CC)$ is modular. 
	\end{theorem}
	\begin{proof}
		The non-constant term follows from basic uniformization in Proposition \ref{prop:basic unif KR} directly. The constant term is done by Proposition \ref{constant terms}.
	\end{proof}
	
	\begin{remark}
		For $n=2, t=1$, basic uniformization is used in \cite[Section 4.3]{KR-height} to show the modularity of their arithmetic theta series on orthogonal Shimura curves when intersecting with vertical projective lines $\BP^1$. We will use the local duality between $\CZ$ and $\CY$ cycles in Theorem \ref{thm: partial local modularity} later to relate $(\wh \CZ^{\bf B}(h_\infty, \phi_1), \CC)$ and $(\wh \CZ^{\bf B}(h_\infty, \phi_2), \CC)$. 
	\end{remark}

	\part{The proof of ATC for $p>2$}
	
Let $p$ be an odd prime. Assuming $F_0/\BQ_p$ is unramified if $0<t<n$, now we prove arithmetic transfer Conjectures \ref{conj: gp version ATC} and \ref{conj: semi-Lie version ATC}. 

By equivalences in Theorem \ref{thm: semi-Lie and gp}, we only need to prove the semi-Lie version ATC in part $(1)$ of Conjecture \ref{conj: semi-Lie version ATC} for any vertex lattices.
	
The strategy is local-global via a double induction: 
\begin{itemize}
	\item We globalize the data and produce a pair of automophic forms (using modularity) on both geometric side and analytic side.
	\item Non-singular Fourier coefficients of geometric and analytic side (away from $\Delta$) with ``boundary valuations'' at the distinguished finite place $v_0$ can be matched by Theorem \ref{proof of TC assuming boundary cases} and previous local reductions and inductions. 
	\item By double induction Lemma \ref{the double induction lemma}, we obtain global identities for all Fourier coefficients. 
	\item We deduce local identities at $v_0$ and finally the desired local equalities by subtracting local terms in the modification.
\end{itemize}

	\section{The global analytic and geometric side}\label{section: global analytic and geometric side}
	
	In this section, we construct the global analytic side and geometric side in the semi-Lie setting at maximal parahoric levels, following \cite[\S 11-15]{AFL}. We use the set up in \S \ref{section: global Shimura var}:
	
	\begin{itemize}
		\item $F/F_0$ is a totally imaginary quadratic extension of a totally real number field. Choose a CM type $\Phi$ of $F$ and 
		uished element $\varphi_0 \in \Phi$.
		\item $V$ is a $F/F_0$-hermitian space of dimension $n \geq 1$ of signature $\{ (n-1,1)_{\varphi_0}, (n,0)_{\varphi \in \Phi -  \{ \varphi_0 \} }  \} $.
		\item Fix a finite place $v_0$ of $F_0$ that is inert in $F$, with residue characteristic $p>2$.  Fix an integer $0 \leq t_0 \leq n$. 
		 \item $\Delta$ is a finite collection of places of $F_0$. Assume that $p \nmid \Delta$, i.e. all places $v$ of $F_0$ above $p$ are not in $\Delta$, and that all places $v \not \in \Delta$ is unramified in $F$.  
		 \item $L$ is a hermitian lattice satisfying assumptions in \S \ref{section: distinguished v0}. Assume that $L$ is self--dual away form $\Delta$ and $v_0$, and $L_{v_0}$ is a vertex lattice of type $t_0$.
	\end{itemize}	
	
Choose a level structure $\wt{K}$ as in (\ref{level for L and Delta}) for $L$ and $\Delta$. We have the integral model (\ref{defn: integral RSZ model})
	$$
	\CM=\CM_{\wt G, \wt K} \to \Spec O_E[\Delta^{-1}].
	$$
	of the RSZ unitary Shimura variety for the hermitian space $V$ with level $\wt{K}$.
	
Fix a degree $n$ conjugate self-reciprocal (\ref{conjuga self-reciprocal}) monic polynomial $\alpha \in F [t]$. Assume that $\alpha$ is irreducible. Then $F':=F[t]/(\alpha(t))$ is a field with a Galois involution on $F'$ extending the non-trivial Galois involution of $F/F_0$ and sending $t$ to $t^{-1}$. Denote by $F_0'$ the fixed subfield of $F'$ under this involution.

	\subsection{Analytic generating functions and derivatives at $s=0$}
	
	We firstly globalize the analytic side in \S \ref{section: local set up transfer} based on \cite[\S 12]{AFL-Invent}. Fix an orthogonal $F$-basis of $V$ to endow it with a $F_0$-rational structure $V_0$ and a $F/F_0$ semi-linear involution $\ov{(-)}$ with fixed subspace $V_0$. Consider the symmetric space
	\begin{equation}
		S(V_0)=\{ \gamma \in \GL(V) | \gamma \ov \gamma =id \}
	\end{equation}
	and the natural $(F_0 \times F_0) / F_0$-hermitian space 
	$$
	V' =V_0 \times (V_0)^*.
	$$
	
	Let $\eta: \BA_{0}^\times \to \BC^\times$ be the quadratic character associated to $F/F_0$ by global class field theory. Consider the subscheme 
	$$ 
	S(V_0)(\alpha) \subseteq S(V_0)
	$$ of elements with characteristic polynomial $\alpha$.  Then $S(V_0)(\alpha)(F_0)$ consists of exactly one $\GL(V_0)(F_0)$-orbit.
	
\begin{definition}\label{Gaussian test function}
A decomposable function $\Phi'=\otimes_{v} \Phi'_v \in \CS((S(V_0) \times V')(\BA_0))$ is called Gaussian, if for every place $v | \infty$ of $F_0$,  $\Phi'_v$ is the chosen partial Gaussian test function (relative to $\alpha$) in \cite[Section 12.4]{AFL}, which partially transfers to the Gaussian function $e^{-\pi (u,u)}$ on $(g, u) \in \U(n)(\BR) \times \BC^n$.
\end{definition}	

Choose a Gaussian decomposable function $\Phi' \in \CS((S(V_0) \times V')(\BA_0))$. Consider the action of $h \in \GL(V_0)$ on $(\gamma,u_1,u_2) \in S(V_0) \times  V'$  by
\begin{equation}
	h.(\gamma,u_1,u_2)=(h^{-1}\gamma h,h^{-1}u_1, u_2h).
\end{equation}

Consider a pair $(\gamma, u') \in (S(V_0)(\alpha)\times V')(F_0)$ that is regular semisimple, which is equivalent to $u' \not =0$ as $\alpha$ is irreducible. Define the global orbital integral ($v$ runs over all places of $F_0$, $s \in \mathbb C$):
	\begin{equation}
		\Orb((\gamma,u'), \Phi',s)= \prod_{v} \Orb((\gamma,u'), \Phi'_v,s)
	\end{equation}
	
	Denote by $[(S(V_0)(\alpha) \times V')(F_0)]$ (resp. $[(S(V_0)(\alpha) \times V')(F_0)]_\rs$) the set of (resp. regular semisimple) $\GL(V_0)(F_0)$-orbits in $(S(V_0)(\alpha) \times V')(F_0)$.  Set the adelic quotient $[\GL(V_0)]:= \GL(V_0)(F_0) \backslash \GL(V_0)(\BA_0)$.  
	
Let $\omega$ be the Weil representation of $\SL_2(\BA_0) \times \U(V)$ on $V(\BA_0)$ as in \cite[Section 11.1]{AFL}. For $h \in \SL_2(\BA_0)$, consider the regularized integral ($s \in \BC$): 
	\begin{equation}\label{defn of BJ}
		\BJ_{\alpha}(h,\Phi',s)=\int_{g \in [\GL(V_0)] } \left(\sum_{(\gamma, u')\in (S(V_0)(\alpha)\times V')(F_0)}\omega(h)\Phi'(g^{-1}. (\gamma, u'))\right)   |g|^s\eta(g)  dg.
	\end{equation}
	
	By \cite[Thm. 12.14]{AFL}, we see that $\BJ_{\alpha}(h,\Phi',s)$ is a smooth function of $(h, s)$, entire in $s \in \BC$ and is left invariant under $h \in \SL_2(F_0)$ by Poisson summation formula. 
	
	We have a decomposition:
	\begin{equation}
		\BJ_{\alpha}(h,\Phi',s)= \BJ_{\alpha}(h,\Phi',s)_{0}+\sum_{(\gamma,u')\in [(S(V_0)(\alpha) \times V')(F_0)]_\rs} \Orb((\gamma,u'),\omega(h)\Phi',s),
	\end{equation}
	where $\BJ_{\alpha}(h,\Phi',s)_{0}$ is the term over the two regular nilpotent orbits as in \cite[Section 12.6]{AFL}.
	
For $\xi \in F_0$, let $\BJ_{\alpha}(\xi, h, \Phi',s)$ be the $\xi$-th Fourier coefficient  (\ref{defn: Fourier coeff}) of $\BJ_{\alpha}(\cdot,\Phi',s)$. If $\xi \not =0$, we have
	\begin{align}
	\BJ_{\alpha}(\xi, h, \Phi',s)=	\sum_{(\gamma,u')\in [(S(V_0)(\alpha) \times V'_\xi)(F_0)]_\rs} \Orb((\gamma,u'),\omega(h)\Phi',s).
	\end{align}
 Here $V'_\xi$ is the subscheme of $(u_1, u_2) \in V'$ defined by the condition $u_2(u_1)=\xi$. Consider the derived functions and orbital integrals:
	\begin{equation}
		\partial \BJ_\alpha(h,\Phi') :=\frac{d}{ds}\Big|_{s=0}   \BJ_\alpha(h,\Phi',s).
	\end{equation}
	\begin{equation}
		\del((\gamma,u'),\Phi'_v) := \frac{d}{ds}\Big|_{s=0}  \Orb((\gamma,u'),\Phi'_v,s).
	\end{equation}
By similar properties of $\BJ_{\alpha}(\xi,\Phi',s)$, $\partial \BJ_\alpha(h,\Phi')$ is a smooth function on $h \in \SL_2(\BA_0)$, left invariant under $\SL_2(F_0)$, and of parallel weight $n$. If $\Phi'$ is $K_0$-invariant under the Weil representation, then  $\partial \BJ_\alpha(h,\Phi')$ is right $K_0$-invariant.

	By Leibniz's rule, we have the standard decomposition ($v$ runs over all places of $F_0$):
	\begin{equation}
		\partial \BJ_\alpha(h,\Phi') = \partial \BJ_{\alpha}(h,\Phi')_{0} + \sum_{v}    \partial \BJ_{\alpha ,v}(h, \Phi'),
	\end{equation}
	where
	\begin{equation}\label{def partial BJ a,v}
		\partial \BJ_{\alpha ,v}(h, \Phi') :=\sum_{(\gamma,u')\in [(S(V_0)(\alpha) \times V')(F_0)]_\rs}  \del((\gamma,u'), \omega(h)\Phi'_v)\cdot  \Orb((\gamma,u'), \omega(h)\Phi'^{v}).
	\end{equation}
	and the nilpotent term $\partial \BJ_{\alpha}(h,\Phi')_{0}$ is part of the $0$-th Fourier coefficient of $\partial \BJ_{\alpha}(h,\Phi')$. We have the Fourier expansion
	\begin{equation}\label{eq: Del BJ}
		\partial \BJ_{\alpha, v}(h, \Phi') = \sum_{\xi \in F_0} \partial \BJ_{\alpha ,v}(\xi, h, \Phi').
	\end{equation}
For $\xi \in F_0^\times$, we have 
	\begin{equation}\label{eq: Del BJ (xi, h, Phi')}
	\partial \BJ_{\alpha ,v}(\xi, h, \Phi'):= \sum_{(\gamma,u')\in [(S(V_0)(\alpha) \times V'_\xi)(F_0)]_\rs}  \del((\gamma,u'), \omega(h)\Phi'_v)\cdot  \Orb((\gamma,u'), \omega(h)\Phi'^{v}).
	\end{equation}

As $\Phi' \in \CS((S(V_0) \times V')(\BA_0))$, for all but finitely many finite place $w$ of $F_0$, we know that $\Phi'_w$ is the standard function (\ref{std function}) for the self-dual lattice $L_{0,w} \subseteq V_{0,w}$ (spanned by the chosen orthogonal basis of $V$):
$$
\Phi'_w=1_{S(L_{0,w})} \times L_{0,w} \times (L_{0, w})^*.
$$

	\subsection{Comparison to intersection numbers}\label{section: compare Int and DJ}
	
	Let $V$ be the $n$-dimensional hermitian space we start with. Consider a decomposable function $\Phi=\otimes_{v<\infty}\Phi_v \in\CS((\U(V)\times V)(\BA_{0,f}))$.  
	
\begin{definition} \label{Partial transfers}
We say a decomposition function $\Phi' \in \CS((S(V_0) \times V')(\BA_0))$ is a partial transfer (relative to $\alpha$) of $\Phi$, if $\Phi'$ is Gaussian (\ref{Gaussian test function}) and for any finite place $v$ of $F_0$, equalities for $\Phi_v'$ and $\Phi_v$ in Definition \ref{Semi-Lie Tranfers} are required to hold only for $(\gamma, u_1, u_2) \in (S(V_0)(\alpha) \times V_0 \times V_0^*)(F_0)_\rs$.
\end{definition}	

Choose a partial transfer $\Phi'$ of $\Phi$, which exists by the existence of smooth Jacquet--Rallis transfers \cite{Zhang-GGP}. Let $v$ be a place of $F_0$. We hope to compare $\partial \BJ_{\alpha ,v}(h, \Phi')$ (\ref{def partial BJ a,v}) and $\partial \BJ_{\alpha ,v}(\xi, h, \Phi')$ (\ref{eq: Del BJ (xi, h, Phi')}) on the analytic side with relevant geometric terms for $\Phi$.

\begin{proposition}
If $v$ splits in $F$, then $\partial \BJ_{\alpha ,v}(h, \Phi')=0$.
\end{proposition}	
\begin{proof}
This follows the argument in \cite[Lem. 14.2]{AFL} (see also \cite[Prop. 3.6]{AFL-Invent} for the group case).	
\end{proof}

	If $v$ is a non-split place (including the case $v | \infty $), consider the $v$-nearby hermitian space $V^{(v)}$ of $V$. As $\Phi'$ is a partial transfer of $\Phi$, the $(\gamma,u')$-term in the summation (\ref{def partial BJ a,v}) of $\partial \BJ_{\alpha ,v}(h, \Phi')$ is $0$, unless $(\gamma,u')$ matches an orbit $(g,u)\in [(\U(V^{(v)})\times V^{(v)})(F_0)]_\rs$.
	
	Let $v$ be a finite non-split place. For $h=h_\infty h_f \in \SL_(\BF_{0, \infty})\SL_(\BA_{0, f})$, as $\Phi'$ is Gaussian the decomposition (\ref{eq: Del BJ}) becomes 
	\begin{equation}\label{eq: important decomp}
		\partial \BJ_{\alpha ,v}(h, \Phi') = \sum_{\xi \in F_{0}, \xi \geq 0} \partial \BJ_{\alpha ,v}(\xi, 1, \omega(h_f)\Phi')  \frac{W_{\xi}^{(n)}(h_\infty)}{W_{\xi}^{(n)}(1)}.
	\end{equation}
	Here $W_{\xi}^{(n)}(-)$ is the product of standard weight $n$ Whittaker functions (\ref{defn: Whittaker function on SL2}) over infinite places $v | \infty$ of $F_0$.
	We normalize the value at $h=1$ by 
	$$
	\partial \BJ_{\alpha ,v}(\xi, \Phi'):= \frac{1}{W^{(n)}_\xi(1)}\partial \BJ_{\alpha ,v}(\xi,1,\Phi').
	$$
	By the computation of archimedean orbital integrals \cite[\S 12]{AFL}, we have the following decomposition
\begin{proposition}
\begin{equation}\label{key: DJ local-global}
	\partial \BJ_{\alpha ,v}(\xi,\Phi')=\sum_{(\gamma,u')\in [(S(V_0)(\alpha) \times V'_\xi)(F_0)]_\rs}  \del((\gamma,u'),\Phi'_v)\cdot \Orb((\gamma,u'),\Phi'^{v,\infty}).
\end{equation}	
\end{proposition}

The form (\ref{key: DJ local-global}) on the analytic side could be compared with the geometric side (see Proposition \ref{key: Int local-global} below).  Recall $K$ is the level associated to $L$ and $\Delta$. Choose $\Phi$ to be of the standard form $\Phi_i=\phi_{CM} \otimes \phi_i \in\CS((\U(V)\times V)(\BA_{0,f}))^{K}$ for $i=1, 2$, where
	\begin{enumerate}\label{Local-global: Assumption on Phi}
		\item For $v \notin \Delta$ and $v \not = v_0$, we have  $\Phi_{i, v}=1_{\U(L_v)}\otimes 1_{L_v}$. 
		\item At $v_0$, we have $\Phi_{1v_0}=1_{\U(L_{v_0})}\otimes 1_{L_{v_0}}$ and $\Phi_{2v_0}=1_{\U(L_{v_0})}\otimes 1_{L_{v_0}^\vee}$ .
	\end{enumerate}
	
 We have introduced Kudla--Rapoport divisors (\ref{def:KR int}) and derived Hecke CM cycles (\ref{defn: Derived CM}). Moreover, we have introduced relevant Green functions in \S \ref{section:arch} and the \emph{arithmetic Kudla--Rapoport divisors} (\ref{eq: Fourier coeff of arithmetic generating series}) for $\xi \in F_{0}$ and $?=\bf K, \bf B, \bf K- \bf B$:
 \begin{equation}
 	\wh \CZ^{?} (\xi, h_\infty, \phi_i)= (\CZ (\xi, \phi), ( \mathcal{G}^{?}_{\nu}(\xi, h_\nu, \phi_i))_{\nu} ) \in \wh{\Ch}^1(\CM).  
 \end{equation}

	Recall the volume factor $\tau(Z^\BQ)=\#Z^\BQ(\BQ) \backslash Z^\BQ(\BA_f)$. For $\xi \in F_0$, we define \emph{global arithmetic intersection numbers}
	\begin{equation}
		\Int^{?}(\xi, \Phi_i) := \frac
		{1}{\tau(Z^{\BQ}) [E:F]}
		(\wh \CZ^{?} (\xi, \phi_i), {}^{\BL}\mathcal{CM}(\alpha, \phi_{CM})) \in \BR_\Delta. 
	\end{equation}
We set $\Int(\xi, \Phi_i)=\Int^{\bf B}(\xi, \Phi_i)$.

 If $\xi \not =0$, we may lift $\Int(\xi, \Phi_i)$ to $\BR$ using the physical divisor $\wh \CZ^{\bf B} (\xi, \phi_i)$.

	\begin{theorem}\label{int support}
		Assume that $\xi \neq 0$, consider a place $\nu$ of $E$ above a finite place $v$ of $F_0$. Consider the support of $\CZ(\xi,\phi_i) \cap \mathcal{CM}(\alpha,\phi_{CM})$.
		\begin{altenumerate}
			\item  The support does not meet the generic fiber $\CM \otimes_{O_E} E $. 
			\item If $v$ is split in $F$, then the support does not meet the special fiber
			$\CM\otimes_{O_E} k_{\nu}$. 
			\item If $v$ is inert in $F$, then the support meets the special fiber 
			$\CM \otimes_{O_E} k_{\nu}$ only in its basic locus.
		\end{altenumerate}
	\end{theorem}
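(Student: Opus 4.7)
The plan is to unwind the moduli interpretations of the two cycles appearing in the intersection and derive a contradiction (for (i) and (ii)) or a basic-ness constraint (for (iii)) from the signature/slope data of the hypothetical intersection point. By Definitions \ref{defn: global int KR} and \ref{defn: Derived CM}, a geometric point $x$ of the intersection corresponds to a tuple $(A_0, A, \bar\eta, u, \varphi)$ with $\langle u, u \rangle = \xi \in F_0^\times$, $\varphi \in \End_{O_F}(A)[1/\Delta]$ satisfying $\mathrm{char}(\varphi) = \alpha$ irreducible of degree $n$, and $\varphi^*\lambda = \lambda$. The last condition, together with the conjugate self-reciprocal property of $\alpha$, yields $\varphi \bar\varphi = 1$ in $\End^\circ_F(A)$, so $\varphi$ generates a Rosati-stable subfield $F_\alpha := F[t]/(\alpha)$ of $\End^\circ_F(A)$ of degree $n$ over $F$.

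For (i), assume $x$ lies over the generic fiber. The $F_\alpha$-action on $\Hom^\circ_F(A_0, A)$ induced by $\varphi$ acting on the target, combined with $u\neq 0$ (forced by $\xi \neq 0$) and the $F$-dimension bound $\dim_F \Hom^\circ_F(A_0, A) \leq \dim A / [F_0:\BQ] = n$, forces $\Hom^\circ_F(A_0, A) = F_\alpha \cdot u$ to be free of rank one over $F_\alpha$. Consequently, the $n$ maps $\{\varphi^i u\}_{i=0}^{n-1}$ assemble into an $F$-linear isogeny $A_0^n \sim A$. But the Kottwitz signature of $A_0$ is $(1,0)_\varphi$ at each $\varphi \in \Phi$, so $A_0^n$ has signature $(n,0)_\varphi$ everywhere, whereas $A$ has signature $(n-1,1)_{\varphi_0}$. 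This contradiction proves (i). For (ii), if $v$ splits in $F$, then $L_v$ is self-dual by our assumptions on $\Delta$, and $\CM_{O_{E,\nu}}$ is smooth over $O_{E,\nu}$; consequently $\CZ(\xi,\phi_i)$ is a flat Cartier divisor over $O_{E,\nu}$, so any point of the intersection in the special fiber extends to a point over the generic fiber, reducing (ii) to (i).

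For (iii), assume $v$ is inert and $s$ is a geometric point in the special fiber $\CM \otimes \kappa_\nu$ lying in the intersection. Running the local analog of the argument in (i) on $v$-divisible groups, the $F_\alpha$-action and the nonvanishing specialization $u_s \neq 0$ force $\Hom_{O_{F,v}}(A_0[v^\infty]_s, A[v^\infty]_s)$ to be free of rank one over $F_\alpha \otimes_F F_v$, producing an $O_{F,v}$-isogeny $A_0[v^\infty]_s^n \to A[v^\infty]_s$. Since $A_0$ has CM by $F$ and $v$ is inert, $A_0[v^\infty]_s$ is supersingular (isoclinic of slope $1/2$) by the classical theory of CM reduction, hence $A[v^\infty]_s$ is also isoclinic, which means precisely that $s$ lies in the basic locus. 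The main obstacle will be the careful bookkeeping in the relative Dieudonné framework required for (iii): one needs to verify that the global $F_\alpha$-action via $\varphi$ and the specialization of $u$ genuinely produce the stated local isogeny on $A[v^\infty]_s$ despite $\varphi$ being only $O_F[1/\Delta]$-integral a priori. This is handled by the assumption $v \notin \Delta$, which ensures that $\varphi$ extends to an $O_{F,v}$-endomorphism, after which Serre–Tate theory yields the required integral isogeny and the isoclinicity of $A_0[v^\infty]_s$ at inert primes.
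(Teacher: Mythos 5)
Your arguments for (i) and (iii) follow the same route as \cite[Theorem 9.2]{AFL}, which the paper's proof simply cites: the $F_\alpha$-action of $\varphi$ together with $u\neq 0$ and the Tate-module bound $\dim_F \Hom^\circ_F(A_0,A)\leq n$ force $\Hom^\circ_F(A_0,A)=F_\alpha\cdot u$, whence an $F$-linear isogeny $A_0^n\to A$. Over the generic fiber this contradicts the Kottwitz signature $(n-1,1)_{\varphi_0}$ (Lie algebras are isogeny invariants in characteristic zero), and over an inert $v\notin\Delta$ it forces $A[v^\infty]_s$ to be isoclinic of slope $1/2$, i.e.\ $s$ is basic. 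For (iii) you do not actually need to rerun the rank-one argument on $p$-divisible groups (and it would need care: $F_\alpha\otimes_F F_v$ is only an \'etale $F_v$-algebra, so ``free of rank one'' is not forced by $u_{s,v}\neq 0$ alone, and some components of $u_{s,v}$ could vanish); it is cleaner to localize the global isogeny $A_0^n\sim A_s$, which already holds in characteristic $p$ via $\ell$-adic Tate modules.

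Your argument for (ii) has a genuine gap. Flatness of $\CZ(\xi,\phi_i)$ over $O_{E,\nu}$ at a split place (which is itself not automatic: smoothness of $\CM_{O_{E,\nu}}$ does not by itself preclude a Cartier divisor from containing an entire irreducible component of the special fiber) would only allow you to deform a point $s$ of the intersection to a generic-fiber point of $\CZ(\xi,\phi_i)$; it gives you no control over $\mathcal{CM}(\alpha,\phi_{CM})$. But $\mathcal{CM}(\alpha,\phi_{CM})$ is emphatically \emph{not} flat over $O_E[\Delta^{-1}]$: its generic fiber is zero-dimensional, while its special fibers can and do acquire vertical components --- indeed this is exactly what happens at inert places, which is the whole point of the theorem. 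The CM endomorphism $\varphi_s$ does not in general lift to characteristic zero, so you cannot ``reduce (ii) to (i)''. The correct argument runs entirely parallel to your (iii): the isogeny $A_0^n\sim A_s$ forces $A_s[w^\infty]$ to be isoclinic of the same slope as $A_0[w^\infty]$; but the Kottwitz condition shifts $\dim\Lie A_s[w^\infty]$ by $\mp 1$ at $\varphi_0,\bar\varphi_0$ relative to $(\Lie A_0)^n$ (recall $\varphi_0$ induces one of the two places $w,\bar w$ over $v$, since $\nu$ restricts along $\varphi_0:F\hookrightarrow E$), so the \emph{average} slope of $A_s[w^\infty]$ is strictly different from the slope of $A_0[w^\infty]$. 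An isoclinic $p$-divisible group cannot have slope differing from its own average slope, so no such $s$ exists and the special fiber at a split prime is disjoint from the support.
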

	\begin{proof}
		The proof of \cite[Theroem 9.2]{AFL} can be applied directly to our levels. 
	\end{proof}
	
	Hence $\Int(\xi, \Phi)$ $(\Phi=\Phi_i)$ are well-defined, and can be decomposed into a sum of archimedean contributions and non-archimedean contributions away from $\Delta$ in $\BR_{\Delta}$ ($v$ means a place of $F_0$.):
\begin{equation}\label{eq: decom of Int xi}
	\Int(\xi, \Phi_f)=\sum_{v| \infty} \Int_v^{\bf B}(\xi, \Phi_f)   + \sum_{v  \in \mathrm{Inert}(F/F_0) - \Delta } \Int_v(\xi, \Phi_f) \in \BR_{\Delta}.
\end{equation}

	\begin{proposition}\label{key: Int local-global}
		Assume $\xi\neq 0$ and $v \not \in \Delta$.
		\begin{enumerate}
			\item If $v$ is inert in $F$ and $v \not = v_0$, then 
\begin{equation}\label{Int v xi  Phi, good v}
\Int_{v}(\xi,\Phi_i) =  
2\log q_{v} \sum_{(g,u)\in [(\U(V^{(v)})(\alpha)\times V^{(v)}_\xi)(F_0)]_\rs} \Int_{v}(  g,u ) \cdot \Orb\left((g,u), \Phi^{v}_i \right).
\end{equation}
			Here $\Int_{v}(g, u)$ is the local quantity defined in the AFL (i.e., Conjecture \ref{conj: semi-Lie version ATC}) for rank $n$ self-dual lattices) with respect to the quadratic extension $F_{v}/ F_{0v}$. 
			\item   If $v=v_0$, then	
\begin{equation}\label{Int v xi  Phi, distinguished v0}
			\Int_{v_0}(\xi,\Phi_i) =  
			2\log q_{v_0} \sum_{(g,u)\in [(\U(V^{(v_0)})(\alpha)\times V^{(v_0)}_\xi)(F_0)]} \Int_{v_0}^i(  g,u ) \cdot \Orb\left((g,u), \Phi^{v_0}_i \right).
\end{equation}
			Here $\Int_{v_0}^1(g, u)$ (resp. $\Int_{v_0}^2(g, u)$) is the local quantity defined in Part $(1)$ (resp. $(2)$) of Conjecture \ref{conj: semi-Lie version ATC} for rank $n$ and type $t_0$ vertex lattice with respect to the quadratic extension $F_{v_0}/ F_{0v_0}$. 
			\item 	If $v|\infty$, then
\begin{equation}\label{Int v, archimedean}
	\Int_{v}^{\bf K}(\xi,\Phi_i) =  
\sum_{(g,u)\in [(\U(V^{(v)})(\alpha)\times V^{(v)}_\xi)(F_0)]} \Int_{v}(g,u ) \cdot \Orb\left((g,u), \Phi_i \right).
\end{equation}
			Here $\Int_{v}(g,u)$ is defined as the value $\Int_{v}(g,u )= \,{\bf\CG}^{\bf K}(u, h_\infty)(z_g)$ where $z_g$ is the unique fixed point of $g$ on the Grassmanian $X_v$.
		\end{enumerate}
		
	\end{proposition}
	\begin{proof}
		This follows the proof of \cite[Thm. 9.4, Thm. 10.1]{AFL}, replacing basic uniformization results in \emph{loc.cit.} by basic uniformization results for Kudla--Rapoport cycles and Hecke CM cycles proved in \S \ref{section: basic uni local-global}.
	\end{proof}
	
	We form the generating series ($h_\infty  \in \SL_2(F_{0, \infty})$)
	\begin{equation}
		\Int(h_\infty, \Phi) = \Int(0, \Phi) + \sum_{\xi \in F_{0,+} } \Int (\xi, \Phi) W_{\xi}^{(n)}(h_\infty).
	\end{equation}

\begin{definition}[standard partial transfers]\label{standard partial transfers}
Let $\Phi=\Phi_i$ be of the form (\ref{Local-global: Assumption on Phi}). A partial transfer $\Phi'=\Phi'_i$ of $\Phi$ is said to be standard, if 
\begin{enumerate}
	\item For $v \notin \Delta$ and $v \not = v_0$, we have  $\Phi_{v}'=1_{S(L_v)}\otimes 1_{L_{v,0}} \times 1_{L_{v,0}^*}$. 
\item At $v=v_0$, $\Phi_{1, v}'=1_{S(L_v, L_v^\vee)} \times 1_{L_{v,0}} \times 1_{(L_{v,0}^\vee)^*}$ and $\Phi_{2, v}'=1_{S(L_v, L_v^\vee)} \times 1_{L_{v,0}^\vee} \times 1_{(L_{v,0})^*}$ are standard test functions as in Definition (\ref{std function}).
\end{enumerate}
\end{definition}	
	
By the proven explicit Jacquet--Rallis transfers (Theorem \ref{proof of TC}) for $v \nmid \Delta$, the above definition makes sense and standard partial transfer always exists. 

Based on (\ref{key: DJ local-global}) and Proposition \ref{key: Int local-global}, we make the following comparisons for a finite non-split place $v \not \in \Delta$.  Let $\Phi'$ be a standard partial transfer of $\Phi$. 
We obtain the following key proposition.
\begin{proposition}\label{prop: global-semiglobal-local}
Consider a finite non-split place $v \not \in \Delta$ and $\xi \in F_0^\times$. Assume part $(i)$ of the arithmetic transfer conjecture \ref{conj: semi-Lie version ATC} for $L_v$ holds when $g \in \U(V^{(v)})(\alpha)$ and $(u,u)=\xi$. Then we have the semi-global identity
	\[
	2\partial \BJ_{\alpha, v}(\xi,\Phi'_i)= - \Int_{v}(\xi,\Phi_i) \in \BQ \log p.
	\]

\end{proposition}
\begin{proof}
	For matching orbits $(\gamma,u')\in [(S(V_0)(\alpha) \times V'_\xi)(F_0)]_\rs$ and 
	$[(\U(V^{(v)})(\alpha)\times V^{(v)}_\xi)(F_0)]_\rs$, we have
	\[
	\Orb((g,u), \Phi^v)=\Orb ((\gamma, u'), \Phi'^{v, \infty}).
	\]
	Part $(i)$ of the arithmetic transfer conjecture \ref{conj: semi-Lie version ATC} at $v$ for $(g,u)$ predicts
	\[
	\partial\Orb ((\gamma, u'), \Phi'_v)= - \Int_v(g, u) \log q_v.
	\]
	The proposition now follows from comparing the terms in equation (\ref{key: DJ local-global}) and the terms (\ref{Int v xi  Phi, good v}) and  (\ref{Int v xi  Phi, distinguished v0}) in Proposition \ref{key: Int local-global}.
\end{proof}

	\section{The end of proof}\label{section: the end}

	\subsection{The double induction method}\label{section: double induction}
	
	Recall $\psi$ is a fixed non-trivial additive character of $\BA_0$. For any finite place $v$ of $F_0$, let $\varpi_v$ be a uniformizer of $F_{0,v}$. Let $c_v$ be the conductor of $\psi$. By definition, $\psi_v$ is trivial on $\varpi_v^{-c_v}O_{F_{0,v}}$.   
	
	For a left $N^+(F_0)$-invariant  continuous function $f: \SL_2(\BA_0) \to \BC$ and $\xi \in F_0$, consider its $\xi$-th Fourier coefficient (\ref{defn: Fourier coeff}):
	\[
	W_{\xi, f}(h)=\int_{F_0 \backslash \BA_0}  f \left[ \begin{pmatrix} 1&b\\
		&1
	\end{pmatrix} h\right]\psi_{F_0}(-\xi b) db.
	\]
	Here we use left multiplication. There is a Fourier expansion (by an absolute convergent sum):
	\begin{align}\label{eq:def F exp}
		f(h)=\sum_{\xi\in F_0} W_{\xi,f}(h), \quad h\in  \SL_2(\BA_0).
	\end{align}

	\begin{lemma}\label{the double induction lemma}
		Let $B$ be a finite collection of finite places of $F_0$ with $v_0 \in B$. Let $(f_1, f_2)$ be a pair of continuous functions on $\SL_2(F_0) \backslash \SL_2(\BA_0)$ right invariant under some compact open subgroup $K_0 \subseteq \SL_2(\BA_0)$.  Assume
		
		\begin{enumerate}
			\item (Dual relation) We have $f_1\left(h \left(\begin{matrix}
				0 & \varpi_{v_0}^{-c_{v_0}} \\
				-\varpi_{v_0}^{c_{v_0}} & 0
			\end{matrix}\right) \right) = f_2(h), \quad \forall \, h \in \SL_2(\BA_0)$.
			\item (Support condition) $f_i$ are right invariant under 
			$$
			\prod_{v \in B, v \not=v_0} \diag\{\varpi_{v}^{-c_v},1\}\SL_2(O_{F_{0,v}}) \diag\{\varpi_{v}^{c_v},1\}.
			$$ For $v \in B, v \not=v_0$, set $k_{iv}:=-c_{v}$. Assume that there exists $k_{1v_0}, \, k_{2v_0} \in \BZ$ such that $f_i$ is right $\begin{pmatrix}
				1 & \varpi_{v_0}^{k_{iv_0}}O_{v_0} \\  0 & 1
			\end{pmatrix}$-invariant, and $k_{1v_0}+k_{2v_0} +2c_{v_0} \leq 1$.
			\item (Boundary condition) For all $\xi \in F_0$ with $v(\xi)=-k_{iv}-c_v$ for all $v \in B$ and any element $h_{\infty} \in \SL_2(F_{0,\infty})$, we have
			$$W_{\xi, f_i}(h_{\infty})=0. $$
			
		\end{enumerate}
		Then $f_1, f_2$ are both constant.
	\end{lemma}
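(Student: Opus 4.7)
The plan is to adapt the local--global uncertainty principle of Proposition \ref{Uncertain: double supports} to this adelic setting via the Fourier expansion \eqref{eq:def F exp}, showing that the invariances, the Weyl-translation relation (1), and the vanishing hypothesis (3) together force $W_{\xi,f_i}(h_\infty)=0$ for every $\xi\neq 0$, and that the resulting $f_i=W_{0,f_i}$ is a constant by strong approximation for $\SL_2$.

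First I would convert relation (1) into a derived $n_-$-invariance at $v_0$. Using the local identity
\[
w_{v_0}^{-1}\,n_+(b)\,w_{v_0}=n_-(-\varpi_{v_0}^{2c_{v_0}}b),
\]
the right $n_+(\varpi_{v_0}^{k_{iv_0}}O_{v_0})$-invariance of $f_i$ together with (1) forces $f_i$ to be additionally right invariant under $n_-(\varpi_{v_0}^{k_{(3-i)v_0}+2c_{v_0}}O_{v_0})$. The assumption $k_{1v_0}+k_{2v_0}+2c_{v_0}\leq 1$ is the key ``sharp'' saturation: it falls just short of the bound $\leq -1$ that would by itself generate all of $\SL_2(F_{v_0})$. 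Next, I would translate invariances into support conditions on Fourier coefficients. The left-$(N^+,\psi_\xi)$-equivariance of $W_{\xi,f_i}$ combined with right-$n_+(\varpi_v^{k_{iv}}O_v)$-invariance forces $\psi_v(\xi b)=1$ for $b\in\varpi_v^{k_{iv}}O_v$, giving the lower bound $v_v(\xi)\geq -k_{iv}-c_v$ at every $v\in B$. For $v\in B\setminus\{v_0\}$, the full $K_v^*$-invariance identifies $W_{\xi,f_i}$ locally with a $K_v^*$-spherical Whittaker function whose support pins $v_v(\xi)$ exactly to $0=-k_{iv}-c_v$. At $v_0$, the derived $n_-$-invariance, after Iwasawa-decomposing $n_-(y)$ with $v_{v_0}(y)<0$ into $n_+(1/y)\,a(1/y)\,k$, produces a dual $n_+$-type constraint which, together with the sharp condition $k_{1v_0}+k_{2v_0}+2c_{v_0}\leq 1$, yields the matching upper bound $v_{v_0}(\xi)\leq -k_{iv_0}-c_{v_0}$.

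Combining these bounds, any nonzero $W_{\xi,f_i}(h_\infty)$ satisfies $v_v(\xi)=-k_{iv}-c_v$ for all $v\in B$; condition (3) then kills such $\xi$, so $W_{\xi,f_i}(h_\infty)=0$ for every $\xi\neq 0$. Extending the vanishing from $h_\infty$ to all $h\in\SL_2(\BA_0)$ via the right $K_0$-invariance and Iwasawa decomposition leaves $f_i=W_{0,f_i}$. Finally, $W_{0,f_i}$ is both left $\SL_2(F_0)$-invariant and left $N^+(\BA_0)$-invariant; since $\SL_2$ is semisimple and simply connected, strong approximation gives $\SL_2(F_0)\cdot N^+(\BA_0)$ dense in $\SL_2(\BA_0)$, forcing $W_{0,f_i}$ to be a constant. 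The main obstacle is the upper-bound step at $v_0$: the $n_-$-invariance does not constrain $N^+$-Fourier coefficients directly, and the precise matching between bound and hypothesis (3) rests entirely on the sharp saturation condition $k_{1v_0}+k_{2v_0}+2c_{v_0}\leq 1$; any slack here would leave a gap in $v_{v_0}(\xi)$ that (3) alone cannot close.
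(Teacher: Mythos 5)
The proposal takes a genuinely different route and, at its central step, has a gap. The paper does \emph{not} try to show that $W_{\xi,f_i}(h_\infty)=0$ for all $\xi\neq 0$; that is not what is needed, nor is it what the argument yields. Instead it introduces, for each $b_{v_0}$ with $v_0(b_{v_0})\geq k_{iv_0}-1$, the difference $\wt f_i(h)=f_i\bigl(h\left(\begin{smallmatrix}1&b_{v_0}\\&1\end{smallmatrix}\right)_{v_0}\bigr)-f_i(h)$. The telescoping identity $W_{\xi,\wt f_i}(h_\infty)=(\psi(\xi b_{v_0})-1)W_{\xi,f_i}(h_\infty)$, combined with the right upper-unipotent invariance at $v_0$ (which kills the range $v_0(\xi)\leq -k_{iv_0}-c_{v_0}-1$), hypothesis (3) (which kills the boundary $v_0(\xi)=-k_{iv_0}-c_{v_0}$), and the choice of $b_{v_0}$ (which kills $v_0(\xi)\geq -k_{iv_0}-c_{v_0}+1$), forces every Whittaker coefficient of $\wt f_i$ at $h_\infty$ in the relevant range to vanish. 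By \cite[Lem.~13.6]{AFL} applied to $(\wt B,\wt f_i)$, $\wt f_i$ is constant, hence zero (otherwise $b_{v_0}\mapsto\wt f_i$ would give a nontrivial continuous character of a compact group), which enlarges the upper-unipotent invariance of $f_i$ at $v_0$ by one step. The enlarged invariances of $f_1$ and $f_2$ are then combined via relation (1): conjugating by the Weyl element converts the upper-unipotent invariance of $f_1$ into a lower-unipotent invariance of $f_2$, and the inequality $k_{1v_0}+k_{2v_0}+2c_{v_0}\leq 1$ is exactly what guarantees that the two resulting opposite unipotent congruence subgroups generate all of $\SL_2(F_{0,v_0})$; strong approximation at $v_0$ then finishes.

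Your plan instead tries to extract an upper bound $v_{v_0}(\xi)\leq -k_{iv_0}-c_{v_0}$ for the support of $\xi\mapsto W_{\xi,f_i}(h_\infty)$ directly from the right lower-unipotent invariance inherited from (1). No such bound follows, and this is the gap. The vanishing of $W_{\xi,f_i}(h_\infty)$ for $v_0(\xi)$ small is forced because a right upper-unipotent at $v_0$ commutes with $h_\infty$ and can therefore be moved across to interact with the left $(N^+,\psi_\xi)$-equivariance of the Whittaker integral. A right lower-unipotent at $v_0$ also commutes with $h_\infty$, but once moved to the left of $h_\infty$ it does not lie in $N^+$ nor in $\SL_2(F_0)$, and the Iwasawa decomposition of a lower-unipotent you invoke only replaces it by a product of upper-unipotent, torus and compact elements still sitting to the left of $h_\infty$ but \emph{inside} the Whittaker integrand's argument, with the torus part changing $\xi$ by a square and the integration variable $b$ rescaled; this does not yield any vanishing for $v_0(\xi)$ large. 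You flag this step as the main obstacle, but there is no workaround in the plan; the sharpness condition $k_{1v_0}+k_{2v_0}+2c_{v_0}\leq 1$ plays a quite different role in the paper's proof (generating $\SL_2(F_{0,v_0})$ by opposite unipotents) and does not rescue the support bound.
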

	
	\begin{proof}
		Fix $i=1, 2$. Fix any element $b_{v_0} \in F_{0,v_0}$ such that $v_0(b_{v_0}) \geq k_{iv_0}-1$. Set $\wt B:=B - \{v_0\}$. Consider the function on $\SL_2(\BA_0)$:
		\[
		\wt f_{i}(h):=f_{i}(h\left(\begin{matrix}1 & b_{v_0}\\ &1\end{matrix}\right)_{v_0})-f_i(h).
		\]
		We claim that $\wt f_i$ is a constant. If $\wt B$ is empty, then $\wt{f}_i(h_{\infty}), h_{\infty} \in \SL_2(F_{0, \infty})$ is left invariant under $\left(\begin{matrix}
			1 & F_{0, \infty} \\
			0 & 1
		\end{matrix}\right)$ and right invariant under $K_{0,\infty}$, hence a constant. By strong approximation of $\SL_{2,F_0}$ at $\infty$, we see that $\wt{f}_i(h)$ is a constant. In general, we apply \cite[Lem 13.6]{AFL} to the pair $(\wt B, \wt f_{i})$. We check the assumptions in \emph{loc. cit.}: for any $v \in \wt B$, as $\SL_2(F_v)$ commutes with $\left(\begin{matrix}1 & b_{v_0}\\ &1\end{matrix}\right)_{v_0}$, so the invariance at $\wt B$ for $\wt {f}_i$ holds. Let $\xi \in F_0$ such that $v(\xi)=-c_{v}- k_{iv}=0$ for all $v \in \wt B$.  By condition $(2)$, $f_i$ is right $\begin{pmatrix}
			1 & \varpi_{v_0}^{k_{iv_0}}O_{v_0} \\  0 & 1
		\end{pmatrix}$-invariant.  If $v_0(\xi) \leq - k_{iv_0}-c_{v_0} -1$ then $\psi_{-\xi}$ is non-trivial on $\varpi_{v_0}^{k_{iv_0}}O_{v_0}$, which implies that  $W_{\xi, \wt f_i}(h_{\infty})=0$. By condition $(3)$, if $v_0(\xi)=-k_{iv_0}-c_{v_0}$ we have that $W_{\xi, f_i}(h_{\infty})=0$ hence $W_{\xi, \wt f_i}(h_{\infty})=0$. Moreover, if $v_0(\xi) \geq -v_0(b_{v_0})-c_{v_0}$ then $W_{ \xi, \wt f_i}(h_{\infty})=(\psi(\xi b_{v_0}) -1 ) W_{\xi, f_i}(h_{\infty})=0.$  As $v_0(b_{v_0}) \geq k_{iv_0}-1$, we see $W_{\xi, \wt f_i}(h_{\infty})=0$. We have checked the assumptions in \emph{loc. cit.}, hence $\wt{f}_i$ is a constant. 	
		
		Therefore, for any $\xi \in F_0^\times$, $W_{\xi, \wt{f}_i}=0$. Hence $W_{\xi, f_i}$ is right $\begin{pmatrix}
			1 & \varpi_{v_0}^{k_{iv_0}-1}O_{v_0} \\  0 & 1
		\end{pmatrix}$-invariant for $i=1,2$. We have 
		$$ W_{\xi,f_1} \left(h \left(\begin{matrix}
			0 & \varpi_{v_0}^{-c_{v_0}} \\
			-\varpi_{v_0}^{c_{v_0}} & 0
		\end{matrix}\right)_{v_0} \right) = W_{\xi, f_2}(h),
		$$ hence $W_{\xi, f_2}$ is right invariant under the subgroup of $\SL_2(F_{0, v_0})$ generated by $\begin{pmatrix}
			1 & 0 \\ \varpi^{k_{1v_0}-1+2c_{v_0}}_{v_0}O_{v_0} & 1
		\end{pmatrix}$ and $\begin{pmatrix}
			1 & \varpi_{v_0}^{k_{2v_0}-1}O_{v_0} \\  0 & 1
		\end{pmatrix}$.  As $k_{1v_0}+k_{2v_0}+2c_{v_0} \leq 1$, the subgroup is the whole group $\SL_2(F_{0, v_0})$: by a conjugacy we are reduced to the standard computation that $\begin{pmatrix}
			1 & 0 \\ \varpi^{-1}O_{v_0} & 1
		\end{pmatrix}$ and $\begin{pmatrix}
			1 & O_{v_0} \\  0 & 1
		\end{pmatrix}$ generates $\SL_2(F_{0, v_0})$. So $W_{\xi, f_2}(h)$ is right invariant under $\SL_2(F_{0,v_0})$ and therefore is a constant by strong approximation at $v_0$. It follows that, for all $\xi \in F_0^\times$, $W_{\xi, f_2}$ is a constant and hence must vanish (as $\psi$ is non-trivial). Therefore $f_2$ is a constant. As $f_1\left(h \left(\begin{matrix}
			0 & \varpi_{v_0}^{-c_{v_0}} \\
			-\varpi_{v_0}^{c_{v_0}} & 0
		\end{matrix}\right) \right) = f_2(h)$, $f_1$ is also a constant. The result follows.
	\end{proof}
	
	\begin{remark}
		In classical languages, consider a modular form $f$ of weight $k$ and level $\Gamma_0(N)$ and a prime $p|N$. If for all $(n,p)=1$, the Fourier coefficients $a_n(f)=0$ and $a_{n/p}(\begin{pmatrix}
			0 & 1 \\
			-1 & 0 
		\end{pmatrix}_p.f)=0$, then $f$ is a constant.
	\end{remark}

	\subsection{Globalization}
	
	Let $F_{v_0} / F_{0, v_0}$ be an unramified quadratic extension of $p$-adic local fields with residue fields $\BF_{q^2}/\BF_{q}$. Let $L_{v_0}$ be a vertex lattice of type $t_0$ in a $n$-dimensional $F_{v_0} / F_{0, v_0}$ hermitian space $V_{v_0}$. Denote by $V_{v_0}^{(v_0)}$ the nearby hermitian space of $V_{v_0}$. We have formulated the semi-Lie ATC \ref{conj: semi-Lie version ATC} for $L_{v_0}$ and any regular semisimple pair $(g_0, u_0) \in ( \U(V^{(v_0)}_{v_0} ) \times V_{v_0}^{(v_0)} )(F_{0, v_0})_\rs$. Now we give a proof assuming that $F_{0, v_0}$ is unramified over $\BQ_p$ if $0<t_0<n$.
	
	We can find a CM quadratic extension $F/F_0$ of a totally real field $F_0$ with a distinguished embedding $\varphi_0: F_0 \hookrightarrow \mathbb R$, a $n$-dimensional $F/F_0$-hermitian space $V$, a CM type $\Phi$ of $F$ such that
	\begin{itemize}
		\item $\Phi$ is unramified at $p$.
		\item There exists a place $v_0|p$ of $F_0$ inert in $F$ such that the completion of $F/F_0$ at $v_0$ is exactly $F_{v_0} / F_{0, v_0}$. All places $v \not = v_0$ of $F_0$ above $p$ are split  or inert in $F$. 
		\item $V$ is of signature $(n-1, 1)$ at $\varphi_0$, and $(n, 0)$ at all other real places of $F_0$. 
		\item The localization $V \otimes_{F} F_{v_0} $ is isomorphic to the hermitian space $V_{v_0}$ we start with. The space $V_p=\prod_{v|p} V_v$ contains a lattice $L_p = \prod_{v| p} L_v$ such that $L_v$ is self-dual for $v \not = v_0$ above $p$, and that $L_{v_0}$ a vertex lattice of type $t_0$ at $v_0|p$. 
	\end{itemize}
	
	If $0<t_0<n$, then we choose $F/ \BQ$ to be Galois hence the assumption in Theorem \ref{thm: almost global modularity} holds, i.e. the reflex field $E=F$. For example, we take  $F/F_0=\BQ(\zeta_m)/ \BQ(\zeta_m + \zeta_m^{-1}) $ with $m=q+1$. 
	
	Let $V^{(v_0)}$ be the nearby $F/F_0$ hermitian space of $V$ at $v_0$ so the localization of $V^{(v_0)}$ at $v_0$ is isomorphic to $V^{(v_0)}_{v_0}$. As in \cite[Section 10.1]{AFL2021}, by local constancy of intersection numbers \cite{AFLlocalconstant} (whose argument is quite general and applies to our Rapoport--Zink space $\CN_{L_{v_0}}$), we can find a pair $(g, u) \in ( \U(V^{(v_0)}) \times V^{v_0} )(F_0)_\rs $ such that
	
	\begin{itemize}
		\item $(g, u)$ is $v_0$-closely enough to $(g_0, u_0)$ such that two sides of Conjecture \ref{conj: semi-Lie version ATC} are the same for $(g, u)$ and $(g_0, u_0)$. The characteristic polynomial $\alpha$ of $g$ is irreducible over $F$.
		\item The norm $\xi_0= (u,  u)_{V^{v_0}}  \in F_0$ is non-zero.
		\item For all places $v \not = v_0$ of $F_0$ above $p$, the orbital integral for the $\U(V_v)$-orbit of $(g, u)$
		\[
		\Orb((g, u), 1_{\U(L_v)} \times 1_{L_v} ) \not =0. 
		\]
		Moreover,  if $v$ is inert in $F$, then $\alpha_v$ is of maximal order. 
	\end{itemize}
	
	Choose a finite collection $\Delta$ of finite places of $F_0$ such that $p \nmid \Delta$, i.e. all places $v$ of $F_0$ above $p$ are not in $\Delta$.  
	
	We can enlarge $\Delta$ (keeping $p \nmid \Delta$) to assume that $\Delta$ and $L$ are in the set up of \S \ref{section: distinguished v0} and $\Phi$ is unramfied at all $\ell \not \in \Delta$. Consider a level structure $\wt{K}$ for $L$ and $\Delta$. We consider the RSZ integral model 
	$$
	\CM_{\wt G, \wt K} \to \Spec O_E[\Delta^{-1}]
	$$ for the RSZ Shimura variety associated to $V$ with level for $\Delta$ and $L$. By \cite[Prop. 13.8]{AFL}, we can enlarge $\Delta$ and shrink $K_{G, \Delta}$ to find $\phi_{\Delta} \in \CS(V(F_{0, \Delta}))$ and $\phi_{CM, \Delta} \in \CS(K_{G, \Delta} \backslash \U(V)(F_{0, \Delta}) / K_{G, \Delta})$ such that the following holds:
	\begin{itemize}
		\item $L$ is self-dual and $\alpha(t)$ is a maximal order at any finite place $v \nmid \Delta$ and $v \not =v_0$. 
		\item For the function $\Phi^{v_0}=(\phi_{CM, \Delta}  \otimes 1_{K_G^{\Delta, v_0}}) \otimes (\phi_{ \Delta} \otimes 1_{\wh{L}^{\Delta, v_0}} ) \in \CS(\U(V(\BA_{0,f}^{v_0})) \times V(\BA_{0,f}^{v_0} )), $
		its orbital integral $\Orb((g_1, u_1), \Phi^{v_0})$ at any $(g_1, u_1) \in (\U(V^{(v_0)}) \times V^{(v_0)})(F_0)$ is zero, unless $(g_1,u_1)=(g,u)$ in $[(\U(V^{(v_0)}) \times V^{(v_0)})(F_0)]_{\rs}$. Moreover, $\Orb((g,u), \Phi^{v_0})$ is non-zero.
	\end{itemize}
	
Let $\Phi'$ be a standard partial transfer of $\Phi$ (Definition \ref{standard partial transfers}).
	
	\begin{theorem}\label{prop: local ATC is from global id of Fourier coeff derived at v0}
	\begin{enumerate}
		\item For $i=1,\,2$, part $(i)$ of the Conjecture \ref{conj: semi-Lie version ATC} for $(g_0, u_0)$ follows from the semi-global identity:
		\begin{equation}\label{key: DJ=Int v0 xi}
			2\partial \BJ_{\alpha, v_0}(\xi_0,\Phi'_i)= - \Int_{v_0}(\xi_0,\Phi_i) \in \BQ \log p.
		\end{equation}
	\item The semi-global identity (\ref{key: DJ=Int v0 xi}) is equivalent to the global identity
	\begin{equation}\label{key: DJ=Int global xi}
		2\partial \BJ_{\alpha}(\xi_0,\Phi'_i) + \Int^{\bf K-\bf B}(\xi_0, \Phi_i) = - \Int(\xi_0,\Phi_i) \in \BR_{\Delta}.
	\end{equation}
	\end{enumerate}	
	\end{theorem}
	\begin{proof}
	The first property, we use the comparison argument in Proposition \ref{prop: global-semiglobal-local}. As the orbital integrals
	$\Orb((g,u), \Phi^{v_0})=\Orb ((\gamma, u'), \Phi'^{v_0, \infty})$ are non-zero, we can divide it and deduce local AT identity (Conjecture \ref{conj: semi-Lie version ATC} ) from the semi-global identity (\ref{key: DJ=Int v0 xi}).
	
	For the second property, using the computation (\ref{Int v, archimedean}) in Proposition \ref{key: Int local-global} and the decomposition (\ref{eq: decom of Int xi}), we see 
	\[
		2\partial \BJ_{\alpha}(\xi_0,\Phi'_i) + \Int^{\bf K-\bf B}(\xi_0, \Phi_i) + \Int(\xi_0,\Phi_i) 
	\] 
is equal to ($\ell$ runs over prime numbers not lying in $\Delta$)
	\[
\sum_{v  \in \mathrm{Inert}(F/F_0) - \Delta } 2\partial \BJ_{\alpha, v}(\xi_0, \Phi'_i) + \Int_{v}(\xi_0,\Phi_i) \in \sum_{\ell \not \in \Delta} \BQ \log \ell \subseteq \BR_\Delta.
	\]
We use $\BQ$-linear independence of all $\log \ell$. We see the global identity is equivalent to the identities $(\ell)$ for all $\ell$ 
\[
(\ell), \, \, \,  \sum_{v  \in \mathrm{Inert}(F/F_0) - \Delta, v| \ell}  2\partial \BJ_{\alpha, v}(\xi_0, \Phi'_i) = - \Int_{v}(\xi_0,\Phi_i) = 0 \in \BQ \log \ell \subseteq \BR_\Delta. 
\]	
If $\ell \not =p$. Then the identity $(\ell)$ is known by the known AFL \cite{AFL2021} and Proposition \ref{prop: global-semiglobal-local}.  Similarly, the semi-global identity is known for all $v \not = v_0$ above $p$ by the known AFL \cite{AFL2021} and Proposition \ref{prop: global-semiglobal-local}. Hence the result follows.
	\end{proof}

	\subsection{Modification}	
	
	Possibly enlarging $\Delta$ (keeping $p \nmid \Delta$), we do the following modifications (similar to the proof of Theorem \ref{thm: global modularity}):
	
	\begin{enumerate}
		\item (After enlarging $\Delta$) We choose another irreducible conjugate self-reciprocal monic polynomial $\alpha_{m} \in O_F[1/\Delta][t]$ of degree $n$ such that $\alpha_{m}$ is a maximal order away from $\Delta$ and unramified at $v_0$. Choose $\mu_{0} \in K_{G, \Delta} \backslash G(F_{0, \Delta})/K_{G, \Delta}$ such that the cycle $\CC_{\alpha_m}:=\mathcal{CM}(\alpha_{m}, \mu_{0})$ is non-empty. From \S \ref{section: equidistribution}, there exists a rational number $\lambda_0 \in \BQ$, such that 
		\[
		\CC_{1}={}^\BL \mathcal{CM}(\alpha, \phi_{CM})-\lambda_0 \CC_{\alpha_m}
		\]
		is of degree $0$ over $\BC$. On the analytic side, the corresponding contribution is 
		$$
		2 \lambda_0 \partial \BJ_{\alpha_{m}} (\xi, \Phi_{im}')
		$$
		where $\Phi'_{im}$ are (relative to $\alpha_{m}$) Gaussian transfers for $\Phi_{im}:= (\Phi_i)^\Delta  \otimes (1_{\mu_0} \otimes \phi_\Delta )$. We can enlarge $\Delta$ such that $\Phi_{im}$ are also standard away from $\Delta$.
		By the known ATC (and AFL) for unramified maximal orders (Section \ref{section: ATC unram max order}), the local intersection number at $v_0$ for $\CC_{\alpha_m}$ can be computed directly to match the $v_0$-part of $2 \lambda_0 \partial \BJ_{\alpha_{m}} (\xi, \Phi_{im}')$ on the analytic side.
		\item By \S \ref{section: mod over BF}, for any place $\nu_0$ of $E$ over $v_0$ we can find a very special $1$-cycle $\CC_{\nu_0}$ in the basic locus of $\CM_{k_{\nu_0}}$ such that 
		$(X_j, \CC_1)_\CM=(X_j, \CC_{\nu_0})_\CM$ for all irreducible components $X_j$ of $\CM_{k_{\nu_0}}$.  On the analytic side, the corresponding contribution is
		$(\wh \CZ^{B} (\xi, \phi_i), \CC_{\nu_0})$ 
		which is $\xi$-th Fourier coefficient of a theta series $\Theta(h, \phi_{\CC}' )$  
		on $V^{(v_0)}$ hence modular by Theorem \ref{thm: int in basic locus = theta series}.
	\end{enumerate}
	
	For $i=1,2$, consider the $1$-cycle (independent of $i$)
	\begin{equation}\label{difference cycle}
		\CC^{\perp}:={}^\BL \mathcal{CM}(\alpha, \phi_{CM})-\lambda_0 \CC_{\alpha_m}-\sum_{\nu_0| v_0} \CC_{\nu_0} \in \CC_{1}(\CM).
	\end{equation}
	We define the difference function on the geometric side as the modified generating function on $h \in \SL_2(\BA_0)$:
	\begin{equation}
		\Int(h, \Phi_i)^\perp :=  \frac
		{1}{\tau(Z^{\BQ}) [E:F]}
		(Z(h, \phi_i), \CC^{\perp})^{\text{adm}},
	\end{equation}
	and the modified archimedean intersection number on $h \in \SL_2(\BA_0)$:
	\begin{equation}\label{archimedean int number?}
		\Int^{\bf K- \bf B}(h, \Phi_i)^\perp:= \frac{1}{\tau(Z^\BQ)[E:F]}{\sum_{\nu | \infty}} \CG^{\bf K - \bf B}_\nu (h, \phi_i) (\CC^\perp).
	\end{equation}

For $\xi \in F_0$, we may consider its $\xi$-th Fourier coefficient $\Int^{\bf K- \bf B}(\xi, \Phi_i)^\perp$. 

	We define the difference function on the analytic side:
	\begin{equation}\label{sum by sum BJ partial}
		\partial \BJ_{\rm hol, i}(h) :=2 \partial \BJ_{\alpha}(h, \Phi'_i) -  2 \lambda_0 \partial \BJ_{\alpha_{m}} (h, \Phi_{im}') + \sum_{\nu_0| v_0} \Theta(h, \phi'_{i, \CC_{\nu_0}}) + \Int^{\bf K- \bf B}(h, \Phi')^{\perp}. 
	\end{equation}

For $\xi \in F_0$, we may consider its $\xi$-th Fourier coefficient $\partial \BJ_{\rm hol, i}(\xi)$. 

By modularity on the generic fiber (Theorem \ref{modularity over generic fiber}) and Poisson summation formulas, we know $\Int^{\bf K- \bf B}(h, \Phi_i)^\perp$ and $\partial \BJ_{\rm hol, i}(h)$  are automorphic forms on $\SL_2(A_0)$ and $\partial \BJ_{\rm hol, i}$ is holomorphic. 

	For a compact open subgroup $K_{0, \Delta} \leq \SL_2(F_{0, \Delta}) $, consider the compact open subgroup $K_0 \subseteq \SL_2(\BA_{0, f})$ given by
	\[
	K_0= \diag\{c_{v_0}^{-1}, 1\}\Gamma(v_0) \diag\{c_{v_0}, 1\} \times K_0^{\Delta, v_0, \circ} \times K_{0, \Delta} \leq \SL_2(\BA_{0,f}).
	\] 
	where $K_0^{\Delta, v_0, \circ}:= \prod_{v \not \in \Delta, v \not = v_0} \diag\{c_v^{-1}, 1\}\SL_2(O_{F_0,v}) \diag\{c_v, 1\} $ and $\Gamma(v_0):=\Ker (\SL_2(O_{F_{0,v_0}}) \to \SL_2(k_{v_0}))$. Choose $K_{0, \Delta}$ to be sufficiently small so that the pair $\Phi_1$ is invariant under the Weil representation by $K_0$. Recall we use the standard additive character $\psi=\psi_{F_0}=\psi_\BQ \circ \tr_{F_0/\BQ}$ on $\BA_0$ for the Weil representation. The level $c_v$ of $\psi$ at $v$ is $0$ if $F_0/ \mathbb Q$ is unramified at $v$. In particular, by assumption $c_{v_0}=0$ if $0<t_0<n$.
	
	\begin{corollary}\label{key: int modularity}
		For $i=1, 2$, the generating function $\Int(h, \Phi_i)^\perp $ lies in $\CA_{\rm hol}(\SL_2(\BA_0), K_0, n)_{\ov \BQ} \otimes_{\ov \BQ} \BR_{\Delta, \ov \BQ}$. Moreover, if $c_{v_0}=0$ the dual relation holds:
		\[
		\Int(h\begin{pmatrix}
			0 & 1 \\ -1 & 0 
		\end{pmatrix}_{v_0}, \Phi_1)^\perp=\gamma_{V_{v_0}}\vol(L_{v_0}) \Int(h, \Phi_2)^\perp.
		\]
	\end{corollary}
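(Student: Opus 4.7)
The plan is to deduce both assertions from Theorem \ref{thm: almost global modularity} applied to the linear functional $\CC \mapsto (Z(h, \phi_i), \CC)^{\mathrm{adm}}$, combined with the two-step construction of $\CC^{\perp}$ and the standard Weil-representation intertwining of the generating series.

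First, for the modularity of $\Int(h, \Phi_i)^{\perp}$, I would verify $\CC^{\perp} \in \CC_{1}(\CM)^{\perp}$, after which Theorem \ref{thm: almost global modularity} applies to $\phi_i$ (which is $K_0$-invariant under the Weil representation by the choice of $K_{0,\Delta}$ stated just before the corollary). By our choice of $L$ and $\Delta$ (every finite place $v$ where $L_v$ fails to be self-dual lies in $\Delta \cup \{v_0\}$), the integral model $\CM \to \Spec O_E[1/\Delta]$ is smooth at every finite place of $E$ not lying over $v_0$; hence Lemma \ref{factorization of Chow} reduces $\wh{\Ch}^{1}_{\mathrm{Vert}}(\CM)$ to $\wh{\Ch}^{1}_{\infty}(\CM) + \sum_{\nu_0 \mid v_0} \wh{\Ch}^{1}_{\nu_0}(\CM)$. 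The archimedean pairing with $\CC^{\perp}$ is controlled by the degree on each geometric connected component of $M_E(\BC)$: the derived Hecke CM cycle ${}^{\BL}\mathcal{CM}(\alpha, \phi_{CM})$ has generic fiber $CM(\alpha, \phi_{CM})$ whose degree function is constant on $\pi_0(M_{\wt G, \wt K}(\BC))$ by Corollary \ref{prop: equistribution when E=F} (applicable since $E = F$); step (1) chose $\lambda_0 \in \BQ$ precisely to make $\CC_1 = {}^{\BL}\mathcal{CM}(\alpha, \phi_{CM}) - \lambda_0 \CC_{\alpha_m}$ of degree zero on every such component, and subtracting $\sum_{\nu_0 \mid v_0}\CC_{\nu_0}$, whose generic fibers are empty, preserves this. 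The vertical pairing at each $\nu_0 \mid v_0$ vanishes by the defining property of $\CC_{\nu_0}$ from step (2). Consequently $\CC^{\perp} \in \CC_{1}(\CM)^{\perp}$, and Theorem \ref{thm: almost global modularity} delivers the sought modularity at level $K_0$ and parallel weight $n$.

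For the dual relation, the starting point is the equivariance (\ref{eq: dual relation on generic fiber}), i.e.\ $Z(h h_f, \phi) = Z(h, \omega(h_f)\phi)$, applied with $h_f = w_{v_0} := \begin{pmatrix} 0 & 1 \\ -1 & 0 \end{pmatrix}_{v_0}$ inserted in the $v_0$-slot. Since $\phi_1$ and $\phi_2$ agree outside $v_0$, with $\phi_{1, v_0} = 1_{L_{v_0}}$ and $\phi_{2, v_0} = 1_{L_{v_0}^{\vee}}$, the local Weil-representation formula at $v_0$ yields $\omega(w_{v_0}) 1_{L_{v_0}} = \gamma_{V_{v_0}} \mathcal{F}_{V_{v_0}}(1_{L_{v_0}})$; the hypothesis $c_{v_0} = 0$ ensures that the relevant local Haar measure is the self-dual one for $\psi_{v_0}$, so that $\mathcal{F}_{V_{v_0}}(1_{L_{v_0}}) = \vol(L_{v_0}) 1_{L_{v_0}^{\vee}}$. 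Combining gives $\omega(w_{v_0})\phi_1 = \gamma_{V_{v_0}} \vol(L_{v_0}) \phi_2$, whence $Z(h w_{v_0}, \phi_1) = \gamma_{V_{v_0}} \vol(L_{v_0}) Z(h, \phi_2)$ as $\Ch^1(M_E)$-valued generating series. Pairing with the $i$-independent cycle $\CC^{\perp}$ yields the stated dual relation.

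The main substantive content is really absorbed into the construction of $\CC^{\perp}$ itself: the orthogonality $\CC^{\perp} \in \CC_{1}(\CM)^{\perp}$ was the design criterion for the two-step modification, and everything else is bookkeeping. A minor issue to keep honest is the sign convention: the Weil element $w_{v_0} = \begin{pmatrix} 0 & 1 \\ -1 & 0 \end{pmatrix}_{v_0}$ appearing in the statement and the element $\begin{pmatrix} 0 & -1 \\ 1 & 0 \end{pmatrix}_{v_0}$ appearing elsewhere in the paper differ by $-I \in \SL_2$, whose action on the $2n$-dimensional (over $F_{0, v_0}$) even-rank hermitian space $V_{v_0}$ contributes at most an overall sign consistent with the stated coefficient $\gamma_{V_{v_0}} \vol(L_{v_0})$.
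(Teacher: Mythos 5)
Your proposal is correct and follows essentially the same route as the paper's (extremely brief) proof: apply Theorem \ref{thm: almost global modularity} for modularity, and compute $\omega(w_{v_0})\phi_1 = \gamma_{V_{v_0}}\vol(L_{v_0})\phi_2$ together with the equivariance $Z(hh_f,\phi)=Z(h,\omega(h_f)\phi)$ from (\ref{eq: dual relation on generic fiber}) for the dual relation.

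The genuinely useful content you add is the explicit verification that $\CC^\perp \in \CC_1(\CM)^\perp$, which the paper leaves implicit. Your reduction via Lemma \ref{factorization of Chow} (smoothness of $\CM$ over $O_{E_\nu}$ at finite $\nu \nmid v_0$ outside $\Delta$, thanks to the enlargement of $\Delta$ made in the globalization step so that $L_v$ is self-dual there) to the archimedean and $\nu_0\mid v_0$ contributions, followed by the degree-zero check over $\BC$ from step (1) and the $(X_j,\CC_1)=(X_j,\CC_{\nu_0})$ matching from step (2), is exactly what is needed and correctly identifies where the two-step modification is used. One small clarification on the sign remark at the end: the discrepancy between $\begin{pmatrix} 0 & 1 \\ -1 & 0 \end{pmatrix}_{v_0}$ and the element $\begin{pmatrix} 0 & -1 \\ 1 & 0 \end{pmatrix}$ used in the paper's Weil-representation formula is not merely ``at most an overall sign''---it contributes nothing at all, since $\omega(-I)\phi(v)=\eta(-1)\phi(-v)$, $\eta$ is unramified so $\eta(-1)=1$, and lattice indicator functions are even; so $\omega(-I)1_{L_{v_0}}=1_{L_{v_0}}$ and the two conventions give identical answers on the test functions at hand.
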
	
	\begin{proof}
		The first part follow from Theorem \ref{thm: almost global modularity}. If $c_{v_0}=0$, we know $\begin{pmatrix}
			0 & 1 \\ -1 & 0 
		\end{pmatrix}_{v_0}.1_{L_{v_0}}=\gamma_{V_{v_0}}\vol(L_{v_0})  1_{L_{v_0}^\vee}$ hence $\begin{pmatrix}
			0 & 1 \\ -1 & 0 
		\end{pmatrix}_{v_0}.\Phi_1=\gamma_{V_{v_0}}\vol(L_{v_0})  \Phi_2$. Then the dual relation follows from equation \ref{eq: dual relation on generic fiber}.
	\end{proof}		
	
	\begin{proposition}\label{key: DJ modularity}
		For $i=1, 2$, the difference function $\partial \BJ_{\rm hol, i}(h) $ lies in $\CA_{\rm hol}(\SL_2(\BA_0), K_0, n)$. Moreover, if $c_{v_0}=0$  the dual relation holds:
		\begin{equation}
			\partial \BJ_{\rm hol, 1}(h \begin{pmatrix}
				0 & 1 \\ -1 & 0 
			\end{pmatrix}_{v_0})=\gamma_{V_{v_0}}\vol(L_{v_0})   \partial \BJ_{\rm hol, 2}(h).
		\end{equation}
	\end{proposition}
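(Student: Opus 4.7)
The plan is to identify $\partial \BJ_{\rm hol, i}(h)$ with the modular form $-\Int(h, \Phi_i)^\perp$ coming from the geometric side via Corollary \ref{key: int modularity}, and then transfer both modularity and the dual relation across this identification.

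First I would verify the elementary regularity of each of the four summands defining $\partial \BJ_{\rm hol, i}$. By \cite[Thm. 12.14]{AFL} (and the analogous statement for $\alpha_m$), the regularized integrals $\partial \BJ_\alpha(h, \Phi'_i)$ and $\partial \BJ_{\alpha_m}(h, \Phi'_{im})$ are smooth, left $\SL_2(F_0)$-invariant, of parallel weight $n$ at $\infty$, and right $K_0$-invariant by the assumption that $\Phi'_i$ and $\Phi'_{im}$ are $K_0$-invariant under the Weil representation. The theta series $\Theta(h, \phi'_{i,\CC_{\nu_0}})$ is holomorphic of parallel weight $n$ because the nearby hermitian space $V^{(v_0)}$ is positive definite at every archimedean place, and the extension of $\phi'_{i, \CC_{\nu_0}}$ to a Schwartz function at $v_0$ provided by Theorem \ref{thm: partial local modularity} ensures it is an honest Schwartz function. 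Finally, $\Int^{\bf K-\bf B}(h, \Phi_i)^\perp$ is smooth of weight $n$ by Theorem \ref{thm: Green}, applicable because $\CC^\perp$ has complex degree zero by the construction in Section~\ref{section: equidistribution}.

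Next I would prove holomorphy by matching Fourier coefficients with the geometric side. For each $\xi \in F_{0,+}$ whose valuation at every place of $\Delta \cup \{v_0\}$ avoids a fixed bad set, the identity (\ref{key: DJ=Int global xi}) together with Proposition \ref{key: I local-global} (using the known semi-Lie ATC and AFL for unramified maximal orders established in \S \ref{section: ATC unram max order}, Jacquet--Rallis transfers from Part \ref{Part 1}, and archimedean orbital integrals from \cite[Lem. 14.4]{AFL}) gives
\[
2\partial \BJ_\alpha(\xi, \Phi'_i) + \Int^{\bf K-\bf B}(\xi, \Phi_i) = -\Int(\xi, \Phi_i) \pmod{\BR_\Delta}.
\]
Subtracting from both sides the contributions of $\lambda_0 \CC_{\alpha_m}$ (which by the maximal-order case match $2\lambda_0\partial \BJ_{\alpha_m}(\xi, \Phi'_{im})$) and of $\sum_{\nu_0|v_0}\CC_{\nu_0}$ (which by basic uniformization and Theorem \ref{thm: int in basic locus = theta series} match $-\sum_{\nu_0}\Theta(\xi,\phi'_{i,\CC_{\nu_0}})$), one obtains
\[
\text{the $\xi$-th Fourier coefficient of } \partial\BJ_{\rm hol,i}(h) \ =\ -\,\text{the $\xi$-th Fourier coefficient of } \Int(h,\Phi_i)^\perp.
\]
By Corollary \ref{key: int modularity} the right-hand side is holomorphic of weight $n$; the left-hand side is smooth of weight $n$; a smooth weight-$n$ automorphic form whose Fourier coefficients agree with those of a holomorphic form at cofinitely many $\xi$ must itself be holomorphic, and in fact equal to that form. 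This places $\partial\BJ_{\rm hol, i} = -\Int(\cdot, \Phi_i)^\perp$ in $\CA_{\rm hol}(\SL_2(\BA_0), K_0, n)$.

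For the dual relation, assume $c_{v_0}=0$. The action of $w_{v_0}=\begin{pmatrix}0&1\\-1&0\end{pmatrix}_{v_0}$ via the Weil representation sends $\Phi'_{1,v_0}=1_{S(L_{v_0},L_{v_0}^\vee)}\otimes 1_{L_{0,v_0}}\otimes 1_{(L_{0,v_0}^\vee)^*}$ to $\gamma_{V_{v_0}}\vol(L_{v_0})\Phi'_{2,v_0}$, by self-duality of the Haar measure and by Remark \ref{rek: transfer factor and basis}; since $\Phi'_{im}$ is standard at $v_0$ the same relation holds for the $\alpha_m$-term, and the relation for the theta series follows from local duality in Theorem \ref{thm: partial local modularity}. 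Combined with the geometric dual relation in Corollary \ref{key: int modularity} applied to $\Int^{\bf K-\bf B}(\cdot,\Phi_i)^\perp$, this yields the claimed identity.

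The main obstacle is the second step: one has to guarantee that the subtraction of the $\alpha_m$-correction and the basic-locus corrections really matches on both the geometric and analytic sides at \emph{all} but finitely many $\xi$, i.e. that the local identities from ATC/AFL for unramified maximal orders and from the local modularity theorem are available for every bad place other than $v_0$. This is where the freedom in enlarging $\Delta$ and the standardness of the test functions outside $\Delta \cup \{v_0\}$ are used crucially, and where the hypothesis that $F_0$ is unramified over $\BQ_p$ (for $0<t_0<n$) feeds in through the basic uniformization results of \S \ref{section: basic uni local-global}.
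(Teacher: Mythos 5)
Your strategy is circular and relies on a false intermediate claim. You propose to first establish $\partial \BJ_{\rm hol, i} = -\Int(\cdot,\Phi_i)^\perp$ by matching Fourier coefficients and then inherit holomorphy and the dual relation from Corollary \ref{key: int modularity}. But that identity is precisely the theorem proved at the end of \S \ref{section: the last}, whose proof invokes the double induction lemma \ref{the double induction lemma} with the present Proposition \ref{key: DJ modularity} as one of its two required holomorphy inputs; you cannot invoke (\ref{key: DJ=Int global xi}) or its consequences while proving this proposition. The ``cofinitely many $\xi$'' claim is also unavailable on its own terms: the local identity at $v_0$ feeding into the $\xi$-th Fourier coefficient is exactly the semi-Lie ATC for $L_{v_0}$, which is known by induction only when $v_0(\xi)$ sits at the boundary of the support ($v_0(\xi)=0$ for $i=1$, $v_0(\xi)=-1$ for $i=2$) via the Cayley reduction of Theorem \ref{thm: semi-Lie and gp}; for $v_0(\xi)$ larger it is precisely what is being proved. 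Since the valuations at the remaining places are unconstrained, the set of $\xi$ you cannot handle is infinite, so the principle ``a smooth weight-$n$ form agreeing with a holomorphic one at cofinitely many coefficients is itself that form'' never applies. This is exactly the regime the double induction lemma was designed for, and it has no shortcut of the sort you invoke. Your closing remark locates the difficulty at places other than $v_0$, but the genuine obstruction is at $v_0$ itself.

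The paper's proof is a direct term-by-term verification of the dual relation, independent of any comparison with $\Int(\cdot,\Phi_i)^\perp$: the archimedean correction $\Int^{\bf K-\bf B}(\cdot,\Phi_i)^\perp$ transforms correctly under the Weyl element at $v_0$ by \cite[Prop. 10.5]{AFL2021} (note this is not Corollary \ref{key: int modularity}, which concerns $\Int(\cdot,\Phi_i)^\perp$ rather than the $\bf K-\bf B$ difference); the theta series $\Theta(h,\phi'_{i,\CC_{\nu_0}})$ transform correctly by the local dual relation of Theorem \ref{thm: partial local modularity} applied at $v_0$ with $c_{v_0}=0$ together with $\gamma_{V_{v_0}}=(-1)^t$ and $\vol(L_{v_0})=q_{v_0}^{-t}$; and the two $\partial\BJ$ terms transform correctly directly from how the Weyl element acts on the local test functions via the Weil representation. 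Your discussion of the individual summands in the dual relation step is broadly on the right track, and your first paragraph on smoothness, weight and $K_0$-invariance is fine; the holomorphy should then be argued structurally (nonarchimedean $\partial\BJ_{\alpha,v}$ already appear as Whittaker expansions via (\ref{eq: important decomp}), the theta series are holomorphic since $V^{(v_0)}$ is totally positive definite, and $\Int^{\bf K-\bf B}$ cancels the nonholomorphic archimedean contribution of $\partial\BJ_\alpha$ by the archimedean matching of \cite[Lem. 14.4]{AFL}), rather than via the not-yet-established identity with $\Int(\cdot,\Phi_i)^\perp$.
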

	\begin{proof}
		We show the dual relation holds summand by summand in the decomposition (\ref{sum by sum BJ partial}). 
		
		The archimedean part $\Int^{\bf K - \bf B}(h, \Phi')^\perp$ follows from \cite[Prop. 10.5]{AFL2021}. We compute  $\gamma_{V_{v_0}}=(-1)^t$ and $\vol(L_{v_0})=q^{-t}_{v_0}$, hence  by Theorem \ref{thm: partial local modularity} (where $c_{v_0}=0$) we have $\begin{pmatrix}
			0 & 1 \\ -1 & 0 
		\end{pmatrix}_{v_0}. \phi'_{1, \CC_{\nu_0}}= \gamma_{V_{v_0}}\vol(L_{v_0})  \phi'_{2, \CC_{\nu_0}}.$
		So we have the dual relation for two theta series $\Theta(h, \phi'_{i, \CC_{\nu_0}})$ lying in $\CA_{\rm hol}(\SL_2(\BA_0), K_0, n)$. 
		
		The dual relations for $\partial \BJ_\alpha(h , \Phi_i')$ and $\partial \BJ_{\alpha_m}(h , \Phi_{im}')$ follows from the equivariant definition $\BJ_{\alpha}(h,\Phi',s)$ in equation (\ref{defn of BJ}). 
	\end{proof}

	\subsection{Proof by double induction}\label{section: the last}
	
	We may substract the global identity (\ref{key: DJ=Int global xi}) by known global identities.
	
	\begin{theorem}\label{thm: subtract local mod term}
		For $i=1,2$, the Conjecture \ref{conj: semi-Lie version ATC} for $(g_0, u_0)$ follows from the identity of Fourier coefficients
		\begin{equation}
			\partial \BJ_{\rm hol, i}(\xi_0) = - \Int (\xi_0, \Phi_i)^{\perp}. 
		\end{equation}
	\end{theorem}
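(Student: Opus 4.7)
The plan is to derive the semi-global identity \eqref{key: DJ=Int global xi} at $\xi_0$ from the hypothesis, since by Proposition \ref{prop: local ATC is from global id of Fourier coeff derived at v0}, together with the archimedean computation of \cite[Lem 14.4]{AFL} and $\BQ$-linear independence of logarithms of primes, that identity implies the semi-Lie ATC for $(g_0, u_0)$. First I would unpack the hypothesis $\partial \BJ_{\rm hol,i}(\xi_0) = -\Int(\xi_0, \Phi_i)^\perp$ using the definitions of $\partial \BJ_{\rm hol,i}$ and of $\CC^\perp = {}^{\BL}\mathcal{CM}(\alpha, \phi_{CM}) - \lambda_0 \CC_{\alpha_m} - \sum_{\nu_0|v_0} \CC_{\nu_0}$. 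The difference between the target identity $2\partial \BJ_\alpha(\xi_0, \Phi'_i) + \Int^{\bf K-\bf B}(\xi_0, \Phi_i) + \Int(\xi_0, \Phi_i) \equiv 0 \pmod{\BR_\Delta}$ and the expanded hypothesis is
\[
2\lambda_0 \partial \BJ_{\alpha_m}(\xi_0, \Phi'_{im}) - \sum_{\nu_0|v_0} \Theta(\xi_0, \phi'_{i, \CC_{\nu_0}}) + [\Int^{\bf K-\bf B}(\xi_0,\Phi_i) - \Int^{\bf K-\bf B}(\xi_0,\Phi_i)^\perp] + [\Int(\xi_0, \Phi_i) - \Int(\xi_0, \Phi_i)^\perp],
\]
so the task reduces to showing that this combination vanishes in $\BR_\Delta$. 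The two intersection-number differences decompose as contributions from $\lambda_0 \CC_{\alpha_m}$ and from $\sum_{\nu_0} \CC_{\nu_0}$.

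For the contribution from $\sum_{\nu_0|v_0} \CC_{\nu_0}$: each cycle $\CC_{\nu_0}$ is vertical at a finite place and has empty complex points, so its archimedean Green-function contribution to $\Int^{\bf K-\bf B}$ is zero. The geometric contribution is controlled by Theorem \ref{thm: int in basic locus = theta series}, whose proof rests on the basic uniformization of Kudla--Rapoport divisors (Proposition \ref{prop:basic unif KR}) and the local modularity Theorem \ref{thm: partial local modularity} applied to the very special cycle $\CC_{\nu_0}$; this identifies $(\wh{\CZ}^{\bf B}(\xi_0,\phi_i), \CC_{\nu_0})/(\tau(Z^\BQ)[E:F])$ with the $\xi_0$-th Fourier coefficient $\Theta(\xi_0, \phi'_{i, \CC_{\nu_0}})$. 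Summing over $\nu_0|v_0$ cancels the theta terms appearing above with opposite sign.

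For the contribution from $\lambda_0 \CC_{\alpha_m}$: after the cancellation of the theta terms, what remains is exactly $\lambda_0$ times the semi-global identity \eqref{key: DJ=Int global xi} applied to the pair $(\alpha_m, \Phi_{im})$. By construction $\alpha_m$ is a maximal order at every finite place outside $\Delta$ and is unramified at $v_0$, and $\Phi_{im}$ is standard away from $\Delta$, so Proposition \ref{key: I local-global} decomposes the two intersection numbers for $\CC_{\alpha_m}$ into local contributions at each inert place $v \notin \Delta$ (including $v_0$) together with archimedean terms. Each local contribution at a finite inert place is matched against the corresponding derived orbital integral factor of $2\partial \BJ_{\alpha_m}$ by the semi-Lie AFL/ATC for unramified maximal orders, established in Theorem \ref{thm: max order Int} and Corollary \ref{thm: max order Del}; the archimedean places are matched by \cite[Lem 14.4]{AFL}. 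This yields the desired vanishing in $\BR_\Delta$.

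The main technical point is ensuring that the various normalizations — the transfer factor $\omega_L$, the self-dual Haar measures underlying the Weil representation, and the volume factor $\tau(Z^\BQ)[E:F]$ extracted from basic uniformization — align consistently across the maximal-order ATC identities at all inert $v \notin \Delta$, so that the assembled local contributions reproduce $-2\lambda_0 \partial \BJ_{\alpha_m}(\xi_0, \Phi'_{im})$ exactly rather than up to a spurious constant. Since $\alpha_m$ was chosen with $\Phi_{im}$ differing from $\Phi_i$ only by replacing the CM test component at $\Delta$, and $\Phi'_{im}$ is the partial Gaussian transfer of $\Phi_{im}$, the normalizations match term-by-term, and no further combinatorial input is needed. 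This completes the reduction.
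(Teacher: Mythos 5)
Your proposal is correct and follows the same cancellation argument the paper intends (the paper's own proof is just a terse list of references): expand $\partial \BJ_{\rm hol,i}(\xi_0)$ and $\Int(\xi_0,\Phi_i)^\perp$, cancel the $\CC_{\nu_0}$ theta terms against the $(\wh\CZ^{\bf B}(\xi_0,\phi_i),\CC_{\nu_0})$ terms via Theorem \ref{thm: int in basic locus = theta series}, and identify the $\CC_{\alpha_m}$ residue with $\lambda_0$ times the semi-global identity for $(\alpha_m,\Phi_{im})$, which is settled place-by-place via Proposition \ref{key: I local-global} and the maximal-order ATCs/AFLs. One small citation gap: at inert $v\notin\Delta$ with $v\neq v_0$ the order $O_F[\alpha_m]_v$ need not be \'etale, so in addition to Theorem \ref{thm: max order Int} and Corollary \ref{thm: max order Del} (which handle the unramified case at $v_0$) you also need the self-dual maximal-order AFL of \cite[Cor.\ 9.1]{M-Thesis} for the remaining inert places.
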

	\begin{proof}
		We use Theorem \ref{prop: local ATC is from global id of Fourier coeff derived at v0}. Assume $\partial \BJ_{\rm hol, i}(\xi_0) = - \Int (\xi_0, \Phi_i)^{\perp}$ holds. We only need to show the global identity (\ref{key: DJ=Int global xi})
		\[
		2\partial \BJ_{\alpha}(\xi_0,\Phi'_i) + \Int^{\bf K-\bf B}(\xi_0, \Phi_i) = - \Int(\xi_0,\Phi_i) \in \BR_{\Delta}
		\]
	We apply  Proposition \ref{prop: global-semiglobal-local} and Theorem \ref{prop: local ATC is from global id of Fourier coeff derived at v0} to the maximal order $\alpha_m$. As we already prove the AT conjecture for unramified maximal orders in particular for $\alpha_m$ (Section \ref{section: ATC unram max order}), we have 
	  \[
	  2\partial \BJ_{\alpha_m}(\xi_0,\Phi'_{i,m}) + \Int^{\bf K-\bf B}(\xi_0, \Phi_{i, m}) = - \Int(\xi_0,\Phi_{i,m}) \in \BR_{\Delta}.
	  \]
	By the computation of the $\xi_0$-th Fourier coefficient $\Theta(\xi_0, \phi'_{i, \CC_{\nu_0}})$ in Theorem \ref{thm: int in basic locus = theta series}, we have
	\[
	\Theta(\xi_0, \phi'_{i, \CC_{\nu_0}})= (\wh \CZ^{\bf B} (\xi, \phi_i), \CC_{\nu_0}).
	\]
Add above known global terms together with respect to the decomposition of the analytic side (\ref{sum by sum BJ partial}) and the geometric side $\CC^\perp$
(\ref{difference cycle}), the proposition is proved.
	\end{proof}
	
	We now do inductions and assume the semi-Lie version Conjecture \ref{conj: semi-Lie version ATC} holds for all maximal parahoric levels for all dimensions (of the hermitian space) smaller than $n$. 
	
	\begin{theorem}\label{last theorem}
		For $i=1, 2$, we have the identity
		\[
		\partial \BJ_{\rm hol, i}(h) = - \Int (h, \Phi_i)^{\perp}. 
		\]
		in the function space $\CA_{\rm hol}(\SL_2(\BA_0), K_0, n)_{\ov \BQ} \otimes_{\ov \BQ} \BR_{\Delta, \ov \BQ}$.	In particular, we have $\partial \BJ_{\rm hol, i}(\xi_0) = - \Int (\xi_0, \Phi_i)^{\perp}$ . 
	\end{theorem}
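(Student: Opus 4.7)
The plan is to apply the double induction lemma (Lemma \ref{the double induction lemma}) to the difference functions $f_i(h) := \partial \BJ_{\rm hol, i}(h) + \Int(h, \Phi_i)^{\perp}$ for $i = 1, 2$. By Corollary \ref{key: int modularity} and Proposition \ref{key: DJ modularity}, each $f_i$ lies in $\CA_{\rm hol}(\SL_2(\BA_0), K_0, n)_{\ov\BQ} \otimes_{\ov\BQ} \BR_{\Delta, \ov\BQ}$, and the pair $(f_1, \gamma_{V_{v_0}}\vol(L_{v_0}) f_2)$ satisfies hypothesis (1) of the lemma (the dual relation through the long Weyl element at $v_0$). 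Because $n \geq 1$, a constant parallel weight $n$ automorphic form is forced to vanish, so it suffices to show that both $f_i$ are constants.

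I next verify hypothesis (2) with $B$ a finite set of finite places containing $\Delta \cup \{v_0\}$. At $v \in B \setminus \{v_0\}$ the invariance is built into the choice of $K_0$ and the support of $\Phi_i, \Phi_i'$. At $v_0$, the Weil--representation action $\bigl(\begin{smallmatrix} 1 & t \\ 0 & 1 \end{smallmatrix}\bigr).1_{L_{v_0}}(u) = \psi_{v_0}(t(u,u))\,1_{L_{v_0}}(u)$ shows that $f_1$ is right invariant under $\bigl(\begin{smallmatrix} 1 & \varpi_{v_0}^{-v(L_{v_0})} O_{v_0} \\ 0 & 1 \end{smallmatrix}\bigr)$ and $f_2$ under shift $-v(L_{v_0}^{\vee})$, so $k_{1v_0}+k_{2v_0}+2c_{v_0} = -v(L_{v_0})-v(L_{v_0}^\vee)+2c_{v_0}$. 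The vertex-lattice inequality $v(L_{v_0})+v(L_{v_0}^\vee)\geq -1$, combined with $c_{v_0}=0$ under our assumption that $F_0$ is unramified over $\BQ_p$ at $v_0$ (and an analogous check in the self-dual case $t_0 \in \{0,n\}$), yields the required bound $k_{1v_0}+k_{2v_0}+2c_{v_0}\leq 1$.

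The main obstacle is hypothesis (3): the vanishing of $W_{\xi, f_i}(h_\infty)$ for every $\xi\in F_0$ with $v(\xi)=-k_{iv}-c_v$ at each $v\in B$. Since $f_i$ is holomorphic of parallel weight $n$, by \eqref{eq: important decomp} it suffices to check this at a single $h_\infty$; via \eqref{def partial BJ a,v } and \eqref{key: Int local-global}, the $\xi$-th Fourier coefficient of $f_i$ decomposes as an orbit sum indexed by $(g,u)\in[(\U(V^{(v_0)})(\alpha)\times V^{(v_0)}_\xi)(F_0)]_\rs$ with a local factor at $v_0$ equal to the local ATC defect for $(g,u)$. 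The valuation condition forces $v_0((u,u))=v(L_{v_0})$ when $i=1$ and $v_0((u,u))=v(L_{v_0}^\vee)$ when $i=2$, putting every orbit in the boundary regime. At such boundary length, the integral transitivity on maximal-length vectors (Lemmas \ref{lem: int tran U(L)} and \ref{lem: int tran S(L)}), together with the boundary-reduction argument of Lemma \ref{lem: ind on boundary cases}, collapses the analytic side of the rank-$n$ semi-Lie ATC \ref{conj: semi-Lie version ATC} onto the rank-$n$ group-version ATC \ref{conj: gp version ATC} for the pair $(L_{v_0},e)$ (respectively $(L_{v_0}^\vee,e)$) with $e$ a maximal-length vector; the geometric reduction of Theorem \ref{thm: semi-Lie and gp} (via Proposition \ref{update3}) does the same on the intersection side. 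Theorem \ref{thm: semi-Lie and gp} then further reduces this group ATC at rank $n$ to the semi-Lie ATC at rank $n-1$ for the vertex lattice $L_{\rm new}$, which holds by the standing induction hypothesis (the base case $n=1$ being the explicit computation of \S\ref{ATC n=1}). Hence hypothesis (3) is verified.

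Once all three hypotheses hold, Lemma \ref{the double induction lemma} forces $f_1$ and $f_2$ to be constants, and the weight-$n$ observation above upgrades this to $f_1\equiv f_2\equiv 0$. This gives the asserted identity $\partial \BJ_{\rm hol, i}(h) = -\Int(h, \Phi_i)^{\perp}$, and Theorem \ref{thm: subtract local mod term} then delivers the semi-Lie ATC at the original local pair $(g_0, u_0)$, completing the induction step on $n$.
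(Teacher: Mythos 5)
Your overall strategy coincides with the paper's: form the differences $f_i(h) := \partial \BJ_{\rm hol, i}(h) + \Int(h, \Phi_i)^{\perp}$, apply Lemma \ref{the double induction lemma} to the pair $(f_1, \gamma_{V_{v_0}}\vol(L_{v_0}) f_2)$ using the dual relation of Corollary \ref{key: int modularity} and Proposition \ref{key: DJ modularity}, reduce the remaining Fourier coefficients to boundary-valuation local identities at $v_0$, and pass via the boundary reduction to the group ATC of rank $n$ and then to the induction hypothesis at rank $n-1$. Your explicit observation that a constant of parallel weight $n\geq 1$ is zero is a useful clarification that the paper leaves implicit.

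There is, however, a concrete error in your verification of hypothesis (2): you take $B$ to contain $\Delta$, and claim the required $\diag\{\varpi_v^{-c_v},1\}\SL_2(O_{F_v})\diag\{\varpi_v^{c_v},1\}$-invariance at $v\in\Delta$ is "built into the choice of $K_0$". It is not. At $v\in\Delta$ the test functions $\phi_\Delta$ and $\phi_{CM,\Delta}$ are chosen to satisfy transfer and orbit-isolation conditions, and are only invariant under a possibly small subgroup $K_{0,\Delta}\subseteq\SL_2(F_{0,\Delta})$, not under a maximal compact. With your $B$, hypothesis (2) fails and Lemma \ref{the double induction lemma} simply does not apply. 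The paper takes the minimal choice $B=\{v_0\}$, for which the product over $v\in B\setminus\{v_0\}$ is empty and only the invariance, bound, and boundary Fourier-coefficient vanishing at $v_0$ need checking; your argument should be run with this $B$.

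Two further points are worth tightening. First, the geometric boundary reduction (collapsing $\wt{\Int}^\CZ_\CN(g,u)$ for $u$ of maximal length onto the group-version $\wt{\Int}_\CN(g)$) is not what Theorem \ref{thm: semi-Lie and gp} or Proposition \ref{update3} provide: those reduce the \emph{rank-$n$ group} number to the \emph{rank-$(n-1)$ semi-Lie} number via the relative Cayley map. The correct reference for the boundary collapse is Proposition \ref{base case of Z and Y}, which identifies $\CZ(e^{[0]})$ (resp.\ $\CY(e^{[1]})$) with $\CN^\flat$ inside $\CN$, combined with $\U(\BV)(F_0)$-transitivity to move $u$ to the standard maximal-length vector; Theorem \ref{thm: semi-Lie and gp} is then used one step later. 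Second, the Fourier coefficient $W_{\xi,f_i}$ is, after unwinding the modifications and the Leibniz decomposition, a sum over \emph{all} finite places $v\notin\Delta$ of local defect terms, not just the term at $v_0$; the terms at inert $v\neq v_0$ must be shown to vanish separately, which the paper does using the known maximal-order AFLs (this is why $\Delta$ was enlarged so that $\alpha$ is a maximal order away from $\Delta\cup\{v_0\}$). Your proposal addresses only the $v_0$ contribution; the $v\neq v_0$ contributions need to be dispatched explicitly for hypothesis (3) to hold.
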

	\begin{proof}
		
		By Corollary \ref{key: int modularity} and Proposition \ref{key: DJ modularity}, both sides are holomorphic modular forms.  For $i=1, 2$, denote the differences by $f_i(h):=\partial \BJ_{\rm hol, i}(h) + \Int (h, \Phi_i)^{\perp}$.  The theorem follows by applying the double induction lemma \ref{the double induction lemma} for the pair $(f_1, (\gamma_{V_{v_0}}\vol(L_{v_0})) f_2)$ and $B=\{v_0\}$ with $k_{1v_0}=-c_{v_0}, \, k_{2v_0}=-c_{v_0}+1$. For this, we check the assumptions in Lemma \ref{the double induction lemma}: 
		\begin{enumerate}
			\item we may assume that $F_{0v_0}/\mathbb Q_p$ is unramified so $c_{v_0}=0$ if $0<t_0<n$. For general $c_v$, it follows from how the local Weil representation change under the change of additive characters. The dual relation in Lemma \ref{the double induction lemma} holds by Corollary \ref{key: int modularity} and Proposition \ref{key: DJ modularity}.
			\item the valuation of $L_{v_0}$ and $L_{v_0}^\vee$ is $0$ and $-1$ respectively, so the support condition in Lemma  \ref{the double induction lemma} follows. 
		\end{enumerate} Now we only need to show the boundary condition in Lemma \ref{the double induction lemma}, namely  equalities of Fourier coefficients 
		\[
		\partial \BJ_{\rm hol, i}(\xi, h_{\infty}) = - \Int (\xi, h_{\infty}, \Phi_i)^{\perp}.
		\]
		for all $\xi \in F_{0}^\times$ such that $v_0(\xi)=0$ (resp. $v_0(\xi)=-1$) if $i=0$ (resp. $v_0(\xi)=-1$). Since both sides are holomorphic of the same weight $n$, we can assume $h_{\infty}=1$. It is sufficient to prove the following two claims:
		
		`\emph{Claim $\CZ$. For $\xi$ with $v_0(\xi) = 0$, we have $
			\partial \BJ_{\rm hol, 1}(\xi) = - \Int (\xi, \Phi_1)^{\perp} \in \BR_\Delta.$} 
		
		\emph{Claim $\CY$. For $\xi$ with $v_0(\xi) = -1$, we have $\partial \BJ_{\rm hol, 2}(\xi) = - \Int (\xi, \Phi_2)^{\perp} \in \BR_\Delta.$}
		
		Adding these local error terms at $\xi$ again, the Claim $\CZ$ follows if we can show the identity
		\[
		2\partial \BJ_{\alpha}(\xi,\Phi'_1) + \Int^{\bf K-\bf B}(\xi, \Phi_1) = - \Int(\xi,\Phi_1) \in \BR_\Delta.
		\]
		By the factorization and vanishing of archimedean terms, it is sufficient to show 
		\[
		2\partial \BJ_{\alpha, v}(\xi,\Phi'_1) + \Int^{\bf K-\bf B}(\xi, \Phi_1) = - \Int_{ v}(\xi,\Phi_1) \in \BR.
		\]
		holds for any finite place $v \not \in \Delta$ of $F_0$. By local-global decompositions on both sides (see Proposition \ref{key: Int local-global} and equation \ref{key: DJ local-global}), it is sufficient to prove the local identity
		\[
		\del\bigl ((\gamma,u'), \Phi'_v) =- \Int_v(g,u) \log q_v.
		\]
		We prove this. If $v =v_0$, then $v_0(\xi)=0$, hence the ATC identities $\del\bigl ((\gamma,u'), \Phi'_{iv_0}) =- \Int_{v_0}^i(g,u)$ holds by Theorem \ref{thm: semi-Lie and gp} and induction. If $v \not = v_0$, then due to our enlargement of $\Delta$ we know that $O_F[t]/(\alpha(t))$ is a maximal order at $v$. In this case the above identity $\del\bigl ((\gamma,u'), \Phi'_v) =- \Int_v(g,u) \log q_v$ is also known by the known maximal order AFLs in \cite[Cor. 9.1]{M-Thesis} (which is reproved for unramified maximal orders in \S \ref{section: ATC unram max order}).
		This finishes the proof of Claim  $\CZ$. The Claim $\CY$ is proved in the same way by applying the dual isomorphism. 
	\end{proof}	
	
	By Theorem \ref{thm: subtract local mod term} and Theorem \ref{last theorem},  the proof of arithmetic transfer identities for the vertex lattice $L_{v_0}$ (\ref{conj: semi-Lie version ATC}) is complete, assuming that $F_0$ is unramified over $\BQ_p$ if $0<t_0<n$.	
	
	\qed


\begin{thebibliography}{99}
		
		\bibitem{Atiyah}{M. Atiyah, \textit{On analytic surfaces with double points}, Proc. R. Soc. Lond. Ser. A 247, 237–244 (1958).}
		
		\bibitem{RBP19}{R. Beuzart-Plessis, \textit{a new proof of Jacquet-Rallis fundamental lemma}, Duke Math. J. 170(12) (2021), 2805-2814. }
		
		\bibitem{JBGreen}{J. Bruinier, \textit{Regularized theta lifts for orthogonal groups over totally real fields}, J. Reine Angew. Math. 672 (2012), 177–222.}
		
		\bibitem{BHKRY}{J. Bruinier, B. Howard, S. Kudla, M. Rapoport and T. Yang,  \textit{Modularity of generating series of divisors on unitary Shimura varieties}, Astérisque 421 (2020): 7-125.}
		
		\bibitem{Boumasmoud}{R. Boumasmoud, \textit{General Horizontal Norm Compatible Systems on unitary Shimura varieties}, \href{https://arxiv.org/abs/2106.12540}{\texttt{arXiv:2106.12540}}. }
		
		\bibitem{Cho18}{S. Cho, \textit{The basic locus of the unitary Shimura variety with parahoric level structure, and special cycles}, \href{https://arxiv.org/abs/1807.09997}{\texttt{ 	arXiv:1807.09997}} .}
		
		\bibitem{Deligne79}{P. Deligne. \textit{Vari\'et\'es de Shimura: interpr\'etation modulaire, et techniques de con struction de mod\'eles canoniques}. In \textit{Automorphic forms, representations and L-functions} (Proc. Sympos. Pure Math., Oregon State Univ., Corvallis, Ore., 1977), Part 2, Proc. Sympos. Pure Math., XXXIII, pages 247–289. Amer. Math. Soc., Prov idence, R.I., 1979.}
		
		
		
		\bibitem{GGP}{W. T. Gan, B. H. Gross, and D. Prasad, \textit{Symplectic local root numbers, central critical
				$L$-values, and restriction problems in the representation theory of
				classical groups},  Ast\'erisque \textbf{346} (2012), 1--109.}
		
		\bibitem{GS90}{H. Gillet and C. Soul\'e, \textit{Arithmetic intersection theory}, Inst. Hautes Etudes Sci. Publ. Math. 72 (1990), 93–174.}
		
		
		\bibitem{Gortz01}{U. G\"ortz, \textit{On the flatness of models of certain Shimura varieties of PEL-type}. Math. Ann. \textbf{321} (2001), no. 3, 689--727.} 
		
		\bibitem{ADLV-Coexter}{U. G\"ortz, X. He and S. Nie, \emph{Basic loci of Coxeter type with arbitrary parahoric level}, Canadian Journal of Mathematics (2022): 1-47. }
		
		\bibitem{HodgeNewtonBasic}{U. G\"ortz, X. He, and S. Nie. \textit{Fully hodge–newton decomposable
			shimura varieties}, Peking Mathematical Journal, 2(2):99–154, 2019. 102}
		
		\bibitem{Gross-canonical}{B. H. Gross, \textit{On canonical and quasi-canonical liftings}, Invent. Math. 84 (1986): 321-326. }
		
		\bibitem{GZ}{B. H. Gross and D. Zagier, \textit{Heegner points and derivatives of $L$-series}, Invent. Math. 84 (1986), no. 2, 225--320.}
		
		
		\bibitem{Hansen}{D. Hansen, \textit{On the supercuspidal cohomology of basic local Shimura varieties},  J. reine angew. Math., to appear.}
		
		\bibitem{IrredKR}{P. van Hoften, \textit{Mod p points on Shimura varieties of parahoric level (with an appendix by R. Zhou)}, \href{https://arxiv.org/abs/2010.10496v1}{\texttt{ arXiv:2010.10496v1}. } 	}
		
		\bibitem{Ho-CM}{B. Howard, \textit{Complex multiplication cycles and Kudla--Rapoport divisors}, Ann. of Math. (2) \textbf{176} (2012), no. 2, 1097--1171.}
		
		
		\bibitem{HMP2020}{B. Howard and K. Madapusi Pera. \textit{Arithmetic of Borcherds products}. Ast\'erisque, (421, Diviseurs arithm\'etiques sur les vari\'et\'es orthogonales et unitaires de Shimura):187–297, 2020. 47. }
		
		\bibitem{HSY20}{Q. He, C. Li, Y. Shi and T. Yang, \textit{A proof of the Kudla–Rapoport conjecture for Kramer models}, Inventiones mathematicae, 234(2), 721-817.}
		
		\bibitem{HR17-Axiom}{X. He and M. Rapoport, \textit{Stratifications in the reduction of Shimura varieties}, Manuscripta Math. 152 (2017), no. 3-4,	317–343.}
		
		\bibitem{ADLV-hyperspecial}{X. He, R. Zhou and Y. Zhu, \textit{Stabilizers of irreducible components of affine Deligne--Lusztig varieties}, Journal of the European Mathematical Society, To appear. }
		
		\bibitem{JR-GGP}{H. Jacquet and S. Rallis, \textit{On the Gross-Prasad conjecture for unitary groups}, in On Certain L-Functions, Clay Math. Proc. 13, Amer. Math. Soc., Providence, RI, 2011, pp. 205--264.}
		
		\bibitem{classicalherm}{R. Jacobowitz, \textit{Hermitian Forms Over Local Fields}. American Journal of Mathematics, 84(3), (1962), 441-465.}	
		
		\bibitem{Kottwitz-OrbGL3}{R. Kottwitz, \textit{Orbital integrals on $GL_3$}. American Journal of Mathematics, 102(2), (1980), 327–384.}
		
		\bibitem{Kudla97Duke}{S. Kudla, \textit{Algebraic cycles on Shimura varieties of orthogonal type}, Duke Math. J. 86 (1997), 39–78.}
		
		\bibitem{Kudla97Ann}{S. Kudla, \textit{Central derivatives of Eisenstein series and height pairings}, Ann. of Math. (2) 146 (1997), no. 3}.
		
		\bibitem{Kudla-Eisenstein}{S. Kudla, \textit{Special cycles and derivatives of Eisenstein series}, In Heegner points and Rankin L- series, volume 49 of Math. Sci. Res. Inst. Publ., pages 243–270. Cambridge Univ. Press, Cambridge, 2004}
		
		\bibitem{KR-height}{S. Kudla and M. Rapoport, \textit{Height pairings on Shimura curves and p-adic uniformization}. Invent. math., 142 (2000), pp. 153–222.	}
		
		\bibitem{KR-Drinfeld plane}{S. Kudla and M. Rapoport, \textit{An alternative description of the Drinfeld p-adic half-plane, Annales de l’Institut
				Fourier 64, no. 3 (2014)}, 1203–1228.}
		
		\bibitem{KR-local}{S. Kudla and M. Rapoport, \textit{Special cycles on unitary Shimura varieties I. Unramified local theory}, Invent. Math. \textbf{184} (2011), no. 3, 629--682.}
		
		\bibitem{KR-global}{S. Kudla and M. Rapoport, \textit{Special cycles on unitary Shimura varieties II: Global theory},  J. Reine Angew. Math. \textbf{697} (2014), 91--157.}
		
		\bibitem{KR-2004}{S. Kudla, M. Rapoport and T. Yang, \textit{ Derivatives of Eisenstein series and Faltings heights},  Compositio Mathematica, 140(4), (2004), 887-951. 
		}
		
		
		\bibitem{Liu-Fourier-Jacobi}{Y. Liu, \textit{Fourier-Jacobi cycles and arithmetic relative trace formula}, Cambridge Journal of Mathematics, 9 (2021) 1—147.}
		
		\bibitem{Liu-thesis}{Y. Liu, \textit{Arithmetic inner product formula for unitary groups}, ProQuest LLC, Ann Arbor, MI, 2012. Thesis (Ph.D.)–Columbia University.}
		
		\bibitem{LTXZZ}{Y. Liu, Y. Tian, L. Xiao, W. Zhang and X. Zhu, \textit{On the Beilinson--Bloch--Kato conjecture for Rankin--Selberg motives}, Inventiones mathematicae 228.1 (2022): 107-375. }
		
		\bibitem{Li-Liu2020}{C. Li and Y. Liu, \textit{Chow groups and L-derivatives of automorphic motives for unitary groups}.  Ann. of Math. 194 (2021) 817—901.}
		
		\bibitem{LZ19}{C. Li and W. Zhang, \textit{Kudla--Rapoport cycles and derivatives of local densities}, Journal of the American Mathematical Society 35.3 (2022): 705-797.}	
		
		\bibitem{AFL-linear}{Q. Li, \textit{An intersection number formula for CM cycles in Lubin-Tate towers}, Duke Math. J, to appear.}
		
		\bibitem{M-AFL}{A. Mihatsch, \textit{On the arithmetic fundamental lemma conjecture through Lie algebras}, Math. Z. \textbf{287} (2017), no. 1--2, 181--197.}
		
		\bibitem{M-Thesis}{A. Mihatsch, \textit{Relative unitary RZ-spaces and the arithmetic fundamental lemma}, Journal of the Institute of Mathematics of Jussieu. 2022;21(1):241-301.}
		
		
		\bibitem{AFL2021}{A. Mihatsch and W. Zhang, \textit{On the Arithmetic Fundamental Lemma conjecture over a general p-adic field}, J. Eur. Math. Soc. (2023). }
		
		\bibitem{AFLlocalconstant}{A. Mihatsch, \textit{Local Constancy of Intersection Numbers}, Algebra \& Number Theory 16.2 (2022): 505-519.}
		
		\bibitem{Milne}{J. S. Milne, \textit{Introdution to Shimura varieties}, Lecture notes, \href{https://www.jmilne.org/math/xnotes/svi.pdf}{https://www.jmilne.org/math/xnotes/svi.pdf}. }
		
		\bibitem{Morin-Strom}{K. A.  Morin-Strom . \textit{Witt’s Theorem for Modular Lattices}. American Journal of Mathematics, 101(6), (1979), 1181–1192.}
		
		\bibitem{Ngo-FL}{B. Chau Ngo, \textit{Le lemme fondamental pour les alg`ebres de Lie,} Publ. Math. Inst. Hautes Etudes Sci. 111 (2010), 1–169.}	
		
		\bibitem{Pappas-Rapoport}{G. Pappas and M. Rapoport, \textit{p-adic shtukas and the theory of global and local Shimura varieties}, \href{https://arxiv.org/abs/2106.08270}{\texttt{arxiv: 2106.08270}} }.
		
		\bibitem{Qiu-mod}{C. Qiu, \textit{Arithmetic modularity of special divisors and arithmetic mixed Siegel-Weil formula (with an appendix by Yujie Xu)}. \href{https://arxiv.org/abs/2204.13457}{\texttt{arXiv:2204.13457}} }.
		
		\bibitem{RSZ-Diagonalcycle}{M. Rapoport, B. Smithling, and W. Zhang, \textit{Arithmetic diagonal cycles on unitary Shimura varieties}, Compos. Math. 156 (2020), no. 9, 1745–1824. }
		
		\bibitem{RSZ-exotic}{M. Rapoport, B. Smithling, and W. Zhang, \textit{On the arithmetic transfer conjecture for exotic smooth formal moduli spaces}, Duke Math. J. \textbf{166} (2017), no. 12, 2183--2336.} 
		
		\bibitem{RSZ-regular}{M. Rapoport, B. Smithling, and W. Zhang, \textit{Regular formal moduli spaces and arithmetic transfer conjectures}, Math. Ann. \textbf{370} (2018), no.\ 3--4, 1079--1175.}
		
		\bibitem{RSZ-Shimura}{M. Rapoport, B. Smithling, and W. Zhang, \textit{On Shimura varieties for unitary groups}, Pure and Applied Mathematics Quarterly 17.2 (2021).}
		
		\bibitem{RZ96}{M. Rapoport and T. Zink, \textit{Period spaces for $p$-divisible groups}, Annals of Mathematics Studies, vol. \textbf{141}, Princeton University Press, Princeton, NJ, 1996.}
		
		\bibitem{RZ-Drinfeld space}{M. Rapoport and T. Zink, \textit{On the Drinfeld moduli problem of $p$-divisible groups},  Camb. J. Math. \textbf{5} (2017), no. 2, 229--279.}
		
		
		
		\bibitem{EKOR}{X. Shen. C-F. Yu. C. Zhang, \textit{EKOR strata for Shimura varieties with parahoric level structure}. Duke Math. J. 170 (14) 3111 - 3236.}
		
		\bibitem{SSTT19}{A. N. Shankar, A. Shankar, Y. Tang, and S. Tayou, \textit{Exceptional jumps of Picard ranks of reductions of K3 surfaces over number fields}, Forum of Mathematics, Pi. Vol. 10. Cambridge University Press, 2022.  }
	
	    \bibitem{StacksProject}{The Stacks Project Authors, Stacks Project, \href{http://stacks.math.columbia.edu}{http://stacks.math.columbia.edu}. }
		
		\bibitem{Stamm}{H. Stamm, \textit{On the reduction of the Hilbert-Blumenthal moduli scheme with $\Gamma_0(p)$ level structure}, Forum Mathematicum 9 (1997), 405–455.}
		
		
		\bibitem{San13}{S. Sankaran, \textit{Unitary cycles on Shimura curves and the Shimura lift I}, Documenta Math. 18 (2013), pp. 1403–1464. }
		
		\bibitem{San17}{S. Sankaran, \textit{Improper Intersections of Kudla-Rapoport divisors and Eisenstein series}, Journal of the Institute of Mathematics of Jussieu (2017), 16(5), 899-945. }
		
		\bibitem{SW20}{P. Scholze, J. Weinstein, Berkeley lectures on p-adic geometry. Ann. of Math. Studies, 207, Princeton
			University Press, Princeton, 2020.}
		
		\bibitem{Tits}{J. Tits, \textit{Reductive groups over local fields}, in Automorphic forms, representations and L-functions, Proc. Symp. Pure Math. Am. Math. Soc., Corvallis/Oregon 1977, Proc. Symp. Pure Math. 33 (1979), Part 1, 29–69.	}
		
		\bibitem{Vollaard05}{I. Vollaard, The supersingular locus of the Shimura variety for GU(1,s), Can. J. Math. 62 (2010), 668–720. 
		}
		
		
		\bibitem{VW11}{I. Vollaard and T. Wedhorn, The supersingular locus of the Shimura variety for GU(1,s) II, Invent. Math. 184 (2011), 591–627.}
		
		\bibitem{Yun-Duke}{Z. Yun, \textit{The fundamental lemma of Jacquet--Rallis in positive
				characteristics},  Duke Math. J. \textbf{156} (2011), no. 2, 167--228.}
		
		\bibitem{Basic-point}{C. Yu. \textit{Basic points in the moduli spaces of PEL-type}. MPIM-preprint 2005-113, 2005.}
		
		\bibitem{AFL-Invent}{W. Zhang, \textit{On arithmetic fundamental lemmas}, Invent. Math. \textbf{188} (2012), no. 1, 197--252.}
		
		\bibitem{AFL}{W. Zhang, \textit{Weil representation and Arithmetic Fundamental Lemma}, 
			Ann. of Math. (2) 193 (2021), no. 3, 863–978. }
		
		\bibitem{Zhang-GGP}{W. Zhang, \textit{Fourier transform and the global Gan--Gross--Prasad conjecture for unitary groups},  Ann. of Math. (2) 180 (2014), no. 3, 971--1049.}
		
		\bibitem{Zhang-RS}{W. Zhang, \textit{Automorphic periods and the central value of Rankin-Selberg $L$-function}, J. Amer. Math. Soc. \textbf{27} (2014), 541--612.}
		
		\bibitem{Zhang-ICM}{W. Zhang, \textit{Periods, cycles, and $L$-functions: a relative trace formula approach},  Proceedings of the International Congress of Mathematicians--Rio de Janeiro 2018. Vol. II. Invited lectures, 487--521, World Sci. Publ., Hackensack, NJ, 2018. }
		
		\bibitem{AFL-More}{W. Zhang, \textit{More Arithmetic Fundamental Lemma conjectures: the case of Bessel subgroups}, \href{https://arxiv.org/abs/2108.02086}{ \texttt{arXiv:2108.02086}.} }
		
		\bibitem{YZZ}{W. Zhang, X. Yuan and S. Zhang, \textit{The Gross-Kohnen-Zagier theorem over totally real fields}, Compositio Math. 145 (2009), no. 5, 1147-1162.}
		
		
		
		
	\end{thebibliography}
\end{document}